\definecolor{citecol}{rgb}{0.07,0.07,0.05}
\definecolor{urlcol}{rgb}{0.06,0.04,0.09}
\definecolor{linkcol}{rgb}{0.01,0.03,0.08}
\newcommand{\sH}{\ensuremath{\mathscr{H}}\xspace}
\newcommand{\sV}{\ensuremath{\mathscr{V}}\xspace}
\DeclareMathOperator{\Hom}{Hom}
\DeclareMathOperator{\End}{End}
\DeclareMathOperator{\Int}{\ensuremath{\mathrm{Int}}\xspace}
\DeclareMathOperator{\Lie}{Lie}
\newcommand{\val}{{\mathrm{val}}}
\newcommand{\U}{\mathrm{U}}
\newcommand{\SO}{\mathrm{SO}}
\DeclareMathOperator{\vol}{vol}
\newcommand{\isoarrow}{%
	\ifbool{@display}{\overset{\sim}{\longrightarrow}}{\xrightarrow\sim}%
}
\newcommand{\Fb}{{\breve F}}
\newcommand{\OFb}{{O_{\breve F}}}
\newcommand{\Herm}{\mathrm{Herm}}
\newcommand{\rd}{\mathrm{d}}
\newcommand{\Den}{\mathrm{Den}}
\newcommand{\pDen}{\partial\mathrm{Den}}
\newcommand{\pPden}{\partial \mathrm{Pden}}
\theoremstyle{plain}
\newtheorem{theorem}{Theorem}[section]
\newtheorem{lemma}[theorem]{Lemma}
\newtheorem{proposition}[theorem]{Proposition}
\newtheorem{corollary}[theorem]{Corollary}
\newtheorem{conjecture}[theorem]{Conjecture}
\theoremstyle{definition}
\newtheorem{definition}[theorem]{Definition}
\newtheorem{remark}[theorem]{Remark}
\theoremstyle{remark}
\newcommand{\BB}{{\mathbb B}}
\newcommand{\BC}{{\mathbb C}}
\newcommand{\BE}{{\mathbb E}}
\newcommand{\BF}{{\mathbb F}}
\newcommand{\BL}{{\mathbb L}}
\newcommand{\BP}{{\mathbb P}}
\newcommand{\BQ}{{\mathbb Q}}
\newcommand{\BV}{{\mathbb V}}
\newcommand{\BX}{{\mathbb X}}
\newcommand{\BZ}{{\mathbb Z}}
\newcommand{\CA}{{\mathcal A}}
\newcommand{\CB}{{\mathcal B}}
\newcommand{\CC}{{\mathcal C}}
\newcommand{\CF}{{\mathcal F}}
\newcommand{\CG}{{\mathcal G}}
\newcommand{\CK}{{\mathcal K}}
\newcommand{\CN}{{\mathcal N}}
\newcommand{\CO}{{\mathcal O}}
\newcommand{\CR}{{\mathcal R}}
\newcommand{\CS}{{\mathcal S}}
\newcommand{\CY}{{\mathcal Y}}
\newcommand{\CZ}{{\mathcal Z}}
\newcommand{\FC}{{\mathfrak C}}
\newcommand{\FL}{{\mathfrak L}}
\newcommand{\FM}{{\mathfrak M}}
\newcommand{\FX}{{\mathfrak X}}
\newcommand{\ScF}{{\mathscr F}}
\newcommand{\ScH}{{\mathscr H}}
\newcommand{\ScV}{{\mathscr V}}
\newcommand{\alphad}{\mathrm{Den}}
\DeclareFontFamily{U}{matha}{\hyphenchar\font45}
\DeclareFontShape{U}{matha}{m}{n}{
	<5> <6> <7> <8> <9> <10> gen * matha
	<10.95> matha10 <12> <14.4> <17.28> <20.74> <24.88> matha12
}{}
\DeclareSymbolFont{matha}{U}{matha}{m}{n}
\DeclareFontFamily{U}{mathx}{\hyphenchar\font45}
\DeclareFontShape{U}{mathx}{m}{n}{
	<5> <6> <7> <8> <9> <10>
	<10.95> <12> <14.4> <17.28> <20.74> <24.88>
	mathx10
}{}
\DeclareSymbolFont{mathx}{U}{mathx}{m}{n}
\DeclareMathSymbol{\obot}         {2}{matha}{"6B}
\newcommand{\bX}{\mathbb{X}}
\newcommand{\bV}{\mathbb{V}}
\newcommand{\bW}{\mathbb{W}}
\newcommand{\Fil}{\mathrm{Fil}}
\newcommand{\bL}{\mathbb{L}}
\newcommand{\Q}{\mathbb{Q}}
\newcommand{\Z}{\mathbb{Z}}
\newcommand{\spa}{\mathrm{Span}}
\newcommand{\Oo}{\mathcal{O}}
\newcommand{\cZ}{\mathcal{Z}}
\newcommand{\cE}{\mathcal{E}}
\newcommand{\cY}{\mathcal{Y}}
\newcommand{\cN}{\mathcal{N}}
\newcommand{\Spf}{\mathrm{Spf}\, }
\newcommand{\q}{q}
\newcommand{\pden}{\partial \mathrm{Den}}
\newcommand{\diag}{\mathrm{Diag}}
\newcommand{\Pden}{\mathrm{Pden}}
\newcommand{\ppden}{\partial \mathrm{Pden}}
\newcommand{\den}{\mathrm{Den}}
\newcommand{\m}{\mathrm{m}}
\newcommand{\cO}{\mathcal{O}}
\newcommand{\rD}{\mathrm{D}}
\numberwithin{equation}{section}
\title{On the Kudla-Rapoport conjecture for unitary Shimura varieties with maximal parahoric level structure at unramified primes}
\author[Sungyoon Cho]{Sungyoon Cho}
\address{Department of Mathematics, POSTECH, 77, Cheongam-ro, Nam-gu, Pohang-si, Gyeongsangbuk-do, Korea}
\email{sungyooncho@postech.ac.kr}
\author[Qiao He]{Qiao He}
\address{Department of Mathematics, 
	Columbia University,
	2990 Broadway,
	New York, NY 10027, USA}
\email{qh2275@columbia.edu}
\author[Zhiyu Zhang]{Zhiyu Zhang}
\address{Department of Mathematics, Stanford University,  450 Jane Stanford Way Building 380, Stanford, CA 94305}
\email{zyuzhang@stanford.edu}
\date{\today}
\begin{document}
	\begin{abstract}
		In this article, we prove that a version of Tate conjectures for certain Deligne-Lusztig varieties implies the Kudla-Rapoport conjecture for unitary Shimura varieties with maximal parahoric level at unramified primes. Furthermore, we prove that the Kudla-Rapoport conjecture holds unconditionally for several new cases in any dimension.
	\end{abstract}

	\maketitle

	\tableofcontents

	\section{Introduction}

	\subsection{Introduction}

	\subsubsection{Background}
	The classical \emph{Siegel--Weil formula} (\cite{Sie35,Siegel1951,Weil1965}) relates   special \emph{values} of certain Eisenstein series as theta functions, which are generating series of representation numbers of quadratic forms. Later on, Kudla (\cite{Kudla97, Kudla2004}) proposed an influential program and introduced analogues of theta series in arithmetic geometry. One of the goals of the program is to prove the so-called \emph{arithmetic Siegel--Weil formula} relating the \emph{central derivative} of certain Eisenstein series with a certain arithmetic analogue of theta functions, which is a generating series of arithmetic intersection numbers of $n$ special divisors on Shimura varieties associated to $\SO(n-1,2)$ or $\U(n-1,1)$.
	
	For  $\U(n-1,1)$-Shimura varieties,  Kudla and Rapoport (\cite{KR1}) formulated a conjectural  \emph{local arithmetic Siegel--Weil formula} at an \emph{unramified} place with hyperspecial level, now known as the  \emph{Kudla--Rapoport conjecture}. As a local analogue of the arithmetic Siegel-Weil formula, it relates the \emph{central derivative} of local densities of hermitian forms with the arithmetic intersection number of special cycles on unitary Rapoport--Zink spaces. Now this conjectural identity is also known as the Kudla-Rapoport conjecture and was recently proved by Li and Zhang in \cite{LZ}. We refer the readers to the introduction of \cite{LZ} for more backgrounds and related results. 
	
	One of the distinguished features of the hyperspecial case \cite{KR1} is that the corresponding Rapoport-Zink space has good reduction. Accordingly, the analytic side has a clear formulation. A natural and important question is to formulate and prove analogues of the Kudla--Rapoport conjecture when  the level structure is non-trivial, where many unexpected new phenomenons occur.
	
	At a ramified place,
	there are two well-studied unitary Rapoport--Zink spaces with different level structures. One of them is the \emph{exotic smooth model} which has good reduction, and the other one is the \emph{Kr\"amer model} which has bad (semistable) reduction. The analogue of Kudla--Rapoport conjecture for the even dimensional exotic smooth model was formulated and proved by Li and Liu in \cite{LL2} using a strategy similar to \cite{LZ}. For the Kr\"amer model, corresponding to the bad reduction in this case, the analytic side is more involved. In fact, even the formulation of the conjecture is not clear and needs to be modified.  This phenomenon in the presence of bad reduction was first discovered by Kudla and Rapoport in \cite{KRshimuracurve} via explicit computation in their study of the Drinfeld $p$-adic half plane.  A similar computation was also done in \cite{San3,HSY} for unitary special cycles on the Kr\"amer models of the Drinfeld $p$-adic half plane. The Kudla--Rapoport conjecture for Kr\"amer models, in general, was formulated in \cite{HSY3} with conceptual formulation for the modification and proved in \cite{HLSY}.
	
	The present paper focus on Kudla--Rapoport conjectures with maximal parahoric level structures at an unramified prime, where more new phenomenons occur. If the level structure is almost self-dual, a Kudla-Rapoport type formula was obtained in \cite{San3} when $n=2$ by explicit computation and established in general in \cite{LZ} by relating it with the hyperspecial case.   At an unramified prime with general maximal parahoric level structure,  such formulation was first given in \cite{Cho} in terms of weighted local density taking advantage of a duality between two Rapoport-Zink spaces (see \S \ref{sec: weighted conj}). In this paper, we also give another formulation in the spirit of \cite{HSY3,HLSY} when the intersection number involved is purely contributed by $\cZ$-cycles.

	Assuming a version of Tate conjectures for certain Deligne-Lusztig varieties, the present paper settles this conjecture for any $n$ and any maximal parahoric level structures. We are able to prove the conjecture unconditionally for several special level structures for any $n$. In particular, we will give a new proof of the almost self-dual case first proved in \cite{LZ}.   The main results we obtained should be useful to relax the local assumptions in the arithmetic Siegel--Weil formula for $\U(n-1,1)$--Shimura varieties by allowing maximal parahoric levels at unramified primes. Also,  it may be applied to relax the local assumptions  in the arithmetic inner product formula in \cite{LL2020,LL2} and the $p$-adic arithmetic inner product formula by Disegni and Liu in \cite{DL22}.

	\subsubsection{Kudla--Rapoport conjecture}\label{sec:kudla-rapop-conj}
	
	Let $p$ be an odd prime. Let $F_0$ be a finite unramified extension of $\mathbb{Q}_p$ with residue field $k=\mathbb{F}_q$. Let $F$ be an unramified quadratic extension of $F_0$. Let $\pi$ be a uniformizer of both $F$ and $F_0$. Let $\breve F$ be the completion of the maximal unramified extension of $F$. Let $ O_F, \OFb$ be the ring of integers of $F,\Fb$ respectively.
	
	Let $n\ge2$ be an integer. To define the  unitary Rapoport--Zink space with maximal parahoric level structure, we fix  a  supersingular hermitian $O_F$-modules $\mathbb{X}$ of signature $(1,n-1)$ over $\bar{k}$. The Rapoport-Zink space $\mathcal{N}=\mathcal{N}_n^{[h]}$ is the formal scheme over $\Spf O_{\breve F}$ parameterizing hermitian formal $O_F$-modules $X$ of signature $(1,n-1)$ and type $h$ (see Definition \ref{def: hermitian formal module}) within the quasi-isogeny class of $\mathbb{X}$. The space $\mathcal{N}$ is locally of finite type, and semistable of relative dimension $n-1$ over $\Spf \OFb$.  
	
	Let $\overline{\mathbb{E}}$ be the framing hermitian $O_F$-modules of signature $(0,1)$ over $\bar{k}$. We define  \emph{space of quasi-homomorphisms} to be $\mathbb{V}=\mathbb{V}_n\coloneqq \Hom_{O_F}(\overline{\mathbb{E}}, \mathbb{X}) \otimes_{O_F}F$. We can associate $\bV$ with  a natural $F/F_0$-hermitian form to make $\mathbb{V}$ a non-degenerate $F/F_0$-hermitian space of dimension $n$.  For any subset $L\subset \mathbb{V}$, we define the \emph{special cycle} $\mathcal{Z}(L)$ (resp. $\mathcal{Y}(L)$ ) (see \S\ref{subsec: special cycles}) to be the deformation locus of $ L$ (resp. $\lambda\circ L$) in $\cN_{n}^{[h]}$.
	
	Given  an $O_F$-lattice $L\subset \mathbb{V}$ of full rank $n$, we can define two integers: the \emph{arithmetic intersection number} $\Int(L)$ and the \emph{modified derived local density} $\pDen(L)$.
	
	\begin{definition}
		Let $L\subset \mathbb{V}$ be an $O_F$-lattice and $x_1,\ldots, x_n$ be an $O_F$-basis of $L$. We define the \emph{arithmetic intersection number}
		\begin{equation}
			\label{eq:IntL}
			\mathrm{Int}_{n,h}(L)\coloneqq \chi(\mathcal{N},\mathcal{O}_{\mathcal{Z}(x_1)} \otimes^\mathbb{L}\cdots \otimes^\mathbb{L}\mathcal{O}_{\mathcal{Z}(x_n)} )\in \mathbb{Z},
		\end{equation}
		where $\mathcal{O}_{\mathcal{Z}(x_i)}$ denotes the structure sheaf of the special divisor $\mathcal{Z}(x_i)$, $\otimes^\mathbb{L}$ denotes the derived tensor product of coherent sheaves on $\mathcal{N}$, and $\chi$ denotes the Euler--Poincar\'e characteristic. By Proposition \ref{prop: linear invariance},   $\mathrm{Int}_{n,h}(L)$ is independent of the choice of the basis $x_1,\ldots, x_n$ and hence is a well-defined invariant of $L$ itself.
	\end{definition}

	To define the \emph{modified derived local density} $\pDen(L)$, we need to introduce local densities first. Let $M$ be another hermitian $O_F$-lattice  (of arbitrary rank) and $\Herm_{L,M}$  denote the $O_{F_0}$-scheme of hermitian $O_F$-module homomorphisms from $L$ to $M$.   Then we  define the corresponding \emph{local density}  to be 
	$$\Den(M,L)\coloneqq \lim_{d\rightarrow +\infty}\frac{|\Herm_{L,M}(O_{F_0}/\pi^{d})|}{q^{d\cdot d_{L,M}}},$$ 
	where $d_{L,M}$ is the dimension of $\Herm_{L,M} \otimes_{O_{F_0}}F_0$.  Let $I_k$ be an unimodular hermitian $O_F$-lattice of rank $k$. It is well-known that  there exists a \emph{local density polynomial} $\den(M,L,X)\in \mathbb{Q}[X]$  such that for any integer $k\ge0$,
	\begin{equation}\label{eq:Den poly introduction}
		\Den(M, L, (-\q)^{-k})=\Den(I_k\obot M,L).
	\end{equation}
	Here $I_k\obot M$ denotes the orthogonal direct sum of $I_k$ and $M$.       
	
	When $M$    also has rank $n$ and $M\otimes_{O_F}F$ is not isometric to $L\otimes_{O_F}F$, we have $\Den(M,L)=0$. In this case we write $$\den'(M, L)\coloneqq -\frac{\rd}{\rd X}\bigg|_{X=1} \den(M,L, X),$$ and define the (normalized) \emph{derived local density}
	\begin{equation}\label{eq: def of Den'}
		\mathrm{Den}_{n,h}'(L)\coloneqq \frac{\Den'(I_{n,h}, L)}{\Den(I_{n,h},I_{n,h})}\in \mathbb{Q}.\end{equation}
	Here $I_{n,h}$ is a hermitian lattice with moment matrix $\diag((1)^{n-h},(\pi)^h)$. When the $n$ and $h$ are clear in the context, we also simply denote it as $\Den'(L)$.
	
	Then the naive analogue of the Kudla--Rapoport conjecture for $\cN_n^{[h]}$ should be the identity
	$$\mathrm{Int}_{n,h}(L)\stackrel{?}=\mathrm{Den}_{n,h}'(L).$$ 
	However, as we mentioned before, since $\cN_n^{[h]}$ has bad reduction, some modification for the analytic side is needed. Indeed, a similar consideration as in \cite{HSY3,HLSY} shows that naive analogue cannot be true for trivial reasons. For example, if $h>0$, then  $\Int(L)$ vanishes by definition if $L$ has $I_{n-h+1}$ as a direct summand while $\Den'(L)$ does not vanish for such $L$ by some direct computation. We call an integral $O_F$-lattice $\Lambda\subset \mathbb{V}$  a \emph{vertex lattice of type $t$}  if $\Lambda^\vee/\Lambda$ is a $k$-vector space of dimension $t$. In particular, for a vertex lattice $\Lambda\subset \mathbb{V}$ of type $t<h$,  we have   $\Int(\Lambda)=0$, while $\Den'(\Lambda)\ne0$ in general. 
	
	In order to have $\Int(\Lambda_t)=\pden(\Lambda_t)$ for vertex lattice $\Lambda_t$ of type $t<h$, we define $\pDen(L)$ by modifying $\Den'(L)$ with a linear combination of the (normalized) \emph{local densities}  
	\begin{equation}
		\label{eq: def of den(L)}
		\mathrm{Den}_{n,t}(L)\coloneqq \frac{\Den(\Lambda_t, L)}{\Den(\Lambda_t,\Lambda_t)}\in \mathbb{Z}.
	\end{equation}
	In fact $\bV \not \approx I_{n}^{[h]}\otimes_{O_F}F$. As a result, if $\Lambda_t\subset \bV$, then $t$ and $h$ have different parity.

	\begin{definition}[Definition \ref{definition3.2}]
		Let $L\subset \mathbb{V}$ be an $O_F$-lattice. Define the \emph{modified derived local density} 
		\begin{equation}
			\label{eq: def of pdenL}
			\pDen_{n,h}(L)\coloneqq \mathrm{Den}_{n,h}'(L)+\sum_{i=0}^{\lfloor\frac{(h-1)}{2}\rfloor}c_{n,h-1-2i}\cdot\mathrm{Den}_{n,h-1-2i}(L).
		\end{equation}
		The coefficients $c_{n,i}\in\mathbb{Q}$ here are chosen to satisfy
		\begin{equation} \label{eq:coeff}
			\pDen_{n,h}(\Lambda_{i})=0,\quad \text{ for } 0\le i\le h-1 \text{ and }  i\equiv h-1 \mod{2},
		\end{equation}
		which turns out to be a linear system in $(c_{n,i})$ with a unique solution.
	\end{definition}
	
	Finally, we propose the following Kudla-Rapoport conjecture for $\cN_n^{[h]}$.
	\begin{conjecture}[Conjecture \ref{conj: main section den}, Conjecture \ref{conj: specialized}, Conjecture \ref{conjecture7.8}]\label{conj: main}
		Let $L\subset \mathbb{V}$ be an $O_F$-lattice.  Then we have 
		$$\mathrm{Int}_{n,h}(L)=\pDen_{n,h}(L).$$
	\end{conjecture}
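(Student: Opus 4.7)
The plan is to adapt the induction strategy of Li-Zhang \cite{LZ} from the hyperspecial case to the maximal parahoric setting, with a version of the Tate conjecture for Deligne-Lusztig strata supplying the crucial geometric input needed to handle the bad reduction of $\cN_n^{[h]}$. As a preliminary, I would verify that both $\mathrm{Int}_{n,h}(L)$ and $\pDen_{n,h}(L)$ satisfy the same linear invariance properties (for the geometric side this is Proposition \ref{prop: linear invariance}; for the analytic side it follows from standard invariance properties of local densities), so that it suffices to prove the identity for a convenient family of lattices adapted to the Jordan decomposition of $L$ inside $\bV$, and in particular on vertex lattices where the vanishing conditions \eqref{eq:coeff} already pin down part of the comparison.

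Next, I would exploit the Bruhat-Tits stratification $\cN_{n,\mathrm{red}}^{[h]} = \bigcup_\Lambda \cN_\Lambda$ of the reduced special fiber, indexed by vertex lattices $\Lambda \subset \bV$, where each closed stratum $\cN_\Lambda$ is a Deligne-Lusztig variety for a unitary group whose rank and isogeny type are controlled by the type $t$ of $\Lambda$. The arithmetic intersection $\mathrm{Int}_{n,h}(L)$ then decomposes as a horizontal contribution coming from quasi-canonical lifts plus a vertical contribution expressible as a sum over the strata of intersection multiplicities of the restricted classes $[\cO_{\cZ(x_i)}]|_{\cN_\Lambda}$ in the Chow group of $\cN_\Lambda$. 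Surjectivity of the cycle class map from Chow groups onto Tate classes — which is exactly the Tate-type assumption — would allow me to identify these restricted classes in terms of a manageable distinguished basis, thereby converting the geometric side into a combinatorial sum indexed by the vertex lattices.

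Finally, I would set up a descending induction on the parahoric type in order to reduce the vertical contributions to intersection numbers on auxiliary Rapoport-Zink spaces $\cN_n^{[h']}$ with smaller $h'$, which enjoy better reduction, and match the resulting expression term by term against the modified derived density $\pDen_{n,h}(L)$. The modification coefficients $c_{n,h-1-2i}$ appearing in \eqref{eq: def of pdenL} are uniquely determined by the conditions \eqref{eq:coeff}, and the matching on the arithmetic side forces the geometric contributions of vertex lattices of type $t<h$ to satisfy the parallel vanishing, which is precisely what the Tate-conjecture input delivers. The main obstacle, reflected in the conditional formulation, is the Tate conjecture for these Deligne-Lusztig varieties itself; a secondary technical challenge is to track carefully the multiplicities of non-reduced components in the vertical cycles so that the recursion on the geometric side aligns with the analytic recursion produced by \eqref{eq:coeff}.
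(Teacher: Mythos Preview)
Your proposal has a genuine gap: it omits the Fourier-analytic engine that actually drives the argument. The paper does not decompose $\mathrm{Int}_{n,h}(L)$ directly over Bruhat--Tits strata and then match a combinatorial sum against $\pDen_{n,h}(L)$. Instead, following Li--Zhang, one fixes a rank $n-1$ lattice $L^\flat\subset\bV$, regards both sides as functions $\mathrm{Int}_{L^\flat}(x)$ and $\pDen_{L^\flat}(x)$ of the last variable $x\in\bV\setminus L_F^\flat$, splits each into horizontal and vertical parts, and then shows the difference $\phi=\mathrm{Int}_{L^\flat,\sV}-\pDen_{L^\flat,\sV}$ vanishes via an \emph{uncertainty principle}: one controls $\mathrm{supp}(\phi)$ by induction on $\val(L^\flat)$ and controls $\mathrm{supp}(\widehat\phi)$ by computing both Fourier transforms on $\{\val(x)<0\}$. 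The crucial analytic computation (Theorem \ref{theorem9.1.1}) is that $\widehat{\pDen}_{L^\flat,\sV}(x)$ vanishes for $\val(x)\le -2$ and equals $-q^{-h}D_{n-1,h-1}(L^\flat)$ for $\val(x)=-1$; this is where the inductive relations among Cho--Yamauchi constants (Theorem \ref{theorem5.24}) and the delicate lattice counting of \S\ref{sec: FT} enter, none of which your outline anticipates.

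The role of the Tate-type conjecture is also more specific than you describe. It is not used to identify restricted special cycle classes on each stratum in a distinguished basis; rather, Conjecture \ref{conjecture9.2.1} asserts that the \emph{one-dimensional} vertical cycle ${}^{\BL}\cZ(L^\flat)_{\sV}$ is, at the level of intersection pairings with $\cZ(x)$, a $\BQ$-combination of Deligne--Lusztig curves in $\cN_3^{[0]}$ and projective lines in $\cN_2^{[1]}$ embedded in $\cN_n^{[h]}$. These particular curves satisfy the local modularity $\widehat{\mathrm{Int}}_C(\cZ(x))=-q^{-h}\mathrm{Int}_C(\cY(x))$, which transports the Fourier transform of the geometric side to an intersection with a $\cY$-cycle and hence, via Proposition \ref{proposition2.5}, to an intersection number on $\cN_{n-1}^{[h-1]}$ (not on $\cN_n^{[h']}$ with the same $n$). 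Without this local modularity step your proposed ``descending induction on the parahoric type'' has no mechanism, and without the Fourier transform and uncertainty principle there is no way to close the argument.
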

	
	\begin{remark}
		We remark that $\den_{n,h}'(L)$ is not an integer in general. Only the modified $\pden_{n,h}(L)$ is an integer. However, a priori, this is not clear at all. 
	\end{remark}

	The main purpose of this paper is to prove Conjecture \ref{conj: main} assuming a version of Tate conjectures for certain Deligne-Lusztig varieties.  
	
	\begin{theorem}[Theorem \ref{theorem9.1.3}]\label{thm:main}
		Let $L\subset \mathbb{V}$ be an $O_F$-lattice of rank $n$. Assuming Conjecture \ref{conjecture9.2.1}, we have  
		$$\mathrm{Int}_{n,h}(L)=\pDen_{n,h}(L).$$
	\end{theorem}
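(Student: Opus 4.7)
The strategy adapts the blueprint of Li--Zhang \cite{LZ} to the maximal parahoric level setting, incorporating ideas from the bad-reduction cases treated in \cite{HLSY,LL2}. First, I would verify that both $\Int_{n,h}(L)$ and $\pDen_{n,h}(L)$ depend only on the isometry class of $L$ and are compatible with the standard linear-invariance principles; on the geometric side this is Proposition~\ref{prop: linear invariance}, and on the analytic side it follows from the fact that $\Den(M,L)$ is an isometry invariant of $L$. The argument then proceeds by induction on $n$, with the inductive hypothesis giving the identity for all lattices of rank $<n$ and all maximal parahoric levels $[h']$.

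For the induction step, fix a rank $n-1$ orthogonal decomposition $L=L^\flat\obot\langle x\rangle$ and view each side as a function of $x$. Following the uncertainty/induction-on-dimension method of \cite{LZ}, I would decompose the geometric side as $\Int_{n,h}(L)=\Int_{\Hor}(L)+\Int_{\Ver}(L)$ into horizontal and vertical contributions, supported respectively on the formal curves arising from quasi-canonical lifts and on the basic locus of $\cN_n^{[h]}$. The horizontal part is governed by the intersection of $\cZ(x)$ with the relative divisor cut out by $L^\flat$, and is accessible from rank $\leq 2$ local calculations together with the inductive hypothesis. The modified derived local density admits an analogous decomposition $\pDen_{n,h}(L)=\pDen_{\Hor}(L)+\pDen_{\Ver}(L)$, where the horizontal piece comes from $\Den'_{n,h}(L)$ and the vertical piece bundles the correction terms $\Den_{n,h-1-2i}(L)$; matching the two horizontal parts is the accessible half of the argument.

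The main obstacle, and the sole reason for the hypothesis, lies in computing $\Int_{\Ver}(L)$. After the Bruhat--Tits stratification of the basic locus, each closed stratum of the special fiber of $\cN_n^{[h]}$ admits (after reduction) a description as a Deligne--Lusztig variety $\mathrm{DL}_t$ indexed by a vertex lattice $\Lambda_t\subset\bV$ of type $t$. The vertical contribution decomposes as a sum of pairings of cycle classes on these strata with the derived structure sheaves $\cO_{\cZ(x_i)}$. Conjecture~\ref{conjecture9.2.1} identifies the relevant Tate classes on $\mathrm{DL}_t$ with the images of special $\cZ$-cycles, which converts $\Int_{\Ver}(L)$ into a combination $\sum_t a_{n,h,t}\,\Den_{n,t}(L)$ whose coefficients depend only on $n$, $h$, and $t$ and not on $L$. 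I would then pin down these coefficients by testing the identity $\Int_{n,h}=\pDen_{n,h}$ on vertex lattices $\Lambda_i$ of type $i<h$ with $i\equiv h-1\pmod 2$: on such lattices $\Int_{n,h}(\Lambda_i)=0$ because $L$ contains a summand that is too isotropic for the level structure, while the defining property \eqref{eq:coeff} of the $c_{n,i}$ forces $\pDen_{n,h}(\Lambda_i)=0$; this forces $a_{n,h,h-1-2i}=c_{n,h-1-2i}$ and closes the induction.

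The hardest technical step is controlling the multiplicities with which each $\mathrm{DL}_t$ enters $\Int_{\Ver}$, and ensuring that the Tate classes produced by Conjecture~\ref{conjecture9.2.1} are coupled to the $\cZ(x_i)$ with exactly the normalizations built into $\Den_{n,t}$. This requires a careful analysis of the deformation theory of $\cZ(x)$ transverse to each stratum, together with the duality between $\cN_n^{[h]}$ and $\cN_n^{[n-h]}$ exploited in \cite{Cho} to reduce statements about $\cZ$-cycles on one side to statements about $\cY$-cycles on the other. Once this bookkeeping is complete, the induction yields the identity $\Int_{n,h}(L)=\pDen_{n,h}(L)$ for all $L$ of rank $n$.
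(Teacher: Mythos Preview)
Your proposal has a genuine gap in how Conjecture~\ref{conjecture9.2.1} enters the argument, and it omits the entire Fourier-analytic machinery that actually drives the proof. You propose that the conjecture converts $\Int_{\Ver}(L)$ directly into a combination $\sum_t a_{n,h,t}\,\Den_{n,t}(L)$ with coefficients pinned down by testing on vertex lattices. This is not the mechanism, and there is no known way to express $\Int_\Ver(L)$ as such a combination for general $L$; the correction terms $c_{n,i}\Den_{n,i}(L)$ live entirely on the analytic side in the \emph{definition} of $\pDen_{n,h}$, not in any decomposition of $\Int_\Ver$. Relatedly, your description of the vertical part of $\pDen$ as ``bundling the correction terms'' is wrong: in the paper $\pDen_{L^\flat,\sV}$ is defined as $\pDen_{L^\flat}-\Int_{L^\flat,\sH}$, i.e.\ by subtracting the \emph{geometric} horizontal part computed via Theorem~\ref{thm: hori part}.

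What the paper actually does is this. Conjecture~\ref{conjecture9.2.1} asserts that ${}^{\BL}\CZ(L^\flat)_\sV$ is, in the Grothendieck group, a $\BQ$-linear combination of structure sheaves of \emph{curves}: Deligne--Lusztig curves coming from embeddings $\cN_3^{[0]}\hookrightarrow\cN_n^{[h]}$ and projective lines from $\cN_2^{[1]}\hookrightarrow\cN_n^{[h]}$. On such special curves one has the local modularity relation $\widehat{\Int}_C(\cZ(x))=-q^{-h}\Int_C(\cY(x))$ (by \cite[Lemma~6.3.1]{LZ} and \cite[Theorem~8.1]{ZhiyuZhang}), so summing gives $\widehat{\Int}_{L^\flat,\sV}(x)=-q^{-h}\Int_{L^\flat,\sV}(\cY(x))$. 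For $\val((x,x))\le -2$ the right side vanishes since $\cY(x)=\emptyset$; for $\val((x,x))=-1$, Proposition~\ref{proposition2.5} reduces the right side to an intersection number on $\cN_{n-1}^{[h-1]}$, computable by the inductive hypothesis. The matching analytic statement (Theorem~\ref{theorem9.1.1}) is proved unconditionally via the inductive formulas for the Cho--Yamauchi constants $D_{n,h}$. Thus $\widehat{\phi}(x)=0$ for all $x\perp L^\flat$ with $\val((x,x))<0$, where $\phi=\Int_{L^\flat,\sV}-\pDen_{L^\flat,\sV}$; combined with the support constraint coming from induction on $\val(L^\flat)$ (as in \cite{LZ}), the uncertainty principle forces $\phi\equiv 0$. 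None of this---the Fourier transform, the $\cY$-cycle interpretation, the reduction to $\cN_{n-1}^{[h-1]}$, or the uncertainty principle---appears in your outline.
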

	
	In this paper,  we also verified Conjecture \ref{conjecture9.2.1} for $\cN_{n}^{[1]}$, $ \cN_{n}^{[n-1]}$ and $\cN_{4}^{[2]}$ unconditionally.
	\begin{theorem}[Theorem \ref{theorem9.2.1}, Theorem \ref{theorem9.1.4}]\label{thm: intro uncond}
		Assume $(n,h)$ is one of the following cases: $(n,1),(n,n-1)$ and $(4,2)$. Then Conjecture \ref{conj: main} holds unconditionally. 
	\end{theorem}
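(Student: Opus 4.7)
By Theorem~\ref{thm:main}, for each $(n,h)$ the identity $\Int_{n,h}(L)=\pDen_{n,h}(L)$ reduces to Conjecture~\ref{conjecture9.2.1} for the Rapoport--Zink space $\cN_n^{[h]}$. Thus for Theorem~\ref{thm: intro uncond} it suffices to verify that Tate-type statement unconditionally in the three cases $(n,1)$, $(n,n-1)$ and $(4,2)$. The plan is to do this case by case by analyzing the Deligne--Lusztig varieties arising in the Bruhat--Tits stratification of the reduced special fiber $(\cN_n^{[h]})_{\red}$ and showing that their Tate classes are spanned by classes of $\cZ$-cycles.

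In the almost self-dual case $\cN_n^{[1]}$, the parity constraint $t\equiv h-1\pmod 2$ forces only vertex lattices of even type to contribute, so the stratification is short and the relevant Deligne--Lusztig varieties are attached to low-rank unitary groups. Their $\ell$-adic cohomology is classically known, the required Tate classes can be written down explicitly, and they should be visibly realized by classes of $\cZ$-cycles; in effect this will recover, in the present framework, the hyperspecial-style argument of \cite{LZ}. For the dual extremal case $\cN_n^{[n-1]}$, I would invoke the duality between Rapoport--Zink spaces recalled in \S\ref{sec: weighted conj} and already used in \cite{Cho}: it identifies the Bruhat--Tits stratifications on the two sides while interchanging the roles of $\cZ$- and $\cY$-cycles, and should propagate the verification of Conjecture~\ref{conjecture9.2.1} from $\cN_n^{[1]}$ to $\cN_n^{[n-1]}$.

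The case $\cN_4^{[2]}$ is the main obstacle: the level sits in the middle of the allowed range, no duality reduces it to a simpler case, and the Deligne--Lusztig varieties appearing have genuinely positive dimension. My plan there is to make their $\ell$-adic cohomology explicit via the Deligne--Lusztig character formula, isolate the Frobenius-invariant (Tate) component, and then construct enough algebraic cycles from intersections of the special divisors $\cZ(x_i)$ on $\cN_4^{[2]}$ to span it. Producing and bookkeeping enough $\cZ$-cycle intersections to realize every Tate class in this four-dimensional middle-level situation is expected to be the technical heart of the argument; once Conjecture~\ref{conjecture9.2.1} is checked for $\cN_4^{[2]}$ on this basis, the unconditional statement of Theorem~\ref{thm: intro uncond} follows immediately from Theorem~\ref{thm:main}.
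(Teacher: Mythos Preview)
Your overall architecture --- verify Conjecture~\ref{conjecture9.2.1} case by case and then invoke Theorem~\ref{thm:main} (i.e.\ Theorem~\ref{theorem9.1.3}) --- is correct, and your treatment of $\cN_n^{[n-1]}$ via the duality $\cN_n^{[1]}\cong\cN_n^{[n-1]}$ matches the paper. However, your assessment of where the difficulty lies is essentially inverted, and this leads to a genuine gap in the $\cN_n^{[1]}$ case.

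For $\cN_n^{[1]}$ you claim the Bruhat--Tits stratification is ``short'' and that the Deligne--Lusztig varieties are ``attached to low-rank unitary groups.'' This is false. By Proposition~\ref{proposition6.4.1} (and \eqref{eq9.6}), the strata are $Y_\Lambda$ for vertex lattices of every even type $t(\Lambda)=2d+2\ge 2$, together with projective spaces $\BP^n_\Lambda$ for type $0$; the $Y_\Lambda$ have dimension $d+1$ and are built from the special unitary group in $2d+2$ variables, which can be as large as $n$. The paper has to do real work here: Lemma~\ref{lemma9.6} computes the Frobenius eigenvalues on $H^{2j}(Y_{d,1}^\circ)(j)$ and $H^{2j}(Y_{d-1,2}^\circ)(j)$ using Lusztig's results on Coxeter-type Deligne--Lusztig varieties, and Theorem~\ref{theorem9.7} deduces that the Tate classes on $Y_d$ in degree $2i$ are spanned by the irreducible components of $\widetilde{X}_i$, which in codimension one are projective lines. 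Your proposal offers no substitute for this computation.

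Conversely, you describe $\cN_4^{[2]}$ as ``the main obstacle'' and propose a character-formula analysis of positive-dimensional Deligne--Lusztig varieties. In fact this case is the easiest: by \cite[Theorem~1.1]{cho2018basic} the irreducible components of $\cN^{[2]}_{4,\red}$ are all isomorphic to $\BP^2$, so any curve class is a multiple of a line class in the Chow group of $\BP^2$, and those lines are realized as images of $\BP^1\subset\cN_2^{[1]}\hookrightarrow\cN_4^{[2]}$. No cohomology computation is needed. Finally, note that what Conjecture~\ref{conjecture9.2.1} actually demands is that the class of $\text{}^{\BL}\cZ(L^\flat)_{\sV}$, viewed as a $1$-cycle, is a $\BQ$-combination of the specific special curves coming from embedded $\cN_3^{[0]}$ and $\cN_2^{[1]}$; it is not a statement about Tate classes being spanned by ``$\cZ$-cycles'' in general.
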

	\begin{remark}
		Conjecture \ref{conj: main} for $\cN_{n}^{[0]}$ was the original Kudla-Rapoport conjecture proposed in \cite{KR1} and proved in \cite{LZ}. The case $(n,h)=(n,1)$ was first proved by \cite{LZ} using certain Hecke correspondence that relates $\cN_{n}^{[1]}$ with $\cN_{n+1}$. Our proof is an attempt to prove the general case uniformly in a way closer to the proof for $\cN_{n}^{[0]}$ in \cite{LZ}.
	\end{remark}
	\begin{remark}
		Although    $\cN_{n}^{[1]}\cong \cN_{n}^{[n-1]}$ by a natural duality, $\cZ$-cycles on   $\cN_{n}^{[1]}$  will be transformed into $\cY$-cycles on   $\cN_{n}^{[n-1]}$. Hence, Theorem \ref{thm: intro uncond} for   $\cN_{n}^{[n-1]}$  is different with the case for  $\cN_{n}^{[1]}$. In particular, Theorem \ref{thm: intro uncond} for $\cN_{n}^{[n-1]}$ and $\cN_{4}^{[2]}$ are new.   
	\end{remark}

	We remark that   Conjecture \ref{conj: main} is based on a different viewpoint from the one formulated in \cite{Cho}. The conjecture formulated in \cite{Cho} is inspired by the duality $\cN_{n}^{[h]}\cong \cN_{n}^{[n-h]}$ and is more general in the sense that it also considers the case when the intersection is between $\cZ$-cycles and $\cY$-cycles. However, since our main theorem in this paper is about intersections between $\cZ$-cycles  and the above formulation is closer to previously studied cases e.g. \cite{LZ},\cite{LL2},\cite{LZ2},\cite{HLSY}, we state this new formulation in the introduction. We refer the reader to Conjecture \ref{conjecture5.5}   for the general conjecture formulated in \cite{Cho} and Conjecture \ref{conj: specialized} for the specialized conjecture about the intersection between $\cZ$-cycles. We show that conjectures \ref{conj: main} and \ref{conj: specialized} are in fact equivalent in Proposition \ref{prop: equiv of conjs}, which  is interestingly a nontrivial fact to prove.

	\subsubsection{Strategy and novelty}\label{sec:whats-new} Our general strategy is similar to the strategy of \cite{LZ} using the local modularity and uncertainty principle.   More precisely, fix an $O_F$-lattice $L^\flat\subset \mathbb{V}$ of rank $n-1$ and consider functions on $\mathbb{V}\setminus L^\flat_F$, $$\mathrm{Int}_{L^\flat}(x)\coloneqq \Int(L^\flat+\langle x\rangle),\quad \pDen_{L^\flat}(x)\coloneqq \pDen(L^\flat+\langle x\rangle).$$ We need to show  $\mathrm{Int}_{L^\flat}=\pDen_{L^\flat}$ as functions on $\bV\setminus L_F^\flat$. First, we have a natural decomposition of $\mathrm{Int}_{L^\flat}$ and $\pDen_{L^\flat}$ into horizontal parts and vertical parts: 
	$$\mathrm{Int}_{L^\flat}=\mathrm{Int}_{L^\flat,\sH}+\mathrm{Int}_{L^\flat,\sV},\quad \pDen_{L^\flat}=\pDen_{L^\flat,\sH}+\pDen_{L^\flat,\sV}.$$ 
	For the horizontal parts, we have $\mathrm{Int}_{L^\flat,\sH}=\pDen_{L^\flat,\sH}$ by direct comparison. For the vertical parts,  $\mathrm{Int}_{L^\flat,\sV}$ and $\pDen_{L^\flat,\sV}$ satisfy ``\emph{local modularity}" in the sense that their Fourier-transforms have nice behaviors.
	
	Due to the existence of nontrivial levels, new phenomenons have already shown up for the horizontal part.  We can decompose the horizontal cycles into primitive horizontal cycles indexed by ``horizontal lattices" and essentially reduce the computation to $n=2$, similar to \cite{LZ}. However, first of all,  we have two different types of the ``horizontal lattices" indexing the primitive horizontal cycle. Moreover, the primitive piece indexed by one of them  admits a further decomposition into  mixed special cycles. Here mixed special cycles mean that they are intersections between both $\cZ$-cycles and $\cY$-cycles. We refer the readers to Theorem \ref{thm: hori part} for more details.

	Now we turn our attention to the vertical part. 
	We discuss the geometric side first.  For a curve $C \subset \cN_{n}^{[h]}$, let $\mathrm{Int}_{C}(\cZ(x))\coloneqq\chi(\cN_{n}^{[h]},\Oo_C\otimes^{\bL}\Oo_{\cZ(x)})$ and $\mathrm{Int}_{C}(\cY(x))\coloneqq\chi(\cN_{n}^{[h]},\Oo_C\otimes^{\bL}\Oo_{\cY(x)})$. If  $C$ lies in $\cN_2^{[1]}$ or $\cN_3^{[0]}$ and is embedded into $\cN_{n}^{[h]}$ via an embedding of $\cN_2^{[1]}$ or $\cN_3^{[0]}$ into $\cN_n^{[h]}$, then explicit computation shows that
	\begin{align}\label{eq: local modularity speculation} 
		\widehat{\Int}_{C}(\cZ(x)) =-q^{-h}\cdot  \mathrm{Int}_{C}(\cY(x)), 
	\end{align}
	where $ \widehat{\Int}_{C}(\cZ(x)) $ denotes the Fourier transform of $\mathrm{Int}_C(\cZ(x))$ (see \S \ref{subsec: notation} for more details). We call such a curve $C$ a special curve.   This was first observed in \cite[]{LZ} when $h=0$. In fact, when $h=0$, the more general identity $$\widehat{\Int}_{L^\flat,\mathscr{V}}(\cZ(x)) =-  \mathrm{Int}_{L^\flat,\mathscr{V}}(\cY(x))$$
	was proved in \cite{LZ}, which we call \emph{local modularity} in this case.   When $h\ge 0$ and $C$ is special, \eqref{eq: local modularity speculation}  was observed in \cite{ZhiyuZhang} based on computation of intersection numbers on $\cN_{2}^{[1]}$ established in \cite{San3}. In \cite{ZhiyuZhang}, this local modularity was used to prove arithmetic transfer conjecture in a similar setting. 
	
	Nevertheless, it is by no means clear that this will be true in general. One observation is that if $\mathrm{Int}_{L^\flat,\sV}(x)$ can be written as a linear combination of $\mathrm{Int}_{C}(x)$ for special $C$, then we have
	\begin{align}\label{eq: speculation for FT Int}
		\widehat{\Int}_{L^\flat,\sV}(x)``="\begin{cases}
			-q^{-h}\cdot  \mathrm{Int}_{n-1,h-1,\sV}(L^\flat) &\text{ if $\val(x)=-1$},\\
			0 &\text{ if $\val(x)\le -2$}.
		\end{cases}
	\end{align}

	Although it is not clear whether $\mathrm{Int}_{L^\flat,\sV}(x)$ can be written as a linear combination of $\mathrm{Int}_{C}(x)$ for special $C$, one can always try to test the corresponding speculation on the analytic side, which is a central idea for many of the past works. Indeed, for example, we  recognized that there should exist two different types of horizontal lattices by testing the analytic side first (the horizontal part goes to infinity when $\val(x)$ goes to infinity). Also, it is really the computation of $\widehat{\pDen}_{L^\flat,\sV}(x)$ in the $\cN_4^{[2]}$ case inspires us what we should expect in general for the geometric side as we explained in \S \ref{section4}.
	
	On the other hand, once we have a big picture for the geometric side with the help of explicit computation from the analytic side, the insight from the geometric side also serves as a guiding principle to obtain purely analytic results. For example, \cite[Proposition 7.5]{HLSY} is inspired by the fact that $\cZ(L)$ is empty for non-integral $L$, and the proof of \cite[Proposition 7.5]{HLSY}  gives important hints for how to prove the rest major analytic results of \cite{HLSY}.  This is also what happened in the current case.

	Indeed, as one of the main results, we manage to prove an analytic analogue of \eqref{eq: speculation for FT Int} unconditionally. More precisely, we prove $\pDen_{L^\flat,\sV}(x)$ can be extended to $\bV$ as a locally constant function and
	\begin{align}\label{eq: FT of pden}
		\widehat{\pden}_{L^\flat,\sV}(x)=\begin{cases} 
			-q^{-h}\cdot   \mathrm{Int}_{n-1,h-1,\sV}(L^\flat) &\text{ if $\val(x)=-1$},\\
			0 &\text{ if $\val(x)\le -2$}.
		\end{cases}
	\end{align}
	As a result, assuming \eqref{eq: speculation for FT Int},  for any $x$ with $\val(x)<0$, we have 
	\begin{align*}
		\widehat{\Int}_{L^\flat}(x)-\widehat{\pden}_{L^\flat}(x)=\widehat{\Int}_{L^\flat,\sV}(x)-\widehat{\pden}_{L^\flat,\sV}(x)=0.
	\end{align*}
	Then the identity $\Int_{L^\flat}(x)-\pDen_{L^\flat}(x)=0$  follows from the uncertainty principle as in \cite{LZ}.
	
	We remark that \eqref{eq: FT of pden} suggests that  $\mathrm{Int}_{L^\flat,\sV}(x)$ might be written as a linear combination of $\mathrm{Int}_{C}(x)$ for special curve $C$. When $h=0$,  this is indeed the case and was proved in \cite[Corollary 5.3.3]{LZ}. Therefore we propose this statement as a conjecture in Conjecture \ref{conjecture9.2.1}.

	In order to establish \eqref{eq: FT of pden}, we make use of the \emph{primitive decomposition} of the local density polynomial into primitive local density polynomials and obtain a decomposition of  $\pDen(L)$:
	\begin{equation}
		\label{eq:decompostionP}\pDen(L)=\sum_{L\subset L'}\pPden(L'),
	\end{equation}
	where $L'$ runs over $O_F$-lattices in $L_F$ containing $L$, and the symbol $\Pden$ stands for the primitive version of $\Den$ (see \eqref{eq:prim local density M,L}). The key reason to consider this primitive decomposition and $\ppden(L)$ is that we usually have a very simple formula for $\ppden(L)$ which eventually makes the computation about $\pden(L)$ possible. This is the case for all the previously proved Kudla-Rapoport type formulas, e.g. \cite{LZ},\cite{LL2},\cite{LZ2}, and \cite{HLSY}. 
	
	Although a similar approach to compute $\ppden(L)$ as in \cite{HLSY} may be directly generalized to the current situation, we decide to generalize the main result of \cite{Cho3} to obtain an explicit formula of $\ppden(L)$ (see Proposition \ref{proposition5.16}). One of the reasons we choose this approach is due to the fact that Proposition \ref{proposition5.16} itself is already a certain induction formula that reduces the computation of $\ppden(L)$ to the good reduction case (see Remark \ref{rem: ind formula}), which is particularly handy in order to prove Theorem \ref{thm: ind formula for Pden} below.  
	
	One of the major new phenomenons and difficulties  we found and overcame in this paper is the fact that when there is a non-trivial level structure, $\ppden(L)$ \emph{no longer} has a simple formula in general. This can be seen via some explicit computation using Proposition \ref{proposition5.16}.   
	Nevertheless, inspired by an attempt to compute the Fourier transform of $\pDen_{L^\flat,\sV}(x)$ (e.g. to obtain \eqref{eq: FT of pden}),  we find simple inductive formulas for $\ppden(L)$ which suffices to control the Fourier transform of $\pDen_{L^\flat,\sV}(x)$. 
	In fact, such inductive formulas also hold in all the previously studied cases and the simple formulas for $\ppden(L)$ in these cases follow as a direct corollary.  
	
	More precisely, let $t_i(L)$ and $t_{\ge i}(L)$ be the number of the fundamental invariants  of $L$ that is exactly $i$ and at least $i$ respectively.  Then we have the following. 
	\begin{theorem}[Theorem \ref{theorem5.24}]\label{thm: ind formula for Pden}  
		Let $L$ be a hermitian lattice with $\val(L) \not\equiv h \mod 2$. We have  $\ppden_{n,h}(L)$ depends only on $(t_{\ge 2}(L),t_1(L),t_0(L))$. For simplicity, we denote it as $D_{n,h}(t_2 ,t_1 ,t_0 )$. Moreover, assume that $(t_{2}-1,t_1+1,t_0) \neq (n-h,h,0)$, $t_0 \leq n-h$ and $t_2\ge 1$, then we have   
		\begin{align*}
			&D_{n,h}(t_2,t_1,t_0)-D_{n,h}(t_2-1,t_1+1,t_0)
			=-(-q)^{2n-h-1-t_1-2t_0}D_{n-1,h-1}(t_2-1,t_1,t_0).
		\end{align*}
	\end{theorem}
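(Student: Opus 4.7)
The strategy is to derive both parts of the theorem as consequences of the explicit formula for $\ppden_{n,h}(L)$ established in Proposition \ref{proposition5.16}, which reduces the computation of the primitive derived local density to quantities attached to the associated ``good-reduction'' hermitian lattices. Since the paper explicitly frames Proposition \ref{proposition5.16} as an induction formula lowering everything to the unramified case, the present theorem should be essentially a bookkeeping consequence of that formula, so the main task is to pin down what cancels and what survives on passing from $(t_2, t_1, t_0)$ to $(t_2 - 1, t_1 + 1, t_0)$.

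For the invariance claim, I would observe that the formula in Proposition \ref{proposition5.16} depends on $L$ only through the decomposition $L = L_0 \oplus L_1 \oplus L_{\ge 2}$ according to fundamental invariants, and that the $L_{\ge 2}$-part enters purely through its rank $t_{\ge 2}$: primitive embedding constraints are blind to whether a fundamental invariant is $2$, $3$, or larger, because such vectors sit inside the $\pi^2$-divisible part and contribute uniformly to the relevant counts modulo the pertinent $\pi$-adic reductions. Together with the parity constraint $\val(L) \not\equiv h \pmod{2}$, which pins down the hermitian isomorphism class of the generic fibre $L \otimes_{O_F} F$, this yields a well-defined function $D_{n,h}(t_2, t_1, t_0)$.

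For the inductive identity, I would apply Proposition \ref{proposition5.16} side-by-side to two lattices $L$ and $L'$ which differ only in a single $\langle \pi^a \rangle$ summand with $a \ge 2$ being replaced by $\langle \pi \rangle$. Most contributions in the two applications agree; the residual difference is concentrated on those primitive embeddings that detect the newly demoted invariant. Matching this residue with $D_{n-1, h-1}(t_2 - 1, t_1, t_0)$ reflects three simultaneous changes: the rank drops by one as the affected factor is absorbed, the level shifts from $h$ to $h - 1$ since $\val(L)$ moves by one and the parity condition is preserved, and a normalization $-(-q)^{2n - h - 1 - t_1 - 2t_0}$ emerges from the $q$-expansion appearing in Proposition \ref{proposition5.16}. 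The excluded case $(t_2 - 1, t_1 + 1, t_0) = (n-h, h, 0)$ and the bounds $t_0 \le n-h$, $t_2 \ge 1$ precisely rule out the boundary situations in Proposition \ref{proposition5.16} where the primitive density either vanishes or satisfies a distinct recursion; I would verify this exclusion as a first step before the main computation.

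The main obstacle is the exponent bookkeeping: identifying $2n - h - 1 - t_1 - 2t_0$ as the correct power of $-q$, and nailing down the sign, require a careful tracking of how the primitive density polynomial renormalizes under the simultaneous drop in rank and level. I would calibrate this by first verifying the identity in the minimal case $t_2 = 1$, where Proposition \ref{proposition5.16} reduces $D_{n,h}$ directly to a good-reduction local density and both sides are genuinely computable; once the exponent and sign are pinned down there, extending to general $t_2 \ge 2$ proceeds by iterating the relation. Cross-checking against the known special cases (for instance, the previously studied unramified $h = 0$ situation in \cite{LZ} and the ramified Kr\"amer case in \cite{HLSY}, in both of which the simple formulas for $\ppden$ should reappear as corollaries of the analogous recursion) provides a useful consistency test for the final constants.
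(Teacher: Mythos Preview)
Your identification of Proposition \ref{proposition5.16} as the key input is correct, and the high-level plan---apply it to $(t_2, t_1, t_0)$ and $(t_2-1, t_1+1, t_0)$ and match the difference against $D_{n-1,h-1}$---is exactly what the paper does. The invariance claim is also immediate from Proposition \ref{proposition5.16}, which already records that $D_{n,h}(\lambda)$ depends only on $t_0(\lambda)$, $t_1(\lambda)$, $n$, and the parity of $|\lambda|$.

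However, your proposed execution for the inductive identity has a gap. The plan to ``calibrate at $t_2 = 1$ and then extend to $t_2 \ge 2$ by iterating the relation'' is circular: iterating the very identity you are trying to prove cannot establish it, and there is no separate induction on $t_2$ available that does not already presuppose the theorem. The paper does not argue this way. Instead, writing $D_{n,h}(a,b,c) = \sum_{s} \sum_{i} M_{n,h}(a,b,c,i,s)$ as in Definition \ref{definition5.19}, the paper first evaluates the inner sum $\sum_{i=s}^{\min(s+c,h)} M_{n,h}(a,b,c,i,s)$ in closed form (Lemma \ref{lemma5.21}, via an auxiliary telescoping identity proved in Lemma \ref{lemma5.22.1}). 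The resulting expression depends on $(a,b)$ only through an elementary product, so the difference for $(a,b,c)$ versus $(a-1,b+1,c)$ collapses termwise in $s$ to exactly $-(-q)^{2n-h-1-b-2c}$ times the corresponding $(n-1,h-1)$ inner sum with $s$ shifted to $s-1$ (Lemma \ref{lemma5.23}). Summing over $s$ then yields the theorem for all $t_2 \ge 1$ simultaneously, with some care about the $s$-range boundary when $b < h$ versus $b \ge h$. Your intuition that ``most contributions agree'' is borne out---the $s=0$ term is independent of $(a,b)$ and cancels---but the matching for $s \ge 1$ genuinely requires the closed form of Lemma \ref{lemma5.21}, not a reduction to a base case.
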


	The above inductive formula is complemented by the  following simple formulas for special $D_{n,h}(t_2,t_1,t_0)$.
	\begin{theorem}\label{thm: complement formula}
		With the same notations and assumptions as in Theorem \ref{thm: ind formula for Pden}, we have the following.
		\begin{enumerate}
			\item (Lemma \ref{lemma7.5}, Proposition \ref{prop: vanishing of D(lambda)})  If $t_0 > n-h$, then $D_{n,h}(t_2,t_1,t_0)=0$.
			
			\item (Theorem \ref{theorem5.31} (1)) If $t_2=0$ and $h+1 \leq t_1$. Then, we have
			\begin{equation*}
				D_{n,h}(0,t_1,t_0)=\dfrac{\prod_{l=h+1}^{t_1}(1-(-q)^l)}{(1-(-q)^{t_1-h})}.
			\end{equation*}
			
			\item  (Theorem \ref{theorem5.31} (2)) If $t_2=1$ and $h-1 \leq t_1$. Then, we have
			\begin{equation*}
				D_{n,h}(1,t_1,t_0)=\left\lbrace \begin{array}{ll}
					1 & \text{ if } t_1=h-1, h;\\
					\prod_{l=h+1}^{t_1}(1-(-q)^l) & \text{ if } t_1 \geq h+1.
				\end{array}\right.
			\end{equation*}
		\end{enumerate}
	\end{theorem}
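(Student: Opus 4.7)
The plan is to handle the three parts in order, with parts~(2) and~(3) tied together via the inductive formula of Theorem \ref{thm: ind formula for Pden}. For part~(1), the vanishing is structural: $\ppden_{n,h}(L)$ counts (with the appropriate normalization) primitive embeddings of $L$ into the type-$I_{n,h}$ lattice, whose only $n-h$ unit-valuation directions cannot accommodate $t_0(L) > n-h$ unit summands of $L$. I would deduce this formally from Lemma \ref{lemma7.5} together with Proposition \ref{prop: vanishing of D(lambda)}, both established earlier in the paper, which encode exactly this obstruction in the correct generality.

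For part~(2), under $t_2=0$ the lattice $L$ has all fundamental invariants in $\{0,1\}$, so $L$ lies close to the good-reduction regime. I would apply Proposition \ref{proposition5.16}, the explicit formula generalizing \cite{Cho3}, whose iteration reduces $\ppden_{n,h}(L)$ to good-reduction densities. The $t_0$ unit summands contribute a uniform factor matching the normalization, while the $t_1$ summands of valuation~$1$ produce a telescoping product that collapses into the claimed ratio $\prod_{l=h+1}^{t_1}(1-(-q)^l)/(1-(-q)^{t_1-h})$; the key algebraic step is recognizing the alternating sum as a geometric-series-like collapse.

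For part~(3) in the main range $t_1 \geq h+1$, I would apply Theorem \ref{thm: ind formula for Pden} with $t_2=1$:
$$D_{n,h}(1, t_1, t_0) = D_{n,h}(0, t_1+1, t_0) - (-q)^{2n-h-1-t_1-2t_0}\, D_{n-1,h-1}(0, t_1, t_0).$$
The rank constraint $1+t_1+t_0=n$ simplifies the exponent to $t_1-h+1$. Substituting the part-(2) formulas for both $D_{n,h}(0,t_1+1,t_0)$ and $D_{n-1,h-1}(0,t_1,t_0)$ reduces the computation to verifying the identity
$$(1-(-q)^{t_1+1}) - (-q)^{t_1-h+1}(1-(-q)^h) = 1-(-q)^{t_1-h+1},$$
after which the denominators $1-(-q)^{t_1-h+1}$ cancel and the expression collapses to $\prod_{l=h+1}^{t_1}(1-(-q)^l)$. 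The boundary cases $t_1 \in \{h-1, h\}$ fall outside the range of part~(2), so I would compute $D_{n,h}(1, t_1, t_0)$ for them directly from Proposition \ref{proposition5.16}, where the induction truncates after a single step and yields the constant~$1$.

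The main obstacle is part~(2): unwinding Proposition \ref{proposition5.16} into the telescoping form requires careful bookkeeping of the primitive decomposition and confirming that the alternating sum in $(-q)$ closes into the claimed rational expression. Once part~(2) is settled, part~(3) reduces to pure algebra via the inductive formula, and part~(1) follows directly from the cited vanishing results.
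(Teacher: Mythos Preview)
Your reduction of part~(3) to part~(2) via Theorem~\ref{thm: ind formula for Pden} is correct and is a genuinely different route from the paper. The paper does \emph{not} derive part~(3) from part~(2); instead it proves both simultaneously (Theorem~\ref{theorem5.31}) by a single direct computation valid for $a\in\{0,1\}$. Concretely, the paper uses Lemma~\ref{lemma5.21} to collapse the inner sum over~$i$ in Definition~\ref{definition5.19}(3), obtaining for $a=0,1$ the expression
\[
D_{n,h}(a,b,c)=(\text{prefactor})\cdot\sum_{s=0}^{\min(h,b)}\frac{(-1)^s(-q)^{\frac{s}{2}-\frac{s^2}{2}}\prod_{l=1}^{n-c-s+a-1}(1-(-q)^{-l})}{\prod_{l=1}^{s}(1-(-q)^{-l})\prod_{l=1}^{h-s}(1-(-q)^{-l})},
\]
and then evaluates this sum via an auxiliary two-parameter family $\omega_{k,t},\tau_{k,t}$ and a pair of coupled recursions. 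Your inductive shortcut for part~(3) is cleaner in the range $t_1\geq h$ (your algebra there is right, and in fact the same identity also covers $t_1=h$, not just $t_1\geq h+1$), but it buys you nothing for part~(2), which you must still prove directly.

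This is where your proposal has a gap. For part~(2) you say you will ``apply Proposition~\ref{proposition5.16}, whose iteration reduces $\ppden_{n,h}(L)$ to good-reduction densities'' and that ``the alternating sum \ldots collapses.'' But Proposition~\ref{proposition5.16} is not iterative in any useful sense here; it gives $D_{n,h}(0,b,c)$ as a double sum over $(i,s)$ with $0\leq s\leq i\leq h$, and the collapse is neither a simple telescoping nor a geometric series. The actual work is the identity~\eqref{eq5.26.2}, whose proof in the paper requires introducing the $\omega_{k,t},\tau_{k,t}$ and establishing two interlocking recursions~\eqref{eq5.27.2},~\eqref{eq5.28.2} by induction on~$k$. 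Your sketch does not identify this step. Similarly, the boundary case $t_1=h-1$ in part~(3) is excluded from Theorem~\ref{thm: ind formula for Pden} (the hypothesis $(a-1,b+1,c)\neq(0,h,n-h)$ fails exactly there), and the direct computation you allude to is again a nontrivial sum identity, not a one-step truncation.
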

	Theorems \ref{thm: ind formula for Pden} and \ref{thm: complement formula} will be proved in \S \ref{sec: ind formula} using the formula for $\ppden(L)$ obtained in \cite{Cho3}. As we have remarked, the method developed in \cite{HLSY} may also be adapted to the current case. However, we find that using the formulas in \cite{Cho3} is easier for our purpose so we stick with this approach. Note that the formulas in \cite{Cho3} are derived via a duality between weighted local densities in analogy with the duality between $\cN_{n}^{[h]}$ and $\cN_{n}^{[n-h]}$, and this might be a particular reason for why this formula so applicable in the current case.

	With the help of formulas for $\pPden(L)$, we finally prove \eqref{eq: FT of pden} via certain involved weighted lattice counting in \S\ref{sec: FT} via a similar method as in \cite{LZ2,HLSY}. However, since $\ppden(L)$ now depends on $t_{\geq 2}(L),t_1(L)$ and $t_0(L)$, the counting becomes much more involved compared with the ones  in \cite{LZ2,HLSY}.

	\subsection{Notation and terminology}\label{subsec: notation}
	\begin{itemize}
		\item
		Let $p$ be an odd prime. Let $F_0$ be a finite unramified extension of $\mathbb{Q}_p$ with residue field $k=\mathbb{F}_q$. Let $F$ be the unramified quadratic extension of $F_0$. Let $\pi$ be a uniformizer of $F$ and $F_0$.   Let $\breve F$ be the completion of the maximal unramified extension of $F$. Let $ O_F, \OFb$ be the ring of integers of $F,\Fb$ respectively.
		
		\item We say a sublattice  of a hermitian space is non-degenerate if the restriction of the hermitian form to it is non-degenerate.
		
		\item
		In this paper, without explicit mentioning a lattice means a non-degenerate hermitian $O_F$-lattice. Unless otherwise stated, the symbol $L$ always means a non-degenerate lattice of rank $n$ with a hermitian form $(\, , \,)$.

		\item We define $L^{\vee}$ to be the dual lattice of $L$ with respect to the hermitian form $(\, , \,)$. If $L\subset L^\vee$, we say $L$ is integral. If $L \subset L^\vee \subset \pi^{-1}L$, we say $L$ is a vertex lattice. 
		
		\item 
		We say that a basis $\left\{\ell_{1}, \ldots, \ell_{n}\right\}$ of $L$ is a normal basis (which always exists) if its moment matrix $T= ( (\ell_{i}, \ell_{j} ) )_{1\le i, j\le n}$ is
		\begin{align*} 
			\diag(\pi^{\alpha_1},\ldots,\pi^{\alpha_n})
		\end{align*}
		where $\alpha_{1}, \ldots, \alpha_{n} \in \mathbb{Z}$. Moreover, we define its fundamental invariants $(a_1,\cdots,a_n)$ to be the unique nondecreasing rearrangement of $(\alpha_{1}, \ldots, \alpha_{n})$. 
		
		\item We define the valuation of $L$ to be $\mathrm{val}(L)\coloneqq\sum_{i=1}^{n}a_i$, where $(a_1,\cdots,a_n)$ are the fundamental invariants of $L$.  For $x\in L$, we define $\val(x)=\val((x,x))$, where $\val(\pi)=1$.  
		
		\item We call a sublattice $N\subset M$ primitive in $M$ if $\mathrm{dim}_{\mathbb F_\q}\overline{N}=r(N)$, where $\overline{N}=(N+\pi M)/\pi M$. We also use $\overline{L}$ to denote $L\otimes_{O_F} O_F/(\pi).$
		
		\item We use $I_{m}$ to denote a unimodular lattice of rank $m$, and $I_{n,h}$ to denote a hermitian lattice with moment matrix $\diag((1)^{n-h},(\pi)^{h})$.

		\item For a hermitian space $\bV$, we let $\bV^{? i}\coloneqq \{x\in \bV\mid \val(x)? i\}$ where $?$ can be $\ge$, $\leq$ or $=$.

		\item Fix an unramified additive character $\psi: F_0 \rightarrow \mathbb{C}^{\times}$. Here "unramifiedness" means that the conductor of $\psi$ (i.e., the largest fractional ideal in $F_0$ on which $\psi$ is trivial) is $O_{F_0}$. For an integrable function $f$ on $\mathbb{V}$, we define its Fourier transform $\widehat{f}$ to be 
		$$
		\widehat{f}(x)\coloneqq \int_{\mathbb{V}} f(y) \psi (\operatorname{tr}_{F / F_0}(x, y) ) \mathrm{d} y, \quad x \in \mathbb{V} .
		$$
		We normalize the Haar measure on $\mathbb{V}$ to be self-dual, so $\hat{f}(x)=f(-x)$. For an $O_F$-lattice $L \subset \mathbb{V}$ of rank $n$, we have (under the assumption that $F / F_0$ is unramified)
		$$
		\widehat{\mathbf{1}}_L=\operatorname{vol}(L) \mathbf{1}_{L^{\vee}}, \quad \text { and } \quad \operatorname{vol}(L)=\left[L^{\vee}: L\right]^{-1 / 2}=q^{-\operatorname{\val}(L)} .
		$$
		Note that $\operatorname{\val}(L)$ can be defined for any lattice $L$ (not necessarily integral) so that the above equality for $\operatorname{vol}(L)$ holds.
	\end{itemize}

	\subsection{Acknowledgement} We would like to thank Sungmun Cho, Benjamin Howard, Chao Li, Yifeng Liu, Michael Rapoport, Tonghai Yang and Wei Zhang for helpful discussions or comments. Part of the work was done while Q. H. and Z. Z. were participating in the ``Algebraic Cycles, $L$-values, and Euler Systems" program held in MSRI during Spring 2023. We would like to thank MSRI for the excellent work conditions, financial support, and hospitality. S.C. was supported by POSTECH Basic Science Research Institute under the NRF grant number NRF2021R1A6A1A1004294412.

	\section{Rapoport-Zink space and special cycles}

	\subsection{Rapoport-Zink space}
	In this section, we review the definition and basic properties of Rapoport-Zink space and special cycles. 
	
	\begin{definition}\label{def: hermitian formal module}
		For any $\Spf O_{\breve F}$-scheme $S$, \emph{a hermitian formal $O_F$-moodule} $(X, \iota, \lambda)$ of   signature $(1, n-1)$ and type $h$ over $S$ is the following data:
		\begin{itemize}
			\item
			$X$ is a strict formal $O_{F_0}$-module over $S$ of relative height $2n$ and dimension $n$. Strictness means the induced action of $O_{F_0}$ on $\Lie X$ is via the structure morphism $O_{F_0} \to \CO_S$. 
			\item $\iota: O_F \to \End(X)$ is an action of $O_F$ on $X$ that extends the action of $O_{F_0}$. We require that the \emph{Kottwitz condition} of signature $(1, n-1)$ holds for all $a \in O_{F} $:
			\begin{equation}\label{eq: Kottwitz}
				\mathrm{char} (\iota(a)\mid \Lie X) = (T-a)(T-\overline{a})^{n-1} \in \CO_S[T].
			\end{equation} 
			\item $\lambda$ is a polarization on $X$, which is $O_F/O_{F_0}$ semi-linear in the sense that the Rosati involution $\mathrm{Ros}_\lambda$ induces the non-hrivial involution on $\iota: O_{F} \to \End(X)$.
			\item We require that the finite flat group scheme $\mathrm{Ker}\, \lambda$ over $S$ lies in $X[\pi]$ and is of order $q^{2h}$.
		\end{itemize}
	\end{definition}
	An isomorphism $(X_1, \iota_1, \lambda_1) \overset{\sim}{\longrightarrow} (X_2, \iota_2, \lambda_2)$ between two such triples is an $O_{F}$-linear isomorphism $\varphi\colon X_1 \overset{\sim}{\longrightarrow} X_2$ such that $\varphi^*(\lambda_2)=\lambda_1$.   Up to $O_F$-linear quasi-isogeny compatible with the polarization, there exists a unique such triple $(\BX, \iota_{\BX}, \lambda_{\BX})$ over $\BF$. Fix one choice of $(\BX, \iota_{\BX}, \lambda_{\BX})$ as the {\em framing object}.  

	\begin{definition}\label{defn: local RZ space}
		Let (Nilp) be the category of $O_{F}$-schemes $S$ such that $\pi$ is locally nilpotent on $S$.   Then the Rapoport--Zink space associated with $(\BX, \iota_{\BX}, \lambda_{\BX})$ is the functor 
		$$\CN_{(\BX, \iota_{\BX}, \lambda_{\BX})}=\CN_n^{[h]} \to \Spf O_{\breve F} $$
		sending $S\in$ (Nilp) to the set of isomorphism classes of tuples $(X, \iota, \lambda, \rho)$, where
		\begin{itemize}
			\item $(X, \iota, \lambda)$ is a hermitian formal $O_F$-module of dimension $n$ and type $h$ over $S$;
			\item $\rho: X \times_{S} {\overline{S} } \to \BX \times_\BF \overline{S} $ is an $O_F$-linear quasi-isogeny of height $0$ over the reduction $\overline{S} \coloneqq S \times_{O_{\breve F_0}} \mathbb F$ such that $\rho^*(\lambda_{\BX, \overline{S} }) = \lambda_{\overline{S} }$.
		\end{itemize}
	\end{definition}
	The functor $\mathcal{N}_n^{[h]}$ is representable by a formal scheme over $\operatorname{Spf} O_{\breve{F}}$ which is locally formally of finite type by \cite{RZ}.  Moreover, this formal scheme is regular (see \cite[Proposition 3.33]{cho2018basic}).  We often simply denote $\CN_n^{[h]}$ as $\cN$ if the signature and type are clear in the context.

	There is an isomorphism $\theta: \CN_n^{[h]}\stackrel{\sim}{\to}\cN_n^{[n-h]}$ constructed as follows (\cite[Remark 5.2]{cho2018basic}). For each $S\in$ (Nilp), 
	\begin{align*}
		\CN_n^{[h]}(S)&\stackrel{\theta}{\to} \cN_n^{[n-h]}(S),\\
		(X, i_X, \lambda_X, \rho_X)&\mapsto   (X^{\vee}, \bar{i}_X^{\vee}, \lambda_X^{\prime}, (\rho_X^{\vee} )^{-1}),
	\end{align*}
	where $\lambda_X^{\prime}: X^{\vee} \rightarrow X$ is the unique polarization such that $\lambda_X^{\prime} \circ \lambda_X=$ $i_X(\pi)$, and for $a \in O_{F}$, the action $\bar{i}_X^{\vee}$ is defined as $\bar{i}_X^{\vee}(a)\coloneqq i_X (a^* )^{\vee}$.
	When we denote $\CN_n^{[h]}$ as $\cN$, we use $\cN^\vee$ to denote $\cN_n^{[n-h]}$.

	\subsection{Special cycles}\label{subsec: special cycles}
	To define special cycles, we need to fix a hermitian formal $O_F$-moodule $ (\overline{\mathbb{E}}, i_{\overline{\mathbb{E}}}, \lambda_{\overline{\mathbb{E}}} )$ of signature $(0,1)$ and type $0$ over $\mathbb{F}$. Then we can similarly define a Rapoport-Zink Space $\cN_{(\overline{\mathbb{E}}, i_{\overline{\mathbb{E}}}, \lambda_{\overline{\mathbb{E}}})}$ which we denote as $\cN^0$ for simplicity. Recall that there is a unique lifting $(\overline{\cE},\iota_{\overline{\cE}},\lambda_{\overline{\cE}})$ of $ (\overline{\mathbb{E}}, i_{\overline{\mathbb{E}}}, \lambda_{\overline{\mathbb{E}}} )$ over $O_{\breve{F}}$.
	
	\begin{definition}
		We define the space of special homomorphism to be the $F$-vector space 
		\begin{align*}
			\bV\coloneqq \Hom_{O_F}(\overline{\BE},\bX)\otimes_{\Z} \BQ.
		\end{align*}
		We can associate $\bV$ with a naturally defined hermitian form as follows. For $x, y \in \mathbb{V}$, we define a hermitian form $h$ on $\mathbb{V}$ as
		$$
		(x, y)=\lambda_{\overline{\mathbb{E}}}^{-1} \circ y^{\vee} \circ \lambda_{\mathbb{X}} \circ x \in \operatorname{End}_{O_F}(\overline{\mathbb{E}}) \otimes \mathbb{Q} \stackrel{i_{\mathbb{\mathbb { X }}}^{\simeq}}{\simeq} F.
		$$
		We often omit $i_{\overline{\mathbb{E}}}^{-1}$ via the identification $\operatorname{End}_{O_{F}}(\overline{\mathbb{E}}) \otimes \mathbb{Q} \simeq F$.
	\end{definition}

	\begin{definition}\cite[Definition 3.2]{KR1}, \cite[Definition 5.4]{cho2018basic}\hfill
		\begin{enumerate}
			\item 
			For $x \in \mathbb{V}$, we define the special cycle $\mathcal{Z}(x)$ to be the closed formal subscheme of $\mathcal{N}^0 \times \mathcal{N}$ with the following property: For each $O_F$-scheme $S$ such that $\pi$ is locally nilpotent, $\mathcal{Z}(x)(S)$ is the set of all points $\xi= (\overline{\cE}_S,\iota_{\overline{\cE}_S},\lambda_{\overline{\cE}_S}, X, i_X, \lambda_X, \rho_X )$ in $ \mathcal{N}^0 \times \mathcal{N} )(S)$ such that the quasi-homomorphism
			$$
			\rho_X^{-1} \circ x \circ \rho_{\overline{\cE}_S}: \overline{\cE}_S \times_S \overline{S} \rightarrow X \times_S \overline{S}
			$$
			extends to a homomorphism from $\overline{\mathcal{E}}_S$ to $X$.
			\item For $y \in \mathbb{V}$, we define the special cycle $\mathcal{Y}(y)$ in $\mathcal{N}^0 \times \mathcal{N}$ as follows. First, consider the special cycle $\mathcal{Z} (\lambda_{\mathbb{X}} \circ y )$ in $\mathcal{N}^0 \times \cN^\vee$. This is the closed formal subscheme of $\mathcal{N}^0 \times \cN^\vee$. We define $\mathcal{Y}(y)$ as $ (i d \times \theta^{-1} ) (\mathcal{Z} (\lambda_{\mathbb{X}} \circ y ) )$ in $\mathcal{N}^0 \times \mathcal{N}$
		\end{enumerate}
	\end{definition}
	
	Note that $\mathcal{N}^0$ can be identified with $\operatorname{Spf} O_{\breve{F}}$, and hence $\mathcal{Z}(x), \mathcal{Y}(y)$ can be identified with closed formal subschemes of $\mathcal{N}$.

	The same proof of \cite[Proposition 5.9]{KR2} gives us the following.
	\begin{proposition}\cite[Proposition 5.9]{KR2} The functors $\mathcal{Z}(x)$ and $\mathcal{Y}(y)$ are representable by Cartier divisors of $\mathcal{N}$.
	\end{proposition}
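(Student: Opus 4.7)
The strategy is to adapt the argument of \cite[Proposition 5.9]{KR2} to the parahoric level setting, treating $\cZ(x)$ first and then deducing the $\cY(y)$ case from it by duality. For $\cZ(x)$, I would first establish representability as a closed formal subscheme of $\cN^0 \times \cN$. Given $S \in$ (Nilp) and a point $\xi = (\overline{\cE}_S, X, \ldots)$ over $S$, the rigidity of quasi-isogenies (Drinfeld rigidity / Grothendieck--Messing) implies that the locus in $S$ where the quasi-homomorphism $\rho_X^{-1}\circ x \circ \rho_{\overline{\cE}_S}$ deforms to a genuine homomorphism of hermitian formal $O_F$-modules is a closed subscheme. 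I would work locally at a geometric point using the display-theoretic or crystalline description of the universal deformation of the hermitian formal $O_F$-module of type $h$ (as set up in \cite{cho2018basic}), so that the deformation locus is cut out by the vanishing of the universal obstruction class for lifting $x$.

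To upgrade this closed subscheme to a Cartier divisor I would show that the obstruction is given locally by a single equation $f_x$. The Kottwitz condition \eqref{eq: Kottwitz} of signature $(1,n-1)$ forces $\mathrm{Lie}(X)$ to have a locally free rank-$1$ summand on which $O_F$ acts through the non-structural embedding; correspondingly the Hodge filtration governing the deformations of $X$ has a rank-$1$ quotient of the relevant crystal, and the obstruction to extending $x$ lives in this rank-$1$ piece. This produces the desired local defining equation $f_x$. The non-zero-divisor property then follows from the regularity of $\cN^0 \times \cN$, which is already recorded via \cite[Proposition 3.33]{cho2018basic}, together with the fact that $\cZ(x) \neq \cN^0 \times \cN$, verified by exhibiting at least one closed point where $\rho_X^{-1} \circ x \circ \rho_{\overline{\cE}_S}$ fails to extend (e.g.\ by choosing the framing so that the denominator of the quasi-isogeny obstructs extension).

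For $\cY(y)$, the duality isomorphism $\theta\colon \cN_n^{[h]} \xrightarrow{\sim} \cN_n^{[n-h]}$ reduces the claim directly to the $\cZ$-case on the dual side: by definition $\cY(y) = (\mathrm{id} \times \theta^{-1})(\cZ(\lambda_\bX \circ y))$, where $\cZ(\lambda_\bX \circ y) \subset \cN^0 \times \cN^\vee$ is a Cartier divisor by the argument above applied with type $n-h$ in place of $h$. Since $\mathrm{id} \times \theta^{-1}$ is an isomorphism of formal schemes, pullback preserves Cartier divisors, yielding the claim for $\cY(y)$ on $\cN^0 \times \cN$.

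The main obstacle is the display/local-model bookkeeping in the parahoric setting required to pin down the single defining equation $f_x$: the polarization is no longer principal (the kernel has order $q^{2h}$), which complicates the self-duality constraints on the Dieudonn\'e lattice and the description of the Hodge filtration on its dual. However, this complication affects only the set-up; the signature $(1,n-1)$ keeps the relevant Hodge piece one-dimensional, which is the sole input the Cartier divisor argument really needs. Once this local analysis is recorded, the rest of the proof is formally identical to \cite[Proposition 5.9]{KR2}.
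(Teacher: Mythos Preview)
Your proposal is correct and follows essentially the same approach as the paper, which simply records that the proof of \cite[Proposition~5.9]{KR2} carries over verbatim. In fact, the single-equation obstruction you describe (landing in the rank-$1$ piece coming from the Kottwitz signature condition) is exactly what the paper makes explicit shortly afterward in Proposition~\ref{prop: obstruction}, and your duality reduction for $\cY(y)$ is precisely the definition.
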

	
	Special cycles have the following properties.
	
	\begin{proposition}\label{proposition2.5}\cite[Proposition 5.10]{cho2018basic} Let $x,y \in \BV$.
		\begin{enumerate}
			\item If $\val((x,x))=0$, then $\CZ(x) \simeq \CN^{[h]}_{n-1}$.
			\item If $\val((y,y))=-1$, then $\CY(y) \simeq \CN^{[h-1]}_{n-1}$.
		\end{enumerate}
	\end{proposition}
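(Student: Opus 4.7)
The plan is to prove (1) directly by splitting off a unimodular direct summand from the framing object, and then to deduce (2) from (1) via the duality isomorphism $\theta\colon \cN_n^{[h]} \xrightarrow{\sim} \cN_n^{[n-h]}$ recalled just above.

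For (1), the assumption $\val((x,x))=0$ says that the hermitian self-pairing of $x$ is a unit in $O_F$. The first step is to observe that the composition $\lambda_\bX^{-1} \circ x^\vee \circ \lambda_\bX \circ x$ is, after the identification $\End_{O_F}(\overline{\BE}) \otimes \Q \simeq F$, multiplication by the unit $(x,x) \in O_F^\times$ on $\overline{\BE}$. Hence $x$ realizes $\overline{\BE}$ as a direct summand of $\bX$ on which the polarization is an isomorphism onto the dual. The orthogonal complement $\bX'$ is then a hermitian formal $O_F$-module of signature $(1, n-2)$; because $x(\overline{\BE})$ is unimodular, one has $\mathrm{Ker}(\lambda_\bX) \cap x(\overline{\BE}) = 0$, so the induced polarization on $\bX'$ still has kernel of order $q^{2h}$, and $\bX'$ is of type $h$. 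The next step is to promote this splitting to the universal level: for any $(X, \iota, \lambda, \rho) \in \cZ(x)(S)$ with $S \in \mathrm{Nilp}$, the extended homomorphism $\overline{\cE}_S \to X$ realizes $\overline{\cE}_S$ as a unimodular direct summand, with complement a hermitian formal $O_F$-module of signature $(1, n-2)$ and type $h$. Sending $(X, \iota, \lambda, \rho)$ to its complement yields a functorial isomorphism $\cZ(x) \simeq \cN_{n-1}^{[h]}$.

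For (2), the plan is to unfold the definition $\cY(y) = \theta^{-1}(\cZ(\lambda_\bX \circ y))$ and reduce to (1) on the dual Rapoport--Zink space. The key short computation uses the defining relation $\lambda'_\bX \circ \lambda_\bX = i_\bX(\pi)$ of the dual polarization on $\bX^\vee$: one verifies that the norm of $\lambda_\bX \circ y$ in the hermitian space $\bV^\vee$ attached to $\bX^\vee$ equals $\pi \cdot (y,y)$, which then has valuation $0$. Applying (1) to $\cN_n^{[n-h]}$ therefore yields $\cZ(\lambda_\bX \circ y) \simeq \cN_{n-1}^{[n-h]}$, and hence $\cY(y) \simeq \cN_{n-1}^{[n-h]}$. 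Finally, composing with the $(n-1)$-dimensional instance of $\theta$, namely $\cN_{n-1}^{[n-h]} \simeq \cN_{n-1}^{[(n-1)-(n-h)]} = \cN_{n-1}^{[h-1]}$, produces the claimed isomorphism.

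The main obstacle I anticipate is the careful verification in (1) that the orthogonal decomposition $\bX = x(\overline{\BE}) \oplus \bX'$ preserves the type of the polarization, i.e., that the restriction of $\lambda_\bX$ to $\bX'$ still has kernel of order exactly $q^{2h}$. This reduces to the trivial intersection $\mathrm{Ker}(\lambda_\bX) \cap x(\overline{\BE}) = 0$, which follows from the unimodularity imposed by $\val((x,x))=0$, but the argument must be carried out functorially over arbitrary $S \in \mathrm{Nilp}$ using the rigidity of the formal group data and the fact that the condition on $\mathrm{Ker}\,\lambda \subset X[\pi]$ is preserved under splitting off a direct summand on which the polarization is an isomorphism. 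Once this is in place, the norm computation on the dual side in (2) is essentially formal given the conventions for $\theta$ and $\lambda'_\bX$, and the rest is functorial bookkeeping.
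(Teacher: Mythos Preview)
The paper does not supply its own proof of this proposition; it is quoted from \cite[Proposition 5.10]{cho2018basic}. Your strategy is the standard one and is essentially what that reference does: for (1) split off the unimodular summand $x(\overline{\BE})$ via the adjoint idempotent $e_x = x\circ x^*$, and for (2) reduce to (1) on $\cN_n^{[n-h]}$ via the duality $\theta$ together with the norm identity $(\lambda_\bX\circ y,\lambda_\bX\circ y)_{\bV^\vee}=\pi\,(y,y)$, then apply the $(n-1)$-dimensional duality $\cN_{n-1}^{[n-h]}\simeq\cN_{n-1}^{[h-1]}$.

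One correction is needed in your first step. You wrote the composition $\lambda_\bX^{-1}\circ x^\vee\circ\lambda_\bX\circ x$, but $\lambda_\bX$ is not invertible when $h>0$ (its kernel has order $q^{2h}$), so this expression is only a quasi-isogeny. The map you actually want, and the one matching the paper's definition of the hermitian form, is $\lambda_{\overline{\BE}}^{-1}\circ x^\vee\circ\lambda_\bX\circ x$: here $\lambda_{\overline{\BE}}$ is principal since $\overline{\BE}$ has type $0$, so this is an honest endomorphism of $\overline{\BE}$, equal to multiplication by $(x,x)\in O_F^\times$. With this fix the adjoint $x^*=(x,x)^{-1}\lambda_{\overline{\BE}}^{-1}\circ x^\vee\circ\lambda_\bX$ is a genuine homomorphism $\bX\to\overline{\BE}$, the idempotent $e_x=x\circ x^*$ satisfies $e_x^\vee\circ\lambda_\bX=\lambda_\bX\circ e_x$, and your verification that the complement retains type $h$ (via $\mathrm{Ker}(\lambda_\bX)\cap x(\overline{\BE})=0$) goes through. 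This is exactly the normalization used in Proposition~\ref{proposition2.6}, where the paper writes $x^*$ for the adjoint with respect to $\lambda_\bX$ and $\lambda_{\overline{\BE}}$.
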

	
	\begin{proposition}\label{proposition2.6}\cite[Proposition 5.11]{cho2018basic}
		Assume that $\val((x,x))=0$ and $\val((y,y))=-1$. Assume further that by rescaling as in the proof of \cite[Proposition 5.10]{cho2018basic}, $x^*\circ x=1$, and $ (\lambda_{\BX} \circ y)^*\circ (\lambda_{\BX} \circ y)=1$. Here, $x^*$ (resp. $(\lambda_{\BX} \circ y)^*$) is the adjoint of $x$ (resp. $(\lambda_{\BX} \circ y)$)  with respect to the polarizations $\lambda_{\BX}$ and $\lambda_{\overline{\BE}}$ (resp. $\lambda'_{\BX}$ and $\lambda_{\overline{\BE}}$). We define $e_x\coloneqq x \circ x^*$ and $e_y\coloneqq (\lambda_{\BX} \circ y) \circ (\lambda_{\BX} \circ y)^*$. Fix isomorphisms
		\begin{equation*}
			\begin{split}
				\Phi:\CZ(x) \simeq \CN^{[h]}_{n-1},\\
				\Psi:\CY(y) \simeq \CN^{[h-1]}_{n-1},
			\end{split}
		\end{equation*}
		as in Proposition \ref{proposition2.5}. Then the following statements hold.
		\begin{enumerate}
			\item For $z\in \BV$ such that $(x,z)=0$, let $z'\coloneqq (1-e_x)\circ z$. Then, we have $\Phi(\CZ(x) \cap \CZ(z))=\CZ(z')$ in $\CN^{[h]}_{n-1}$ and $(z',z')=(z,z)$.
			
			\item For $w\in \BV$ such that $(x,w)=0$, let $w'\coloneqq (1-e_x)\circ w$. Then, we have $\Phi(\CZ(x) \cap \CY(w))=\CY(w')$ in $\CN^{[h]}_{n-1}$ and $(w',w')=(w,w)$.
			
			\item For $z\in \BV$ such that $(y,z)=0$, let $z'\coloneqq (1-e_y^{\vee})\circ z$. Then, we have $\Psi(\CY(y) \cap \CZ(z))=\CZ(z')$ in $\CN^{[h-1]}_{n-1}$ and $(z',z')=(z,z)$.
			
			\item For $w\in \BV$ such that $(y,w)=0$, let $w'\coloneqq (1-e_y^{\vee})\circ w$. Then, we have $\Psi(\CY(y) \cap \CY(w))=\CY(w')$ in $\CN^{[h-1]}_{n-1}$ and $(w',w')=(w,w)$.
		\end{enumerate}
	\end{proposition}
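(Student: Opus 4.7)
The plan is to handle the four statements in parallel, reducing each to a compatible orthogonal decomposition of the framing object and a short algebraic manipulation with the self-adjoint idempotents $e_x$ and $e_y^{\vee}$.

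First I would verify the hermitian identity $(z',z')=(z,z)$ and its three analogues. The key observation is that $e_x = x\circ x^*$ is self-adjoint with respect to $\lambda_{\BX}$ (and similarly $e_y^{\vee}$ with respect to $\lambda'_{\BX}$), because $(u\circ u^*)^* = u\circ u^*$ for any quasi-homomorphism $u$. Consequently $1-e_x$ is self-adjoint, and with $z' = (1-e_x)\circ z$ one computes
\[
(z',z') = z^{*}\circ(1-e_x)\circ z = (z,z) - z^{*}\circ x\circ x^{*}\circ z = (z,z) - (x,z)\overline{(x,z)} = (z,z),
\]
using the orthogonality $(x,z)=z^{*}\circ x=0$ together with $(z,x)=\overline{(x,z)}$. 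The three remaining identities follow by the same bookkeeping after replacing $x$ by $\lambda_{\BX}\circ y$ and swapping $\lambda_{\BX}$ for $\lambda'_{\BX}$ where appropriate.

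Next, I would unpack the geometric content of the isomorphisms $\Phi:\CZ(x)\simeq \CN_{n-1}^{[h]}$ and $\Psi:\CY(y)\simeq \CN_{n-1}^{[h-1]}$ from Proposition~\ref{proposition2.5}. On $\CZ(x)$ the universal lift $X$ carries an extension $\overline{\cE}_S\hookrightarrow X$ of $x$, and the splitting of $\BX$ along the self-adjoint idempotent $e_x$ lifts uniquely, in a polarization-compatible way, to a splitting of hermitian $O_F$-modules $X\simeq \overline{\cE}_S \oplus X'$ over the base $S$, with $X'$ of signature $(1,n-2)$ and type $h$; the map $\Phi$ sends the universal tuple to $X'$. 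The isomorphism $\Psi$ arises analogously from the decomposition of $\BX^{\vee}$ along $e_y^{\vee}$, with the type dropping from $h$ to $h-1$ because the polarization restricted to the complement has kernel of order $q^{2(h-1)}$.

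With these splittings in hand, the four cycle-theoretic assertions reduce to elementary deformation theory. For (1), a point of $\CZ(x)\cap \CZ(z)$ is a point of $\CZ(x)$ at which the universal quasi-homomorphism $z:\overline{\cE}_S\to X$ extends to a morphism. Along the splitting $X=\overline{\cE}_S\oplus X'$, decompose $z = x\circ(x^{*}\circ z) + (1-e_x)\circ z$; the first summand factors through $x$ and is automatically integral, so $z$ lifts iff the second component $z'$ lifts as a morphism $\overline{\cE}_S\to X'$, which is precisely the condition defining $\CZ(z')\subset \CN_{n-1}^{[h]}$ under $\Phi$. The same decomposition argument, applied with $w$ in place of $z$ and with $\CY$-cycles interpreted via the duality $\theta$, proves (2), (3), and (4). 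The main technical point to secure is the unique, functorial deformation of the orthogonal splitting across families of hermitian $O_F$-modules---this is the same rigidity statement that underlies the existence of $\Phi$ and $\Psi$, so once it is granted every remaining step is algebraic.
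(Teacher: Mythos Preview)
The paper does not supply its own proof of this proposition; it is imported wholesale from \cite[Proposition~5.11]{cho2018basic}, so there is no in-paper argument to compare against. Your sketch is correct and is the standard route: the self-adjoint idempotent $e_x$ (resp.\ $e_y^\vee$) splits the framing object as $\BX \simeq \overline{\BE}\oplus \BX'$ compatibly with the polarization, this splitting deforms uniquely over $\CZ(x)$ (resp.\ $\CY(y)$) to give the isomorphism $\Phi$ (resp.\ $\Psi$), and the orthogonality hypothesis forces $e_x\circ z = x\circ(x^*z)=x\circ(z,x)=0$ so that $z'=z$ already factors through $\BX'$, whence lifting $z$ to $X$ is equivalent to lifting $z'$ to $X'$. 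One small remark: in your display you should really have $(z',z')=z^*(1-e_x)^*(1-e_x)z=z^*(1-e_x)^2 z=z^*(1-e_x)z$, using that $1-e_x$ is a self-adjoint idempotent; you jumped straight to the last expression, which is fine but worth making explicit. The treatment of cases (2)--(4) via the duality $\theta$ is likewise the expected argument and only requires checking that the splitting of $\BX^\vee$ along $e_y^\vee$ is compatible with $\lambda'_\BX$ and drops the type by one, exactly as you indicate.
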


	\subsection{Horizontal and vertical part of special cycles}\label{subsection2.3}
	We closely follow \cite[\S 2.9]{LZ}  in this subsection.
	We call a formal scheme $Z$ over $\operatorname{Spf}$ $O_{\breve{F}}$   vertical (resp. horizontal) if $\pi$ is locally nilpotent on $Z$ (resp. flat over $\operatorname{Spf} O_{\breve{F}}$ ). In particular, the formal scheme-theoretic union of two vertical (resp. horizontal) formal subschemes of a formal scheme is again vertical (resp. horizontal).

	Now we define the horizontal part and vertical part of $Z$ respectively. The horizontal part $Z_{\mathscr{H}}$ of $Z$ is defined to be the closed formal subscheme with ideal sheaf $\mathcal{O}_Z\left[\pi^{\infty}\right] \subset \mathcal{O}_Z$. Then $Z_{\mathscr{H}}$ is the maximal horizontal closed formal subscheme of $Z$.
	For noetherian $Z$, we can find $N \gg 0$ such that $\pi^N \mathcal{O}_Z\left[\pi^{\infty}\right]=0$.  Then the vertical part $Z_{\mathscr{V}} \subset Z$ is defined to be the closed formal subscheme with ideal sheaf $\pi^N \mathcal{O}_Z$. 
	
	Note that $\mathcal{O}_Z\left[\pi^{\infty}\right] \cap \pi^N \mathcal{O}_Z=0$ implies the following  decomposition:
	$$
	Z=Z_{\mathscr{H}} \cup Z_{\mathscr{V}}.
	$$

	The same proof of \cite[Lemma 2.9.2]{LZ} gives the following.
	\begin{lemma}\label{lemma2.7}\cite[Lemma 2.9.2]{LZ}
		Let $L \subset \mathbb{V}$ be a $O_F$-lattice of rank $r \geq n-1$ such that $L_F$ is non-degenerate. Then $\mathcal{Z}(L)$ is noetherian.
	\end{lemma}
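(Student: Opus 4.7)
The plan is to reduce noetherianness of the formal scheme $\mathcal{Z}(L)$ to showing that its underlying reduced scheme $\mathcal{Z}(L)_{\mathrm{red}}$ is of finite type over $\overline{k}$, and to obtain the latter by exhibiting $\mathcal{Z}(L)_{\mathrm{red}}$ as a closed subset of a finite union of Bruhat--Tits strata in $\mathcal{N}_{\mathrm{red}}$.

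First, since $\mathcal{N}$ is locally formally of finite type over $\Spf O_{\breve F}$ and $\mathcal{Z}(L)$ is a closed formal subscheme cut out by finitely many divisors $\mathcal{Z}(x_i)$ (generators of $L$), the formal scheme $\mathcal{Z}(L)$ is locally noetherian. Hence to prove it is globally noetherian it suffices to show its underlying topological space is quasi-compact, equivalently that $\mathcal{Z}(L)_{\mathrm{red}}$ is of finite type over $\overline{k}$. Without loss of generality we may replace $L$ by any non-degenerate sublattice of rank $n-1$ (since enlarging $L$ can only shrink $\mathcal{Z}(L)$), so we assume $r=n-1$.

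Next, recall the Bruhat--Tits stratification of $\mathcal{N}_{\mathrm{red}}$: its irreducible components are indexed by vertex lattices $\Lambda \subset \mathbb{V}$ of maximal type (within an allowed range depending on $h$), and each closed stratum $\mathcal{N}(\Lambda)$ is a classical Deligne--Lusztig variety, in particular projective over $\overline{k}$. The defining property of $\mathcal{Z}(x)$ translates, at a geometric point $(X,\iota,\lambda,\rho)$, into the statement that $x$ (transported via $\rho$) lies in the Dieudonn\'e lattice $M(X)$; passing to the associated vertex lattice $\Lambda(X)\subset \mathbb{V}$ attached to the point via the Bruhat--Tits theory, this means $x \in \Lambda(X)$. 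Therefore a geometric point of $\mathcal{Z}(L)_{\mathrm{red}}$ must lie in a stratum $\mathcal{N}(\Lambda)$ with $L \subset \Lambda$.

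It remains to check that only finitely many vertex lattices $\Lambda$ satisfy $L \subset \Lambda$. Because $L_F$ is non-degenerate of rank $n-1$, any lattice $\Lambda \subset \mathbb{V}$ containing $L$ decomposes (up to finite index) as $L^{\mathrm{sat}} \oplus (\Lambda \cap L_F^\perp)$, where $L^{\mathrm{sat}}$ is the saturation of $L$; the integrality condition $\Lambda \subset \Lambda^\vee \subset \pi^{-1}\Lambda$ bounds the discriminant of $\Lambda$ and hence bounds $\Lambda$ between $L$ and a fixed larger lattice. Since there are only finitely many lattices trapped between two fixed lattices, only finitely many vertex $\Lambda$ can occur. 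Thus $\mathcal{Z}(L)_{\mathrm{red}}$ is contained in a finite union of closed strata $\mathcal{N}(\Lambda)$, each projective over $\overline{k}$, so it is of finite type; combined with local noetherianness, $\mathcal{Z}(L)$ is noetherian. The main delicate point is the second step: for $h>0$ the Bruhat--Tits stratification of $\mathcal{N}_n^{[h]}$ is more intricate than in the hyperspecial case, and one must verify that the ``a point of $\mathcal{Z}(L)$ forces $L$ into the associated vertex lattice" argument goes through in the parahoric setting, using the description of closed points of $\mathcal{N}_n^{[h]}$ via lattice chains rather than a single self-dual lattice.
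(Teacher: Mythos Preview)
The paper gives no proof of its own; it simply asserts that the argument of \cite[Lemma 2.9.2]{LZ} carries over. Your sketch is precisely that strategy (local noetherianness plus quasi-compactness of the reduced locus via the Bruhat--Tits stratification), so the approaches coincide.

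One point in step~3 needs tightening: the sentence ``bounds the discriminant of $\Lambda$ and hence bounds $\Lambda$ between $L$ and a fixed larger lattice'' is not correct as stated---a bound on $\val(\Lambda)$ alone does not trap $\Lambda$ below a fixed lattice. What makes the finiteness work is the rank hypothesis. Concretely: $\Lambda\cap L_F$ is an integral lattice in $L_F$ with $L\subset\Lambda\cap L_F\subset L^{\vee,L_F}$, so there are finitely many choices for it; then $\Lambda=(\Lambda\cap L_F)+O_F\,w$ with $w=\ell+\pi^{b}e$ for a fixed generator $e$ of $L_F^\perp$, and a Gram-matrix computation gives $\val(\Lambda)=\val(\Lambda\cap L_F)+2b+\val((e,e))$, so the vertex-lattice bound $0\le\val(\Lambda)\le n$ pins down $b$, while integrality forces $\ell$ into the finite set $(\Lambda\cap L_F)^{\vee}/(\Lambda\cap L_F)$. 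With this correction your argument goes through. Your caveat about step~2 in the parahoric case is well placed; the needed translation between membership in $\mathcal{Z}(x)$ and the lattice-chain data indexing strata is supplied by \cite{cho2018basic}.
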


	The following lemma follows from the same proof of \cite[Lemma 5.1.1]{LZ}.
	\begin{lemma}\cite[Lemma 5.1.1]{LZ}
		Let $L^\flat$ be an $O_F$-lattice of rank $n-1$ in $\bV_n$. Then $\mathcal{Z} (L^b )_{\mathscr{V}}$ is supported on $\mathcal{N}_n^{\mathrm{red}}$, i.e., $\mathcal{O}_{\mathcal{Z} (L^b )_{\mathscr{V}}}$ is annihilated by a power of the ideal sheaf of $\mathcal{N}_n^{\mathrm{red}} \subset \mathcal{N}_n$.
	\end{lemma}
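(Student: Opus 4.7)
The plan is to directly verify the annihilation condition by combining the noetherianness of $\mathcal{Z}(L^\flat)$ with the reducedness of the special fiber of $\mathcal{N}_n^{[h]}$, mirroring the template of \cite[Lemma 5.1.1]{LZ}. The argument has essentially no geometric content beyond the definitions recalled in \S\ref{subsection2.3}; the only nontrivial input is the semistability of the Rapoport--Zink space at an unramified prime.

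First, I would invoke Lemma \ref{lemma2.7} (applicable since $L^\flat$ has rank exactly $n-1$) to conclude that the formal scheme $\mathcal{Z}(L^\flat)$ is noetherian. Then by the construction of horizontal and vertical parts recalled in \S\ref{subsection2.3}, I can choose $N \gg 0$ such that $\pi^N \cdot \mathcal{O}_{\mathcal{Z}(L^\flat)}[\pi^\infty] = 0$. By definition $\mathcal{Z}(L^\flat)_{\mathscr{V}}$ is then the closed formal subscheme cut out by the ideal sheaf $\pi^N \mathcal{O}_{\mathcal{Z}(L^\flat)}$; in particular its structure sheaf is annihilated by $\pi^N$.

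Second, I would identify the ideal sheaf $\mathcal{J}$ of $\mathcal{N}_n^{\mathrm{red}} \subset \mathcal{N}_n^{[h]}$ with $(\pi)$. Since $p$ is odd and unramified, and since $\mathcal{N}_n^{[h]}$ is semistable of relative dimension $n-1$ over $\Spf O_{\breve F}$ (as recorded just after Definition \ref{defn: local RZ space}), its special fiber is a reduced strict normal crossings divisor. Consequently $\pi$ equals its own radical in $\mathcal{O}_{\mathcal{N}_n^{[h]}}$, and hence $\mathcal{J}$ coincides with the principal ideal $(\pi)$. Combining with the first step, $\mathcal{J}^N$ annihilates $\mathcal{O}_{\mathcal{Z}(L^\flat)_{\mathscr{V}}}$, which is exactly the claim.

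The only step that requires anything beyond formal manipulation is the identification $\mathcal{J} = (\pi)$, which uses semistability at the unramified prime $p$ essentially. No significant obstacle is anticipated; the argument is short and parallel to \cite[Lemma 5.1.1]{LZ}, with the semistable local model $\Spf O_{\breve F}[[x_1,\dots,x_n]]/(x_1\cdots x_r-\pi)$ replacing the smooth local model of that reference.
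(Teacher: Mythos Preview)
Your proposal is correct and follows exactly the approach the paper intends: the paper offers no independent proof but simply records that the lemma ``follows from the same proof of \cite[Lemma 5.1.1]{LZ}'', and your argument is precisely that proof, with the smooth local model of \cite{LZ} replaced by the semistable one here. One small bibliographic point: the semistability of $\mathcal{N}_n^{[h]}$ is asserted in the introduction (\S\ref{sec:kudla-rapop-conj}) rather than immediately after Definition~\ref{defn: local RZ space}, where only regularity is stated.
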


	\subsection{Linear invariance}
	Following \cite{Ho2}, we show the linear invariance of intersection numbers. In this subsection, we use $\cN$ to denote $\cN_{n}^{[h]}$ and  $(X,\iota_X,\lambda_X)$ to denote the  the universal object over $\cN$. Let $\mathrm{D}(X)$ denote the covariant Grothendieck-Messing crystal of $X$  restricted to the Zariski site. Then we have a short exact sequence of locally free $\mathcal{O}_{\cN}$-modules:
	\begin{align*}
		0 \rightarrow \operatorname{Fil}(X) \rightarrow \mathrm{D}(X) \rightarrow \operatorname{Lie}(X) \rightarrow 0.
	\end{align*}
	which is $O_F$-linear via the action given by $\iota_X$. Let 
	\begin{align*}
		\epsilon&\coloneqq \pi \otimes 1+ 1\otimes \pi \in O_{F}\otimes_{O_F}\mathcal{O}_{\cN},\\
		\bar{\epsilon}&\coloneqq -\pi \otimes 1 +1\otimes \pi \in O_{F}\otimes_{O_F}\mathcal{O}_{\cN}.
	\end{align*}
	\begin{definition}
		Let $L_X$ be the image of $\iota(\pi)+\pi$ on $\mathrm{Lie}(X)$. In other words, $L_X\coloneqq \epsilon \Lie(X).$
	\end{definition}
	According to the Kottwitz signature condition, we know $L_X\subset \mathrm{Lie}(X)$ is locally a $\mathcal{O}_{\cN}$-module  direct summand of rank $1$.

	For a closed formal subscheme $Z$ of $\mathcal{N}$ with ideal sheaf $\mathcal{I}_Z$, we denote by $\tilde{Z}$ the closed formal subscheme defined by the sheaf $\mathcal{I}_Z^2$. Let $x \in \bV$ be a non-zero special homomorphism. Let $X_0$ be the universal object of $\CN^{[0]}_{1}$. By the very definition of $\mathcal{Z}(x)$,  we have
	$$
	X_0 |_{\mathcal{Z}(x)} \stackrel{x}{\rightarrow} X |_{\mathcal{Z}(x)},
	$$
	which induces an $O_F$-linear morphism of vector bundles
	$$
	\mathrm{D} (X_0 ) |_{\mathcal{Z}(x)} \stackrel{x}{\rightarrow} \mathrm{D}(X) |_{\mathcal{Z}(x)}.
	$$
	By the Grothendieck--Messing theory, we may canonically extend the above morphism to a morphism
	$$
	\mathrm{D} (X_0 ) |_{\tilde{\mathcal{Z}}(x)} \stackrel{\tilde{x}}{\rightarrow} \mathrm{D}(X) |_{\tilde{\mathcal{Z}}(x)},
	$$
	which no longer preserves the Hodge filtrations and hence induces a nontrivial morphism
	\begin{align}\label{eq: morphism}
		\operatorname{Fil} (X_0 ) |_{\tilde{\mathcal{Z}}(x)} \stackrel{\tilde{x}}{\rightarrow} \operatorname{Lie}(X) |_{\tilde{\mathcal{Z}}(x)}. 
	\end{align}
	\begin{proposition}\label{prop: obstruction}
		The morphism \eqref{eq: morphism} induces a morphism
		$$
		\operatorname{Fil} (X_0 ) |_{\tilde{Z}(x)} \stackrel{\tilde{x}}{\rightarrow} L_X |_{\tilde{Z}(x)} .
		$$
		Moreover, $Z(x)$ is the vanishing locus of $\tilde{x}$.
	\end{proposition}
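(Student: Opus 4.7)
The strategy is to split the proposition into its two assertions, both of which will be reduced to Grothendieck--Messing deformation theory combined with the $O_F$-equivariance of the crystal morphism $\tilde x$. For the factorization $\operatorname{Fil}(X_0)|_{\tilde{\mathcal{Z}}(x)} \to L_X|_{\tilde{\mathcal{Z}}(x)}$, I would start by noting that $\tilde x$ on Dieudonn\'e modules is $O_F$-linear by the functoriality of the Grothendieck--Messing crystal applied to the $O_F$-linear quasi-homomorphism $x$, and hence so is the induced composition
\begin{equation*}
\operatorname{Fil}(X_0)|_{\tilde{\mathcal{Z}}(x)} \hookrightarrow \mathrm{D}(X_0)|_{\tilde{\mathcal{Z}}(x)} \xrightarrow{\tilde x} \mathrm{D}(X)|_{\tilde{\mathcal{Z}}(x)} \twoheadrightarrow \operatorname{Lie}(X)|_{\tilde{\mathcal{Z}}(x)}.
\end{equation*}
To pin down the image, I would show that for any local section $v \in \operatorname{Fil}(X_0)$ one can write $v = \epsilon(v')$ with $v' \in \operatorname{Fil}(X_0)$, so that $O_F$-equivariance yields $\tilde x(v) = \tilde x(\epsilon(v')) = \epsilon(\tilde x(v')) \in L_X$. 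This reduces the first assertion to verifying that $\epsilon = \iota(\pi) + \pi$ acts as an automorphism of the line bundle $\operatorname{Fil}(X_0)$, which is a local computation on the rank-two module $\mathrm{D}(X_0)$ using the Kottwitz signature condition \eqref{eq: Kottwitz} for $X_0$ together with the type-$0$ hypothesis.

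For the identification of $\mathcal{Z}(x)$ with the vanishing locus of $\tilde x$, I would invoke the standard consequence of Grothendieck--Messing theory: given an $S$-point of $\tilde{\mathcal{Z}}(x)$, the quasi-homomorphism $x$ extends to an honest homomorphism $\overline{\mathcal{E}}_S \to X$ precisely when the induced morphism of Dieudonn\'e modules preserves the Hodge filtration, equivalently, when the composition $\operatorname{Fil}(X_0) \hookrightarrow \mathrm{D}(X_0) \xrightarrow{\tilde x} \mathrm{D}(X) \twoheadrightarrow \operatorname{Lie}(X)$ vanishes. Since this composition factors through $L_X$ by the previous step, the vanishing locus of the resulting morphism $\operatorname{Fil}(X_0)|_{\tilde{\mathcal{Z}}(x)} \to L_X|_{\tilde{\mathcal{Z}}(x)}$ coincides with the defining locus of $\mathcal{Z}(x)$ inside $\tilde{\mathcal{Z}}(x)$.

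The main obstacle is verifying that $\epsilon$ acts invertibly on $\operatorname{Fil}(X_0)$. Once the action of $\iota_{X_0}(\pi)$ on the two-dimensional Dieudonn\'e module $\mathrm{D}(X_0)$ is pinned down, using that the Kottwitz condition fixes its eigenvalue on the quotient $\operatorname{Lie}(X_0)$ and the relation $\iota_{X_0}(\pi)^2 = \pi^2 \cdot \mathrm{id}$ forces the complementary eigenvalue on $\operatorname{Fil}(X_0)$, one concludes that $\epsilon$ is a unit multiple of $\pi$ there. The rest of the argument is then a formal consequence of Grothendieck--Messing theory and $O_F$-equivariance.
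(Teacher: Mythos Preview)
Your overall strategy matches the paper's: both arguments use the $O_F$-linearity of $\tilde x$ to show that $\tilde x(\Fil(X_0))$ lands in the $\epsilon$-part of $\Lie(X)$, and both defer the second claim to Grothendieck--Messing. However, your execution of the key step has a genuine gap. You aim to show that $\epsilon$ acts invertibly on $\Fil(X_0)$ and conclude that $\epsilon$ is ``a unit multiple of $\pi$'' there; but $\pi$ is the uniformizer, hence topologically nilpotent on $\cN$, so this conclusion \emph{contradicts} invertibility rather than establishing it. The underlying problem is that in the unramified setting $\pi\in O_{F_0}$, so by strictness $\iota_{X_0}(\pi)$ acts as the scalar $\pi$ on all of $\rD(X_0)$ and the relation $\iota_{X_0}(\pi)^2=\pi^2\cdot\id$ is vacuous: there is no ``complementary eigenvalue'' to extract. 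The eigenspace decomposition you want is detected by a trace-zero unit of $O_F$, not by $\pi$.

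The paper avoids this entirely by weakening your requirement $v'\in\Fil(X_0)$ to $v'\in\rD(X_0)$. It shows directly that $\Fil(X_0)=\epsilon\,\rD(X_0)$: the signature $(0,1)$ condition forces $\epsilon$ to kill $\Lie(X_0)$, so $\epsilon\,\rD(X_0)\subset\Fil(X_0)$, and both are rank-one local direct summands of $\rD(X_0)$, hence equal. Then every $v\in\Fil(X_0)$ is $\epsilon v'$ for some $v'\in\rD(X_0)$, and $O_F$-linearity of $\tilde x$ on the full crystal gives $\tilde x(v)=\epsilon\,\tilde x(v')$, which projects into $\epsilon\,\Lie(X)=L_X$. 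No invertibility statement is needed.
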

	\begin{proof}
		According to the signature condition of $X_0$, we have $\Fil(X_0)=\epsilon \rD(X_0)$ since both are locally $\cO_{\cN}$ direct summands of $\rD(X_0)$ of rank $1$. Since $\tilde{x}$ is $O_F$-linear, we have 
		\begin{align*}
			\tilde{x}(\Fil(X_0))=\tilde{x}(\epsilon \rD(X_0))\in \epsilon \rD(X)=L_X.
		\end{align*}
		Now the second claim follows from the Grothendieck--Messing theory.
	\end{proof}

	Now given a nonzero element $x \in \bV$, we define a chain complex of locally free $\mathcal{O}_{\mathcal{N}}$-modules
	$$
	C(x)\coloneqq (\cdots \rightarrow 0 \rightarrow \mathcal{I}_{\mathcal{Z}(x)} \rightarrow\mathcal{O}_{\mathcal{N}} \rightarrow 0 )
	$$
	supported in degrees 1 and 0 with the map $\mathcal{I}_{\mathcal{Z}(x)} \rightarrow\mathcal{O}_{\mathcal{N}}$ being the natural inclusion. We extend the definition to $x=0$ by setting
	$$
	C(0)\coloneqq  (\cdots \rightarrow 0 \rightarrow \omega \stackrel{0}{\rightarrow}\mathcal{O}_{\mathcal{N}} \rightarrow 0 )
	$$
	supported in degrees 1 and 0 , where $\omega$ is the line bundle such that $\omega^{-1}  =\underline{\operatorname{Hom}} (\operatorname{Fil} (X_0 ), L_X ).$
	
	The following is our main result of this subsection which follows from Proposition \ref{prop: linear invariance} by the same argument as in \cite{Ho2}.
	\begin{proposition}\label{prop: linear invariance}
		Let $0 \leqslant m \leqslant n$ be an integer. Suppose that $x_1, \ldots, x_m \in \bV$ and $y_1, \ldots, y_m \in \bV$ generate the same $O_E$-submodule. Then we have an isomorphism
		$$
		\mathrm{H}_i (C (x_1 ) \otimes_{\mathcal{O}_{\mathcal{N}}} \cdots \otimes_{\mathcal{O}_{\mathcal{N}}} C (x_m ) ) \simeq \mathrm{H}_i (C (y_1 ) \otimes_{\mathcal{O}_{\mathcal{N}}} \cdots \otimes_{\mathcal{O}_{\mathcal{N}}} C (y_m ) )
		$$
		of $\mathcal{O}_{\mathcal{N}}$-modules for every $i$.
	\end{proposition}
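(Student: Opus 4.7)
Following Howard \cite{Ho2}, my plan is to realize each two-term complex $C(x)$ as quasi-isomorphic to the Koszul complex of a global section of the line bundle $\omega^{-1}$, and then reduce the invariance claim to a standard linear-algebra statement about Koszul complexes of line-bundle sections.

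First I would globalize the construction of Proposition~\ref{prop: obstruction}. For each nonzero $x \in \bV$, the quasi-isogeny $x \colon \overline{\BE} \to \BX$ induces an $O_F$-linear morphism of Dieudonn\'e crystals $\rD(X_0) \to \rD(X)$ on $\mathcal{N}$. Composing with the Hodge filtration on the source and the projection to $\mathrm{Lie}(X)$ on the target, the Kottwitz condition together with $O_F$-linearity forces the image to land in $L_X = \epsilon\, \mathrm{Lie}(X)$, producing a global section $s_x \in H^0(\mathcal{N}, \omega^{-1})$ whose vanishing locus is $\mathcal{Z}(x)$. Setting $s_0 = 0$, the assignment $x \mapsto s_x$ is $O_F$-linear by functoriality of the Dieudonn\'e construction. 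Since $\mathcal{Z}(x)$ is a Cartier divisor, $s_x$ is locally a non-zero-divisor for $x \ne 0$, so the Koszul complex $K(s_x) \coloneqq [\omega \xrightarrow{s_x} \mathcal{O}_{\mathcal{N}}]$ is a locally free resolution of $\mathcal{O}_{\mathcal{Z}(x)}$, hence canonically quasi-isomorphic to $C(x) = [\mathcal{I}_{\mathcal{Z}(x)} \hookrightarrow \mathcal{O}_{\mathcal{N}}]$; the case $x = 0$ is tautological.

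Tensoring over $\mathcal{O}_{\mathcal{N}}$, I obtain $C(x_1) \otimes \cdots \otimes C(x_m) \simeq K(s_{x_1}, \ldots, s_{x_m})$, the $m$-fold Koszul complex on the tuple $(s_{x_i})$. The $O_F$-linearity of $x \mapsto s_x$ implies that if $(x_i)$ and $(y_j)$ generate the same $O_F$-submodule of $\bV$, then the associated $m$-tuples of sections generate the same $\mathcal{O}_{\mathcal{N}}$-submodule of $\omega^{-1}$. Writing $y_i = \sum_j a_{ij} x_j$ and $x_i = \sum_j b_{ij} y_j$ for matrices $A, B \in M_m(O_F)$, these induce mutually inverse (up to chain homotopy) maps between the two Koszul complexes; this is the standard linear-algebra lemma of \cite{Ho2} and delivers the required isomorphism on homology for every $i$.

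\textbf{Main obstacle.} The principal subtlety is the globalization in the first step: although Proposition~\ref{prop: obstruction} only constructs $\tilde x$ on the first-order thickening $\tilde{\mathcal{Z}}(x)$, one needs $s_x$ as a section on all of $\mathcal{N}$. This requires using that $\rD(X_0)$ and $\rD(X)$ are defined globally as coherent $\mathcal{O}_{\mathcal{N}}$-modules via the universal deformation and that, after restriction to $\mathrm{Fil}(X_0) = \epsilon\, \rD(X_0)$ and projection by $\epsilon$, the induced morphism of isocrystals becomes an integral morphism of line bundles whose zero locus recovers $\mathcal{Z}(x)$ globally. Once this globalization is settled, both the quasi-isomorphism $C(x) \simeq K(s_x)$ and the matrix-based comparison of Koszul complexes proceed formally as in \cite{Ho2}.
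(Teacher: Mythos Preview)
Your proposal is correct and follows essentially the same route as the paper, which simply states that the result follows from Proposition~\ref{prop: obstruction} ``by the same argument as in \cite{Ho2}''. You have accurately reconstructed Howard's argument: realize each $C(x)$ as the Koszul complex of a global section $s_x \in H^0(\mathcal{N},\omega^{-1})$, use the $O_F$-linearity of $x \mapsto s_x$, and invoke the standard matrix/homotopy comparison of Koszul complexes. The globalization of the obstruction section from $\tilde{\mathcal{Z}}(x)$ to all of $\mathcal{N}$ that you flag as the main obstacle is exactly the point the paper leaves to \cite{Ho2}; once Proposition~\ref{prop: obstruction} is established in the present parahoric setting, Howard's construction of the global section and the subsequent Koszul argument carry over verbatim.
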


	\section{Local density and the modified Kudla--Rapoport conjecture}
	In this section, we discuss the conjecture \ref{conj: main}  proposed in the introduction in detail.

	\subsection{Local density} 
	Let $L, M$ be two integral hermitian $O_F$-lattices with rank $n, m$ respectively.  Let $\operatorname{Herm}{ }_{L,M}$ be the scheme of integral representations of $M$ by $L$, an $O_{F_0}$-scheme such that for any $O_{F_0}$-algebra $R$,
	$$
	\operatorname{Herm}_{L,M}(R)=\operatorname{Herm} (L \otimes_{O_{F_0}} R, M \otimes_{O_{F_0}} R )
	$$
	where $\operatorname{Herm}$ denotes the set of hermitian module homomorphisms. The local density of integral representations of $M$ by $L$ is defined to be
	$$
	\operatorname{Den}(M, L)\coloneqq \lim _{d \rightarrow+\infty} \frac{\# \operatorname{Herm}_{L,M} (O_{F_0} / (\pi^d) )}{q^{d \cdot \operatorname{dim} (\operatorname{Herm}_{L, M} )_{F_0}}}.
	$$
	If the generic fiber $ (\operatorname{Herm}_{L, M} )_{F_0}$ is non-empty, then  we have $n \leq m$ and
	$$
	\operatorname{dim} (\operatorname{Herm}_{M, L} )_{F_0}=\operatorname{dim} \mathrm{U}_m-\operatorname{dim} \mathrm{U}_{m-n}=n \cdot(2 m-n).
	$$
	
	Now we consider the local density polynomial.
	Let $I_k$ be an unimodular hermitian $O_F$-lattice of rank $k$. It is well-known that  there exists a \emph{local density polynomial} $\den(M,L,X)\in \mathbb{Q}[X]$  such that for any integer $k\ge0$,
	\begin{equation}\label{eq:Den poly introduction}
		\Den(M, L, (-\q)^{-k})=\Den(I_k\obot M,L).
	\end{equation}
	When $M$  has also rank $n$ and $\chi(M)=-\chi(L)$, we have $\Den(M,L)=0$   and in this case we write $$\den'(M, L)\coloneqq - \frac{\rd}{\rd X}\bigg|_{X=1} \den(M,L, X).$$ Define the (normalized) \emph{derived local density}
	\begin{equation}\label{eq: def of Den'}
		\mathrm{Den}_{n,h}'(L)\coloneqq \frac{\Den'(I_{n,h}, L)}{\Den(I_{n,h},I_{n,h})}\in \mathbb{Q}.
	\end{equation}
	Here $I_{n,h}$ is a hermitian lattice with moment matrix $\diag((1)^{n-h},(\pi)^h)$.   When the $n$ and $h$ are clear in the context, we also simply denote it as $\Den'(L)$.
	
	Recall that an integral $O_F$-lattice $\Lambda\subset \mathbb{V}$ is a \emph{vertex lattice of type $t$}  if $\Lambda^\vee/\Lambda$ is a $\kappa$-vector space of dimension $t$. 
	\begin{lemma}\label{lem: vanishing of Z(Lambda)}
		Assume $t<h$,  we have   $\Int_{n,h}(\Lambda_t)=0$.
	\end{lemma}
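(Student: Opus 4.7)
The plan is to show that $\cZ(\Lambda_t)$ is empty as a closed formal subscheme when $t<h$, which immediately implies $\Int_{n,h}(\Lambda_t)=0$ since the derived tensor product $\cO_{\cZ(x_1)}\otimes^{\bL}\cdots\otimes^{\bL}\cO_{\cZ(x_n)}$ would then have empty support, and hence trivial Euler characteristic.

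First I would invoke linear invariance (Proposition \ref{prop: linear invariance}) so that we are free to pick any basis of $\Lambda_t$ to compute $\Int_{n,h}(\Lambda_t)$. Take a normal basis $x_1,\ldots,x_n$ of $\Lambda_t$ with moment matrix $\diag((1)^{n-t},(\pi)^t)$. Since $t<h$, we have $n-t\ge n-h+1$, so there are at least $n-h+1$ orthogonal unit-norm basis vectors $x_1,\ldots,x_{n-h+1}$ with $\val((x_i,x_i))=0$.

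Next I would iteratively apply Propositions \ref{proposition2.5}(1) and \ref{proposition2.6}(1) to these unit-norm vectors. At the first step, Proposition \ref{proposition2.5}(1) gives $\cZ(x_1)\cong \cN_{n-1}^{[h]}$; then Proposition \ref{proposition2.6}(1) transports each orthogonal $\cZ(x_i)$ for $i\ge 2$ to $\cZ(x_i')$ inside $\cN_{n-1}^{[h]}$, where $x_i'=(1-e_{x_1})x_i$ satisfies $(x_i',x_i')=(x_i,x_i)$; the $x_i'$ for $i\ge2$ remain pairwise orthogonal because $1-e_{x_1}$ is the orthogonal projection away from $x_1$. Iterating this procedure $n-h$ times reduces the intersection $\cZ(x_1)\cap\cdots\cap\cZ(x_{n-h})$ inside $\cN_n^{[h]}$ to $\cN_h^{[h]}$, with the remaining unit-norm vectors $x_{n-h+1}',\ldots,x_{n-t}'$ living as orthogonal unit-norm vectors in $\bV_h$.

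The decisive step is the $(n-h+1)$-th iteration: inside $\cN_h^{[h]}$, applying Proposition \ref{proposition2.5}(1) to $x_{n-h+1}'$ would identify $\cZ(x_{n-h+1}')$ with $\cN_{h-1}^{[h]}$. But $\cN_{h-1}^{[h]}$ is empty, since a hermitian formal $O_F$-module of dimension $h-1$ has $|X[\pi]|=q^{2(h-1)}$, making the required containment $\ker\lambda\subset X[\pi]$ with $|\ker\lambda|=q^{2h}$ impossible. Therefore $\cZ(x_1)\cap\cdots\cap\cZ(x_{n-h+1})=\emptyset$, forcing $\cZ(\Lambda_t)=\emptyset$ and hence $\Int_{n,h}(\Lambda_t)=0$.

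The main obstacle to watch is bookkeeping through the iteration: verifying that Proposition \ref{proposition2.6}(1) genuinely preserves orthogonality of the transported vectors at each stage (so that it can be applied at the next stage), and confirming the emptiness of $\cN_{h-1}^{[h]}$ directly from the size constraint on $\ker\lambda$ in Definition \ref{def: hermitian formal module}. Both are essentially formal once set up, but care is needed to ensure the rescaling assumptions of Proposition \ref{proposition2.6} can be arranged at each reduction step.
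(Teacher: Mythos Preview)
Your proposal is correct and follows essentially the same route as the paper: both reduce $\cZ(\Lambda_t)\subset\cN_n^{[h]}$ to a special cycle inside $\cN_h^{[h]}$ by peeling off $n-h$ orthogonal unit-norm vectors via Propositions \ref{proposition2.5}(1) and \ref{proposition2.6}(1), and then argue emptiness. The only difference is in the final step: the paper observes directly that in $\cN_h^{[h]}$ one has $\ker\lambda=X[\pi]$, hence $\lambda=\pi\lambda'$ for a principal $\lambda'$, which forces $\val(x)\ge 1$ for any $x$ with nonempty $\cZ(x)$; you instead invoke Proposition \ref{proposition2.5}(1) once more and argue that the target $\cN_{h-1}^{[h]}$ is empty by the size constraint on $\ker\lambda$. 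These are essentially the same observation, though the paper's phrasing avoids appealing to Proposition \ref{proposition2.5} in the boundary case where the target moduli problem has no framing object.
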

	\begin{proof}
		First, we write $\Lambda_t=I_{n-t}\obot J_{t}$ where $J_t$ is a lattice with moment matrix $(p)^t$.  According to Proposition \ref{proposition2.5}, $\cZ(\Lambda_t)$ in $\cN_{n}^{[h]}$ is isomorphic to $\cZ(I_{h-t}\obot J_{t})$ in $\cN_{h}^{[h]}$. However, for $(X,\iota,\lambda,\rho)\in \cN_{h}^{[h]}$, we have $\mathrm{Ker}\lambda =X[\pi]$ by definition of $\cN_{h}^{[h]}$. Hence, $\lambda=\pi \lambda'$ for some principal $\lambda'$. Assume $(X,\iota,\lambda,\rho)\in \cZ(x)$ for some $x\in \bV$, then it is clear from the definition of $\cZ(x)$ such that $\val(x)\ge 1$. Hence, if $\val(x)=0$, then $\cZ(x)=\emptyset$ in $\cN_{h}^{[h]}$. In particular, $\cZ(I_{h-t}\obot J_{t})$ in $\cN_{h}^{[h]}$ is empty.
	\end{proof}

	The naive analogue of the Kudla--Rapoport conjecture for $\cN_n^{[h]}$ states that $$\mathrm{Int}_{n,h}(L)\stackrel{?}=\mathrm{Den}_{n,h}'(L).$$ However, this can not be true since $\Int_{n,h}(\Lambda_t)=0$ by Lemma \ref{lem: vanishing of Z(Lambda)}, but $\Den'_{n,h}(\Lambda_t)\neq 0$.

	Now a similar consideration as in \cite{HSY3,HLSY} suggests the following.   In order to have $\Int(\Lambda_t)=\pden(\Lambda_t)$ for vertex lattice $\Lambda_t$ of type $t<h$, we define $\pDen(L)$ by modifying $\Den'(L)$ with a linear combination of the (normalized) \emph{local densities}  
	\begin{equation}
		\label{eq: def of den(L)}
		\mathrm{Den}_{n,t}(L)\coloneqq \frac{\Den(\Lambda_t, L)}{\Den(\Lambda_t,\Lambda_t)}\in \mathbb{Z}.
	\end{equation}
	Note that since $F/F_0$ is unramified, the hermitian space over $F$ is determined by the parity of valuation of the hermitian form. As a result, if $\Lambda_t\subset \bV$, then $t$ and $h$ have different parity.

	\begin{definition}\label{definition3.2}
		Let $L\subset \mathbb{V}$ be an $O_F$-lattice. Define the \emph{modified derived local density} 
		\begin{equation}
			\label{eq: def of pdenL}
			\pDen_{n,h}(L)\coloneqq \mathrm{Den}_{n,h}'(L)+\sum_{i=0}^{\lfloor\frac{(h-1)}{2}\rfloor}c_{n,h-1-2i}\cdot\mathrm{Den}_{n,h-1-2i}(L).
		\end{equation}
		The coefficients $c_{n,i}\in\mathbb{Q}$ here are chosen to satisfy
		\begin{equation} \label{eq:coeff}
			\pDen_{n,h}(\Lambda_{i})=0,\quad \text{ for } 0\le i\le h-1 \text{ and }  i\equiv h-1 \mod{2},
		\end{equation}
		which turns out to be a linear system in $(c_{n,i})$ with a unique solution since $\Den(\Lambda_j,\Lambda_i)=0$ if $j>i$.
	\end{definition}

	\begin{conjecture}\label{conj: main section den}
		Let $L\subset \mathbb{V}$ be an $O_F$-lattice.  Then we have 
		$$\mathrm{Int}_{n,h}(L)=\pDen_{n,h}(L).$$
	\end{conjecture}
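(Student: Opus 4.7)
The plan is to follow the strategy of \cite{LZ} adapted to the maximal parahoric setting, proceeding by induction on $n$ (with $h$ allowed to decrease to $h-1$ in the reduction step so that the relevant base cases are $\cN_2^{[1]}$ from \cite{San3} and the hyperspecial case $h=0$ from \cite{LZ}). For the inductive step I fix a non-degenerate $O_F$-lattice $L^\flat \subset \mathbb{V}$ of rank $n-1$ and study the two functions $\mathrm{Int}_{L^\flat}(x) := \mathrm{Int}_{n,h}(L^\flat + \langle x\rangle)$ and $\pDen_{L^\flat}(x) := \pDen_{n,h}(L^\flat+\langle x\rangle)$ on $\mathbb{V} \setminus L^\flat_F$; the claim reduces to showing these two functions agree everywhere.

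I next decompose both sides into horizontal and vertical parts,
$$\mathrm{Int}_{L^\flat} = \mathrm{Int}_{L^\flat,\sH} + \mathrm{Int}_{L^\flat,\sV}, \qquad \pDen_{L^\flat} = \pDen_{L^\flat,\sH} + \pDen_{L^\flat,\sV}.$$
The horizontal identity $\mathrm{Int}_{L^\flat,\sH} = \pDen_{L^\flat,\sH}$ is checked directly through the primitive decomposition of horizontal cycles. The new subtlety in the parahoric setting is that two distinct species of horizontal lattices appear and one of them breaks up further into mixed $\cZ$--$\cY$ cycles; these are tractable because they reduce to explicit computations in $\cN_2^{[1]}$.

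For the vertical identity the strategy is to compare Fourier transforms. On the analytic side, I would use the primitive decomposition of the local density together with the inductive formulas for $\ppden_{n,h}$ of Theorem \ref{thm: ind formula for Pden} and Theorem \ref{thm: complement formula} to carry out a weighted lattice count, in the spirit of \cite{LZ2,HLSY}, establishing \emph{unconditionally} that
$$\widehat{\pDen}_{L^\flat,\sV}(x) = \begin{cases} -q^{-h}\,\mathrm{Int}_{n-1,h-1,\sV}(L^\flat) & \text{if } \val(x) = -1,\\ 0 & \text{if } \val(x)\le -2. \end{cases}$$
On the geometric side, Conjecture \ref{conjecture9.2.1} is exactly what is needed: granting it, $\mathrm{Int}_{L^\flat,\sV}(x)$ is expressible as a linear combination of intersection numbers $\mathrm{Int}_C(\cZ(x))$ with special curves $C$ lying in embedded copies of $\cN_2^{[1]}$ or $\cN_3^{[0]}$, for which the relation $\widehat{\mathrm{Int}}_C(\cZ(x)) = -q^{-h}\mathrm{Int}_C(\cY(x))$ is already known by the explicit computations of \cite{San3,ZhiyuZhang}. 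Subtracting the two identities, the Fourier transform of $\mathrm{Int}_{L^\flat} - \pDen_{L^\flat}$ vanishes on $\{x : \val(x) < 0\}$; combining this support condition with the inductive hypothesis (which controls the behaviour on the integral side) and the uncertainty principle as in \cite{LZ} forces $\mathrm{Int}_{L^\flat} = \pDen_{L^\flat}$, completing the induction.

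The main obstacle is twofold. First, the geometric local modularity formula is precisely the content of Conjecture \ref{conjecture9.2.1}, and this is exactly why the theorem is conditional; any unconditional extension beyond the cases $(n,1)$, $(n,n-1)$ and $(4,2)$ requires verifying the Tate-type statement for the underlying Deligne--Lusztig varieties, which is a genuinely new geometric input. Second, even granting that conjecture, the analytic identity for $\widehat{\pDen}_{L^\flat,\sV}$ is substantially more delicate than its counterparts in \cite{LZ2,HLSY}: in the parahoric setting $\ppden(L)$ no longer admits a closed-form expression but instead depends nontrivially on the triple $(t_{\ge 2}(L), t_1(L), t_0(L))$, so the weighted lattice counting has to track an additional combinatorial parameter, and the inductive formula of Theorem \ref{thm: ind formula for Pden} must be applied carefully to collapse this data into the clean Fourier-theoretic output displayed above.
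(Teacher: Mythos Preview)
Your proposal is correct and follows essentially the same route as the paper: the conditional reduction via Conjecture~\ref{conjecture9.2.1}, the horizontal/vertical decomposition, the unconditional Fourier-transform identity \eqref{eq: FT of pden} for $\widehat{\pDen}_{L^\flat,\sV}$ obtained from the inductive formulas for $\pPden$, and the uncertainty-principle endgame are exactly the ingredients of Theorem~\ref{theorem9.1.3}. One small correction: the horizontal primitive pieces in Theorem~\ref{thm: hori part} reduce to quasi-canonical liftings in $\cN_2^{[0]}$ (equivalently $\cN_2^{[2]}$), not $\cN_2^{[1]}$; the role of $\cN_2^{[1]}$ is rather as a source of the special curves in Conjecture~\ref{conjecture9.2.1}.
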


	Although the definition of $\pDen(L)$ is very explicit, the computation of $\pDen(L)$ is a  challenging task, especially when $h>0$,  $I_{n,h}$ and the unimodular lattice $I_k$ lie in two different Jordan block. One way to compute $\pDen(L)$ is to decompose $\pDen(L)$ into primitive pieces as we introduce now. 
	
	Similarly to the local density polynomial, we define the primitive local density polynomial $\Pden(M, L, X)$    to be the polynomial in $\Q[X]$ such that
	\begin{equation}\label{eq:prim local density M,L}
		\Pden(M, L, (-\q)^{-k})\coloneqq\lim _{d \rightarrow+\infty} \frac{\# \operatorname{Pherm}_{L,M} (O_{F_0} / (\pi^d))}{q^{d \cdot \operatorname{dim} (\operatorname{Herm}_{L, M} )_{F_0}}},
	\end{equation}
	where
	\begin{align*}
		\mathrm{Pherm}_{L,M\obot H^k}(O_{F_0}/(\pi^d))&\coloneqq\{ \phi \in  	\mathrm{Herm}_{L,M\obot H^k}(O_{F_0}/(\pi^d)) \mid  \text{ $\phi$ is primitive}\}.
	\end{align*}
	Recall that $\phi\in	\mathrm{Herm}_{L,M\obot I_k}(O_{F_0}/(\pi^d))$ is primitive if $\dim_{\mathbb{F}_\q}((\phi (L)+\pi (M\obot I_k))/\pi (M\obot I_k)=n$. In particular, we have $\den(M,M)=\Pden(M,M)$ for any hermitian $O_F$-lattice $M$. We can also similarly define the normalized primitive local densities: 
	$$
	\Pden_{n,h}'(L)=\frac{\Pden'(I_{n,h}, L)}{\Den(I_{n,h},I_{n,h})},\quad \text{  } \quad \mathrm{Pden}_{n,t}(L)\coloneqq \frac{\Pden(\Lambda_t, L)}{\Den(\Lambda_t,\Lambda_t)},
	$$
	and 
	\begin{align*}
		\ppden_{n,h}(L)\coloneqq \mathrm{Pden}_{n,h}'(L)+\sum_{i=0}^{\lfloor\frac{(h-1)}{2}\rfloor}c_{n,h-1-2i}\cdot\mathrm{Pden}_{n,h-1-2i}(L).
	\end{align*}

	The following lemma decomposes local density polynomials into a summation of primitive local density polynomials. The following is essentially due to \cite{CY}. See also \cite[Theorem 3.5.1]{LZ}.
	\begin{lemma}\label{lem: ind formula to prim ld}
		Let $M$ and $L$ be lattices of rank $m$ and $n$. Then we have
		$$\den(M,L,X)=\sum_{L\subset L'\subset L_{F}}(\q^{n-m}X)^{\ell(L'/L)}\Pden(M,L',X),$$
		where $\ell(L'/L)=\mathrm{length}_{O_F}L'/L$. Here $\Pden(M,L',X)=0$ for $L'$ with fundamental invariant strictly less than the smallest fundamental invariant of $M$. In particular, the summation is finite.
	\end{lemma}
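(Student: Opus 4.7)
The plan is to prove this inductive formula by stratifying integral hermitian representations according to their primitive saturation, propagating this decomposition to truncated counts modulo $\pi^d$, and then renormalizing to obtain the claimed identity of local density polynomials. This approach follows \cite{CY} and \cite[Theorem 3.5.1]{LZ}.

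First, I would establish a set-theoretic decomposition at the integral level. For any hermitian homomorphism $\phi: L \to M\obot I_k$ (thinking of the specialization $X = (-q)^{-k}$), let $\phi_F: L_F \to (M\obot I_k)_F$ be its $F$-linear extension and set $L' \coloneqq \phi_F^{-1}(M \obot I_k)$. Then $L'$ is the largest $O_F$-lattice in $L_F$ containing $L$ to which $\phi$ extends integrally, and the extended map $\phi': L' \to M \obot I_k$ is automatically primitive: if $\phi'(x) \in \pi(M\obot I_k)$ for some $x \in L'\setminus \pi L'$, then $\pi^{-1}\phi'$ would extend $\phi'$ to the strictly larger lattice $L' + O_F \cdot \pi^{-1}x$, contradicting the maximality of $L'$. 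Conversely, any primitive $\phi'$ on some $L' \supset L$ arises uniquely as the maximal extension of its restriction $\phi = \phi'|_L$. This yields the disjoint decomposition
\begin{equation*}
\mathrm{Herm}_{L, M\obot I_k}(O_{F_0}) = \bigsqcup_{L\subset L'\subset L_F} \mathrm{Pherm}_{L', M\obot I_k}(O_{F_0}).
\end{equation*}

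Second, I would propagate this decomposition to mod-$\pi^d$ counts. For $d$ sufficiently large (depending on the fundamental invariants of $L'$), the primitive saturation of a mod-$\pi^d$ representation is well-defined, yielding a partition of $\mathrm{Herm}_{L, M\obot I_k}(O_{F_0}/\pi^d)$ indexed by intermediate lattices $L'$. The restriction map $\mathrm{Pherm}_{L', M\obot I_k}(O_{F_0}/\pi^d) \to \mathrm{Herm}_{L, M\obot I_k}(O_{F_0}/\pi^d)$ has fibers of computable size dictated by the $O_F$-module structure of $L'/L$: the freedom in reconstructing $\phi'$ from $\phi = \phi'|_L$ comes from reading off $\phi'(x)$ via $\pi^a \phi'(x) = \phi'(\pi^a x)$, which only determines $\phi'(x)$ modulo $\pi^{d-a}$. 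After dividing both sides by the standard normalization $q^{d\cdot n(2(m+k)-n)}$ and taking $d \to \infty$, this fiber count collapses into the multiplicative coefficient $(q^{n-m}(-q)^{-k})^{\ell(L'/L)}$: the exponent records the length of $L'/L$, the factor $q^{n-m}$ reflects the rank gap between $L$ and $M$, and $(-q)^{-k}$ records the absorption of the $\ell$ extra $O_F$-directions of $L'/L$ into the unimodular summand $I_k$. Since the identity holds for every $k \geq 0$, interpolation produces the desired polynomial identity in $X$.

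The main obstacle is pinning down the coefficient, especially the sign $(-1)^{k\ell(L'/L)}$, which cannot be extracted directly from counts of positive cardinalities. The cleanest route is to reduce to the extremal case $L' = \pi^{-a}L$, where only one intermediate lattice contributes, verify the formula there by explicit computation (leveraging the classical formula for primitive local densities against unimodular targets), and then obtain the general case by M\"obius inversion on the poset of intermediate lattices. The final vanishing claim $\Pden(M, L', X) = 0$ for $L'$ whose smallest fundamental invariant is strictly less than that of $M$ is immediate: a primitive $\phi': L' \to M$ would send a primitive vector $x \in L'$ to a primitive vector of $M$, forcing the hermitian norm $(\phi'(x), \phi'(x)) = (x, x)$ to have valuation at least as large as the smallest fundamental invariant of $M$, a contradiction. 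This vanishing guarantees that the sum in the lemma is finite.
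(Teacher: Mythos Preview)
The paper does not give its own proof of this lemma; it simply cites \cite{CY} and \cite[Theorem~3.5.1]{LZ}.  Your strategy---stratify integral hermitian maps by their primitive saturation $L'=\phi_F^{-1}(M\obot I_k)$, then pass to mod-$\pi^d$ counts and renormalize---is exactly the argument in those references, and your justification of the vanishing clause at the end is correct.

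The gap is entirely in the coefficient.  You first assert that the fiber count produces $(q^{n-m}(-q)^{-k})^{\ell(L'/L)}$ and then immediately flag the sign $(-1)^{k\ell}$ as impossible to obtain from positive counts; these two sentences contradict each other, and neither is backed by an actual computation.  If you carry out the computation honestly you get a different exponent.  Comparing $\mathrm{Herm}_{L',M\obot I_k}$ with its image in $\mathrm{Herm}_{L,M\obot I_k}$ under restriction, the change of variable $X\mapsto X'A$ on the ambient space $M_{m+k,n}(F)$ scales volume by $|\det A|_F^{\,m+k}=q^{-2(m+k)\ell}$, while on the constraint space $\mathrm{Herm}_n(F_0)$ it scales by $q^{-2n\ell}$; the ratio on the representation variety is therefore $q^{2(n-m-k)\ell}$.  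In the polynomial variable $X=(-q)^{-k}$ this is $(q^{2(n-m)}X^2)^{\ell}$, not $(q^{n-m}X)^{\ell}$.  A direct check with $n=m=1$, $k=1$, $L\approx(\pi^2)$, $L'\approx(1)$ confirms this: one finds $\Den(I_2,L)=\Pden(I_2,L)+q^{-2}\Den(I_2,I_1)$, whereas the lemma as printed would predict the coefficient $-q^{-1}$.  So your instinct that the sign cannot come from counting is exactly right, and your proposed ``extremal case plus M\"obius inversion'' would, if executed, expose the same discrepancy rather than resolve it.  The issue is not with your method but with matching its output to the exponent printed in the statement; the correct coefficient in this unramified setting is $(q^{2(n-m)}X^2)^{\ell(L'/L)}$.
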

	
	Conversely, we can recover primitive local density polynomials as a linear combination of local density polynomials.
	\begin{theorem}\cite[Theorem 5.2]{HSY3}\label{thm:ind formula reducing valuation}
		Let $M$ and $L$ be lattices of rank $m$ and $n$. We have
		\begin{align*}
			\Pden(M, L,X)
			= \sum_{i=0}^{n} (-1)^{i} \q^{i(i-1)/2+i(n-m)}X^{i}
			\sum_{\substack{L \subset L' \subset \pi^{-1}L \\ \ell(L'/L)=i}} \den(M, L',X).
		\end{align*}
	\end{theorem}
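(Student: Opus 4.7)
The natural strategy is Möbius inversion over the finite poset of $O_F$-lattices $L'$ with $L \subset L' \subset \pi^{-1}L$. Such $L'$ are in bijection with $\mathbb{F}_{q^2}$-subspaces of $\pi^{-1}L/L \cong \mathbb{F}_{q^2}^n$ via $L' \mapsto L'/L$, and under this bijection $\ell(L'/L) = \dim_{\mathbb{F}_{q^2}}(L'/L)$. The right-hand side of the theorem is then an alternating sum of local densities weighted by such subspaces.

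The first move is to substitute the forward expansion of Lemma \ref{lem: ind formula to prim ld} into each $\den(M, L', X)$ on the right-hand side. This produces a double sum
\[
\sum_{i=0}^{n}(-1)^i q^{i(i-1)/2 + i(n-m)}X^i \sum_{\substack{L \subset L' \subset \pi^{-1}L \\ \ell(L'/L) = i}}\ \sum_{L'' \supset L'} (q^{n-m}X)^{\ell(L''/L')}\Pden(M, L'', X).
\]
Swapping the two inner sums and fixing $L'' \supseteq L$, the $L'$ contributing to the coefficient of $\Pden(M, L'', X)$ are exactly those with $L \subset L' \subset L'' \cap \pi^{-1}L$. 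Using $\ell(L''/L') = \ell(L''/L) - i$, the powers of $X$ and of $q^{n-m}$ combine into $(q^{n-m}X)^{\ell(L''/L)}$, and setting $d \coloneqq \dim_{\mathbb{F}_{q^2}}((L'' \cap \pi^{-1}L)/L)$, the coefficient becomes this prefactor times the purely combinatorial Möbius sum
\[
S_d \;=\; \sum_{i=0}^{d}(-1)^i q^{i(i-1)/2}\binom{d}{i}_{q^2},
\]
where $\binom{d}{i}_{q^2}$ counts $i$-dimensional $\mathbb{F}_{q^2}$-subspaces of a $d$-dimensional one.

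The key step is then the identity $S_d = \delta_{d,0}$, which is a specialization of the $q$-binomial theorem $\prod_{j=0}^{d-1}(1 - tQ^j) = \sum_{i=0}^{d}(-1)^i t^i Q^{i(i-1)/2}\binom{d}{i}_Q$ at $t = 1$, after reconciling the two residue-field conventions $|k| = q$ versus $|O_F/\pi| = q^2$. By the elementary divisor theorem, any $L'' \supsetneq L$ has $L'' \cap \pi^{-1}L \supsetneq L$, hence $d \geq 1$ and such $L''$ contribute zero. The only surviving term is $L'' = L$, where $d = 0$ and the coefficient is $1$, recovering $\Pden(M, L, X)$ on the nose.

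The main obstacle is the combinatorial bookkeeping: correctly isolating $S_d$ from the nested sums, matching powers of $q$ between the forward-formula weight $(q^{n-m}X)^{\ell(L''/L')}$ and the claimed weight $(-1)^i q^{i(i-1)/2 + i(n-m)}X^i$, and reconciling the $\mathbb{F}_q$ versus $\mathbb{F}_{q^2}$ conventions so that the vanishing $S_d = 0$ for $d \geq 1$ drops out of the $q$-binomial theorem. Once this identity is in hand, the rest of the argument is a formal rearrangement of finite sums.
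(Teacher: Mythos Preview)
The paper does not prove this theorem; it is imported verbatim from \cite{HSY3}. Your M\"obius-inversion strategy---substitute Lemma~\ref{lem: ind formula to prim ld} into the right-hand side, exchange sums, and reduce to a Gaussian-binomial identity---is the standard and correct approach to such an inversion, and your bookkeeping down to the appearance of
\[
S_d \;=\; \sum_{i=0}^{d}(-1)^i\,q^{i(i-1)/2}\binom{d}{i}_{q^2}
\]
is accurate. The gap is that $S_d = \delta_{d,0}$ is \emph{false}: already $S_2 = 1 - (1+q^2) + q = q - q^2 \neq 0$. The $q$-binomial theorem with parameter $Q = q^2$ yields $\sum_i (-1)^i Q^{i(i-1)/2}\binom{d}{i}_Q = \prod_{j=0}^{d-1}(1-Q^j) = 0$ for $d\ge 1$, but $Q^{i(i-1)/2} = q^{i(i-1)}$, not $q^{i(i-1)/2}$.

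You flag this as a matter of ``reconciling conventions,'' but no reconciliation is available: taking Lemma~\ref{lem: ind formula to prim ld} and the theorem exactly as written, your own computation shows they are not mutually inverse. The underlying issue is that \cite{HSY3} works in the \emph{ramified} setting, where $|O_F/\pi O_F| = q$ and the exponent $q^{i(i-1)/2}$ is correct; the present paper is in the \emph{unramified} setting, where $|O_F/\pi O_F| = q^2$ and M\"obius inversion over the poset of $\mathbb{F}_{q^2}$-subspaces produces $q^{i(i-1)}$ instead. So either the exponent in the theorem should read $q^{i(i-1)}$, or the weight in Lemma~\ref{lem: ind formula to prim ld} is misstated for this setting; once that is repaired your argument goes through, but as written the key vanishing step does not hold.
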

	
	\begin{corollary}\label{cor: ind structure of partial Den(T)} Let $L$ be a lattice of rank $n$.		Then
		\begin{align*}	
			\ppden_{n,h}(L) = \sum_{i=0}^{n} (-1)^{i} \q^{i(i-1)/2}
			\sum_{\substack{L \subset L' \subset \pi^{-1} L \\ \ell(L'/L)=i}} \pden_{n,h}(L').
		\end{align*}

	\end{corollary}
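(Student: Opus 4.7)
The strategy is to apply Theorem \ref{thm:ind formula reducing valuation} separately to each summand in the definition of $\ppden_{n,h}(L)$ and then recombine. Recall from \eqref{eq: def of pdenL} that $\ppden_{n,h}(L)$ is the linear combination
\[
\Pden_{n,h}'(L) + \sum_{j=0}^{\lfloor(h-1)/2\rfloor} c_{n,h-1-2j}\,\Pden_{n,h-1-2j}(L),
\]
with identical coefficients $1, c_{n,h-1}, c_{n,h-3}, \ldots$ used to build $\pden_{n,h}(L')$ on the analytic side. Both framing lattices $M = I_{n,h}$ and $M = \Lambda_t$ have rank $m = n$, so the factor $q^{i(n-m)}$ in Theorem \ref{thm:ind formula reducing valuation} collapses to $1$, leaving precisely the coefficient $(-1)^i q^{i(i-1)/2}$ that appears in the target identity.

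For each $t = h-1-2j$, I would specialize the polynomial identity of Theorem \ref{thm:ind formula reducing valuation} with $M = \Lambda_t$ at $X = 1$ and divide by $\Den(\Lambda_t, \Lambda_t)$, obtaining
\[
\Pden_{n,t}(L) \;=\; \sum_{i=0}^{n} (-1)^i q^{i(i-1)/2} \sum_{\substack{L \subset L' \subset \pi^{-1}L \\ \ell(L'/L) = i}} \den_{n,t}(L').
\]
For $M = I_{n,h}$ I need the derived version, and here the key observation is that every $L'$ appearing in the inner sum is a rank-$n$ sublattice of $\bV$ with $L'_F = L_F = \bV$. Since $\bV \not\approx I_{n,h} \otimes_{O_F} F$ as hermitian spaces, there is no $F$-linear isometric embedding $L'_F \hookrightarrow I_{n,h} \otimes F$, so $\Den(I_{n,h}, L') = \den(I_{n,h}, L', 1) = 0$. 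Differentiating the polynomial identity of Theorem \ref{thm:ind formula reducing valuation} at $X = 1$, the boundary contributions from $\tfrac{d}{dX}(X^i) = i X^{i-1}$ all vanish, leaving
\[
\Pden'(I_{n,h}, L) \;=\; \sum_{i=0}^{n} (-1)^i q^{i(i-1)/2} \sum_{\substack{L \subset L' \subset \pi^{-1}L \\ \ell(L'/L) = i}} \den'(I_{n,h}, L').
\]
Dividing by $\Den(I_{n,h}, I_{n,h})$ gives the same identity with $\Pden_{n,h}'$ and $\den_{n,h}'$ in place of the unnormalized quantities.

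Taking the linear combination with coefficients prescribed in Definition \ref{definition3.2} and interchanging the outer summation with the lattice sum over $L'$ then yields the claimed equality $\ppden_{n,h}(L) = \sum_i (-1)^i q^{i(i-1)/2} \sum_{L'} \pden_{n,h}(L')$. The only substantive input is the vanishing $\Den(I_{n,h}, L') = 0$ for rank-$n$ sublattices $L' \subset \bV$, which is exactly what makes the derivative at $X = 1$ commute with the inversion formula of Theorem \ref{thm:ind formula reducing valuation}; beyond this observation the argument is purely formal.
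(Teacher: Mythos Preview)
Your proof is correct and is precisely the argument the paper intends by presenting this as a corollary of Theorem~\ref{thm:ind formula reducing valuation}: apply that inversion formula with $m=n$ to each of $M=I_{n,h}$ and $M=\Lambda_t$, use the vanishing $\Den(I_{n,h},L')=0$ (since $L'_F=\bV\not\approx I_{n,h}\otimes_{O_F}F$) to kill the product-rule terms when differentiating at $X=1$, and recombine with the coefficients of Definition~\ref{definition3.2}.
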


	\begin{lemma}\label{lem: 0 vanish of error term}
		For two lattices $L$ and $M$ of the same rank $n$, we have
		\begin{align}\label{eq: Pden(M,L)}
			\Pden(M,L)=\begin{cases}
				\den(M,L) & \text{ if $M\cong L$},\\
				0 & \text{ if $M\not\cong L$}.
			\end{cases}
		\end{align}
		Moreover,
		\begin{align*}
			\den(M,L)=n(M,L)\cdot \den(M,M),
		\end{align*}
		where for two lattices $M,L\subset \bV$ of rank $n$,  $n(M,L)=|\{L'\subset L_F\mid  L\subset L', L'\cong M\}|. $
	\end{lemma}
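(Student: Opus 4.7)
The plan is to prove both claims by direct bijections between $\Herm_{L,M}$ and combinatorially tractable sets, then passing to local densities by counting $O_{F_0}/\pi^d$-points.

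For the identification of $\Pden$ and $\Den$, I would first observe that for $L$ and $M$ of the same rank $n$, a hermitian homomorphism $\phi\colon L\to M$ is primitive if and only if it is an isomorphism. Primitivity means $\phi\bmod\pi$ is injective, and since $L/\pi L$ and $M/\pi M$ are both $n$-dimensional over $k$, this forces $\phi\bmod\pi$ to be an iso; Nakayama then gives surjectivity of $\phi$, and non-degeneracy of the hermitian form gives injectivity. Conversely, isomorphisms are trivially primitive. Thus $\mathrm{Pherm}_{L,M}=\Isom(L,M)$, which is empty when $L\not\cong M$, giving $\Pden(M,L)=0$ in that case. When $L\cong M$, I would further show that every $\phi\in\Herm_{L,M}$ is automatically an iso via a volume comparison: the image $\phi(L)\subset M$ is isometric to $L\cong M$, so $\val(\phi(L))=\val(M)$; combined with the identity $\val(\phi(L))=\val(M)+2\val([M:\phi(L)]_F)$, this forces $\phi(L)=M$. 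Hence $\Herm_{L,M}=\mathrm{Pherm}_{L,M}$ and $\Pden(M,L)=\Den(M,L)$.

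For the product formula, I would set up the bijection
\[
\Herm_{L,M}\;\longleftrightarrow\;\bigl\{(L',\psi)\;:\;L\subset L'\subset L_F,\;L'\cong M,\;\psi\in\Isom(L',M)\bigr\}
\]
sending $\phi\mapsto(\phi^{-1}(M),\phi|_{\phi^{-1}(M)})$, where $\phi$ is viewed as an isometric isomorphism $L_F\xrightarrow{\sim}M_F$ (which exists by non-degeneracy and matching dimensions). The inverse sends $(L',\psi)$ to the composition $L\hookrightarrow L'\xrightarrow{\psi}M$. Since $\#\Isom(L',M)(O_{F_0}/\pi^d)$ depends only on the isometry class of $L'$ and equals $\#\Isom(M,M)(O_{F_0}/\pi^d)$, summing over the $n(M,L)$ admissible lattices $L'$ yields
\[
\#\Herm_{L,M}(O_{F_0}/\pi^d)=n(M,L)\cdot\#\Isom(M,M)(O_{F_0}/\pi^d)
\]
for $d\gg 0$. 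Dividing by the common normalization $q^{dn^2}$ (both schemes have generic dimension $n\cdot(2n-n)=n^2$) and letting $d\to\infty$ gives $\Den(M,L)=n(M,L)\cdot\Den(M,M)$.

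The main technical point is the mod-$\pi^d$ compatibility of the bijection: for $d$ large, a hermitian homomorphism over $O_{F_0}/\pi^d$ should canonically determine a lattice $L'\supset L$ in $L_F$. This is not immediate, but works because any $L'\supset L$ with $L'\cong M$ has index $[L':L]$ uniformly bounded in terms of $\disc(L)$ and $\disc(M)$, so for $d$ past this bound the lattice $L'=\phi^{-1}(M)$ is unambiguously recovered from the mod-$\pi^d$ data, and the arguments of the first paragraph pass to the mod-$\pi^d$ setting by the same uniform boundedness.
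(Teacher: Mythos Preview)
The paper states this lemma without proof. Your argument is correct, and the Nakayama step (primitive $\Leftrightarrow$ isomorphism in equal rank) together with the observation that a hermitian $O_F/\pi^d$-isomorphism detects the integral isometry class once $d$ exceeds the fundamental invariants is exactly what is needed for the first display. The mod-$\pi^d$ version of your volume argument also goes through: from $P^*BP\equiv A\pmod{\pi^d}$ one gets $\Nm(\det P)\det B\equiv\det A\pmod{\pi^d}$, and for $d>\val(L)=\val(M)$ this forces $\det P\in O_F^\times$, hence $\phi$ primitive.

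Your bijection for the product formula is correct but more delicate to run mod $\pi^d$ than your last paragraph suggests (one must check that the lattice $L'=\tilde\phi^{-1}(M)$ is independent of the lift $\tilde\phi$, which requires the uniform bound $L'\subset\pi^{-e_0}L$ and $d\gg e_0$), and in any case heavier than needed here. The primitive decomposition (Lemma~\ref{lem: ind formula to prim ld} at $X=1$, $m=n$) already gives $\Den(M,L)=\sum_{L\subset L'}\Pden(M,L')$; by the first claim only the $n(M,L)$ terms with $L'\cong M$ survive, each equal to $\Den(M,M)$, yielding the product formula in one line. The same decomposition with $L\cong M$ has $L'=L$ as its sole surviving term (any strictly larger $L'$ has strictly smaller $\val$), recovering $\Pden(M,L)=\Den(M,L)$ too. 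So both assertions reduce to the single input $\Pden(M,L')=0$ for $L'\not\cong M$, and Lemma~\ref{lem: ind formula to prim ld} absorbs all the mod-$\pi^d$ bookkeeping you sketch.
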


	\begin{corollary}\label{cor: vanishing of error term}
		Assume $L\not \cong \Lambda_{t}$ for any vertex lattice $\Lambda_{t}$ with  $t <h$.  Then
		\begin{align*}
			\ppden_{n,h}(L)=\Pden_{n,h}'(L).
		\end{align*}
	\end{corollary}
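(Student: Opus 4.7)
The strategy is to invoke the vanishing statement in Lemma \ref{lem: 0 vanish of error term} to kill every term in the sum defining $\ppden_{n,h}(L)$ except the leading one. Recall that by definition
\begin{equation*}
\ppden_{n,h}(L)=\mathrm{Pden}_{n,h}'(L)+\sum_{i=0}^{\lfloor (h-1)/2\rfloor}c_{n,h-1-2i}\cdot\mathrm{Pden}_{n,h-1-2i}(L),
\end{equation*}
where $\mathrm{Pden}_{n,t}(L)=\Pden(\Lambda_t,L)/\Den(\Lambda_t,\Lambda_t)$. The plan is simply to show that $\Pden(\Lambda_t,L)=0$ for every index $t=h-1-2i$ appearing in the sum.

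First I would observe that as $i$ ranges over $0,1,\ldots,\lfloor (h-1)/2\rfloor$, the integer $t=h-1-2i$ ranges through nonnegative values satisfying $t\le h-1<h$, and each $\Lambda_t$ is by definition a vertex lattice of type $t$. In particular each $\Lambda_t$ has rank $n$, the same rank as $L$. By hypothesis $L\not\cong \Lambda_t$ for any vertex lattice with $t<h$, so in particular $L\not\cong \Lambda_{h-1-2i}$ for every $i$ in the summation range.

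Second, since $\Lambda_{h-1-2i}$ and $L$ are lattices of the same rank $n$ and are not isomorphic, Lemma \ref{lem: 0 vanish of error term} (specifically the second case of \eqref{eq: Pden(M,L)}) gives
\begin{equation*}
\Pden(\Lambda_{h-1-2i},L)=0
\end{equation*}
for every $i=0,\ldots,\lfloor (h-1)/2\rfloor$. Dividing by the nonzero normalizing factor $\Den(\Lambda_{h-1-2i},\Lambda_{h-1-2i})$ shows $\mathrm{Pden}_{n,h-1-2i}(L)=0$ for each such $i$. Substituting back into the defining formula leaves only the leading term, yielding $\ppden_{n,h}(L)=\mathrm{Pden}_{n,h}'(L)$ as claimed.

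There is no real obstacle here; the corollary is essentially a direct bookkeeping consequence of the primitive-vanishing lemma, and the only thing to check is that the indices in the correction sum all fall in the range $t<h$ governed by the hypothesis, which is immediate from the summation bound $i\le \lfloor (h-1)/2\rfloor$.
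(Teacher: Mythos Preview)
Your argument is correct and is exactly the intended one: the paper states this result as an immediate corollary of Lemma~\ref{lem: 0 vanish of error term} with no separate proof, and your write-up simply makes explicit the one-line observation that each correction index $t=h-1-2i$ satisfies $t<h$, so the hypothesis combined with \eqref{eq: Pden(M,L)} kills every term in the sum.
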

	
	\begin{corollary} \label{cor: coefficient}
		Let $c_{n,t} $ be the coefficients in \eqref{eq:coeff} with  even $t$ and  $0<t\le t_{\max}$.  Then
		\begin{align*}
			c_{n,t} =-\Pden_{n,h}'(\Lambda_t).
		\end{align*}
	\end{corollary}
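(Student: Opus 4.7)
The plan is to reduce Corollary \ref{cor: coefficient} to the vanishing $\ppden_{n,h}(\Lambda_t) = 0$ for every valid index $t$ (i.e.\ $0 \le t \le h-1$ with $t \equiv h-1 \pmod 2$), and then extract this vanishing from a triangular linear system built from \eqref{eq:coeff}. The reduction is immediate from the definition of $\ppden_{n,h}$: Lemma \ref{lem: 0 vanish of error term} gives $\Pden(\Lambda_j,\Lambda_t) = \Den(\Lambda_j,\Lambda_j)\cdot\delta_{j,t}$, so $\Pden_{n,j}(\Lambda_t)=\delta_{j,t}$ for valid $j,t$, and hence
\[
\ppden_{n,h}(\Lambda_t) \;=\; \Pden'_{n,h}(\Lambda_t) + c_{n,t}.
\]
Thus the desired identity $c_{n,t} = -\Pden'_{n,h}(\Lambda_t)$ is equivalent to $\ppden_{n,h}(\Lambda_t) = 0$.

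To obtain this vanishing I would apply the decomposition \eqref{eq:decompostionP}, $\pDen_{n,h}(L) = \sum_{L \subset L'} \ppden_{n,h}(L')$, which follows from Lemma \ref{lem: ind formula to prim ld} by expanding both $\Den(I_{n,h},L,X)$ and $\Den(\Lambda_j,L,X)$ as sums of primitive polynomials and differentiating at $X = 1$. The apparent ``extra'' derivative contribution of the form $\sum_{L' \supset L,\, L' \cong I_{n,h}} \ell(L'/L)$ vanishes by Lemma \ref{lem: 0 vanish of error term} together with the observation that no $O_F$-lattice in $\bV$ can be isomorphic to $I_{n,h}$, since the two ambient hermitian spaces have opposite parity of discriminant. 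Plugging $L = \Lambda_t$ into this decomposition and using the defining relation \eqref{eq:coeff} yields
\[
0 \;=\; \sum_{L' \supset \Lambda_t} \ppden_{n,h}(L').
\]

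Finally I would argue that only vertex over-lattices of $\Lambda_t$ contribute. Non-integral $L'$ give $\ppden_{n,h}(L') = 0$, because the vanishing clause in Lemma \ref{lem: ind formula to prim ld} forces both $\Pden(I_{n,h},L',X)$ and $\Pden(\Lambda_j,L',X)$ to be the zero polynomial when $L'$ has a negative fundamental invariant. For integral $L' \supset \Lambda_t$, the inclusion chain $L' \subset L'^{\vee} \subset \Lambda_t^{\vee} \subset \pi^{-1}\Lambda_t \subset \pi^{-1}L'$ shows that $L'$ itself is a vertex lattice $\Lambda_s$ with $s = \val(L') \le t$ and $s \equiv h-1 \pmod 2$ (the parity forced by $\bV$). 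Letting $t$ range over all valid indices, the relation becomes the upper-triangular linear system
\[
\sum_{\substack{s \le t \\ s \equiv h-1 \,(\mathrm{mod}\,2)}} n(\Lambda_s, \Lambda_t)\, \ppden_{n,h}(\Lambda_s) \;=\; 0,
\]
with $n(\Lambda_t, \Lambda_t) = 1$ on the diagonal; its unique solution is $\ppden_{n,h}(\Lambda_s) = 0$ for every valid $s$, which is exactly what was needed. The only mildly delicate step is the verification of \eqref{eq:decompostionP} together with the parity argument ruling out $L' \cong I_{n,h}$; everything else is formal manipulation.
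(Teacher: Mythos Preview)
Your proposal is correct. Both your argument and the paper's reduce to showing $\ppden_{n,h}(\Lambda_t)=0$ via the identity $\ppden_{n,h}(\Lambda_t)=\Pden'_{n,h}(\Lambda_t)+c_{n,t}$ coming from Lemma~\ref{lem: 0 vanish of error term}, and both rest on the observation that every integral over-lattice of $\Lambda_t$ in $\bV$ is again a vertex lattice of type $s\le t$ with $s\equiv h-1\pmod 2$. The difference lies in which direction of the primitive decomposition is invoked: the paper applies the M\"obius inversion of Corollary~\ref{cor: ind structure of partial Den(T)} to express $\ppden_{n,h}(\Lambda_t)$ directly as an alternating sum of $\pden_{n,h}(L')$'s, each of which vanishes by \eqref{eq:coeff}; you instead use the forward decomposition \eqref{eq:decompostionP} and then invert by solving the resulting upper-triangular system in the unknowns $\ppden_{n,h}(\Lambda_s)$. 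Your route has the mild advantage of avoiding Theorem~\ref{thm:ind formula reducing valuation} and of making explicit the parity argument (no $L'\cong I_{n,h}$ inside $\bV$) needed to justify \eqref{eq:decompostionP}; the paper's route is one step shorter since no linear system needs to be solved.
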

	\begin{proof}
		On the one hand, combining Corollary \ref{cor: ind structure of partial Den(T)} with \eqref{eq:coeff}, we obtain
		\begin{align*}
			\ppden_{n,h}(\Lambda_t)=0.
		\end{align*}
		On the other hand, by  Lemma \ref{lem: 0 vanish of error term} and \eqref{eq: def of pdenL},
		\begin{align*}
			\ppden_{n,h}(\Lambda_{t})=\Pden_{n,h}'(\Lambda_{t})+c_{n,t}.
		\end{align*}
	\end{proof}

	We will give another formulation of the conjecture \ref{conj: main section den} (conjecture \ref{conj: specialized}) in \S \ref{sec: weighted conj} which is based on the duality between $\cN_{n}^{[h]}$ and $\cN_{n}^{[n-h]}$, and is in fact more general (it takes care of the intersections between $\cZ$-cycles and $\cY$-cycles). The main terms of these two formulations are the same by direct calculations. However, interestingly, it is not clear that these two formulations have the same correction terms from the definition.

	\section{Our strategy: $\CN^{[2]}_4$}\label{section4}
	Our general strategy is closest to the unramified unitary case \cite{LZ} since $\CN^{[0]}_n \subset \CN^{[h]}_{n+h}$, and has several new ingredients which are quite complicated. Therefore, in this section, we consider the case $\CN^{[2]}_4$ and explain our strategy. Indeed, this is the first new case we proved and almost all ideas are essentially from this case. Therefore, we believe that this section will be helpful for readers. Since we want to explain our strategy in detail, we will freely use notations from the following sections.
	
	Let $\BV$ be the space of special homomorphisms (dimension $4$), and let $L^{\flat}\subset \BV$ be an $O_F$-lattice of rank 3. For any lattice $L'^{\flat}$ such that $L^{\flat} \subset L'^{\flat} \subset (L'^{\flat})^{\vee}\subset L_F^{\flat}$, we define the primitive part $\CZ(L'^{\flat})^{\circ}$ of the special cycle $\CZ(L'^{\flat})$ inductively by setting
	\begin{equation*}
		\CZ(L'^{\flat})^{\circ}\coloneqq \CZ(L'^{\flat})-\mathlarger{\sum}_{\substack{L'^{\flat} \subset L''^{\flat}\\L''^{\flat} \subset (L''^{\flat})^{\vee} \subset L'^{\flat}_F}} \CZ(L''^{\flat})^{\circ}.
	\end{equation*}
	
	Then, for $x \in \BV \backslash L_F^{\flat}$, the Kudla-Rapoport conjecture (Conjecture \ref{conjecture5.5}) is equivalent to
	\begin{equation*}
		\text{Int}_{L'^{\flat\circ}}(x)\coloneqq \chi(\CN^{[2]}_4, \text{}^{\BL}\CZ(L'^{\flat})^{\circ} \otimes^{\BL}  O_{\CZ(x)})=\mathlarger{\sum}_{L'^{\flat} \subset L' \subset L'^{\vee}, L'\cap L_F^{\flat}=L'^{\flat}}D_{4,2}(L')1_{L'}(x)=:\partial Den^{4,2}_{L'^{\flat\circ}}(x),
	\end{equation*}
	where $D_{4,2}(L')$ is the Cho-Yamauchi constant for $\CN^{[2]}_4$ (Definition \ref{definition5.9}).
	
	The first step is decomposing $\text{Int}_{L'^{\flat\circ}}(x)$ and $\partial \text{Den}^{4,2}_{L'^{\flat\circ}}(x)$ into horizontal and vertical parts:
	\begin{equation*}
		\begin{array}{l}
			\text{Int}_{L'^{\flat\circ}}(x)=\text{Int}_{L'^{\flat\circ},\ScH}(x)+\text{Int}_{L'^{\flat\circ},\ScV}(x),\\
			\partial \text{Den}^{4,2}_{L'^{\flat\circ}}(x)=\partial \text{Den}^{4,2}_{L'^{\flat\circ},\ScH}(x)+\partial \text{Den}^{4,2}_{L'^{\flat\circ},\ScV}(x).
		\end{array}
	\end{equation*}
	In the case of the good reduction $\CN^{[0]}_n$, this decomposition is relatively simple: if $L'^{\flat}$ has the fundamental invariants $(0,0,\dots,0,\alpha), \alpha \geq 1$, then $\text{Int}_{L'^{\flat\circ}}(x)$ and $\partial \text{Den}^{4,2}_{L'^{\flat\circ}}(x)$ are horizontal. Otherwise, $\text{Int}_{L'^{\flat\circ}}(x)$ and $\partial \text{Den}^{4,2}_{L'^{\flat\circ}}(x)$ are vertical.
	
	However, when $h \geq 1$, i.e., in the case of bad reduction, horizontal parts and vertical parts cannot be separated just by the fundamental invariants of $L'^{\flat}$. Indeed, even in $\CN^{[1]}_2$, $\CZ(L_1)^{\circ}$ $(L_1 \simeq (\pi))$ is a sum of horizontal parts and vertical parts. To understand this phenomenon, we did some explicit computation on the analytic side by using \cite{Cho4}: for $L_{\beta\gamma\delta} \in \BV$ of rank 3 with fundamental invariants $(\delta,\gamma,\beta)$, and $x \in \BV \backslash (L_{\beta\gamma\delta})_F$ with $\val ((x,x))=\alpha \geq \beta \geq \gamma\geq \delta$, we have
	\begin{equation*}\begin{array}{l}
			\partial \text{Den}_{L_{100}^{\circ}}^{4,2}(x)=\alpha/2, \quad\quad\quad\quad\quad\quad\quad\quad\quad\quad \quad\quad\quad\,\,\  \partial \text{Den}_{L_{300}^{\circ}}^{4,2}(x)=(q^2+q)\alpha/2+1-q^2,\\
			\partial \text{Den}_{L_{311}^{\circ}}^{4,2}(x)=(q^7+q^6)\alpha/2-(q^7-q^5-1), \quad\quad\quad \partial \mathrm{Den}_{L_{210}^{\circ}}^{4,2}(x)=q^3-q+1,\\
			\partial \text{Den}_{L_{221}^{\circ}}^{4,2}(x)=-(q^2-1)(q^2-q+1)(q^3+q^2+q+1),\\
			\partial \text{Den}_{L_{331}^{\circ}}^{4,2}(x)=-(q^2-1)(q^6+q^5+q^4+2q^3+q^2+1),\\
			\partial \text{Den}_{L_{322}^{\circ}}^{4,2}(x)=-(q^2-1)(q^2-q+1)(q^4+2q^3+q^2+q+1),\\
			\quad\quad \quad \quad \quad\quad \quad \quad\quad\quad \quad \quad \vdots
		\end{array}
	\end{equation*}
	The most interesting thing in this computation is the fact that $\partial \text{Den}_{L_{\beta\gamma\delta}^{\circ}}^{4,2}(x)$ does not depend on $\alpha$ if $(\beta,\gamma,\delta) \neq (\beta,0,0), (\beta,1,1)$. A similar phenomenon happens in the case of $\CN^{[0]}_n$ for vertical components since these are locally constant. Therefore, it is reasonable to guess that the horizontal parts of $\CZ(L^{\flat})$ are contained in $\CZ(L_{\beta\gamma\delta})^{\circ}$s for $(\beta,\gamma,\delta)=(\beta,0,0), (\beta,1,1)$ (which turns out to be true by Theorem \ref{thm: hori part}). Since $\CZ(L_{\beta00})^{\circ}, \CZ(L_{\beta11})^{\circ}$ have some vertical parts too, it should be handled carefully. Anyway, horizontal parts can be understood in this way, so for now, let us focus on the cases where $(\beta,\gamma,\delta) \neq (\beta,0,0), (\beta,1,1)$.
	
	Now, we guessed that $\CZ(L_{\beta\gamma\delta})^{\circ}$ is purely vertical if $(\beta,\gamma,\delta) \neq (\beta,0,0),(\beta,1,1)$. The next step is to understand the Fourier transforms
	\begin{equation*}\begin{array}{c}
			\widehat{\text{Int}}_{L_{\beta\gamma\delta}^{\circ},\ScV}(x) \overset{guess}{=}\widehat{\text{Int}}_{L_{\beta\gamma\delta}^{\circ}}(x),\\
			\widehat{\partial \text{Den}}_{L_{\beta\gamma\delta}^{\circ},\ScV}^{4,2}(x)\overset{guess}{=}\widehat{\partial \text{Den}}_{L_{\beta\gamma\delta}^{\circ}}^{4,2}(x),
		\end{array}
	\end{equation*}
	for $x \perp L_{\beta\gamma\delta}$, $\val((x,x))<0$. In the case of good reduction $\CN^{[0]}_n$, both $\widehat{\text{Int}}_{L'^{\flat\circ},\ScV}(x)$ and $\widehat{\partial \text{Den}}_{L'^{\flat\circ},\ScV}^{n,0}(x)$ vanish when $\val((x,x)) <0$, and hence
	\begin{equation}\label{eqint1.1}
		\widehat{\text{Int}}_{L'^{\flat\circ},\ScV}(x)-\widehat{\partial \text{Den}}_{L'^{\flat\circ},\ScV}^{n,0}(x)=0, \quad \text{for }\val((x,x))<0.
	\end{equation}
	
	Since \eqref{eqint1.1} is the most crucial property to prove the Kudla-Rapoport conjecture inductively, we have to show that \eqref{eqint1.1} holds in our cases.
	
	In $\CN^{[2]}_4$, we still have that $\widehat{\partial \text{Den}}_{L_{\beta\gamma\delta}^{\circ}}^{4,2}(x)=0$ if $\val((x,x)) \leq -2$, but $\widehat{\partial \text{Den}}_{L_{\beta\gamma\delta}^{\circ}}^{4,2}(x)$ is not zero for $\val((x,x))=-1$ (see Theorem \ref{theorem5.46}, Theorem \ref{theorem5.47}, Theorem \ref{theorem8.18}, Theorem \ref{theorem8.19}, Theorem \ref{theorem9.1.1}). Indeed, we can compute that for $\val((x,x))=-1$,
	\begin{equation*}
		\begin{array}{ll}
			\widehat{\partial \text{Den}}_{L_{444}^{\circ}}^{4,2}(x)=\dfrac{1}{q^2}(q^2-1)(q^3+1),&
			\widehat{\partial \text{Den}}_{L_{431}^{\circ}}^{4,2}(x)=-\dfrac{1}{q^2}(q+1)(q^3-q+1),\\\\
			\widehat{\partial \text{Den}}_{L_{440}^{\circ}}^{4,2}(x)=\dfrac{1}{q^2}(q^2-1),&
			\widehat{\partial \text{Den}}_{L_{310}^{\circ}}^{4,2}(x)=-\dfrac{1}{q^2}.
		\end{array}
	\end{equation*}
	What is the meaning of these numbers? This is the main obstruction when we try to prove the conjecture. Since we cannot compute the geometric side directly, it is not possible to prove the conjecture without knowing the meaning of these numbers.
	
	Fortunately, we had a table of the Cho-Yamauchi constants $D_{3,1}(L)$:
	\begin{equation*}
		\begin{array}{ll}
			D_{3,1}(L_{444})=-(q^2-1)(q^3+1),&
			D_{3,1}(L_{431})=(q+1)(q^3-q+1),\\
			D_{3,1}(L_{440})=-(q^2-1),&
			D_{3,1}(L_{310})=1.
		\end{array}
	\end{equation*}
	Now, it is easy to see that
	\begin{equation}\label{eqint1.2}
		\widehat{\partial \text{Den}}_{L_{\beta\gamma\delta}^{\circ}}^{4,2}(x)=-\dfrac{1}{q^2}D_{3,1}(L_{\beta\gamma\delta}).
	\end{equation}
	This is the most important observation in our work since this gives the following crucial ideas.
	
	First, note that $\widehat{\pDen}_{L_{\beta\gamma\delta}^{\circ}}^{4,2}(x)$ is a certain linear sum of the Cho-Yamauchi constants $D_{4,2}(L)$ for $\CN^{[2]}_4$ and the right-hand side of \eqref{eqint1.2} is the Cho-Yamauchi constants $D_{3,1}(L)$ for $\CN^{[1]}_3$. This suggests that there should be certain inductive relations among $D_{n,h}(L)$ and $D_{n-1,h-1}(L)$, $\forall 0 \leq h \leq n$ (see Theorem \ref{theorem5.24}). Since the Cho-Yamauchi constants for $\CN^{[h]}_n$ are very complicated (for example, $D_{6,2}(L)=(q-1)(q+1)^3(q^2-q+1)(q^{13}-q^{12}+q^{11}+q^{10}-2q^9+3q^8-3q^7+q^6-2q^4+2q^3-2q^2+q-1)$ for a lattice $L$ with fundamental invariants $(1,1,a_4,a_3,a_2,a_1)$, $a_i \geq 2$), we may not be able to find these inductive relations without the above observation \eqref{eqint1.2}.
	
	These inductive relations are the most important ingredients to understand the analytic side and by using them, we have a quite complete understanding on the analytic side of the Kudla-Rapoport conjecture for $\CN^{[h]}_n$, $\forall 0 \leq h \leq n$. 
	
	Second, note that the right-hand side of \eqref{eqint1.2} is the Cho-Yamauchi constant for $\CN^{[1]}_3$, not $\CN^{[2]}_3$. Also, note that we need a $Y$-cycle to get a reduction from $\CN^{[2]}_4$ to $\CN^{[1]}_3$ (see Proposition \ref{proposition2.5}). This means that $Y$-cycles appear when we take the Fourier transform of $\text{Int}_{L_{\beta\gamma\delta}^{\circ}}(x)=\chi(\CN^{[2]}_4, \text{}^{\BL}\CZ(L_{\beta\gamma\delta}^{\circ}) \otimes^{\BL}  O_{\CZ(x)})$. Indeed, the Kudla-Rapoport conjecture for $\CN_{3}^{[1]}$ holds, therefore,
	\begin{equation*}
		D_{3,1}(L_{\beta\gamma\delta})=\chi(\CN^{[2]}_4, \text{}^{\BL}\CZ(L_{\beta\gamma\delta})^{\circ} \otimes^{\BL}  O_{\CY(x)}), \quad \val((x,x))=-1.
	\end{equation*}
	Therefore, \eqref{eqint1.2} suggests that
	\begin{equation*}\begin{array}{rl}
			\widehat{\chi(\CN^{[2]}_4, \text{}^{\BL}\CZ(L_{\beta\gamma\delta})^{\circ} \otimes^{\BL}  O_{\CZ(x)})}=\widehat{\text{Int}}_{L_{\beta\gamma\delta}^{\circ}}(x)&\\
			\overset{conjecture}{=}\widehat{\partial \text{Den}}_{L_{\beta\gamma\delta}^{\circ}}^{4,2}(x)&=-\dfrac{1}{q^2}D_{3,1}(L_{\beta\gamma\delta})=-\dfrac{1}{q^2}\chi(\CN^{[2]}_4, \text{}^{\BL}\CZ(L_{\beta\gamma\delta})^{\circ} \otimes^{\BL}  O_{\CY(x)}).
		\end{array}
	\end{equation*}
	If we can prove this relation, then we can show that \eqref{eqint1.1} holds for all $\val(x,x)<0$, and we can use an inductive argument to prove the Kudla-Rapoport conjecture for $\CN^{[2]}_4$.
	
	By \cite[Lemma 6.3.1]{LZ} and \cite[Theorem 8.1]{ZhiyuZhang}, we know that
	\begin{equation*}
		\widehat{\chi(\CN^{[2]}_4, O_C \otimes^{\BL}  O_{\CZ(x)})}=-\dfrac{1}{q^2}\chi(\CN^{[2]}_4, O_C \otimes^{\BL}  O_{\CY(x)}),
	\end{equation*}
	if $C$ is a Deligne-Lusztig curve or $\BP^1$. Therefore, if $\CZ(L_{\beta\gamma\delta})^{\circ}$ is a linear sum of Deligne-Lusztig curves or $\BP^1$ (in the Grothendieck group of coherent sheaves), then we have
	\begin{equation*}
		\widehat{\chi(\CN^{[2]}_4, \text{}^{\BL}\CZ(L_{\beta\gamma\delta})^{\circ} \otimes^{\BL}  O_{\CZ(x)})}=-\dfrac{1}{q^2}\chi(\CN^{[2]}_4, \text{}^{\BL}\CZ(L_{\beta\gamma\delta})^{\circ} \otimes^{\BL}  O_{\CY(x)}).
	\end{equation*}
	
	This is how we make the Conjecture \ref{conjecture9.2.1} (this can be regarded as a variant of Tate conjectures for certain Deligne-Lusztig varieties), and we prove that if Conjecture \ref{conjecture9.2.1} holds, then the Kudla-Rapoport conjecture holds (see Theorem \ref{theorem9.1.3}). Then, we prove that Conjecture \ref{conjecture9.2.1} holds for $\CN^{[2]}_{4}$ and some other cases (see Theorem \ref{theorem9.2.1}). This is how we prove the Kudla-Rapoport conjecture for $\CN^{[2]}_4$.

	\section{Horizontal parts of Kudla–Rapoport cycles}

	In this section, we describe  the horizontal parts of special cycles following the approach of \cite[\S 4]{LZ}.  Due to the existence of non-trivial level structures, there are some new phenomena.
	
	Let $K$ denote a finite extension of $\breve{F}$. Consider $z \in \mathcal{N}_n^{[h]}(O_K)$ which corresponds to an $O_F$-hermitian module $G$ of signature $(1, n-1)$ over $O_K$. Let $T_p(-)$ denote the integral $p$-adic Tate modules and
	$$
	L\coloneqq \operatorname{Hom}_{O_F} (T_p \overline{\cE}, T_p G ).
	$$
	We can associate  $L$ with a hermitian form $\{x,y\}$ given by
	$$ (T_p \overline{\cE} \stackrel{x}{\rightarrow} T_p G \stackrel{\lambda_G}{\longrightarrow} T_p G^{\vee} \stackrel{y^{\vee}}{\longrightarrow} T_p \overline{\cE}^{\vee} \stackrel{\lambda_{\overline{\mathcal{\mathcal { E }}}}^{\vee}}{\longrightarrow} T_p \overline{\cE} ) \in \operatorname{End}_{O_F} (T_p \overline{\cE}) \cong O_F.$$
	One can check that $L$ is represented by the hermitian matrix $\diag((1)^{n-h},(\pi)^h)$.

	Following \cite[\S 4]{LZ}, we consider two injective $O_F$-linear isometric homomorphisms
	$$
	i_K: \operatorname{Hom}_{O_F}(\overline{\cE}, G)_F \rightarrow L_{F},
	$$
	and
	$$
	i_{\bar{k}}: \operatorname{Hom}_{O_F}(\overline{\cE}, G)_F \rightarrow \mathbb{V} .
	$$
	By \cite[Lemma 4.4.1]{LZ}, we have  
	\begin{align}\label{eq: intersection}
		\operatorname{Hom}_{O_F}(\overline{\cE}, G)=i_K^{-1}(L).
	\end{align}

	By the definition of special cycles, for any $O_F$-lattice $M \subset \mathbb{V}$, we have $z \in \mathcal{Z}(M)(O_K)$ if and only if $M \subset i_{\bar{k}} (\operatorname{Hom}_{O_F}(\overline{\cE}, G) )$. By \eqref{eq: intersection}, $z \in \mathcal{Z}(M)(O_K)$ if and only if
	\begin{align}\label{eq: equiv}
		M \subset i_{\bar{k}} (i_K^{-1}(L)).
	\end{align}

	Now assume that $z \in \mathcal{Z}(L^\flat)(O_K)$ corresponds to  an $O_F$-hermitian module $G$ of signature $(1, n-1)$ over $O_K$. By \eqref{eq: equiv}, we have
	$$
	L^\flat \subset i_{\bar{k}}(i_K^{-1}(L) ).
	$$
	Define $M^\flat\coloneqq L_F^\flat \cap i_{\bar{k}}(i_K^{-1}(L) )$. By \eqref{eq: equiv}, we still have $z \in \mathcal{Z} (M^\flat )(O_K)$. Moreover, we have
	$$
	M^\flat \stackrel{\sim}{\longrightarrow} L \cap i_K (i_{\bar{k}}^{-1} (L_F^\flat ) ).
	$$
	Set $\mathbb{W}=i_K (i_{\bar{k}}^{-1} (L_F^\flat ) )$, which has the same dimension as $L_F^\flat$.

	\begin{definition}
		Define $H(\bV)$ to be the collection of  $O_F$-lattices $M^\flat \subset \bV$ of rank $n-1$ such that 
		\begin{align*}
			M^\flat \approx \diag((1)^{n-h},(\pi)^{h-2},(\pi^a)) \text{ with $a\in \Z_{> 0}$}  \quad \text{or} \quad    M^\flat \approx \diag(1^{n-h-2},(\pi)^{h},(\pi^a))  \text{ with $a\in \Z_{\ge 0}$}.
		\end{align*}
	\end{definition}
	
	\begin{definition}  
		For any lattice $L'^{\flat}$ such that $L^{\flat} \subset L'^{\flat} \subset (L'^{\flat})^{\vee}\subset L_F^{\flat}$, we define the primitive part $\CZ(L'^{\flat})^{\circ}$ of the special cycle $\CZ(L'^{\flat})$ inductively by setting
		\begin{equation*}
			\CZ(L'^{\flat})^{\circ}\coloneqq \CZ(L'^{\flat})-\mathlarger{\sum}_{\substack{L'^{\flat} \subset L''^{\flat}\\L''^{\flat} \subset (L''^{\flat})^{\vee} \subset L'^{\flat}_F}} \CZ(L''^{\flat})^{\circ}.
		\end{equation*}
		Moreover, we define $\cY(L'^{\flat})^\circ$ similarly.
	\end{definition}

	\begin{theorem}\label{thm: hori part}
		Let $L^\flat \subseteq \mathbb{V}_n$ be a hermitian $O_F$-lattice of rank $n-1$. Then
		$$
		\mathcal{Z} (L^\flat )_{\mathscr{H}}=\bigcup_{\substack{L^\flat \subseteq M^\flat, \\ M^\flat \in H(\bV)}} \mathcal{Z} (M^\flat )_{\mathscr{H}}^{\circ}.
		$$
		Moreover, we have the following. 
		\begin{enumerate}
			\item If $M^\flat\approx \diag((1)^{n-h},(\pi)^{h-2},(\pi^a)) \text{ with $a\in \Z_{> 0}$}$, then $\cZ(M^\flat)_{\mathscr{H}}\cong \cZ(M^\flat)\simeq \cZ(x)\subset \cN_2^{[0]}$ with $\val(x)=a-1,$ which is a quasi-canonical lifting of degree $a-1$.  
			\item If  $  M^\flat \approx \diag(1^{n-h-2},(\pi)^{h},(\pi^a))  \text{ with $a\in \Z_{\ge 0}$}$, then  
			\begin{align*}  
				\cZ(M^\flat)^\circ_{\mathscr{H}}=\sum_{\substack{M^\flat \subset  M_1\oplus N_2 \subset \pi^{-1}M^\flat \\ N_2\approx  (\pi^{-1})^h}} \cZ(M_1)^\circ\cdot \cY(N_2)^\circ.
			\end{align*}
			Here, each $\cZ(M_1)^\circ\cdot \cY(N_2)^\circ$ is a quasi-canonical lifting of degree $a$. The summation index   has cardinality:
			\begin{align*}
				\begin{cases}
					q^{2n-2} & \text{ if } a \ge 1,\\
					q^{2n-2}\dfrac{1-(-q)^{-n}}{1-(-q)^{-1}} &\text{ if } a=0.
				\end{cases}
			\end{align*}
		\end{enumerate}
		
	\end{theorem}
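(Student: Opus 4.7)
The plan is to adapt the strategy of \cite[\S 4]{LZ}, which handles the hyperspecial case $h=0$, to general level $h$. The key translation is the Tate-module dictionary recalled just before the statement, which turns horizontal $O_K$-points of $\mathcal{Z}(L^\flat)$ into rank-$(n-1)$ saturated sublattices of $L = I_{n-h} \obot \pi I_h$.

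I would begin with the union formula. For a horizontal point $z \in \mathcal{Z}(L^\flat)(O_K)$, set $M^\flat \coloneqq L_F^\flat \cap i_{\bar{k}}(i_K^{-1}(L))$; by construction $M^\flat$ is the unique maximal $O_F$-lattice in $L_F^\flat$ containing $L^\flat$ with $z \in \mathcal{Z}(M^\flat)(O_K)$, so $z \in \mathcal{Z}(M^\flat)^{\circ}_{\mathscr{H}}$. Via the isomorphism $M^\flat \cong L \cap \mathbb{W}$, the classification of possible fundamental invariants of $M^\flat$ reduces to describing saturated corank-$1$ sublattices of $L$. Parametrising these as $v^\perp \cap L$ for primitive non-isotropic $v \in L$, and decomposing $v = v_1 + v_2$ according to the Jordan splitting, a direct case analysis on whether $v_1 \in I_{n-h}$ has unit norm or not (and whether $v_1 = 0$) shows that the invariants of $M^\flat$ fall into exactly the two families defining $H(\mathbb{V})$.

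For case (1), where $M^\flat \approx \diag((1)^{n-h},(\pi)^{h-2},(\pi^a))$ with $a>0$, I would iterate Proposition~\ref{proposition2.5}(1) on the $n-h$ unit-norm basis directions of $M^\flat$ to identify $\mathcal{Z}(M^\flat) \subset \mathcal{N}_n^{[h]}$ with a special cycle inside $\mathcal{N}_h^{[h]}$. Applying the duality $\theta \colon \mathcal{N}_h^{[h]} \cong \mathcal{N}_h^{[0]}$, which interchanges $\mathcal{Z}$- and $\mathcal{Y}$-cycles while shifting the hermitian form by $\pi^{-1}$, converts the remaining $h-1$ generators (of valuations $1,\ldots,1,a$) into $\mathcal{Y}$-cycles of valuations $-1,\ldots,-1,a-1$. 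Iterating Proposition~\ref{proposition2.5}(2) on the $h-2$ directions of valuation $-1$ then cuts down to $\mathcal{N}_2^{[0]}$, where one is left with a $\mathcal{Y}$-cycle of valuation $a-1$; by the quasi-canonical lifting theory of \cite{KRshimuracurve}, this is horizontal of the stated degree, which also establishes $\mathcal{Z}(M^\flat)_{\mathscr{H}} = \mathcal{Z}(M^\flat)$.

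The main obstacle is case (2), with $M^\flat \approx \diag((1)^{n-h-2},(\pi)^h,(\pi^a))$, because the $h$ norm-$\pi$ generators are not directly susceptible to Propositions~\ref{proposition2.5}--\ref{proposition2.6}. To prove the decomposition $\mathcal{Z}(M^\flat)^{\circ}_{\mathscr{H}} = \sum \mathcal{Z}(M_1)^{\circ}\cdot\mathcal{Y}(N_2)^{\circ}$, I would analyse the Tate-module data at a horizontal point: each norm-$\pi$ generator of $M^\flat$ admits refinements to a norm-$(-1)$ element in a suitable enlargement inside $\pi^{-1}M^\flat$, and the ambiguity of these refinements is precisely parameterised by a direct-sum splitting $\pi^{-1}M^\flat \supset M_1 \oplus N_2 \supset M^\flat$ with $N_2 \approx (\pi^{-1})^h$. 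Once such a splitting is fixed, each summand $\mathcal{Z}(M_1)^{\circ}\cdot\mathcal{Y}(N_2)^{\circ}$ reduces, by the same argument as in case (1) (now starting with $h$ $\mathcal{Y}$-cycles of valuation $-1$ already in place), to a quasi-canonical lifting of degree $a$. The hardest parts will be verifying this $\mathcal{Z}$-to-$\mathcal{Y}$ refinement at the formal-scheme level rather than only on $O_K$-points, and the combinatorial enumeration of admissible splittings: this is an orbit count for the induced action of $\U_h(\mathbb{F}_{q^2})$ on the hermitian $\mathbb{F}_{q^2}$-space $\pi^{-1}M^\flat/M^\flat$, yielding $q^{2n-2}$ when $a\ge 1$, with the additional factor $(1-(-q)^{-n})/(1-(-q)^{-1})$ in the $a=0$ case coming from extra splittings in which the unit-norm ``leftover'' direction participates nontrivially.
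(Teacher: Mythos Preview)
Your overall strategy---the Tate-module dictionary, the classification of saturated corank-$1$ sublattices of $I_{n-h}\obot\pi I_h$ to obtain $H(\mathbb{V})$, and the reductions via Propositions~\ref{proposition2.5}--\ref{proposition2.6}---coincides with the paper's. Two points deserve comment.

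First, there is a real gap at the step you yourself flag as ``the hardest part'': upgrading the decomposition in case~(2) from $O_K$-points to the formal-scheme level. Showing that every horizontal $O_K$-point lies in some $\mathcal{Z}(M_1)^\circ\cdot\mathcal{Y}(N_2)^\circ$ only gives the union \emph{set-theoretically}; to get the claimed identity of cycles one must verify that the multiplicity of each quasi-canonical piece is exactly~$1$. The paper handles this by a Grothendieck--Messing deformation argument: for any $z\in\mathcal{Z}(L^\flat)(O_K)$ one shows that the lift to $\mathcal{Z}(L^\flat)(O_K[\epsilon])$ is unique, using that the image of $L^\flat$ in $\mathrm{gr}_0\mathbb{D}(G)(O_K)$ already has full rank $n-1$ (via Breuil's theorem), so $\mathrm{gr}_0\widetilde{\mathrm{Fil}}$ is forced, and then $\mathrm{gr}_1\widetilde{\mathrm{Fil}}$ is determined by the isotropy condition for the polarization pairing. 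Your proposal contains no mechanism for this. (A minor related point: in case~(1), after the duality $\mathcal{N}_h^{[h]}\cong\mathcal{N}_h^{[0]}$ the cycles are again $\mathcal{Z}$-cycles for a lattice with valuations shifted down by~$1$, not $\mathcal{Y}$-cycles; one then applies Proposition~\ref{proposition2.5}(1), not~(2), to reach $\mathcal{N}_2^{[0]}$.)

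Second, your proposed enumeration of the splittings $M_1\oplus N_2$ is genuinely different from the paper's route. Direct parametrisation (as in Lemma~\ref{lem: possible hor lattice}) does give the answer for $a\ge 1$---it is simply $|(O_F/\pi)^h|=q^{2h}$, which is what the paper records as $q^{2n-2}$ after one has already reduced to $\mathcal{N}_{h+2}^{[h]}$---but your description of it as an orbit count under $\mathrm{U}_h(\mathbb{F}_{q^2})$ and your heuristic for the extra factor when $a=0$ are not correct as stated: it is a lattice count, not an orbit count, and the degeneration at $a=0$ comes from the collision $\mathrm{val}(x_0)=0$, not from the unimodular part participating. The paper sidesteps the combinatorics entirely and instead computes the cardinality analytically as a ratio of normalized representation densities (Lemma~\ref{lemma5.48}); this is less elementary but handles the $a=0$ case uniformly and dovetails with the local-density machinery used elsewhere in the paper.
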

	
	The rest of this section is devoted to the proof of Theorem \ref{thm: hori part}. We give the proof at the end of this section after some preparations.

	First, by the similar method as in the appendix of \cite{HSY3}, we can prove the following two lemmas.
	\begin{lemma}\label{lem: orbit of prim vector}
		Assume $L=L_1\obot L_2$ where $L_1\approx (1)^{n-h}$ and $L_2\approx (\pi)^h$. Let $x$ be a primitive vector in $L$. 
		\begin{enumerate}
			\item If $\mathrm{Pr}_{L_1}(x)$ is primitive in $L_1$, then there exists $L_1'\approx (1)^{n-h}$ such that $x\in L_1'$.
			\item If  $\mathrm{Pr}_{L_1}(x)$ is not primitive in $L_1$, then there exists $L_1'\approx (1)^{n-h}$ and $L_2'\approx (\pi)^h$ such that $L=L_1'\obot L_2'$ and $x\in L_2'$.
		\end{enumerate}

	\end{lemma}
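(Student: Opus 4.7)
The strategy in both parts is to produce the desired sublattice by making a small ``shear'' that exchanges a piece of $x$ between the two summands of $L = L_1 \obot L_2$, while preserving the isometry classes. In both parts one invokes at the end the classification of unimodular (resp. $\pi$-modular) hermitian $O_F$-lattices in the unramified case: the isomorphism class is determined by the rank.

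\textbf{Part (1).} Write $x = x_1 + x_2$ with $x_1 = \mathrm{Pr}_{L_1}(x)$ primitive in $L_1$. Extend $\{x_1\}$ to an $O_F$-basis $\{x_1, v_2, \ldots, v_{n-h}\}$ of $L_1$ and set
\[
L_1' \coloneqq O_F\langle x,\, v_2, \ldots, v_{n-h}\rangle \subset L.
\]
Projection onto $L_1$ maps this basis isomorphically to the chosen basis of $L_1$, so $L_1'$ is a free rank-$(n-h)$ direct summand of $L$. Its Gram matrix in this basis agrees with that of $L_1$ except in the $(1,1)$-entry, where $(x_1, x_1)$ is replaced by $(x,x) = (x_1, x_1) + (x_2, x_2)$. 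Since $(x_2, x_2) \in \pi O_{F_0}$, the Gram determinant remains a unit, so $L_1'$ is unimodular and thus $L_1' \approx (1)^{n-h}$.

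\textbf{Part (2).} Write $x_1 = \pi y_1$ with $y_1 \in L_1$; primitivity of $x$ forces $x_2$ to be primitive in $L_2$. The plan is to choose an $O_F$-basis $\{x_2, e_2, e_3, \ldots, e_h\}$ of $L_2$ with the property that $(x_2, e_i) = 0$ for $i \ge 3$, and such that the rank-2 block spanned by $\{x_2, e_2\}$ is non-degenerate with $(x_2, e_2) \in \pi O_{F_0}^{\times}$. The reduction $\bar{L}_2 = L_2 / \pi L_2$ is a non-degenerate hermitian $\mathbb{F}_{q^2}$-space of dimension $h$; if $\bar{x}_2$ is anisotropic we can extend to an orthogonal basis, and if $\bar{x}_2$ is isotropic we pair it with a hyperbolic partner $\bar{e}_2$ and orthogonalize the remaining $h-2$ basis vectors. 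Hensel-type lifts (using invertibility of the relevant $2 \times 2$ Gram block over $O_F$) then produce the desired basis. Next define the shear $f \colon L_2 \to L_1$ by $f(x_2) = \pi y_1$ and $f(e_i) = 0$ for $i \ge 2$, and put
\[
L_2' \coloneqq \{\,z + f(z) : z \in L_2\,\} = O_F\langle x,\, e_2, \ldots, e_h\rangle, \qquad L_1' \coloneqq (L_2')^{\perp} \subset L.
\]
Since $(y_1, L_2) = 0$, the Gram matrix of $L_2'$ in the basis $\{x, e_2, \ldots, e_h\}$ coincides with that of $L_2$ in $\{x_2, e_2, \ldots, e_h\}$ except in the $(1,1)$-entry, shifted by $\pi^2 (y_1, y_1) \in \pi^2 O_F$. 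This preserves the $(\pi)^h$-type, so $L_2' \approx (\pi)^h$. For $L_1'$, writing $v = v_1 + v_2$ with $v_1 \in L_1, v_2 \in L_2$, the system $(v, x) = 0 = (v, e_i)$ forces $v_2 \in \{e_2, \ldots, e_h\}^{\perp} \cap L_2 = O_F u$, where $u$ is chosen so that the basis design yields $(u, x_2) \in \pi O_{F_0}^{\times}$; the remaining equation then solves uniquely for $v_2 = \alpha u$ with $\alpha \in O_F$ depending linearly on $v_1$. The induced Gram form on $L_1' \cong L_1$ differs from the one on $L_1$ by an entry in $\pi O_F$, hence remains unimodular, giving $L_1' \approx (1)^{n-h}$. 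Since $L_1' \oplus L_2' \subset L$ has the same rank and discriminant as $L$, this inclusion is an equality, completing the orthogonal decomposition $L = L_1' \obot L_2'$ with $x \in L_2'$.

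\textbf{Main obstacle.} The delicate point is the choice of basis in Part (2) when $\bar{x}_2$ is isotropic in $L_2 / \pi L_2$, i.e.\ when $(x_2, x_2) \in \pi^2 O_F$. In that regime $O_F x_2$ is not an orthogonal direct summand of $L_2$, so one cannot naively take $u = x_2$; the hyperbolic-plane construction described above is needed in order to ensure that $(u, x_2)$ remains in $\pi O_{F_0}^{\times}$ and hence that the linear system defining $L_1'$ can be solved over $O_F$. This is precisely the new phenomenon relative to good reduction, and it is handled by the same Gram-matrix manipulations used in the appendix of \cite{HSY3}.
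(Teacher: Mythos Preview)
Your argument is correct and is exactly the shear/Gram-matrix construction the paper intends; the paper itself gives no proof of this lemma but defers to the method in the appendix of \cite{HSY3}, which is the same approach. One small slip: the off-diagonal pairings $(x_2,e_2)$ and $(u,x_2)$ lie in $\pi O_F^{\times}$ rather than $\pi O_{F_0}^{\times}$ in general (only diagonal entries of a hermitian form are forced into $F_0$), and in the anisotropic sub-case of Part~(2) one simply takes $u=x_2$ with $(u,x_2)=(x_2,x_2)\in\pi O_{F_0}^{\times}$; neither point affects the validity of the argument.
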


	\begin{lemma}\label{lem: orbit of prim vector II}
		Assume $L\approx (1)^{n-h}$ or $(\pi)^h$. Then for any primitive vectors $x,x'\in L$ with $q(x)=q(x')$, there exists $g\in \mathrm{U}(L)$ such that $g(x)=x'$.
	\end{lemma}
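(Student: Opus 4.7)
The plan is to reduce to the unimodular case and then construct an isometry via an orthogonal splitting containing the prescribed vector. First, in the $\pi$-modular case $L\approx (\pi)^h$, all values of the hermitian form on $L$ lie in $\pi O_F$, so rescaling the form by $\pi^{-1}$ produces an isomorphic lattice $(L,\pi^{-1}(\,\cdot\,,\,\cdot\,))$ which is unimodular. This rescaling does not change the underlying $O_F$-module structure, does not change the unitary group, preserves primitivity, and preserves the equality $(x,x)=(x',x')$. So it suffices to treat the case where $L$ is unimodular.

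Given a primitive $x \in L$ with $\alpha := (x,x)$, I would produce an orthogonal decomposition $L = M \oplus M^{\perp}$ where $M$ is a unimodular sublattice containing $x$, splitting into two cases. If $\alpha \in O_F^{\times}$, take $M := O_F\cdot x$, which is unimodular of rank one. If $\alpha \in \pi O_F$, then primitivity of $x$ together with self-duality of $L$ guarantees some $y \in L$ with $(x,y) \in O_F^{\times}$; setting $M := O_F x + O_F y$ yields a rank-two sublattice with Gram determinant $\alpha(y,y)-\mathrm{N}_{F/F_0}((x,y)) \in O_{F_0}^{\times}$, hence unimodular, and we get $L = M \oplus M^{\perp}$. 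Applying the same construction to $x'$ gives $L = M' \oplus (M')^{\perp}$ with $x' \in M'$.

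It remains to produce (a) an isometry $M \to M'$ sending $x$ to $x'$, and (b) an isometry $M^{\perp} \to (M')^{\perp}$, and then to glue them. For (b), both $M^{\perp}$ and $(M')^{\perp}$ are unimodular hermitian lattices of the same rank over the unramified extension $F/F_0$, and such lattices are classified up to isometry by rank alone, so (b) is immediate. For (a) in the unit-norm case, $M=\langle\alpha\rangle=M'$ and $x\mapsto x'$ works directly. In the case $\alpha\in\pi O_F$, after replacing $y$ by $y-c x$ for a suitable $c \in O_F$ I may assume $(x,y)=1$, and after further adjusting $y$ by an element of $O_F\cdot x$ I may normalize $(y,y)$ to lie in a fixed system of representatives of $O_{F_0}/\mathrm{tr}_{F/F_0}(O_F)$; doing the same for $(x',y')$ puts $M$ and $M'$ in a common normal form in which $x\mapsto x',\, y\mapsto y'$ is an isometry.

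The main obstacle is step (a) in the subcase $\alpha \in \pi O_F$: one must verify that two primitive vectors of the same non-unit norm in a rank-two unimodular hermitian lattice lie in the same $\mathrm{U}(M)$-orbit. This is essentially Witt cancellation at the lattice level, and the normalization of the pair $(x,y)$ described above reduces it to a finite check in the residue field, using that $F/F_0$ is unramified so that every unit in $O_{F_0}$ is a norm from $O_F^{\times}$.
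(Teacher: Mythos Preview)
Your plan is correct and is the standard argument; the paper itself gives no proof beyond citing the method of the appendix of \cite{HSY3}, which proceeds along the same lines (split off a small unimodular summand containing the vector, then invoke the classification of unimodular hermitian $O_F$-lattices over an unramified extension).

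Two small slips in your write-up are worth fixing. First, to arrange $(x,y)=1$ you should \emph{scale} $y$ by a unit, not subtract a multiple of $x$: if $(x,y)=u\in O_F^{\times}$ and $\alpha=(x,x)\in\pi O_{F_0}$, then $(x,y-cx)=u-\overline{c}\alpha$ only moves $u$ within its residue class mod $\pi$, whereas $(x,(\overline{u})^{-1}y)=1$. Second, your description of the remaining freedom in $(y,y)$ as ``representatives of $O_{F_0}/\mathrm{tr}_{F/F_0}(O_F)$'' is slightly off: after maintaining $(x,y)=1$ under $y\mapsto uy+cx$ with $u=1-c\alpha$, one computes that $(y,y)$ changes by $(1-\alpha\beta)\bigl(\mathrm{tr}(c)-N(c)\alpha\bigr)$, and a successive-approximation argument (using that $\mathrm{tr}:O_F\to O_{F_0}$ is surjective since $F/F_0$ is unramified, and $\alpha\in\pi O_{F_0}$) shows this sweeps out all of $O_{F_0}$. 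So in fact you may normalize $(y,y)=0$, which makes the matching $M\to M'$, $x\mapsto x'$, $y\mapsto y'$ immediate.
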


	With the help of the above two lemmas, we can prove the following.
	\begin{lemma}\label{lemma4.3}
		Assume $L$ is a hermitian lattice represented by $\diag((1)^{n-h},(\pi)^h)$ and $\bW \subset L_F$ is a subspace of dimension $n-1$. Then $M^\flat \coloneqq \bW\cap L$ is represented either by   $\diag(1^{n-h-2},(\pi)^{h},(\pi^a))$ with $a\in \Z_{\ge 0}$ or $\diag((1)^{n-h},(\pi)^{h-2},(\pi^a))$ with $a\in \Z_{> 0}$. In the first case, we can write $M^\flat=M_1\obot M_2$ such that $L=L_1\obot M_2$.
	\end{lemma}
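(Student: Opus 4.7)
The plan is to realize $M^\flat = \bW \cap L$ as the orthogonal complement of a primitive vector in $L$. Since $\bW$ is a hyperplane of $L_F$, its orthogonal complement is one-dimensional; picking a generator and multiplying by a suitable power of $\pi$ produces a primitive vector $y \in L$ with $y^\perp \cap L = \bW \cap L = M^\flat$. The isomorphism type of $M^\flat$ then depends only on the orbit of $y$ under $\U(L)$, which is controlled by Lemma \ref{lem: orbit of prim vector}. I would split into the two cases provided by that lemma.

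In the case where $\mathrm{Pr}_{L_1}(y)$ is primitive in $L_1$, Lemma \ref{lem: orbit of prim vector}(1) furnishes a unimodular $L_1' \approx (1)^{n-h}$ with $y \in L_1'$. Being unimodular, $L_1'$ splits off automatically, giving $L = L_1' \obot L_2'$ with $L_2' \approx (\pi)^h$. Setting $a \coloneqq \val((y,y)) \ge 0$, I would compute $y^\perp \cap L_1'$ directly: if $a=0$ then $\langle y \rangle$ is a unimodular summand of $L_1'$ and $y^\perp \cap L_1' \approx (1)^{n-h-1}$; if $a \ge 1$, I produce a dual vector $y^\ast \in L_1'$ with $(y,y^\ast)=1$, so that $\langle y, y^\ast\rangle$ is a unimodular hyperbolic plane, and then directly check that $y^\perp \cap L_1' = \langle y - \pi^a u \cdot y^\ast \rangle \obot \langle y, y^\ast\rangle^\perp_{L_1'} \approx (\pi^a) \obot (1)^{n-h-2}$. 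In either subcase, $M^\flat = (y^\perp \cap L_1') \obot L_2'$ falls into the first listed form. Taking $M_1 \coloneqq y^\perp \cap L_1'$, $M_2 \coloneqq L_2'$ and $L_1 \coloneqq L_1'$ yields the claimed decomposition $M^\flat = M_1 \obot M_2$ with $L = L_1 \obot M_2$.

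In the case where $\mathrm{Pr}_{L_1}(y)$ is not primitive in $L_1$, Lemma \ref{lem: orbit of prim vector}(2) gives a decomposition $L = L_1' \obot L_2'$ with $L_1' \approx (1)^{n-h}$, $L_2' \approx (\pi)^h$, and $y \in L_2'$ primitive. Here $(y,y) = \pi^a u$ with $a \ge 1$ since $y \in L_2' \subset \pi (L_2')^\vee$. Lemma \ref{lem: orbit of prim vector II} applied to the $\pi$-modular lattice $L_2'$ allows me to put $y$ into a convenient normal form, namely $y = f_1 + \lambda f_2$ with respect to a normal basis $f_1,\ldots,f_h$ of $L_2'$, where $\lambda$ is chosen so that $1 + \lambda\bar\lambda = \pi^{a-1} u$ (solvability is ensured because the norm map $\mathbb{F}_{q^2}^\times \to \mathbb{F}_q^\times$ is surjective). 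A direct calculation then gives $y^\perp \cap L_2' = \langle -\bar\lambda f_1 + f_2 \rangle \obot \langle f_3,\ldots,f_h \rangle \approx (\pi^a) \obot (\pi)^{h-2}$, whence $M^\flat \approx (1)^{n-h} \obot (\pi)^{h-2} \obot (\pi^a)$.

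The main obstacle is the orthogonal-complement computation in the first case when $a \ge 1$: because $\langle y \rangle$ is then not itself a summand of the unimodular lattice $L_1'$, one cannot simply pick off $y$, and the computation must proceed through the hyperbolic plane $\langle y, y^\ast\rangle$. The other calculations, while explicit, rely only on the standard facts that unimodular hermitian lattices split off as summands and that primitive vectors of prescribed norm in a $\pi$-modular lattice are unique up to the unitary group (Lemma \ref{lem: orbit of prim vector II}).
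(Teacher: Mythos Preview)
Your proof is correct and takes a genuinely different route from the paper's. Rather than analyzing $M^\flat$ directly through its intersections $M_i^\flat \coloneqq M^\flat \cap L_{i,F}$ (the paper splits on whether $M_1^\flat$ is unimodular and then does explicit basis manipulations inside $M^\flat$), you dualize: write $\bW = y^\perp$ for a primitive $y \in L$ and classify $y^\perp \cap L$ by placing $y$ via Lemma~\ref{lem: orbit of prim vector}. This gives a cleaner case-structure --- your two cases correspond exactly to the two output forms, whereas the paper's unimodular case bifurcates further --- and makes the additional claim in the first case immediate, since $M^\flat = (y^\perp \cap L_1') \obot L_2'$ already exhibits $M_2 = L_2'$ as a common summand of $M^\flat$ and $L$. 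Both arguments rest on the same preparatory Lemmas~\ref{lem: orbit of prim vector} and~\ref{lem: orbit of prim vector II}, but the paper applies them to a vector \emph{inside} $M_1^\flat$ while you apply them to the normal vector $y$. One small caveat: in your Case~2 with $a=1$, the equation $N(\lambda)=u-1$ need not be solvable (the right side can have odd valuation, while norms from $F$ have even valuation), so your justification via surjectivity of the residue-field norm does not cover this case; the fix is trivial --- take $y' = \mu f_1$ with $N(\mu)=u$ instead, and then $y'^\perp \cap L_2' = \langle f_2,\ldots,f_h\rangle \approx (\pi)^{h-1}$ as required.
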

	
	\begin{proof}
		First, we assume $M^\flat_1\coloneqq M^\flat \cap L_{1,F}$ is not unimodular. Then the rank of $L_1$ is at least 2 and we choose a basis $\{e_1,\ldots,e_{n-h}\}$ of $L_1$ such that $e_1,e_2\in L_1$ and $(e_1,e_1)=(e_2,e_2)=0$, $(e_1,e_2)=1$ and $(e_i,e_j)=0$ for $i,j>2, i\neq j$. Since  $M_1^\flat$ is primitive in $L_{1}$ but non-isometric to $L_1$, we know the rank of $M_1^\flat$ is smaller than $n-h$. Hence the rank of $M_1^\flat$ has to be $n-h-1$ and $M_2^\flat\coloneqq M^\flat \cap L_{2,F}=L_2$. Now we choose a orthogonal basis $\{x_1,\ldots,x_{n-h-1}\}$ of $M_1^\flat$, where   we assume $q(x_{1})=(x_{1},x_{1})$ has largest valuation among $\{q(x_1),\ldots,q(x_{n-h-1})\}$.  In particular, $\val(q(x_1))>0$. By Lemmas \ref{lem: orbit of prim vector} and \ref{lem: orbit of prim vector II}, we may assume $x_{1}=e_1+\frac{q(x_1)}{2}e_2$.  Since $M_1^\flat $ is primitive, we can write $x_i=a_i(e_1-\frac{q(x_1)}{2}e_2)+\sum_{j=3}^{n-h}a_{ij}e_j$ and for each $j$ with $3\le j\le n-h$, there exists an $i$ such that $a_{ij}$ is a unit. Hence by possibly choosing a different basis $\{x_2,\ldots,x_{n-h-1}\}$, we may assume $x_i=a_i (e_1-\frac{q(x_1)}{2}) e_2)+e_{i+1}$. Now it is clear that $\spa\{x_2,\ldots,x_{n-h-1}\}$ is unimodular and $M^\flat \approx \diag((1)^{n-h-2},(\pi)^{h},(\pi^a))$.

		Now we assume  $M_1^\flat\coloneqq M^\flat \cap L_{1,F}$ is unimodular. If $\mathrm{rank}M_1^\flat=n-h-1$, then an argument as above shows that $M^\flat\approx \diag((1)^{n-h-1},(\pi)^h)$. If If $\mathrm{rank}M_1^\flat=n-h$, then $\mathrm{rank}M_2^\flat=h-1$, and a similar argument as before shows that $M_2^\flat \approx (\pi)^{h-2}\obot (\pi^a)$ with $a\in \Z_{>0}$. Then $M^\flat\approx (1)^{n-h}\obot (\pi)^{h-2}\obot (\pi^a)$ and $(1)^{n-h-1}\obot (\pi)^h$ respectively.    
	\end{proof}
	
	In particular,  Lemma \ref{lemma4.3} implies that if $z\in \cZ(L^\flat)(O_K)$, then $z \in  \cZ(M^\flat)(O_K)$ where $M^\flat \in H(\bV)$. 
	
	Now we assume $z\in \cZ(L^\flat)(O_K)$ corresponds to a $M^\flat$ represented by 
	$(1)^{n-h-2}\obot (\pi^a) \obot (\pi)^h$. By   Proposition \ref{proposition2.6}, 
	we may reduce to the case $M^\flat$ represented by 
	$(\pi^a)\obot (\pi)^{h}.$
	
	\begin{proposition}\label{prop: cancellation law full type part}
		Assume $z\in \cZ(L^\flat)(O_K)$ corresponds to a $M^\flat=M_1\obot M_2$, where $M_1$ is represented by 
		$(\pi^a)$ and $M_2$ is represented by $(\pi)^h$. If $L=L_1\obot M_2$, then $z \in \cZ(M_1)^\circ\cdot \cY(\pi^{-1}M_2)^\circ.$
	\end{proposition}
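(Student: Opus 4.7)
The plan is to translate both cycle conditions into lattice-theoretic conditions on the Tate-module hermitian lattice $L = \operatorname{Hom}_{O_F}(T_p\overline{\cE}, T_pG)$ attached to the point $z$, and then read them off from the hypothesis $L = L_1 \obot M_2$. The $\cZ$-condition is already expressed this way: $z \in \cZ(N)(O_K)$ iff $N \subset i_{\bar{k}}(i_K^{-1}(L))$. For the $\cY$-side, I would run the analogous argument for $G^\vee$ in place of $G$. Using the polarization $\lambda_G\colon T_pG \hookrightarrow T_pG^\vee$ to identify $\operatorname{Hom}_{O_F}(T_p\overline{\cE}, T_pG^\vee)$ with $L^\vee \subset L_F$, the condition $z \in \cY(y)(O_K)$, which amounts to $\lambda_{\BX}\circ y$ lifting to $G^\vee$, translates into $y \in i_{\bar{k}}(i_K^{-1}(L^\vee))$.

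With that identification in hand, the $\cZ$-membership is immediate: from $z \in \cZ(L^\flat)(O_K)$ and the earlier formula $M^\flat = L_F^\flat \cap i_{\bar{k}}(i_K^{-1}(L))$ one gets $z \in \cZ(M^\flat)(O_K) \subset \cZ(M_1)(O_K)$. For the $\cY$-membership, the hypothesis $L = L_1 \obot M_2$ with $L_1 \approx (1)^{n-h}$ unimodular yields $L^\vee = L_1 \obot \pi^{-1}M_2$; and since $M_2$ already lies in the $F$-subspace $\bW = i_K(i_{\bar{k}}^{-1}(L_F^\flat))$, so does $\pi^{-1}M_2$. Hence $\pi^{-1}M_2 \subset L^\vee \cap \bW$, giving $z \in \cY(\pi^{-1}M_2)(O_K)$. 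Combining these, $z$ is an $O_K$-point of the intersection $\cZ(M_1) \cap \cY(\pi^{-1}M_2)$.

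To upgrade this to the primitive parts $\cZ(M_1)^\circ$ and $\cY(\pi^{-1}M_2)^\circ$, the key observation is that the very construction $M^\flat = L_F^\flat \cap i_{\bar{k}}(i_K^{-1}(L))$ makes $M_1 = L_1 \cap \spa_F M_1$ and $\pi^{-1}M_2 = L^\vee \cap \spa_F M_2$; in particular these lattices are maximal among integral (respectively $\cY$-integral) sublattices of $\spa_F M_1$ and $\spa_F M_2$ that can carry $z$. Concretely, any integral overlattice $M_1' \supsetneq M_1$ in $\spa_F M_1$ with $z \in \cZ(M_1')(O_K)$ would satisfy $M_1' \subset L \cap \spa_F M_1 = M_1$, a contradiction; the analogous argument with $L^\vee$ replacing $L$ precludes any strict $\cY$-integral overlattice of $\pi^{-1}M_2$. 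Hence $z$ lies in no larger primitive stratum of either cycle, so it contributes to $\cZ(M_1)^\circ \cdot \cY(\pi^{-1}M_2)^\circ$.

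The hardest step is the Tate-module translation of the $\cY$-condition on the dual side: one has to verify the analog of \cite[Lemma 4.4.1]{LZ} for the dual triple $(G^\vee, \bar{\iota}^{\vee}, \lambda_G')$ and show the $\cY$-extension condition really corresponds to lying in $L^\vee$ with respect to the same embedding $i_K$, compatibly with the construction of $\bW$. Once this is set up, the proposition follows formally from the orthogonal splitting $L = L_1 \obot M_2$ together with the saturation considerations above.
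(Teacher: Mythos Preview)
Your proposal is correct and follows essentially the same approach as the paper's proof. The paper works directly with the lattice $L_{z^\vee}\coloneqq\operatorname{Hom}_{O_F}(T_p\overline{\cE},T_pG^\vee)$ and computes $L_{z^\vee}=\lambda_G(L_1)\obot\pi^{-1}\lambda_G(M_2)$ together with $L_{z^\vee}\cap\lambda_G(\bW)=M_1\obot\pi^{-1}\lambda_G(M_2)$; your identification of $L_{z^\vee}$ with $L^\vee\subset L_F$ via $\lambda_G$ (which is valid since $\lambda_G'\circ\lambda_G=\pi$ scales the form by $\pi$) repackages the same computation, and your separate saturation arguments for the primitive parts unpack what the paper summarizes as ``by the definition of special cycles.''
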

	\begin{proof}
		First, notice that $z\in \cZ(M^\flat)^\circ$ essentially by the definition of $M^\flat$. We may choose a basis $\{x_1,x_2\}$ of $L_1$ and a basis $\{x_3,\cdots,x_{h+2}\}$ of $M_2$ such that the moment matrix of $\{x_1,x_2\}$ and $\{x_3,\cdots,x_{h+2}\}$ is $(1)^2$ and $(\pi)^h$.   Let 
		\begin{align*}
			L_{z^\vee}\coloneqq  \operatorname{Hom}_{O_F} (T_p \overline{\cE}, T_p G^{\vee} ).
		\end{align*}
		Composing $L$ with $\lambda_G$, we obtain an embedding $L\hookrightarrow L_{z^\vee}$ such that $L_{z^\vee}=\lambda_G(L_1)\obot \pi^{-1} \lambda_G(M_2).$ Moreover, the moment matrix of $\lambda_G(L_1)\obot \pi^{-1} \lambda_G(M_2)$ is $(\pi)^2\obot (1)^h$. We can also directly check that $L_{z^\vee}\cap \lambda_G(i_K(i_{\bar{k}}^{-1}(L^\flat_F)))=M_1\obot \pi^{-1} \lambda_G(M_2)$. Hence, $z \in \cZ(M_1)^\circ\cdot \cY(\pi^{-1}M_2)^\circ$ by the definition of special cycles.
	\end{proof}
	
	\begin{lemma}
		Assume $M^\flat=M_1\obot M_2=M_1'\oplus M_2$. Then 
		$$\cZ(M_1)^\circ\cdot \cY(\pi^{-1}M_2)^\circ=\cZ(M_1')^\circ\cdot \cY(\pi^{-1}M_2)^\circ.$$ 
	\end{lemma}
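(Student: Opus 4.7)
The plan is to reduce the claimed equality to an application of the linear invariance of Koszul complexes (Proposition \ref{prop: linear invariance}), combined with a moduli-theoretic containment of the primitive $\cY$-cycle inside an auxiliary $\cZ$-cycle.

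First, I would parametrize the difference between the two decompositions in terms of generators. Since $M_1$ and $M_1'$ are both rank-one direct-sum complements of $M_2$ inside $M^\flat$, we have $M_1+M_2=M_1'+M_2=M^\flat$ and $M_1\cap M_2=M_1'\cap M_2=0$. Writing $M_1=\langle x_1\rangle$ and choosing a generator $x_1'$ of $M_1'$, the condition forces $x_1'=u\,x_1+m$ for some unit $u\in O_F^\times$ and $m\in M_2$; rescaling, we may take $M_1'=\langle x_1+m\rangle$ with $m\in M_2$. The statement therefore reduces to proving
$$\cZ(x_1+m)^\circ\cdot\cY(\pi^{-1}M_2)^\circ=\cZ(x_1)^\circ\cdot\cY(\pi^{-1}M_2)^\circ$$
for every $m\in M_2$.

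Second, I would establish the scheme-theoretic containment $\cZ(M_1)^\circ\cdot\cY(\pi^{-1}M_2)^\circ\subset \cZ(m)$. By Theorem \ref{thm: hori part}(2) (and the proof of Proposition \ref{prop: cancellation law full type part}), the cycle $\cZ(M_1)^\circ\cdot\cY(\pi^{-1}M_2)^\circ$ is supported on quasi-canonical liftings whose Tate module lattice has the form $L=L_1\obot M_2$; in particular $m\in M_2\subset L$, so the quasi-homomorphism $m$ extends at every such point, hence $\cZ(m)$ set-theoretically contains this support. Because the quasi-canonical liftings are Artinian formal schemes, one can upgrade this to a scheme-theoretic containment via Grothendieck--Messing theory, yielding the identity $\cZ(m)\cdot\cY(\pi^{-1}M_2)^\circ|_{\cZ(M_1)^\circ}=\cY(\pi^{-1}M_2)^\circ|_{\cZ(M_1)^\circ}$, and similarly with $M_1'$ in place of $M_1$.

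Third, I would apply Proposition \ref{prop: linear invariance} to the pairs $\{x_1,m\}$ and $\{x_1+m,m\}$, which generate the same $O_F$-submodule of $\bV$. Linear invariance gives a canonical isomorphism of Koszul products
$$\cZ(x_1)\otimes^{\BL}\cZ(m)\simeq \cZ(x_1+m)\otimes^{\BL}\cZ(m).$$
Combining this with the containment of the previous step to freely insert $\cZ(m)$ as a redundant factor, I obtain
\begin{align*}
\cZ(x_1+m)^\circ\cdot\cY(\pi^{-1}M_2)^\circ
&=\cZ(x_1+m)\cdot\cZ(m)\cdot\cY(\pi^{-1}M_2)^\circ\\
&=\cZ(x_1)\cdot\cZ(m)\cdot\cY(\pi^{-1}M_2)^\circ\\
&=\cZ(x_1)^\circ\cdot\cY(\pi^{-1}M_2)^\circ,
\end{align*}
which is the desired identity; compatibility of the primitive markings on both sides follows from the inclusion-exclusion nature of the primitive decomposition.

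The main obstacle is the second step: promoting the evident set-theoretic inclusion to a scheme-theoretic containment suitable for insertion into the derived Koszul complex. Since the relevant quasi-canonical liftings are non-reduced in general, an honest verification demands either the Grothendieck--Messing argument above or, alternatively, an iterative reduction via Proposition \ref{proposition2.6} to the lower-dimensional space $\cN_{n-h}^{[0]}$, where both $\cZ(x_1)$ and $\cZ(x_1+m)$ map to the same cycle $\cZ(x_1)$ after projection onto the orthogonal complement of $\pi^{-1}M_2$ (the contribution of $m\in M_2\subset\pi^{-1}M_2$ being annihilated).
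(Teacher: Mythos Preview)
Your core idea is right and matches the paper's: combine linear invariance with a containment of the $\cY$-cycle inside an auxiliary $\cZ$-cycle. But you have made step~2 far harder than it needs to be, and your appeal to Theorem~\ref{thm: hori part}(2) is circular: that theorem is stated and proved only after the present lemma, which is precisely what makes each summand $\cZ(M_1)^\circ\cdot\cY(N_2)^\circ$ there well-defined independently of the chosen complement. The containment you flag as ``the main obstacle'' is in fact a one-line scheme-theoretic fact requiring none of the machinery you invoke: for any $y\in\bV$ one has $\cY(y)\subset\cZ(\pi y)$, because whenever $\lambda_{\BX}\circ y$ lifts to a homomorphism $\phi:\overline{\cE}_S\to X^\vee$, the composite $\lambda_X'\circ\phi:\overline{\cE}_S\to X$ lifts $\pi y$ (recall $\lambda_X'\circ\lambda_X=\iota_X(\pi)$). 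Taking $y=\pi^{-1}m$ for each $m\in M_2$ gives $\cY(\pi^{-1}M_2)\subset\cZ(M_2)$ as closed formal subschemes directly---no Tate modules, no Grothendieck--Messing deformation, no quasi-canonical liftings.

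With this in hand the paper finishes by mutual containment rather than by manipulating derived products:
\[
\cZ(M_1)^\circ\cdot\cY(\pi^{-1}M_2)^\circ\ \subset\ \cZ(M_1)^\circ\cdot\cZ(M_2)^\circ\ =\ \cZ(M_1')^\circ\cdot\cZ(M_2)^\circ\ \subset\ \cZ(M_1')^\circ,
\]
the middle equality being linear invariance applied to $\cZ(M^\flat)^\circ$ (since $M_1\obot M_2=M_1'\oplus M_2=M^\flat$); intersecting both ends with $\cY(\pi^{-1}M_2)^\circ$ and then swapping $M_1\leftrightarrow M_1'$ gives equality. This also bypasses the bookkeeping in your step~3, where the primitive decoration $\circ$ is silently dropped from $\cZ(x_1+m)$ when you insert the redundant factor $\cZ(m)$; that step is not obviously valid as written.
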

	\begin{proof}
		First of all, we have $\cZ(M_1)^\circ\cdot \cY(\pi^{-1}M_2)^\circ\subset  \cZ(M_1)^\circ\cdot \cZ(M_2)^\circ=\cZ(M_1')^\circ\cdot \cZ(M_2)^\circ\subset \cZ(M_1')^\circ.$ Hence,  $\cZ(M_1)^\circ\cdot \cY(\pi^{-1}M_2)^\circ\subset \cZ(M_1')^\circ\cdot \cY(\pi^{-1}M_2)^\circ$. By switching the role of $M_1$ and $M_1'$, the lemma is proved.
	\end{proof}
	
	\begin{lemma}\label{lem: possible hor lattice}
		Let $M^\flat=M_1\obot M_2$, where $M_1=\mathrm{span}\{x_0\}$, and $M_2$ has a basis $\{x_1,\cdots,x_h\}$   with moment matrix $(\pi)^h$. 
		Assume $M^\flat \subset N^\flat \subset \pi^{-1} M^\flat$ and $N^\flat \approx \diag( \pi^a,(\pi^{-1})^h)$. Then $N^\flat = M_1\oplus \mathrm{span}\{\pi^{-1}(x_1+\alpha_1 x_0),\cdots,\pi^{-1}(x_h+\alpha_h x_0)\}$, where $\alpha_i$ is a representative of $O_F/(\pi)$.
	\end{lemma}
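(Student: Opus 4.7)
My plan is to show that $N^\flat/M^\flat$ is a codimension-one $\mathbb{F}_{q^2}$-subspace of $\pi^{-1}M^\flat/M^\flat$ that avoids the line through $\pi^{-1}x_0$; the claimed description then falls out of this complementarity.

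First I would compute indices via valuations. Using $\mathrm{vol}(L)=q^{-\val(L)}$ from the notational conventions, together with $\val(M^\flat)=a+h$ and $\val(N^\flat)=a-h$, we obtain $[N^\flat:M^\flat]=q^{2h}$. Since $F/F_0$ is unramified, $O_F/\pi O_F\cong\mathbb{F}_{q^2}$, so $\pi^{-1}M^\flat/M^\flat$ is an $\mathbb{F}_{q^2}$-vector space of dimension $h+1$, and $\overline{N^\flat}:=N^\flat/M^\flat$ is a codimension-one $\mathbb{F}_{q^2}$-subspace.

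Next, granting the key claim $\pi^{-1}x_0\notin N^\flat$, the proof concludes as follows. The orthogonal decomposition $M^\flat=M_1\obot M_2$ yields a canonical $O_F$-linear projection $p:\pi^{-1}M^\flat/M^\flat\twoheadrightarrow\pi^{-1}M_2/M_2$ whose kernel is the one-dimensional subspace $\mathbb{F}_{q^2}\cdot\pi^{-1}x_0$. By the saturation claim, this kernel meets $\overline{N^\flat}$ trivially, so $p|_{\overline{N^\flat}}$ is injective; matching dimensions, it is an isomorphism onto $\pi^{-1}M_2/M_2$. Therefore each $\pi^{-1}x_i\bmod M_2$ has a unique lift in $\overline{N^\flat}$, which is necessarily of the form $\pi^{-1}(x_i+\alpha_i x_0)\bmod M^\flat$ for a unique $\alpha_i\in O_F/\pi O_F$. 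By Nakayama's lemma applied to $N^\flat/\pi N^\flat$, the elements $x_0$ and $\pi^{-1}(x_i+\alpha_i x_0)$ for $1\le i\le h$ generate $N^\flat$ freely as an $O_F$-module, giving the stated formula.

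The main obstacle, and the heart of the proof, is the saturation claim $\pi^{-1}x_0\notin N^\flat$. I plan to argue by contradiction: if $\pi^{-1}x_0\in N^\flat$, then since $\pi^{-2}x_0\notin\pi^{-1}M^\flat\supset N^\flat$, the rank-one sublattice $O_F\cdot\pi^{-1}x_0$ is saturated in $N^\flat$. Because $F/F_0$ is unramified and $p$ is odd, the Jordan decomposition of hermitian $O_F$-lattices allows us to split off this summand orthogonally, writing $N^\flat=O_F\cdot\pi^{-1}x_0\perp W$ for a rank-$h$ sublattice $W$. The multiset of fundamental invariants of $N^\flat$ is then $\{a-2\}\cup\mathrm{invs}(W)$, and this must coincide with $\{-1,\dots,-1,a\}$ ($h$ copies of $-1$). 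Since $a-2$ is neither $-1$ nor $a$ in the relevant regime, we reach a contradiction, establishing the claim. This saturation step is the delicate part because it requires carefully combining the Jordan decomposition with tracking of invariants; everything else is bookkeeping in the $\mathbb{F}_{q^2}$-vector space $\pi^{-1}M^\flat/M^\flat$.
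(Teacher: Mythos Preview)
Your approach is essentially the paper's, but with more justification: both compute $[N^\flat:M^\flat]=q^{2h}$ and reduce to the claim that the hyperplane $N^\flat/M^\flat\subset\pi^{-1}M^\flat/M^\flat$ avoids the line $\mathbb F_{q^2}\cdot\pi^{-1}x_0$. The paper simply asserts that $N^\flat$ contains some $\mathrm{span}\{\pi^{-1}(x_i+\alpha_i x_0)\}$ and then concludes by an index sandwich; you actually supply an argument for the key avoidance claim $\pi^{-1}x_0\notin N^\flat$.

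There is, however, a genuine gap in your saturation step at $a=1$, which your phrase ``in the relevant regime'' does not address. When $a=1$ one has $a-2=-1$, which \emph{does} lie among the invariants $\{a,-1,\dots,-1\}$ of $N^\flat$, so your contradiction does not fire. In fact the statement itself fails for $a=1$: with $M^\flat$ spanned by mutually orthogonal $x_0,\dots,x_h$ each of norm $\pi$, the lattice $N^\flat=\mathrm{span}\{\pi^{-1}x_0,\dots,\pi^{-1}x_{h-1},\,x_h\}$ satisfies $M^\flat\subset N^\flat\subset\pi^{-1}M^\flat$ and $N^\flat\approx\mathrm{diag}(\pi,(\pi^{-1})^h)$, yet $\pi^{-1}(x_h+\alpha x_0)\notin N^\flat$ for every $\alpha\in O_F$, so $N^\flat$ cannot be written in the asserted form $M_1\oplus\mathrm{span}\{\pi^{-1}(x_i+\alpha_i x_0)\}$. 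The paper's terse proof shares this hidden issue; since the intended applications allow $a\ge 0$, the case $a=1$ needs a separate treatment (or an added hypothesis excluding it).
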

	\begin{proof}
		Since $M^\flat   \approx \diag( \pi^a,(\pi)^h)$ and $N^\flat \approx \diag( \pi^a,(\pi^{-1})^h)$,
		$N^\flat$ must contain a sub-lattice of the form $\mathrm{span}\{\pi^{-1}(x_1+\alpha_1 x_0),\cdots,\pi^{-1}(x_h+\alpha_h x_0)\}$. 
		Moreover, we have $M^\flat \stackrel{h}{\subset}N^\flat\subset \pi^{-1}M^\flat$. 
		
		Notice that $M^\flat \stackrel{h}{\subset} M_1\oplus \mathrm{span}\{\pi^{-1}(x_1+\alpha_1 x_0),\cdots,\pi^{-1}(x_h+\alpha_h x_0)\}\subset N^\flat$. Hence $N^\flat = M_1\oplus \mathrm{span}\{\pi^{-1}(x_1+\alpha_1 x_0),\cdots,\pi^{-1}(x_h+\alpha_h x_0)\}$. 
	\end{proof}
	
	\begin{lemma}
		Assume $M^\flat=M_1\obot M_2$ with basis $\{x_0,x_1,\cdots,x_h\}$ as in Lemma \ref{lem: possible hor lattice}. Let $N^\flat $ and $(N^\flat)'$ be two different lattices such that $M^\flat \subset N^\flat \subset \pi^{-1} M^\flat$, $M^\flat \subset (N^\flat)' \subset \pi^{-1} M^\flat$, and $N^\flat \approx (N^\flat)' \approx \diag(\pi^a,(\pi^{-1})^h).$ Write $N^\flat=M_1 \oplus N_2$ and $(N^\flat)'=M_1\oplus N_2'$. Then $$\cZ(M_1)^\circ\cdot \cY(N_2)^\circ(O_K)\neq \cZ(M_1)^\circ\cdot \cY^\circ(N_2')(O_K).$$
	\end{lemma}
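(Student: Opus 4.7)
The plan is to show that a point $z$ lying in both cycles would determine the same intrinsic lattice in two different ways, forcing $N_2=N_2'$.

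First, following the framework introduced before Proposition~\ref{prop: cancellation law full type part}, every $z\in\cN_n^{[h]}(O_K)$ corresponding to a hermitian $O_F$-module $G$ gives rise to the pair of lattices $L\subset L_{z^\vee}\subset L_F$ together with the isometric embeddings $i_K$ and $i_{\bar{k}}$ of $\Hom_{O_F}(\overline{\cE},G)_F$ into $L_F$ and $\BV$ respectively. Under these identifications, one has $z\in\cZ(x)$ iff $x\in i_{\bar{k}}(i_K^{-1}(L))$, and $z\in\cY(y)$ iff $\lambda_G(y)\in L_{z^\vee}$ after transport through $i_K\circ i_{\bar{k}}^{-1}$. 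Hence for any $z\in\cZ(M_1)(O_K)\cap\cY(N_2)(O_K)$, both $M_1$ and $\lambda_G(N_2)$ embed into the relevant lattices of this pair.

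The main step is to establish the intrinsic identity
\[
L_{z^\vee}\cap\lambda_G\bigl(i_K(i_{\bar{k}}^{-1}((M^\flat)_F))\bigr)=\lambda_G(M_1\obot N_2)
\]
for every $z\in\cZ(M_1)^{\circ}\cdot\cY(N_2)^{\circ}(O_K)$. The inclusion $\supset$ is immediate from the cycle membership and the splitting $L=L_1\obot M_2$ of Lemma~\ref{lemma4.3}. For the inclusion $\subset$, I would use the primitivity of $\cZ(M^\flat)^{\circ}$ (which forces $M^\flat=L\cap i_K(i_{\bar{k}}^{-1}((M^\flat)_F))$, exactly as in Proposition~\ref{prop: cancellation law full type part}) together with Lemma~\ref{lem: possible hor lattice}, which classifies the intermediate lattices $M^\flat\subset N^\flat\subset\pi^{-1}M^\flat$ of moment $\diag(\pi^a,(\pi^{-1})^h)$ in terms of the parameters $(\alpha_1,\ldots,\alpha_h)$. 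The argument of Proposition~\ref{prop: cancellation law full type part} then identifies the lattice on the left-hand side as precisely the member of this parametrization corresponding to the chosen $N_2$. Granting the identity, applying it to the hypothetical common point $z$ twice yields $\lambda_G(M_1\obot N_2)=\lambda_G(M_1\obot N_2')$, hence $N_2=N_2'$, contradicting the hypothesis and proving the lemma.

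The main obstacle is the $\subset$ direction of this key identity: a priori the intersection could be a strict enlargement of $\lambda_G(M_1\obot N_2)$ inside $\lambda_G((M^\flat)_F)$, since the primitivity of $\cY(N_2)^{\circ}$ relative to integral overlattices is vacuous (no integral lattice strictly contains $N_2\simeq(\pi^{-1})^h$). One must therefore extract the primitivity from the mixed cycle $\cZ(M_1)^{\circ}\cdot\cY(N_2)^{\circ}$ as a whole. In the representative case $h=1$, a direct basis computation gives $(M_1\oplus N_2)+(M_1\oplus N_2')=\pi^{-1}M^\flat$, so any common $z$ would satisfy $\lambda_G(\pi^{-1}x_0)\in L_{z^\vee}$; combined with $x_0\in L$, this forces $M_1\subset \pi L_1$, contradicting the primitivity of $M_1$ in $L_1$ guaranteed by $\cZ(M_1)^{\circ}$. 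The general case is handled by applying this rank-one reasoning coordinate by coordinate along the basis $\{x_1,\ldots,x_h\}$ of $M_2$.
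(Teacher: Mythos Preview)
Your opening observation is the right one and coincides with the paper's: if $z$ lies in both cycles then, using Lemma~\ref{lem: possible hor lattice} to write $N_2=\mathrm{span}\{\pi^{-1}(x_i+\alpha_i x_0)\}$ and $N_2'=\mathrm{span}\{\pi^{-1}(x_i+\alpha_i' x_0)\}$, one computes $N_2+N_2'=\pi^{-1}M^\flat$ (your ``$h=1$'' identity in fact holds for all $h$), so $z\in\cY(\pi^{-1}M^\flat)$ and in particular $z\in\cY(\pi^{-1}x_0)=\cY(\pi^{-1}M_1)$.

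The gap is in how you extract a contradiction from this. Your step ``combined with $x_0\in L$, this forces $M_1\subset\pi L_1$'' presupposes a decomposition $L=L_1\obot M_2$ with $M_1\subset L_1$. That decomposition is an \emph{input} to Proposition~\ref{prop: cancellation law full type part}, not an output of membership in $\cZ(M_1)^\circ\cdot\cY(N_2)^\circ$; for an arbitrary $z$ in the cycle you have no a priori control on how $x_0$ sits inside $L$ beyond $x_0\in L$ and $\pi^{-1}x_0\in L_{z^\vee}$, and those two facts alone do not pin down the position of $x_0$ relative to the type-decomposition of $L$. Likewise, $\cZ(M_1)^\circ$ does not literally assert primitivity of $M_1$ in any specific sublattice of $L$; it only says $z\notin\cZ(M_1'')$ for integral $M_1''\supsetneq M_1$. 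Your ``intrinsic identity'' would close this gap, but as you acknowledge, its $\subset$ direction is exactly the missing step.

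The paper avoids the lattice-level analysis entirely. From $z\in\cY(\pi^{-1}M_1)\cdot\cY(\pi^{-1}M_2)$ it applies the cancellation law (Proposition~\ref{proposition2.6}) to pass to $\cN_{n-h}^{[0]}$, where the polarization is principal and hence $\cY=\cZ$. Thus $z\in\cY(\pi^{-1}M_1)$ becomes $z\in\cZ(\pi^{-1}M_1)$ after transfer, and this contradicts $z\in\cZ(M_1)^\circ$ directly (for $a\ge2$ because $\pi^{-1}M_1$ is an integral overlattice; for $a\le1$ because $\cZ(\pi^{-1}M_1)=\emptyset$). The point is that the cancellation law packages the needed structural statement about $L_{z^\vee}$ in a way that applies to \emph{every} $z$ in the cycle, which is what your direct approach is missing.
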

	\begin{proof}
		Let $N^\flat = M_1\oplus \mathrm{span}\{\pi^{-1}(x_1+\alpha_1 x_0),\cdots,\pi^{-1}(x_h+\alpha_h x_0)\}$ and $(N^\flat)' = M_1\oplus \mathrm{span}\{\pi^{-1}(x_1+\alpha_1' x_0),\cdots,\pi^{-1}(x_h+\alpha_h' x_0)\}$ where $\alpha_i$ and $\alpha_i'$ are representatives of $O_F/(\pi)$.  If $\cZ(M_1)^\circ\cdot \cY(N_2)^\circ(O_K)= \cZ(M_1)^\circ\cdot \cY^\circ(N_2')(O_K),$ then we have nontrivial $z\in \cZ(M_1)^\circ\cdot \cY(N_2)^\circ(O_K)\cap \cZ(M_1)^\circ\cdot \cY^\circ(N_2')(O_K)$. This implies that $z\in \cY(\pi^{-1}M_1)$.   In particular, $z\in \cY(\pi^{-1}M_1)\cdot \cY(\mathrm{span}\{\pi^{-1}x_1 ,\cdots,\pi^{-1}x_h\})(O_K).$ However, 
		notice that 
		\begin{align*}
			\cY(\pi^{-1}M_1)\cdot \cY(\mathrm{span}\{\pi^{-1}x_1 ,\cdots,\pi^{-1}x_h\}) \cong \cZ(\pi^{-1}M_1)\cdot \cY(\mathrm{span}\{\pi^{-1}x_1 ,\cdots,\pi^{-1}x_h\}) 
		\end{align*}
		by cancellation law and the fact that $\cZ(\pi^{-1}M_1)=\cY(\pi^{-1}M_1)$ in $\cN_2$. This contradicts the fact $z\not\in \cZ(\pi^{-1}M_1)^{\circ}$.
	\end{proof}

	\begin{proof}[Proof of Theorem \ref{thm: hori part}]
		Assume $z\in \cZ(M^\flat)^\circ(O_K)$. By Lemma \ref{lemma4.3}, we know that   $z \in  \cZ(M^\flat)(O_K)$ where $M^\flat \in H(\bV)$.

		If $M^\flat =M_1\obot M_2$, where $M_1\approx (1)^{n-h}$ and $M_2\approx \diag((\pi)^{h-2},(\pi^a))$ with $a>0$, then according to Proposition \ref{proposition2.6}, $\cZ(M^\flat)\cong \cZ(M_2)\subset \cN_{h}^{[h]}$. Then applying the duality between $\cN_{h}^{[h]}$  and  $\cN_{h}^{[0]}$, we have $\cZ(M_2)\subset \cN_{h}^{[h]}$ is isomorphic to $\cZ(M_2')\subset \cN_{h}^{[0]}$, where $M_2'\cong \diag((1)^{h-2},(\pi^{a-1}))$. Then   by   Proposition \ref{proposition2.6} and \cite[Proposition 8.1]{KR1}, $\cZ(M^\flat)^\circ $ is isomorphic to a quasi-canonical lifting of degree $a-1$. 
		
		Now we assume $M^\flat \approx \diag(1^{n-h-2},(\pi)^{h},(\pi^a))  \text{ with $a\in \Z_{\ge 0}$}.$ Then according to Proposition \ref{prop: cancellation law full type part}, and Lemma \ref{lem: possible hor lattice}, we have $z \in \cZ(M_1)^\circ\cdot \cY(N_2)^\circ(O_K)$ for some $N_2$, where $N_2=\mathrm{span}\{\pi^{-1}(x_1+\alpha_1 x_0),\cdots,\pi^{-1}(x_h+\alpha_h x_0)\}$ and each $\alpha_i$ is some representative of $O_F/(\pi)$. If   $M_1\cong \diag(1^{n-h-2},\pi^a)$ and $M_2\cong (\pi)^{h}$, then by  Proposition \ref{proposition2.6} and \cite[Proposition 8.1]{KR1} again, $\cZ(M_1)^\circ\cdot \cY(\pi^{-1}M_2)^\circ$ is isomorphic to a quasi-canonical lifting of degree $a$.

		According to the above discussion, we have
		\begin{align}\label{eq: hor dec}
			\mathcal{Z} (L^\flat )_{\mathscr{H}}=\bigcup_{\substack{L^\flat \subseteq M^\flat, \\ M^\flat \in H(\bV)}} m(M^\flat) \mathcal{Z} (M^\flat )_{\mathscr{H}}^{\circ},
		\end{align}
		and $\mathcal{Z} (M^\flat )_{\mathscr{H}}^{\circ}$ has the desired description.

		Now we show $m(M^\flat)=1$ for any $M^\flat$ in the above identity  following the  proof of \cite[Theorem 4.2.1]{LZ} closely. It suffices to show $O_K[\epsilon]$-points of both sides of \eqref{eq: hor dec} are the same where $\epsilon^2=0$. First, each $O_K$-point of the right hand side of \eqref{eq: hor dec} has a unique lift to an $O_K[\epsilon]$-point. Therefore we only need to show each $z\in \cZ(M^\flat)_{\mathscr{H}}^\circ(O_K)$ has a unique lift in $\cZ(M^\flat)_{\mathscr{H}}^\circ(O_K[\epsilon])$.

		Let $G$ be the corresponding $O_F$-hermitian module of signature $(1,n-1)$ over $O_K$ and $\mathbb{D}(G)$ be the (covariant) $O_{F_0}$-relative Dieudonné crystal of $G$.  First, we have an action $O_F \otimes_{O_{F_0}} O_K \simeq O_K \oplus O_K$ on $\mathbb{D}(G)(O_K)$ induced by the action of  $O_F$ via $\iota: O_F \rightarrow \operatorname{End}(G)$, and hence a $\mathbb{Z} / 2 \mathbb{Z}$-grading on $\mathbb{D}(G)(O_K)$. Then we let $\mathscr{A}=$ $\operatorname{gr}_0 \mathbb{D}(G)(O_K)$. It is a free $O_K$-module of rank $n$  equipped with an $O_K$-hyperplane: $\mathscr{H}\coloneqq \operatorname{Fil}^1 \mathbb{D}(G)(O_K) \cap \mathscr{A}$ by the Kottwitz signature condition. Note that $\mathscr{H}$ contains the image of $L^\flat$ under the identification of \cite[Lemma 3.9]{KR1}.

		Note that the kernel of $O_K[\epsilon] \rightarrow O_K$ has a natural nilpotent divided power structure. Then according to  Grothendieck-Messing theory, a lift $\tilde{z} \in \mathcal{Z}(L^\flat)(O_K[\epsilon])$ of $z$ corresponds to an $O_K[\epsilon]$ direct summand  of $\mathbb{D}(G)(O_K[\epsilon])$ $\widetilde{\mathrm{Fil}}$ that lifts $\mathrm{Fil}^1 \mathbb{D}(G)$  and  contains the image of $L^\flat$ in $\widetilde{\mathscr{A}}$.    Here,  $\widetilde{\mathrm{Fil}}$ is isotropic under the natural pairing $\langle \, , \, \rangle_{\mathbb{D}(G)(O_K[\epsilon])}$ on $\mathbb{D}(G)(O_K[\epsilon])$ induced by the polarization. Since $L^\flat \subset \operatorname{Hom}_{O_F}(\overline{\cE}, G)$ has rank $n-1$, by Breuil's theorem \cite[§4.3]{LZ},   we know that the image of $L^\flat$ in $\operatorname{gr}_0 \mathbb{D}(G)(S)$ has rank $n-1$ over $S$ (the Breuil's ring) and thus its image in the base change $\mathscr{A}$ has rank $n-1$ over $O_K$.    In particular, $\mathrm{gr_0}\widetilde{\Fil}$  is the unique $O_K[\epsilon]$-hyperplane $\widetilde{\mathscr{H}}$ of $\mathrm{gr}_0\mathbb{D}(G)(O_K[\epsilon])$ that contains the $O_K[\epsilon]$-module spanned by the image of $L^\flat$ in $\widetilde{\mathscr{A}}$.
		
		To determine $\mathrm{gr}_1\widetilde{\mathrm{Fil}}$, we note that  $\widetilde{\mathrm{Fil}}$ is a direct summand of $\mathbb{D}(G)(O_K[\epsilon])$ with rank $n$ containing $\widetilde{\mathscr{H}}$. Since $\widetilde{\mathrm{Fil}}$ is isotropic under $\langle \, , \, \rangle_{\mathbb{D}(G)(O_K[\epsilon])}$, we have $\mathrm{gr}_1\widetilde{\mathrm{Fil}}\subset (\widetilde{\mathscr{H}})^\perp \cap \mathrm{gr}_1 \mathbb{D}(G)(O_K[\epsilon])$. Here $(\widetilde{\mathscr{H}})^\perp$ is the perpendicular subspace in $\mathbb{D}(G)(O_K[\epsilon])$ with respect to $\langle \, , \, \rangle_{\mathbb{D}(G)(O_K[\epsilon])}$. Moreover, since $\mathrm{det}\langle \, , \, \rangle_{\mathbb{D}(G)(O_K[\epsilon])}\neq 0$ in $O_K[\epsilon]$, we have  $(\widetilde{\mathscr{H}})^\perp$ has rank $n+1$. Note that  $\mathrm{gr}_0\mathbb{D}(G)(O_K[\epsilon])$ is also isotropic under $\langle \, , \, \rangle_{\mathbb{D}(G)(O_K[\epsilon])}$. In particular, $\mathrm{gr}_0\mathbb{D}(G)(O_K[\epsilon])\subset (\widetilde{\mathscr{H}})^\perp$ which has rank $n$. Hence $(\widetilde{\mathscr{H}})^\perp\cap \mathrm{gr}_1 \mathbb{D}(G)(O_K[\epsilon])$ is of rank one.  Since $\widetilde{\Fil}$ is a direct summand of $\mathbb{D}(G)(O_K[\epsilon])$, we know $\mathrm{gr}_1\widetilde{\mathrm{Fil}}=(\widetilde{\mathscr{H}})^\perp \cap \mathrm{gr}_1 \mathbb{D}(G)(O_K[\epsilon])$.  Hence $\widetilde{\Fil}=\mathrm{gr}_0\widetilde{\Fil}\oplus \mathrm{gr}_1\widetilde{\Fil}$ is uniquely determined, and the lift $\tilde{z}\in \cZ(L^\flat)(O_K[\epsilon])$ of $z$ is unique. Hence $\widetilde{\Fil}=\mathrm{gr}_0\widetilde{\Fil}\oplus \mathrm{gr}_1\widetilde{\Fil}$ is uniquely determined, and the lift $\tilde{z}\in \cZ(L^\flat)(O_K[\epsilon])$ of $z$ is unique.
		
		We defer the proof of the cardinality of the summation index to Lemma \ref{lemma5.48}, where it is proved via analytic method.
	\end{proof}

	\section{Local modularity and Tate conjectures}
	First, as we have discussed in the introduction, we propose the following local modularity conjecture   motivated by the analytic computation and the special case for $\cN_n^{[0]}$ (see \cite[Corollary 5.3.3]{LZ}).
	\begin{conjecture}\label{conjecture6.1.1}
		For the Rapoport-Zink space $\CN^{[h]}_n$ and an $O_F$-lattice $L^{\flat} \subset \BV$ of rank $n-1$, we have
		\begin{equation*}
			\widehat{\Int}_{L^\flat,\mathscr{V}}(\cZ(x)) =-  \dfrac{1}{q^h}\mathrm{Int}_{L^\flat,\mathscr{V}}(\cY(x)).
		\end{equation*}
	\end{conjecture}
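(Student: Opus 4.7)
The plan is to mimic the strategy of \cite[Section 6]{LZ}, which established the analogous identity in the good reduction case $h=0$. The crux is to decompose the class of $\cZ(L^\flat)_{\sV}$ in an appropriate Grothendieck group of coherent sheaves on $\cN_n^{[h],\red}$ as a $\mathbb{Z}$-linear combination
\begin{equation*}
[\mathcal{O}_{\cZ(L^\flat)_{\sV}}] = \sum_i n_i \, [\mathcal{O}_{C_i}],
\end{equation*}
where each $C_i$ is a \emph{special curve}, meaning a proper curve sitting inside the image of an embedding $\cN_2^{[1]} \hookrightarrow \cN_n^{[h]}$ or $\cN_3^{[0]} \hookrightarrow \cN_n^{[h]}$. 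The existence of such a decomposition is precisely Conjecture \ref{conjecture9.2.1}, which the paper isolates as the missing input for the unconditional Kudla--Rapoport formula.

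Granting the decomposition, the conjecture reduces to the curve-level identity
\begin{equation*}
\widehat{\Int}_C(\cZ(x)) = -\frac{1}{q^h}\, \Int_C(\cY(x))
\end{equation*}
for every special curve $C$. For $C$ lying in an embedded $\cN_3^{[0]}$ this is \cite[Lemma 6.3.1]{LZ}, while for $C$ in an embedded $\cN_2^{[1]}$ it is \cite[Theorem 8.1]{ZhiyuZhang}, which itself rests on the explicit intersection computations of \cite{San3}. The cancellation law (Proposition \ref{proposition2.6}) allows one to compute both $\Int_C(\cZ(x))$ and $\Int_C(\cY(x))$ inside the lower-dimensional Rapoport--Zink space containing $C$, and the factor $q^{-h}$ emerges from the polarization-type discrepancy between the $\cZ$- and $\cY$-divisor structures on $\cN_2^{[1]}$ and $\cN_3^{[0]}$. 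Summing the curve identity with weights $n_i$ and applying linearity of the Euler characteristic, of the Fourier transform, and of the horizontal/vertical decomposition (so that the $\mathscr{H}$-contribution, which is supported at closed points lifting to $\OFb$, drops out after Fourier transforming on the nontrivial orbit $\val(x) < 0$) then yields the desired identity for $\cZ(L^\flat)_{\sV}$.

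The main obstacle is the decomposition step, i.e.\ proving Conjecture \ref{conjecture9.2.1}; this is genuinely a Tate-type statement for the Deligne--Lusztig strata of $\cN_n^{[h],\red}$. The analytic side provides strong guidance here: the companion identity \eqref{eq: FT of pden} for $\pden_{L^\flat,\sV}$, combined with the inductive formulas of Theorem \ref{thm: ind formula for Pden} and the explicit values in Theorem \ref{thm: complement formula}, predicts the multiplicities $n_i$ in terms of primitive densities at level $(n-1,h-1)$. A uniform proof appears to require a careful Bruhat--Tits stratification of $\cN_n^{[h],\red}$ together with a K-theoretic lifting of the fundamental classes of its higher-dimensional strata to linear combinations of structure sheaves of closed curves of Deligne--Lusztig type. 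Unconditionally, I expect the strategy to go through in the cases $(n,h)=(n,1),(n,n-1),(4,2)$, where the stratification is explicit enough that the decomposition can be written down by hand and matched against the analytic predictions; for general $(n,h)$ the argument will remain conditional on the Tate-type input of Conjecture \ref{conjecture9.2.1}.
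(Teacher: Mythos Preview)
Your proposal is correct and matches the paper's approach: the paper states Conjecture~\ref{conjecture6.1.1} as a conjecture, notes (in the remark following Conjecture~\ref{conjecture9.2.1} and in the proof of Theorem~\ref{theorem9.1.3}) that it follows from Conjecture~\ref{conjecture9.2.1} via the curve-level identities of \cite[Lemma~6.3.1]{LZ} and \cite[Theorem~8.1]{ZhiyuZhang}, and verifies Conjecture~\ref{conjecture9.2.1} unconditionally in exactly the cases $(n,1),(n,n-1),(4,2)$ you name. One minor imprecision: your remark about the $\mathscr{H}$-contribution ``dropping out after Fourier transforming'' is unnecessary here, since the identity already concerns the vertical part $\cZ(L^\flat)_{\sV}$ directly; and the paper does not actually use the analytic predictions to determine the multiplicities $n_i$, but rather proves Conjecture~\ref{conjecture9.2.1} via the Chow-group/cohomology computations for the Deligne--Lusztig strata (Lemma~\ref{lemma9.6}, Theorem~\ref{theorem9.7}).
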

	\begin{remark}
		More precisely, as we wrote in Section \ref{section4}, Conjecture \ref{conjecture6.1.1} is primarily motivated by \eqref{eqint1.2} and, more generally, by Theorem \ref{theorem9.1.1} below along with \cite[Lemma 6.3.1]{LZ} and \cite[Theorem 8.1]{ZhiyuZhang}.
	\end{remark}
	
	\begin{conjecture}\label{conjecture9.2.1}(cf. \cite[Corollary 5.3.3]{LZ}) For the Rapoport-Zink space $\CN^{[h]}_{n}$ and an $O_F$-lattice $L^{\flat} \subset \BV$ of rank $n-1$, there are finitely many Deligne-Lusztig curves $C_i\subset \cN_{3}^{[0]}\hookrightarrow \cN_{n}^{[h]}$, projective lines $D_i\subset \cN_{2}^{[1]}\hookrightarrow \cN_{n}^{[h]}$ and $\text{mult}_{C_i}$, $\text{mult}_{D_i} \in \BQ$ such that for any $x \in \BV \backslash L^{\flat}_F$,
		\begin{equation*}
			\chi(\CN^{[h]}_n, \text{}^{\BL}\CZ(L^{\flat})_{\ScV} \otimes^{\BL} O_{\CZ(x)}) = \sum_i \text{mult}_{C_i} \chi(\CN^{[h]}_n, O_{C_i} \otimes^{\BL} O_{\CZ(x)})+\sum_i \text{mult}_{D_i} \chi(\CN^{[h]}_n, O_{D_i} \otimes^{\BL} O_{\CZ(x)}).
		\end{equation*}
	\end{conjecture}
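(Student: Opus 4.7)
The plan is to prove this via the Bruhat--Tits stratification of $\cN_n^{[h], \red}$. The reduced locus decomposes into locally closed strata indexed by vertex lattices in $\bV$, and each closed stratum is a Deligne--Lusztig variety in the parahoric setting, extending the Vollaard--Wedhorn description of $\cN_n^{[0], \red}$. Since $\CZ(L^\flat)_{\ScV}$ is supported on the reduced locus, its class in the Grothendieck group of coherent sheaves on $\cN_n^{[h]}$ supported on $\cN_n^{[h], \red}$ should be expressible as a $\BQ$-linear combination of classes of structure sheaves of closed Bruhat--Tits strata. This is the starting point, generalizing \cite[Corollary 5.3.3]{LZ}.

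First, I would write down an explicit decomposition of $[\CZ(L^\flat)_{\ScV}]$ as a combination of $[\Oo_X]$ for closed Deligne--Lusztig strata $X$, by descending induction on dimension, using excision along the stratification together with Euler-characteristic calculations for $\cZ$-divisors restricted to DL varieties. Next, for each such $X$, I would reduce the class $[\Oo_X]$ to a combination of curve classes, modulo the kernel of the pairing $Y \mapsto \chi(\cN_n^{[h]}, \Oo_Y \otimes^{\BL} \Oo_{\cZ(x)})$. The required curves come from two sources: (a) embeddings $\cN_3^{[0]} \hookrightarrow \cN_n^{[h]}$ via Proposition \ref{proposition2.5} applied to suitable unimodular rank-three summands, whose reduced loci contain the Vollaard--Wedhorn Deligne--Lusztig curves $C_i$; and (b) embeddings $\cN_2^{[1]} \hookrightarrow \cN_n^{[h]}$, whose special fibers supply the projective lines $D_i$ needed to capture the new phenomena appearing only when $h \ge 1$, namely the exceptional components produced by the bad reduction. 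Combining these two steps yields the claimed identity upon pairing with $\cZ(x)$.

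The main obstacle is the second step, which is a Tate-style statement for parahoric Deligne--Lusztig varieties: one needs that their groups of $1$-cycles, modulo the equivalence relation defined by intersection with all divisors $\cZ(x)$, are generated by the two specified families of curves. For the low-dimensional or rigidly structured cases $(n,h) = (n,1), (n,n-1), (4,2)$ handled unconditionally in the paper, one can exhibit enough such curves explicitly and compute the relevant intersection numbers, driven in part by the analytic identity \eqref{eq: FT of pden} which predicts exactly how many independent curve classes must appear. In full generality, however, establishing such a generation statement requires nontrivial control of Chow groups, or at least the appropriate numerical $1$-cycle groups, of parahoric Deligne--Lusztig varieties in arbitrary dimension; this is a Tate-type conjecture of independent interest, which is precisely why the authors formulate it as Conjecture \ref{conjecture9.2.1} rather than attempting to resolve it in its natural full generality here.
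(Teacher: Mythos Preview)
The statement is a conjecture, and the paper does not prove it in general; your final paragraph correctly recognizes this. What the paper does prove is Theorem~\ref{theorem9.2.1}, covering the cases $\CN_n^{[0]}$, $\CN_n^{[1]}$, $\CN_n^{[n-1]}$, $\CN_n^{[n]}$, $\CN_4^{[2]}$, and your proposal should be compared against that argument.

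Your two-step plan (first decompose $[\CZ(L^\flat)_{\ScV}]$ into classes of higher-dimensional closed DL strata, then reduce each stratum class to curves) is more elaborate than what the paper does. Since $L^\flat$ has rank $n-1$, the cycle $\CZ(L^\flat)_{\ScV}$ is already one-dimensional, so the paper works directly with curves: any irreducible curve in $\CN_{n,\red}^{[h]}$ lies in some closed stratum $Y_\Lambda$ or $\BP^n_\Lambda$ of the Bruhat--Tits stratification, and one only needs to show that the class of an arbitrary curve inside such a stratum is a $\BQ$-combination of the specified curves. For $\BP^n_\Lambda$ and $\BP^2_\Lambda$ this is the Chow group of projective space; for the DL strata $Y_\Lambda$ in $\CN_n^{[1]}$, the paper establishes the required Tate-type statement (Theorem~\ref{theorem9.7}) by an explicit computation of Frobenius eigenvalues on the cohomology of the relevant Coxeter-type DL varieties (Lemma~\ref{lemma9.6}), drawing on Lusztig's tables. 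Finiteness then comes from Lemma~\ref{lemma2.7}.

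The main gap in your sketch for the special cases is that you point to the analytic identity~\eqref{eq: FT of pden} as the driver, saying it ``predicts exactly how many independent curve classes must appear.'' In the paper the analytic side plays no role in the proof of Theorem~\ref{theorem9.2.1}; the argument is entirely geometric and cohomological, resting on the Frobenius-eigenvalue computation just mentioned. The analytic input is used elsewhere (to match the vertical parts once Conjecture~\ref{conjecture9.2.1} is assumed), not to verify the conjecture itself in the known cases.
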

	
	\begin{remark}
		Indeed, as we wrote in Section \ref{section4}, Conjecture \ref{conjecture6.1.1} follows from Conjecture \ref{conjecture9.2.1}. Conjecture \ref{conjecture9.2.1} is implied by a stronger version of Tate conjectures on $1$-cycles for Deligne--Lusztig varieties $Y_\Lambda$ in Proposition \ref{proposition6.4.1} (cf. \cite[Theorem 5.3.2]{LZ}).
	\end{remark}
	
	\begin{theorem}\label{theorem9.2.1} Conjecture \ref{conjecture9.2.1} holds for $\CN^{[0]}_{n}, \CN^{[1]}_{n},\CN^{[n-1]}_{n}, \CN^{[n]}_{n}$, and $\CN^{[2]}_{4}$.
	\end{theorem}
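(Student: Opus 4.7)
The plan is to argue case by case, reducing where possible to known results and handling $\cN_4^{[2]}$ by direct analysis of its Bruhat-Tits stratification. The case $\cN_n^{[0]}$ is \cite[Corollary 5.3.3]{LZ} verbatim. The case $\cN_n^{[n]}$ reduces to $\cN_n^{[0]}$: the defining condition $\mathrm{Ker}\,\lambda=X[\pi]$ forces $\lambda=\pi\lambda'$ with $\lambda'$ principal, yielding a canonical isomorphism $\cN_n^{[n]}\cong \cN_n^{[0]}$ under which $\cZ$-cycles for a special vector $x$ correspond to $\cZ$-cycles for the $\pi$-rescaled vector on the principally polarized side. The desired decomposition then transports through this identification, since the embedded subspaces $\cN_3^{[0]}\hookrightarrow\cN_n^{[n]}$ and $\cN_2^{[1]}\hookrightarrow\cN_n^{[n]}$ match up with the corresponding embedded subspaces on the $\cN_n^{[0]}$-side.

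The cases $\cN_n^{[1]}$ and $\cN_n^{[n-1]}$ are handled via the Hecke correspondence from \cite{LZ} relating $\cN_n^{[1]}$ and $\cN_{n+1}^{[0]}$: pulling back \cite[Corollary 5.3.3]{LZ} yields a decomposition of $\cZ(L^\flat)_\ScV$ in $\cN_n^{[1]}$, where the Deligne-Lusztig curves from $\cN_3^{[0]}\hookrightarrow\cN_n^{[1]}$ descend directly from DL curves in $\cN_3^{[0]}\hookrightarrow\cN_{n+1}^{[0]}$, and the $\BP^1$s in $\cN_2^{[1]}\hookrightarrow\cN_n^{[1]}$ arise from the exceptional curves of the correspondence. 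The dual case $\cN_n^{[n-1]}$ is then obtained via $\theta\colon\cN_n^{[n-1]}\xrightarrow{\sim}\cN_n^{[1]}$: although $\theta$ swaps $\cZ$- and $\cY$-cycles, it preserves DL curves and $\BP^1$s lying in the embedded subspaces, so the problem becomes the analogous decomposition of $\cY(L^\flat)_\ScV$ on $\cN_n^{[1]}$, obtained by the same Hecke-correspondence argument with the roles of $\cZ$ and $\cY$ interchanged.

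For the remaining case $\cN_4^{[2]}$, the plan is direct computation. The reduced locus $\cN_4^{[2],\mathrm{red}}$ is a union of closed Bruhat-Tits strata indexed by vertex lattices $\Lambda\subset\bV$ of types within $\{1,3\}$, each identifiable with the closure of a Deligne-Lusztig variety of dimension at most two for a small-rank unitary group. We decompose $\cZ(L^\flat)_\ScV$ into primitive pieces indexed by these vertex lattices (in the spirit of \cite[Sections 4--5]{LZ}) and express each primitive piece as a $\BQ$-linear combination of the classes of DL curves in embedded $\cN_3^{[0]}$'s and of $\BP^1$'s in embedded $\cN_2^{[1]}$'s. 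This ultimately rests on a one-cycle Tate-type statement on each DL stratum, which in this low-dimensional range is tractable by explicit description of the Chow group of $1$-cycles. The main obstacle lies precisely here: verifying Tate for $1$-cycles on the specific Deligne-Lusztig surfaces appearing in $\cN_4^{[2],\mathrm{red}}$ and identifying each required class in terms of the prescribed basis of curves coming from the embedded subspaces, which demands careful bookkeeping of intersections across the stratification. A secondary obstacle is making sure, in the $\cN_n^{[n-1]}$ step, that the Hecke correspondence is sufficiently symmetric in $\cZ$ and $\cY$ to yield the analogous $\cY$-cycle decomposition on $\cN_n^{[1]}$.
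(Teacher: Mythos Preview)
Your approach diverges from the paper's in the $\cN_n^{[1]}$ case, and there is a real gap. The Hecke correspondence in \cite{LZ} relates \emph{intersection numbers} on $\cN_n^{[1]}$ and $\cN_{n+1}^{[0]}$ and is used there to prove the Kudla--Rapoport identity directly; it is not at all clear that pulling back \cite[Corollary~5.3.3]{LZ} through the correspondence yields a decomposition of ${}^{\BL}\cZ(L^\flat)_\ScV$ into the \emph{specific} curve classes required by Conjecture~\ref{conjecture9.2.1}. You would need to track how DL curves in embedded $\cN_3^{[0]}\hookrightarrow\cN_{n+1}^{[0]}$ transform under the correspondence and to identify your ``exceptional curves of the correspondence'' with $\BP^1$'s coming from embedded $\cN_2^{[1]}$'s, neither of which you justify. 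The paper instead argues intrinsically on $\cN_n^{[1]}$: using the Bruhat--Tits stratification $\cN_{n,\mathrm{red}}^{[1]}=\bigcup_{t(\Lambda)\ge 2}Y_\Lambda \cup \bigcup_{t(\Lambda)=0}\BP_\Lambda$ from \cite[Theorem~1.1]{cho2018basic}, it proves a Tate-type theorem (Theorem~\ref{theorem9.7}) for each $Y_\Lambda$ by computing Frobenius eigenvalues on the cohomology of the open DL pieces via Lusztig's tables (Lemma~\ref{lemma9.6}, adapting \cite[\S5.3]{LZ}). This shows every curve class on $Y_\Lambda$ is a $\BQ$-combination of the irreducible components of the stratum $\widetilde X_1$, which are projective lines; curves in the $\BP_\Lambda$ strata are handled by the Chow group of projective space. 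The paper explicitly notes that this intrinsic route is chosen to give a uniform template for general $h$, avoiding the Hecke correspondence.

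Your worry about $\cZ$ versus $\cY$ for $\cN_n^{[n-1]}$ is also misplaced: Conjecture~\ref{conjecture9.2.1} is ultimately a statement about curve classes in the reduced locus, and since $\theta$ identifies $\cN_{n,\mathrm{red}}^{[n-1]}$ with $\cN_{n,\mathrm{red}}^{[1]}$, the Tate statement transports directly without any $\cZ$/$\cY$ bookkeeping. For $\cN_4^{[2]}$ you are overcomplicating matters: by \cite[Theorem~1.1]{cho2018basic} the irreducible components of $\cN_{4,\mathrm{red}}^{[2]}$ are all $\BP^2$'s, so the Chow group of $\BP^2$ immediately writes every curve class as a multiple of a line, and these lines can be realized inside embedded $\cN_2^{[1]}$'s. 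There is no ``main obstacle'' of verifying Tate on genuine DL surfaces here.
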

	
	To prove this theorem, we follow \cite[Section 5.3]{LZ}. First, we need to recall several notations and theorems from \cite{cho2018basic}. In \cite[Theorem 1.1]{cho2018basic}, we proved that the reduced subscheme of $\CN^{[h]}_{n}$ has a Bruhat-Tits stratification and their components are certain Deligne-Lusztig varieties $Y_{\Lambda}$ where $\Lambda$ is a vertex lattice of type $t(\Lambda)$. More precisely, we have the following proposition.
	\begin{proposition}\cite[Theorem 1.1]{cho2018basic}\label{proposition6.4.1} Let $\CN^{[h]}_{n,red}$ be the underlying reduced subscheme of $\CN^{[h]}_{n}$. Then, we have
		\begin{equation*}
			\CN^{[h]}_{n,red}=\mathlarger{\cup}_{t(\Lambda) \leq h-1} Y_{\Lambda} \text{ } \mathlarger{\cup}\text{ }  \mathlarger{\cup}_{t(\Lambda) \geq h+1} Y_{\Lambda},
		\end{equation*}
		where $Y_{\Lambda}$ denotes certain Deligne-Lusztig varieties associated with vertex lattices $\Lambda$. Also, the dimension of $Y_{\Lambda}$ is $\frac{1}{2}(t(\Lambda)+h-1)$ (resp. $\frac{1}{2}(t(\Lambda)+n-h-1)$) if $t(\Lambda) \geq h+1$ (resp. $t(\Lambda) \leq h-1$).
	\end{proposition}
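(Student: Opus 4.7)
The plan is to follow the Vollaard--Wedhorn strategy, extended to the parahoric-level setting where the polarization has nontrivial type $h$. The first step is to translate the problem into semi-linear algebra via (covariant) relative Dieudonn\'e theory for strict formal $O_{F_0}$-modules: an $\bar k$-point of $\CN^{[h]}_{n,\red}$ corresponds to a full $O_F\otimes_{\Z_p}W(\bar k)$-lattice $M$ in the rational Dieudonn\'e module $N$ of $\BX$, subject to (i) a $\tau$-stability bound (such as $\pi M\subset \tau(M)\subset \pi^{-1}M$ in the appropriate component), (ii) the $O_F$-action together with the Kottwitz signature $(1,n-1)$ condition, and (iii) a duality condition imposed by the polarization of type $h$, so that $M$ sits between its dual $M^\vee$ and some shift thereof with the index matching $q^{2h}$. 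The space $\BV$ of special homomorphisms identifies with the $\tau$-fixed part of $N$ (up to rescaling), and vertex lattices in $\BV$ correspond to $\tau$-stable lattices in $N$ of the prescribed self-duality defect.

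Next, to each such $M$ I would associate a canonical \emph{vertex-lattice envelope} $\Lambda(M)$, constructed as either the smallest $\tau$-stable lattice containing $M$ or the largest one contained in $M$, obtained by iterating $\tau$ (this process terminates by the boundedness of the Newton polygon of a supersingular isocrystal). One checks that $\Lambda(M)$ descends to a vertex lattice in $\BV$, and that the type $t(\Lambda(M))$ is forced by the polarization-type $h$ to satisfy $t\not\equiv h\pmod 2$. The two ranges $t\le h-1$ and $t\ge h+1$ arise depending on whether the envelope is obtained by ``adding'' or ``intersecting'' $\tau$-translates of $M$, and they are exchanged by the duality $\theta\colon \CN^{[h]}_n\xrightarrow\sim \CN^{[n-h]}_n$ of the previous section. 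This yields a set-theoretic decomposition of $\CN^{[h]}_{n,\red}(\bar k)$ indexed by vertex lattices of the two allowed parities.

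For each vertex lattice $\Lambda$, the stratum $\CN_\Lambda\coloneqq\{M:\Lambda(M)=\Lambda\}$ is parameterized by flags in the $\BF_{q^2}$-hermitian space $\Lambda^\vee/\Lambda$ (of dimension $t$) tensored up to $\bar k$, satisfying a Frobenius-relative position condition coming from $\tau$ together with the Kottwitz condition cutting out a hyperplane in each factor. This exhibits $\CN_\Lambda$ as (the $\bar k$-points of) a Deligne--Lusztig variety $Y_\Lambda$ in an appropriate partial flag variety for a quasi-split unitary group over $\BF_q$ attached to $\Lambda$. To upgrade this from $\bar k$-points to a scheme-theoretic identification with $\CN^{[h]}_{n,\red}$, one applies Grothendieck--Messing deformation theory to show that the strata close up correctly and that the natural map $Y_\Lambda\to \CN^{[h]}_{n,\red}$ is a locally closed immersion with the expected image; closure relations are given by the reverse inclusion of vertex lattices, yielding the stated Bruhat--Tits stratification. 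The dimension formulas $\tfrac12(t+h-1)$ and $\tfrac12(t+n-h-1)$ then follow from the standard length formula for Deligne--Lusztig varieties, where the Weyl-group element is a Coxeter element of the appropriate unitary group and its length depends on whether the relevant isotropic flag has length governed by $h$ or by $n-h$.

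The main obstacle is the careful bookkeeping in step two, namely proving that the envelope $\Lambda(M)$ is always a genuine vertex lattice whose type has the correct parity, and that every vertex lattice in the two ranges actually arises. Equivalently, one must match the two families of vertex lattices with the two connected types of closed Bruhat--Tits strata in the presence of the type-$h$ polarization, and verify that no ``mixed'' strata of the forbidden parity $t\equiv h\pmod 2$ appear. Once this dichotomy is established, identifying each stratum with a Deligne--Lusztig variety and computing its dimension is routine, but this parity analysis is the step that genuinely distinguishes the parahoric case from the hyperspecial case.
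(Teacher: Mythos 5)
This proposition is not proved in the paper at all: it is imported verbatim from \cite{cho2018basic}*{Theorem 1.1}, so there is no internal argument to compare against. Your sketch (relative Dieudonn\'e translation, vertex-lattice envelopes via $\tau$-translates, the parity dichotomy $t\not\equiv h\pmod 2$ split into the two ranges exchanged by the duality $\theta$, identification of strata with generalized Deligne--Lusztig varieties and the dimension count) is essentially the same Vollaard--Wedhorn-style strategy that the cited reference carries out, so it takes the same approach as the proof the paper relies on.
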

	
	By \cite[Corollary 5.3.3]{LZ}, we know that the Conjecture \ref{conjecture9.2.1} holds for $\CN^{[0]}_{n}$ and $\CN^{[n]}_{n}$. Also, by \cite[Theorem 1.1]{cho2018basic}, the irreducible components of the reduced subscheme of $\CN^{[2]}_4$ are $\BP^2$ and their Chow groups are well-known. Therefore, let us focus on $\CN^{[1]}_{n},\CN^{[n-1]}_{n}$.
	Since $\CN^{[1]}_{n}$ is isomorphic to $\CN^{[n-1]}_{n}$, we only need to consider $\CN^{[1]}_{n}$. In this case, by Proposition \ref{proposition6.4.1} (see \cite[Theorem 1.1]{cho2018basic} for more detail), we know that the reduced subscheme of $\CN^{[1]}_n$ has a Bruhat-Tits stratification and their components are Deligne-Lusztig varieties $Y_{\Lambda}$ where $t(\Lambda) \geq 2$ or projective spaces $\BP^n_{\Lambda}$, where $t(\Lambda)=0$. Let us describe $Y_{\Lambda}$ more precisely.
	
	Let $k_F$ be the residue field of $F$ and let $V_{2d+2}$ be the unique (up to isomorphism) $k_F/k$-hermitian space of dimension $2d+2$. Let $\Lambda/\pi \Lambda^{\vee}=V_{2d+2}$ where $t(\Lambda)=2d+2$, and let $J_{2d+2}$ be the special unitary group associated to $(V_{2d+2},(\cdot,\cdot))$. Let $(W_{2d+2},S_{2d+2})$ be the Weyl system of $J_{2d+2}$ and let $B_{2d+2}$ be the standard Borel subgroup. For $I \in S_{2d+2}$, we define $W_I$ as the subgroup of $W_{2d+2}$ generated by $I$ and $P_I\coloneqq B_{2d+2}W_IB_{2d+2}$. Note that $W_{2d+2}$ can be identified with a symmetric group and $S_{2d+2}$ with $\lbrace s_1,\dots,s_{2d+1} \rbrace$ where $s_i$ is the transposition of $i$ and $i+1$. We write
	\begin{equation*}\begin{array}{l}
			I_0\coloneqq \lbrace s_1,\dots s_d,s_{d+2},\dots,s_{2d+1}\rbrace,\\
			I_i\coloneqq \lbrace s_1,\dots, s_{d-i},s_{d+i+2},\dots,s_{2d+1}\rbrace, 1 \leq i \leq d,\\
			P_i\coloneqq P_{I_i}.
		\end{array}
	\end{equation*}
	Note that $P_d=B_{2d+2}$. Also, note that the elements in $J_{2d+2}/P_i$ parametrize flags
	\begin{equation*}
		0 \subset T_{-i}\overset{1}{\subset} T_{-i+1} \dots \overset{1}{\subset} T_{-1} \overset{1}{\subset} \overline{A} \overset{1}{\subset} \overline{B} \overset{1}{\subset} T_1 \dots \overset{1}{\subset} T_i \subset V_{2d+2}.
	\end{equation*}
	
	Now, by \cite[Theorem 1.1]{cho2018basic}, we know that the reduced subscheme of $\CN^{[1]}_n$ is
	\begin{equation}\label{eq9.6}
		\CN^{[1]}_{n,red}=\cup_{\Lambda, t(\Lambda)\geq2} Y_{\Lambda} \cup \cup_{\Lambda, t(\Lambda)=0} \BP^{n}_{\Lambda}.
	\end{equation}
	
	Here, $Y_{\Lambda}=X_{P_0}(id) \sqcup X_{P_0}(s_{d+1})$ where $2d+2=t(\Lambda)$(see \cite[Definition 3.10]{cho2018basic}). By \cite[Lemma 2.21]{cho2018basic} (cf. \cite[Lemma 2.1]{Vo}), we have the following statement (cf. \cite[Theorem 2.15]{Vo}).
	
	\begin{proposition}(cf. \cite[Theorem 2.15]{Vo})
		There is a decomposition of $X_{P_0}(id) \sqcup X_{P_0}(s_{d+1})$ into a disjoint union of locally closed subvarieties
		\begin{equation*}
			X_{P_0}(id) \sqcup X_{P_0}(s_{d+1})=\sqcup_{i=0}^{d} \lbrace X_{P_i}(s_{d+2}\dots s_{d+i+1}) \sqcup X_{P_i}(s_{d+1}s_{d+2}\dots s_{d+i+1})\rbrace.
		\end{equation*}
	\end{proposition}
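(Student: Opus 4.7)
The plan is to adapt Vollaard's stratification from \cite[Theorem 2.15]{Vo} to the partial flag variety $J_{2d+2}/P_0$. The key idea is to recognize $X_{P_0}(\mathrm{id}) \sqcup X_{P_0}(s_{d+1})$ as a moduli of pairs $\overline A \overset{1}{\subset} \overline B$ satisfying a Frobenius-relative-position constraint, and to refine this description by iterating Frobenius to produce a canonical extended flag whose data is parametrized by $J_{2d+2}/P_i$.

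First I would interpret the points of $X_{P_0}(\mathrm{id}) \sqcup X_{P_0}(s_{d+1})$ concretely as pairs $(\overline A, \overline B)$ of subspaces of $V_{2d+2}$ with $\overline A \overset{1}{\subset} \overline B$, of the appropriate dimensions, such that $(\overline A, \overline B)$ is in relative position $\mathrm{id}$ or $s_{d+1}$ with respect to $(F(\overline A), F(\overline B))$. I would then iteratively construct the auxiliary subspaces
\begin{equation*}
T_{-j} := \overline A \cap F(\overline A) \cap \cdots \cap F^{j}(\overline A), \qquad T_{j} := \overline B + F(\overline B) + \cdots + F^{j}(\overline B),
\end{equation*}
and define the stratum $Z_i$ (for $0 \le i \le d$) to be the locus on which $\dim T_{-j} = \dim \overline A - j$ for $0 \le j \le i$ but $T_{-(i+1)} = T_{-i}$ (equivalently, the sum $T_{j}$ stabilizes at step $i$). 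Since these are rank conditions, the $Z_i$ form a locally closed decomposition of the left-hand side.

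Next, on each stratum $Z_i$, the assignment $(\overline A, \overline B) \mapsto (T_{-i}, \ldots, T_{-1}, \overline A, \overline B, T_1, \ldots, T_i)$ defines a morphism $Z_i \to J_{2d+2}/P_i$ which is an isomorphism onto its image, with inverse given by forgetting the $T_j$'s. I would then identify the image with $X_{P_i}(s_{d+2}\cdots s_{d+i+1}) \sqcup X_{P_i}(s_{d+1}s_{d+2}\cdots s_{d+i+1})$ by computing the Frobenius-relative position of the enhanced flag. The two components correspond to whether the Frobenius step at the center includes the ``middle reflection'' $s_{d+1}$ or not, which in turn is inherited from whether the original pair $(\overline A, \overline B)$ lay in $X_{P_0}(\mathrm{id})$ or in $X_{P_0}(s_{d+1})$.

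The main obstacle is verifying the precise Weyl-group combinatorics, namely that the Frobenius-relative position of the full flag $(T_{-i}, \ldots, T_i)$ is given by exactly one of the two specified Weyl elements $s_{d+2}\cdots s_{d+i+1}$ and $s_{d+1}s_{d+2}\cdots s_{d+i+1}$. I would handle this by induction on $i$, using \cite[Lemma 2.21]{cho2018basic} (the parahoric analogue of \cite[Lemma 2.1]{Vo}) as the inductive step: under the projection $\pi_{i+1}: J_{2d+2}/P_{i+1} \to J_{2d+2}/P_i$, the fiber is a $\BP^1$ on which Frobenius acts by the simple reflection $s_{d+i+2}$ (the simple root corresponding to the new step $T_i \subset T_{i+1}$), so passing from $P_i$ to $P_{i+1}$ appends exactly one letter $s_{d+i+2}$ to the right of the Weyl word. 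The presence or absence of the initial $s_{d+1}$ is fixed by the starting piece and is preserved along the induction, which yields exactly the two Weyl elements claimed.
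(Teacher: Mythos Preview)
Your proposal is correct and follows essentially the same route as the paper, which simply says ``We can follow the proof of \cite[Theorem 2.15]{Vo} with \cite[Lemma 2.21]{cho2018basic}.'' You have spelled out in detail what that adaptation entails: the iterated Frobenius construction of the auxiliary flag $T_{-j},T_j$, the rank-condition stratification into the $Z_i$, and the inductive identification of the Weyl element via \cite[Lemma 2.21]{cho2018basic}, which is precisely the content behind the paper's one-line citation.
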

	\begin{proof}
		We can follow the proof of \cite[Theorem 2.15]{Vo} with \cite[Lemma 2.21]{cho2018basic}.
	\end{proof}
	
	Since $P_d=B_{2d+2}$, we have that $X_{P_d}(s_{d+2}\dots s_{2d+1})$ and $X_{P_d}(s_{d+1}s_{d+2}\dots s_{2d+1})$ are classical Deligne-Lusztig varieties. For $0 \leq i \leq d$, let us write
	\begin{equation*}\begin{array}{l}
			X_{i,1}^{\circ}\coloneqq X_{P_i}(s_{d+1}\dots s_{d+i+1}),\\
			X_{i-1,2}^{\circ}\coloneqq X_{P_i}(s_{d+2}\dots s_{d+i+1}),\\
			Y_{i,1}^{\circ}\coloneqq X_{B_{2i+2}}(s_{i+1}\dots s_{2i+1}),\\
			Y_{i-1,2}^{\circ}\coloneqq X_{B_{2i+2}}(s_{i+2}\dots s_{2i+1}),\\
			\widetilde{X}_i^{\circ}\coloneqq X_{i,1}^{\circ} \sqcup X_{i,2}^{\circ},\\
			\widetilde{X}_i\coloneqq \sqcup_{m=0}^{i}\widetilde{X}_i^{\circ},\\
			Y_{d}\coloneqq \sqcup_{i=0}^{d} \lbrace X_{P_i}(s_{d+2}\dots s_{d+i+1}) \sqcup X_{P_i}(s_{d+1}s_{d+2}\dots s_{d+i+1})\rbrace.
		\end{array}
	\end{equation*}
	
	Then, by the above proposition and a Bruhat-Tits stratification in \cite[Theorem 1.1]{cho2018basic}, we have that $X_{i,1}^{\circ}$ (resp. $X_{i,2}^{\circ}$) is a disjoint union of isomorphic copies of $Y_{i,1}^{\circ}$ (resp. $Y_{i,2}^{\circ}$). Also, the dimensions of $Y_{i,1}^{\circ}$ and $Y_{i,2}^{\circ}$ are $i+1$.
	
	For any $k_F$-variety $S$, we write $H^j(S)(i)$ for $H^j(S_{\overline{k}},\overline{\BQ_l}(i))$ where $l \neq p$ is a prime and $\overline{k}$ is an algebraically closed field containing $k_F$. Let $\ScF=\text{Fr}_{k_F}$ be the $q^2$-Frobenius on $H^j(S)(i)$. Then, the following analogous statement of \cite[Lemma 5.3.1]{LZ} holds.

	\begin{lemma}\label{lemma9.6}(cf. \cite[Lemma 5.3.1]{LZ}) For any $d,i \geq 0$ and $s \geq 1$, the action of $\ScF^s$ on the following cohomology groups are semisimple, and the space of $\ScF^s$-invariants is zero when $j \geq 1$.
		\begin{enumerate}
			\item $H^{2j}(Y_{d,1}^{\circ})(j)$.
			\item $H^{2j}(Y_{d-1,2}^{\circ})(j)$.
			\item $H^{2j}(\widetilde{X}_i^{\circ})(j)$.
			\item $H^{2j}(Y_d-\widetilde{X}_i)(j)$.
		\end{enumerate}
		
	\end{lemma}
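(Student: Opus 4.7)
The strategy follows \cite[Lemma 5.3.1]{LZ} closely, with Lusztig's description of the $\ell$-adic cohomology of Deligne--Lusztig varieties attached to Coxeter-type elements as the main input.

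First, I would treat parts (1) and (2) directly. The varieties $Y_{d,1}^{\circ}=X_{B_{2d+2}}(s_{d+1}\cdots s_{2d+1})$ and $Y_{d-1,2}^{\circ}=X_{B_{2d+2}}(s_{d+2}\cdots s_{2d+1})$ are classical Deligne--Lusztig varieties in the special unitary group $J_{2d+2}$, attached to products of consecutive simple reflections (partial Coxeter elements). By Lusztig's theorem, $\ScF$ acts semisimply on their $\ell$-adic cohomology, with eigenvalues of the form $\zeta q^{a}$ where $\zeta$ runs through an explicit finite list of roots of unity governed by the $F$-conjugacy class of the chosen element. Applying Poincar\'e duality (the varieties are smooth) and Tate twisting by $(j)$ converts this to the assertion that the eigenvalues of $\ScF$ on $H^{2j}(\cdot)(j)$ are roots of unity, so $\ScF^s$ is semisimple. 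Verifying that these roots of unity are all non-trivial when $j\geq 1$, and hence that the $\ScF^s$-invariants vanish, reduces to a direct inspection of Lusztig's eigenvalue list for the specific elements $s_{d+1}\cdots s_{2d+1}$ and $s_{d+2}\cdots s_{2d+1}$.

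Part (3) then follows immediately from (1) and (2): by the proposition preceding the lemma, $\widetilde{X}_i^{\circ}=X_{i,1}^{\circ}\sqcup X_{i,2}^{\circ}$ is a finite disjoint union of copies of $Y_{i,1}^{\circ}$ and $Y_{i-1,2}^{\circ}$, and cohomology commutes with disjoint unions. For part (4), I would induct on $d-i$ using the locally closed stratification $Y_d-\widetilde{X}_i=\bigsqcup_{m=i+1}^{d}\widetilde{X}_m^{\circ}$, removing one stratum at a time. Each step invokes the Frobenius-equivariant long exact sequence for the open-closed decomposition, combined with purity (Gysin) since each stratum $\widetilde{X}_m^{\circ}$ is smooth of positive codimension. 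Because (3) provides semisimplicity of $\ScF^s$ and vanishing of $\ScF^s$-invariants on the closed stratum (with the appropriate Tate twist), and because all relevant Frobenius eigenvalues are roots of unity, these properties propagate through the long exact sequences: vanishing of invariants is preserved under extensions, while semisimplicity follows from the fact that $\ScF^s$ has eigenvalues that are roots of unity on $H^{2j}(\cdot)(j)$, hence acts with finite order on each piece, forcing semisimplicity on the total space.

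The principal obstacle is the explicit analysis of Lusztig's eigenvalue data in (1)--(2): one must use the combinatorics of the partial Coxeter element $w=s_{d+1}\cdots s_{2d+1}$ (or its truncation) inside the Weyl group of $J_{2d+2}$ to rule out the eigenvalue $1$ on $H^{2j}(\cdot)(j)$ for $j\geq 1$. The verification that no ``trivial'' eigenvalue appears in positive Tate-twisted degree is the core computational input; once it is secured, the subsequent cohomological d\'evissage leading to (3) and (4) is essentially formal.
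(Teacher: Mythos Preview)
Your overall strategy matches the paper's: invoke Lusztig's results for (1) and (2), deduce (3) from the disjoint-union description, and handle (4) via the stratification. However, there is a concrete error in your eigenvalue analysis that propagates through the argument.

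You assert that after Poincar\'e duality and the Tate twist, the eigenvalues of $\ScF$ on $H^{2j}(\cdot)(j)$ are roots of unity, and that the task is to check these roots of unity are nontrivial. This is false. The varieties $Y_{d,1}^{\circ}$ and $Y_{d-1,2}^{\circ}$ are affine Deligne--Lusztig varieties, hence not proper; their cohomology is mixed rather than pure, and twisting $H^{2j}$ by $(j)$ does not land in weight zero. The paper carries out the computation explicitly: from Lusztig's eigenvalue tables for $H^j_c$ (\cite[(7.3)]{lusztig1976coxeter}) together with Poincar\'e duality, the eigenvalues of $\ScF$ on $H^{2j}(Y_{d,1}^{\circ})(j)$ come out to be $q^{2j}$ (and also $-q^{2j-3}$ when $j\geq 2$), and on $H^{2j}(Y_{d-1,2}^{\circ})(j)$ the eigenvalue is $q^{2j}$. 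These are nontrivial integral powers of $q$ for $j\geq 1$, which is why no power of $\ScF$ has invariants. Semisimplicity in (1) and (2) is not deduced from any finite-order property but is cited directly from \cite[6.1]{lusztig1976coxeter}. The same error breaks your argument for (4): you claim semisimplicity propagates through the long exact sequences because $\ScF^s$ has finite order on each stratum, but $\ScF^s$ does not have finite order, and in any case finite order on a sub and a quotient does not force finite order (or semisimplicity) on an extension --- a unipotent Jordan block is the standard counterexample. Your observation that vanishing of $\ScF^s$-invariants is preserved under extensions is correct, but the semisimplicity step needs a different justification.
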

	\begin{proof} Here, we follow the proof of \cite[Lemma 5.3.1]{LZ} with some modification.
		\begin{enumerate}
			\item By \cite[(7.3) $(\text{}^2\text{A}_{2d+1},\ScF)$]{lusztig1976coxeter} (or we refer to the proof of \cite[Lemma 2]{Ohm}), we have the following table on the eigenvalues of $\ScF$ on $H^{j}_{c}(Y_{d,1}^{\circ})$.
			\begin{equation*}
				\begin{array}{c|c|c|c|c|c|c|c|c}
					j & d+1 & d+2 & d+3 & \dots & 2d-1 & 2d & 2d+1 & 2d+2\\
					\hline
					& 1 & q^2 & q^4 & \dots & q^{2d-4} & q^{2d-2} & q^{2d} & q^{2d+2}\\
					\hline
					& -q^3 & -q^5 & -q^7 &\dots & -q^{2d-1} &&&.
				\end{array}
			\end{equation*}
			
			By the Poincar\'e duality, we have a perfect pairing
			\begin{equation*}
				H^{2d+2-j}_{c}(Y_{d,1}^{\circ}) \times H^{j}(Y_{d,1}^{\circ})(d+1) \rightarrow H^{2d+2}_c(Y_{d,1}^{\circ})(d+1)\simeq \overline{\BQ}_l.
			\end{equation*}
			Therefore, the eigenvalues of $\ScF$ on $H^{2j}(Y^{\circ}_{d,1})(j)$ are given by $q^{2(d+1-j)}$ times the inverse of the eigenvalues in $H^{2(d+1-j)}_c(Y_{d,1}^{\circ})$. More precisely,
			\begin{equation*}
				\begin{array}{ccc}
					& H^{2(d+1-j)}_c(Y_{d,1}^{\circ}) & q^{2(d+1-j)} \times \text{the inverse}\\
					j=0 & q^{2d+2} & 1\\
					j=1 & q^{2d-2} & q^2 \\
					j \geq 2 & q^{2d+2-4j}, -q^{2d-4j+5} & q^{2j}, -q^{2j-3}.
				\end{array}
			\end{equation*}
			Therefore, the eigenvalue of $\ScF^s$ cannot be $1$ when $j \geq 1$. The semisimplicity of the action of $\ScF^s$ is from \cite[6.1]{lusztig1976coxeter}.
			
			\item By \cite[(7.3) $(\text{A}_{d+1},\ScF)$]{lusztig1976coxeter} (or we refer to the proof of \cite[Lemma 2]{Ohm}, here, note that $\ScF$ is $q^2$-Frobenius), we have the following table on the eigenvalues of $\ScF$ on $H^{j}_{c}(Y_{d,2}^{\circ})$.
			
			\begin{equation*}
				\begin{array}{c|c|c|c|c|c|c|c|c}
					j & d+1 & d+2 & d+3 & \dots & 2d-1 & 2d & 2d+1 & 2d+2\\
					\hline
					& 1 & q^2 & q^4 & \dots & q^{2d-4} & q^{2d-2} & q^{2d} & q^{2d+2}.
				\end{array}
			\end{equation*}
			By the Poincar\'e duality, we have a perfect pairing
			\begin{equation*}
				H^{2d+2-j}_{c}(Y_{d,2}^{\circ}) \times H^{j}(Y_{d,2}^{\circ})(d+1) \rightarrow H^{2d+2}_c(Y_{d,2}^{\circ})(d+1)\simeq \overline{\BQ}_l.
			\end{equation*}
			Therefore, the eigenvalues of $\ScF$ on $H^{2j}(Y^{\circ}_{d,2})(j)$ are given by $q^{2(d+1-j)}$ times the inverse of the eigenvalues in $H^{2(d+1-j)}_c(Y_{d,2}^{\circ})$. More precisely,
			\begin{equation*}
				\begin{array}{ccc}
					& H^{2(d+1-j)}_c(Y_{d,2}^{\circ}) & q^{2(d+1-j)} \times \text{the inverse}\\
					j=0 & q^{2d+2} & 1\\
					j=1 & q^{2d-2} & q^2 \\
					j \geq 2 & q^{2d+2-4j} & q^{2j}.
				\end{array}
			\end{equation*}
			Therefore, the eigenvalue of $\ScF^s$ cannot be $1$ when $j \geq 1$. The semisimplicity of the action of $\ScF^s$ is from \cite[6.1]{lusztig1976coxeter}.
			
			\item This follows from (1) and (2) since $\widetilde{X}_i^{\circ}$ is a disjoint union of $Y_{i,1}^{\circ}$ and $Y_{i,2}^{\circ}$.
			
			\item This follows from (3) since $Y_d-\widetilde{X}_i^{\circ}=\sqcup_{m=i+1}^{d} \widetilde{X}_m^{\circ}$.
			
		\end{enumerate}
	\end{proof}
	
	\begin{theorem}\label{theorem9.7}(cf. \cite[Theorem 5.3.2]{LZ}) For any $0 \leq i \leq d+1$ and any $s \geq 1$, we have
		\begin{enumerate}
			\item The space of Tate classes $H^{2i}(Y_{d})(i)^{\ScF^s=1}$ is spanned by the cycle classes of the irreducible components of $\widetilde{X}_i$.
			
			\item Let $H^{2i}(Y_d)(i)_1 \subset H^{2i}(Y_d)(i)$ be the generalized eigenspace of $\ScF^s$ for the eigenvalue $1$. Then $H^{2i}(Y_d)(i)_1=H^{2i}(Y_d)(i)^{\ScF^s=1}$.
		\end{enumerate}
	\end{theorem}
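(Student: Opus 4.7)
The plan is to adapt the argument of \cite[Theorem 5.3.2]{LZ} using Lemma \ref{lemma9.6} as the main cohomological input and excision as the driving exact sequence. Set $U_i \coloneqq Y_d \setminus \widetilde{X}_i$, so that $\widetilde{X}_i$ is closed in $Y_d$ with open complement $U_i$. The resulting $\ScF^s$-equivariant excision long exact sequence
\[ \cdots \to H^{2i-1}(U_i)(i) \to H^{2i}_{\widetilde{X}_i}(Y_d)(i) \to H^{2i}(Y_d)(i) \xrightarrow{\rho_i} H^{2i}(U_i)(i) \to \cdots \]
will drive the proof.

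First I would show that $\rho_i$ sends the generalized $1$-eigenspace $H^{2i}(Y_d)(i)_1$ to zero. By Lemma \ref{lemma9.6}(4), $\ScF^s$ acts semisimply on $H^{2i}(U_i)(i)$ with no eigenvalue $1$ for $i \geq 1$, so $H^{2i}(U_i)(i)_1 = 0$, and the Frobenius-equivariance of $\rho_i$ forces its image on the generalized eigenspace to vanish. Consequently every class in $H^{2i}(Y_d)(i)_1$ lifts to $H^{2i}_{\widetilde{X}_i}(Y_d)(i)$. Next, absolute cohomological purity applied along the smooth dense open stratum of $\widetilde{X}_i$ identifies the image of $H^{2i}_{\widetilde{X}_i}(Y_d)(i)$ in $H^{2i}(Y_d)(i)$ with the $\overline{\mathbb{Q}}_\ell$-span of the cycle classes $[Z]$ of the irreducible components $Z$ of $\widetilde{X}_i$. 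Being algebraic, these classes are $\ScF^s$-fixed and thus lie in $H^{2i}(Y_d)(i)^{\ScF^s = 1} \subseteq H^{2i}(Y_d)(i)_1$. Combining both directions gives the chain of inclusions
\[ \langle [Z] : Z \text{ an irreducible component of } \widetilde{X}_i \rangle \subseteq H^{2i}(Y_d)(i)^{\ScF^s = 1} \subseteq H^{2i}(Y_d)(i)_1 \subseteq \langle [Z] \rangle, \]
forcing all three to coincide and proving (1) and (2) simultaneously for $i \geq 1$. The edge case $i = 0$ reduces to the identification of $H^0(Y_d)(0)$ with the permutation module on connected components of $Y_d$, which are represented inside $\widetilde{X}_0$, while $i = d+1$ follows from Lemma \ref{lemma9.6}(3) applied to the top stratum together with the observation that $\widetilde{X}_{d+1} = Y_d$.

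The main obstacle is the careful justification of purity for the embedding $\widetilde{X}_i \hookrightarrow Y_d$: the ambient variety is smooth only stratum by stratum, so Gabber's absolute purity cannot be applied globally. The strategy is to work component by component using the Bruhat--Tits decomposition of Proposition \ref{proposition6.4.1} together with the description of each $\widetilde{X}_i^\circ$ as a disjoint union of copies of the classical Deligne--Lusztig varieties $Y_{i,1}^\circ$ and $Y_{i-1,2}^\circ$, and then to glue the local purity statements along the incidence relations of the stratification. Once this bookkeeping is in place, the excision argument above applies verbatim.
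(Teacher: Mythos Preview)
Your approach is correct and is essentially the one the paper has in mind: the paper's proof is literally ``the same as \cite[Theorem 5.3.2]{LZ} with Lemma \ref{lemma9.6}'', and that argument is exactly the excision sequence for $\widetilde{X}_i \hookrightarrow Y_d$ combined with the vanishing of Frobenius-invariants on $H^{2i}(Y_d\setminus\widetilde{X}_i)(i)$ supplied by Lemma \ref{lemma9.6}(4), followed by a purity/cycle-class identification of the image of $H^{2i}_{\widetilde{X}_i}(Y_d)(i)$.

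The one point where you create unnecessary work for yourself is the ``main obstacle'' you flag. The ambient variety $Y_d = Y_\Lambda$ is in fact smooth and projective: this is part of the description of the Bruhat--Tits strata in \cite[Theorem 1.1]{cho2018basic} (exactly parallel to the Vollaard--Wedhorn situation used in \cite{LZ}). So Gabber's absolute purity applies globally to the closed embedding $\widetilde{X}_i \hookrightarrow Y_d$, and the identification of the image of $H^{2i}_{\widetilde{X}_i}(Y_d)(i)$ with the span of cycle classes of the top-dimensional irreducible components of $\widetilde{X}_i$ goes through directly, with no need for a stratum-by-stratum gluing argument. Once you drop that detour, your write-up is the LZ proof verbatim, transported via Lemma \ref{lemma9.6}.
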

	\begin{proof}
		The proof is the same as \cite[Theorem 5.3.2]{LZ} with Lemma \ref{lemma9.6}
	\end{proof}
	
	\begin{proof}[Proof of Theorem \ref{theorem9.2.1}] Here, we follow the proof of \cite[Corollary 5.3.3]{LZ}. For $\CN^{[0]}_n$ and $\CN^{[n]}_n$, this is from \cite[Corollary 5.3.3]{LZ}. For $\CN^{[1]}_{n}$ and $\CN^{[n-1]}_{n}$, note that $\CN^{[1]}_{n}$ and $\CN^{[n-1]}_{n}$ are isomorphic, so we only need to consider the case $\CN^{[1]}_{n}$. By \cite[Theorem 1.1]{cho2018basic}, we have \eqref{eq9.6}:
		\begin{equation*}
			\CN^{[1]}_{n,red}=\cup_{\Lambda, t(\Lambda)\geq2} Y_{\Lambda} \cup \cup_{\Lambda, t(\Lambda)=0} \BP^{n}_{\Lambda},
		\end{equation*}
		and any curve $C$ in $\CN^{[1]}_{n,red}$ lies in some $Y_{\Lambda} \simeq Y_d$ or the projective space $\BP^{n}_{\Lambda}$ for some vertex lattice $\Lambda$. If it lies on $Y_d$, then by Theorem \ref{theorem9.7}, the cycle class of $C$ can be written as a $\BQ$-linear combination of the cycle classes of the irreducible components of $\widetilde{X}_1$ and these are projective lines. Similarly, if it lies on $\BP^{n}_{\Lambda}$, then by the Chow group of $\BP^{n}_{\Lambda}$, we have that the cycle class of $C$ can be written as a $\BQ$-linear combination of projective lines. This finishes the proof of the theorem for $\CN^{[1]}_{n}$ and $\CN^{[n-1]}_{n}$. 
		
		For $\CN^{[2]}_{4}$, we know that the irreducible components of $\CN^{[2]}_{4,red}$ are $\BP^{2}_{\Lambda}$ for vertex lattices $\Lambda$, and hence by the description of Chow group of $\BP^2_{\Lambda}$, the cycle class of $C$ can be written as a $\BQ$-linear combination of projective lines. Moreover, the projective lines here can be realized as images of embeddings of projective lines in $\cN_{2}^{[1]}$.    
		
		Finally, the finiteness of curves is from Lemma \ref{lemma2.7}. This finishes the proof of the theorem.
	\end{proof}

	\section{Weighted representation densities and conjectures}\label{sec: weighted conj}
	In this section, we first recall the definition of weighted representation densities and formulas from \cite[Section 3.1]{Cho}. Then, we will recall the conjectural formula in \cite[Conjecture 3.17, Conjecture 3.25]{Cho}.
	
	We denote by $^*$ the nontrivial Galois automorphism of $F$ over $F_0$. We fix the standard additive character $\psi:F_0 \rightarrow \BC^{\times}$ that is trivial on $O_{F_0}$. Let $V^+$ (resp. $V^{-}$) be a split (resp. non-split) $2n$-dimensional hermitian vector space over $F$ and let $\CS((V^{\pm})^{2n})$ be the space of Schwartz functions on $(V^{\pm})^{2n}$. Let $V_{r,r}$ be the split hermitian space of signature $(r,r)$ and let $L_{r,r}$ be a self-dual lattice in $V_{r,r}$. Let $\phi_{r,r}$ be the characteristic function of $(L_{r,r})^{2n}$. Let $(V^{\pm})^{[r]}$ be the space $V^{\pm} \otimes V_{r,r}$. For any function $\phi \in \CS((V^{\pm})^{2n})$, we define a function $\phi^{[r]}$ by $\phi \otimes \phi_{r,r} \in \CS(((V^{\pm})^{[r]})^{2n})$.
	
	Let $\Gamma_{n}$ be the Iwahori subgroup
	\begin{equation*}
		\Gamma_{n}\coloneqq \lbrace \gamma=(\gamma_{ij}) \in GL_{n}(O_{F}) \mid \gamma_{ij} \in \pi O_{F} \text{ if }i>j \rbrace.
	\end{equation*}
	
	We define the set $V_{n}(F)$ by
	\begin{equation*}
		V_{n}(F)=\lbrace Y \in M_{n,n}(F) \mid \text{}^tY^*=Y \rbrace.
	\end{equation*}
	
	We define the set $X_{n}(F)$ by
	\begin{equation*}
		X_{n}(F)=\lbrace X \in GL_{n}(F) \mid \text{}^tX^*=X \rbrace.
	\end{equation*}
	
	For $g \in GL_{n}(F)$ and $X \in X_{n}(F)$, we define the group action of $GL_{n}(F)$ on $X_{n}(F)$ by $g \cdot X=gX^tg^*$. For $X, Y \in V_{n}(F)$, we denote by $\langle X,Y \rangle=\mathrm{Tr}(XY)$. For $X \in M_{m,n}(F)$ and $A \in V_{m}(F)$, we denote by $A[X]=^tX^*AX.$ For a hermitian matrix $A \in X_m(F)$, we define
	\begin{equation*} A^{[r]}=\left(\begin{array}{cc} A & \\ & I_{2r} \end{array}\right).\end{equation*}
	
	Now, let us recall the definition of usual representation densities.
	\begin{definition}
		For $A \in X_m(O_F)$ and $B \in X_n(O_F)$, we define $\alphad(A,B)$ by
		\begin{equation*}
			\alphad(A,B)=\lim_{d \rightarrow \infty} (q^{-d})^{n(2m-n)}\vert \lbrace x \in M_{m,n}(O_F/\pi^dO_F) \mid A[x]\equiv B (\text{mod} \pi^d)\rbrace\vert.
		\end{equation*}
	\end{definition}
	
	Now, let us recall the definition of weighted representation densities in \cite{Cho}.
	\begin{definition}\cite[Definition 3.1]{Cho} Let $0 \leq h,t \leq n$. Let $L_t$ be a lattice of rank $2n$ in $V^+$ if $t$ is even (resp. in $V^{-}$ if $t$ is odd) with hermitian form 
		\begin{equation*}
			A_t\coloneqq \left(\begin{array}{cc} I_{2n-t} &\\
				&\pi^{-1}I_t
			\end{array}\right).
		\end{equation*}
		Let $1_{h,t} \in \CS((V^{\pm})^{2n})$ be the characteristic function of $(L_t^{\vee})^{2n-h} \times L_t^h$.
		
		For $B \in X_{2n}(F)$, we define
		\begin{equation*}
			W_{h,t}(B,(-q)^{-2r})\coloneqq \int_{V_{2n}(F)}\int_{M_{2n+2r,2n}(F)}\psi(\langle Y,A_t^{[r]}[X]-B\rangle)1_{h,t}^{[r]}(X)dXdY.
		\end{equation*}
		Here, $dY$ (resp. $dX$) is the Haar measure on $V_{2n}(F)$ (resp. $M_{2n+2r,2n}(F)$) such that
		\begin{equation*}
			\int_{V_{2n}(O_F)}dY=1 \text{ }(\text{resp. }\int_{M_{2n+2r,2n}(O_F)}dX=1 ).
		\end{equation*}
	\end{definition}
	
	\bigskip
	The functions $\alphad(A,B)$ and $W_{h,t}(B,r)$ have the following formulas.
	
	\begin{lemma}\label{lemma5.3}(\cite{Hi}, \cite[Lemma 3.5]{Cho}) For $A \in X_{m}(F)$ and $B \in X_{2n}(F)$, we have that
		
		\begin{equation*}
			\alphad(A^{[r]},B)=\mathlarger{\sum}_{Y \in \Gamma_{2n} \backslash X_{2n}(F)}\dfrac{\CG(Y,B)\CF(Y,A^{[r]})}{\alpha(Y;\Gamma_{2n})},
		\end{equation*}
		and
		\begin{equation*}
			W_{h,t}(B,(-q)^{-2r})=\mathlarger{\sum}_{Y \in \Gamma_{2n} \backslash X_{2n}(F)}\dfrac{\CG(Y,B)\CF_{h}(Y,A_t^{[r]})}{\alpha(Y;\Gamma_{2n})}.
		\end{equation*}
		Here, we define $\CF(Y,A^{[r]})$ by
		\begin{equation*}
			\CF(Y,A^{[r]})\coloneqq \mathlarger{\int}_{M_{m,2n}(F)}\psi(\langle Y,A^{[r]}[X] \rangle)dX,
		\end{equation*}
		and we define $\CF_h(Y,A_t^{[r]})$ by
		\begin{equation*}
			\CF_h(Y,A_t^{[r]})\coloneqq \mathlarger{\int}_{M_{2n+2r,2n}(F)}\psi(\langle Y,A_t^{[r]}[X] \rangle)1_{h,t}^{[r]}(X)dX.
		\end{equation*}
		
		We define $\CG(Y,B)$ by
		\begin{equation*}
			\CG(Y,B)\coloneqq \mathlarger{\int}_{\Gamma_{2n}}\psi(\langle Y, -B[\gamma]\rangle)d\gamma,
		\end{equation*}
		where $d\gamma$ is the Haar measure on $M_{2n,2n}(O_F)$ such that $\int_{M_{2n,2n}(O_F)}d\gamma=1$.
		
		Also, we define $\alpha(Y;\Gamma_{2n})$ by
		\begin{equation*}
			\alpha(Y;\Gamma_{2n})\coloneqq \lim_{d\rightarrow \infty} q^{-4dn^2}N_d(Y;\Gamma_{2n}),
		\end{equation*}
		where $N_d(Y;\Gamma_{2n})=\vert \lbrace \gamma \in \Gamma_{2n}( \text{mod } \pi^d) \vert \gamma \cdot Y \equiv Y ( \text{mod } \pi^{d})\rbrace \vert$.

	\end{lemma}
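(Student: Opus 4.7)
My plan is to follow Hironaka's Fourier-theoretic and orbital-integral method. The starting point is to write the representation density as a double integral via Fourier inversion: expressing the mod-$\pi^d$ counting of solutions to $A^{[r]}[x]\equiv B$ through the character $\psi$ and passing to the limit yields
\[
\alphad(A^{[r]},B) = \int_{V_{2n}(F)} \int_{M_{m+2r,2n}(F)} \psi(\langle Y,\, A^{[r]}[x]-B\rangle)\,dx\,dY.
\]
For $W_{h,t}(B,(-q)^{-2r})$ the definition is already in this form, with the additional weight $1_{h,t}^{[r]}(x)$ inside the $x$-integral.

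Next I factor the character as $\psi(\langle Y, A^{[r]}[x]\rangle)\,\psi(-\langle Y,B\rangle)$ and perform the $x$-integral first, which produces $\CF(Y,A^{[r]})$ (respectively $\CF_h(Y,A_t^{[r]})$). Then I insert the identity $1=\int_{\Gamma_{2n}} d\gamma$ and change variables $Y\mapsto \gamma\cdot Y=\gamma Y\,{}^t\gamma^*$, which has trivial Jacobian since $\gamma\in \GL_{2n}(O_F)$. Using the adjunction $\langle \gamma\cdot Y,Z\rangle=\langle Y,Z[\gamma]\rangle$, a further substitution $x\mapsto x\gamma$ in the inner integral shows $\CF(\gamma\cdot Y,A^{[r]})=\CF(Y,A^{[r]})$, while the averaged exponential $\int_{\Gamma_{2n}}\psi(-\langle \gamma\cdot Y, B\rangle)\,d\gamma$ is by definition $\CG(Y,B)$. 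Hence
\[
\alphad(A^{[r]},B) = \int_{V_{2n}(F)} \CF(Y,A^{[r]})\,\CG(Y,B)\,dY,
\]
with both factors $\Gamma_{2n}$-invariant (for $\CG(\cdot,B)$ the invariance follows from right-invariance of the Haar measure on $\Gamma_{2n}$).

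The final step is to decompose this integral as a sum over $\Gamma_{2n}$-orbits in $V_{2n}(F)$. On the orbit through a regular representative $Y\in X_{2n}(F)$, both integrand factors are constant, and the normalized orbit measure is $1/\alpha(Y;\Gamma_{2n})$, because $\alpha(Y;\Gamma_{2n})$ was defined precisely as the Haar volume of the stabilizer of $Y$ in $\Gamma_{2n}$. Singular orbits (with $\det Y=0$) do not contribute; this is most cleanly seen by truncating to $V_{2n}(O_F/\pi^d)$ before passing to the limit and observing that only non-degenerate $Y$ survive in the stationary phase. This yields the first formula, and the second follows verbatim with $\CF$ replaced by $\CF_h$, after verifying that $1_{h,t}^{[r]}(x)$ is invariant under $x\mapsto x\gamma$ for $\gamma\in\Gamma_{2n}$: this reduces to the observations $\pi L_t\subset L_t^\vee\subset L_t$, which ensure that the upper-triangular-mod-$\pi$ Iwahori preserves the lattice chain defining $1_{h,t}$.

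The main obstacle, and the only genuinely non-formal step, is the careful bookkeeping of Haar measures between $V_{2n}(F)$ and $\Gamma_{2n}/\mathrm{Stab}(Y)$ on each regular orbit; this is handled as in Hironaka's original argument by computing orbital Jacobians on $X_{2n}(F)$ and matching the normalization implicit in the limit definition of $\alpha(Y;\Gamma_{2n})$.
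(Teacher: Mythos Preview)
The paper does not supply its own proof of this lemma; it simply cites Hironaka \cite{Hi} and \cite[Lemma~3.5]{Cho}. Your sketch is precisely the Hironaka orbital-integral argument underlying those references, and the steps are correct, including the one new ingredient beyond the classical density formula: the $\Gamma_{2n}$-invariance of the weight $1_{h,t}^{[r]}$ under $x\mapsto x\gamma$, which you correctly trace to the chain $\pi L_t\subset L_t^{\vee}\subset L_t$ combined with the lower-triangular-mod-$\pi$ shape of the Iwahori. (Note the obvious typo in the displayed definition of $\CF(Y,A^{[r]})$: the domain of integration should be $M_{m+2r,2n}(F)$, as you wrote, not $M_{m,2n}(F)$.)
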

	
	\begin{definition} For $r \geq 0$, $A \in X_{n+2r}(O_F)$, and $B \in X_n(O_F)$, we can regard $\CF(Y,A^{[r]})$, $\alphad(A^{[r]},B)$, $\CF_h(Y,A_t^{[r]})$, and $W_{h,t}(B,(-q)^{-2r})$ as functions of $X=(-q)^{-2r}$. We define
		\begin{equation*}
			\CF'(Y,A)\coloneqq -\dfrac{d}{dX}\CF(Y,A^{[r]})\vert_{X=1},
		\end{equation*}
		and
		\begin{equation*}
			\CF_h'(Y,A_t)\coloneqq -\dfrac{d}{dX}\CF_h(Y,A_t^{[r]})\vert_{X=1}.
		\end{equation*}
		
		Also, we define
		\begin{equation*}
			\alphad'(A,B)=-\dfrac{d}{dX}\alphad(A,B;X)\vert_{X=1},
		\end{equation*}
		and
		\begin{equation*}
			W'_{h,t}(B)\coloneqq -\dfrac{d}{dX}W_{h,t}(B,r)\vert_{X=1}.
		\end{equation*}
	\end{definition}
	
	\begin{definition}\label{definition5.5}\cite[Proposition 2.7]{Cho3} For $0 \leq i, h \leq 2n$, we define the constant $\beta_i^h$ by
		
		\begin{equation*}
			\beta^h_i=\alpha_{i+1,h}^{-1}\Biggl(\dfrac{\mathlarger{\prod}_{1 \leq m \leq 2n, m \neq i+1} (1-x_m)}{\mathlarger{\prod}_{1 \leq m \leq 2n+1, m \neq i+1}(x_m-x_{i+1})}\Biggl),
		\end{equation*}
		where
		\begin{equation*}
			\begin{array}{lr}
				\alpha_{i,h}=(-q)^{(n+1-i)(2n-h)},& 1 \leq i \leq n;\\
				\alpha_{i,h}=(-q)^{(2n+1-i)(2n+h)},& n+1 \leq i \leq 2n;\\
				\alpha_{2n+1,h}=1,
			\end{array}
		\end{equation*}
		and
		\begin{equation*}
			\begin{array}{lr}
				x_i=(-q)^{n+1-i},& 1 \leq i \leq n; \\
				x_i=(-q)^{i-2n-1},& n+1 \leq i \leq 2n;\\
				x_{2n+1}=1.
			\end{array}
		\end{equation*}
		
	\end{definition}
	
	Now, we can state \cite[Conjecture 3.17, Conjecture 3.25]{Cho}.
	
	\begin{conjecture}\label{conjecture5.5}\cite[Conjecture 3.17, Conjecture 3.25]{Cho}
		
		\quad For a basis $\lbrace x_1, \dots, x_{2n-m}, y_1, \dots, y_m\rbrace$ of $\BV$, and special cycles $\CZ(x_1)$,$\dots$,$\CZ(x_{2n-m})$, $\CY(y_1)$,$\dots$, $\CY(y_m)$ in $\CN^{[n]}_{2n}$, we have
		\begin{equation*}
			\begin{array}{l}
				\chi(\CN_{2n}^{[n]},O_{\CZ(x_1)}\otimes^{\BL}\dots\otimes^{\BL}O_{\CY(y_m)}) 
				=\dfrac{1}{W_{n,n}(A_n,1)}\lbrace W'_{m,n}(B)-\mathlarger{\sum}_{0 \leq i \leq n-1} \beta_i^mW_{m,i}(B,1)\rbrace.
			\end{array}
		\end{equation*}
		Here, $\chi$ is the Euler-Poincar\'e characteristic and $\otimes^{\BL}$ is the derived tensor product. Also, $B$ is the matrix
		\begin{equation*}
			B=\left(\begin{array}{cc} 
				(x_i,x_j) & ( x_i,y_l)\\
				( y_k,x_j)& (y_k,y_l)
			\end{array} 
			\right)_{1\leq i,j \leq 2n-m, 1\leq k,l \leq m}.
		\end{equation*}
	\end{conjecture}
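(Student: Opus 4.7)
My plan is to reduce Conjecture \ref{conjecture5.5} to the $\mathcal{Z}$-only specialized form (Conjecture \ref{conj: specialized}), which by Proposition \ref{prop: equiv of conjs} is equivalent to Conjecture \ref{conj: main} and hence settled by Theorem \ref{thm:main} under the Tate-type assumption in Conjecture \ref{conjecture9.2.1}. The reduction would exploit the self-duality $\theta: \mathcal{N}_{2n}^{[n]} \xrightarrow{\sim} \mathcal{N}_{2n}^{[n]}$, which exchanges $\mathcal{Z}$- and $\mathcal{Y}$-cycles. Applying $\theta$ identifies the Euler characteristic for $(2n-m, m)$ cycles with that for $(m, 2n-m)$ cycles, so it suffices to treat $m \le n$. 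On the analytic side one must verify the parallel symmetry $W_{2n-m,t}(\widetilde{B}) = c \cdot W_{m,t}(B)$ under the companion involution on hermitian matrices; this should follow by a direct change of variables in the integral definition of $W_{h,t}$ together with the explicit form of the coefficients $\beta_i^m$ in Definition \ref{definition5.5}.

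Next I would argue by induction on the rank $2n$. The base case $m=0$ is Conjecture \ref{conj: specialized}, covered by combining Theorem \ref{thm:main} with Proposition \ref{prop: equiv of conjs}. For the inductive step with $m > 0$, after applying Proposition \ref{prop: linear invariance} one may try to choose $y_1$ with $\val((y_1,y_1)) = -1$. Then Proposition \ref{proposition2.5} gives $\mathcal{Y}(y_1) \cong \mathcal{N}_{2n-1}^{[n-1]}$, and Proposition \ref{proposition2.6} rewrites $O_{\mathcal{Z}(x_1)} \otimes^{\mathbb{L}} \cdots \otimes^{\mathbb{L}} O_{\mathcal{Y}(y_m)}$ as an intersection on $\mathcal{N}_{2n-1}^{[n-1]}$ involving $(2n-1-m)$ transformed $\mathcal{Z}$-cycles and $(m-1)$ transformed $\mathcal{Y}$-cycles, a case handled at one lower rank. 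The analytic side undergoes a parallel recursion after removing $y_1$, and matching the two sides of the recursion should follow from the explicit formula in Lemma \ref{lemma5.3} together with the vanishing and recursion properties of $\beta_i^m$ coming from the Vandermonde system in Definition \ref{definition5.5}. When no $y_j$ can be arranged to have valuation $-1$, I would instead use the Fourier-analytic strategy of Section \ref{section4}: decompose both sides into horizontal parts (Theorem \ref{thm: hori part}) and vertical parts, apply local modularity (Conjecture \ref{conjecture6.1.1}) and its analytic counterpart to control Fourier transforms on the $\val \le -1$ locus, and close the induction via the uncertainty principle as in \cite{LZ}.

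The hard part will be establishing Proposition \ref{prop: equiv of conjs} itself, that is, matching the $\pDen$ correction $\sum_i c_{n,h-1-2i}\mathrm{Den}_{n,h-1-2i}(L)$ with the weighted correction $\sum_i \beta_i^m W_{m,i}(B,1)$. These two sums look very different and yet must agree, reflecting the analytic shadow of $\theta$. I would attempt this by expanding both sides through Lemma \ref{lemma5.3} and comparing contributions indexed by the double cosets $\Gamma_{2n}\backslash X_{2n}(F)$ term by term; the Vandermonde system solved by $\beta_i^m$ should produce precisely the coefficients needed to convert the vertex-lattice combination into the weighted-density combination. A secondary obstacle is that the recursive reduction in the inductive step requires $\mathcal{N}_{2n-1}^{[n-1]}$ to satisfy the analogue of Conjecture \ref{conjecture5.5} at arbitrary level $h$, not just the balanced level $h = n$, so one should formulate and prove a level-$h$ analogue in parallel, using the analytic structure discovered in Theorem \ref{thm: ind formula for Pden} and Theorem \ref{thm: complement formula} to propagate the matching across the levels.
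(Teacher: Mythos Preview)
The statement you are attempting to prove is Conjecture~\ref{conjecture5.5}, and the paper does \emph{not} prove it: it is stated as an open conjecture from \cite{Cho}, and the paper only establishes its $\mathcal{Z}$-only specialization (Conjecture~\ref{conj: specialized} / Conjecture~\ref{conjecture7.8}) conditionally on Conjecture~\ref{conjecture9.2.1} (Theorem~\ref{theorem9.1.3}) and unconditionally in the listed special cases (Theorem~\ref{theorem9.1.4}). The introduction says explicitly that the formulation in \cite{Cho} ``is more general in the sense that it also considers the case when the intersection is between $\mathcal{Z}$-cycles and $\mathcal{Y}$-cycles,'' and that the present paper restricts to $\mathcal{Z}$-cycles. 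So there is no ``paper's own proof'' of the full mixed statement to compare against.

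Your proposed reduction does not close. The duality $\theta$ swaps a configuration with $m$ $\mathcal{Y}$-cycles for one with $2n-m$ $\mathcal{Y}$-cycles, so it only lets you assume $m\le n$; it never reaches $m=0$ unless you already have $m\in\{0,2n\}$. Your inductive step, peeling off $y_1$ with $\val((y_1,y_1))=-1$ via Proposition~\ref{proposition2.5}, lands on $\mathcal{N}_{2n-1}^{[n-1]}$ with a mixed $(2n-1-m,m-1)$ configuration. This is no longer the balanced level, and (as you concede in your ``secondary obstacle'') you now need the mixed-cycle analogue of Conjecture~\ref{conjecture5.5} at arbitrary level $h$, which is precisely what is not available: the paper's entire Fourier-analytic machinery (Theorems~\ref{theorem5.46}--\ref{theorem8.19}, Theorem~\ref{theorem9.1.1}) is built for $\partial\mathrm{Den}^{n,h}_{L^{\flat\circ}}$ attached to a rank-$(n-1)$ lattice of $\mathcal{Z}$-type and a single varying $\mathcal{Z}(x)$; there is no analogue proved for a mixed $L^\flat$ or a varying $\mathcal{Y}(y)$. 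Likewise, your fallback ``when no $y_j$ has valuation $-1$'' invokes Conjecture~\ref{conjecture6.1.1} and the horizontal/vertical analysis, but those results in the paper are again stated and proved only for $\mathcal{Z}$-cycle inputs. In short, the reduction you sketch is circular: each step presupposes the mixed conjecture at a neighboring $(n,h)$, and no base case other than $m=0$ is supplied. The paper's actual content is exactly that $m=0$ case (under Conjecture~\ref{conjecture9.2.1}), and the genuinely mixed cases of Conjecture~\ref{conjecture5.5} remain open.
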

	Assume that special homomophisms $\lbrace x_1, x_2, \dots, x_n, x_{n+1},\dots,x_{n+h},y_1,\dots,y_{n-h} \rbrace$ has the hermitian matrix:
	\begin{equation}\label{eq5.1}
		B=\left(\begin{array}{cc} 
			( x_i,x_j) & (x_i,y_l)\\
			( y_k,x_j)& ( y_k,y_l)
		\end{array} 
		\middle)_{\substack{1\leq i,j \leq n+h,\\ 1\leq k,l \leq n-h}}=\middle(\begin{array}{ccc}
			T & &\\
			& I_{h} & \\
			&& \pi^{-1}I_{n-h}
		\end{array}\right),
	\end{equation}
	for some $n \times n$ matrix $T$. Then, by Proposition \ref{proposition2.5} and Proposition \ref{proposition2.6}, the arithmetic intersection number of special cycles $\chi(\CN_{2n}^{[n]},O_{\CZ(x_1)}\otimes^{\BL}\dots\otimes^{\BL}O_{\CZ(x_{n+h})} \otimes^{\BL}O_{\CY(y_1)}\otimes^{\BL}\dots\otimes^{\BL}O_{\CY(y_{n-h})})$ in $\CN^{[n]}_{2n}$ can be identified with $\Int_{n,h}(T)=\Int_{n,h}(L)=\chi(\CN_{n}^{[h]},O_{\CZ(x_1)}\otimes^{\BL}\dots\otimes^{\BL}O_{\CZ(x_n)})$ in $\CN^{[h]}_{n}$, where $L=\spa_{O_F}\{x_1,\cdots,x_n\}$. We note that the valuation of the determinant of $B$ and $h+1$ have the same parity. Now, Conjecture \ref{conjecture5.5} is specialized to the following conjecture. 
	
	\begin{conjecture}\label{conj: specialized}
		\quad Consider a basis $\lbrace x_1, \dots, x_{n+h}, y_1, \dots, y_{n-h}\rbrace$ of $\BV$ with moment matrix $B$ as in \eqref{eq5.1}. Let $L=\spa_{O_F}\{x_1,\cdots,x_n\}$. Then
		\begin{equation}\label{eq: specialized conj}
			\begin{array}{l}
				\Int_{n,h}(L)
				=\Int_{n,h}(T)=\dfrac{1}{W_{n,n}(A_n,1)}\lbrace W'_{n-h,n}(B)-\mathlarger{\sum}_{0 \leq i \leq n-1} \beta_i^{n-h}W_{n-h,i}(B,1)\rbrace.
			\end{array}
		\end{equation}
	\end{conjecture}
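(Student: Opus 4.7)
The plan is to reduce Conjecture \ref{conj: specialized} to Conjecture \ref{conj: main section den} and then to prove the latter by a Fourier-theoretic induction modeled on the Li--Zhang approach \cite{LZ}, with modifications to handle the vertex lattice corrections and the new $\cY$-cycle behavior in the bad reduction setting.

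First I would establish the equivalence of Conjecture \ref{conj: specialized} and Conjecture \ref{conj: main section den}, as claimed in Proposition \ref{prop: equiv of conjs}. The main terms agree essentially by definition: once one unpacks $W'_{n-h,n}(B)/W_{n,n}(A_n,1)$ via Lemma \ref{lemma5.3} and compares it to $\mathrm{Den}'_{n,h}(L)$, the leading pieces match. The content lies in showing that the correction terms $\sum_i \beta_i^{n-h} W_{n-h,i}(B,1)$ coming from the weighted density side equal the correction terms $\sum_i c_{n,h-1-2i}\, \mathrm{Den}_{n,h-1-2i}(L)$ coming from vertex lattices. One approach is to identify both sets of correction coefficients by testing them against vertex lattices $\Lambda_t$ for $t \le h-1$, where both conjectures force vanishing by Lemma \ref{lem: vanishing of Z(Lambda)} (combined with Corollary \ref{cor: coefficient}), and invoke uniqueness of solutions to the resulting linear system in $(c_{n,i})$.

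Next, I would prove Conjecture \ref{conj: main section den} by induction on $n$ as follows. Fix an $O_F$-lattice $L^\flat \subset \bV$ of rank $n-1$ and consider the functions $\Int_{L^\flat}(x) = \Int_{n,h}(L^\flat + \langle x \rangle)$ and $\pDen_{L^\flat}(x)$ on $\bV \setminus L^\flat_F$. Decompose each into horizontal and vertical parts. For the horizontal part, I would use Theorem \ref{thm: hori part} to describe $\cZ(L^\flat)_{\sH}$ as a union of quasi-canonical liftings, compute their contributions explicitly via \cite[Prop.~8.1]{KR1}, and compare with the analytic side using the primitive decomposition \eqref{eq:decompostionP} combined with explicit formulas for $\ppden$ for horizontal-type lattices (items in Theorem \ref{thm: complement formula}). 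For the vertical part, the key is to compute the Fourier transforms. On the analytic side, I would prove, using the inductive formula for $\ppden$ in Theorem \ref{thm: ind formula for Pden} together with weighted lattice counting as in \cite{LZ2,HLSY}, the formula
\begin{equation*}
\widehat{\pden}_{L^\flat,\sV}(x) =
\begin{cases}
-q^{-h} \cdot \mathrm{Int}_{n-1,h-1,\sV}(L^\flat) & \text{if } \val(x) = -1, \\
0 & \text{if } \val(x) \le -2.
\end{cases}
\end{equation*}
On the geometric side, I would use Conjecture \ref{conjecture9.2.1} to express $\cZ(L^\flat)_\sV$ in the Grothendieck group as a $\BQ$-linear combination of Deligne--Lusztig curves $C_i \subset \cN_3^{[0]} \hookrightarrow \cN_n^{[h]}$ and projective lines $D_i \subset \cN_2^{[1]} \hookrightarrow \cN_n^{[h]}$, then apply the local modularity identity \eqref{eq: local modularity speculation} curve-by-curve (using \cite[Lemma~6.3.1]{LZ} and \cite[Thm.~8.1]{ZhiyuZhang}) to conclude $\widehat{\Int}_{L^\flat,\sV}(\cZ(x)) = -q^{-h}\, \Int_{L^\flat,\sV}(\cY(x))$. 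Applying the induction hypothesis to the $\cY$-cycle term on the reduced space $\cN_{n-1}^{[h-1]}$ via Proposition \ref{proposition2.5}(2) matches the analytic formula above.

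With these two Fourier transform computations in hand, the difference $F := \Int_{L^\flat} - \pDen_{L^\flat}$ vanishes on $\{x : \val(x) < 0\}$ after Fourier transform, while $F$ itself vanishes on $\{x : \val(x) \text{ sufficiently large}\}$ by the horizontal matching and cancellation of large-valuation pieces. The uncertainty principle (as in \cite[\S6]{LZ}) then forces $F \equiv 0$, completing the induction. The main obstacle will be the analytic computation in Step on the Fourier transform of $\pden_{L^\flat,\sV}$: because $\ppden_{n,h}(L)$ now depends on the triple $(t_{\ge 2}(L), t_1(L), t_0(L))$ rather than a single invariant, the weighted lattice counts are substantially more involved than in \cite{LZ,LZ2,HLSY}, and one needs the precise inductive formula of Theorem \ref{thm: ind formula for Pden} together with the boundary formulas of Theorem \ref{thm: complement formula} to collapse the sum into the clean expression above. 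A secondary difficulty is verifying the equivalence in Step, since the correction terms in the two formulations are not manifestly equal and require nontrivial matching.
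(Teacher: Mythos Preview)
Your proposal is essentially the paper's approach: reduce to Conjecture~\ref{conj: main section den} via Proposition~\ref{prop: equiv of conjs}, decompose into horizontal and vertical parts, handle the horizontal part via Theorem~\ref{thm: hori part}, compute the Fourier transform of the vertical analytic side using the inductive formulas of Theorems~\ref{thm: ind formula for Pden} and~\ref{thm: complement formula}, and on the geometric side invoke Conjecture~\ref{conjecture9.2.1} together with the curve-level local modularity from \cite{LZ,ZhiyuZhang}, then close via the uncertainty principle and induction on $(n,h)$ (Theorem~\ref{theorem9.1.3}). One small caution: your sketch of the equivalence step is slightly circular as written---the point is not that ``both conjectures force vanishing'' at vertex lattices, but that one must prove directly (Proposition~\ref{prop: vanishing of D(lambda)}) that the weighted-density side vanishes there, after which uniqueness of the $(c_{n,i})$ system gives the match.
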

	
	Note that $\Int_{n,h}(L)$ is exactly the intersection number considered in Conjecture \ref{conj: main section den}. We show the analytic sides of Conjectures \ref{conj: main section den} and \ref{conj: specialized} also match in \S \ref{sec: CY constant}.

	\section{Cho-Yamauchi constants}\label{sec: CY constant}
	In this section, we will modify the result of \cite{Cho3} to get the Cho-Yamauchi constants in the case of $\CN^{[h]}_n$. More precisely, we want to write the conjectural formula for the arithmetic intersection numbers of special cycles $\chi(\CN_{n}^{[h]},O_{\CZ(x_1)}\otimes^{\BL}\dots\otimes^{\BL}O_{\CZ(x_n)})$ in $\CN^{[h]}_n$ as a linear sum of representation densities. First, we start with the following proposition.

	\begin{proposition}\label{proposition5.7}
		Assume that $B$ is of the form in \eqref{eq5.1}. Then, we have
		\begin{align*}
			W_{n-h,n}'(B)&=q^{-4n^2+(n+h)(n-h)}\alphad(\pi A_n,I_{n-h})\alphad'(I_{n+h,h},\left( \begin{array}{cc}T&\\& I_h\end{array}\right) )
			\\&=q^{-4n^2+(n+h)(n-h)}\alphad(\pi A_n,I_{n-h})\alphad(I_{n+h,h},I_h)\alphad'(I_{n,h},T).
		\end{align*}
	\end{proposition}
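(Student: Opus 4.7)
The plan is to apply Lemma \ref{lemma5.3} to $W_{n-h,n}(B,(-q)^{-2r})$ and to standard representation densities $\alphad(\cdot,\cdot)$, and to match the resulting orbital-type expressions term by term by exploiting the block structure $B=\diag(T,I_h,\pi^{-1}I_{n-h})$ together with the structure of $A_n=\diag(I_n,\pi^{-1}I_n)$. First, I would unfold the integral $\CF_{n-h}(Y,A_n^{[r]})$ using the definition of $1_{n-h,n}$, which restricts the first $n+h$ columns of the matrix $X$ to lie in $L_n^\vee$ and the last $n-h$ columns in $L_n$. Decomposing $L_n=L_n'\oplus L_n''$ according to the two blocks of $A_n$, this restriction breaks $X$ into two independent groups of columns whose integrals factorize.

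Next, I would match these factorizations with the target. The last $n-h$ columns of $X$ pair with the $\pi^{-1}I_{n-h}$ block of $B$: the $\pi^{-1}$ scalar can be absorbed by rescaling, producing a representation density of $I_{n-h}$ by $\pi A_n$, which is precisely $\alphad(\pi A_n,I_{n-h})$. The first $n+h$ columns of $X$, combined with the $I_{2r}$ block of $A_n^{[r]}$, pair with the block $\diag(T,I_h)$ of $B$ to produce the representation density $\alphad(I_{n+h,h}^{[r]},\diag(T,I_h))$. The overall normalization $q^{-4n^2+(n+h)(n-h)}$ comes from bookkeeping of the self-dual Haar measures on $M_{2n+2r,2n}(F)$ and $V_{2n}(F)$, together with the factor $\alpha(Y;\Gamma_{2n})^{-1}$. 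Taking $-\tfrac{d}{dX}\bigl|_{X=1}$ on both sides and using that the factor $\alphad(\pi A_n,I_{n-h})$ is independent of $r$ gives the first equality.

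For the second equality, I would use the orthogonal decomposition principle for representation densities: since the target $\diag(T,I_h)$ splits as $T\perp I_h$, and $I_{n+h,h}\cong I_n\perp \langle\pi\rangle^h$ admits a compatible splitting, we obtain
\begin{equation*}
\alphad(I_{n+h,h},\diag(T,I_h))=\alphad(I_{n,h},T)\cdot \alphad(I_{n+h,h},I_h).
\end{equation*}
Since $L_F\not\cong I_{n,h}\otimes_{O_F}F$ (the spaces have different discriminants), $\alphad(I_{n,h},T)$ as a polynomial in $X$ vanishes at $X=1$, and Leibniz rule applied to $-\tfrac{d}{dX}\bigl|_{X=1}$ kills the $\alphad'(I_{n+h,h},I_h)$ term, leaving $\alphad(I_{n+h,h},I_h)\cdot \alphad'(I_{n,h},T)$.

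The main obstacle will be verifying the factorization of $\alphad(I_{n+h,h}^{[r]},\diag(T,I_h))$ cleanly, since a naive product decomposition requires a Witt-type cancellation argument: one must show that any representation of $\diag(T,I_h)$ by $I_{n+h,h}^{[r]}$ sends the $I_h$ block into the unimodular part $I_{n+2r}$ up to a change of basis by elements of the relevant stabilizer, so that the remaining $T$ representation is by $I_{n,h}^{[r]}$. The polynomial identity \eqref{eq:Den poly introduction} together with a careful analysis of the orbits $Y\in \Gamma_{2n}\backslash X_{2n}(F)$ that contribute nontrivially to both sides should yield this.
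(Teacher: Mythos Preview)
Your outline is the same method the paper points to (\cite[Corollary~9.12]{KR1} and the appendix of \cite{Cho}): factorize the Whittaker integral according to the column blocks of $X$ dictated by $B=\diag(T,I_h,\pi^{-1}I_{n-h})$ and by the support of $1_{n-h,n}$, then apply Witt cancellation and Leibniz for the second line. One point to tighten: routing the first equality through the orbital sum of Lemma~\ref{lemma5.3} rather than the raw double integral $\int_Y\int_X$ obscures the factorization. For a fixed $Y$, the off-diagonal blocks of $Y$ couple the column groups $X',X''$ through the cross term ${}^t{X'}^*A_n^{[r]}X''$ appearing in $A_n^{[r]}[X]$, so $\CF_{n-h}(Y,A_n^{[r]})$ by itself does not split; the decoupling only happens once the $Y$-integration (equivalently $\CG(Y,B)$ together with the full $Y$-sum) is carried out, and that is exactly the direct double-integral argument of \cite{KR1}. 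Relatedly, your first paragraph mixes the row decomposition of $X$ (coming from the two blocks of $A_n$) with the column decomposition (coming from $B$ and from the constraint $(L_n^\vee)^{n+h}\times L_n^{n-h}$); it is the latter that drives the factorization, and you do use it correctly in the second paragraph.

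For the second equality your Witt-plus-Leibniz argument is already complete and no orbital analysis is needed: the identity $\alphad(I_{n+h,h}^{[r]},\diag(T,I_h))=\alphad(I_{n+h,h}^{[r]},I_h)\cdot\alphad(I_{n,h}^{[r]},T)$ holds for every $r\ge 0$ because the orthogonal complement of any isometric copy of $I_h$ inside $I_{n+h,h}^{[r]}$ is isometric to $I_{n,h}^{[r]}$, and then $\alphad(I_{n,h},T)=0$ kills the other Leibniz term at $X=1$, exactly as you say.
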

	\begin{proof}
		One can use a similar method as in \cite[Corollary 9.12]{KR1} to prove this. For example, see \cite[Proposition A.3, Proposition A.4, (A.0.4), (A.0.5)]{Cho}.
	\end{proof}
	
	Note that in Proposition \ref{proposition5.7}, the terms $\alphad(\pi A_n, I_{n-h})$ and $\alphad(I_{n+h,h},I_h)$ are constants, and $\alphad'(I_{n,h},T)$ is the derivative of a usual representation density. Now, by \cite{Cho3}, this can be written as a linear sum of usual representation densities. Let us follow the steps in \cite[Section 4.1]{Cho3}. For this, we need to introduce some notations.
	
	\begin{definition}\begin{enumerate}
			\item	We write $\CR_n$ for the set
			\begin{equation*}
				\CR_n=\lbrace Y_{\sigma,e} \mid (\sigma,e) \in \CS_n \times \BZ^n, \sigma^2=1, e_i=e_{\sigma(i)}, \forall i \rbrace,
			\end{equation*}
			where $\CS_n$ is the symmetric group of degree $n$, and
			\begin{equation*}
				Y_{\sigma,e}=\sigma \left( \begin{array}{ccc} \pi^{e_1} & & 0 \\
					& \ddots & \\ 0 & & \pi^{e_n} \end{array}\right).
			\end{equation*}
			Then $\CR_n$ forms a complete set of representatives of $\Gamma_n \backslash X_{n}(F)$.
			
			\item	We write $\CR_n^{0+}$ for the set
			\begin{equation*}
				\CR_n^{0+}=\lbrace \lambda=(\lambda_1,\dots,\lambda_n) \in \BZ^n \mid \lambda_1 \geq \dots \geq \lambda_n \geq 0 \rbrace.
			\end{equation*}
			
			\item For $\lambda \in \CR_n^{0+}$, we define $A_{\lambda}$ by
			\begin{equation*}
				A_{\lambda}=\left(\begin{array}{ccc} \pi^{\lambda_1} & & \\ & \ddots & \\ & & \pi^{\lambda_n} \end{array}\right).
			\end{equation*}
			
			\item For $\lambda=(\lambda_1, \dots ,\lambda_n) \in \CR_n^{0+}$, we define $\vert \lambda \vert$ by
			\begin{equation*}
				\vert \lambda \vert= \sum_{i=1}^{n}\lambda_i.
			\end{equation*}
		\end{enumerate}
	\end{definition}
	
	\begin{definition}[Cho-Yamauchi constant]\label{definition5.9} Assume that $B$ is of the form in \eqref{eq5.1}.
		For $\lambda \in \CR_n^{0+}$, we define $D_{n,h}(\lambda)$ to be the constant satisfying
		\begin{equation*}
			\dfrac{1}{W_{n,n}(A_n,1)}\lbrace W'_{n-h,n}(B)-\mathlarger{\sum}_{0 \leq t \leq n-1} \beta_t^{n-h}W_{n-h,t}(B,1)\rbrace=\mathlarger{\sum}_{\lambda \in \CR_n^{0+}}D_{n,h}(\lambda)\dfrac{\alphad(A_{\lambda},T)}{\alphad(A_{\lambda},A_{\lambda})}.
		\end{equation*}
		The existence and uniqueness of these constants are from \cite{Cho3}. The constant $D_{n,h}(\lambda)$ is a version of the Cho-Yamauchi constant in \cite{CY}.
	\end{definition}
	
	\begin{remark}\label{remark5.10}
		In $\CN^h(1,n-1)$, let $L$ be a rank $n$ $O_F$-lattice generated by special homomorphisms $x_1, \dots, x_{n}$ in $\BV$. Assume that $T$ is the hermitian matrix of $L$. Then, the valuation of the determinant of $T$ and $h+1$ have the same parity. Therefore, in Definition \ref{definition5.9}, the terms $\alphad(A_{\lambda},T)$ such that
		\begin{equation*}
			\val(\det(A_{\lambda}))=\sum_i \lambda_i \not\equiv h+1 (\text{mod 2})
		\end{equation*}
		are always equal to 0.
	\end{remark}
	
	Now, let us compute the correction terms $\dfrac{-\mathlarger{\sum}_{0 \leq t \leq n-1} \beta_t^{n-h}W_{n-h,t}(B,1)}{W_{n,n}(A_n,1)}$.
	
	\begin{proposition}\label{proposition5.27}
		Assume that $B$ is of the form in \eqref{eq5.1}. If $n-h \leq t \leq n-1$, we have
		\begin{align*}
			W_{n-h,t}(B,1)&=q^{-4n^2+(n+h)(3n-2t-h)}\alphad(\pi A_t, I_{n-h})
			\alphad(I_{n+h,t-n+h},I_h)\alphad(I_{n,t-n+h},T).
		\end{align*}
		If $t < n-h$, we have that $W_{n-h,t}(B,1)=0$.
	\end{proposition}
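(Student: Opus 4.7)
The plan is to run exactly the Fourier-analytic factorization that underlies Proposition \ref{proposition5.7} (following \cite[Corollary 9.12]{KR1} and \cite[Propositions A.3, A.4]{Cho}), but evaluated at $r=0$ and keeping track of the cutoff function $1_{n-h,t}$ in the $X$-integral. Concretely, starting from the definition
\begin{equation*}
W_{n-h,t}(B,1)=\int_{V_{2n}(F)}\int_{M_{2n,2n}(F)}\psi(\langle Y,A_t[X]-B\rangle)\,1_{n-h,t}(X)\,dX\,dY,
\end{equation*}
I would write $X=(X_1\ X_2\ X_3)$ with blocks of widths $n,\,h,\,n-h$ paired respectively against the $T$, $I_h$, and $\pi^{-1}I_{n-h}$ blocks of $B$. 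The cutoff $1_{n-h,t}$ constrains the columns of $(X_1\ X_2)$ to lie in $L_t^\vee$ and the columns of $X_3$ to lie in $L_t$. Performing the Gaussian $Y$-integration block by block against the block-diagonal matrix $B=\diag(T,I_h,\pi^{-1}I_{n-h})$ sets the three diagonal blocks of $A_t[X]$ equal to $T$, $I_h$, and $\pi^{-1}I_{n-h}$, and simultaneously forces the three off-diagonal blocks of $A_t[X]$ to vanish.

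The next step is to analyze the three block equations in the order $X_3\!\to\!X_2\!\to\!X_1$. The last equation says $X_3^*A_tX_3=\pi^{-1}I_{n-h}$, i.e.\ the columns of $X_3\in L_t^{n-h}$ form an orthogonal system of vectors each of norm $\pi^{-1}$. Rescaling $X_3\mapsto \pi X_3$ converts this to representing $I_{n-h}$ by the lattice with Gram matrix $\pi A_t=\diag(\pi I_{2n-t},I_t)$, so (after tracking the volume) the $X_3$-integral contributes $\alphad(\pi A_t,I_{n-h})$. Reducing $\pi A_t$ modulo $\pi$ yields a hermitian space over $k$ of rank $t$, so an orthogonal set of $n-h$ primitive vectors exists only when $t\geq n-h$; for $t<n-h$ this forces $W_{n-h,t}(B,1)=0$, which is the vanishing claim.

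For $t\geq n-h$, with $X_3$ brought into a standard form, the off-diagonal vanishing conditions $X_1^*A_tX_3=0$ and $X_2^*A_tX_3=0$ force the columns of $X_1$ and $X_2$ into the orthogonal complement of $X_3$ inside $L_t^\vee$. A direct lattice computation identifies this orthogonal complement with the hermitian lattice $I_{n+h,\,t-n+h}$ of rank $n+h$ (the unimodular rank drops by $0$ while the $\pi$-rank of the dual becomes $t-n+h$). The equation $X_2^*A_tX_2=I_h$ then contributes $\alphad(I_{n+h,t-n+h},I_h)$, and after passing to the subsequent orthogonal complement $I_{n,t-n+h}$, the equation $X_1^*A_tX_1=T$ contributes $\alphad(I_{n,t-n+h},T)$. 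Multiplying the three pieces yields the stated product.

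The main obstacle is purely book-keeping: tracking the normalizations of the Haar measures on $V_{2n}(F)$, on each of the $M_{2n,*}(F)$ factors, and on the unitary action used to move $X_3$ into standard form, and showing that these combine to produce exactly the power $q^{-4n^2+(n+h)(3n-2t-h)}$. This is done in direct parallel with the proof of Proposition \ref{proposition5.7} (i.e.\ with \cite[(A.0.4), (A.0.5)]{Cho}); in fact the present argument is slightly simpler because one evaluates at $X=1$ rather than differentiating, so no derivative identities on local density polynomials are needed. Once the constants match on any single test case (for instance $h=0$, which reduces to the hyperspecial setting), the general exponent follows from the dimensions of the three representation problems.
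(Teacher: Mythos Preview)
Your proposal is correct and follows essentially the same approach as the paper: the paper's own proof is simply a pointer to \cite[Corollary 9.12]{KR1} and \cite[Propositions A.3, A.4, (A.0.4), (A.0.5)]{Cho}, and you have spelled out precisely that block-factorization argument. Your identification of the successive orthogonal complements as $I_{n+h,t-n+h}$ and then $I_{n,t-n+h}$ is correct, as is the vanishing argument for $t<n-h$; the only remaining content is the bookkeeping of the measure normalizations to obtain the exponent $-4n^2+(n+h)(3n-2t-h)$, which is exactly what the cited references handle.
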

	\begin{proof}
		One can use a similar method as in \cite[Corollary 9.12]{KR1} to prove this. For example, see \cite[Proposition A.3, Proposition A.4, (A.0.4), (A.0.5)]{Cho}.
	\end{proof}
	
	\begin{proposition}\label{proposition5.28} For $n-h \leq t \leq n-1$, we have
		\begin{equation*}
			\dfrac{\beta_t^{n-h}W_{n-h,t}(B,1)}{W_{n,n}(A_n,1)}=\dfrac{-(-q)^{\frac{(n-t)(n-t-1-2h)}{2}}}{1-(-q)^{-(n-t)}}\dfrac{\alphad(I_{n,t-n+h},T)}{\alphad(I_{n,t-n+h},I_{n,t-n+h})}.
		\end{equation*}
	\end{proposition}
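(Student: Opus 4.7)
The plan is a direct computation: expand both the numerator and the denominator of the left-hand side using Proposition \ref{proposition5.27} and Definition \ref{definition5.5}, and then simplify.

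First, I apply Proposition \ref{proposition5.27} to express $W_{n-h,t}(B,1)$ as an explicit product of three classical local densities times a power of $q$. The normalizing constant $W_{n,n}(A_n,1)$ is computed by the same technique: specializing the argument of Proposition \ref{proposition5.27} to $h=n$, $t=n$, and $T$ of rank zero, two of the three density factors degenerate to $1$ and one is left with $W_{n,n}(A_n,1) = q^{-4n^2}\,\alphad(I_{2n,n}, I_n)$. Taking the ratio, I obtain
\[
\frac{W_{n-h,t}(B,1)}{W_{n,n}(A_n,1)} \;=\; q^{(n+h)(3n-2t-h)+4n^2-4n^2}\;\cdot\;\frac{\alphad(\pi A_t, I_{n-h})\cdot\alphad(I_{n+h,\,t-n+h}, I_h)}{\alphad(I_{2n,n}, I_n)}\;\cdot\;\alphad(I_{n,\,t-n+h}, T).
\]

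Next, to land on the right-hand side of the proposition, I multiply and divide by $\alphad(I_{n,t-n+h}, I_{n,t-n+h})$, so that the intersection-number contribution $\alphad(I_{n,t-n+h},T)/\alphad(I_{n,t-n+h}, I_{n,t-n+h})$ appears cleanly. What remains is then a purely numerical identity: the product of $\beta_t^{n-h}$, the extracted power of $q$, and the ratio of the four densities $\alphad(\pi A_t, I_{n-h})$, $\alphad(I_{n+h, t-n+h}, I_h)$, $\alphad(I_{2n,n}, I_n)$, and $\alphad(I_{n, t-n+h}, I_{n, t-n+h})$ should equal $-(-q)^{(n-t)(n-t-1-2h)/2}/(1-(-q)^{-(n-t)})$. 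Each of these densities admits a classical closed form as a product of Euler factors $1-(-q)^{-k}$, and $\beta_t^{n-h}$ is itself presented in Definition \ref{definition5.5} as a ratio of such Euler products with $x_{t+1} = (-q)^{n-t}$ since $t+1 \le n$. The key observation is that the denominator factor $(x_{2n+1}-x_{t+1}) = 1-(-q)^{n-t}$ in $\beta_t^{n-h}$ has no matching $(1-x_m)$ partner; after renormalization this factor becomes $1-(-q)^{-(n-t)}$ and produces the denominator on the right-hand side, while the sign $-1$ is inherited from $-x_{t+1}$.

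The main obstacle is the bookkeeping of exponents of $q$ and of signs. One must verify that the exponent $(n+h)(3n-2t-h)$ from Proposition \ref{proposition5.27}, the factor $\alpha_{t+1,n-h}^{-1} = (-q)^{-(n-t)(n+h)}$ appearing in $\beta_t^{n-h}$, and the leading powers in the closed-form density formulas combine to exactly $(n-t)(n-t-1-2h)/2$, and that all other Euler factors cancel. Since the index $m$ in Definition \ref{definition5.5} splits into two halves $1\le m\le n$ and $n+1\le m\le 2n$, corresponding to the two different $\alpha_{i,h}$ formulas and to the two complementary parts of the $\alphad$ product formulas, it is natural to carry out the cancellation in two parallel telescoping computations, one for each half; this reduces the simplification to a pair of standard $q$-hypergeometric identities.
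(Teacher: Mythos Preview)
Your approach is correct and is essentially the paper's own proof: expand $W_{n-h,t}(B,1)$ via Proposition~\ref{proposition5.27}, plug in the explicit formula for $\beta_t^{n-h}$ from Definition~\ref{definition5.5}, evaluate each of the four classical densities $\alphad(\pi A_t, I_{n-h})$, $\alphad(I_{n+h,t-n+h},I_h)$, $\alphad(I_{n,t-n+h},I_{n,t-n+h})$, and $W_{n,n}(A_n,1)$ as explicit products of Euler factors (the paper cites \cite[Proposition A.4]{Cho} for these), and simplify. One small slip: to recover $W_{n,n}(A_n,1)$ from the argument of Proposition~\ref{proposition5.27} you should specialize to $h=0$, $t=n$, $T=I_n$ (so that the first subscript $n-h$ equals $n$), not to $h=n$; in that specialization only one of the three density factors degenerates to $1$, and one finds $W_{n,n}(A_n,1)=q^{-3n^2}\prod_{l=1}^{n}(1-(-q)^{-l})^2$ as the paper states.
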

	\begin{proof}
		By Definition \ref{definition5.5}, we have that
		\begin{equation*}\begin{array}{l}
				\beta_t^{n-h}=(-q)^{-(n-t)(n+h)}\dfrac{(-1)^{n-1}(-q)^{\frac{n(n+1)}{2}-(n-t)}\mathlarger{\prod}_{l=n+1-t}^{n}(1-(-q)^{-l})\mathlarger{\prod}_{l=1}^{n-t-1}(1-(-q)^{-l})\mathlarger{\prod}_{l=1}^{n}(1-(-q)^{-l})}{(-1)^{t}(-q)^{2n(n-t)+\frac{t(t+1)}{2}}\mathlarger{\prod}_{l=1}^{t}(1-(-q)^{-l})\mathlarger{\prod}_{l=1}^{2n-t}(1-(-q)^{-l})}\\\\
				=(-1)^{n-t-1}(-q)^{-\frac{(n-t)(1+2h+5n-t)}{2}}\dfrac{\prod_{l=n+1-t}^{n}(1-(-q)^{-l})\prod_{l=1}^{n-t-1}(1-(-q)^{-l})\prod_{l=1}^{n}(1-(-q)^{-l})}{\prod_{l=1}^{t}(1-(-q)^{-l})\prod_{l=1}^{2n-t}(1-(-q)^{-l})}.
			\end{array}
		\end{equation*}
		Also, by \cite[Proposition A.4]{Cho}, we have that
		\begin{equation*}
			\alphad(\pi A_t,I_{n-h})=\alphad(\left(\begin{array}{cc}\pi I_{2n-t} & \\ & I_{t}\end{array}\right),I_{n-h})=\prod_{l=t-n+h+1}^{t}(1-(-q)^{-l}),
		\end{equation*}
		\begin{equation*}
			\alphad(I_{n+h,t-n+h},I_{h})=\prod_{l=2n-h-t+1}^{2n-t}(1-(-q)^{-l}),
		\end{equation*}
		\begin{equation*}
			\alphad(I_{n,t-n+h},I_{n,t-n+h})=q^{(t-n+h)^2}\prod_{l=1}^{2n-t-h}(1-(-q)^{-l})\prod_{l=1}^{t-n+h}(1-(-q)^{-l}),
		\end{equation*}
		and
		\begin{equation*}
			W_{n,n}(A_n,1)=q^{-3n^2}\prod_{l=1}^{n}(1-(-q)^{-l})^2.
		\end{equation*}
		Combining these and Proposition \ref{proposition5.27}, we get the proposition.
	\end{proof}
	
	Combining Proposition \ref{proposition5.7} and Proposition \ref{proposition5.28}, we get the following corollary which compares the analytic sides of Conjecture \ref{conj: main section den} and Conjecture \ref{conjecture5.5}.
	
	\begin{corollary}\label{corollary7.7}
		Assume that $B$ is of the in \eqref{eq5.1}. By Proposition \ref{proposition5.7}, Proposition \ref{proposition5.27}, Proposition \ref{proposition5.28}, and \cite[Proposition A.4]{Cho}, we can write $\dfrac{1}{W_{n,n}(A_n,1)}\lbrace W'_{n-h,n}(B)-\mathlarger{\sum}_{0 \leq t \leq n-1} \beta_t^{n-h}W_{n-h,t}(B,1)\rbrace$ in terms of usual representation densities as follows:
		\begin{equation*}
			\dfrac{1}{W_{n,n}(A_n,1)}\lbrace W'_{n-h,n}(B)-\mathlarger{\sum}_{0 \leq t \leq n-1} \beta_t^{n-h}W_{n-h,t}(B,1)\rbrace
		\end{equation*}
		\begin{equation*}\begin{aligned}
				&=\dfrac{\alphad'(I_{n,h},T)}{\alphad(I_{n,h},I_{n,h})}
				+\mathlarger{\sum}_{k=0}^{h-1}\frac{(-q)^{-\frac{(h-k)(h+k+1)}{2}}}{1-(-q)^{-(h-k)}}\dfrac{\alphad(I_{n,k},T)}{\alphad(I_{n,k},I_{n,k})}.
			\end{aligned}
		\end{equation*}
	\end{corollary}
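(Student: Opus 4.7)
The proof is essentially an exercise in combining the formulas already established in Propositions \ref{proposition5.7}, \ref{proposition5.27}, and \ref{proposition5.28}, together with the explicit product formulas for local densities of unimodular-type lattices from \cite[Proposition A.4]{Cho}. The plan is to treat the derivative term and the correction sum in the bracket separately.

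First I would handle the derivative term $W'_{n-h,n}(B)/W_{n,n}(A_n,1)$. Proposition \ref{proposition5.7} gives
\[
W_{n-h,n}'(B)=q^{-4n^2+(n+h)(n-h)}\alphad(\pi A_n,I_{n-h})\,\alphad(I_{n+h,h},I_h)\,\alphad'(I_{n,h},T),
\]
so it suffices to verify the numerical identity
\[
q^{-4n^2+(n+h)(n-h)}\,\alphad(\pi A_n,I_{n-h})\,\alphad(I_{n+h,h},I_h)\,\alphad(I_{n,h},I_{n,h}) \;=\; W_{n,n}(A_n,1).
\]
Specializing the product formulas recorded in the proof of Proposition \ref{proposition5.28} to $t=n$, one gets $\alphad(\pi A_n, I_{n-h})=\prod_{l=h+1}^{n}(1-(-q)^{-l})$, $\alphad(I_{n+h,h},I_h)=\prod_{l=n-h+1}^{n}(1-(-q)^{-l})$, and $\alphad(I_{n,h},I_{n,h})=q^{h^2}\prod_{l=1}^{n-h}(1-(-q)^{-l})\prod_{l=1}^{h}(1-(-q)^{-l})$. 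Since $(n+h)(n-h)=n^2-h^2$, the $q$-power collapses to $q^{-3n^2}$, and concatenating the complementary product ranges yields $\prod_{l=1}^{n}(1-(-q)^{-l})^{2}$, which matches the formula for $W_{n,n}(A_n,1)$ recalled in the same proof.

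Next I would handle the correction sum. By Proposition \ref{proposition5.27}, $W_{n-h,t}(B,1)=0$ for $t<n-h$, so the sum effectively runs over $n-h\le t\le n-1$. Reindexing by $k=t-(n-h)$, i.e.\ $t=n-h+k$ with $0\le k\le h-1$, Proposition \ref{proposition5.28} gives
\[
-\frac{\beta_t^{n-h}W_{n-h,t}(B,1)}{W_{n,n}(A_n,1)}
=\frac{(-q)^{(n-t)(n-t-1-2h)/2}}{1-(-q)^{-(n-t)}}\cdot
\frac{\alphad(I_{n,k},T)}{\alphad(I_{n,k},I_{n,k})}.
\]
Substituting $n-t=h-k$, the exponent becomes $(h-k)(h-k-1-2h)/2 = -(h-k)(h+k+1)/2$, which produces exactly the $k$-th summand on the right-hand side of the claim.

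The whole argument is routine arithmetic once Propositions \ref{proposition5.7}, \ref{proposition5.27}, and \ref{proposition5.28} are in hand, so there is no serious obstacle; the only thing to watch is the careful tracking of $q$-powers and the index translation $t\mapsto k=t-(n-h)$ to match the two expressions. In particular, the cleanest way to present the computation is to first display the reduction for the derivative term as a product/quotient of four explicit $q$-Pochhammer-like products, then do the reindexing for the sum and simplify the exponent algebraically.
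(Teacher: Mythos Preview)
Your proposal is correct and follows exactly the approach the paper indicates: the corollary is stated as an immediate consequence of Propositions \ref{proposition5.7}, \ref{proposition5.27}, \ref{proposition5.28}, and \cite[Proposition A.4]{Cho}, with no further proof given. You have faithfully filled in the routine arithmetic the paper leaves implicit, and your verification of the product identity for the derivative term and the reindexing $t\mapsto k=t-(n-h)$ for the correction sum are both accurate.
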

	\bigskip
	By Corollary \ref{corollary7.7}, Conjecture \ref{conj: specialized} can be rewritten as follows.
	\begin{conjecture}[$\CZ$-cycles in $\CN^{[h]}_{n}$]\label{conjecture7.8}
		For a basis $\lbrace x_1, \dots, x_{n}\rbrace$ of $\BV$, and special cycles $\CZ(x_1)$,$\dots$,$\CZ(x_{n})$ in $\CN^{[h]}_{n}$, we have
		\begin{equation*}
			\begin{array}{l}
				\chi(\CN_{n}^{[h]},O_{\CZ(x_1)}\otimes^{\BL}\dots\otimes^{\BL}O_{\CZ(x_n)}) 
				=\dfrac{\alphad'(I_{n,h},T)}{\alphad(I_{n,h},I_{n,h})}
				+\mathlarger{\sum}_{k=0}^{h-1}\frac{(-q)^{-\frac{(h-k)(h+k+1)}{2}}}{1-(-q)^{-(h-k)}}\dfrac{\alphad(I_{n,k},T)}{\alphad(I_{n,k},I_{n,k})}.
			\end{array}
		\end{equation*}
		
		Here, $\chi$ is the Euler-Poincar\'e characteristic and $\otimes^{\BL}$ is the derived tensor product. Also, $T$ is the matrix
		\begin{equation*}
			T=\left(\begin{array}{c} 
				(x_i,x_j)
			\end{array} 
			\right)_{1\leq i,j \leq n}.
		\end{equation*}
		
	\end{conjecture}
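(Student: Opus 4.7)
The plan is to prove Conjecture \ref{conjecture7.8} by induction on the pair $(n,h)$, following the overall blueprint of \cite{LZ} but accommodating the new phenomena described in Section \ref{section4}. The base cases are $\cN_n^{[0]}$ (handled by \cite{LZ}), together with the low-dimensional pieces $\cN_2^{[1]}$ and $\cN_3^{[0]}$ that feed into the description of special curves. For the inductive step, I would fix a hermitian $O_F$-lattice $L^\flat\subset\bV$ of rank $n-1$ and regard both sides of the conjectural identity as functions of $x\in\bV\setminus L_F^\flat$:
\[
\Int_{L^\flat}(x)\coloneqq \Int_{n,h}(L^\flat+\langle x\rangle),\qquad \pDen_{L^\flat}(x)\coloneqq\pDen_{n,h}(L^\flat+\langle x\rangle).
\]
The task is to prove $\Int_{L^\flat}=\pDen_{L^\flat}$ on $\bV\setminus L_F^\flat$. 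I would split each side into its horizontal and vertical parts, $\Int_{L^\flat}=\Int_{L^\flat,\sH}+\Int_{L^\flat,\sV}$ and likewise for $\pDen_{L^\flat}$, and match these parts separately.

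For the horizontal part, I would use Theorem \ref{thm: hori part}, which identifies the primitive horizontal cycles in $\cZ(L^\flat)_\sH$ as either quasi-canonical liftings arising from lattices of shape $\diag((1)^{n-h},(\pi)^{h-2},(\pi^a))$ or mixed $\cZ\cap\cY$ cycles arising from lattices of shape $\diag((1)^{n-h-2},(\pi)^h,(\pi^a))$, with degrees and multiplicities explicit. On the analytic side, Lemma \ref{lem: ind formula to prim ld} and the primitive decomposition \eqref{eq:decompostionP} let me rewrite $\pDen_{L^\flat,\sH}$ as a sum of $\pPden$-terms, which can be evaluated by the closed-form formulas of Theorem \ref{thm: complement formula}. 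A term-by-term comparison establishes $\Int_{L^\flat,\sH}=\pDen_{L^\flat,\sH}$.

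The heart of the argument is the vertical part, where I would proceed by Fourier analysis. On the analytic side, I would prove \eqref{eq: FT of pden}, namely
\[
\widehat{\pDen}_{L^\flat,\sV}(x)=\begin{cases}-q^{-h}\cdot \Int_{n-1,h-1,\sV}(L^\flat) & \text{if } \val(x)=-1,\\ 0 & \text{if } \val(x)\le -2,\end{cases}
\]
by combining the primitive decomposition \eqref{eq:decompostionP} with the inductive relation in Theorem \ref{thm: ind formula for Pden} and the boundary formulas in Theorem \ref{thm: complement formula}; this reduces to a weighted lattice count in the spirit of \cite{LZ2,HLSY} but more intricate because $\pPden_{n,h}(L)$ depends nontrivially on the triple $(t_{\ge 2}(L),t_1(L),t_0(L))$. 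On the geometric side, I would invoke Conjecture \ref{conjecture9.2.1} to write $\cZ(L^\flat)_\sV$ in the Grothendieck group as a rational linear combination of Deligne--Lusztig curves $C_i\subset\cN_3^{[0]}$ and projective lines $D_i\subset\cN_2^{[1]}$ embedded in $\cN_n^{[h]}$. Each such special curve $C$ satisfies a local modularity identity $\widehat{\Int}_C(\cZ(x))=-q^{-h}\Int_C(\cY(x))$ by \cite[Lemma 6.3.1]{LZ} and \cite[Theorem 8.1]{ZhiyuZhang}. Summing these, the resulting expression at $\val(x)=-1$ is (via Proposition \ref{proposition2.5}) computed on $\cN_{n-1}^{[h-1]}$, where the inductive hypothesis identifies it with $-q^{-h}\cdot\Int_{n-1,h-1,\sV}(L^\flat)$, matching the analytic transform.

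With $\widehat{\Int}_{L^\flat,\sV}$ and $\widehat{\pDen}_{L^\flat,\sV}$ agreeing on the region $\val(x)<0$, the difference $\Int_{L^\flat}-\pDen_{L^\flat}$ has Fourier transform supported in $\bV^{\ge 0}$; the uncertainty principle of \cite[\S 6]{LZ}, combined with the normalization \eqref{eq:coeff} that forces vanishing on vertex lattices and the horizontal match above, forces the difference to be identically zero. The main obstacle is twofold: first, proving the purely analytic identity \eqref{eq: FT of pden}, which requires uncovering and exploiting the nontrivial inductive structure of $\pPden$ in Theorem \ref{thm: ind formula for Pden} since no closed formula exists once both unimodular and $(\pi)$-blocks are present; second, the geometric input Conjecture \ref{conjecture9.2.1}, which is verified only in the cases listed in Theorem \ref{theorem9.2.1} and must be assumed in general to close the induction.
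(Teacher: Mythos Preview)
Your plan is essentially the paper's own strategy (Theorem \ref{theorem9.1.3}): induction passing from $\cN_n^{[h]}$ to $\cN_{n-1}^{[h-1]}$, horizontal/vertical decomposition, the analytic Fourier identity \eqref{eq: FT of pden} via Theorems \ref{thm: ind formula for Pden} and \ref{thm: complement formula}, local modularity on special curves via Conjecture \ref{conjecture9.2.1}, and an uncertainty-principle conclusion. Two points deserve correction.

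First, your closing step omits the crucial \emph{inner} induction on $\val(L^\flat)$. Knowing only that $\widehat{\phi}(x)=0$ for $\val(x)<0$, where $\phi=\Int_{L^\flat,\sV}-\pDen_{L^\flat,\sV}$, does not force $\phi=0$; you must first show $\phi$ is supported in a fixed lattice $M=M(L^\flat)$. The paper does this by observing that if $x\notin M$ then $L^\flat+\langle x\rangle$ can be rewritten with a new $L''^\flat$ of strictly smaller valuation (\cite[Lemma 8.2.2]{LZ}), to which the inner hypothesis applies. Only then can one factor $\phi=1_{L^\flat}\otimes\phi_\perp$ with $\phi_\perp$ supported in $\langle u\rangle$; the translation-invariance of $\widehat{\phi_\perp}$ under $\langle u^\vee\rangle$ (with $\val(u^\vee)<0$) combined with vanishing on $\{\val<0\}$ then yields $\widehat{\phi_\perp}\equiv0$. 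Your invocation of the normalization \eqref{eq:coeff} at this stage is misplaced: that condition is built into the definition of $\pDen$ and plays no role in the uncertainty argument.

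Second, a presentational difference: the paper does not prove a standalone horizontal identity $\Int_{L^\flat,\sH}=\pDen_{L^\flat,\sH}$. Instead it \emph{defines} $\pDen_{L'^{\flat\circ},\sV}:=\pDen_{L'^{\flat\circ}}-\Int_{L'^{\flat\circ},\sH}$ and checks, inside the proof of Theorem \ref{theorem9.1.1}, that the horizontal contributions from the two types of lattices in Theorem \ref{thm: hori part} exactly cancel the horizontal terms of $\widehat{\pDen}_{L'^{\flat\circ}}$ computed in Theorems \ref{theorem8.18}--\ref{theorem8.19}. This amounts to the same thing, but the bookkeeping is organized differently than in your sketch.
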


	\begin{proposition}\label{prop: equiv of conjs}
		Conjecture \ref{conj: main section den} is equivalent to Conjecture \ref{conjecture7.8}.
	\end{proposition}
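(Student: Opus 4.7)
The plan is to reduce the equivalence to a single analytic vanishing identity on vertex lattices. Both $\pDen_{n,h}(L)$ (by Definition \ref{definition3.2}) and the right-hand side $R(L)$ of Conjecture \ref{conjecture7.8} (by Corollary \ref{corollary7.7}) are $\Q$-linear combinations of the form
\[
\Den'_{n,h}(L) + \sum_{k=0}^{h-1} C_k\, \Den_{n,k}(L),
\]
so the equivalence reduces to matching these constants on the relevant test lattices. First I would restrict attention to $L \subset \bV$ of rank $n$. The remark following \eqref{eq: def of den(L)} gives $\val(L) \equiv h-1 \pmod{2}$, and since $F/F_0$ is unramified the non-vanishing of $\Den(\Lambda_k, L)$ forces $L\otimes F \cong \Lambda_k \otimes F$, hence $k \equiv \val(L) \equiv h-1 \pmod{2}$. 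Therefore on $\bV$ only the summands with $k \equiv h-1 \pmod{2}$ contribute on either side.

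Next I would invoke uniqueness. The constants $c_{n,k}$ in the definition of $\pDen$ are pinned down by the triangular system $\pDen_{n,h}(\Lambda_j) = 0$ for $j \equiv h-1 \pmod{2}$ and $0 \le j \le h-1$ (Definition \ref{definition3.2}). Thus it suffices to prove that $R$ also satisfies this vanishing: with $T = I_{n,j}$,
\[
\frac{\alphad'(I_{n,h}, I_{n,j})}{\alphad(I_{n,h}, I_{n,h})} + \sum_{\substack{0 \le k \le h-1 \\ k \equiv h-1\,(\mathrm{mod}\,2)}} \frac{(-q)^{-(h-k)(h+k+1)/2}}{1 - (-q)^{-(h-k)}} \cdot \frac{\alphad(I_{n,k}, I_{n,j})}{\alphad(I_{n,k}, I_{n,k})} = 0
\]
for every such $j$. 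I would verify this by plugging in the explicit product formulas for representation densities of diagonal Jordan forms from \cite[Proposition A.4]{Cho}, together with the derivative formula obtained by differentiating the local density polynomial as in Proposition \ref{proposition5.7}, and collapsing the resulting telescoping rational function in $q$.

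The hard part is precisely this last identity. While it is conceptually anticipated by $\Int_{n,h}(\Lambda_j) = 0$ (Lemma \ref{lem: vanishing of Z(Lambda)}) together with Conjecture \ref{conj: main section den} itself, one cannot invoke the Kudla--Rapoport conjecture at this stage; the identity must be proven by bare-hands analytic manipulation. The main bookkeeping difficulties are tracking the prefactors $\prod_\ell (1-(-q)^{-\ell})$ that appear throughout Propositions \ref{proposition5.7}, \ref{proposition5.27}, and \ref{proposition5.28}, handling the logarithmic derivative at $X=1$ that produces the denominators $1-(-q)^{-(h-k)}$, and organizing the alternating signs coming from Hironaka's expansion (Lemma \ref{lemma5.3}). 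Once the vanishing identity is established, triangular uniqueness forces the coefficient of $\Den_{n,k}$ in $R(L)$ to equal $c_{n,k}$ for each relevant $k$, yielding $R(L) = \pDen_{n,h}(L)$ for all rank-$n$ $L \subset \bV$ and hence the equivalence of Conjectures \ref{conj: main section den} and \ref{conjecture7.8}.
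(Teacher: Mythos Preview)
Your reduction is correct and is exactly the paper's reduction: both sides are $\Den'_{n,h}(L)+\sum_k C_k\,\Den_{n,k}(L)$ (Definition~\ref{definition3.2} and Corollary~\ref{corollary7.7}), only the parity $k\equiv h-1\pmod 2$ survives on $\bV$, and the triangular characterization of the $c_{n,k}$ pins everything down once one shows the right-hand side of Conjecture~\ref{conjecture7.8} vanishes on the vertex lattices $\Lambda_j$ with $0\le j\le h-1$, $j\equiv h-1\pmod 2$.

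Where you diverge from the paper is in how you propose to establish that vanishing. You want to attack $R(\Lambda_j)=0$ head-on by inserting product formulas for $\alphad(I_{n,k},I_{n,j})$ and differentiating $\alphad(I_{n,h},I_{n,j},X)$ at $X=1$, then hoping for a telescoping cancellation. The paper instead passes to the \emph{primitive} level: since $R(L)=\sum_\lambda D_{n,h}(\lambda)\,n(\lambda,L)$ by Definition~\ref{definition5.9} and $\pDen_{n,h}(L)=\sum_\lambda \ppden_{n,h}(\lambda)\,n(\lambda,L)$, and since $\ppden_{n,h}(\Lambda_t)=0$, the triangular system reduces the question to $D_{n,h}(I_{n,t})=0$ for the relevant $t$. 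This is then proved as Proposition~\ref{prop: vanishing of D(lambda)}, using the explicit formula for $D_{n,h}(\lambda)$ in Proposition~\ref{proposition5.16} together with the combinatorial identities of Lemma~\ref{lemma5.21} and Lemma~\ref{lemma5.30}. In particular, the derivative contribution enters through the exceptional constants $\CC_{h+1-t}(0,0,n-i)=\alphad'(I_{n-i},I_{n-i})/\alphad(I_{n-i},I_{n-i})$, and the cancellation against the correction term $\frac{(-q)^{-(h-t)(h+t+1)/2}}{1-(-q)^{-(h-t)}}$ is \emph{not} a simple telescoping but requires the Vandermonde-type identity of Lemma~\ref{lemma5.30}.

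So your proposal is sound in outline, but the phrase ``collapsing the resulting telescoping rational function'' understates the work: a direct computation of $\alphad'(I_{n,h},I_{n,j})$ plus the weighted sum of $\alphad(I_{n,k},I_{n,j})$ would unwind to essentially the same combinatorics as Proposition~\ref{prop: vanishing of D(lambda)}, just without the organizing framework of the $M_{n,h}(a,b,c,i,s)$ decomposition. The paper's route buys reuse of the $D_{n,h}$ machinery already needed for the inductive relations in \S\ref{sec: ind formula}; your route would be self-contained but would have to rediscover Lemma~\ref{lemma5.30} in some guise.
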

	\begin{proof}
		The intersection numbers from both conjectures are by definition the same. Hence we only need to show the analytic sides of both conjectures agree. By Corollary \ref{corollary7.7}, we only need to show $\beta_t^{n-h}$ from Conjecture \ref{conjecture5.5}  is the same as $c_{n,t}$ from Conjecture \ref{conj: main section den}.    Since the $c_{n,t}$ is characterized by $\ppden_{n,h}(I_{n,t})=0$ for $t\le h-1$ and $t\equiv h+1 \pmod 2$, we only need to show $D_{n,h}(I_{n,t})=0$ for $t\le h-1$ and $t\equiv h+1 \pmod 2$. This is proved in Proposition \ref{prop: vanishing of D(lambda)} via a method we used throughout \S \ref{sec: ind formula} so we postpone the proof.
	\end{proof}
	
	\begin{remark}
		Before we start to find the constants $D_{n,h}(\lambda)$, let us provide a brief explanation of the forthcoming steps. By Proposition \ref{proposition5.7}, we know that $\dfrac{W'_{n-h,n}(B)}{W_{n,n}(A_n,1)}$ is a constant multiple of $\alphad'(I_{n,h},T)$. Also, we know how to write $\alphad'(I_n,T)$ in terms of a linear sum of representation densities by \cite{CY} and \cite[Theorem 3.5.1]{LZ} (see Proposition \ref{proposition5.13} below).
		
		Now, we consider $\alphad'(I_{n,h},T)-(-q)^{nh}\alphad'(I_n,T)$. Then, it is possible to write the difference between these two in terms of a certain linear sum of representation densities (see \eqref{eq5.11}). Therefore, we can use the linear sum expression of $(-q)^{nh}\alphad'(I_n,T)$ and the difference $\alphad'(I_{n,h},T)-(-q)^{nh}\alphad'(I_n,T)$ to find all constants $D_{n,h}(\lambda)$. This is what we will do in the next few pages.
	\end{remark}
	
	\begin{definition}
		\begin{enumerate}
			\item  For $Y \in \CR_n$, we define
			\begin{equation*}
				t_0(Y)=\vert \lbrace e_i \mid e_i \geq 0 \rbrace \vert,
			\end{equation*}
			and for $k \geq 1$, we define
			\begin{equation*}
				t_k(Y)=\vert \lbrace e_i \mid e_i=-k \rbrace \vert.
			\end{equation*}
			
			\item For $\eta \in \CR_n^{0+}$, and $k \geq 0$, we define
			\begin{equation*}
				t_k(\eta)=\vert \lbrace \eta_i \mid \eta_i=k \rbrace \vert,
			\end{equation*}
			and
			\begin{equation*}
				t_{\ge k}(\eta)=\vert \lbrace \eta_i \mid \eta_i \geq k \rbrace \vert.
			\end{equation*}
			
			\item For $Y \in \CR_n$, we define
			\begin{equation*}
				t(Y)=(t_0(Y),t_1(Y),\dots).
			\end{equation*}
			Similarly, for $\eta \in \CR^{0+}_n$, we define
			\begin{equation*}
				t(\eta)=(t_0(\eta),t_1(\eta),\dots).
			\end{equation*}
		\end{enumerate}
	\end{definition}

	\begin{definition}\begin{enumerate}
			\item For $\lambda=(\overline{\lambda},\overset{t_1(\lambda)}{\overbrace{1,\dots,1}},\overset{t_0(\lambda)}{\overbrace{0,\dots,0}}) \in \CR_n^{0+}$, and $0 \leq s \leq t_0(\lambda)$, we define
			\begin{equation*}
				\lambda_s^+\coloneqq (\overline{\lambda},\overset{t_1(\lambda)+s}{\overbrace{1,\dots,1}},\overset{t_0(\lambda)-s}{\overbrace{0,\dots,0}}),
			\end{equation*}
			by replacing $s$ zeros by $s$ 1's.
			
			\item For $\lambda=(\overline{\lambda},\overset{t_1(\lambda)}{\overbrace{1,\dots,1}},\overset{t_0(\lambda)}{\overbrace{0,\dots,0}}) \in \CR_n^{0+}$ and $0 \leq s \leq t_1(\lambda)$, we define
			\begin{equation*}
				\lambda_{s}^{-}=(\overline{\lambda},\overset{t_1(\lambda)-s}{\overbrace{1,\dots,1}},\overset{t_0(\lambda)+s}{\overbrace{0,\dots,0}}),
			\end{equation*}
			by replacing $s$ 1's by $s$ zeros.
			
			\item For $0 \leq l \leq n$ and $\lambda \in \CR^{0+}_n$ such that $t_0(\lambda) \geq l$, we define $\lambda^{\vee_l}$ as the element in $\CR_{n-l}^{0+}$ such that $\lambda=(\lambda^{\vee_l},0,\dots,0)$.
		\end{enumerate}
		
	\end{definition}
	
	\begin{definition} Assume that $k \geq 0$, $\alpha, \eta \in \CR_{n}^{0+}$, $Y \in \CR_n^{0+}$, and $t(\eta)=t(Y)$.
		\begin{enumerate}
			\item We define
			\begin{equation*}
				\CB_k(Y)=\mathlarger{\sum}_{i} \min(0,e_i+k)-\min(0,e_i),
			\end{equation*}
			and
			\begin{equation*}
				\CB_k(\eta)=\mathlarger{\sum}_{i} \min(k,\eta_i).
			\end{equation*}
			Note that $\CB_k(\eta)=\CB_k(Y)$ since $t(\eta)=t(Y)$.
			
			\item We define
			\begin{equation*}
				\CB_{\alpha}(Y)=\mathlarger{\sum}_{i}\CB_{\alpha_i}(Y),
			\end{equation*}
			and
			\begin{equation*}
				\CB_{\alpha}(\eta)=\mathlarger{\sum}_{i}\CB_{\alpha_i}(\eta).
			\end{equation*}
			Note that $\CB_{\alpha}(\eta)=\CB_{\alpha}(Y)$ since $t(\eta)=t(Y)$.
			
			\item We define $f(Y)$ by
			\begin{equation*}
				f(Y)=\prod_{i}(-q)^{n\min(0,e_i)}.
			\end{equation*}
		\end{enumerate}
	\end{definition}

	By Lemma \ref{lemma5.3} and the fact that $\CR_n$ forms a complete set of representatives of $\Gamma_n \backslash X_{n}(F)$, we have that
	\begin{equation}\label{eq5.2'}
		\alphad'(I_{n,h},B)=\mathlarger{\sum}_{Y \in \CR_n} \dfrac{\CG(Y,B)\CF'(Y,I_{n,h})}{\alpha(Y;\Gamma_{n})},
	\end{equation}
	
	\begin{equation}\label{eq5.3'}
		\alphad'(I_{n},B)=\mathlarger{\sum}_{Y \in \CR_n} \dfrac{\CG(Y,B)\CF'(Y,I_n)}{\alpha(Y;\Gamma_{n})},
	\end{equation}
	and for $\lambda \in \CR_n^{0+}$,
	\begin{equation}\label{eq5.4'}
		\alphad(A_{\lambda},B)=\mathlarger{\sum}_{Y \in \CR_n} \dfrac{\CG(Y,B)\CF(Y,A_{\lambda})}{\alpha(Y;\Gamma_{n})}.
	\end{equation}
	
	By \cite[Lemma 3.2, Section 3.2]{Cho3}, we know that $\CF'(Y,I_{n,h})$ can be written uniquely as a linear sum of $\CF(Y,A_{\lambda})$, $\lambda \in \CR_n^{0+}$.  As in the proof of \cite[Lemma 3.15]{Cho}, we can compute that for
	\begin{equation*}
		Y=Y_{\sigma,e}=\sigma \left(\begin{array}{ccc}\pi^{e_1} & & 0 \\ & \ddots& \\ 0 & & \pi^{e_n} \end{array}\right), 
	\end{equation*}
	we have
	\begin{equation*}
		\CF'(Y,I_{n,h})=\mathlarger{\sum}_{j}\min(0,e_j) (-q)^{h\CB_1(Y)}f(Y),
	\end{equation*}
	\begin{equation*}
		\CF'(Y,I_n)=\mathlarger{\sum}_{j}\min(0,e_j)f(Y),
	\end{equation*}
	and
	\begin{equation*}
		\CF(Y,A_{\lambda})=(-q)^{\CB_{\lambda}(Y)}f(Y).
	\end{equation*}
	Therefore, we have that (cf. \cite[(4.1.1)]{Cho3})
	\begin{equation}\label{eq5.2}\begin{array}{l}
			\CF'(Y,I_{n,h})-(-q)^{hn}\CF'(Y,I_n)
			=\left\lbrace \begin{array}{cl} 
				0 & \text{if } t_0(Y)=0,\\
				(\mathlarger{\sum}_{j}\min(0,e_j))f(Y)((-q)^{h(n-1)}-(-q)^{hn})& \text{ if } t_0(Y)=1,\\
				\quad\quad\quad\quad\vdots\\
				(\mathlarger{\sum}_{j}\min(0,e_j))f(Y)((-q)^{h(n-k)}-(-q)^{hn})& \text{ if } t_0(Y)=k,\\
				\quad\quad\quad\quad\vdots\\
				(\mathlarger{\sum}_{j}\min(0,e_j))f(Y)(1-(-q)^{hn})& \text{ if } t_0(Y)=n.

			\end{array}\right.
		\end{array}
	\end{equation}
	
	Now, let us define the following constants and matrices.
	\begin{definition}\label{definition5.12}\hfill
		\begin{enumerate}
			\item (cf. \cite[Lemma 4.4]{Cho3}) For $0 \leq i \leq l$, we define constants $d_{il}$ by 
			\begin{equation*}
				d_{il}=(-q)^{-in} \prod_{\substack{0 \leq m \leq l\\m \neq i}}\dfrac{1}{((-q)^{-i}-(-q)^{-m})}.
			\end{equation*}
			Therefore, we have
			\begin{equation*}
				\dfrac{d_{il}}{d_{i+1,l}}=-(-q)^{n+i+1-l}\dfrac{(1-(-q)^{-(i+1)})}{(1-(-q)^{(l-i)})}.
			\end{equation*}
			
			\item We define the upper triangular $(n+1) \times (n+1)$ matrix $\Delta$ by	\begin{equation*}
				\Delta=\left( \begin{array}{ccccc}
					d_{00} & d_{01} & d_{02} & \dots &d_{0n}\\
					0 & d_{11} & d_{12} & \dots & d_{1n}\\
					0 &  0& d_{22} &\dots & d_{2n}\\
					\vdots & \ddots & \ddots & \ddots& \vdots \\
					0 & 0 & 0 & \dots & d_{nn} \\
				\end{array}\right).
			\end{equation*}
			
			\item For $l \leq 0$, we define $\FM_l$ by
			\begin{equation*}
				\FM_{l}:\left(\begin{array}{cccc}
					1 & (-q)^{2n} & \dots & (-q)^{2ln}\\
					1 & (-q)^{2n-1} & \dots & (-q)^{l(2n-1)}\\
					\vdots & \vdots & \ddots & \vdots\\
					1 & (-q)^{2n-l} & \dots & (-q)^{l(2n-l)}
				\end{array}\right).
			\end{equation*}
			
			\item For $0 \leq i, j \leq n$, we define constants $\CA_{ij}$ by
			\begin{equation*}
				(\CA_{ij})_{0 \leq i, j \leq n}=\FM_n \Delta.
			\end{equation*}
			\item For $0 \leq i \leq n$, we define constants $\CK_i$ by
			\begin{equation*}
				(\CA_{ij})\left(\begin{array}{c}
					\CK_0\\
					\CK_1\\
					\vdots \\
					\CK_{n}
				\end{array}\middle)=\middle(\begin{array}{c}
					0\\
					(-q)^{h(n-1)}-(-q)^{hn}\\
					\vdots\\
					1-(-q)^{hn}
				\end{array} \right).
			\end{equation*}
		\end{enumerate}
	\end{definition}
	\begin{proposition}\label{proposition5.13} \cite[Proposition 3.3, Proposition 3.4]{Cho3} We define the following constants.
		\begin{enumerate}
			\item For $\alpha \in \CR_n^{0+}$ such that $\sum \alpha_i=$odd, we write $\CC_{\alpha}$ for
			\begin{equation*}
				\CC_{\alpha}=\prod_{i=1}^{t_{\ge 1}(\alpha)-1}(1-(-q)^i).
			\end{equation*}
			Here, we define $\CC_{\alpha}=1$ if $t_{\ge 1}(\alpha)=1$.
			
			\item  For $\alpha \in \CR_n^{0+}$ such that $\sum \alpha_i=$even and $\alpha\neq (0,0,\dots,0)$, we write $\CC_{\alpha}$ for
			\begin{equation*}
				\CC_{\alpha}=-\prod_{i=1}^{t_{\ge 1}(\alpha)-1}(1-(-q)^i).
			\end{equation*}
			Here, we define $\CC_{\alpha}=-1$ if $t_{\ge 1}(\alpha)=1$.
			
			Also, if $\alpha=(0,0,\dots,0)$, we define
			\begin{equation*}
				\CC_{\alpha}=\dfrac{\alphad'(I_n,I_n)}{\alphad(I_n,I_n)}.
			\end{equation*}
		\end{enumerate}
		Then, we have that
		\begin{equation*}
			\dfrac{\alphad'(I_n,B)}{\alphad(I_n,I_n)}=\mathlarger{\sum}_{\alpha \in \CR^{0+}_n}\CC_{\alpha}\dfrac{\alphad(A_{\alpha},B)}{\alphad(A_{\alpha},A_{\alpha})}.
		\end{equation*}
		
	\end{proposition}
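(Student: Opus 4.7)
The plan is to reduce the claimed identity to a pointwise identity at the level of the integrands. By Lemma \ref{lemma5.3}, both sides are sums over $Y \in \CR_n$ with common weight $\CG(Y,B)/\alpha(Y;\Gamma_n)$, so it suffices to verify
\begin{equation*}
\frac{\CF'(Y, I_n)}{\alphad(I_n, I_n)} = \sum_{\alpha \in \CR_n^{0+}} \CC_\alpha \frac{\CF(Y, A_\alpha)}{\alphad(A_\alpha, A_\alpha)}
\end{equation*}
for every $Y = Y_{\sigma,e} \in \CR_n$.

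The first key observation is that both sides depend on $Y$ only through the type vector $t(Y) = (t_0(Y), t_1(Y), \ldots)$. From the explicit formulas recorded just above \eqref{eq5.2} we have $\CF'(Y, I_n) = \bigl(\sum_j \min(0, e_j)\bigr) f(Y)$ and $\CF(Y, A_\alpha) = (-q)^{\CB_\alpha(Y)} f(Y)$, where $f(Y) = (-q)^{-n\sum_{k\ge 1} k\, t_k(Y)}$, $\sum_j \min(0, e_j) = -\sum_{k\ge 1} k\, t_k(Y)$, and a short case analysis gives $\CB_\alpha(Y) = \sum_{i}\sum_{k\ge 1} t_k(Y)\min(\alpha_i, k)$. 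The identity is therefore a finite polynomial identity in $q$, indexed by the compositions $(t_0, t_1, \ldots)$ of $n$, to be checked stratum by stratum.

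Next I would stratify the right-hand sum by the positive-part partition $\overline\alpha \coloneqq (\alpha_i : \alpha_i \ge 1)$, which determines $\CC_\alpha$ up to its sign, the sign being governed by the parity of $\sum\alpha_i$ and matching the parity constraint of Remark \ref{remark5.10}; the index $t_0(\alpha)$ then ranges from $0$ to $n - t_{\ge 1}(\overline\alpha)$. Summing the $q$-power contributions of these $t_0(\alpha)$, weighted by $\alphad(A_\alpha, A_\alpha)^{-1}$, collapses via a telescoping $q$-binomial identity to a compact generating function whose simple-pole residues at powers of $-q$ produce precisely the coefficients $\pm\prod_{i=1}^{t_{\ge 1}(\overline\alpha) - 1}(1 - (-q)^i)$. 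This is the combinatorial heart of all Cho--Yamauchi type formulas.

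The main obstacle is carrying out this combinatorial verification together with the correct normalization of the $\alpha = 0$ term. A clean approach is induction on $n$: for $Y$ with $t_0(Y) = n$ (all $e_j \ge 0$) the left-hand side vanishes as a derivative in $X$ at a non-singular value while only the $\alpha = 0$ contribution survives on the right, and the identity collapses to the definition $\CC_{(0,\ldots,0)} = \alphad'(I_n, I_n)/\alphad(I_n, I_n)$. For $t_0(Y) < n$, decompose $Y$ into an orthogonal sum of an integral block of size $t_0(Y)$ and a non-integral block of size $n - t_0(Y)$; use the multiplicativity of $\CF$ under this decomposition, together with a Leibniz rule which places the derivative on the non-integral factor only, to reduce to the inductive hypothesis applied to the non-integral block. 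This is the strategy carried out in \cite[Propositions 3.3--3.4]{Cho3}, which we simply invoke.
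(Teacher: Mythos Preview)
The paper does not give its own proof of this proposition: it is stated with a citation to \cite[Propositions~3.3--3.4]{Cho3} and used as a black box, the only thing the paper actually exploits being the integrand-level consequence recorded immediately afterwards (the identity for $\CF'(Y,I_{n-l})$). Your reduction via Lemma~\ref{lemma5.3} to a pointwise identity in $Y$ is therefore exactly in line with how the paper uses the result, and your final sentence invoking \cite{Cho3} already matches the paper's treatment.

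That said, the intermediate sketch you offer contains a genuine error in the base case. For $Y$ with $t_0(Y)=n$ (all $e_j\ge 0$) one has $\CB_\alpha(Y)=0$ for \emph{every} $\alpha\in\CR_n^{0+}$, so $\CF(Y,A_\alpha)=f(Y)=1$ for all $\alpha$, not just $\alpha=0$; the right-hand side is $\sum_\alpha \CC_\alpha/\alphad(A_\alpha,A_\alpha)$, and the identity in this case is the nontrivial vanishing of that full sum, not the tautological definition of $\CC_{(0,\dots,0)}$. Relatedly, your inductive scheme does not terminate: when $t_0(Y)=0$ the ``integral block'' is empty and there is no reduction to smaller $n$, so this stratum must be handled directly rather than by the inductive hypothesis. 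Finally, the appeal to Remark~\ref{remark5.10} is misplaced at the integrand level: that remark concerns vanishing of $\alphad(A_\lambda,T)$ for specific $T$, whereas the pointwise identity in $Y$ must hold without any parity restriction on $\alpha$. None of this affects the validity of simply citing \cite{Cho3}, but if you retain the sketch you should correct the base case and clarify the induction.
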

	
	\bigskip
	
	Now, by \cite[(4.1.5)]{Cho3}, we have that for $\lambda \in \CR_n^{0+}$ with $t_0(Y)\geq l$,
	\begin{equation}\label{eq5.3}
		\mathlarger{\sum}_{0 \leq i \leq l}d_{il}\CF(Y,A_{\lambda_i^+})=\left\lbrace\begin{array}{ll}0=A_{0l}\CF(Y,A_{\lambda}) & \text{ if }t_0(Y)=0,\\
			\vdots & \vdots\\
			0=A_{l-1,l}\CF(Y,A_{\lambda}) & \text{ if }t_0(Y)=l-1,\\
			(-q)^{\CB_{\lambda}(Y)}f(Y)=\CF(Y,A_{\lambda})=A_{ll}\CF(Y,A_{\lambda}) & \text{ if }t_0(Y)=l,\\
			A_{kl}\CF(Y,A_{\lambda}) & \text{ if } t_0(Y)=k, \text{ }l+1 \leq k \leq n.
		\end{array}\right.
	\end{equation}
	
	Also, by Proposition \ref{proposition5.13}, we have that for $Y \in \CR_{n-l}^{0+}$,
	\begin{equation*}
		\begin{array}{ll}
			& \dfrac{\CF'(Y,I_{n-l})}{\alphad(I_{n-l},I_{n-l})}=\mathlarger{\sum}_{\overline{\lambda}\in \CR_{n-l}^{0+}}\CC_{\overline{\lambda}} \dfrac{\CF(Y,A_{\overline{\lambda}})}{\alphad(A_{\overline{\lambda}},A_{\overline{\lambda}})}\\
			\Longleftrightarrow &\sum_{i} \min(0,e_i)f(Y)=\mathlarger{\sum}_{\overline{\lambda}\in \CR_{n-l}^{0+}}\CC_{\overline{\lambda}} \dfrac{\alphad(I_{n-l},I_{n-l})}{\alphad(A_{\overline{\lambda}},A_{\overline{\lambda}})}(-q)^{\CB_{\overline{\lambda}}(Y)}f(Y)\\
			\Longleftrightarrow & \sum_{i} \min(0,e_i)=\mathlarger{\sum}_{\overline{\lambda}\in \CR_{n-l}^{0+}}\CC_{\overline{\lambda}} \dfrac{\alphad(I_{n-l},I_{n-l})}{\alphad(A_{\overline{\lambda}},A_{\overline{\lambda}})}(-q)^{\CB_{\overline{\lambda}}(Y)}.
		\end{array}
	\end{equation*}
	Since running $\lambda=(\lambda^{\vee_l},0,\dots,0)$ over  $\CR_n^{0+}$ with $t_0(Y)\geq l$ is equivalent to running $\lambda^{\vee_l}$ over $\CR_{n-l}^{0+}$ by removing $l$ zeros, we have that
	\begin{equation}\label{eq5.4}\begin{array}{l}
			\mathlarger{\sum}_{\lambda \in \CR_{n}^{0+}, t_0(Y)\geq l} \CC_{\lambda^{\vee_l}}\dfrac{\alphad(I_{n-l},I_{n-l})}{\alphad(A_{\lambda^{\vee_l}},A_{\lambda^{\vee_l}})}\mathlarger{\sum}_{0 \leq i\leq l} d_{il}\CF_0(Y,A_{\lambda_{i}^+})\\\\
			=\left\lbrace \begin{array}{ll}
				0\quad=\quad\CA_{il}\sum_{j}\min(0,e_j)f(Y)  & 
				\text{ if } t_0(Y) \leq l-1\\
				\CA_{il}\sum_{j}\min(0,e_j)f(Y)&
				\text{ if } t_0(Y) \geq l.
			\end{array}\right.
		\end{array}
	\end{equation}
	
	In Definition \ref{definition5.12} (5), we defined constants $\CK_i$ such that
	\begin{equation*}
		(\CA_{ij})\left(\begin{array}{c}
			\CK_0\\
			\CK_1\\
			\vdots \\
			\CK_{n}
		\end{array}\middle)=\middle(\begin{array}{c}
			0\\
			(-q)^{h(n-1)}-(-q)^{hn}\\
			\vdots\\
			1-(-q)^{hn}
		\end{array} \right).
	\end{equation*}
	
	Therefore, we have that
	\begin{equation*}\begin{array}{l}
			\mathlarger{\sum}_{l=0}^{n}\CK_l \lbrace\mathlarger{\sum}_{\lambda \in \CR_{n}^{0+}, t_0(Y) \geq l} \CC_{\lambda^{\vee_l}}\dfrac{\alphad(I_{n-l},I_{n-l})}{\alphad(A_{\lambda^{\vee_l}},A_{\lambda^{\vee_l}})}\mathlarger{\sum}_{0 \leq i\leq l} d_{il}\CF_0(Y,A_{\lambda_{i}^+})\rbrace\\\\
			=\left\lbrace \begin{array}{cl} 
				0 & \text{if } t_0(Y)=0,\\
				(\mathlarger{\sum}_{j}\min(0,e_j))f(Y)((-q)^{h(n-1)}-(-q)^{hn})& \text{ if } t_0(Y)=1,\\
				\quad\quad\quad\quad\vdots\\
				(\mathlarger{\sum}_{j}\min(0,e_j))f(Y)((-q)^{h(n-k)}-(-q)^{hn})& \text{ if } t_0(Y)=k,\\
				\quad\quad\quad\quad\vdots\\
				(\mathlarger{\sum}_{j}\min(0,e_j))f(Y)(1-(-q)^{hn})& \text{ if } t_0(Y)=n.
			\end{array}\right.
		\end{array}
	\end{equation*}
	
	By comparing this with \eqref{eq5.2}, we have that
	
	\begin{equation}\label{eq5.8}\begin{array}{l}
			\CF'(Y,I_{n,h})-(-q)^{hn}\CF'(Y,I_n)\\
			=\mathlarger{\sum}_{l=0}^{n}\CK_l \lbrace\mathlarger{\sum}_{\lambda \in \CR_n^{0+}, t_0(Y) \geq l} \CC_{\lambda^{\vee_l}}\dfrac{\alphad(I_{n-l},I_{n-l})}{\alphad(A_{\lambda^{\vee_l}},A_{\lambda^{\vee_l}})}\mathlarger{\sum}_{0 \leq i\leq l} d_{il}\CF_0(Y,A_{\lambda_{i}^+})\rbrace.
		\end{array}
	\end{equation}
	
	Now, we have the following lemma.
	
	\begin{lemma}\label{lemma5.14}(cf. \cite[Lemma 4.5]{Cho3}) For $i=0$ and $h+1 \leq i \leq n$, $\CK_i=0$. Also, $d_{hh}\CK_h=1$ and for $1 \leq l \leq h-1$, we have
		\begin{equation*}
			d_{h-l,h-l}\CK_{h-l}=(-q)^n\dfrac{1-(-q)^{-h+l-1}}{1-(-q)^{-l}}d_{h-l+1,h-l+1}\CK_{h-l+1}.
		\end{equation*}
	\end{lemma}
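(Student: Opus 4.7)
The plan is to reduce the defining linear system to a polynomial interpolation problem and then back-solve the resulting upper triangular system. I set $\vec u\coloneqq \Delta\vec{\CK}$; the formula $\CA_{kl}=\sum_{i=0}^{l}d_{il}(-q)^{i(n-k)}$ implicit in \eqref{eq5.3} lets me rewrite $\CA\vec{\CK}=\vec b$ as the Vandermonde-type system
\[
\sum_{j=0}^{n}(-q)^{j(n-k)}u_j=(-q)^{h(n-k)}-(-q)^{hn},\qquad k=0,\ldots,n.
\]
Setting $x_k=(-q)^{-k}$ and $P(x)\coloneqq\sum_{j=0}^n(-q)^{jn}u_j\,x^j$, this reads $P(x_k)=(-q)^{hn}(x_k^h-1)$. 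Both sides are polynomials of degree at most $n$ agreeing at the $n+1$ distinct points $x_0,\ldots,x_n$, so they coincide identically; matching coefficients yields $u_h=1$, $u_0=-(-q)^{hn}$, and $u_j=0$ for all other $j$.

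I next back-solve $\Delta\vec{\CK}=\vec u$ downward in $m$. Upper triangularity and $u_m=0$ for $h+1\le m\le n$ force $\CK_m=0$ in this range, proving the vanishing claim for large indices. At $m=h$ the equation $u_h=d_{hh}\CK_h=1$ yields the normalization $d_{hh}\CK_h=1$. For $0<m<h$ one has $u_m=0$, whence $d_{mm}\CK_m=-\sum_{j=m+1}^{h}d_{mj}\CK_j$; setting $l=h-m$, the lemma's recursion claims this multi-term sum collapses to the single term $(-q)^n\frac{1-(-q)^{-(m+1)}}{1-(-q)^{-(h-m)}}d_{m+1,m+1}\CK_{m+1}$. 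I prove this by downward induction on $m$: writing $a_i\coloneqq d_{ii}\CK_i$ and combining the explicit ratio $d_{il}/d_{i+1,l}=-(-q)^{n+i+1-l}(1-(-q)^{-(i+1)})/(1-(-q)^{-(l-i)})$ from Definition~\ref{definition5.12}(1) with the inductive expression for $a_{m+j}/a_{m+1}$, each summand of the sum becomes an explicit $q$-hypergeometric expression.

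After routine simplification (writing $t=(-q)^{-1}$ and $N=h-m$), the required equality reduces to
\[
\sum_{j=1}^{N}(-1)^{j}\,t^{j(j-1)/2}\binom{N}{j}_{t}=-1,\qquad N\ge 1,
\]
where $\binom{N}{j}_t$ is the Gaussian binomial coefficient. This is precisely the $z=1$ specialization of the $q$-binomial theorem $\prod_{k=0}^{N-1}(1-zt^k)=\sum_{j=0}^{N}(-1)^j t^{j(j-1)/2}\binom{N}{j}_t\,z^j$: the left-hand side vanishes because of the $k=0$ factor, so the full sum over $0\le j\le N$ equals $0$, and isolating the $j=0$ term (which equals $1$) yields the claim.

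Finally, the vanishing $\CK_0=0$ follows directly from the zeroth row: the formula for $\CA_{kl}$ may be recognized, via Lagrange interpolation at the nodes $(-q)^{-m}$ ($0\le m\le l$), as the coefficient of $x^l$ in the Lagrange interpolant of $x^k$ of degree $\le l$, which equals $\delta_{kl}$ whenever $k\le l$. Hence $\CA$ is lower triangular with $\CA_{00}=1$, and $(\CA\vec{\CK})_0=\CK_0=\vec b_0=0$. The main obstacle is the $q$-binomial identity used in the inductive step; the remainder is standard upper-triangular bookkeeping.
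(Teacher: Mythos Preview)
Your proof is correct and follows the same overall architecture as the paper's: both arguments hinge on the identity $\Delta\vec{\CK}=\vec u$ with $u_0=-(-q)^{hn}$, $u_h=1$, and $u_j=0$ otherwise (the paper's \eqref{eq5.5}), and then back-solve using the upper-triangularity of $\Delta$. The difference is that the paper outsources both substantive steps to \cite{Cho3}: it cites \cite[Lemma~4.3]{Cho3} for \eqref{eq5.5} and \cite[Lemma~4.5]{Cho3} for the recursion, whereas you supply self-contained arguments for each.

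Your polynomial-interpolation derivation of $\vec u$ is a clean replacement for the cited lemma: recognizing $P(x)=\sum_j(-q)^{jn}u_j x^j$ and $(-q)^{hn}(x^h-1)$ as two degree-$\le n$ polynomials agreeing at the $n+1$ points $(-q)^{-k}$ immediately pins down $\vec u$. For the recursion, your reduction is valid: writing $t=(-q)^{-1}$ and using $d_{ml}/d_{ll}=(-1)^{l-m}t^{-(l-m)n+\binom{l-m}{2}}\binom{l}{m}_t$ together with $\binom{m+k}{m}_t\binom{h}{m+k}_t=\binom{h}{m}_t\binom{h-m}{k}_t$, the sum $\sum_{l=m}^{h}(d_{ml}/d_{ll})\,a_l$ becomes a constant multiple of $\sum_{k=0}^{N}(-1)^k t^{\binom{k}{2}}\binom{N}{k}_t$ with $N=h-m$, which vanishes by the $q$-binomial theorem at $z=1$; this is exactly your claimed identity $\sum_{j=1}^{N}(-1)^j t^{\binom{j}{2}}\binom{N}{j}_t=-1$. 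Your separate treatment of $\CK_0=0$ via the Lagrange-interpolation interpretation of $\CA_{kl}$ (leading coefficient of the degree-$\le l$ interpolant of $y^k$, hence $\delta_{kl}$ for $k\le l$) is a nice independent check that the paper's proof does not make explicit. What your approach buys is a fully self-contained argument within the present paper; what the paper's approach buys is brevity, at the cost of depending on the earlier reference.
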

	\begin{proof}
		As in \cite[Lemma 4.3]{Cho3}, one can show that
		\begin{equation}\label{eq5.5}
			\Delta\left(\begin{array}{c}
				\CK_0\\
				\CK_1\\
				\dots \\
				\CK_{n}\\
				\dots\\
				\CK_{2n}
			\end{array}\middle)=\middle(\begin{array}{c}
				-(-q)^{nh}\\
				0\\
				0\\
				1\\
				0\\
				0
			\end{array} \right)\begin{array}{l} 
				\text{1st entry}\\
				\\
				\\
				\text{(h+1)-th entry}
				\\
				\\
				.
			\end{array}
		\end{equation}
		Since $\Delta$ is an upper triangular matrix, we have that $\CK_i=0$ for $h+1 \leq i \leq n$. Also, the $(h+1)$-th row of \eqref{eq5.5} implies that
		\begin{equation*}
			d_{hh}\CK_h+d_{h,h+1}\CK_{h+1}+\dots+d_{h,n}\CK_{n}=1,
		\end{equation*}
		and hence $d_{hh}\CK_h=1$. Now, the proof of the last statement is almost identical to the proof of \cite[Lemma 4.5]{Cho3}.
	\end{proof}

	Now, we are ready to write the following proposition on the constants $D_{n,h}(\lambda)$. 
	
	\begin{proposition}\label{proposition5.16}(cf. \cite[Theorem 4.7]{Cho3})
		For $0 \leq h \leq n$ and $\lambda \in \CR_n^{0+}$ such that $\lambda \neq (1^t,0^n-t), t \leq h-1$, we have
		
		\begin{equation}\label{eq5.10.1}\begin{aligned}
				D_{n,h}(\lambda)=&\dfrac{q^{h(n-h)}\prod_{l=1}^{n}(1-(-q)^{-l})}{\prod_{l=1}^{h}(1-(-q)^{-l})\prod_{l=1}^{n-h}(1-(-q)^{-l})}\CC_{\lambda}\\
				&+\mathlarger{\sum}_{\substack{1 \leq i \leq h\\ \max\lbrace i-t_0(\lambda),0 \rbrace  \leq s \\ \leq \min\lbrace i , t_1(\lambda) \rbrace}}(-q)^{n(h-i)+(i-s)(2n-i+s+1)/2-h^2+s(2n-2t_0(\lambda)-s)}(-1)^{i+h}\\
				& \qquad \times \dfrac{\prod_{l=1}^{n-i}(1-(-q)^{-l})}{\prod_{l=1}^{n-h}(1-(-q)^{-l})\prod_{l=1}^{h}(1-(-q)^{-l})}
				\times \dfrac{\prod_{l=s+1}^{h}(1-(-q)^{-l})}{\prod_{l=1}^{h-i}(1-(-q)^{-l})\prod_{l=1}^{i-s}(1-(-q)^{-l})}
				\\
				&\qquad\times
				\dfrac{\prod_{l=1}^{l=t_0(\lambda)}(1-(-q)^{-l})\prod_{l=1}^{l=t_1(\lambda)}(1-(-q)^{-l})}{\prod_{l=1}^{l=t_0(\lambda)-i+s}(1-(-q)^{-l})\prod_{l=1}^{l=t_1(\lambda)-s}(1-(-q)^{-l})} \times \CC_{(\lambda_s^-)^{\vee_i}}.
			\end{aligned}
		\end{equation}
		Here, we choose the following convention: for $k \leq 0$, we assume that
		\begin{equation*}
			\prod_{l=1}^{k}(*)=1 \quad \text{ and }\quad \sum_{l=1}^k (*)=0.
		\end{equation*}
		If $\lambda=(1^t,0^{n-t}), t \leq h-1$, then $D_{n,h}(\lambda)=\text{ the right hand side of \eqref{eq5.10.1}}+\dfrac{(-q)^{\frac{-(h-t)(h+t+1)}{2}}}{1-(-q)^{-(h-t)}}$.
		In particular, $D_{n,h}(\lambda)$ depends only on $n$, $t_0(\lambda)$, $t_1(\lambda)$, and the parity of $\sum_i\lambda_i$.
	\end{proposition}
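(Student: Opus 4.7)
The plan is to derive Proposition 5.16 by combining Corollary 7.7 with the identity (5.8) and Proposition 5.13, then executing a careful change of variables in the resulting double sum. First I would rewrite the left-hand side of the defining equation of $D_{n,h}(\lambda)$ via Corollary 7.7 as
$$\frac{\alphad'(I_{n,h}, T)}{\alphad(I_{n,h}, I_{n,h})} + \sum_{k=0}^{h-1} \frac{(-q)^{-(h-k)(h+k+1)/2}}{1-(-q)^{-(h-k)}}\,\frac{\alphad(I_{n,k}, T)}{\alphad(I_{n,k}, I_{n,k})}.$$
Since $I_{n,k} = A_\lambda$ exactly when $\lambda=(1^k,0^{n-k})$, the correction sum contributes precisely the extra term $(-q)^{-(h-t)(h+t+1)/2}/(1-(-q)^{-(h-t)})$ that appears in the statement for $\lambda=(1^t,0^{n-t})$ with $t\leq h-1$. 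The remaining task is to expand the first term in the basis $\{\alphad(A_\lambda, T)/\alphad(A_\lambda, A_\lambda)\}_{\lambda}$.

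Next I would integrate (5.8) against $\CG(Y,T)/\alpha(Y;\Gamma_n)$ and sum over $Y\in\CR_n$, applying (5.2'), (5.3'), (5.4'). This yields
$$\alphad'(I_{n,h},T)-(-q)^{hn}\alphad'(I_n,T)=\sum_{l=0}^n \CK_l \sum_{\substack{\mu\in\CR_n^{0+}\\ t_0(\mu)\geq l}} \CC_{\mu^{\vee_l}}\,\frac{\alphad(I_{n-l}, I_{n-l})}{\alphad(A_{\mu^{\vee_l}}, A_{\mu^{\vee_l}})}\sum_{j=0}^l d_{jl}\,\alphad(A_{\mu_j^+}, T).$$
Then Proposition 5.13 applied to $(-q)^{hn}\alphad'(I_n,T)$ gives the $\CC_\lambda$ part of $D_{n,h}(\lambda)$ after dividing by $\alphad(I_{n,h},I_{n,h})$ and using standard formulas for $\alphad(I_n,I_n)$ and $\alphad(I_{n,h},I_{n,h})$ from \cite[Proposition A.4]{Cho}.

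The crucial step is a change of variables in the remaining double sum. Setting $\lambda=\mu_j^+$, so that $\mu=\lambda_j^-$ and $\mu^{\vee_l}=(\lambda_j^-)^{\vee_l}$, and renaming $j=s$, $l=i$, the constraints $0\leq j\leq l$, $t_0(\mu)\geq l$, $j\leq t_0(\mu)$, $s\leq t_1(\lambda)$ translate precisely into
$$\max(i-t_0(\lambda),0)\leq s\leq \min(i, t_1(\lambda)),$$
which matches the summation range in the statement. The coefficient of $\CC_{(\lambda_s^-)^{\vee_i}}$ in $D_{n,h}(\lambda)$ then becomes
$$\frac{\alphad(A_\lambda,A_\lambda)\,\alphad(I_{n-i},I_{n-i})\,\CK_i\, d_{si}}{\alphad(I_{n,h},I_{n,h})\,\alphad(A_{(\lambda_s^-)^{\vee_i}},A_{(\lambda_s^-)^{\vee_i}})}.$$

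The main obstacle is the final coefficient simplification into the explicit form given in the statement. I would substitute Lemma 5.14 (which gives $d_{ii}\CK_i$ explicitly via the recursion starting from $d_{hh}\CK_h=1$ and the vanishing $\CK_i=0$ for $i>h$, explaining in particular why the sum collapses to $1\leq i\leq h$), then use $d_{si}/d_{ii}=\prod(\cdots)$ from Definition 5.12(1), and finally plug in the product formulas for $\alphad(A_\mu, A_\mu)$ on diagonal hermitian lattices (standard $q$-factorial expressions depending only on $t_0(\mu)$ and $t_1(\mu)$; cf.\ Proposition 5.28). The powers of $-q$ in the statement emerge from $\CK_i d_{si}$ together with the ratio $\alphad(A_\lambda,A_\lambda)/\alphad(A_{(\lambda_s^-)^{\vee_i}},A_{(\lambda_s^-)^{\vee_i}})$; the exponent $n(h-i)+(i-s)(2n-i+s+1)/2-h^2+s(2n-2t_0(\lambda)-s)$ arises from a bookkeeping of these contributions together with the difference of ranks. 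The final step — matching the products of $(1-(-q)^{-l})$ — is tedious but direct, and parallels \cite[Theorem 4.7]{Cho3}, which handles the case $h=0$; the general $h$ enters only through $\CK_i$ (Lemma 5.14) and through the normalization by $\alphad(I_{n,h},I_{n,h})$, both of which are explicit.
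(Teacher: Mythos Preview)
Your approach is essentially the same as the paper's: both derive equation (5.11) from (5.8) via (5.2'), (5.3'), (5.4'), expand the $(-q)^{nh}\alphad'(I_n,T)$ piece using Proposition~5.13, perform the change of variables $\lambda=\mu_j^+$ (i.e.\ $\mu=\lambda_s^-$, $l=i$), and simplify the resulting coefficients using Lemma~5.14 and the explicit density formulas, exactly as in \cite[Theorem~4.7]{Cho3}. The only cosmetic difference is that you invoke Corollary~7.7 up front, whereas the paper separates this into Proposition~5.7 for the main term and Proposition~5.28 for the correction terms (the extra summand for $\lambda=(1^t,0^{n-t})$, $t\le h-1$); one minor omission is that Lemma~5.14 also gives $\CK_0=0$, which you need (in addition to $\CK_i=0$ for $i>h$) to see that the sum truly collapses to $1\le i\le h$.
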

	
	\begin{remark}\label{rem: ind formula}
		Note that when $h=0$, the sum $\mathlarger{\sum}_{\substack{1 \leq i \leq h\\ \max\lbrace i-t_0(\lambda),0 \rbrace  \leq s \leq \min\lbrace i , t_1(\lambda) \rbrace}}(*)$ in \eqref{eq5.10.1} is empty, and hence $D_{n,0}(\lambda)=\CC_{\lambda}$ which is obvious by construction. Therefore, Proposition \ref{proposition5.16} decomposes $D_{n,h}$ into a weighted summation of $D_{n,0}$, which is the Cho-Yamauchi constant in the good reduction case.
	\end{remark}

	\begin{proof}[Proof of Proposition \ref{proposition5.16}] The proof is almost identical to the proof of \cite[Theorem 4.7]{Cho3}; therefore, let me just write which parts are different. First, note that for $B \in X_n(F)$ of the form in \eqref{eq5.1}, Proposition \ref{proposition5.7} implies that
		\begin{equation}\label{eq5.10}\begin{aligned}
				\dfrac{W_{n-h,n}'(B)}{W_{n,n}(A_n,1)}&=\dfrac{q^{-4n^2+(n+h)(n-h)}\alphad(\pi A_n,I_{n-h})\alphad(I_{n+h,h},I_h)\alphad'(I_{n,h},T)}{W_{n,n}(A_n,1)}.
			\end{aligned}
		\end{equation}
		Now, by \eqref{eq5.2'}, \eqref{eq5.3'}, \eqref{eq5.4'}, and \eqref{eq5.8}, we have
		\begin{equation}\label{eq5.11}\begin{aligned}
				\alphad'(I_{n,h},T)&=(-q)^{nh}\alphad'(I_n,T)
				+\mathlarger{\sum}_{l=0}^{n}\CK_l \lbrace\mathlarger{\sum}_{\lambda \in \CR_n^{0+},t_0(Y) \geq l} \CC_{\lambda^{\vee_l}}\dfrac{\alphad(I_{n-l},I_{n-l})}{\alphad(A_{\lambda^{\vee_l}},A_{\lambda^{\vee_l}})}\mathlarger{\sum}_{0 \leq i\leq l} d_{il}\alphad(A_{\lambda_{i}^+},T)\rbrace.
			\end{aligned}
		\end{equation}
		
		Now, we can easily follow the proof of \cite[Theorem 4.7]{Cho3} by using Lemma \ref{lemma5.14}, \eqref{eq5.10}, and \eqref{eq5.11}. Also, for $\lambda=(1^t,0^{n-t}), t\leq h-1$, we use Proposition \ref{proposition5.28}.
	\end{proof}
	
	Note that $D_{n,h}(\lambda)$ depends only on $t_{\ge 2}(\lambda)$, $t_1(\lambda)$, $t_0(\lambda)$, and the parity of $\sum_i\lambda_i$. Also, by Remark \ref{remark5.10}, we only need to consider $\lambda$ such that $\sum_i \lambda_i \equiv h+1 \pmod 2$. Also, later on, we will establish inductive formulas relating $D_{n,h}(\lambda)$ for $\lambda$ with different $t_{\ge 2}(\lambda),t_1(\lambda)$ and $t_0(\lambda)$. So we introduce the following notation to streamline the computation.

	\begin{definition}\label{definition5.19} Assume that $h,n,a,b,c,i,j,s \geq 0$, $0 \leq s \leq i \leq h \leq n$, $0\leq j \leq n$, and $n=a+b+c$.
		\begin{enumerate}
			\item  If $c \neq n$, we write $\CC_{j}(a,b,c)$ for
			\begin{equation*}
				\CC_{j}(a,b,c)=(-1)^{j+1}\prod_{i=1}^{a+b-1}(1-(-q)^i).
			\end{equation*}
			Also, if $c=n$, we define
			\begin{equation*}
				\CC_{j}(0,0,n)=\dfrac{\alphad'(I_n,I_n)}{\alphad(I_n,I_n)}.
			\end{equation*}
			Then, for $\lambda \in \CR_n^{0+}$, we have that
			\begin{equation*}
				\CC_{\lambda}=\CC_{\vert \lambda \vert}(t_{\ge 2}(\lambda),t_1(\lambda),t_0(\lambda)).
			\end{equation*}
			\item Let $M_{n,h}(a,b,c,i,s)$ be the constant 
			\begin{equation*}\begin{aligned}
					M_{n,h}(a,b,c,i,s)=&(-q)^{n(h-i)+(i-s)(2n-i+s+1)/2-h^2+s(2n-2c-s)}(-1)^{i+h}\\
					&
					\times \dfrac{\prod_{l=1}^{n-i}(1-(-q)^{-l})}{\prod_{l=1}^{n-h}(1-(-q)^{-l})\prod_{l=1}^{h}(1-(-q)^{-l})}
					\times \dfrac{\prod_{l=s+1}^{h}((1-(-q)^{-l})}{\prod_{l=1}^{h-i}(1-(-q)^{-l})\prod_{l=1}^{i-s}(1-(-q)^{-l})}\\
					&\times
					\dfrac{\prod_{l=1}^{c}(1-(-q)^{-l})\prod_{l=1}^{b}(1-(-q)^{-l})}{\prod_{l=1}^{c-i+s}(1-(-q)^{-l})\prod_{l=1}^{b-s}(1-(-q)^{-l})}
					\times\CC_{h+1-s}(a,b-s,c+s-i).
				\end{aligned}
			\end{equation*}
			
			\item For $(a,b,c) \neq (0,t,n-t), t\leq h-1$, we write $D_{n,h}(a,b,c)$ for
			\begin{equation*}
				D_{n,h}(a,b,c)=\mathlarger{\sum}_{\substack{0 \leq i \leq h\\ \max(i-c,0)  \leq s \leq \min(i , b )}}M_{n,h}(a,b,c,i,s)=\mathlarger{\sum}_{\substack{0 \leq s \leq \min(h,b)\\ s \leq i \leq \min( s+c,h)}}M_{n,h}(a,b,c,i,s).
			\end{equation*}
			
			\item For $(a,b,c)=(0,t,n-t), t \leq h-1$, $t \equiv h+1 \pmod 2$, we write $D_{n,h}(a,b,c)$ for
			\begin{equation*}\begin{aligned}
					D_{n,h}(0,t,n-t)&=\mathlarger{\sum}_{\substack{0 \leq i \leq h\\ \max(i-c,0)  \leq s \leq \min(i , b )}}M_{n,h}(a,b,c,i,s)+\dfrac{(-q)^{\frac{-(h-t)(h+t+1)}{2}}}{1-(-q)^{-(h-t)}}\\
					&=\mathlarger{\sum}_{\substack{0 \leq s \leq \min(h,b)\\ s \leq i \leq \min( s+c,h)}}M_{n,h}(a,b,c,i,s)+\dfrac{(-q)^{\frac{-(h-t)(h+t+1)}{2}}}{1-(-q)^{-(h-t)}}.
				\end{aligned}
			\end{equation*}
		\end{enumerate}
	\end{definition}
	
	\begin{proposition}\label{proposition5.20}
		For $\lambda \in \CR_n^{0+}$, we have that
		\begin{equation*}
			D_{n,h}(\lambda)=D_{n,h}(t_{\ge 2}(\lambda),t_1(\lambda),t_0(\lambda)).
		\end{equation*}
	\end{proposition}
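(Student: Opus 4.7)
The plan is to derive Proposition \ref{proposition5.20} as a direct consequence of the explicit formula for $D_{n,h}(\lambda)$ obtained in Proposition \ref{proposition5.16}, by matching term-by-term against Definition \ref{definition5.19}. First I would observe that, by Remark \ref{remark5.10}, we need only consider $\lambda \in \CR_n^{0+}$ with $\sum_i \lambda_i \equiv h+1 \pmod 2$ (otherwise $D_{n,h}(\lambda)=0$, which is symmetric in the permissible arguments anyway); this parity condition is what lets us cleanly identify $\CC_{\lambda}$ with $\CC_{|\lambda|}(t_{\ge 2}(\lambda),t_1(\lambda),t_0(\lambda))$, since by Proposition \ref{proposition5.13} and Definition \ref{definition5.19}(1), both are equal to $(-1)^{|\lambda|+1}\prod_{l=1}^{t_{\ge 1}(\lambda)-1}(1-(-q)^l)$.

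Next, for fixed $(i,s)$ in the index of summation of \eqref{eq5.10.1}, I would verify that the combinatorial operations $\lambda \mapsto \lambda_s^-$ and $\mu \mapsto \mu^{\vee_i}$ act purely on the type vector: if $(a,b,c)=(t_{\ge 2}(\lambda),t_1(\lambda),t_0(\lambda))$, then $\lambda_s^-$ has type $(a,b-s,c+s)$ (provided $0 \le s \le b$), and removing $i$ zeros gives $(\lambda_s^-)^{\vee_i}$ of type $(a,b-s,c+s-i)$ (provided $i \le c+s$, i.e.\ $s \ge i-c$). These are precisely the constraints $\max\{i-c,0\} \le s \le \min\{i,b\}$ appearing in Definition \ref{definition5.19}(3), matching the range $\max\{i-t_0(\lambda),0\} \le s \le \min\{i,t_1(\lambda)\}$ of \eqref{eq5.10.1}. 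The parity $|(\lambda_s^-)^{\vee_i}| = |\lambda|-s \equiv h+1-s \pmod 2$, so $\CC_{(\lambda_s^-)^{\vee_i}} = \CC_{h+1-s}(a,b-s,c+s-i)$ by the definition of $\CC_j(a,b,c)$, which is exactly the factor appearing in $M_{n,h}(a,b,c,i,s)$.

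The remaining combinatorial factors in \eqref{eq5.10.1} --- the powers of $(-q)$, the product ratios involving $(1-(-q)^{-l})$ with bounds $n-i,n-h,h,s,h-i,i-s$, and the ratios with numerator $\prod_{l=1}^{c}\prod_{l=1}^{b}$ over denominator $\prod_{l=1}^{c-i+s}\prod_{l=1}^{b-s}$ --- are already written purely in terms of $(n,h,a,b,c,i,s)$ in Proposition \ref{proposition5.16}, so they coincide with the corresponding factors in $M_{n,h}(a,b,c,i,s)$ of Definition \ref{definition5.19}(2). Summing over $(i,s)$ in the permissible range gives $D_{n,h}(a,b,c)$ of Definition \ref{definition5.19}(3), establishing the identity in the generic case $(a,b,c)\neq(0,t,n-t)$ with $t \le h-1$.

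Finally, for the exceptional case $\lambda = (1^t,0^{n-t})$ with $t \le h-1$, Proposition \ref{proposition5.16} adds the extra term $\dfrac{(-q)^{-(h-t)(h+t+1)/2}}{1-(-q)^{-(h-t)}}$, which matches precisely the extra term in Definition \ref{definition5.19}(4); here the parity condition $t \equiv h+1 \pmod 2$ in the definition is automatic from Remark \ref{remark5.10}. I expect no substantive obstacle: the entire argument is bookkeeping, the only subtle point being the verification that the parity of $|(\lambda_s^-)^{\vee_i}|$ agrees with $h+1-s$ modulo $2$, which as noted follows from the parity hypothesis on $|\lambda|$.
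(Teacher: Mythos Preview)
Your approach is correct and is essentially the same as the paper's: both derive the identity by unwinding Proposition \ref{proposition5.16} and matching it term-by-term against Definition \ref{definition5.19}. The paper's proof is a one-liner citing those definitions together with the observation that $M_{n,h}(a,b,c,0,0)=\dfrac{q^{h(n-h)}\prod_{l=1}^{n}(1-(-q)^{-l})}{\prod_{l=1}^{h}(1-(-q)^{-l})\prod_{l=1}^{n-h}(1-(-q)^{-l})}\,\CC_{h+1}(a,b,c)$; this is the one bookkeeping point you leave implicit, namely that the summation in \eqref{eq5.10.1} runs over $1\le i\le h$ while Definition \ref{definition5.19}(3) sums over $0\le i\le h$, and it is precisely the leading $\CC_\lambda$-term of Proposition \ref{proposition5.16} that furnishes the missing $(i,s)=(0,0)$ summand. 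Your parity verification $\CC_\lambda=\CC_{h+1}(a,b,c)$ is exactly what is needed there. One small inaccuracy: it is not that $D_{n,h}(\lambda)=0$ for the wrong parity, but rather that those constants are irrelevant since the corresponding $\alphad(A_\lambda,T)$ vanish (Remark \ref{remark5.10}); your restriction to $|\lambda|\equiv h+1\pmod 2$ is nonetheless the correct move.
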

	\begin{proof}
		This follows from the definitions of $M_{n,h}(a,b,c,i,s)$, $D_{n,h}(a,b,c)$, Proposition \ref{proposition5.16}, and the fact that
		\begin{equation*}
			M_{n,h}(a,b,c,0,0)=\dfrac{q^{h(n-h)}\prod_{l=1}^{n}(1-(-q)^{-l})}{\prod_{l=1}^{h}(1-(-q)^{-l})\prod_{l=1}^{n-h}(1-(-q)^{-l})}\CC_{h+1}(a,b,c).
		\end{equation*}
	\end{proof}

	\section{Inductive relations among Cho-Yamauchi constants}\label{sec: ind formula}
	
	In this section, we will prove some inductive relations among $D_{n,h}(\lambda)$. The main result of this section can be summarized as follows: Let $0 \leq a,b,c,h, \leq n$, $a+b+c=n$. Assume further that $(a-1,b+1,c) \neq (0,h,n-h)$. Then, we have the following result.
	\begin{enumerate}
		\item (Theorem \ref{theorem5.24}) If $c \leq n-h$ and $a \geq 1$, then
		\begin{equation*}
			D_{n,h}(a,b,c)-D_{n,h}(a-1,b+1,c)=-(-q)^{2n-h-1-b-2c}D_{n-1,h-1}(a-1,b,c).
		\end{equation*}
		
		\item (Lemma \ref{lemma7.5}, Proposition \ref{prop: vanishing of D(lambda)}) If $c > n-h$, then $D_{n,h}(a,b,c)=0$.
		
		\item (Theorem \ref{theorem5.31} (1)) Assume that $a=0$ and $h+1 \leq b$. Then, we have
		\begin{equation*}
			D_{n,h}(0,b,c)=\dfrac{\prod_{l=h+1}^{b}(1-(-q)^l)}{(1-(-q)^{b-h})}.
		\end{equation*}
		
		\item (Theorem \ref{theorem5.31} (2)) Assume that $a=1$ and $h-1 \leq b$. Then, we have
		\begin{equation*}
			D_{n,h}(1,b,c)=\left\lbrace \begin{array}{ll}
				1 & \text{ if } b=h-1, h;\\
				\prod_{l=h+1}^{b}(1-(-q)^l) & \text{ if } b \geq h+1.
			\end{array}\right.
		\end{equation*}
	\end{enumerate}
	
	Here, the main reason for excluding the case $(a-1,b+1,c)=(0,h,n-h)$ is because it is not necessary by Remark \ref{remark5.10}.
	\bigskip

	First, let us prove the following lemma.	
	\begin{lemma}\label{lemma5.21} Let $0 \leq a,b,c,h \leq n$, $a+b+c=n$, and $0 \leq s \leq \min(h,b)$.
		\begin{enumerate}
			\item If $c \leq n-h$ and $(a,b,c) \neq (0,h,n-h)$, we have that
			\begin{align*} 
				&\sum_{i=s}^{\min(s+c,h)}M_{n,h}(a,b,c,i,s)\\
				&=(-1)^{n+c+s+1}(-q)^{\frac{c}{2}+\frac{c^2}{2}-h^2-\frac{n}{2}-cn+hn+\frac{n^2}{2}-ch+\frac{s}{2}-\frac{s^2}{2}}\\
				&\quad \times \dfrac{\prod_{l=1}^{b}(1-(-q)^{-l})\prod_{l=1}^{n-c-s}(1-(-q)^{-l})\prod_{l=1}^{n-c-s-1}(1-(-q)^{-l})}{\prod_{l=1}^{b-s}(1-(-q)^{-l})\prod_{l=1}^{s}(1-(-q)^{-l})\prod_{l=1}^{n-h-c}(1-(-q)^{-l})\prod_{l=1}^{h-s}(1-(-q)^{-l})}.
			\end{align*}
			
			\item If $c>n-h$ and $a \neq 0$, we have that $\sum_{i=s}^{\min(s+c,h)}M_{n,h}(a,b,c,i,s)=0$.
		\end{enumerate}
	\end{lemma}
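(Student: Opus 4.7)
The plan is to evaluate the sum $\sum_{i=s}^{\min(s+c,h)} M_{n,h}(a,b,c,i,s)$ in closed form as a terminating $q$-hypergeometric series and apply a $q$-Chu--Vandermonde type identity. First I would substitute the explicit expression for $M_{n,h}(a,b,c,i,s)$ from Definition \ref{definition5.19} and isolate the factors that genuinely depend on the summation variable $i$. The key initial observation is that under the hypothesis $(a,b,c) \neq (0,h,n-h)$ the factor $\CC_{h+1-s}(a, b-s, c+s-i) = (-1)^{h+2-s}\prod_{l=1}^{a+b-s-1}(1-(-q)^{l})$ depends only on $a + (b-s)$ and not on $i$; hence $\CC_{h+1-s}(a, b-s, c+s-i)$, together with every other $i$-independent prefactor, may be pulled out of the sum.

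Next I would change variables to $j = i - s$, so that $j$ ranges over $\{0, 1, \dots, \min(c, h-s)\}$. A direct computation shows that the $j$-dependent part of the $(-q)$-exponent in $M_{n,h}$ collapses to $-j(j-1)/2$, and the remaining sum becomes
\begin{equation*}
\sum_{j=0}^{\min(c,h-s)} (-1)^{j}(-q)^{-j(j-1)/2} \dfrac{\prod_{l=1}^{n-s-j}(1-(-q)^{-l})}{\prod_{l=1}^{h-s-j}(1-(-q)^{-l})\prod_{l=1}^{j}(1-(-q)^{-l})\prod_{l=1}^{c-j}(1-(-q)^{-l})}.
\end{equation*}
After multiplying and dividing by appropriate $q$-factorials (with $q$ replaced by $-q$), the ratio of products can be rewritten as a product of two $q$-binomial coefficients in the variable $-q$, putting the sum into the template of a $q$-Chu--Vandermonde identity.

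In case (1), where $c \leq n-h$, evaluating this $q$-Chu--Vandermonde type sum yields the product appearing on the right-hand side of the lemma; matching the $(-q)$-exponent $\frac{c}{2}+\frac{c^2}{2}-h^2-\frac{n}{2}-cn+hn+\frac{n^2}{2}-ch+\frac{s}{2}-\frac{s^2}{2}$ and the sign $(-1)^{n+c+s+1}$ then follows from a direct but tedious computation combining the pulled-out $i$-independent prefactors (including $\CC_{h+1-s}(a,b-s,c+s-i)$) with the closed form. In case (2), where $c > n-h$ and $a \neq 0$ (so $a+b \leq h-1$), the parameters fall outside the range covered by case (1) and a separate evaluation shows that the resulting closed form contains a vanishing Pochhammer factor, yielding the claimed vanishing.

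The main technical obstacle is the intricate bookkeeping of signs and $(-q)$-exponents in the application of the $q$-Chu--Vandermonde identity, together with verifying the vanishing in case (2). The latter is subtle because the closed-form formula of case (1), under the convention $\prod_{l=1}^{k}(\cdot) = 1$ for $k \leq 0$, does not vanish when $c > n-h$; for instance, already in the smallest example $(n,h,a,b,c,s) = (3,2,1,0,2,0)$ the naive extrapolation of formula (1) evaluates to $1/(q^2-1)$, while the three nonzero summands $M_{3,2}(1,0,2,0,0), M_{3,2}(1,0,2,1,0), M_{3,2}(1,0,2,2,0)$ genuinely sum to zero. The correct argument for case (2) thus requires invoking the ``terminating'' variant of the $q$-Chu--Vandermonde identity whose evaluation produces an honest vanishing product in this regime.
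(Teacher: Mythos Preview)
Your approach is correct and genuinely different from the paper's. Both proofs begin with the same observation that $\CC_{h+1-s}(a,b-s,c+s-i)$ is independent of $i$ under the stated hypotheses, and both reduce to evaluating the same sum over $i$ (the paper calls the $i$-dependent factor $X_{n,h}(c,i,s)$). From there the methods diverge: the paper establishes a contiguous relation (Lemma~\ref{lemma5.22.1}) showing that $\sum_i X_{n,h}(c,i,s)/X_{n,h}(c,s,s)$ equals a simple factor times $\sum_i X_{n-1,h}(c-1,i,s)/X_{n-1,h}(c-1,s,s)$, and then iterates this $c$ times in case~(1), or $n-h$ times in case~(2) followed by a separate inductive computation at $n=h$. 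Your route instead recognises the shifted sum directly as a terminating ${}_2\phi_1$ in base $Q=(-q)^{-1}$ with parameters $(Q^{-c},Q^{-(h-s)};Q^{-(n-s)};Q,Q^{c-(n-h)})$ and evaluates it in one stroke by $q$-Chu--Vandermonde.

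Your diagnosis of case~(2) is exactly right and worth making explicit: the $q$-Chu--Vandermonde evaluation produces the factor $(Q^{c-(n-s)};Q)_{h-s}$, and when $c>n-h$ with $a\neq 0$ (so $s\le b$ forces $c+s<n$), the exponent $c-(n-s)+l$ runs from a negative value up through $c-(n-h)-1\ge 0$, hence hits $0$ and the product vanishes. This is cleaner than the paper's two-stage reduction for case~(2). The trade-off is that the paper's argument is entirely self-contained, whereas yours imports a standard $q$-series identity; in effect the paper's recursion is the usual inductive proof of $q$-Chu--Vandermonde specialised to these parameters.
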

	\begin{proof}
		By our assumption, we have that $(a,b-s,c+s-i) \neq (0,0,n-i)$ for any $i \leq s \leq \min(s+c,h)$. Therefore, by Definition \ref{definition5.19} (1), we have that
		\begin{equation*}\begin{array}{ll}
				\CC_{h+1-s}(a,b-s,c+s-i)&=(-1)^{h-s}\prod_{l=1}^{n-c-s-1}(1-(-q)^l)\\
				&=(-1)^{n+c+h+1}(-q)^{(n-c-s-1)(n-c-s)/2}\prod_{l=1}^{n-c-s-1}(1-(-q)^{-l}),
			\end{array}
		\end{equation*}
		which is independent of $i$.
		Also, by Definition \ref{definition5.19} (2), we have that
		\begin{equation*}
			M_{n,h}(a,b,c,i,s)=\CC_{h+1-s}(a,b-s,c+s-i)Y_{n,h}(b,c,s)X_{n,h}(c,i,s),
		\end{equation*}
		where
		\begin{equation*}\begin{aligned}
				Y_{n,h}(b,c,s)&
				\coloneqq	(-1)^{h}(-q)^{-h^2+hn-\frac{s}{2}-\frac{3s^2}{2}-2cs+ns}
				\dfrac{\mathlarger{\prod}_{l=1}^{c}(1-(-q)^{-l})    \mathlarger{\prod}_{l=1}^{b}(1-(-q)^{-l})}{\mathlarger{\prod}_{l=1}^{n-h}(1-(-q)^{-l})\mathlarger{\prod}_{l=1}^{s}(1-(-q)^{-l})\mathlarger{\prod}_{l=1}^{b-s}(1-(-q)^{-l})},
			\end{aligned}
		\end{equation*}
		which is independent of $i$,		and
		\begin{equation*}\begin{aligned}
				X_{n,h}(c,i,s)\coloneqq
				\dfrac{(-1)^{i}(-q)^{\frac{i}{2}-\frac{i^2}{2}+is}\mathlarger{\prod}_{l=1}^{n-i}(1-(-q)^{-l})}{\mathlarger{\prod}_{l=1}^{h-i}(1-(-q)^{-l})\mathlarger{\prod}_{l=1}^{i-s}(1-(-q)^{-l})\mathlarger{\prod}_{l=1}^{c-i+s}(1-(-q)^{-l})}.
			\end{aligned}
		\end{equation*}

		Now, we note that Lemma \ref{lemma5.21} follows from the following claim
		\begin{equation}\label{lemma5.22}
			\sum_{i=s}^{\min(s+c,h)}X_{n,h}(c,i,s)=\left\lbrace\begin{array}{cl}  \dfrac{(-1)^s(-q)^{-ch+cs+\frac{s}{2}+\frac{s^2}{2}}\mathlarger{\prod}_{l=1}^{n-h}(1-(-q)^{-l})\mathlarger{\prod}_{l=1}^{n-s-c}(1-(-q)^{-l})}{\mathlarger{\prod}_{l=1}^{h-s}(1-(-q)^{-l})\mathlarger{\prod}_{l=1}^{c}(1-(-q)^{-l})\mathlarger{\prod}_{l=1}^{n-h-c}(1-(-q)^{-l})} & \text{ if } c \leq n-h,\\\\
				
				0 & \text{ if } c > n-h.
				
			\end{array}\right.
		\end{equation}
		Hence it suffices to show that \eqref{lemma5.22} holds.
		
		If $h=s$, then 
		\begin{equation*}\begin{aligned}
				\sum_{i=s}^{\min(s+c,h)}X_{n,h}(c,i,s)&=X_{n,h}(c,s,s) =\dfrac{(-1)^s(-q)^{\frac{s}{2}+\frac{s^2}{2}}\prod_{l=1}^{n-s}(1-(-q)^{-l})}{\prod_{l=1}^{c}(1-(-q)^{-l})}.
			\end{aligned}
		\end{equation*}
		Therefore, \eqref{lemma5.22} holds in this case.
		
		Similarly, if $c=0$, then
		\begin{equation*}\begin{aligned}
				\sum_{i=s}^{\min(s+c,h)}X_{n,h}(c,i,s)&=X_{n,h}(c,s,s) 
				&=\dfrac{(-1)^s(-q)^{\frac{s}{2}+\frac{s^2}{2}}\prod_{l=1}^{n-s}(1-(-q)^{-l})}{\prod_{l=1}^{h-s}(1-(-q)^{-l})}.
			\end{aligned}
		\end{equation*}
		Therefore, \eqref{lemma5.22} holds in this case.
		
		Now, assume that $c\geq 1$ and $h > s$. In Lemma \ref{lemma5.22.1} below, we will prove that
		\begin{equation}\label{eq5.12.1}
			\sum_{i=s}^{\min(s+c,h)}\dfrac{X_{n,h}(c,i,s)}{X_{n,h}(c,s,s)}=\dfrac{(-q)^{-(h-s)}(1-(-q)^{-(n-h)})}{1-(-q)^{-(n-s)}}\sum_{i=s}^{\min(s+c-1,h)}\dfrac{X_{n-1,h}(c-1,i,s)}{X_{n-1,h}(c-1,s,s)}.
		\end{equation}
		Assume  \eqref{eq5.12.1} holds for now. Then we are ready to prove \eqref{lemma5.22}.
		
		If $c \leq n-h$, by applying \eqref{eq5.12.1} repeatedly, we have that
		\begin{equation*}\begin{aligned}
				\sum_{i=s}^{\min(s+c,h)}\dfrac{X_{n,h}(c,i,s)}{X_{n,h}(c,s,s)}&=(-q)^{-c(h-s)}\dfrac{\prod_{l=n-h-c+1}^{n-h}(1-(-q)^{-l})}{\prod_{l=n-s-c+1}^{n-s}(1-(-q)^{-l})}\sum_{i=s}^{\min(s+0,h)}\dfrac{X_{n-c,h}(0,i,s)}{X_{n-c,h}(0,s,s)}\\
				&=(-q)^{-c(h-s)}\dfrac{\prod_{l=n-h-c+1}^{n-h}(1-(-q)^{-l})}{\prod_{l=n-s-c+1}^{n-s}(1-(-q)^{-l})}.
			\end{aligned}
		\end{equation*}
		Combining this with
		\begin{equation*}\begin{aligned}
				X_{n,h}(c,s,s)=
				\dfrac{(-1)^{s}(-q)^{\frac{s}{2}+\frac{s^2}{2}}\prod_{l=1}^{n-s}(1-(-q)^{-l})}{\prod_{l=1}^{h-s}(1-(-q)^{-l})\prod_{l=1}^{c}(1-(-q)^{-l})},
			\end{aligned}
		\end{equation*}
		we see that \eqref{lemma5.22} holds when $c \leq n-h$.
		
		Now, assume that $c > n-h$. By \eqref{eq5.12.1}, we have that
		\begin{equation*}\begin{aligned}
				\mathlarger{\sum}_{i=s}^{\min(s+c,h)}\dfrac{X_{n,h}(c,i,s)}{X_{n,h}(c,s,s)}=(-q)^{-(n-h)(h-s)}\dfrac{\prod_{l=1}^{n-h}(1-(-q)^{-l})}{\prod_{l=h-s+1}^{n-s}(1-(-q)^{-l})}\mathlarger{\sum}_{i=s}^{\min(s+c-n+h,h)}\dfrac{X_{h,h}(c-n+h,i,s)}{X_{h,h}(c-n+h,s,s)}.
			\end{aligned}
		\end{equation*}
		Therefore, it suffices to show that $\mathlarger{\sum}_{i=s}^{\min(s+c-n+h,h)}\dfrac{X_{h,h}(c-n+h,i,s)}{X_{h,h}(c-n+h,s,s)}=0$.
		
		Note that $s \leq \min(h,b)$ and hence $s+c \leq b+c \leq n$. Therefore, $\min(s+c-n+h,h)=s+c-n+h$. For simplicity, let us write $c'=c-n+h\geq 1$. 
		
		We claim that
		\begin{equation}\label{eq5.15}
			\mathlarger{\sum}_{i=s}^{s+k}\dfrac{X_{h,h}(c',i,s)}{X_{h,h}(c',s,s)}=(-1)^{k}(-q)^{-\frac{k(k+1)}{2}}\dfrac{\prod_{l=c'-k}^{c'-1}(1-(-q)^{-l})}{\prod_{l=1}^{k}(1-(-q)^{-l})},
		\end{equation}
		which specializes to 
		\begin{equation*}
			\mathlarger{\sum}_{i=s}^{s+c'}\dfrac{X_{h,h}(c',i,s)}{X_{h,h}(c',s,s)}=0
		\end{equation*}
		when $k=c'$.
		
		Therefore it suffices to prove \eqref{eq5.15}. We prove this by induction on $k$. Recall that by definition, we have
		\begin{equation*}
			\dfrac{X_{h,h}(c',s+k,s)}{X_{h,h}(c',s,s)}=(-1)^k(-q)^{-\frac{k(k-1)}{2}}\dfrac{\prod_{l=c'-k+1}^{c'}(1-(-q)^{-l})}{\prod_{l=1}^{k}(1-(-q)^{-l})}.
		\end{equation*}
		When $k=0$, both sides are $1$. When $k=1$,
		\begin{equation*}\begin{aligned}
				\mathlarger{\sum}_{i=s}^{s+1}\dfrac{X_{h,h}(c',i,s)}{X_{h,h}(c',s,s)}=1-\dfrac{1-(-q)^{-c'}}{1-(-q)^{-1}}=-(-q)^{-1}\dfrac{1-(-q)^{-(c'-1)}}{1-(-q)^{-1}}.
			\end{aligned}
		\end{equation*}
		Now, assume that \eqref{eq5.15} holds for $k$. Then, we have
		\begin{equation*}\begin{aligned}
				\mathlarger{\sum}_{i=s}^{s+k+1}\dfrac{X_{h,h}(c',i,s)}{X_{h,h}(c',s,s)}&=(-1)^{k}(-q)^{-\frac{k(k+1)}{2}}\dfrac{\prod_{l=c'-k}^{c'-1}(1-(-q)^{-l})}{\prod_{l=1}^{k}(1-(-q)^{-l})}+\dfrac{X_{h,h}(c',s+k+1,s)}{X_{h,h}(c',s,s)}\\
				&=(-1)^k(-q)^{-\frac{k(k+1)}{2}}\dfrac{\prod_{l=c'-k}^{c'-1}(1-(-q)^{-l})}{\prod_{l=1}^{k}(1-(-q)^{-l})}\mathlarger{\lbrace}1-\dfrac{1-(-q)^{-c'}}{1-(-q)^{-k-1}} \mathlarger{\rbrace}\\
				&=(-1)^{k+1}(-q)^{-\frac{(k+1)(k+2)}{2}}\dfrac{\prod_{l=c'-k-1}^{c'-1}(1-(-q)^{-l})}{\prod_{l=1}^{k+1}(1-(-q)^{-l})}.
			\end{aligned}
		\end{equation*}
		This shows that \eqref{eq5.15} holds and finishes the proof of \eqref{lemma5.22}, and hence the proof of the lemma.
	\end{proof}

	\begin{lemma}\label{lemma5.22.1}
		For $c \geq 1$ and $h>s$, the function $X_{n,h}(c,i,s)$ (defined in the proof of the Lemma \ref{lemma5.21}) satisfies
		\begin{equation}\label{eq5.12}
			\sum_{i=s}^{\min(s+c,h)}\dfrac{X_{n,h}(c,i,s)}{X_{n,h}(c,s,s)}=\dfrac{(-q)^{-(h-s)}(1-(-q)^{-(n-h)})}{1-(-q)^{-(n-s)}}\sum_{i=s}^{\min(s+c-1,h)}\dfrac{X_{n-1,h}(c-1,i,s)}{X_{n-1,h}(c-1,s,s)}.
		\end{equation}
	\end{lemma}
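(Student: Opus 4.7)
Set $u \coloneqq (-q)^{-1}$ and, writing $i = s+k$, introduce the normalized ratio
\[
T_k(n,h,c,s) \coloneqq \frac{X_{n,h}(c,s+k,s)}{X_{n,h}(c,s,s)}.
\]
Direct computation from the definition of $X_{n,h}$ in the proof of Lemma~\ref{lemma5.21} gives
\[
T_k(n,h,c,s) = (-1)^k u^{k(k-1)/2} \cdot \frac{\prod_{l=h-s-k+1}^{h-s}(1-u^l)\,\prod_{l=c-k+1}^{c}(1-u^l)}{\prod_{l=n-s-k+1}^{n-s}(1-u^l)\,\prod_{l=1}^{k}(1-u^l)},
\]
from which one reads off that for $k < c$, the ratio $\rho_k \coloneqq T_k(n,h,c,s)/T_k(n-1,h,c-1,s)$ equals $(1-u^{n-s-k})(1-u^c)/[(1-u^{n-s})(1-u^{c-k})]$. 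Setting $\lambda \coloneqq u^{h-s}(1-u^{n-h})/(1-u^{n-s})$ for the scalar on the right-hand side of \eqref{eq5.12}, the identity is equivalent to
\[
\sum_{k=0}^{\min(c-1,\, h-s)} T_k(n-1,h,c-1,s)(\rho_k - \lambda) + T_c(n,h,c,s)\cdot \mathbf{1}_{c \le h - s} = 0,
\]
after isolating the boundary term at $k=c$ (where $T_c(n-1,h,c-1,s) = 0$ but $T_c(n,h,c,s)$ remains nonzero when $c \le h-s$).

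The heart of the argument is the algebraic factorization
\[
(1-u^{n-s-k})(1-u^c) - u^{h-s}(1-u^{n-h})(1-u^{c-k}) = (1-u^{h-s})(1-u^c) + (1-u^k)\,u^{h-s+c-k}(1-u^{n-h-c}),
\]
which one verifies by direct expansion (the $u^{n-s+c-k}$-terms cancel on both sides). This splits $(\rho_k - \lambda)$ into two pieces. The second piece contains a factor $(1-u^k)$, which vanishes at $k=0$ and, crucially, cancels the corresponding factor $(1-u^k)$ in the denominator of $T_k(n-1,h,c-1,s)$. The resulting cancellation enables an index shift $k \mapsto k-1$, and the two partial sums can then be reassembled as a telescoping sum whose net contribution absorbs exactly the boundary term.

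\textbf{Main obstacle.} The principal difficulty lies in the telescoping bookkeeping: one must verify that after the index shift, the leftover boundary contribution matches $-T_c(n,h,c,s)\cdot\mathbf{1}_{c \le h-s}$ in the case $c \le h-s$ and cancels entirely when $c > h-s$. This forces a separate analysis of the two cases according to whether $\min(c,h-s) = c$ or $h-s$, with the transition between these regimes being the delicate point. A more conceptual alternative would be to recognize $\sum_k T_k(n,h,c,s)$ as a terminating $q$-hypergeometric series in base $u$ and to derive \eqref{eq5.12} from a contiguous relation; however, the factor $(-1)^k u^{k(k-1)/2}$ prevents direct application of the classical $q$-Chu--Vandermonde identity, so a sign-twisted variant would be required. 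Alternatively, one may proceed by induction on $c$ with base case $c=1$, which is an immediate direct computation, and use the factorization above to execute the inductive step.
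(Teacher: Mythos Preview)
Your plan is essentially the same telescoping strategy the paper uses, and your key algebraic factorization
\[
(1-u^{n-s-k})(1-u^c) - u^{h-s}(1-u^{n-h})(1-u^{c-k}) = (1-u^{h-s})(1-u^c) + (1-u^k)\,u^{h-s+c-k}(1-u^{n-h-c})
\]
is correct. The difference is organizational: rather than grouping $T_k(n)$ with $\lambda T_k(n-1)$ as you do, the paper groups $T_{k+1}(n)$ with $\lambda T_k(n-1)$ and makes the telescoping remainder completely explicit by introducing
\[
Z_{n,h}(c,s+k,s)=(-1)^{k}(-q)^{-\frac{k(k+1)}{2}}\dfrac{\prod_{l=c-k}^{c-1}(1-(-q)^{-l})\prod_{l=h-s-k+1}^{h-s}(1-(-q)^{-l})}{\prod_{l=1}^{k}(1-(-q)^{-l})\prod_{l=n-s-k+1}^{n-s}(1-(-q)^{-l})},
\]
then proving by induction on $k$ that the partial difference $\sum_{i=s}^{s+k}T_i(n)-\lambda\sum_{i=s}^{s+k-1}T_i(n-1)$ equals $Z_{n,h}(c,s+k,s)$. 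The two boundary regimes you flag as the main obstacle are then disposed of in one line each: when $h\ge s+c$ the endpoint $Z_{n,h}(c,s+c,s)$ carries the factor $(1-(-q)^0)=0$; when $h\le s+c-1$ one checks directly that $Z_{n,h}(c,h,s)=\lambda\cdot T_{h-s}(n-1)$, which cancels the extra term on the right.

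Your proposal stops exactly where the work begins: you correctly identify that after the index shift the sum should telescope and absorb the boundary term, but you do not produce the candidate for the telescoping remainder or verify the recursion. The paper's $Z$ is precisely that candidate, and its recursion (the paper's equation for $Z_{n,h}(c,s+k,s)+T_{k+1}(n)=\lambda T_k(n-1)+Z_{n,h}(c,s+k+1,s)$) is the explicit form of the telescoping you are gesturing at. Your factorization can be rearranged to give this recursion, so your plan is completable; but as written it is a sketch, and the induction-on-$c$ and $q$-hypergeometric alternatives you mention would each require at least as much bookkeeping.
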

	\begin{proof}
		To prove \eqref{eq5.12}, we define $Z_{n,h}(c,j,s)$ to be
		\begin{equation*}
			Z_{n,h}(c,j,s)=(-1)^{j-s}(-q)^{-\frac{(j-s)(j-s+1)}{2}}\dfrac{\prod_{l=c-j+s}^{c-1}(1-(-q)^{-l})\prod_{l=h-j+1}^{h-s}(1-(-q)^{-l})}{\prod_{l=1}^{j-s}(1-(-q)^{-l})\prod_{l=n-j+1}^{n-s}(1-(-q)^{-l})}.
		\end{equation*}
		Note that if $c \geq 1$ and $h>s$, we have that $\min(s+c,h)>s$. 
		
		Now, we claim the following statement: for $k \geq 1$, we have
		\begin{equation}\label{eq5.13}\begin{array}{l}
				\mathlarger{\sum}_{i=s}^{s+k}\dfrac{X_{n,h}(c,i,s)}{X_{n,h}(c,s,s)}-\dfrac{(-q)^{-(h-s)}(1-(-q)^{-(n-h)})}{1-(-q)^{-(n-s)}}\mathlarger{\sum}_{i=s}^{s+k-1}\dfrac{X_{n-1,h}(c-1,i,s)}{X_{n-1,h}(c-1,s,s)}
				=Z_{n,h}(c,s+k,s).
			\end{array}
		\end{equation}
		By definition, we have that
		\begin{equation*}
			\dfrac{X_{n,h}(c,i,s)}{X_{n,h}(c,s,s)}=(-1)^{i-s}(-q)^{-\frac{(i-s)(i-s-1)}{2}}\dfrac{\prod_{l=c-i+s+1}^{c}(1-(-q)^{-l})\prod_{l=h-i+1}^{h-s}(1-(-q)^{-l})}{\prod_{l=1}^{i-s}(1-(-q)^{-l})\prod_{l=n-i+1}^{n-s}(1-(-q)^{-l})}.
		\end{equation*}
		In particular, when $k=1$, we have that
		\begin{equation*}\begin{aligned}
				\mathlarger{\sum}_{i=s}^{s+1}\dfrac{X_{n,h}(c,i,s)}{X_{n,h}(c,s,s)}&=1-\dfrac{(1-(-q)^{-c})(1-(-q)^{-(h-s)})}{(1-(-q)^{-1})(1-(-q)^{-(n-s)})}\\
				&=\dfrac{(-q)^{-(h-s)}(1-(-q)^{-(n-h)})}{1-(-q)^{-(n-s)}}-\dfrac{(-q)^{-1}(1-(-q)^{-(c-1)})(1-(-q)^{-(h-s)})}{(1-(-q)^{-1})(1-(-q)^{-(n-s)})}\\
				&=\dfrac{(-q)^{-(h-s)}(1-(-q)^{-(n-h)})}{1-(-q)^{-(n-s)}}\dfrac{X_{n-1,h}(c-1,s,s)}{X_{n-1,h}(c-1,s,s)}+Z_{n,h}(c,s+1,s).
			\end{aligned}
		\end{equation*}
		Therefore, \eqref{eq5.13} holds for $k=1$.
		
		Now, assume that \eqref{eq5.13} is true for $k$, then it suffices to show that
		
		\begin{equation}\label{eq5.14}\begin{array}{l}
				Z_{n,h}(c,s+k,s)+\dfrac{X_{n,h}(c,s+k+1,s)}{X_{n,h}(c,s,s)}\\
				=\dfrac{(-q)^{-(h-s)}(1-(-q)^{-(n-h)})}{1-(-q)^{-(n-s)}}\dfrac{X_{n-1,h}(c-1,s+k,s)}{X_{n-1,h}(c-1,s,s)}+Z_{n,h}(c,s+k+1,s).
			\end{array}
		\end{equation}
		
		Indeed,
		\begin{equation*}
			\begin{array}{l}
				Z_{n,h}(c,s+k,s)+\dfrac{X_{n,h}(c,s+k+1,s)}{X_{n,h}(c,s,s)}\\
				=(-1)^k(-q)^{-\frac{k(k+1)}{2}}\dfrac{\mathlarger{\prod}_{l=c-k}^{c-1}(1-(-q)^{-l})\mathlarger{\prod}_{l=h-s-k+1}^{h-s}(1-(-q)^{-l})}{\mathlarger{\prod}_{l=1}^{k}(1-(-q)^{-l})\mathlarger{\prod}_{l=n-s-k+1}^{n-s}(1-(-q)^{-l})}\mathlarger{\mathlarger{\lbrace}}1-\dfrac{(1-(-q)^{-c})(1-(-q)^{-(h-s-k)})}{(1-(-q)^{-(k+1)})(1-(-q)^{-(-n-s-k)}}\mathlarger{\mathlarger{\rbrace}},
			\end{array}
		\end{equation*}
		and
		\begin{equation*}\begin{aligned}
				1-\dfrac{(1-(-q)^{-c})(1-(-q)^{-(h-s-k)})}{(1-(-q)^{-(k+1)})(1-(-q)^{-(-n-s-k)})}&=\dfrac{(-q)^k(-q)^{-(h-s)}(1-(-q)^{-(n-h)})}{1-(-q)^{-(n-s-k)}}\\
				&\quad -\dfrac{(-q)^{-(k+1)}(1-(-q)^{-(-c-k-1)})(1-(-q)^{-(h-s-k)})}{(1-(-q)^{-(k+1)})(1-(-q)^{-(n-s-k)})}.
			\end{aligned}
		\end{equation*}
		
		Now, note that
		\begin{equation*}
			\begin{aligned}
				&(-1)^k(-q)^{-\frac{k(k+1)}{2}}\frac{\prod_{l=c-k}^{c-1}(1-(-q)^{-l})\prod_{l=h-s-k+1}^{h-s}(1-(-q)^{-l})}{\prod_{l=1}^{k}(1-(-q)^{-l})\prod_{l=n-s-k+1}^{n-s}(1-(-q)^{-l})} \times \dfrac{(-q)^k(-q)^{-(h-s)}(1-(-q)^{-(n-h)})}{1-(-q)^{-(n-s-k)}}\\
				&=(-1)^k(-q)^{-\frac{k(k-1)}{2}}\frac{\prod_{l=c-k}^{c-1}(1-(-q)^{-l})\prod_{l=h-s-k+1}^{h-s}(1-(-q)^{-l})}{\prod_{l=1}^{k}(1-(-q)^{-l})\prod_{l=n-s-k}^{n-s-1}(1-(-q)^{-l})} \times \dfrac{(-q)^{-(h-s)}(1-(-q)^{-(n-h)})}{1-(-q)^{-(n-s)}}\\
				&=\dfrac{X_{n-1,h}(c-1,s+k,s)}{X_{n-1,h}(c-1,s,s)}\dfrac{(-q)^{-(h-s)}(1-(-q)^{-(n-h)})}{1-(-q)^{-(n-s)}},
			\end{aligned}
		\end{equation*}
		and
		\begin{equation*}
			\begin{aligned}
				&(-1)^k(-q)^{-\frac{k(k+1)}{2}}\frac{\prod_{l=c-k}^{c-1}(1-(-q)^{-l})\prod_{l=h-s-k+1}^{h-s}(1-(-q)^{-l})}{\prod_{l=1}^{k}(1-(-q)^{-l})\prod_{l=n-s-k+1}^{n-s}(1-(-q)^{-l})} \\
				&\quad \times \lbrace-\dfrac{(-q)^{-(k+1)}(1-(-q)^{-(-c-k-1)})(1-(-q)^{-(h-s-k)})}{(1-(-q)^{-(k+1)})(1-(-q)^{-(n-s-k)})} \rbrace\\
				&=(-1)^{k+1}(-q)^{-\frac{(k+1)(k+2)}{2}}\frac{\prod_{l=c-k-1}^{c-1}(1-(-q)^{-l})\prod_{l=h-s-k}^{h-s}(1-(-q)^{-l})}{\prod_{l=1}^{k+1}(1-(-q)^{-l})\prod_{l=n-s-k}^{n-s}(1-(-q)^{-l})}\\
				&=Z_{n,h}(c,s+k+1,s).
			\end{aligned}
		\end{equation*}
		
		Therefore, combining these two, we have that \eqref{eq5.14} holds, and hence \eqref{eq5.13} holds. 
		
		Now, we are ready to prove \eqref{eq5.12}.
		When $h \geq s+c$, we have that $\min(s+c,h)=s+c$ and $\min(s+c-1,h)=s+c-1$. Also, by \eqref{eq5.13}, we have
		\begin{equation*}
			\mathlarger{\sum}_{i=s}^{s+c}\dfrac{X_{n,h}(c,i,s)}{X_{n,h}(c,s,s)}-\mathlarger{\sum}_{i=s}^{s+c-1}\dfrac{X_{n-1,h}(c-1,i,s)}{X_{n-1,h}(c-1,s,s)}\dfrac{(-q)^{-(h-s)}(1-(-q)^{-(n-h)})}{1-(-q)^{-(n-s)}}
			=Z_{n,h}(c,s+c,s).
		\end{equation*}
		
		Now, note that $Z_{n,h}(c,s+c,s)=0$. This implies that \eqref{eq5.12} holds when $h \geq s+c$.
		
		When $h \leq s+c-1$, we have that $\min(s+c,h)=h$ and $\min(s+c-1,h)=h$. Therefore, by \eqref{eq5.13}, we have
		\begin{equation*}\begin{array}{l}
				\sum_{i=s}^{\min(s+c,h)}\dfrac{X_{n,h}(c,i,s)}{X_{n,h}(c,s,s)}-\dfrac{(-q)^{-(h-s)}(1-(-q)^{-(n-h)})}{1-(-q)^{-(n-s)}}\sum_{i=s}^{\min(s+c-1,h)}\dfrac{X_{n-1,h}(c-1,i,s)}{X_{n-1,h}(c-1,s,s)}\\
				
				=\mathlarger{\sum}_{i=s}^{h}\dfrac{X_{n,h}(c,i,s)}{X_{n,h}(c,s,s)}-\dfrac{(-q)^{-(h-s)}(1-(-q)^{-(n-h)})}{1-(-q)^{-(n-s)}}\mathlarger{\sum}_{i=s}^{h}\dfrac{X_{n-1,h}(c-1,i,s)}{X_{n-1,h}(c-1,s,s)}\\
				=Z_{n,h}(c,h,s)-\dfrac{(-q)^{-(h-s)}(1-(-q)^{-(n-h)})}{1-(-q)^{-(n-s)}}\dfrac{X_{n-1,h}(c-1,h,s)}{X_{n-1,h}(c-1,s,s)}\\
				=0.
			\end{array}
		\end{equation*}
		
		This finishes the proof of the lemma.
	\end{proof}

	\begin{lemma}\label{lemma5.23} Let $0 \leq a,b,c,h \leq n$, $a+b+c=n$, and $0 \leq s \leq \min(h,b)$. 
		If $c \leq n-h$ and $(a,b,c) \neq (0,h,n-h)$, we have that
		\begin{equation*}\begin{array}{l}
				\sum_{i=s}^{\min(s+c,h)}M_{n,h}(a,b,c,i,s)-\sum_{i=s}^{\min(s+c,h)}M_{n,h}(a-1,b+1,c,i,s)\\
				=-(-q)^{2n-h-1-b-2c}\sum_{i=s-1}^{\min(s+c-1,h-1)}M_{n-1,h-1}(a-1,b,c,i,s-1).
			\end{array}
		\end{equation*}
	\end{lemma}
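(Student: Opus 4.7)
The plan is to apply Lemma~\ref{lemma5.21}(1) to each of the three sums in the identity and reduce everything to a scalar identity between explicit products. The crucial observation is that the closed-form expression in Lemma~\ref{lemma5.21}(1), call it $F(n,h,b,c,s)$, depends only on $n,h,b,c,s$ and not on $a$. Before invoking the formula I will verify its hypotheses for all three summations: the first uses exactly the standing hypothesis of Lemma~\ref{lemma5.23}; the second, with parameters $(a-1,b+1,c)$, avoids the excluded triple $(0,h,n-h)$ by the same hypothesis; and the third, with shifted parameters $(n-1,h-1,a-1,b,c,s-1)$, requires $c \leq (n-1)-(h-1)=n-h$ (given) and $(a-1,b,c) \neq (0,h-1,n-h)$, which would otherwise force the forbidden $(a-1,b+1,c)=(0,h,n-h)$.

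The identity then becomes
\begin{equation*}
F(n,h,b,c,s) - F(n,h,b+1,c,s) = -(-q)^{2n-h-1-b-2c}\, F(n-1,h-1,b,c,s-1).
\end{equation*}
I extract the two ratios directly from the formula for $F$. The $b$-dependence of $F$ lives entirely in the factor $\prod_{l=1}^{b}(1-(-q)^{-l})/\prod_{l=1}^{b-s}(1-(-q)^{-l})$, which yields
\begin{equation*}
F(n,h,b,c,s) - F(n,h,b+1,c,s) = F(n,h,b,c,s) \cdot \frac{-(-q)^{-(b-s+1)}(1-(-q)^{-s})}{1-(-q)^{-(b-s+1)}},
\end{equation*}
and a similar but longer calculation gives
\begin{equation*}
\frac{F(n-1,h-1,b,c,s-1)}{F(n,h,b,c,s)} = (-q)^{E_1-E_0} \cdot \frac{1-(-q)^{-s}}{1-(-q)^{-(b-s+1)}},
\end{equation*}
where $E_0,E_1$ denote the explicit exponents of $(-q)$ in $F(n,h,b,c,s)$ and $F(n-1,h-1,b,c,s-1)$, respectively, and the rational factor records the shift in the $b-s$ and $s$ product symbols.

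Once the exponent difference $E_1 - E_0 = h + 2c - 2n + s$ is computed, the identity reduces to checking $-(b-s+1) = (2n-h-1-b-2c) + (h+2c-2n+s)$, which is immediate. The main obstacle is precisely this bookkeeping of $E_1 - E_0$, which requires telescoping eight quadratic-in-$(n,h,s)$ terms; once it is carried out, the identity follows mechanically from the ratios above. The boundary case $s=0$ is handled separately: the right-hand sum is vacuous under the natural convention that terms with $s<0$ are zero, and the left-hand side vanishes because when $s=0$ the factor $\prod_{l=1}^{b}/\prod_{l=1}^{b-s}$ collapses to $1$, so $F(n,h,b,c,0)$ is independent of $b$.
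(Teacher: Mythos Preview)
Your approach is the same as the paper's: apply Lemma~\ref{lemma5.21}(1) to each of the three sums and reduce to an explicit scalar identity. The paper writes both sides out in full and matches them; you factor through $F(n,h,b,c,s)$ and compare ratios, which is just a tidier packaging of the same computation. Your exponent bookkeeping $E_1-E_0=h+2c-2n+s$ and the $s=0$ boundary argument are both correct.

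One correction to your hypothesis check: the standing assumption $(a,b,c)\neq(0,h,n-h)$ does \emph{not} rule out $(a-1,b+1,c)=(0,h,n-h)$ --- take $(a,b,c)=(1,h-1,n-h)$ --- so your justification for invoking Lemma~\ref{lemma5.21}(1) on the second and third sums is circular as written. The valid reason is that the exclusion in Lemma~\ref{lemma5.21} is only there to avoid the exceptional constant $\CC_{h+1-s}(0,0,\cdot)$, which appears precisely when the first parameter is $0$ and $s$ equals the second parameter. For your second sum the second parameter is $b+1$ while $s\le\min(h,b)\le b<b+1$; for the third sum it is $b$ while $s-1\le b-1<b$. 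Hence the exceptional case never arises and the closed formula of Lemma~\ref{lemma5.21}(1) applies, but for this $s$-constraint reason rather than because the triple itself is unexcluded. (The paper's proof invokes Lemma~\ref{lemma5.21}(1) without commenting on this; the application in Theorem~\ref{theorem5.24} adds the hypothesis $(a-1,b+1,c)\neq(0,h,n-h)$ explicitly.)
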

	\begin{proof}
		By Lemma \ref{lemma5.21} (1), we have that
		\begin{equation*}
			\begin{aligned}
				&\sum_{i=s}^{\min(s+c,h)}M_{n,h}(a,b,c,i,s)-\sum_{i=s}^{\min(s+c,h)}M_{n,h}(a-1,b+1,c,i,s)\\
				&=(-1)^{n+c+s+1}(-q)^{\frac{c}{2}+\frac{c^2}{2}-h^2-\frac{n}{2}-cn+hn+\frac{n^2}{2}-ch+\frac{s}{2}-\frac{s^2}{2}}\\
				&\quad \times \mathlarger{\mathlarger{\mathlarger{\lbrace}}}\dfrac{\prod_{l=1}^{b}(1-(-q)^{-l})\prod_{l=1}^{n-c-s}(1-(-q)^{-l})\prod_{l=1}^{n-c-s-1}(1-(-q)^{-l})}{\prod_{l=1}^{b-s}(1-(-q)^{-l})\prod_{l=1}^{s}(1-(-q)^{-l})\prod_{l=1}^{n-h-c}(1-(-q)^{-l})\prod_{l=1}^{h-s}(1-(-q)^{-l})}\\
				&\quad\quad\quad -\dfrac{\prod_{l=1}^{b+1}(1-(-q)^{-l})\prod_{l=1}^{n-c-s}(1-(-q)^{-l})\prod_{l=1}^{n-c-s-1}(1-(-q)^{-l})}{\prod_{l=1}^{b+1-s}(1-(-q)^{-l})\prod_{l=1}^{s}(1-(-q)^{-l})\prod_{l=1}^{n-h-c}(1-(-q)^{-l})\prod_{l=1}^{h-s}(1-(-q)^{-l})}\mathlarger{\mathlarger{\mathlarger{\rbrace}}}\\
				&=(-1)^{n+c+s}(-q)^{\frac{c}{2}+\frac{c^2}{2}-h^2-\frac{n}{2}-cn+hn+\frac{n^2}{2}-ch+\frac{s}{2}-\frac{s^2}{2}-(b+1-s)}\\
				&\quad \times \dfrac{\prod_{l=1}^{b}(1-(-q)^{-l})\prod_{l=1}^{n-c-s}(1-(-q)^{-l})\prod_{l=1}^{n-c-s-1}(1-(-q)^{-l})}{\prod_{l=1}^{b+1-s}(1-(-q)^{-l})\prod_{l=1}^{s-1}(1-(-q)^{-l})\prod_{l=1}^{n-h-c}(1-(-q)^{-l})\prod_{l=1}^{h-s}(1-(-q)^{-l})}.
			\end{aligned}
		\end{equation*}
		
		On the other hand,
		\begin{equation*}\begin{aligned}
				&-(-q)^{2n-h-1-b-2c}\sum_{i=s-1}^{\min(s+c-1,h-1)}M_{n-1,h-1}(a-1,b,c,i,s-1)\\
				&=(-1)^{n+c+s}(-q)^{\frac{c}{2}+\frac{c^2}{2}-(h-1)^2-\frac{n-1}{2}-c(n-1)+(h-1)(n-1)+\frac{(n-1)^2}{2}-c(h-1)+\frac{s-1}{2}-\frac{(s-1)^2}{2}+2n-h-1-b-2c}\\
				&\quad \times
				\dfrac{\prod_{l=1}^{b}(1-(-q)^{-l})\prod_{l=1}^{n-c-s}(1-(-q)^{-l})\prod_{l=1}^{n-c-s-1}(1-(-q)^{-l})}{\prod_{l=1}^{b+1-s}(1-(-q)^{-l})\prod_{l=1}^{s-1}(1-(-q)^{-l})\prod_{l=1}^{n-h-c}(1-(-q)^{-l})\prod_{l=1}^{h-s}(1-(-q)^{-l})}\\
				&=(-1)^{n+c+s}(-q)^{\frac{c}{2}+\frac{c^2}{2}-h^2-\frac{n}{2}-cn+hn+\frac{n^2}{2}-ch+\frac{3s}{2}-\frac{s^2}{2}-b-1}\\
				&\quad \times
				\dfrac{\prod_{l=1}^{b}(1-(-q)^{-l})\prod_{l=1}^{n-c-s}(1-(-q)^{-l})\prod_{l=1}^{n-c-s-1}(1-(-q)^{-l})}{\prod_{l=1}^{b+1-s}(1-(-q)^{-l})\prod_{l=1}^{s-1}(1-(-q)^{-l})\prod_{l=1}^{n-h-c}(1-(-q)^{-l})\prod_{l=1}^{h-s}(1-(-q)^{-l})}.
			\end{aligned}
		\end{equation*}
		
		Now, it is easy to see that the above two are the same. This finishes the proof of the lemma.
		
	\end{proof}
	\begin{theorem}\label{theorem5.24}  Let $0 \leq a,b,c,h \leq n$ and $a+b+c=n$. If $c \leq n-h$, $1\le a$, and $(a-1,b+1,c) \neq (0,h,n-h)$, we have that
		\begin{equation*}D_{n,h}(a,b,c)-D_{n,h}(a-1,b+1,c)=-(-q)^{2n-h-1-b-2c}D_{n-1,h-1}(a-1,b,c).
		\end{equation*}
	\end{theorem}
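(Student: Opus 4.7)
The plan is to derive the identity by summing the per-$s$ statement of Lemma~\ref{lemma5.23} over $s\in\{0,1,\ldots,\min(h,b)\}$, then reconciling the boundary terms against the natural summation ranges in $D_{n,h}(a-1,b+1,c)$ and $D_{n-1,h-1}(a-1,b,c)$, and finally verifying compatibility with the correction terms from Definition~\ref{definition5.19}(4). I first note that the $s=0$ case of Lemma~\ref{lemma5.23} is automatic: by Lemma~\ref{lemma5.21}(1), the $s=0$ value of the inner sum $\sum_i M_{n,h}(\cdot,\cdot,\cdot,i,0)$ is independent of the second argument, so the two $M$-sums on its left-hand side agree and the right-hand side is vacuous. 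Aggregating Lemma~\ref{lemma5.23} over $0\leq s\leq\min(h,b)$ and shifting $s\mapsto s-1$ on the right then yields an identity equating a truncated version of $D_{n,h}(a,b,c)-D_{n,h}(a-1,b+1,c)$ to $-(-q)^{2n-h-1-b-2c}$ times a truncated version of $D_{n-1,h-1}(a-1,b,c)$. When $b\geq h$, all three $\min$'s collapse to $h$ or $h-1$, the truncations are vacuous, no correction terms occur (since $a\geq 1$), and the theorem follows immediately.

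When $b<h$, the truncated sum misses the $s=b+1$ row of $D_{n,h}(a-1,b+1,c)$ on the left and the $s=b$ row of $D_{n-1,h-1}(a-1,b,c)$ on the right. The theorem therefore reduces to the boundary identity
\[
(-q)^{2n-h-1-b-2c}\sum_{i=b}^{\min(b+c,h-1)}M_{n-1,h-1}(a-1,b,c,i,b) \;=\; \sum_{i=b+1}^{\min(b+1+c,h)}M_{n,h}(a-1,b+1,c,i,b+1),
\]
which I would verify by plugging in the closed form of Lemma~\ref{lemma5.21}(1) (applicable because $c\leq n-h$ and $(a-1,b+1,c)\neq(0,h,n-h)$). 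Both sides share the sign $(-1)^{n+c+b}$ and the same rational prefactor of $(1-(-q)^{-l})$-products, and the two quadratic exponents of $(-q)$ differ by exactly $2n-h-1-b-2c$ after simplification using $a+b+c=n$. This closed-form boundary comparison is the main obstacle: while the algebra is routine, tracking the cancellation of quadratic exponents of $(-q)$ requires careful bookkeeping.

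Finally, I address the Definition~\ref{definition5.19}(4) correction terms. The hypothesis $a\geq 1$ rules out a correction in $D_{n,h}(a,b,c)$. A correction appears in $D_{n,h}(a-1,b+1,c)$ precisely when $a=1$, $b\leq h-2$, and $b\equiv h\pmod 2$; in that same regime, $D_{n-1,h-1}(a-1,b,c)=D_{n-1,h-1}(0,b,n-1-b)$ also carries a correction by the matching parity condition. Since $a=1$ forces $b+c=n-1$, the required compatibility
\[
\frac{(-q)^{-(h-b-1)(h+b+2)/2}}{1-(-q)^{-(h-b-1)}} \;=\; (-q)^{2n-h-1-b-2c}\cdot\frac{(-q)^{-(h-1-b)(h+b)/2}}{1-(-q)^{-(h-1-b)}}
\]
holds: the denominators coincide, and the ratio of the numerator exponents of $(-q)$ simplifies to $-(h-b-1)-(2n-h-1-b-2c)=-2n+2b+2+2c=0$ using $b+c=n-1$. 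This completes the inductive step in both generic and special cases, proving the theorem.
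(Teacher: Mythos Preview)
Your argument is correct and follows the same approach as the paper: both cancel the $s=0$ contribution via Lemma~\ref{lemma5.21}, aggregate Lemma~\ref{lemma5.23} over $s\geq 1$, and in the case $b<h$ match the leftover boundary row $s=b+1$ on the left against the row $s=b$ on the right using the closed form of Lemma~\ref{lemma5.21}(1). Your final paragraph on the Definition~\ref{definition5.19}(4) correction terms is superfluous, however: the hypothesis $c\leq n-h$ already excludes that regime, since $a=1$ and $b\leq h-2$ would force $c=n-1-b\geq n-h+1$, so under the theorem's assumptions none of $D_{n,h}(a,b,c)$, $D_{n,h}(a-1,b+1,c)$, or $D_{n-1,h-1}(a-1,b,c)$ ever carries a correction term and Definition~\ref{definition5.19}(3) applies throughout.
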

	\begin{proof}
		First, by Lemma \ref{lemma5.21}, we have that
		\begin{equation*}\begin{aligned}
				\sum_{i=0}^{\min(0+c,h)}M_{n,h}(a,b,c,i,0)=&(-1)^{n+c+1}(-q)^{\frac{c}{2}+\frac{c^2}{2}-h^2-\frac{n}{2}-cn+hn+\frac{n^2}{2}-ch}\\
				&\dfrac{\prod_{l=1}^{n-c}(1-(-q)^{-l})\prod_{l=1}^{n-c-1}(1-(-q)^{-l})}{\prod_{l=1}^{n-h-c}(1-(-q)^{-l})\prod_{l=1}^{h}(1-(-q)^{-l})}.
			\end{aligned}
		\end{equation*}
		
		Since this does not depend on $a,b$, we have that
		\begin{equation*}
			\sum_{i=0}^{\min(0+c,h)}M_{n,h}(a,b,c,i,0)=\sum_{i=0}^{\min(0+c,h)}M_{n,h}(a-1,b+1,c,i,0).
		\end{equation*}
		
		Since
		\begin{equation*}\begin{array}{l}
				D_{n,h}(a,b,c)=\mathlarger{\sum}_{s=0}^{\min(h,b)}\mathlarger{\sum}_{i=s}^{\min( s+c,h)}M_{n,h}(a,b,c,i,s),
			\end{array}
		\end{equation*}
		we have that
		\begin{equation*}\begin{array}{l}
				D_{n,h}(a,b,c)-D_{n,h}(a-1,b+1,c)
				\\\\=\mathlarger{\sum}_{s=0}^{\min(h,b)}\mathlarger{\sum}_{i=s}^{\min( s+c,h)}M_{n,h}(a,b,c,i,s)-\mathlarger{\sum}_{s=0}^{\min(h,b+1)}\mathlarger{\sum}_{i=s}^{\min(s+c,h)}M_{n,h}(a-1,b+1,c,i,s)\\\\
				=\mathlarger{\sum}_{s=1}^{\min(h,b)}\mathlarger{\sum}_{i=s}^{\min( s+c,h)}M_{n,h}(a,b,c,i,s)-\mathlarger{\sum}_{s=1}^{\min(h,b+1)}\mathlarger{\sum}_{i=s}^{\min(s+c,h)}M_{n,h}(a-1,b+1,c,i,s).
			\end{array}
		\end{equation*}
		
		Now, assume that $h \leq b$. Then $\min(h,b)=h$, $\min(h,b+1)=h$, $\min(h-1,b)=h-1$. Therefore, by Lemma \ref{lemma5.23}, we have
		\begin{equation*}
			\begin{array}{l}
				D_{n,h}(a,b,c)-D_{n,h}(a-1,b+1,c)\\
				=\mathlarger{\sum}_{s=1}^{h}\lbrace\mathlarger{\sum}_{i=s}^{\min( s+c,h)}M_{n,h}(a,b,c,i,s)-\mathlarger{\sum}_{i=s}^{\min(s+c,h)}M_{n,h}(a-1,b+1,c,i,s)\rbrace\\
				=\mathlarger{\sum}_{s=0}^{h-1}-(-q)^{2n-h-1-b-2c}\mathlarger{\sum}_{i=s}^{\min( s+c,h-1)}M_{n-1,h-1}(a-1,b,c,i,s)\\
				=-(-q)^{2n-h-1-b-2c}D_{n-1,h-1}(a-1,b,c).
			\end{array}
		\end{equation*}
		This finishes the proof of the theorem when $h \leq b$.
		
		Now, assume that $h > b$. Then $\min(h,b)=b$, $\min(h,b+1)=b+1$. Therefore, by Lemma \ref{lemma5.23}, we have that
		\begin{equation}\label{eq5.16,1}
			\begin{array}{l}
				D_{n,h}(a,b,c)-D_{n,h}(a-1,b+1,c)\\
				=\mathlarger{\sum}_{s=1}^{b}\lbrace\mathlarger{\sum}_{i=s}^{\min( s+c,h)}M_{n,h}(a,b,c,i,s)-\mathlarger{\sum}_{i=s}^{\min(s+c,h)}M_{n,h}(a-1,b+1,c,i,s)\rbrace\\
				\quad -\mathlarger{\sum}_{i=b+1}^{\min(b+1+c,h)}M_{n,h}(a-1,b+1,c,i,b+1)\\
				=\mathlarger{\sum}_{s=0}^{b-1}-(-q)^{2n-h-1-b-2c}\mathlarger{\sum}_{i=s}^{\min( s+c,h-1)}M_{n-1,h-1}(a-1,b,c,i,s)\\
				\quad -\mathlarger{\sum}_{i=b+1}^{\min(b+1+c,h)}M_{n,h}(a-1,b+1,c,i,b+1).
			\end{array}
		\end{equation}
		By Lemma \ref{lemma5.21}, we have that
		\begin{equation*}\begin{array}{l}
				-\mathlarger{\sum}_{i=b+1}^{\min(b+1+c,h)}M_{n,h}(a-1,b+1,c,i,b+1)\\
				=-(-1)^{n+c+b}(-q)^{\frac{c}{2}+\frac{c^2}{2}-h^2-\frac{n}{2}-cn+hn+\frac{n^2}{2}-ch+\frac{(b+1)}{2}-\frac{(b+1)^2}{2}}\\
				\quad \times \dfrac{\prod_{l=1}^{n-c-b-1}(1-(-q)^{-l})\prod_{l=1}^{n-c-b-2}(1-(-q)^{-l})}{\prod_{l=1}^{n-h-c}(1-(-q)^{-l})\prod_{l=1}^{h-b-1}(1-(-q)^{-l})}\\
				=-(-1)^{n-1+c+b+1}(-q)^{2n-h-1-b-2c+\frac{c}{2}+\frac{c^2}{2}-(h-1)^2-\frac{n-1}{2}-c(n-1)+(h-1)(n-1)+\frac{(n-1)^2}{2}-c(h-1)+\frac{b}{2}-\frac{b^2}{2}}
				\\
				\quad \times \dfrac{\prod_{l=1}^{n-1-c-b}(1-(-q)^{-l})\prod_{l=1}^{n-1-c-b-1}(1-(-q)^{-l})}{\prod_{l=1}^{(n-1)-(h-1)-c}(1-(-q)^{-l})\prod_{l=1}^{h-1-b}(1-(-q)^{-l})}\\
				=-(-q)^{2n-h-1-b-2c}\mathlarger{\sum}_{i=b}^{\min(b+c,h-1)}M_{n-1,h-1}(a-1,b,c,i,b).
			\end{array}
		\end{equation*}
		Therefore, since $\min(h-1,b)=b$, \eqref{eq5.16,1} can be written as
		\begin{equation*}
			\begin{array}{l}
				D_{n,h}(a,b,c)-D_{n,h}(a-1,b+1,c)\\
				=-(-q)^{2n-h-1-b-2c}\quad \mathlarger{\sum}_{s=0}^{b}\quad\mathlarger{\sum}_{i=s}^{\min( s+c,h-1)}M_{n-1,h-1}(a-1,b,c,i,s)\\
				=-(-q)^{2n-h-1-b-2c}D_{n-1,h-1}(a-1,b,c).
			\end{array}
		\end{equation*}
		This finishes the proof of the theorem.
	\end{proof}
	
	\begin{lemma}\label{lemma7.5}  Let $0 \leq a,b,c,h \leq n$, and $a+b+c=n$. If $c > n-h$ and $a \neq 0$, we have that
		\begin{equation*}D_{n,h}(a,b,c)=0.
		\end{equation*}
		Therefore, for $\lambda \in \CR_n^{0+}$ such that $t_0(\lambda) > n-h$, and $t_{\ge 2}(\lambda) \neq 0$, we have that
		\begin{equation*}
			D_{n,h}(\lambda)=0.
		\end{equation*}
	\end{lemma}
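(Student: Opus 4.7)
The plan is to derive the lemma as a direct consequence of Lemma \ref{lemma5.21}(2), together with the definitional unpacking of $D_{n,h}(a,b,c)$ and Proposition \ref{proposition5.20}.

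First I would note that under the hypotheses $c > n-h$ and $a \neq 0$, the triple $(a,b,c)$ cannot equal $(0,t,n-t)$ for any $t \le h-1$, so $D_{n,h}(a,b,c)$ is defined by Definition \ref{definition5.19}(3), i.e. purely as
\begin{equation*}
D_{n,h}(a,b,c) = \sum_{s=0}^{\min(h,b)} \sum_{i=s}^{\min(s+c,h)} M_{n,h}(a,b,c,i,s),
\end{equation*}
with no exceptional correction term. The key observation is that Lemma \ref{lemma5.21}(2) asserts precisely that the inner sum
\begin{equation*}
\sum_{i=s}^{\min(s+c,h)} M_{n,h}(a,b,c,i,s) = 0
\end{equation*}
for every admissible $s$ in the range $0 \le s \le \min(h,b)$, under exactly the hypotheses $c > n-h$ and $a \neq 0$. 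Summing zero over $s$ gives $D_{n,h}(a,b,c) = 0$, which is the first assertion.

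For the second assertion, I would apply Proposition \ref{proposition5.20}: for any $\lambda \in \mathcal{R}_n^{0+}$, one has $D_{n,h}(\lambda) = D_{n,h}(t_{\ge 2}(\lambda), t_1(\lambda), t_0(\lambda))$. Setting $a = t_{\ge 2}(\lambda)$, $b = t_1(\lambda)$, $c = t_0(\lambda)$, the hypotheses $t_{\ge 2}(\lambda) \neq 0$ and $t_0(\lambda) > n-h$ translate to $a \neq 0$ and $c > n-h$, so the first part applies and yields $D_{n,h}(\lambda) = 0$.

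There is essentially no obstacle here, as the lemma is a formal rearrangement: all of the analytic work has been done in the proof of Lemma \ref{lemma5.21}(2), where one evaluates the alternating sum $\sum_i X_{h,h}(c',i,s)$ over the full range $i = s, \ldots, s+c'$ with $c' = c - (n-h) \ge 1$ and shows it vanishes via a telescoping identity in the $q$-binomial-like factors. The only minor care point in writing up Lemma \ref{lemma7.5} is to confirm that the definitional case split of Definition \ref{definition5.19}(3)–(4) applies in the form (3) here (which it does, since $a \neq 0$ forces $(a,b,c) \neq (0,t,n-t)$), so that no additional term $\frac{(-q)^{-(h-t)(h+t+1)/2}}{1-(-q)^{-(h-t)}}$ is present to cancel against.
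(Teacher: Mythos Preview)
Your proposal is correct and follows exactly the same approach as the paper: the paper's proof is the one-liner ``This follows from Lemma \ref{lemma5.21} (2), Definition \ref{definition5.19} (3) and Proposition \ref{proposition5.20},'' and you have simply unpacked those references carefully, including the check that $a \neq 0$ places you in case (3) rather than the exceptional case (4) of Definition \ref{definition5.19}.
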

	\begin{proof}
		This follows from Lemma \ref{lemma5.21} (2), Definition \ref{definition5.19} (3) and Proposition \ref{proposition5.20}.
	\end{proof}

	\begin{proposition}\label{prop: vanishing of D(lambda)}
		For $0 \leq t \leq h-1$, $t \equiv h+1 \pmod 2$, and $\lambda=(1^{t},0^{n-t}) \in \CR_n^{0+}$, we have that
		\begin{equation*}
			D_{n,h}(\lambda)=0.
		\end{equation*}
	\end{proposition}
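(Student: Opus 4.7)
The plan is to exploit the defining property of the modification coefficients $c_{n,\bullet}$ from Definition \ref{definition3.2} together with a classification of integral overlattices of the vertex lattice $I_{n,t}=\Lambda_t$, reducing the claim to a triangular induction on $t$. The starting point is the observation that, by Definition \ref{definition5.9}, Corollary \ref{corollary7.7} and Lemma \ref{lem: 0 vanish of error term}, for every rank-$n$ lattice $L$ with moment matrix $T$ one has
\begin{equation*}
\pden_{n,h}(L) \;=\; \sum_{\lambda \in \CR_n^{0+}} D_{n,h}(\lambda)\,\frac{\alphad(A_\lambda,T)}{\alphad(A_\lambda,A_\lambda)} \;=\; \sum_{\lambda \in \CR_n^{0+}} D_{n,h}(\lambda)\,n(A_\lambda,T).
\end{equation*}
Specialising to $T = I_{n,t}$ and invoking the very definition of the $c_{n,\bullet}$ in Definition \ref{definition3.2}, the left-hand side vanishes for every $t$ with $0\le t\le h-1$ and $t\equiv h+1\equiv h-1\pmod 2$.

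Next I would determine which $\lambda \in \CR_n^{0+}$ contribute nontrivially to $n(A_\lambda, I_{n,t})$. Any integral overlattice $L' \supset I_{n,t}$ inside the ambient hermitian space $\bV_{n,t}\coloneqq (I_{n,t})_F$ satisfies $I_{n,t}\subset L'\subset (L')^\vee\subset (I_{n,t})^\vee$, and so $L'/I_{n,t}$ is a $k$-isotropic subspace of the $t$-dimensional nondegenerate $k_F/k_{F_0}$-hermitian space $(I_{n,t})^\vee/I_{n,t}$. Writing $d\coloneqq \dim_k(L'/I_{n,t})$, the annihilator identification $(L')^\vee/I_{n,t}=(L'/I_{n,t})^{\perp}$ forces $\dim_k((L')^\vee/L')=t-2d$, so $L'$ is itself a vertex lattice of type $t-2d$; since vertex lattices of a given type in a fixed hermitian space are pairwise isomorphic, $L'\cong I_{n,t-2d}$. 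Hence $n(A_\lambda,I_{n,t})=0$ unless $\lambda=(1^{t'},0^{n-t'})$ with $0\le t'\le t$ and $t'\equiv t\pmod 2$, while $n(I_{n,t},I_{n,t})=1$.

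Combining the two steps yields the triangular system
\begin{equation*}
0\;=\;\pden_{n,h}(I_{n,t})\;=\;\sum_{\substack{0\le t'\le t\\ t'\equiv t\,(2)}} D_{n,h}\!\bigl((1^{t'},0^{n-t'})\bigr)\,n(I_{n,t'},I_{n,t})
\end{equation*}
valid for every $t\le h-1$ with $t\equiv h+1\pmod 2$. Since $t'\equiv t\pmod 2$ automatically places every summation index in the same range, I would close by induction on $t$: the smallest admissible $t$ (namely $0$ if $h$ is odd, $1$ if $h$ is even) degenerates the sum to the single term $D_{n,h}((1^{t},0^{n-t}))\cdot 1$, and the inductive step isolates $D_{n,h}((1^{t},0^{n-t}))$ from the coefficient $n(I_{n,t},I_{n,t})=1$ after the smaller-$t'$ contributions are killed by the induction hypothesis.

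The main obstacle I anticipate is the classification in the second step — specifically, rigorously confirming that every integral overlattice of $I_{n,t}$ in $\bV_{n,t}$ is a vertex lattice (and not merely an integral lattice with more exotic fundamental invariants) and that the annihilator identity for the induced form on $(I_{n,t})^\vee/I_{n,t}$ holds as stated. Both facts should follow cleanly from the standard theory of the discriminant module for unramified $F/F_0$, but they are the load-bearing geometric input that converts the analytic identity $\pden_{n,h}(I_{n,t})=0$ into the purely combinatorial vanishing of $D_{n,h}((1^{t},0^{n-t}))$.
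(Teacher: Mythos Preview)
Your argument is circular. The chain of references you invoke (Definition~\ref{definition5.9}, Corollary~\ref{corollary7.7}, Lemma~\ref{lem: 0 vanish of error term}) establishes only that
\[
\sum_{\lambda\in\CR_n^{0+}} D_{n,h}(\lambda)\,n(A_\lambda,T)
\;=\;
\Den'_{n,h}(T)+\sum_{k=0}^{h-1}\frac{(-q)^{-(h-k)(h+k+1)/2}}{1-(-q)^{-(h-k)}}\,\Den_{n,k}(T),
\]
i.e.\ the Cho--Yamauchi expansion equals $\Den'_{n,h}$ plus a correction with the \emph{explicit} coefficients $b_k$ displayed in Corollary~\ref{corollary7.7}. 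To identify this with $\pden_{n,h}(L)$ as in Definition~\ref{definition3.2} you need $b_k=c_{n,k}$ for each $k\equiv h-1\pmod 2$; but that equality is precisely the content of Proposition~\ref{prop: equiv of conjs}, whose proof invokes Proposition~\ref{prop: vanishing of D(lambda)} itself. Said differently: your triangular system reads $0=\widetilde{\pden}_{n,h}(I_{n,t})=\sum_{t'}D_{n,h}((1^{t'},0^{n-t'}))\,n(I_{n,t'},I_{n,t})$, and the vanishing of the left side (for the quantity built from the $b_k$, not the $c_{n,k}$) is \emph{equivalent}, via the very induction you run, to the vanishing of all the $D_{n,h}((1^{t'},0^{n-t'}))$ you are trying to prove. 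The step ``the left-hand side vanishes by the defining property of $c_{n,\bullet}$'' therefore assumes the conclusion.

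The paper's proof sidesteps this by a direct computation with the explicit formula in Definition~\ref{definition5.19}(4). One factors $M_{n,h}(0,t,n-t,i,s)=\CC_{h+1-s}(0,t-s,n-t+s-i)\,Y_{n,h}(t,n-t,s)\,X_{n,h}(n-t,i,s)$; since $c=n-t>n-h$, equation~\eqref{lemma5.22} kills the inner $i$-sum for every $s<t$, leaving only the $s=t$ contribution, which involves the exceptional constants $\CC_{h+1-t}(0,0,n-i)=\sum_{l=1}^{n-i}((-q)^l-1)^{-1}$. A Vandermonde identity (Lemma~\ref{lemma5.30}) then evaluates that remaining sum and shows it exactly cancels the additive term $(-q)^{-(h-t)(h+t+1)/2}/(1-(-q)^{-(h-t)})$ from Definition~\ref{definition5.19}(4). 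Your lattice-counting idea is elegant, but to make it non-circular you would first have to prove independently that the weighted-density expression from Corollary~\ref{corollary7.7} vanishes at $I_{n,t}$ --- and that is essentially the same computation.
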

	\begin{proof}
		First, by Proposition \ref{proposition5.20}, we have that
		\begin{equation*}
			D_{n,h}(\lambda)=D_{n,h}(0,t,n-t)=\mathlarger{\sum}_{\substack{0 \leq s \leq \min(h,b)\\ s \leq i \leq \min( s+c,h)}}M_{n,h}(a,b,c,i,s)+\dfrac{(-q)^{\frac{-(h-t)(h+t+1)}{2}}}{1-(-q)^{-(h-t)}}.
		\end{equation*}
		
		Before we start the proof of the proposition, let us say why we consider this case separately. Recall from Definition \ref{definition5.19} (1) that $\CC_l(0,0,k)$ is defined separately. If $\lambda$ is not of the form $(1^{t},0^{n-t})$, $t <h$, these constants $\CC_l(0,0,k)$'s do not appear in the sum $\mathlarger{\sum}_{\substack{0 \leq s \leq \min(h,b)\\ s \leq i \leq \min( s+c,h)}}M_{n,h}(a,b,c,i,s).$ However, if $\lambda=(1^t,0^{n-t})$, $t<h$, $M_{n,h}(0,t,n-t,i,t)$ has the term $\CC_{h+1-t}(0,0,n-i)$. Therefore, we need to be careful when we compute $D_{n,h}(0,t,n-t)$.

		Now, let us start the proof of the proposition. Recall from the proof of the Lemma \ref{lemma5.21} that we have
		\begin{equation*}
			M_{n,h}(a,b,c,i,s)=\CC_{h+1-s}(a,b-s,c+s-i)Y_{n,h}(b,c,s)X_{n,h}(c,i,s),
		\end{equation*}
		
		Also, by \eqref{lemma5.22}, we have that
		\begin{equation*}\begin{array}{l}
				\sum_{i=s}^{\min(s+n-t,h)}Y_{n,h}(t,n-t,s)X_{n,h}(n-t,i,s)=Y_{n,h}(t,n-t,s)\sum_{i=s}^{\min(s+n-t,h)}X_{n,h}(n-t,i,s)=0
			\end{array}
		\end{equation*}
		
		Also, note that $\min(h,t)=t$ and $\CC_{h+1-s}(0,t-s,n-t+s-i)$ does not depend on $i$ if $s \neq t$. Therefore, we have that
		
		\begin{equation*}
			\begin{array}{l}
				D_{n,h}(0,t,n-t)-\dfrac{(-q)^{\frac{-(h-t)(h+t+1)}{2}}}{1-(-q)^{-(h-t)}}\\
				=\mathlarger{\sum}_{\substack{0 \leq s \leq \min(h,t)\\ s \leq i \leq \min(s+n-t,h)}}\CC_{h+1-s}(0,t-s,n-t+s-i) Y_{n,h}(t,n-t,s)X_{n,h}(n-t,i,s)\\
				=\mathlarger{\sum}_{s=0}^{t-1}\CC_{h+1-s}(0,t-s,n-t+s)Y_{n,h}(t,n-t,s) \mathlarger{\sum}_{i=s}^{\min(s+n-t,h)}X_{n,h}(n-t,i,s)\\
				\quad +\mathlarger{\sum}_{i=t}^h\CC_{h+1-t}(0,0,n-i) Y_{n,h}(t,n-t,t)X_{n,h}(n-t,i,t)\\\\
				=\mathlarger{\sum}_{i=t}^h\CC_{h+1-t}(0,0,n-i)Y_{n,h}(t,n-t,t)X_{n,h}(n-t,i,t).
			\end{array}
		\end{equation*}
		
		Now, recall that $\CC_{h+1-t}(0,0,n-i)=\dfrac{\alphad'(I_{n-i},I_{n-i})}{\alphad(I_{n-i},I_{n-i})}=\mathlarger{\sum}_{l=1}^{n-i}\dfrac{1}{(-q)^l-1}.$
		
		This implies that it suffices to show that
		\begin{equation}\label{eq5.17}
			Y_{n,h}(t,n-t,t)\mathlarger{\sum}_{i=t}^h(\mathlarger{\sum}_{l=1}^{n-i}\dfrac{1}{(-q)^l-1}) X_{n,h}(n-t,i,t)=-\dfrac{(-q)^{\frac{-(h-t)(h+t+1)}{2}}}{1-(-q)^{-(h-t)}}.
		\end{equation}
		
		First, we have that
		\begin{equation*}\begin{aligned}
				\mathlarger{\sum}_{i=t}^h(\mathlarger{\sum}_{l=1}^{n-i}\dfrac{1}{(-q)^l-1}) X_{n,h}(n-t,i,t)=&\mathlarger{\sum}_{i=t}^hX_{n,h}(n-t,i,t)(\mathlarger{\sum}_{l=1}^{n-h}\dfrac{1}{(-q)^l-1}) \\
				&+\mathlarger{\sum}_{k=0}^{h-t-1}\dfrac{1}{(-q)^{n-t-k}-1}\mathlarger{\sum}_{i=t}^{t+k}X_{n,h}(n-t,i,t).
			\end{aligned}
		\end{equation*}
		Since $\mathlarger{\sum}_{i=t}^hX_{n,h}(n-t,i,t)=0$ (by \eqref{lemma5.22}), we have that
		\begin{equation}\label{eq5.19}
			\mathlarger{\sum}_{i=t}^h(\mathlarger{\sum}_{l=1}^{n-i}\dfrac{1}{(-q)^l-1}) X_{n,h}(n-t,i,t)=\mathlarger{\sum}_{k=0}^{h-t-1}\dfrac{1}{(-q)^{n-t-k}-1}\mathlarger{\sum}_{i=t}^{t+k}X_{n,h}(n-t,i,t).
		\end{equation}
		
		We claim that
		\begin{equation}\label{eq5.20}
			\mathlarger{\sum}_{i=t}^{t+k} X_{n,h}(n-t,i,t)=\dfrac{(-1)^{t+k}(-q)^{\frac{(t+k+1)(t-k)}{2}}}{\prod_{l=1}^{h-t-k-1}(1-(-q)^{-l})\prod_{l=1}^{k}(1-(-q)^{-l})(1-(-q)^{-(h-t)})}.
		\end{equation}
		
		Indeed, it is easy to see that this holds for $k=0$. Now, assume that this holds for $k$. Then, we have that
		\begin{equation*}
			\begin{array}{l}
				\mathlarger{\sum}_{i=t}^{t+k+1}X_{n,h}(n-t,i,t)=\mathlarger{\sum}_{i=t}^{t+k+1} \dfrac{(-1)^i(-q)^{\frac{i}{2}-\frac{i^2}{2}+it}}{\prod_{l=1}^{h-i}(1-(-q)^{-l})\prod_{l=1}^{i-t}(1-(-q)^{-l})}\\
				=\dfrac{(-1)^{t+k}(-q)^{\frac{(t+k+1)(t-k)}{2}}}{\mathlarger{\prod}_{l=1}^{h-t-k-1}(1-(-q)^{-l})\mathlarger{\prod}_{l=1}^{k}(1-(-q)^{-l})(1-(-q)^{-(h-t)})}+\dfrac{(-1)^{t+k+1}(-q)^{\frac{t+k+1}{2}-\frac{(t+k+1)^2}{2}+(t+k+1)t}}{\mathlarger{\prod}_{l=1}^{h-t-k-1}(1-(-q)^{-l})\mathlarger{\prod}_{l=1}^{k+1}(1-(-q)^{-l})}\\
				=\dfrac{(-1)^{t+k+1}(-q)^{\frac{(t+k+1)(t-k)}{2}}}{\prod_{l=1}^{h-t-k-1}(1-(-q)^{-l})\prod_{l=1}^{k+1}(1-(-q)^{-l})}\lbrace-\dfrac{(1-(-q)^{-(k+1)})}{1-(-q)^{-(h-t)}}+1\rbrace\\
				=\dfrac{(-1)^{t+k+1}(-q)^{\frac{(t+k+1)(t-k)}{2}}}{\prod_{l=1}^{h-t-k-1}(1-(-q)^{-l})\prod_{l=1}^{k+1}(1-(-q)^{-l})}\dfrac{(-q)^{-(k+1)}(1-(-q)^{-h-t+k+1})}{(1-(-q)^{-(h-t)}}\\
				=\dfrac{(-1)^{t+k+1}(-q)^{\frac{(t+k+2)(t-k-1)}{2}}}{\prod_{l=1}^{h-t-k-2}(1-(-q)^{-l})\prod_{l=1}^{k+1}(1-(-q)^{-l})(1-(-q)^{-(h-t)})}.
			\end{array}
		\end{equation*}
		This shows that \eqref{eq5.20} holds.
		
		Now, by \eqref{eq5.20}, we have that \eqref{eq5.19} can be written as
		\begin{equation}\label{eq5.21}
			\begin{array}{ll}
				\mathlarger{\sum}_{i=t}^h(\mathlarger{\sum}_{l=1}^{n-i}\dfrac{1}{(-q)^l-1}) X_{n,h}(n-t,i,t)=\mathlarger{\sum}_{k=0}^{h-t-1}\dfrac{1}{(-q)^{n-t-k}-1}\mathlarger{\sum}_{i=t}^{t+k}X_{n,h}(n-t,i,t)\\
				=\dfrac{(-1)^{t}(-q)^{\frac{t^2}{2}+\frac{3t}{2}-n}}{(1-(-q)^{-(h-t)})}\times
				\mathlarger{\sum}_{k=0}^{h-t-1}\dfrac{(-1)^k(-q)^{-\frac{k(k-1)}{2}}}{(1-(-q)^{-(n-t-k)})\prod_{l=1}^{h-t-k-1}(1-(-q)^{-l})\prod_{l=1}^{k}(1-(-q)^{-l})}.
			\end{array}
		\end{equation}
		
		In Lemma \ref{lemma5.30} below, we will prove that
		\begin{equation}\label{eq5.22}
			\begin{array}{l}
				\mathlarger{\sum}_{k=0}^{h-t-1}\dfrac{(-1)^k(-q)^{-\frac{k(k-1)}{2}}}{(1-(-q)^{-(n-t-k)})\prod_{l=1}^{h-t-k-1}(1-(-q)^{-l})\prod_{l=1}^{k}(1-(-q)^{-l})}\\
				=\dfrac{(-1)^{h-t-1}(-q)^{-\frac{(h-t)(h-t-1)}{2}}(-q)^{-(n-h)(h-t-1)}}{\prod_{l=n-h+1}^{n-t}(1-(-q)^{-l})}.
			\end{array}
		\end{equation}
		
		Combining \eqref{eq5.21} and \eqref{eq5.22}, we have that
		\begin{equation*}\begin{array}{l}
				Y_{n,h}(t,n-t,t)\mathlarger{\sum}_{i=t}^h(\mathlarger{\sum}_{l=1}^{n-i}\dfrac{1}{(-q)^l-1}) X_{n,h}(n-t,i,t)\\
				=\dfrac{(-1)^{h}(-q)^{-h^2+hn-\frac{t}{2}+\frac{t^2}{2}-nt}\prod_{l=1}^{n-t}(1-(-q)^{-l})}{\prod_{l=1}^{n-h}(1-(-q)^{-l})}\\
				\times \dfrac{(-1)^t(-q)^{\frac{t^2}{2}+\frac{3t}{2}-n}}{(1-(-q)^{-(h-t)})}
				\times \dfrac{(-1)^{h-t-1}(-q)^{-\frac{(h-t)(h-t-1)}{2}}(-q)^{-(n-h)(h-t-1)}}{\prod_{l=n-h+1}^{n-t}(1-(-q)^{-l})}\\
				=\dfrac{-(-q)^{\frac{-(h-t)(h+t+1)}{2}}}{1-(-q)^{-(h-t)}}.
				
			\end{array}
		\end{equation*}

		This finishes the proof of the proposition.
		
	\end{proof}

	\begin{lemma}\label{lemma5.30} For $0 \leq t <h \leq n$, we have
		\begin{equation*}
			\begin{array}{l}
				\mathlarger{\sum}_{k=0}^{h-t-1}\dfrac{(-1)^k(-q)^{-\frac{k(k-1)}{2}}}{(1-(-q)^{-(n-t-k)})\prod_{l=1}^{h-t-k-1}(1-(-q)^{-l})\prod_{l=1}^{k}(1-(-q)^{-l})}\\\\
				=\dfrac{(-1)^{h-t-1}(-q)^{-\frac{(h-t)(h-t-1)}{2}}(-q)^{-(n-h)(h-t-1)}}{\prod_{l=n-h+1}^{n-t}(1-(-q)^{-l})}.
			\end{array}
		\end{equation*}
	\end{lemma}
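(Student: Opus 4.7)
The idea is to interpret the identity as a polynomial equality after clearing denominators, and then verify it by evaluation at finitely many points. Let $y := -q$, $m := h-t \ge 1$ and $N := n-t$, and multiply both sides by the common factor $\prod_{l=N-m+1}^{N}(1-y^{l}) = \prod_{k=0}^{m-1}(1-y^{N-k})$. Writing $(y;y)_{j} := \prod_{i=1}^{j}(1-y^{i})$, the identity becomes
\begin{equation*}
\sum_{k=0}^{m-1} \dfrac{(-1)^{k}\, y^{k(k-1)/2}}{(y;y)_{m-1-k}\,(y;y)_{k}}\prod_{\substack{j=0\\ j\ne k}}^{m-1}(1-y^{N-j}) \;=\; (-1)^{m-1}\, y^{-m(m-1)/2}\, (y^{N})^{m-1}.
\end{equation*}

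Next, I would treat $z := y^{N}$ as a formal indeterminate. Both sides are then polynomials in $z$ of degree at most $m-1$: the right side is a monomial of degree $m-1$, while each summand on the left contains exactly $m-1$ factors $(1-y^{N-j}) = (1-y^{-j}z)$. It therefore suffices to verify the identity at $m$ distinct values of $z$; the natural choice is $z = y^{i}$ for $i = 0, 1, \dots, m-1$, which are pairwise distinct since $y$ is transcendental in our working ring.

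The evaluation at $z = y^{i}$ kills every term on the left except $k = i$, since for $k \ne i$ the product contains the vanishing factor $(1 - y^{i-i}) = 0$. Using $1 - y^{-s} = -y^{-s}(1-y^{s})$, one computes
\begin{equation*}
\prod_{\substack{j=0\\ j\ne i}}^{m-1}(1-y^{i-j}) \;=\; (-1)^{m-1-i}\, y^{-(m-1-i)(m-i)/2}\, (y;y)_{i}\,(y;y)_{m-1-i},
\end{equation*}
so the surviving $k = i$ summand simplifies to $(-1)^{m-1} y^{(m-1)(2i-m)/2} = (-1)^{m-1} y^{-m(m-1)/2 + i(m-1)}$, which is precisely the right side at $z = y^{i}$.

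Since two polynomials in $z$ of degree at most $m-1$ that agree at $m$ distinct values must be equal, the cleared identity holds, and unclearing denominators recovers the stated identity. The only real work is the exponent bookkeeping in the last step, which collapses cleanly via $j(j-1) - (m-1-j)(m-j) = (m-1)(2j-m)$; no nontrivial $q$-series or induction is required, and I expect this to be the main (but mild) obstacle.
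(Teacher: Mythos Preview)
Your interpolation argument is correct and is essentially the same mechanism as the paper's proof, but there is one bookkeeping slip: you should set $y=(-q)^{-1}$, not $y=-q$. With $y=-q$ the lemma's summands carry $y^{-k(k-1)/2}$ and factors $1-y^{-l}$ in the denominator, so multiplying by $\prod_{k}(1-y^{N-k})$ does \emph{not} literally produce your displayed identity (the resulting $k$-th term differs from yours by a $k$-dependent power of $y$; already for $m=2$ the two sides of your ``becomes'' differ by a factor of $y$). With $y=(-q)^{-1}$ everything matches on the nose: then $(-q)^{-l}=y^{l}$, so $\prod_{l=1}^{j}(1-(-q)^{-l})=(y;y)_{j}$, the denominator factor $1-(-q)^{-(N-k)}$ is exactly $1-y^{N-k}$, and the RHS denominator is exactly $\prod_{k}(1-y^{N-k})$. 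After that correction your clearing step, the evaluation at $z=y^{i}$, and the exponent identity $j(j-1)-(m-1-j)(m-j)=(m-1)(2j-m)$ go through verbatim.

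The paper's proof is the same Lagrange idea in a different wrapper. It introduces a free parameter $X$ (later specialized to $(-q)^{-(n-h)}$) and recognizes the resulting partial-fraction identity as the statement that the row vector $(1,1,\dots,1)$ times the inverse of the Vandermonde matrix at nodes $(-q)^{-1},\dots,(-q)^{-m}$ equals $(1,0,\dots,0)$; equivalently, that the Lagrange basis polynomials sum to $1$. Your ``both sides are degree $\le m-1$ in $z$ and agree at the $m$ nodes $z=y^{i}$'' is exactly this Lagrange/Vandermonde inversion, just run pointwise rather than matricially. Your version is a bit more direct; the paper's has the mild advantage of keeping the parameter free throughout, so no clearing/unclearing of denominators is needed.
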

	\begin{proof}
		For $N >0$, we claim the following statement: 
		\begin{equation}\label{eq5.23}
			\sum_{k=0}^{N-1}\frac{(-1)^{k}(-q)^{-\frac{k(k-1)}{2}}}{(1-(-q)^{-(N-k)}X)\prod_{l=1}^{N-k-1}(1-(-q)^{-l})\prod_{l=1}^{k}(1-(-q)^{-l})}=\frac{(-1)^{N-1}(-q)^{-\frac{N(N-1)}{2}}X^{N-1}}{\prod_{l=1}^{N}(1-(-q)^{-l}X)},
		\end{equation}
		which specializes to the statement of the lemma when $X=(-q)^{-(n-h)}$. Therefore it suffices to show the claim \eqref{eq5.23}.
		
		Consider a Vandermonde matrix
		\begin{equation*}
			\FX=\left(\begin{array}{cccc}
				1 & 1& \dots & 1\\
				x_1& x_2 & \dots & x_N\\
				\vdots &\vdots &\ddots& \vdots\\
				x_1^{N-1} & x_2^{N-1} &\dots & x_N^{N-1}
			\end{array}\right),
		\end{equation*}
		and let $\FX^{-1}=(y_{ij})_{1 \leq i,j \leq N}$. Note that $y_{ij}$ is the $X^{N-j}$-coefficient of
		\begin{equation*}
			\prod_{l=1,l\neq i}^{N} \dfrac{1-x_l X}{x_i-x_l}.
		\end{equation*}
		
		Now, assume that $x_l=(-q)^{-l}$. Then, we have that $y_{(N-k)j}$ is the $X^{N-j}$-coefficient of
		\begin{equation*}\begin{array}{l}
				\mathlarger{\prod}_{l=1,l\neq (N-k)}^{N} \dfrac{(1-(-q)^{-l} X)}{(-q)^{-(N-k)}-(-q)^{-l}}= \dfrac{(-1)^{N-k-1}(-q)^{\frac{(N-k)(N+k-1)}{2}}\prod_{l=1,l\neq (N-k)}^{N}(1-(-q)^{-l} X)}{\prod_{l=1}^{N-k-1}(1-(-q)^{-l})\prod_{l=1}^{k}(1-(-q)^{-l})}\\\\
				=(-1)^{N-1}(-q)^{\frac{N(N-1)}{2}}\dfrac{(-1)^{k}(-q)^{-\frac{k(k-1)}{2}}\prod_{l=1,l\neq (N-k)}^{N}(1-(-q)^{-l} X)}{\prod_{l=1}^{N-k-1}(1-(-q)^{-l})\prod_{l=1}^{k}(1-(-q)^{-l})}.
			\end{array}
		\end{equation*}
		Therefore, we have that 
		\begin{equation}\label{eq5.24} 
			\mathlarger{\sum}_{j=1}^{N}\mathlarger{\sum}_{k=0}^{N-1}y_{(N-k)j}X^{N-j}=(-1)^{N-1}(-q)^{\frac{N(N-1)}{2}}\mathlarger{\sum}_{k=0}^{N-1}\dfrac{(-1)^{k}(-q)^{-\frac{k(k-1)}{2}}\prod_{l=1, l \neq N-k}^{N}(1-(-q)^{-l}X)}{\prod_{l=1}^{N-k-1}(1-(-q)^{-l})\prod_{l=1}^{k}(1-(-q)^{-l})}.
		\end{equation}

		Now, note that
		\begin{equation*}
			(1 \quad 0 \dots 0) \FX=(1 \quad 1 \dots 1) \Longleftrightarrow (1 \quad 0 \dots 0)=(1 \quad 1 \dots 1)\FX^{-1}=(\mathlarger{\sum}_{i=1}^{N}y_{i1} \quad \mathlarger{\sum}_{i=1}^{N}y_{i2} \dots \mathlarger{\sum}_{i=1}^{N}y_{iN}).
		\end{equation*}
		Hence, we have 
		\begin{equation}\label{eq5.25}
			\mathlarger{\sum}_{j=1}^{N}\mathlarger{\sum}_{k=0}^{N-1}y_{(N-k)j}X^{N-j}=X^{N-1}.
		\end{equation}
		Therefore, \eqref{eq5.24} and \eqref{eq5.25} imply that \eqref{eq5.23} holds.
		This finishes the proof of the lemma.
	\end{proof}
	
	Now, let us state the following theorem.
	\begin{theorem}\label{theorem5.31}
		Assume that $0 \leq h \leq n$.
		\begin{enumerate}
			\item Assume that $h+1 \leq b \leq n$, and $b+c=n$. Then, we have
			\begin{equation*}
				D_{n,h}(0,b,c)=\dfrac{\prod_{l=h+1}^{b}(1-(-q)^l)}{(1-(-q)^{b-h})}.
			\end{equation*}
			
			\item Assume that $h-1 \leq b \leq n$, and $b+c=n-1$. Then, we have
			\begin{equation*}
				D_{n,h}(1,b,c)=\left\lbrace \begin{array}{ll}
					1 & \text{ if } b=h-1, h,\\
					\prod_{l=h+1}^{b}(1-(-q)^l) & \text{ if } b \geq h+1.
				\end{array}\right.
			\end{equation*}
		\end{enumerate}
	\end{theorem}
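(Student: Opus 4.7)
My plan is to proceed by induction on $h$. For the base case $h = 0$, Definition \ref{definition5.19} forces $s = i = 0$ (since $\min(h,b) = 0$ and $\min(s+c,h) = 0$), so $D_{n,0}(a,b,c) = M_{n,0}(a,b,c,0,0) = \CC_1(a,b,c) = \prod_{l=1}^{a+b-1}(1-(-q)^l)$. For $a = 0$ this equals $\prod_{l=1}^{b-1}(1-(-q)^l) = \frac{\prod_{l=1}^b(1-(-q)^l)}{1-(-q)^b}$ as required by $(1)$, and for $a = 1$ it equals $\prod_{l=1}^b(1-(-q)^l)$ as required by $(2)$.

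For the inductive step, assuming the theorem at level $h-1$, the key starting point is to establish the seed value $D_{n,h}(1,h-1,n-h) = 1$. This case is not accessible via Theorem \ref{theorem5.24} as it corresponds to the excluded case $(a-1,b+1,c) = (0,h,n-h)$. I would apply Lemma \ref{lemma5.21} directly with $(a,b,c) = (1,h-1,n-h)$, where the factor $n-h-c = 0$ causes several products in $F(s)$ to collapse. After rewriting each $\prod_{l=1}^k(1-(-q)^{-l}) = (-1)^k(-q)^{-k(k+1)/2}\prod_{l=1}^k(1-(-q)^l)$ and carefully matching signs and powers of $(-q)$ against the prefactor $(-1)^{n+c+s+1}(-q)^{E(s)}$, the $s$-th summand reduces to $(-q)^s\prod_{l=s+1}^{h-1}(1-(-q)^l)$. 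Setting $y = -q$ and $P_s = \prod_{l=s}^{h-1}(1-y^l)$, the telescoping identity $y^s P_{s+1} = P_{s+1} - P_s$ yields
\begin{equation*}
\sum_{s=0}^{h-1} y^s P_{s+1} = P_h - P_0 = 1 - 0 = 1,
\end{equation*}
where $P_0 = 0$ because it contains the factor $1 - y^0$.

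The remaining cases of $(2)$ at $b \geq h$ would be handled by parallel direct computations via Lemma \ref{lemma5.21}: after the same power-of-$(-q)$ and sign bookkeeping, the summand again takes a form amenable to telescoping, and a mild generalization of the identity $\sum_{s=0}^h y^s \prod_{l=s+1}^h(1-y^l) = 1$ (provable by induction on $h$ from the same telescoping principle) evaluates the sum to $\prod_{l=h+1}^b(1-(-q)^l)$ for $b \geq h+1$ and to $1$ for $b = h$. Part $(1)$ then follows from Theorem \ref{theorem5.24} applied with $(a, b_{\mathrm{Thm}}, c) = (1, b-1, n-b)$ (valid for $b \neq h$, hence for all $b \geq h+1$): the rearranged recursion
\begin{equation*}
D_{n,h}(0,b,n-b) = D_{n,h}(1,b-1,n-b) + (-q)^{b-h} D_{n-1,h-1}(0,b-1,n-b)
\end{equation*}
expresses $(1)$ in terms of $(2)$ just established and the level-$(h-1)$ value supplied by the outer inductive hypothesis. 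Substituting the formulas reduces the desired identity in $(1)$ to the elementary algebraic fact $1 - (-q)^b = (1-(-q)^{b-h}) + (-q)^{b-h}(1-(-q)^h)$.

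The main obstacle will be the direct $q$-series evaluation for $(2)$ at general $b$ via Lemma \ref{lemma5.21}: the seed computation at $b = h-1$ already exhibits the key mechanism --- the nontrivial cancellation of signs and powers of $(-q)$ that exposes the telescoping identity $y^s P_{s+1} = P_{s+1} - P_s$ --- but the generalizations for $b \geq h$ require more elaborate bookkeeping and a slightly generalized telescoping identity. Once this $q$-series evaluation is in hand, the deduction of $(1)$ from $(2)$ via Theorem \ref{theorem5.24} is purely algebraic.
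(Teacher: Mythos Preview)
Your overall strategy is sound and the verifiable pieces are correct: the base case $h=0$, the seed computation $D_{n,h}(1,h-1,n-h)=1$ via the telescoping $y^sP_{s+1}=P_{s+1}-P_s$, and the derivation of (1) from (2) via Theorem~\ref{theorem5.24} all check out. Your route for part (1) is genuinely different from the paper's: the paper does not induct on $h$ or invoke Theorem~\ref{theorem5.24} here, but instead proves a single $q$-series identity, equation~\eqref{eq5.26.2}, namely
\[
\sum_{s=0}^{h}\frac{(-1)^s(-q)^{\frac{s-s^2}{2}}\prod_{l=1}^{N-s}(1-(-q)^{-l})}{\prod_{l=1}^{s}(1-(-q)^{-l})\prod_{l=1}^{h-s}(1-(-q)^{-l})}
=\frac{(-1)^h(-q)^{-hN+\frac{h(h-1)}{2}}\prod_{l=1}^{N-h}(1-(-q)^{-l})}{\prod_{l=1}^{h}(1-(-q)^{-l})},
\]
and then reads off both (1) and (2) from it (with $N=b-1$ for $a=0$ and $N=b+1$ for $a=1$). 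Your detour through Theorem~\ref{theorem5.24} is valid but does not actually save work, since the $N$-values needed for (2) already cover those needed for (1).

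The genuine concern is your treatment of (2) for $b\geq h$. After converting to positive powers, the summand is not $y^s\prod_{l=s+1}^{h}(1-y^l)$ but rather $y^{s(N-h+1)}\binom{N-s}{h-s}_y\prod_{l=s+1}^{h}(1-y^l)$ with $N=b+1$, carrying an extra $q$-binomial factor. This is \eqref{eq5.26.2}, and calling it a ``mild generalization'' understates the difficulty: the single-index telescope $y^sP_{s+1}=P_{s+1}-P_s$ no longer closes. The paper proves it via a two-index telescoping scheme with auxiliary quantities $\omega_{k,t}$ and $\tau_{k,t}$ satisfying coupled recursions~\eqref{eq5.27.2}--\eqref{eq5.28.2}. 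An alternative inductive proof does exist --- apply $q$-Pascal to $\binom{N-s}{h-s}_y$ to obtain a recursion in both $h$ and $N$, with base case $N=h$ being exactly your seed telescope --- but this requires an inner induction on $b$ (equivalently on $N$) layered inside your outer induction on $h$, not merely ``the same telescoping principle.'' You should either carry out this double induction explicitly or adopt the paper's $\omega_{k,t},\tau_{k,t}$ argument; as written, this step is a gap.
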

	\begin{proof}
		Recall from Lemma \ref{lemma5.21} that for $a=0$ or $1$ (i.e., $b+c=n$ or $n-1$, respectively), we have
		\begin{equation}\label{eq5.26.3}\begin{array}{l}
				D_{n,h}(a,b,c)=\sum_{s=0}^{\min(h,b)}\sum_{i=s}^{\min(s+c,h)}M_{n,h}(a,b,c,i,s)\\
				=(-1)^{n+c+1}(-q)^{\frac{c}{2}+\frac{c^2}{2}-h^2-\frac{n}{2}-cn+hn+\frac{n^2}{2}-ch}\dfrac{\prod_{l=1}^{n-c-a}(1-(-q)^{-l})}{\prod_{l=1}^{n-h-c}(1-(-q)^{-l})}\\
				\quad \times \sum_{s=0}^{\min(h,b)} \dfrac{(-1)^s(-q)^{\frac{s}{2}-\frac{s^2}{2}}\prod_{l=1}^{n-c-s+a-1}(1-(-q)^{-l})}{\prod_{l=1}^{s}(1-(-q)^{-l})\prod_{l=1}^{h-s}(1-(-q)^{-l})}.
				
			\end{array}
		\end{equation}
		
		First, assume that $a=1$, $b=h-1$, and $c=n-h$. Then, we have
		\begin{equation*}
			\begin{array}{l}
				\mathlarger{\sum}_{s=0}^{\min(h,b)} \dfrac{(-1)^s(-q)^{\frac{s}{2}-\frac{s^2}{2}}\prod_{l=1}^{n-c-s+a-1}(1-(-q)^{-l})}{\prod_{l=1}^{s}(1-(-q)^{-l})\prod_{l=1}^{h-s}(1-(-q)^{-l})}=\mathlarger{\sum}_{s=0}^{h-1} \dfrac{(-1)^s(-q)^{\frac{s}{2}-\frac{s^2}{2}}}{\prod_{l=1}^{s}(1-(-q)^{-l})}
				=\dfrac{(-1)^{h-1}(-q)^{-\frac{h(h-1)}{2}}}{\prod_{l=1}^{h-1}(1-(-q)^{-l})}.
			\end{array}
		\end{equation*}
		
		Therefore, we have
		\begin{equation*}\begin{aligned}
				D_{n,h}(1,h-1,n-h)&=(-1)^{n+c+h}(-q)^{\frac{c}{2}+\frac{c^2}{2}-h^2-\frac{n}{2}-cn+hn+\frac{n^2}{2}-ch-h(h-1)/2}\dfrac{\prod_{l=1}^{h-1}(1-(-q)^{-l})}{\prod_{l=1}^{h-1}(1-(-q)^{-l})}\\
				&=(-q)^{\frac{(n-c-h)(n-c+3h-1)}{2}}=1.
			\end{aligned}
		\end{equation*}

		Now, assume that $b \geq h$. In this case, we have that $\min(h,b)=h$. We claim that
		\begin{equation}\label{eq5.26.2}
			\mathlarger{\sum}_{s=0}^{h} \dfrac{(-1)^s(-q)^{\frac{s}{2}-\frac{s^2}{2}}\prod_{l=1}^{N-s}(1-(-q)^{-l})}{\prod_{l=1}^{s}(1-(-q)^{-l})\prod_{l=1}^{h-s}(1-(-q)^{-l})}=\dfrac{(-1)^h(-q)^{-hN+\frac{h(h-1)}{2}}\prod_{l=1}^{N-h}(1-(-q)^{-l})}{\prod_{l=1}^{h}(1-(-q)^{-l})}, 
		\end{equation}
		where $N=n-c+a-1$.
		
		To prove \eqref{eq5.26.2}, we define the following constants: for $0 \leq k \leq h$, $1 \leq t \leq k+1$, 
		\begin{equation*}
			\begin{aligned}
				\omega_{k,t}=\left\lbrace \begin{array}{cl} \dfrac{(-1)^k(-q)^{-(t-1)N+\frac{(t-1)(t-2)}{2}-\frac{(k-t+1)(k-t+2)}{2}}\prod_{l=1}^{N-k}(1-(-q)^{-l})}{\prod_{l=1}^{k-t+1}(1-(-q)^{-l})\prod_{l=1}^{h-k-1}(1-(-q)^{-l})\prod_{l=h-t+1}^{h}(1-(-q)^{-l})}& \text{ if } k \leq h-1,\\
					0 &\text{ if }k=h, t\neq h+1,\\
					\dfrac{(-1)^h(-q)^{-hN+\frac{h(h-1)}{2}}\prod_{l=1}^{N-h}(1-(-q)^{-l})}{\prod_{l=1}^{h}(1-(-q)^{-l})} & \text{ if } k=h, t=h+1,\end{array}\right.
			\end{aligned}
		\end{equation*}
		and  
		\begin{equation*}\begin{array}{l}
				\tau_{k,t}=\dfrac{(-1)^k(-q)^{-tN+\frac{t(t-1)}{2}-\frac{(k-t-1)(k-t)}{2}}\prod_{l=1}^{N-k}(1-(-q)^{-l})}{\prod_{l=1}^{k-t}(1-(-q)^{-l})\prod_{l=1}^{h-k}(1-(-q)^{-l})\prod_{l=h-t+1}^{h}(1-(-q)^{-l})}.
			\end{array}
		\end{equation*}
		Note that $$\omega_{0,1}=\dfrac{\prod_{l=1}^{N}(1-(-q)^{-l})}{\prod_{l=1}^{h}(1-(-q)^{-l})}=\dfrac{(-1)^0(-q)^{\frac{0}{2}-\frac{0^2}{2}}\prod_{l=1}^{N-0}(1-(-q)^{-l})}{\prod_{l=1}^{0}(1-(-q)^{-l})\prod_{l=1}^{h-0}(1-(-q)^{-l})},$$
		and
		\begin{equation*}
			\tau_{k,k}=\dfrac{(-1)^k(-q)^{-kN+\frac{k(k-1)}{2}}\prod_{l=1}^{N-k}(1-(-q)^{-l})}{\prod_{l=1}^{h}(1-(-q)^{-l})}=\omega_{k,k+1}.
		\end{equation*}
		
		We claim the following two equations: for $k \geq 1$, $2 \leq t \leq k$, we have
		\begin{equation}\label{eq5.27.2}
			\dfrac{(-1)^k(-q)^{\frac{k}{2}-\frac{k^2}{2}}\prod_{l=1}^{N-k}(1-(-q)^{-l})}{\prod_{l=1}^{k}(1-(-q)^{-l})\prod_{l=1}^{h-k}(1-(-q)^{-l})}+\omega_{k-1,1}=\omega_{k,1}+\tau_{k,1},
		\end{equation}
		and
		\begin{equation}\label{eq5.28.2}
			\tau_{k,t-1}+\omega_{k-1,t}=\omega_{k,t}+\tau_{k,t}.
		\end{equation}

		First, we have
		\begin{equation*}
			\begin{aligned}
				&\dfrac{(-1)^k(-q)^{\frac{k}{2}-\frac{k^2}{2}}\prod_{l=1}^{N-k}(1-(-q)^{-l})}{\prod_{l=1}^{k}(1-(-q)^{-l})\prod_{l=1}^{h-k}(1-(-q)^{-l})}+\omega_{k-1,1}\\
				&	=\dfrac{(-1)^k(-q)^{\frac{k}{2}-\frac{k^2}{2}}\prod_{l=1}^{N-k}(1-(-q)^{-l})}{\prod_{l=1}^{k-1}(1-(-q)^{-l})\prod_{l=1}^{h-k}(1-(-q)^{-l})}\mathlarger{\lbrace} \dfrac{1}{1-(-q)^{-k}}-\dfrac{1-(-q)^{-(N-k+1)}}{1-(-q)^{-h}}\mathlarger{\rbrace}\\
				&	=\dfrac{(-1)^k(-q)^{\frac{k}{2}-\frac{k^2}{2}}\prod_{l=1}^{N-k}(1-(-q)^{-l})}{\prod_{l=1}^{k-1}(1-(-q)^{-l})\prod_{l=1}^{h-k}(1-(-q)^{-l})}\mathlarger{\lbrace} \dfrac{(-q)^{-k}(1-(-q)^{-(h-k)})}{(1-(-q)^{-k})(1-(-q)^{-h})}+\dfrac{(-q)^{-(N-k+1)}}{1-(-q)^{-h}}\mathlarger{\rbrace}\\
				&	=\omega_{k,1}+\tau_{k,1}.
			\end{aligned}
		\end{equation*}
		This shows that \eqref{eq5.27.2} holds.
		
		For \eqref{eq5.28.2}, we have \begin{align*}
			\tau_{k,t-1}+\omega_{k-1,t}=&\dfrac{(-1)^k(-q)^{-(t-1)N+\frac{(t-1)(t-2)}{2}-\frac{(k-t)(k-t+1)}{2}}\prod_{l=1}^{N-k}(1-(-q)^{-l})}{\prod_{l=1}^{k-t}(1-(-q)^{-l})\prod_{l=1}^{h-k}(1-(-q)^{-l})\prod_{l=h-t+2}^{h}(1-(-q)^{-l})}\\
			&\times \mathlarger{\lbrace} \dfrac{1}{(1-(-q)^{-(k-t+1)}}-\dfrac{1-(-q)^{-(N-k+1)}}{1-(-q)^{-(h-t+1)}}\mathlarger{\rbrace}\\ 
			=&\dfrac{(-1)^k(-q)^{-(t-1)N+\frac{(t-1)(t-2)}{2}-\frac{(k-t)(k-t+1)}{2}}\prod_{l=1}^{N-k}(1-(-q)^{-l})}{\prod_{l=1}^{k-t}(1-(-q)^{-l})\prod_{l=1}^{h-k}(1-(-q)^{-l})\prod_{l=h-t+2}^{h}(1-(-q)^{-l})}\\
			&\times \mathlarger{\lbrace} \dfrac{(-q)^{-(k-t+1)}(1-(-q)^{-(h-k)})}{(1-(-q)^{-(k-t+1)})(1-(-q)^{-(h-t+1)})}+\dfrac{(-q)^{-(N-k+1)}}{1-(-q)^{-(h-t+1)}}\mathlarger{\rbrace}\\
			=&\omega_{k,t}+\tau_{k,t}.
		\end{align*}
		This shows that \eqref{eq5.28.2} holds.
		Now, by \eqref{eq5.27.2} and \eqref{eq5.28.2}, we have that for $k \geq 1$,
		\begin{equation*}\begin{aligned}
				&\dfrac{(-1)^k(-q)^{\frac{k}{2}-\frac{k^2}{2}}\prod_{l=1}^{N-k}(1-(-q)^{-l})}{\prod_{l=1}^{k}(1-(-q)^{-l})\prod_{l=1}^{h-k}(1-(-q)^{-l})}+\mathlarger{\sum}_{t=1}^{k}\omega_{k-1,t}+\mathlarger{\sum}_{t=1}^{k-1}\tau_{k,t}=\mathlarger{\sum}_{t=1}^{k}\omega_{k,t}+\mathlarger{\sum}_{t=1}^{k}\tau_{k,t},
			\end{aligned}
		\end{equation*}
		and hence
		\begin{equation}\label{eq: identity}
			\begin{aligned}
				\dfrac{(-1)^k(-q)^{\frac{k}{2}-\frac{k^2}{2}}\prod_{l=1}^{N-k}(1-(-q)^{-l})}{\prod_{l=1}^{k}(1-(-q)^{-l})\prod_{l=1}^{h-k}(1-(-q)^{-l})}+\mathlarger{\sum}_{t=1}^{k}\omega_{k-1,t}=\mathlarger{\sum}_{t=1}^{k}\omega_{k,t}+\tau_{k,k}=\mathlarger{\sum}_{t=1}^{k+1}\omega_{k,t}.\end{aligned}
		\end{equation}
		Here, we used $\tau_{k,k}=\omega_{k,k+1}$ for the last identity.

		Therefore, by \eqref{eq: identity}, we have
		\begin{equation*}\begin{aligned}
				&\mathlarger{\sum}_{k=1}^{h}\dfrac{(-1)^k(-q)^{\frac{k}{2}-\frac{k^2}{2}}\prod_{l=1}^{N-k}(1-(-q)^{-l})}{\prod_{l=1}^{k}(1-(-q)^{-l})\prod_{l=1}^{h-k}(1-(-q)^{-l})}+\mathlarger{\sum}_{k=1}^{h}\mathlarger{\sum}_{t=1}^{k}\omega_{k-1,t}=\mathlarger{\sum}_{k=1}^{h}\mathlarger{\sum}_{t=1}^{k+1}\omega_{k,t}\\
				\Longleftrightarrow & \mathlarger{\sum}_{k=1}^{h}\dfrac{(-1)^k(-q)^{\frac{k}{2}-\frac{k^2}{2}}\prod_{l=1}^{N-k}(1-(-q)^{-l})}{\prod_{l=1}^{k}(1-(-q)^{-l})\prod_{l=1}^{h-k}(1-(-q)^{-l})}+\omega_{0,1}=\mathlarger{\sum}_{t=1}^{h+1}\omega_{h,t}\\
				\Longleftrightarrow & \mathlarger{\sum}_{k=0}^{h}\dfrac{(-1)^k(-q)^{\frac{k}{2}-\frac{k^2}{2}}\prod_{l=1}^{N-k}(1-(-q)^{-l})}{\prod_{l=1}^{k}(1-(-q)^{-l})\prod_{l=1}^{h-k}(1-(-q)^{-l})}=\omega_{h,h+1} \text{ (since $\omega_{h,t}=0$ for all $t<h+1$)}\\
				&=\dfrac{(-1)^h(-q)^{-hN+\frac{h(h-1)}{2}}\prod_{l=1}^{N-h}(1-(-q)^{-l})}{\prod_{l=1}^{h}(1-(-q)^{-l})}.
			\end{aligned}
		\end{equation*}
		This shows that \eqref{eq5.26.2} holds.
		
		Combining \eqref{eq5.26.3} and \eqref{eq5.26.2}, we have
		\begin{equation*}\begin{aligned}
				D_{n,h}(a,b,c)
				&=(-1)^{n+c+h+1}(-q)^{\frac{c}{2}+\frac{c^2}{2}-h^2-\frac{n}{2}-cn+hn+\frac{n^2}{2}-ch-h(n-c+a-1)+\frac{h(h-1)}{2}}\\
				&\quad \times \dfrac{\prod_{l=1}^{n-c-a}(1-(-q)^{-l})\prod_{l=1}^{n-c+a-1-h}(1-(-q)^{-l})}{\prod_{l=1}^{n-h-c}(1-(-q)^{-l})\prod_{l=1}^{h}(1-(-q)^{-l})}.
			\end{aligned}
		\end{equation*}
		If $a=1$, then we have $b=n-c-1$, and
		\begin{equation*}
			\begin{aligned}
				D_{n,h}(1,b,c)&=(-1)^{n+c+h+1}(-q)^{\frac{(n-c-h-1)(n-c+h)}{2}}\dfrac{\prod_{l=1}^{b}(1-(-q)^{-l})}{\prod_{l=1}^{h}(1-(-q)^{-l})}\\
				&=\prod_{l=h+1}^{b}(1-(-q)^l).
			\end{aligned}
		\end{equation*}
		If $a=0$, then we have $b=n-c$, and
		\begin{equation*}
			\begin{aligned}
				D_{n,h}(0,b,c)&=(-1)^{n+c+h+1}(-q)^{\frac{(n-c-h)(n-c+h-1)}{2}}\dfrac{\prod_{l=1}^{b}(1-(-q)^{-l})}{(1-(-q)^{-(n-c-h)})\prod_{l=1}^{h}(1-(-q)^{-l})}\\
				&=\dfrac{\prod_{l=h+1}^{b}(1-(-q)^l)}{(1-(-q)^{b-h})}.
			\end{aligned}
		\end{equation*}
		This finishes the proof of the theorem.
	\end{proof}
	
	\section{Fourier transform}\label{sec: FT}
	In this section, we will prove certain theorems on the Fourier transform of the analytic side of Conjecture \ref{conjecture5.5}. Recall that $\BV$ is the space of special homomorphisms associated with $\CN_{n}^{[h]}$ and it is split (resp. non-split) if $h$ is odd (resp. even). For an integrable function $f$ on $\BV$, we denote by $\widehat{f}$ its Fourier transform
	\begin{equation*}
		\widehat{f}(x)\coloneqq \int_{\BV}f(y)\psi(\mathrm{Tr}_{F/F_0}\langle x,y\rangle)dy, \quad x \in \BV.
	\end{equation*}
	For example, for an $O_F$-lattice $L \in \BV$ of rank $n$, we have
	\begin{equation*}
		\widehat{1}_L=\vol(L)1_{L^{\vee}}=q^{-\val(L)/2}1_{L^{\vee}}.
	\end{equation*}

	\begin{definition} Let $L \subset \BV$ be an $O_F$-lattice of rank $n$ and let $L^{\flat} \subset \BV$ be an $O_F$-lattice of rank $n-1$.
		\begin{enumerate}
			\item Assume that $\lambda \in \CR_{n}^{0+}$ be the fundamental invariants of $L$. Then we define 
			\begin{equation*}
				D_{n,h}(L)=D_{n,h}(\lambda).
			\end{equation*}
			\item For $x \in \BV \backslash L_F^{\flat}$, we define
			\begin{equation*}
				\partial \mathrm{Den}_{L^{\flat}}^{n,h}(x)\coloneqq \mathlarger{\sum}_{L^{\flat} \subset L' \subset L'^{\vee}} D_{n,h}(L')1_{L'}(x),
			\end{equation*}
			where $L' \subset \BV$ are $O_F$-lattices of rank $n$.
			
			\item For $x \in \BV \backslash L_F^{\flat}$, and $L'^{\flat} \supset L^{\flat}$, we define
			\begin{equation*}
				\partial \mathrm{Den}_{L'^{\flat\circ}}^{n,h}(x)\coloneqq \mathlarger{\sum}_{L'^{\flat} \subset L' \subset L'^{\vee}, L' \cap L^{\flat}_F=L'^{\flat}} D_{n,h}(L')1_{L'}(x),
			\end{equation*}
			where $L' \subset \BV$ are $O_F$-lattices of rank $n$. \end{enumerate}
	\end{definition}
	Then, we have that
	\begin{equation*}
		\partial \mathrm{Den}_{L^{\flat}}^{n,h}(x)=\mathlarger{\sum}_{{L^{\flat}} \subset L'^{\flat} \subset (L'^{\flat})^{\vee}} \partial \mathrm{Den}_{L'^{\flat\circ}}^{n,h}(x).
	\end{equation*}

	By definition, we have that
	\begin{equation*}\begin{aligned}
			\widehat{\pDen}_{L'^{\flat\circ}}^{n,h}(x)&=\mathlarger{\sum}_{L'^{\flat} \subset L' \subset L'^{\vee}, L' \cap L^{\flat}_F=L'^{\flat}} D_{n,h}(L')\widehat{1_{L'}}(x)\\
			&=\mathlarger{\sum}_{L'^{\flat} \subset L' \subset L'^{\vee}, L' \cap L^{\flat}_F=L'^{\flat}, x \in L'^{\vee}} D_{n,h}(L')\vol(L').
		\end{aligned}
	\end{equation*}
	Also, by \cite[Lemma 7.2.1, Lemma 7.2.2, (7.4.2.3)]{LZ}, we have that
	\begin{equation*}\begin{array}{ccc}
			[((\langle x \rangle+L'^{\flat})^{\vee,\geq 0}/L'^{\flat})\backslash (L^{\flat}_F/L'^{\flat})]&\overset{\sim}{\longrightarrow}&\lbrace L'^{\flat} \subset L' \subset L'^{\vee}, L' \cap L^{\flat}_F=L'^{\flat}, x \in L'^{\vee} \rbrace\\
			u & \longmapsto & L'^{\flat}+\langle u \rangle.
		\end{array}
	\end{equation*}
	
	Now, assume that $x \perp L^{\flat}$ and $\val(\langle x ,x \rangle) \leq -1$. Then, as in the proof of \cite[Lemma 7.4.2]{LZ}, we have that
	\begin{equation*}
		(\langle x \rangle+L'^{\flat})^{\vee,\geq 0}=(L'^{\flat})^{\vee,\geq 0} \obot \langle x \rangle^{\vee},
	\end{equation*}
	and
	\begin{equation*}
		[((\langle x \rangle+L'^{\flat})^{\vee,\geq 0}/L'^{\flat})\backslash (L^{\flat}_F/L'^{\flat})]\overset{\sim}{\longrightarrow} (L'^{\flat})^{\vee,\geq0}/L'^{\flat} \times (\langle x \rangle^{\vee}\backslash \lbrace 0\rbrace )/O_F^{\times}.
	\end{equation*}
	Therefore, we have that
	\begin{equation*}\begin{array}{ccc}
			(L'^{\flat})^{\vee,\geq0}/L'^{\flat} \times (\langle x \rangle^{\vee}\backslash \lbrace 0\rbrace )/O_F^{\times}&	\overset{\sim}{\longrightarrow}&\lbrace L'^{\flat} \subset L' \subset L'^{\vee}, L' \cap L^{\flat}_F=L'^{\flat}, x \in L'^{\vee} \rbrace\\
			(u^{\flat},u^{\perp}) & \longmapsto & L'^{\flat}+\langle u^{\flat}+u^{\perp} \rangle,
		\end{array}
	\end{equation*}
	and
	\begin{equation*}\begin{array}{ll}
			\widehat{\pDen}_{L'^{\flat\circ}}^{n,h}(x)
			&=\mathlarger{\sum}_{(u^{\flat},u^{\perp}) \in (L'^{\flat})^{\vee,\geq0}/L'^{\flat} \times (\langle x \rangle^{\vee}\backslash \lbrace 0\rbrace )/O_F^{\times}} D_{n,h}(L'^{\flat}+\langle u^{\flat}+u^{\perp}\rangle)\vol(L'^{\flat}+\langle u^{\flat}+u^{\perp}\rangle).
		\end{array}
	\end{equation*}
	
	Now, we need to compute $D_{n,h}(L'^{\flat}+\langle u^{\flat}+u^{\perp}\rangle)$. First, let us define the following notations.
	\begin{definition}
		For $\lambda \in \CR_{n-1}^{0+}$, we let $L_{\lambda} \subset \BV$ be an $O_F$-lattice of rank $n-1$ with hermitian matrix $A_{\lambda}$. Consider a basis $\BB=\lbrace x_1, \dots, x_{n-1}\rbrace$ such that the hermitian matrix of $L_{\lambda}$ with respect to $\BB$ is $A_{\lambda}$.
		\begin{enumerate}
			\item For $i \geq 0$, we define $L_{\lambda \geq i}$ to be the sublattice of $L_{\lambda}$ generated by $\lbrace x_1, \dots, x_{t_{\ge i}(\lambda)} \rbrace$. Therefore, the hermitian matrix of $L_{\lambda \geq i}$ is 
			\begin{equation*}
				\left( \begin{array}{cccc}
					\pi^{\lambda_1} &&&\\
					& \pi^{\lambda_2} &&\\
					&&\ddots&\\
					&&&\pi^{\lambda_{t_{\ge i}(\lambda)}}
				\end{array}\right).
			\end{equation*}
			In particular, $L_{\lambda \geq 0}=L_{\lambda}$.
			
			\item For $i \geq 0$, we define $L_{\lambda = i}$ to be the sublattice of $L_{\lambda}$ generated by $x_j$'s such that $\val(( x_j,x_j ))=i$. Therefore, the hermitian matrix of $L_{\lambda =i}$ is $\pi^{i} I_{t_{i}(\lambda)}$.

			\item For $i \geq j \geq 0$, we define $L_{(\lambda \geq i) - j}$ be an $O_F$-lattice of rank $t_{\ge i}(\lambda)$ with hermitian matrix
			\begin{equation*}
				\left( \begin{array}{cccc}
					\pi^{\lambda_1-j} &&&\\
					& \pi^{\lambda_2-j} &&\\
					&&\ddots&\\
					&&&\pi^{\lambda_{t_{\ge i}(\lambda)}-j}
					
				\end{array}\right).
			\end{equation*}
			Note that $L_{(\lambda \geq i)-j}$ is not necessarily a sublattice in $\BV$.
			
			\item For an $O_F$-lattice $L$, we define
			\begin{equation*}
				\begin{array}{l}
					\mu(L)=\vert (L^{\vee})^{\geq0}/L \vert,\\
					\mu^+(L)=\vert (\pi L^{\vee})^{\geq1}/L \vert,\\
					\mu^{++}(L)=\vert (\pi^2 L^{\vee})^{\geq2}/L \vert.\\
				\end{array}
			\end{equation*}
		\end{enumerate}
	\end{definition}
	
	Consider a basis  $\BB=\lbrace x_1, x_2$, $\dots,x_a,$ $x_{a+1},$$\dots$,$x_{a+b}$,$\dots,$ $x_{a+b+c} \rbrace $ of $L'^{\flat}$ ($a+b+c=n-1$) such that the hermitian matrix of $L$ with respect to $\BB$ is $A_{\lambda}$ where $\lambda=(\lambda_1, \dots, \lambda_{a},1^{b},0^{c}) \in \CR_{n-1}^{0+}$, $\lambda_i \geq 2$. To simplify notation, we abbreviate $L_{\lambda \geq2}$, $L_{\lambda =1 }$ to $L_2'^{\flat}$, $L_{1}'^{\flat}$, respectively. In other words,  $L_2'^{\flat}$ is the $O_F$-lattice generated by $\lbrace x_1, \dots, x_a \rbrace$, and $L'^{\flat}_1$ is the $O_F$-lattice generated by $\lbrace x_{a+1},\dots, x_{a+b} \rbrace$.
	
	Note that
	\begin{equation*}
		(L'^{\flat})^{\vee,\geq0}/L'^{\flat}=(L'^{\flat}_{2} \obot L'^{\flat}_{1})^{\vee,\geq 0}/(L'^{\flat}_{2} \obot L'^{\flat}_{1}).
	\end{equation*}
	
	Then, we have the following propositions.
	
	\begin{proposition}\label{proposition5.33}
		
		Assume that $\val(( u^{\perp},u^{\perp} )) \geq 2$, then we have the followings.
		\begin{enumerate}
			\item (Case 1-1) If $u^{\flat} \in (\pi^2(L_{2}'^{\flat})^{\vee}\obot \pi (L_{1}'^{\flat})^{\vee})^{\geq 2}$,
			\begin{equation*}
				D_{n,h}(L'^{\flat}+\langle u^{\flat}+u^{\perp}\rangle)=D_{n,h}(a+1,b,c).
			\end{equation*}
			\item (Case 1-2) If $u^{\flat} \in (\pi^2(L_{2}'^{\flat})^{\vee}\obot \pi (L_{1}'^{\flat})^{\vee})^{\geq 1}-(\pi^2(L_{2}'^{\flat})^{\vee}\obot \pi (L_{1}'^{\flat})^{\vee})^{\geq 2}$,
			\begin{equation*}
				D_{n,h}(L'^{\flat}+\langle u^{\flat}+u^{\perp}\rangle)=D_{n,h}(a,b+1,c).
			\end{equation*}
			\item (Case 1-3) If $u^{\flat} \in (\pi^2(L_{2}'^{\flat})^{\vee}\obot \pi (L_{1}'^{\flat})^{\vee})^{\geq 0}-(\pi^2(L_{ 2}'^{\flat})^{\vee}\obot \pi (L_{1}'^{\flat})^{\vee})^{\geq 1}$,
			\begin{equation*}
				D_{n,h}(L'^{\flat}+\langle u^{\flat}+u^{\perp}\rangle)=D_{n,h}(a,b,c+1).
			\end{equation*}
			
			\item (Case 2-1) If $u^{\flat} \in (\pi^2(L_{ 2}'^{\flat})^{\vee})^{\geq0}\obot ((L_{1}'^{\flat})^{\vee}-\pi (L_{1}'^{\flat})^{\vee})^{\geq 0}$,
			\begin{equation*}
				D_{n,h}(L'^{\flat}+\langle u^{\flat}+u^{\perp}\rangle)=D_{n,h}(a+1,b-2,c+2).
			\end{equation*}
			\item (Case 2-2) If \begin{equation*}
				u^{\flat} \in ((\pi^2(L_{ 2}'^{\flat})^{\vee})\obot ((L_{1}'^{\flat})^{\vee}-\pi (L_{1}'^{\flat})^{\vee}))^{\geq 0}- (\pi^2(L_{ 2}'^{\flat})^{\vee})^{\geq0}\obot ((L_{1}'^{\flat})^{\vee}-\pi (L_{1}'^{\flat})^{\vee})^{\geq 0},
			\end{equation*}
			\begin{equation*}
				D_{n,h}(L'^{\flat}+\langle u^{\flat}+u^{\perp}\rangle)=D_{n,h}(a,b-1,c+2).
			\end{equation*}
			\item (Case 3-1) If $u^{\flat} \in ((\pi (L_{ 2}'^{\flat})^{\vee}-\pi^2(L_{ 2}'^{\flat})^{\vee})\obot (\pi (L_{1}'^{\flat})^{\vee}))^{\geq 1}$,
			\begin{equation*}
				D_{n,h}(L'^{\flat}+\langle u^{\flat}+u^{\perp}\rangle)=D_{n,h}(a-1,b+2,c).
			\end{equation*}
			\item (Case 3-2) If $u^{\flat} \in ((\pi (L_{ 2}'^{\flat})^{\vee}-\pi^2(L_{ 2}'^{\flat})^{\vee})\obot (\pi (L_{1}'^{\flat})^{\vee}))^{\geq 0}-((\pi (L_{ 2}'^{\flat})^{\vee}-\pi^2(L_{ 2}'^{\flat})^{\vee})\obot (\pi (L_{1}'^{\flat})^{\vee}))^{\geq 1}$,
			\begin{equation*}
				D_{n,h}(L'^{\flat}+\langle u^{\flat}+u^{\perp}\rangle)=D_{n,h}(a,b,c+1).
			\end{equation*}
			\item (Case 4-1) If $u^{\flat} \in (\pi (L_{ 2}'^{\flat})^{\vee}-\pi^2(L_{ 2}'^{\flat})^{\vee})^{\geq0}\obot ((L_{1}'^{\flat})^{\vee}-\pi (L_{1}'^{\flat})^{\vee})^{\geq 0}$,
			\begin{equation*}
				D_{n,h}(L'^{\flat}+\langle u^{\flat}+u^{\perp}\rangle)=D_{n,h}(a+1,b-2,c+2).
			\end{equation*}
			\item (Case 4-2) If \begin{equation*}
				u^{\flat} \in ((\pi (L_{ 2}'^{\flat})^{\vee}-\pi^2(L_{ 2}'^{\flat})^{\vee})\obot ((L_{1}'^{\flat})^{\vee}-\pi (L_{1}'^{\flat})^{\vee}))^{\geq 0}- (\pi (L_{ 2}'^{\flat})^{\vee}-\pi^2(L_{ 2}'^{\flat})^{\vee})^{\geq0}\obot ((L_{1}'^{\flat})^{\vee}-\pi (L_{1}'^{\flat})^{\vee})^{\geq 0},
			\end{equation*}
			\begin{equation*}
				D_{n,h}(L'^{\flat}+\langle u^{\flat}+u^{\perp}\rangle)=D_{n,h}(a,b-1,c+2).
			\end{equation*}
			
			\item (Case 5)  If $u^{\flat} \in (((L_{ 2}'^{\flat})^{\vee}-\pi(L_{ 2}'^{\flat})^{\vee})\obot (L_{1}'^{\flat})^{\vee})^{\geq 0}$,
			\begin{equation*}
				D_{n,h}(L'^{\flat}+\langle u^{\flat}+u^{\perp}\rangle)=D_{n,h}(a-1,b,c+2).
			\end{equation*}
		\end{enumerate}
	\end{proposition}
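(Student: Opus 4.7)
By Proposition~\ref{proposition5.20}, $D_{n,h}(L'^{\flat}+\langle u^{\flat}+u^{\perp}\rangle)$ depends only on the triple of fundamental invariants $(t_{\ge 2},t_1,t_0)$ of the new rank-$n$ lattice $L\coloneqq L'^{\flat}+\langle v\rangle$ with $v\coloneqq u^{\flat}+u^{\perp}$. The plan is therefore to compute these three integers in each of the ten cases by explicit manipulation of the Gram matrix
\begin{equation*}
G=\begin{pmatrix}A_{\lambda}&C\\ C^{*}&(v,v)\end{pmatrix},\qquad C_i=(u^{\flat},x_i),
\end{equation*}
and to read off the Smith invariants of $G$ under $\mathrm{GL}_n(O_F)$-congruence. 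The ten cases are designed precisely so that the joint valuation pattern of $C$ and $(v,v)$ forces a unique Smith normal form, and hence a unique output triple.

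The common first reduction is as follows. Using the orthogonal decomposition $L'^{\flat}=L_2'^{\flat}\obot L_1'^{\flat}\obot L_0'^{\flat}$ (where $L_i'^{\flat}$ collects the basis vectors of valuation $\ge 2$, $=1$, $=0$ respectively), write $u^{\flat}=u_2+u_1+u_0$. Since $(L_0'^{\flat})^{\vee}=L_0'^{\flat}$, one absorbs $u_0$ into $L'^{\flat}$ at the outset, so that $L_0'^{\flat}$ splits off $L$ orthogonally and contributes exactly $c$ invariants equal to $0$. In Cases~1 and~3, the component $u_1$ lies in $\pi(L_1'^{\flat})^{\vee}=L_1'^{\flat}$ and is absorbed similarly; hence $L_1'^{\flat}$ also splits off orthogonally, contributing $b$ invariants equal to $1$, and the analysis reduces to the rank-$(a+1)$ sublattice $L_2'^{\flat}+\langle u_2+u^{\perp}\rangle$. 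A direct $\gcd$/determinant computation on the resulting $(a+1)\times(a+1)$ Gram matrix then shows that the single new fundamental invariant equals $\val((v,v))$ whenever $\val((v,v))\le 1$, and is $\ge 2$ otherwise; this accounts for the six sub-cases 1-1 through 3-2.

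Cases~2, 4, and 5 are the substantive new content: here $u_1$ (in Cases~2, 4) or $u_2$ (in Case~5) has a component in $(L_1'^{\flat})^{\vee}\setminus L_1'^{\flat}$ or $(L_2'^{\flat})^{\vee}\setminus\pi(L_2'^{\flat})^{\vee}$, and cannot be absorbed. In each such case, the plan is to isolate a rank-$2$ orthogonal summand of $L$ generated by $v$ together with one offending basis direction of $L_1'^{\flat}$ or $L_2'^{\flat}$. An $O_F$-linear change of basis, using the transitivity of $U(2,O_F)$ on hermitian vectors of fixed norm, brings this summand to $\mathrm{diag}(1,\pi^{k})$ with $k$ determined by $\val((v,v))$; this produces two zero-type invariants (and possibly one higher) while consuming one unit-type invariant from $L_1'^{\flat}$ (resp.\ one $\ge 2$-type invariant from $L_2'^{\flat}$), which explains the shifts $(b,c)\mapsto(b-1,c+2)$ in Cases~2-2 and 4-2 and $(a,c)\mapsto(a-1,c+2)$ in Case~5.

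The main obstacle is this rank-$2$ reduction in Cases~2, 4, and 5: one cannot directly Gauss-eliminate the off-diagonal entries of $G$, because the naive multipliers $(v,x_i)/(x_i,x_i)$ may have negative valuation. The resolution is to exploit both the freedom of $v$ modulo $L'^{\flat}$ and the explicit classification of rank-$2$ hermitian $O_F$-lattices over the unramified quadratic extension $F/F_0$; once this local normalization is in place, the remaining verifications reduce to a short, algorithmic count of the three invariants of the resulting orthogonal decomposition in each case.
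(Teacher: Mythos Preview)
The paper's own proof is literally one sentence: ``This is a linear algebra problem, so we skip the proof of the proposition.'' Your outline is therefore already more detailed than what the paper provides, and the overall strategy---compute $(t_{\ge 2},t_1,t_0)$ of $L'^{\flat}+\langle v\rangle$ by Smith-normalizing its Gram matrix, splitting off $L_0'^{\flat}$ always and $L_1'^{\flat}$ whenever $u_1\in L_1'^{\flat}$---is exactly what is intended.

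There is, however, a genuine slip in your treatment of Case~3. You group Cases~1 and~3 together and assert that after reducing to $M=L_2'^{\flat}+\langle u_2+u^{\perp}\rangle$, ``the single new fundamental invariant equals $\val((v,v))$''. This is correct in Case~1, where $u_2\in\pi^2(L_2'^{\flat})^{\vee}$ forces every off-diagonal entry $(u_2,x_i)$ of $G_M$ into $\pi^2 O_F$; then indeed $a$ invariants stay $\ge 2$ and exactly one new one is created. But in Case~3 we have $u_2\in\pi(L_2'^{\flat})^{\vee}\setminus\pi^2(L_2'^{\flat})^{\vee}$, so some off-diagonal entry $(u_2,x_j)$ has valuation exactly~$1$. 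All entries of $G_M$ then lie in $\pi O_F$, and reducing $\pi^{-1}G_M$ modulo~$\pi$ yields a matrix of the shape $\begin{psmallmatrix}0_a&c\\c^{*}&*\end{psmallmatrix}$ with $c\neq 0$, which has rank~$2$. Hence $t_1(M)=2$ and $t_{\ge 2}(M)=a-1$, matching the stated answer $(a-1,b+2,c)$ in Case~3-1; your ``single new invariant'' heuristic misses this rank-$2$ interaction and would predict $(a,b+1,c)$ or $(a+1,b,c)$ instead.

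The repair is mechanical: rather than trying to name one distinguished new invariant, compute $t_0(M)=\operatorname{rank}(G_M\bmod\pi)$ and $t_0(M)+t_1(M)$ as the rank of the next reduction (of $\pi^{-1}$ times the complementary block, modulo~$\pi$). These ranks are robust under the off-diagonal perturbations in each case and immediately give the claimed triples. A related minor point: after absorbing $u_1$ you are working with $w=u_2+u^{\perp}$, and $\val((w,w))$ need not equal $\val((v,v))$ (they differ by $(u_1,u_1)\in\pi O_F$); so within Case~1 the relevant valuation is that of $(u_2,u_2)$, not of $(u^{\flat},u^{\flat})$, which is consistent with how Proposition~\ref{proposition5.36.1} counts these sets after passing to the quotient by $L_1'^{\flat}$.
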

	\begin{proof} This is a linear algebra problem, so we skip the proof of the proposition.
	\end{proof}
	
	\begin{proposition}\label{proposition5.34}
		Assume that $\val((u^{\perp},u^{\perp} )) =1$, then we have the followings.
		\begin{enumerate}
			\item (Case 1-1) If $u^{\flat} \in (\pi^2(L_{2}'^{\flat})^{\vee}\obot \pi (L_{1}'^{\flat})^{\vee})^{\geq 2}$,
			\begin{equation*}
				D_{n,h}(L'^{\flat}+\langle u^{\flat}+u^{\perp}\rangle)=D_{n,h}(a,b+1,c).
			\end{equation*}
			\item (Case 1-2-1) If $u^{\flat} \in (\pi^2(L_{2}'^{\flat})^{\vee}\obot \pi (L_{1}'^{\flat})^{\vee})^{\geq 1}-(\pi^2(L_{2}'^{\flat})^{\vee}\obot \pi (L_{1}'^{\flat})^{\vee})^{\geq 2}$, and $\val(\langle u^{\flat}+u^{\perp},u^{\flat}+u^{\perp} \rangle) =1$
			\begin{equation*}
				D_{n,h}(L'^{\flat}+\langle u^{\flat}+u^{\perp}\rangle)=D_{n,h}(a,b+1,c).
			\end{equation*}
			\item (Case 1-2-2) If $u^{\flat} \in (\pi^2(L_{2}'^{\flat})^{\vee}\obot \pi (L_{1}'^{\flat})^{\vee})^{\geq 1}-(\pi^2(L_{2}'^{\flat})^{\vee}\obot \pi (L_{1}'^{\flat})^{\vee})^{\geq 2}$, and $\val(\langle u^{\flat}+u^{\perp},u^{\flat}+u^{\perp} \rangle) \geq 2$, we have
			\begin{equation*}
				D_{n,h}(L'^{\flat}+\langle u^{\flat}+u^{\perp}\rangle)=D_{n,h}(a+1,b,c).
			\end{equation*}
			\item (Case 1-3) If $u^{\flat} \in (\pi^2(L_{2}'^{\flat})^{\vee}\obot \pi (L_{1}'^{\flat})^{\vee})^{\geq 0}-(\pi^2(L_{ 2}'^{\flat})^{\vee}\obot \pi (L_{1}'^{\flat})^{\vee})^{\geq 1}$,
			\begin{equation*}
				D_{n,h}(L'^{\flat}+\langle u^{\flat}+u^{\perp}\rangle)=D_{n,h}(a,b,c+1).
			\end{equation*}
			\item (Case 2-1) If $u^{\flat} \in (\pi^2(L_{ 2}'^{\flat})^{\vee})^{\geq0}\obot ((L_{1}'^{\flat})^{\vee}-\pi (L_{1}'^{\flat})^{\vee})^{\geq 0}$,
			\begin{equation*}
				D_{n,h}(L'^{\flat}+\langle u^{\flat}+u^{\perp}\rangle)=D_{n,h}(a+1,b-2,c+2).
			\end{equation*}
			\item (Case 2-2) If \begin{equation*}
				u^{\flat} \in ((\pi^2(L_{ 2}'^{\flat})^{\vee})\obot ((L_{1}'^{\flat})^{\vee}-\pi (L_{1}'^{\flat})^{\vee}))^{\geq 0}- (\pi^2(L_{ 2}'^{\flat})^{\vee})^{\geq0}\obot ((L_{1}'^{\flat})^{\vee}-\pi (L_{1}'^{\flat})^{\vee})^{\geq 0},
			\end{equation*}
			\begin{equation*}
				D_{n,h}(L'^{\flat}+\langle u^{\flat}+u^{\perp}\rangle)=D_{n,h}(a,b-1,c+2).
			\end{equation*}
			\item (Case 3-1) If $u^{\flat} \in ((\pi (L_{ 2}'^{\flat})^{\vee}-\pi^2(L_{ 2}'^{\flat})^{\vee})\obot (\pi (L_{1}'^{\flat})^{\vee}))^{\geq 1}$,
			\begin{equation*}
				D_{n,h}(L'^{\flat}+\langle u^{\flat}+u^{\perp}\rangle)=D_{n,h}(a-1,b+2,c).
			\end{equation*}
			\item (Case 3-2) If $u^{\flat} \in ((\pi (L_{ 2}'^{\flat})^{\vee}-\pi^2(L_{ 2}'^{\flat})^{\vee})\obot (\pi (L_{1}'^{\flat})^{\vee}))^{\geq 0}-((\pi (L_{ 2}'^{\flat})^{\vee}-\pi^2(L_{ 2}'^{\flat})^{\vee})\obot (\pi (L_{1}'^{\flat})^{\vee}))^{\geq 1}$,
			\begin{equation*}
				D_{n,h}(L'^{\flat}+\langle u^{\flat}+u^{\perp}\rangle)=D_{n,h}(a,b,c+1).
			\end{equation*}
			\item (Case 4-1) If $u^{\flat} \in (\pi (L_{ 2}'^{\flat})^{\vee}-\pi^2(L_{ 2}'^{\flat})^{\vee})^{\geq0}\obot ((L_{1}'^{\flat})^{\vee}-\pi (L_{1}'^{\flat})^{\vee})^{\geq 0}$,
			\begin{equation*}
				D_{n,h}(L'^{\flat}+\langle u^{\flat}+u^{\perp}\rangle)=D_{n,h}(a+1,b-2,c+2).
			\end{equation*}
			\item (Case 4-2) If \begin{equation*}
				u^{\flat} \in ((\pi (L_{ 2}'^{\flat})^{\vee}-\pi^2(L_{ 2}'^{\flat})^{\vee})\obot ((L_{1}'^{\flat})^{\vee}-\pi (L_{1}'^{\flat})^{\vee}))^{\geq 0}- (\pi (L_{ 2}'^{\flat})^{\vee}-\pi^2(L_{ 2}'^{\flat})^{\vee})^{\geq0}\obot ((L_{1}'^{\flat})^{\vee}-\pi (L_{1}'^{\flat})^{\vee})^{\geq 0},
			\end{equation*}
			\begin{equation*}
				D_{n,h}(L'^{\flat}+\langle u^{\flat}+u^{\perp}\rangle)=D_{n,h}(a,b-1,c+2).
			\end{equation*}
			
			\item (Case 5)  If $u^{\flat} \in (((L_{ 2}'^{\flat})^{\vee}-\pi(L_{ 2}'^{\flat})^{\vee})\obot (L_{1}'^{\flat})^{\vee})^{\geq 0}$,
			\begin{equation*}
				D_{n,h}(L'^{\flat}+\langle u^{\flat}+u^{\perp}\rangle)=D_{n,h}(a-1,b,c+2).
			\end{equation*}
		\end{enumerate}
	\end{proposition}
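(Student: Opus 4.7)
The structure of the argument parallels Proposition \ref{proposition5.33}. By Proposition \ref{proposition5.20}, $D_{n,h}(L'^\flat + \langle u^\flat + u^\perp \rangle)$ depends only on the triple $(t_{\geq 2}, t_1, t_0)$ of fundamental invariants of the enlarged lattice, so for each case it suffices to compute these. The plan is to use the normal orthogonal decomposition $L'^\flat = L_2'^\flat \obot L_1'^\flat \obot L_0'^\flat$ already in place, decompose $u^\flat = u_2 + u_1 + u_0$ accordingly, write the $n \times n$ Gram matrix of $\{x_1, \ldots, x_{n-1}, u^\flat + u^\perp\}$ in block form, and perform elementary row/column operations to reduce it to a diagonal normal form from which the invariants can be read off.

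The case analysis mirrors that of Proposition \ref{proposition5.33}; the only essential difference is the valuation of $u^\perp$. In Proposition \ref{proposition5.33} one has $\val((u^\perp, u^\perp)) \geq 2$, so $u^\perp$ behaves as a valuation-$\geq 2$ direction, whereas here $\val((u^\perp, u^\perp)) = 1$, so it behaves as a valuation-$1$ direction. Consequently, in every case where $u^\flat$ itself has valuation strictly less than $1$ --- i.e.\ Cases $2$, $3$, $4$, and $5$ --- the valuation of $u^\flat + u^\perp$ is controlled by $u^\flat$ alone, and the reduction to normal form yields exactly the same fundamental invariants as in Proposition \ref{proposition5.33}. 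Case 1-3, where $\val(u^\flat) = 0$, is similarly unaffected: $u^\flat + u^\perp$ still has valuation $0$ and contributes to $t_0$. Case 1-1 changes from $(a+1, b, c)$ to $(a, b+1, c)$ precisely because $u^\flat$ has valuation $\geq 2$ and is now dominated by $u^\perp$, so the new orthogonal summand falls into the $t_1$ stratum rather than the $t_{\geq 2}$ stratum.

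The genuinely new phenomenon occurs in Case 1-2, where both $u^\perp$ and $u^\flat$ carry valuation exactly $1$. The sum $u^\flat + u^\perp$ then either retains valuation $1$ (no cancellation: Case 1-2-1, giving $(a, b+1, c)$) or jumps to valuation $\geq 2$ (cancellation: Case 1-2-2, giving $(a+1, b, c)$). The first subcase proceeds identically to the analogous situation in Proposition \ref{proposition5.33}. For the second, after Gram--Schmidt against the $L_1'^\flat$-block to clear the $u_1$-pairing, the residual vector lies in $(L'^\flat)^\vee$ with norm of valuation $\geq 2$ and contributes a fresh orthogonal summand, incrementing $t_{\geq 2}$ by one while leaving $L_2'^\flat$ intact.

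The main technical obstacle is verifying Case 1-2-2: one must check that the Gram--Schmidt projection does not interact nontrivially with the $L_2'^\flat$-block in a way that would shuffle its fundamental invariants, so that $t_{\geq 2}$ really increases by exactly one rather than undergoing a more complicated shift, and that the residual vector's norm valuation is correctly computed in the presence of the cancellation hypothesis $\val((u^\flat + u^\perp, u^\flat + u^\perp)) \geq 2$. Once this is established, together with a routine verification that Cases 1-2-1 and 1-2-2 jointly exhaust the Case 1-2 regime, the remaining cases reduce to the Gram-matrix manipulations already carried out in Proposition \ref{proposition5.33}, with $u^\perp$ playing a slightly different but benign role.
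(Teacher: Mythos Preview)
Your approach is correct and coincides with the paper's, whose proof reads in its entirety: ``This is a linear algebra problem, so we skip the proof of the proposition.'' Your outline is considerably more detailed than what the paper supplies.

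One correction to your heuristic: it is not the case that Cases 2--5 all satisfy $\val(u^{\flat})<1$; Case~3-1, for instance, explicitly requires $\val(u^{\flat})\geq 1$. The genuine reason those cases agree with Proposition~\ref{proposition5.33} is that the off-diagonal Gram entries $(e_i,u^{\flat})$ --- which do not involve $u^{\perp}$ --- already carry low enough valuation to pin down the elementary divisors of $L'^{\flat}+\langle u^{\flat}+u^{\perp}\rangle$, so the exact valuation of the corner entry $(u^{\flat}+u^{\perp},u^{\flat}+u^{\perp})$ is immaterial there. Your Gram-matrix reduction would reveal this upon execution; only the explanatory shortcut needs adjustment. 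A related subtlety worth flagging for the actual write-up: since the sum runs over cosets in $(L'^{\flat})^{\vee,\geq 0}/L'^{\flat}$, the valuation conditions defining the sub-cases of Case~1 should be imposed on the canonical representative with $u_1=0$ (as $u_1\in L_1=\pi(L_1'^{\flat})^{\vee}$ is already trivial modulo $L'^{\flat}$); otherwise the conditions $\val(u^{\flat})\geq 2$, $=1$, $=0$ are not invariant under translation by $L_1'^{\flat}$.
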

	\begin{proof} This is a linear algebra problem, so we skip the proof of the proposition.
	\end{proof}
	
	\begin{proposition}\label{proposition5.35}
		Assume that $\lambda \in \CR_{n}^{0+}$ and $L_{\lambda}$ is an $O_F$-lattice with hermitian matrix $A_{\lambda}$.
		\begin{enumerate}
			\item If $\lambda \geq (1,\dots,1)$, we have
			\begin{equation*}
				\mu^+(L_{\lambda})=\mu(L_{\lambda-1}).
			\end{equation*}
			
			\item If $\lambda \geq (2,\dots,2)$, we have
			\begin{equation*}
				\mu^{++}(L_{\lambda})=\mu^+(L_{\lambda-1}).
			\end{equation*}
		\end{enumerate}
	\end{proposition}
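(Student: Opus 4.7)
The plan is to prove both identities by constructing explicit $O_F$-linear isomorphisms between the relevant pairs of lattices that rescale the hermitian form by $\pi^{-1}$, so that the integrality conditions on the two sides line up. The hypothesis $\lambda \geq (1,\ldots,1)$ in (1) (resp.\ $\lambda \geq (2,\ldots,2)$ in (2)) is present precisely to guarantee $L_\lambda \subset \pi L_\lambda^\vee$ (resp.\ $L_\lambda \subset \pi^2 L_\lambda^\vee$), so that the quotient on the left makes sense.

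For part (1), fix a normal basis $x_1,\dots,x_n$ of $L_\lambda$ with $(x_i,x_j)=\delta_{ij}\pi^{\lambda_i}$, together with a parallel basis $y_1,\dots,y_n$ of $L_{\lambda-1}$ satisfying $(y_i,y_j)=\delta_{ij}\pi^{\lambda_i-1}$, and set $x_i^\ast=\pi^{-\lambda_i}x_i$, $y_i^\ast=\pi^{1-\lambda_i}y_i$ for the dual bases. I would then define
\[
\phi:\pi L_\lambda^\vee \longrightarrow L_{\lambda-1}^\vee,\qquad \phi(\pi x_i^\ast)=y_i^\ast,
\]
extended $O_F$-linearly. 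A direct one-line check on basis vectors gives the two key identities $(\phi(a),\phi(b))=\pi^{-1}(a,b)$ and $\phi(x_i)=\pi^{\lambda_i-1}y_i^\ast=y_i$, so $\phi(L_\lambda)=L_{\lambda-1}$. The form-rescaling immediately gives $\val(a,a)\geq 1 \Longleftrightarrow \val(\phi(a),\phi(a))\geq 0$, and since $\phi$ is an $O_F$-linear bijection sending $L_\lambda$ onto $L_{\lambda-1}$, it descends to a bijection
\[
(\pi L_\lambda^\vee)^{\geq 1}/L_\lambda \;\xrightarrow{\;\sim\;}\; (L_{\lambda-1}^\vee)^{\geq 0}/L_{\lambda-1},
\]
which is exactly $\mu^+(L_\lambda)=\mu(L_{\lambda-1})$.

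For part (2), the argument is entirely parallel: under the stronger hypothesis $\lambda\geq (2,\ldots,2)$, I would define $\phi:\pi^2 L_\lambda^\vee \to \pi L_{\lambda-1}^\vee$ by $\pi^2 x_i^\ast \mapsto \pi y_i^\ast$, verify in the same way that $(\phi(a),\phi(b))=\pi^{-1}(a,b)$ and $\phi(L_\lambda)=L_{\lambda-1}$, and conclude an $O_F$-equivariant bijection $(\pi^2 L_\lambda^\vee)^{\geq 2}/L_\lambda \xrightarrow{\sim} (\pi L_{\lambda-1}^\vee)^{\geq 1}/L_{\lambda-1}$, which is exactly $\mu^{++}(L_\lambda)=\mu^+(L_{\lambda-1})$.

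There is no genuine obstacle here; the only point that deserves a brief remark is that the condition $\val(a,a)\geq k$ really descends to cosets modulo $L_\lambda$, so that counting cosets is well-defined. For $x\in\pi L_\lambda^\vee$ and $z\in L_\lambda$ one has $(x,z)\in\pi O_F$ (by the definition of $\pi L_\lambda^\vee$) and $(z,z)\in\pi O_{F_0}$ (using $\lambda_i\geq 1$), whence $(x+z,x+z)\equiv (x,x)\pmod{\pi}$; an analogous check modulo $\pi^2$ handles the setting of (2). With that verification in hand, the proposition reduces to the construction and elementary verification of $\phi$ outlined above.
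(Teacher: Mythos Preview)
Your proof is correct and takes essentially the same approach as the paper: both arguments identify the two quotients by rescaling the hermitian form by $\pi^{\pm 1}$, with your version spelling out the explicit basis-level isomorphism $\phi$ that the paper leaves implicit. Your added remark on why the condition $\val(a,a)\geq k$ descends to $L_\lambda$-cosets is a welcome detail that the paper omits.
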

	\begin{proof}
		By definition, we have that
		\begin{equation*}
			\begin{array}{l}
				\mu(L_{\lambda-1})=\vert (L_{\lambda-1}^{\vee})^{\geq0}/L_{\lambda-1} \vert,\\
				\mu^+(L_{\lambda})=\vert (\pi L_{\lambda}^{\vee})^{\geq1}/L_{\lambda} \vert.
			\end{array}
		\end{equation*}
		
		Note that the fundamental invariants of $L_{\lambda-1}^{\vee}$ (resp. $\pi L_{\lambda}^{\vee})$) are $(-\lambda_1+1,\dots, -\lambda_n+1)$ (resp. $(-\lambda_1+2,\dots, -\lambda_n+2)$). Now, for $(L_{\lambda-1}^{\vee})^{\geq0}/L_{\lambda-1}$, and the hermitian form $\langle \cdot,\cdot \rangle$, we consider the same set with the hermitian form $\pi \langle \cdot,\cdot\rangle$. Then, this is isomorphic to $(\pi L_{\lambda}^{\vee})^{\geq1}/L_{\lambda}$. This proves (1). The proof of (2) is similar.
	\end{proof}

	\begin{proposition}\label{proposition5.36.1}
		For $\lambda \in \CR_{n-1}^{0+}$ and $L_{\lambda}=L_{\lambda \geq 2} \obot L_{\lambda=1}=:L_2 \obot L_1$, we have the followings.
		
		\begin{enumerate}
			\item (Case 1-1) We have that
			\begin{equation*}
				\vert	(\pi^2(L_2)^{\vee}\obot \pi (L_1)^{\vee})^{\geq 2}/L_2 \obot L_1\vert=\mu(L_{(\lambda \geq 2)-2}).
			\end{equation*}
			
			\item (Case 1-2) We have that
			\begin{equation*}\begin{array}{l}
					\vert	\lbrace(\pi^2(L_2)^{\vee}\obot \pi (L_1)^{\vee})^{\geq 1}-(\pi^2(L_2)^{\vee}\obot \pi (L_1)^{\vee})^{\geq 2}\rbrace/ L_2 \obot L_1\vert\\
					=q^{2t_{\ge 3}(\lambda)}\mu(L_{(\lambda \geq 3)-3})-\mu(L_{(\lambda \geq2) -2}).
			\end{array}\end{equation*}
			\item (Case 1-3+Case 3-2) We have that
			\begin{equation*}\begin{array}{l}
					\vert \lbrace (\pi^2(L_2)^{\vee}\obot \pi (L_1)^{\vee})^{\geq 0}-(\pi^2(L_2)^{\vee}\obot \pi (L_1)^{\vee})^{\geq 1}\rbrace / L_2 \obot L_1 \vert\\
					+\vert\lbrace((\pi (L_2)^{\vee}-\pi^2(L_{\lambda \geq 2})^{\vee})\obot (\pi (L_1)^{\vee}))^{\geq 0}
					-((\pi (L_2)^{\vee}-\pi^2(L_2)^{\vee})\obot (\pi (L_1)^{\vee}))^{\geq 1}\rbrace/ L_2 \obot L_1\vert\\
					=q^{2t_{\ge 2}(\lambda)}\mu(L_{(\lambda\geq 2) -2})-\mu(L_{(\lambda \geq 2)-1}).
			\end{array}\end{equation*}
			
			\item (Case 2-1+Case 4-1) We have that
			\begin{equation*}\begin{array}{l}
					\vert \lbrace (\pi^2(L_{ 2}'^{\flat})^{\vee})^{\geq0}\obot ((L_{1}'^{\flat})^{\vee}-\pi (L_{1}'^{\flat})^{\vee})^{\geq 0}\rbrace/L_2 \obot L_1 \vert\\
					+\vert\lbrace(\pi (L_{ 2}'^{\flat})^{\vee}-\pi^2(L_{ 2}'^{\flat})^{\vee})^{\geq0}\obot ((L_{1}'^{\flat})^{\vee}-\pi (L_{1}'^{\flat})^{\vee})^{\geq 0}\rbrace/ L_2 \obot L_1 \vert\\
					=q^{2t_{\ge 2}(\lambda)}\mu(L_{(\lambda \geq 2) -2})\times (\mu(L_{\lambda=1})-1).
			\end{array}\end{equation*}
			\item (Case 2-2+Case 4-2) We have that
			\begin{equation*}\begin{array}{l}
					\vert \lbrace ((\pi^2(L_{2})^{\vee})\obot ((L_{1})^{\vee}-\pi (L_{1})^{\vee}))^{\geq 0}- (\pi^2(L_{ 2})^{\vee})^{\geq0}\obot ((L_{1})^{\vee}-\pi (L_{1})^{\vee})^{\geq 0}\rbrace/L_2 \obot L_1 \vert\\
					+\vert\lbrace((\pi (L_{ 2})^{\vee}-\pi^2(L_{ 2})^{\vee})\obot ((L_{1})^{\vee}-\pi (L_{1})^{\vee}))^{\geq 0}\rbrace/ L_2 \obot L_1 \vert\\
					-\vert\lbrace (\pi (L_{ 2})^{\vee}-\pi^2(L_{ 2})^{\vee})^{\geq0}\obot ((L_{1})^{\vee}-\pi (L_{1})^{\vee})^{\geq 0}\rbrace/ L_2 \obot L_1 \vert\\
					=q^{2t_{\ge 2}(\lambda)}\lbrace \mu(L_{(\lambda\geq 2)-2} \obot L_{\lambda=1}) - \mu(L_{(\lambda \geq 2) -2})\mu(L_{\lambda=1})\rbrace.
			\end{array}\end{equation*}
			\item (Case 3-1) We have that
			\begin{equation*}\begin{array}{l}
					\vert \lbrace  ((\pi (L_{ 2})^{\vee}-\pi^2(L_{ 2})^{\vee})\obot (\pi (L_{1})^{\vee}))^{\geq 1}\rbrace/ L_2 \obot L_1 \vert
					=\mu(L_{(\lambda \geq 2) -1})-q^{2t_{\ge 3}(\lambda)}\mu(L_{(\lambda \geq3)-3}).
			\end{array}\end{equation*}
			\item (Case 5) We have that
			\begin{equation*}\begin{array}{l}
					\vert \lbrace (((L_{ 2})^{\vee}-\pi(L_{ 2})^{\vee})\obot (L_{1})^{\vee})^{\geq 0}\rbrace /L_2\obot L_1 \vert
					=\mu(L_{\lambda \geq2} \obot L_{\lambda=1})-q^{2t_{\ge 2}(\lambda)}\mu(L_{(\lambda \geq 2)-2}\obot L_{\lambda=1}).
			\end{array}\end{equation*}
		\end{enumerate}
		
		Here, we choose the following convention: $\forall i \geq j \geq 0$, if $L_{\lambda \geq i}$ is empty, then $\mu(L_{(\lambda \geq i) -j })=1$.
	\end{proposition}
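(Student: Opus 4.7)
The plan is a direct case-by-case cardinality computation using the orthogonal decomposition $L'^\flat = L'^\flat_2 \obot L'^\flat_1 \obot L'^\flat_0$. Since $L'^\flat_0 = L_{\lambda = 0}$ is unimodular, its factor contributes trivially in the quotient by $L'^\flat$, so every count reduces to a count on $(L'^\flat_2)^\vee \obot (L'^\flat_1)^\vee$ modulo $L'^\flat_2 \obot L'^\flat_1$. The two key inputs will be: (i) the orthogonality identity $\langle u_2 + u_1, u_2 + u_1\rangle = \langle u_2, u_2\rangle + \langle u_1, u_1\rangle$, which lets us analyze the norm condition componentwise when no cancellation is possible, and (ii) Proposition~\ref{proposition5.35}, which translates the $\mu^+$- and $\mu^{++}$-invariants of $L_2$ into $\mu$-invariants of the shifted lattices $L_{(\lambda\geq 2) - 1}$ and $L_{(\lambda\geq 2) - 2}$.

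For the cases whose defining region is of the separable form $A^{\geq k} \obot B^{\geq k}$ (componentwise norm constraints, as in Cases 1-1, 1-2, 1-3, 2-1+4-1, 3-1, 3-2), the count factors as a product. The $u_2^\flat$-factor is a $\mu$-, $\mu^+$-, or $\mu^{++}$-invariant of $L_2$, sometimes multiplied by the index $q^{2 t_{\geq 2}(\lambda)} = [\pi L_2^\vee : \pi^2 L_2^\vee]$ when the condition selects the ``middle'' layer, and is then rewritten via Proposition~\ref{proposition5.35}. The $u_1^\flat$-factor is either $1$ (when $u_1^\flat$ lies in $\pi L_1^\vee = L_1$, giving only the trivial class mod $L_1$) or $\mu(L_{\lambda=1}) - 1$ (counting primitive classes with integral norm). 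Set-difference regions such as $(\cdot)^{\geq 1} - (\cdot)^{\geq 2}$ are then handled by inclusion--exclusion on the corresponding product regions.

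The coupled cases, where the defining constraint is $(A \obot B)^{\geq 0}$ on the \emph{total} norm (Cases 2-2+4-2 and Case 5), will be the main source of technical complication. The total-norm condition permits cancellation between $\langle u_2^\flat, u_2^\flat\rangle$ and $\langle u_1^\flat, u_1^\flat\rangle$ of negative valuation, so the count exceeds the product of the two individual counts, and the cross-term $\mu(L_{(\lambda\geq 2) - 2} \obot L_{\lambda = 1}) - \mu(L_{(\lambda\geq 2)-2}) \mu(L_{\lambda = 1})$ on the right side of Case 2-2+4-2 records precisely this excess. The key step here is to count cosets of $L_2 \obot L_1$ in $((L_2 \obot L_1)^\vee)^{\geq 0}$ in two ways: directly as $\mu(L_2 \obot L_1)$, and as the disjoint union of componentwise-admissible cosets (the product part) and the cancellation cosets. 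Case 5 is analogous, using $\mu(L_{\lambda \geq 2} \obot L_{\lambda = 1})$ directly. The bulk of the work will be bookkeeping to match exact coefficients and powers of $q$ on both sides after applying Proposition~\ref{proposition5.35}; no single step is conceptually deep, but the hardest part will be keeping the coupled/uncoupled distinction straight across the seven overlapping regions.
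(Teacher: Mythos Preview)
Your proposal is correct and follows essentially the same approach as the paper's proof, which only writes out Cases 1-2 and 2-2+4-2 explicitly and declares the rest similar. One terminological slip: Cases 1-1, 1-2, 1-3, 3-1, 3-2 are \emph{not} literally of the separable form $A^{\geq k}\obot B^{\geq k}$ but rather carry a total-norm constraint $(A\obot B)^{\geq k}$; however, you correctly identify the mechanism that makes them behave separably, namely $\pi L_1^\vee = L_1$ forces the $u_1^\flat$-class to be trivial, so the count collapses to a count on $L_2$ alone --- exactly as the paper argues for Case 1-2. For the genuinely coupled Case 2-2+4-2 the paper proceeds by first collapsing the three displayed terms to the single set $\bigl((\pi L_2^\vee)\obot(L_1^\vee-\pi L_1^\vee)\bigr)^{\geq 0}\setminus\bigl((\pi L_2^\vee)^{\geq 0}\obot(L_1^\vee-\pi L_1^\vee)^{\geq 0}\bigr)$ and then computing four auxiliary cardinalities by the scaling $[\pi L_2^\vee:L_2]=q^{2t_{\geq 2}(\lambda)}$ together with Proposition~\ref{proposition5.35}; your two-way counting of $\mu(L_{(\lambda\geq 2)-2}\obot L_{\lambda=1})$ amounts to the same bookkeeping.
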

	\begin{proof}
		We only prove the cases: (Case 1-2) and (Case 2-2+Case 4-2). The other cases can be proved similarly.
		
		In (Case 1-2), note that $\pi(L_1)^{\vee}=L_1$. Therefore, we have
		\begin{equation*}\begin{array}{l}
				\vert	\lbrace(\pi^2(L_2)^{\vee}\obot \pi (L_1)^{\vee})^{\geq 1}-(\pi^2(L_2)^{\vee}\obot \pi (L_1)^{\vee})^{\geq 2}\rbrace/ L_2 \obot L_1\vert=\vert	\lbrace(\pi^2(L_2)^{\vee})^{\geq 1}-(\pi^2(L_2)^{\vee})^{\geq 2}\rbrace/ L_2\vert.
		\end{array}\end{equation*}
		If $L_{\lambda \geq 3}$ is empty, then we have that $\pi^2 L_2^{\vee}=L_2=L_{\lambda = 2}$. Therefore, 
		\begin{equation*}
			\vert	\lbrace(\pi^2(L_2)^{\vee})^{\geq 1}\rbrace/ L_2\vert=\vert\lbrace(\pi^2(L_2)^{\vee})^{\geq 2}\rbrace/ L_2\vert=1.
		\end{equation*}
		
		If $L_{\lambda \geq 3}$ is not empty, then we have
		\begin{equation*}\begin{array}{l}
				\vert	\lbrace(\pi^2(L_2)^{\vee})^{\geq 1}\rbrace/ L_2\vert=\vert\lbrace(\pi^2 L_{\lambda \geq 3}^{\vee})^{\geq 1}\rbrace/ L_{\lambda \geq 3}\vert=q^{2t_{\ge 3}(\lambda)}\vert\lbrace(\pi (\pi^{-1}L_{\lambda \geq 3})^{\vee})^{\geq 1}\rbrace/ \pi^{-1}L_{\lambda \geq 3}\vert\\
				=q^{2t_{\ge 3}(\lambda)} \mu^+(\pi^{-1}L_{\lambda \geq 3})=q^{2t_{\ge 3}(\lambda)} \mu^+(L_{(\lambda \geq 3)-2})=q^{2t_{\ge 3}(\lambda)} \mu(L_{(\lambda \geq 3)-3}).
			\end{array}
		\end{equation*}
		Here, we used Proposition \ref{proposition5.35}. Also, by definition, we have that $\vert	\lbrace(\pi^2(L_2)^{\vee})^{\geq 2}\rbrace/ L_2\vert=\mu^{++}(L_2)=\mu(L_{(\lambda \geq 2)-2}).$ Therefore, we have that (Case 1-2) is $q^{2t_{\ge 3}(\lambda)} \mu(L_{(\lambda \geq 3)-3})-\mu(L_{(\lambda \geq 2)-2})$.
		
		Next, consider the case (Case 2-2+Case 4-2).
		First, note that
		\begin{equation*}\begin{array}{l}
				\vert \lbrace ((\pi^2(L_{2})^{\vee})\obot ((L_{1})^{\vee}-\pi (L_{1})^{\vee}))^{\geq 0}- (\pi^2(L_{ 2})^{\vee})^{\geq0}\obot ((L_{1})^{\vee}-\pi (L_{1})^{\vee})^{\geq 0}\rbrace/L_2 \obot L_1 \vert\\
				+\vert\lbrace((\pi (L_{ 2})^{\vee}-\pi^2(L_{ 2})^{\vee})\obot ((L_{1})^{\vee}-\pi (L_{1})^{\vee}))^{\geq 0}\rbrace/ L_2 \obot L_1 \vert\\
				-\vert\lbrace (\pi (L_{ 2})^{\vee}-\pi^2(L_{ 2})^{\vee})^{\geq0}\obot ((L_{1})^{\vee}-\pi (L_{1})^{\vee})^{\geq 0}\rbrace/ L_2 \obot L_1 \vert\\
				=\vert \lbrace ((\pi(L_{2})^{\vee})\obot ((L_{1})^{\vee}-\pi (L_{1})^{\vee}))^{\geq 0}- (\pi(L_{ 2})^{\vee})^{\geq0}\obot ((L_{1})^{\vee}-\pi (L_{1})^{\vee})^{\geq 0}\rbrace/L_2 \obot L_1 \vert.
		\end{array}\end{equation*}
		
		Now, note that
		\begin{equation*}\begin{aligned}
				\vert \lbrace ((\pi(L_{2})^{\vee})\obot (L_{1}^{\vee}))^{\geq 0}\rbrace/L_2 \obot L_1 \vert&=q^{2t_{\ge 2}(\lambda)}\vert \lbrace ((\pi^{-1}L_{2})^{\vee}\obot (L_{1}^{\vee}))^{\geq 0}\rbrace/\pi^{-1}L_2 \obot L_1 \vert\\
				&=q^{2t_{\ge 2}(\lambda)}\mu(\pi^{-1}L_2\obot L_1)\\
				&=q^{2t_{\ge 2}(\lambda)}\mu(L_{(\lambda \geq 2)-2}\obot L_{\lambda=1}).
			\end{aligned}
		\end{equation*}
		
		Also, since $\pi L_1^{\vee}=L_1$, we have
		\begin{equation*}
			\vert \lbrace ((\pi(L_{2})^{\vee})\obot (\pi L_{1}^{\vee}))^{\geq 0}\rbrace/L_2 \obot L_1 \vert=\vert \lbrace ((\pi(L_{2})^{\vee}))^{\geq 0}\rbrace/L_2\vert=q^{2t_{\ge 2}(\lambda)}\mu(L_{(\lambda \geq 2)-2}),
		\end{equation*}
		
		\begin{equation*}
			\vert \lbrace(\pi(L_{ 2})^{\vee})^{\geq0}\obot ((L_{1})^{\vee})^{\geq 0}\rbrace/L_2 \obot L_1 \vert=q^{2t_{\ge 2}(\lambda)}\mu(L_{(\lambda \geq 2)-2})\times \mu(L_{\lambda=1}),
		\end{equation*}
		and
		\begin{equation*}
			\vert \lbrace(\pi(L_{ 2})^{\vee})^{\geq0}\obot (\pi(L_{1})^{\vee})^{\geq 0}\rbrace/L_2 \obot L_1 \vert=\vert \lbrace ((\pi(L_{2})^{\vee}))^{\geq 0}\rbrace/L_2\vert=q^{2t_{\ge 2}(\lambda)}\mu(L_{(\lambda \geq 2)-2}).
		\end{equation*}
		
		Combining these, we have that (Case 2-2+Case 4-2) is
		\begin{equation*}
			q^{2t_{\ge 2}(\lambda)}\mu(L_{(\lambda \geq 2)-2}\obot L_{\lambda=1})-q^{2t_{\ge 2}(\lambda)}\mu(L_{(\lambda \geq 2)-2})\times \mu(L_{\lambda=1}).
		\end{equation*}
		
		This finishes the proof of the proposition in the case (Case 2-2+Case 4-2).
	\end{proof}

	\begin{proposition}\label{proposition5.36}(cf. \cite[Lemma 8.2.3]{LZ2}, \cite[Lemma 8.4]{HLSY}) Assume that $L$ (resp. $M$) is an $O_F$-lattice of rank $n$ with hermitian matrix $A_{\lambda}$ (resp. $A_{\eta}$) such that $\lambda, \eta \in \CR_{n}^{0+}$, $\lambda, \eta \geq (1,\dots, 1)$. Assume further that $L \subset M \subset \pi^{-1}L$. Then, we have
		\begin{equation*}
			\vert	((L^{\vee})^{\geq 0} \backslash (M^{\vee})^{\geq0}) / L \vert=q^{2n-1} \vert	((\pi L^{\vee})^{\geq 1} \backslash (\pi M^{\vee})^{\geq1}) / L \vert.
		\end{equation*}
		
	\end{proposition}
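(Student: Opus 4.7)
The plan is to reduce the stated identity to a comparison of valuation strata via the multiplication-by-$\pi$ bijection on $L^\vee$, and then complete the proof by a fiber count over the residue hermitian space.

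I would first observe that $\mu_\pi \colon L^\vee \to \pi L^\vee$, $v \mapsto \pi v$, is a bijection shifting the valuation $\val$ by $+2$, sending $L$ to $\pi L$, and (crucially) sending $\pi^{-1}L$ to $L$. Its inverse identifies
\[
\bigl((\pi L^\vee)^{\geq 1}\setminus (\pi M^\vee)^{\geq 1}\bigr)/L \;\cong\; \bigl((L^\vee)^{\geq -1}\setminus (M^\vee)^{\geq -1}\bigr)/\pi^{-1}L.
\]
The hypothesis $\lambda, \eta \geq (1, \ldots, 1)$ yields $\pi^{-1}L \subset (L^\vee)^{\geq -1}$ and, via $L\cdot M \subset \pi O_F$ (valid because $L\subset M$ and $\eta\geq (1,\dots,1)$), also $\pi^{-1}L \subset (M^\vee)^{\geq -1}$; moreover $(L^\vee)^{\geq -1}$ and $(M^\vee)^{\geq -1}$ are stable under translation by $\pi^{-1}L$. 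Since $[\pi^{-1}L : L] = q^{2n}$, the claim therefore reduces to
\[
q\cdot\bigl|\bigl((L^\vee)^{\geq 0}\setminus (M^\vee)^{\geq 0}\bigr)/L\bigr|\;=\;\bigl|\bigl((L^\vee)^{\geq -1}\setminus (M^\vee)^{\geq -1}\bigr)/L\bigr|,
\]
or equivalently, splitting off the $\val\geq 0$ stratum on the right,
\[
(q-1)\cdot\bigl|\bigl((L^\vee)^{\geq 0}\setminus (M^\vee)^{\geq 0}\bigr)/L\bigr|\;=\;\bigl|\bigl((L^\vee)^{=-1}\setminus (M^\vee)^{=-1}\bigr)/L\bigr|.
\]

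For the reduced identity I would stratify the count by the projection $L^\vee \twoheadrightarrow L^\vee/\pi L^\vee$, an $\mathbb{F}_{q^2}$-space of dimension $n$ equipped with the reduced hermitian form $\overline h$ obtained by clearing denominators in the Gram matrix. Fixing $\bar v\in L^\vee/\pi L^\vee$ with lift $v \in L^\vee$, the valuation of $v+w$ as $w$ ranges over representatives of $\pi L^\vee/L$ is governed by an affine quadratic expression in $w$ whose leading term reduces mod $\pi$ to the nondegenerate $\mathbb{F}_q$-hermitian form $\langle\cdot,\cdot\rangle/\pi \bmod \pi$ on $\pi L^\vee/\pi^2 L^\vee \cong L/\pi L$. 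Counting preimages with $\val=-1$ versus $\val \geq 0$ over a single fiber then becomes a point count of affine quadrics over $\mathbb{F}_q$, with the $(q-1)$-to-$1$ ratio arising from the leading coefficient $\pi \langle v, v\rangle \bmod \pi \in \mathbb{F}_q^\times$ of a valuation-$(-1)$ representative, which takes $q-1$ nonzero values.

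The main obstacle is showing that the residual ``error'' terms in the two fiber counts (one for $L^\vee$, one for $M^\vee$) cancel exactly. These error terms depend on the types of the induced hermitian forms on $L^\vee/\pi L^\vee$ and $M^\vee/\pi L^\vee$; the hypothesis $L \subset M \subset \pi^{-1}L$ identifies $M^\vee/\pi L^\vee$ with the orthogonal complement under $\overline h$ of $\pi M/\pi^2 L^\vee \subset \pi L^\vee/\pi^2 L^\vee$, which should force the residue form types to match. When $L$ and $M$ have constant fundamental invariants, this matching is immediate and the argument reduces to the hyperspecial-type proofs of \cite[Lemma 8.2.3]{LZ2} and \cite[Lemma 8.4]{HLSY}. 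In general one must decompose $L$ and $M$ into Jordan blocks of equal diagonal and verify the matching block-by-block, controlling cross-block contributions; this combinatorial bookkeeping is the main technical challenge.
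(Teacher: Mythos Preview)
Your reduction to the identity
\[
(q-1)\cdot\bigl|\bigl((L^\vee)^{\geq 0}\setminus (M^\vee)^{\geq 0}\bigr)/L\bigr|\;=\;\bigl|\bigl((L^\vee)^{=-1}\setminus (M^\vee)^{=-1}\bigr)/L\bigr|
\]
is valid, but the fibrewise argument you sketch after that has a real gap. You assert that the ``leading term'' of $(v+w,v+w)$ in $w\in\pi L^{\vee}$ reduces modulo $\pi$ to a \emph{nondegenerate} hermitian form on $L/\pi L$ (via $\langle\cdot,\cdot\rangle/\pi$). This is only true when every $\lambda_i=1$: in general the Gram matrix of $\langle\cdot,\cdot\rangle/\pi$ on $L$ is $\mathrm{diag}(\pi^{\lambda_1-1},\dots,\pi^{\lambda_n-1})$, which is degenerate mod $\pi$ as soon as some $\lambda_i>1$. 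Your subsequent worry about matching residue-form types and Jordan blocks is a symptom of this; the argument as written does not close.

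The paper's proof avoids all of this by working with the map in the \emph{other} direction. Consider
\[
\pi:\ \bigl((L^{\vee})^{\geq 0}\setminus (M^{\vee})^{\geq 0}\bigr)/L\ \longrightarrow\ \bigl((\pi L^{\vee})^{\geq 1}\setminus (\pi M^{\vee})^{\geq 1}\bigr)/L,\qquad x\mapsto \pi x,
\]
with \emph{the same} quotient by $L$ on both sides. The fibre over $x\in(\pi L^{\vee})^{\geq 1}\setminus(\pi M^{\vee})^{\geq 1}$ is $\{\,y\in L/\pi L:\ (x+y,x+y)\equiv 0\pmod{\pi^2}\,\}$. Expanding in the diagonal basis $(e_i)$ of $L$ and writing $x=\sum_i\mu_i\pi^{-\lambda_i+1}e_i$, the congruence becomes a single affine $\BF_q$-linear equation in $y=\sum_i\nu_ie_i$, namely a trace condition $\mathrm{Tr}(\mu_k\overline{\nu_k})=\text{(known)}$. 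The decisive point is this: from $x\notin\pi M^{\vee}$, together with $M\subset\pi^{-1}L$ and $\eta\geq(1,\dots,1)$, one shows there is an index $k$ with $\lambda_k>1$ and $\mu_k\in O_F^{\times}$ (indeed, if all such $\mu_k$ lay in $\pi O_F$ then $x\in L+\pi^2 L^{\vee}\subset M+\pi M^{\vee}\subset\pi M^{\vee}$). That single unit coefficient makes the trace map $\nu_k\mapsto\mathrm{Tr}(\mu_k\overline{\nu_k})$ surjective onto $\BF_q$, so the fibre has size exactly $q^{2n-1}$.

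The missing idea in your proposal is precisely this direct use of the condition $x\notin\pi M^{\vee}$ to produce one nonvanishing linear coefficient; once you have it there is no residue-form type to match and no Jordan-block bookkeeping.
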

	\begin{proof} Here, we follow the proof of \cite[Lemma 8.2.3]{LZ2}.
		Consider the map
		\begin{equation*}\begin{array}{ccc}
				((L^{\vee})^{\geq 0} \backslash (M^{\vee})^{\geq0}) / L & \longrightarrow &	((\pi L^{\vee})^{\geq 1} \backslash (\pi M^{\vee})^{\geq1}) / L\\
				x &\longmapsto & \pi x.
			\end{array}
		\end{equation*}
		To prove the proposition, it suffices to show that the above map is surjective and every fiber has size $q^{2n-1}$. Choose $x \in (\pi L^{\vee})^{\geq 1} \backslash (\pi M^{\vee})^{\geq 1}$. Then the fiber of $x$ is given by
		\begin{equation*}
			\lbrace \dfrac{1}{\pi}(x+y) \in (L^{\vee})^{\geq 0}, y \in L/\pi L \rbrace.
		\end{equation*}
		Since $x \in \pi L^{\vee}$, the condition $\dfrac{1}{\pi}(x+y) \in (L^{\vee})^{\geq 0}$ is equivalent to
		\begin{equation*}
			y \in L \cap \pi L^{\vee} / \pi L, \text{ and } ( y+x, y+x ) \equiv 0 (\text{mod } \pi^2).
		\end{equation*}
		
		Choose a basis $e=\lbrace e_1, \dots, e_n \rbrace$ of $L$ such that the hermitian matrix of $L$ with respect to $e$ is $A_{\lambda}$. Then, we have
		\begin{equation*}
			\pi L^{\vee}=\oplus_{i} O_F (\pi^{-\lambda_i+1}e_i).
		\end{equation*}
		
		Write
		\begin{equation*}\begin{array}{l}
				x=\mathlarger{\sum}_i \mu_i \pi^{-\lambda_i+1}e_i, \quad \mu_i \in O_F,\\
				y=\mathlarger{\sum}_i \nu_i e_i, \quad \nu_i \in O_F.
			\end{array}
		\end{equation*}
		
		Then, we have
		\begin{equation*}\begin{array}{l}
				( y,y)\equiv \mathlarger{\sum}_{\lambda_i=1} ( \nu_i e_i,\nu_i e_i )\equiv \mathlarger{\sum}_{\lambda_i=1}\pi \nu_i \overline{\nu_i}(\text{mod } \pi^2),\\
				( y,x )=\mathlarger{\sum}_i ( \nu_ie_i, \mu_i \pi^{-\lambda_i+1}e_i)=\mathlarger{\sum}_{i}\nu_i \overline{\mu_i}\pi.
			\end{array}
		\end{equation*}
		Since $x \in (\pi L^{\vee})^{\geq 1}$, the condition $( y+x,y+x ) \equiv 0 (\text{mod }\pi^2)$ is equivalent to
		\begin{equation}\label{eq5.26}
			\dfrac{1}{\pi}( x, x )+\mathlarger{\sum}_{\lambda_i=1}(\nu_i \overline{\nu_i}+\nu_i \overline{\mu_i}+\overline{\nu_i}\mu_i)+\mathlarger{\sum}_{\lambda_i >1}(\nu_i \overline{\mu_i}+\overline{\nu_i}\mu_i) \equiv 0 (\text{mod } \pi).
		\end{equation}
		
		Since $x \notin \pi M^{\vee}$, there is at least one $i$ such that $\mu_i \not\equiv 0 (\text{mod }\pi)$, and $\lambda_i > 1$. Let $\mu_k$ be such a coefficient, i.e., $\mu_k \not\equiv 0 (\text{mod }\pi)$, and $\lambda_k > 1$.
		
		Now, let $\beta=\dfrac{1}{\pi}( x,x)+\mathlarger{\sum}_{\lambda_i=1}(\nu_i \overline{\nu_i}+\nu_i \overline{\mu_i}+\overline{\nu_i}\mu_i).$ Then, \eqref{eq5.26} can be written as
		\begin{equation*}\begin{array}{ll}
				&\beta+	\mathlarger{\sum}_{\lambda_i >1}(\nu_i \overline{\mu_i}+\overline{\nu_i}\mu_i) \equiv 0 (\text{mod } \pi)\\
				\Longleftrightarrow & \mathrm{Tr}(\mu_k\overline{\nu_k})\equiv -\beta	-\mathlarger{\sum}_{\lambda_i >1, i\neq k}(\nu_i \overline{\mu_i}+\overline{\nu_i}\mu_i) (\text{mod } \pi).
			\end{array}
		\end{equation*}
		Choose any $\nu_i, i \neq k$, so there are $q^{2(n-1)}$-choices. Also, $\mathrm{Tr}:\BF_{q^2} \rightarrow \BF_q$ is surjective and every fiber has size $q$. Therefore, for each $\lbrace \nu_i\rbrace_{i \neq k}$, there are $q$-solutions of $\nu_k$, and hence \begin{equation*}
			\vert	\lbrace \dfrac{1}{\pi}(x+y) \in (L^{\vee})^{\geq 0}, y \in L/\pi L \rbrace \vert=q^{2n-1}.
		\end{equation*}
		
		This finishes the proof of the proposition.
		
	\end{proof}

	\begin{proposition}\label{proposition5.37}(cf. \cite[Lemma 8.2.6]{LZ2}, \cite[Lemma 8.6]{HLSY})
		Assume that $L$ is an $O_F$-lattice of rank $n$ and $e=\lbrace e_1,\dots,e_n \rbrace$ is a basis of $L$. Also, assume that the hermitian matrix of $L$ with respect to $e$ is $A_{\lambda}$ where $\lambda \in \CR_{n}^{0+}$, $\lambda \geq (1,\dots, 1)$. We choose an $O_F$-lattice $M$ as follows.
		\begin{enumerate}
			\item If $\lambda_1 \geq 3$, we choose an $O_F$-lattice $M \supset L$ such that $M=O_F (\dfrac{1}{\pi}e_1) \mathlarger{\oplus} \oplus_{i \neq 1} O_F (e_i)$ with fundamental invariants $(\lambda_1-2,\lambda_2,\dots,\lambda_n)$.
			
			\item If $\lambda_1=\lambda_2=2$ (so, $\lambda_i \leq 2$ for all $i$), we choose an $O_F$-lattice $M \supset L$ such that the fundamental invariants of $M$ are $( \lambda_1-1,\lambda_2-1,\lambda_3,\dots,\lambda_n )$. 
		\end{enumerate}
		
		Then, we have that
		\begin{equation*}
			\vert	((\pi L^{\vee})^{\geq 0} \backslash (\pi M^{\vee})^{\geq0}) / L \vert=q \vert	((\pi L^{\vee})^{\geq 1} \backslash (\pi M^{\vee})^{\geq1}) / L \vert.
		\end{equation*}
	\end{proposition}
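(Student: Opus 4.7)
The plan is to adapt the orbit-counting argument of Proposition~\ref{proposition5.36} to the present setting, constructing in each case a shift action on $\pi L^\vee/L$ whose orbits have a uniform ratio of ``$\val\geq 1$'' to ``$\val\geq 0$'' elements equal to $1{:}q$.

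For case (1), where $\lambda_1 \geq 3$, I would let $O_F/\pi \cong \mathbb{F}_{q^2}$ act freely on $\pi L^\vee/L$ by $\alpha \cdot [x] = [x + \pi^{-1}\alpha e_1]$, giving orbits of size $q^2$. Three things need to be checked. First, the reduction of the first coefficient $\mu_1$ of $x$ modulo $\pi$ is preserved (since $\lambda_1 - 2 \geq 1$), hence so is the condition $x \notin \pi M^\vee$, which is precisely $\mu_1 \in O_F^\times$. Second, a direct expansion gives
\[
(x + \pi^{-1}\alpha e_1,\, x + \pi^{-1}\alpha e_1) \;=\; (x,x) + \tr(\bar\alpha\bar\mu_1) + |\alpha|^2 \pi^{\lambda_1 - 2},
\]
with the last term in $\pi O_{F_0}$, so the shift lies in $O_{F_0}$ and preserves the condition $\val((x,x)) \geq 0$. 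Third, modulo $\pi$ the shift equals $\tr(\bar\alpha\bar\mu_1)$; since $\mu_1 \in O_F^\times$ and $\tr\colon \mathbb{F}_{q^2} \to \mathbb{F}_q$ is surjective with fibers of size $q$, exactly $q$ of the $q^2$ orbit elements satisfy $\val((x,x)) \geq 1$, yielding $|A| = q|B|$.

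For case (2), where $\lambda_1 = \lambda_2 = 2$ and $\lambda_i \leq 2$ for all $i$, the naive shift along $e_1$ no longer preserves the $\pi M^\vee$ condition, so I would argue instead through the intrinsic hermitian geometry of the quotient. Here $\pi L^\vee$ has invariants in $\{0,1\}$, so $\val((x,x)) \geq 0$ is automatic for $x \in \pi L^\vee$, and the problem reduces to comparing total and isotropic counts. Set $V = \pi L^\vee/L \cong (O_F/\pi)^k$ with $k = \#\{i : \lambda_i = 2\}$, equipped with the standard non-degenerate hermitian form inherited from $L$, and $V' = \pi M^\vee/L$ a codimension-one subspace of size $q^{2(k-1)}$. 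The key claim is that $(V')^\perp = M/L$ inside $V$: unfolding the orthogonality condition, $[w] \in (V')^\perp$ iff $(w,v) \in \pi O_F$ for all $v \in \pi M^\vee$, which is equivalent by standard lattice duality to $w \in M$; since $M$ is integral, $M \subset \pi M^\vee$, so $(V')^\perp \subset V'$ and $Q|_{V'}$ is degenerate of rank $k - 2$ with one-dimensional radical $M/L$.

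From this, the counting is a direct application of the formula $N_r = q^{2r-1} + (-1)^r(q-1)q^{r-1}$ for the number of isotropic vectors in a non-degenerate rank-$r$ hermitian space over $\mathbb{F}_{q^2}/\mathbb{F}_q$. Applied once to $V$ and once to the non-degenerate quotient $V'/(M/L)$ and then scaled by $|M/L| = q^2$, the correction terms cancel and one finds $|B| = N_k - q^2 N_{k-2} = q^{2k-1} - q^{2k-3}$, matching $|A|/q = (q^{2k} - q^{2k-2})/q$ exactly. The main obstacle I anticipate is pinning down the identification $(V')^\perp = M/L$ cleanly in case (2); this is what forces the elementary orbit analysis of case (1) to be replaced by hermitian-form bookkeeping, and once it is in hand, everything else is routine lattice algebra and a polynomial identity in $q$.
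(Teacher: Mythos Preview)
Your proof is correct. In case~(1) your orbit argument is essentially a streamlined version of the paper's: the paper fixes the projection $x_0$ away from $e_1$ and lets the full coefficient $\mu_1$ vary, then counts solutions to a chain of congruences on the $\pi$-adic digits $b_0,\dots,b_{\lambda_1-2}$ of $\mu_1$; your action by $\alpha\mapsto x+\alpha\pi^{-1}e_1$ amounts to varying only the top digit $b_{\lambda_1-2}$, which is exactly the step where the paper's two counts differ by the factor~$q$. Two small slips: the cross term is $\tr(\bar\alpha\mu_1)$ rather than $\tr(\bar\alpha\bar\mu_1)$, and the inclusion $M\subset\pi M^\vee$ follows not from integrality alone but from all invariants of $M$ being $\geq 1$.

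In case~(2) your route is genuinely different from the paper's. The paper constructs an explicit basis $f_1,\dots,f_n$ of $\pi M^\vee$ together with an element $\varepsilon$ satisfying $1+\varepsilon\bar\varepsilon=\pi$, writes $\pi L^\vee$ in these coordinates, and then repeats a fiberwise trace-count analogous to case~(1). You instead work intrinsically with the $\BF_{q^2}$-hermitian space $V=\pi L^\vee/L$, identify $V'=\pi M^\vee/L$ as a hyperplane whose radical is the line $M/L$, and reduce everything to the closed formula $N_r=q^{2r-1}+(-1)^r(q-1)q^{r-1}$ for isotropic counts. Your argument is cleaner and avoids the ad~hoc choice of~$\varepsilon$, at the cost of invoking the isotropic-vector formula; the paper's argument is more uniform with its case~(1) and entirely self-contained. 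Both are valid and short.
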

	\begin{proof}
		Here, we follow the proof of \cite[Lemma 8.2.6]{LZ2}.
		\begin{enumerate}
			\item In this case, we have that
			\begin{equation*}
				\pi L^{\vee}=\mathlarger{\oplus}_i O_F(\pi^{-\lambda_i+1} e_i),
			\end{equation*}
			and
			\begin{equation*}
				\pi M^{\vee}=O_F(\pi^{-\lambda_1+2}e_1) \oplus (\bigoplus_{i \neq 1} O_F(\pi^{-\lambda_i+1} e_i)).
			\end{equation*}
			
			Fix an element $x_0=\mathlarger{\sum}_{i \neq 1} \mu_i \pi^{-\lambda_i+1}e_i, \mu_i \in O_F$. Consider the sets
			\begin{equation*}\begin{array}{l}
					S_{x_0}^{\geq 0}\coloneqq \lbrace x \in (\pi L^{\vee})^{\geq 0} \backslash (\pi M^{\vee})^{\geq 0} \mid x=x_0+\mu_1 \pi^{-\lambda_1+1}e_1, \mu_1 \in O_F \rbrace/L\\
					S_{x_0}^{\geq 1}\coloneqq \lbrace x \in (\pi L^{\vee})^{\geq 1} \backslash (\pi M^{\vee})^{\geq 1} \mid x=x_0+\mu_1 \pi^{-\lambda_1+1}e_1, \mu_1 \in O_F \rbrace/L.
				\end{array}
			\end{equation*}
			
			Then, it suffices to show that $\vert S_{x_0}^{\geq 0}\vert=q \vert S_{x_0}^{\geq 1}\vert.$
			
			Note that $x \notin \pi M^{\vee}$ if and only if $\mu_1 \in O_F^{\times}$. Also, we have
			\begin{equation*}
				( x, x )=( x_0,x_0 )+\mu_1 \overline{\mu_1}\pi^{-\lambda_1+2}.
			\end{equation*}
			Therefore, we have that
			\begin{equation*}
				\begin{array}{lll}
					x \in S_{x_0}^{\geq 0} & \text{ if and only if } & \pi^{\lambda_1-2}( x_0,x_0)+ \mu_1 \overline{\mu_1} \equiv 0 (\text{mod } \pi^{\lambda_1-2}), \mu_1 \in O_F^{\times},\\
					x \in S_{x_0}^{\geq 1} & \text{ if and only if } & \pi^{\lambda_1-2}( x_0,x_0 )+ \mu_1 \overline{\mu_1} \equiv 0 (\text{mod } \pi^{\lambda_1-1}), \mu_1 \in O_F^{\times}.
				\end{array}
			\end{equation*}
			Let us write
			\begin{equation*}
				\begin{array}{l}
					\mu_1=b_0+b_1\pi+b_2\pi^2 +\dots,\\
					-\pi^{\lambda_1-2}( x_0,x_0 )=c_0+c_1 \pi +c_2 \pi^2 +\dots .
				\end{array}
			\end{equation*}
			Then $x \in S_{x_0}^{\geq 0}$ if and only if $b_0 \in O_F^{\times}$ and
			\begin{equation}\label{eq5.27}
				\begin{array}{l}
					c_0=b_0\overline{b_0},\\
					c_1=b_0\overline{b_1}+b_1\overline{b_0},\\
					\vdots\\
					c_{\lambda_1-3}=b_0\overline{b_{\lambda_1-3}}+\dots+b_{\lambda_1-3}\overline{b_0}.
				\end{array}
			\end{equation}
			Also, $x \in S_{x_0}^{\geq 1}$ if and only if $x \in S_{x_0}^{\geq 0}$ and 
			\begin{equation}\label{eq5.28}\begin{array}{ll}
					&c_{\lambda_1-2}=b_0\overline{b_{\lambda_1-2}}+\dots+b_{\lambda_1-2}\overline{b_0}\\
					\Longleftrightarrow & \mathrm{Tr}(b_0\overline{b_{\lambda_1-2}})=c_{\lambda_1-2}-b_1\overline{b_{\lambda_1-3}}-\dots-b_{\lambda_1-3}\overline{b_1}.
				\end{array}
			\end{equation}
			Now, for each $\lbrace b_0,b_1,\dots,b_{\lambda_1-3}\rbrace$ satisfying \eqref{eq5.27}, there are $q^2$-choices of $b_{\lambda_1-2}$ so that $x \in S_{x_0}^{\geq 0}$. Also, there are $q$-choices of $b_{\lambda_1-2}$ so that \eqref{eq5.28} is true, and hence $ x \in S_{x_0}^{\geq 1}$. This shows that $\vert S_{x_0}^{\geq 0}\vert=q \vert S_{x_0}^{\geq 1}\vert.$

			\item In this case, we may choose a basis $f=\lbrace f_1, f_2, \dots, f_n \rbrace$ of $\pi M^{\vee}$ such that the hermitian matrix of $\pi M^{\vee}$ with respect to $f$ is
			
			\begin{equation*}
				\left(	\begin{array}{ccccc	}
					\pi^{1} & & & &\\
					& \pi^{1}&&&\\
					&& \pi^{-\lambda_3+2} &&\\
					&&& \ddots&\\
					&&&&\pi^{-\lambda_n+2}
				\end{array}\right),
			\end{equation*}
			and $\pi L^{\vee}=O_F ( \pi^{-1}(\varepsilon f_1+ f_2)) \oplus O_F (\pi^{-1}(f_1-\overline{\varepsilon} f_2) ) \oplus \bigoplus_{i \neq 1,2} O_F(f_i),$ for some $\varepsilon$ such that $1+\varepsilon \overline{\varepsilon}=\pi$. Then, $\lbrace \pi^{-1}(\varepsilon f_1+ f_2), f_2, f_3,\dots,f_n \rbrace$ forms a basis for $\pi L^{\vee}$.
			
			We fix $x_0=\mathlarger{\sum}_{i \neq 1} \mu_i f_i, \mu_i \in O_F$, and let $x_0=x_1+\mu_{2}f_2$. Then, consider the sets  
			\begin{equation*}\begin{array}{l}
					S_{x_0}^{\geq 0}\coloneqq\lbrace x \in (\pi L^{\vee})^{\geq 0} \backslash (\pi M^{\vee})^{\geq 0} \mid x=x_0+\mu_1 \pi^{-1}(\varepsilon f_1+ f_2), \mu_1 \in O_F \rbrace/L,\\
					S_{x_0}^{\geq 1}\coloneqq\lbrace x \in (\pi L^{\vee})^{\geq 1} \backslash (\pi M^{\vee})^{\geq 1} \mid x=x_0+\mu_1 \pi^{-1}(\varepsilon f_1+ f_2), \mu_1 \in O_F \rbrace/L.
				\end{array}
			\end{equation*}
			
			Note that $x \notin \pi M^{\vee}$ if and only if $\mu_1 \in O_F^{\times}$. Also, we have that   
			\begin{align*} 
				( x, x )&=( x_1+\mu_2f_2+\mu_1 \pi^{-1}(\varepsilon f_1+ f_2),x_1+\mu_2f_2+\mu_1 \pi^{-1}(\varepsilon f_1+ f_2)) \\
				&=( x_1,x_1 )+\mu_2 \overline{\mu_2}\pi+\mu_2\overline{\mu_1}+\overline{\mu_2}\mu_1+\mu_1\overline{\mu_1}.
			\end{align*}
			Therefore, we have
			\begin{equation*}
				\begin{array}{lll}
					x \in S_{x_0}^{\geq 0} & \text{ if and only if } & ( x_1,x_1 )+\mu_2 \overline{\mu_2}\pi+\mu_2\overline{\mu_1}+\overline{\mu_2}\mu_1+\mu_1\overline{\mu_1} \in O_F, \text{and }\mu_1 \in O_F^{\times},\\
					x \in S_{x_0}^{\geq 1} & \text{ if and only if } & ( x_1,x_1 )+\mu_2 \overline{\mu_2}\pi+\mu_2\overline{\mu_1}+\overline{\mu_2}\mu_1+\mu_1\overline{\mu_1} \equiv 0 (\text{mod } \pi), \text{and }\mu_1 \in O_F^{\times}.
				\end{array}
			\end{equation*}
			
			Write  
			\begin{align*} 
				-( x_1, x_1 )&=d_0+d_1 \pi + d_2 \pi^2 +\cdots,\\
				\mu_1&=b_0+b_1 \pi +b_2 \pi^2+\cdots,\\
				\mu_2&=c_0+c_1 \pi + c_2\pi^2+\cdots.
			\end{align*}
			Then, $x \in S_{x_0}^{\geq 0}$ if and only if $b_0 \in O_F^{\times}$. Therefore, there are $q^2$-choices of $c_0$.
			
			Also, $x \in S_{x_0}^{\geq 1}$ if and only if $b_0 \in O_F^{\times}$ and
			\begin{equation*}\begin{array}{ll}
					&d_0=c_0 \overline{b_0}+\overline{c_0}b_0+b_0\overline{b_0}\\
					\Longleftrightarrow & \mathrm{Tr}(c_0\overline{b_0})=d_0-b_0\overline{b_0}.
				\end{array}
			\end{equation*}
			Therefore, for each $b_0 \in O_F^{\times}$, there are $q$-choices of $c_0$. This shows that $\vert S_{x_0}^{\geq 0}\vert=q \vert S_{x_0}^{\geq 1}\vert.$ This finishes the proof of the proposition.

		\end{enumerate}
	\end{proof}

	\begin{proposition}\label{proposition5.38}
		Assume that $\lambda \in \CR_{n}^{0+}$.
		\begin{enumerate}
			\item If $\lambda_1 \geq 3$, we define
			\begin{equation*}
				\eta=(\lambda_1-2,\lambda_2,\dots,\lambda_n) \in \CR_{n}^{0+},
			\end{equation*}
			(if necessary, we change the order of $\eta_i$'s so that $\eta \in \CR_{n}^{0+})$.
			
			Then, we have
			\begin{equation*}\begin{aligned}
					\mu(L_{\lambda})&=\mu(L_{\lambda \geq 1})\\
					&=q^2\mu(L_{\eta \geq 1})+q^{2t_{\ge 1}(\lambda)+2t_{\ge 2}(\lambda)-2}\mu(L_{(\lambda \geq 2) -2})-q^{2t_{\ge 1}(\lambda)+2t_{\ge 2}(\lambda)}\mu(L_{(\eta \geq 2) -2}).
				\end{aligned}
			\end{equation*}
			
			\item If $\lambda_1= \lambda_2=2$, we define
			\begin{equation*}
				\eta=(\lambda_1-1,\lambda_2-1,\dots,\lambda_n) \in \CR_{n}^{0+},
			\end{equation*}
			(if necessary, we change the order of $\eta_i$'s so that $\eta \in \CR_{n}^{0+})$.
			
			Then, we have
			\begin{equation*}\begin{aligned}
					\mu(L_{\lambda})&=\mu(L_{\lambda \geq 1})\\
					&=q^2\mu(L_{\eta \geq 1})+q^{2t_{\ge 1}(\lambda)+2t_{\ge 2}(\lambda)-2}\mu(L_{(\lambda \geq 2) -2})-q^{2t_{\ge 1}(\eta)+2t_{\ge 2}(\eta)}\mu(L_{(\eta \geq 2) -2}).
				\end{aligned}
			\end{equation*}
		\end{enumerate}
		
		Here, we choose the following convention: If $L_{\lambda \geq 2}$ is empty, we assume $\mu(L_{(\lambda \geq 2) -2 })=1$.
	\end{proposition}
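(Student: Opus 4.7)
The plan is to combine Propositions \ref{proposition5.36} and \ref{proposition5.37} into the identity
$$|((L_\lambda^\vee)^{\geq 0}\backslash (M^\vee)^{\geq 0})/L_\lambda|=q^{2n-2}\cdot|((\pi L_\lambda^\vee)^{\geq 0}\backslash (\pi M^\vee)^{\geq 0})/L_\lambda|,$$
applied to the explicit intermediate lattice $M\supset L_\lambda$ produced by Proposition \ref{proposition5.37}. In both cases of Proposition \ref{proposition5.38} this $M$ is isometric to $L_\eta$, satisfies $L_\lambda\subset M\subset\pi^{-1}L_\lambda$ with $[M:L_\lambda]=q^2$, and fulfills the hypotheses of Proposition \ref{proposition5.36}.

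First I would reduce to $\lambda\geq(1,\ldots,1)$. The summand $L_{\lambda=0}$ is self-dual, so $(L_\lambda^\vee)^{\geq 0}=(L_{\lambda\geq 1}^\vee)^{\geq 0}\obot L_{\lambda=0}$ and taking the quotient by $L_\lambda$ cancels the $L_{\lambda=0}$ factor; this gives the asserted first equality $\mu(L_\lambda)=\mu(L_{\lambda\geq 1})$ and leaves $n=t_{\geq 1}(\lambda)$. Next I would decompose
$$\mu(L_\lambda)=|(L_\eta^\vee)^{\geq 0}/L_\lambda|+|((L_\lambda^\vee)^{\geq 0}\backslash(L_\eta^\vee)^{\geq 0})/L_\lambda|;$$
the first summand equals $[L_\eta:L_\lambda]\,\mu(L_\eta)=q^2\mu(L_{\eta\geq 1})$, producing the leading term of the claim, while the second is converted by the combined Propositions \ref{proposition5.36} and \ref{proposition5.37} to $q^{2n-2}\bigl(|(\pi L_\lambda^\vee)^{\geq 0}/L_\lambda|-|(\pi L_\eta^\vee)^{\geq 0}/L_\lambda|\bigr)$.

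To evaluate the two remaining counts I would pass to the orthogonal decomposition $L_\lambda=L_{\lambda\geq 2}\obot L_{\lambda=1}$. The key observation is that $(\pi L_\lambda^\vee)^{\geq 0}=(\pi L_{\lambda\geq 2}^\vee)^{\geq 0}\obot L_{\lambda=1}$: one inclusion is immediate, while the other follows because $L_{\lambda=1}$ has self-pairings of valuation $\geq 1$ and so cannot cancel any vector in $\pi L_{\lambda\geq 2}^\vee$ whose self-pairing has negative valuation. Consequently $|(\pi L_\lambda^\vee)^{\geq 0}/L_\lambda|=|(\pi L_{\lambda\geq 2}^\vee)^{\geq 0}/L_{\lambda\geq 2}|$. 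Identifying $\pi L_{\lambda\geq 2}^\vee$ with $L_{(\lambda\geq 2)-2}^\vee$ (they share the spanning set $\{\pi^{-\lambda_i+1}e_i\}$) and using $[L_{(\lambda\geq 2)-2}:L_{\lambda\geq 2}]=q^{2t_{\geq 2}(\lambda)}$ then yields $q^{2t_{\geq 2}(\lambda)}\mu(L_{(\lambda\geq 2)-2})$. The analogous calculation for $L_\eta$, multiplied by $[L_\eta:L_\lambda]=q^2$, provides the $\mu(L_{(\eta\geq 2)-2})$ contribution; assembling the pieces and tracking the exponents (noting that in case (1) $t_{\geq 2}$ is unchanged or drops by $1$ from $\lambda$ to $\eta$, whereas in case (2) it drops by $2$) yields the stated identity.

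The main obstacle is verifying that the concrete $M$ constructed in Proposition \ref{proposition5.37}, especially in case (2) where it is built using an auxiliary element $\varepsilon$ with $1+\varepsilon\bar\varepsilon=\pi$, is genuinely isometric to $L_\eta$ with the advertised fundamental invariants. Once that identification is secured and the orthogonal splitting of $(\pi L^\vee)^{\geq 0}$ is in hand, the remaining exponent arithmetic producing $q^{2t_{\geq 1}(\lambda)+2t_{\geq 2}(\lambda)-2}$ and $q^{2t_{\geq 1}(\lambda)+2t_{\geq 2}(\lambda)}$ is purely bookkeeping.
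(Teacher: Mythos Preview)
Your proposal is correct and follows essentially the same route as the paper: combine Propositions~\ref{proposition5.36} and~\ref{proposition5.37} to obtain the key identity $|((L_\lambda^\vee)^{\ge 0}\setminus(L_\eta^\vee)^{\ge 0})/L_\lambda|=q^{2t_{\ge 1}(\lambda)-2}|((\pi L_\lambda^\vee)^{\ge 0}\setminus(\pi L_\eta^\vee)^{\ge 0})/L_\lambda|$, then evaluate each of the four counts via the orthogonal splitting and the identification $\pi^{-1}L_{\lambda\ge 2}\cong L_{(\lambda\ge 2)-2}$. Your ``main obstacle'' is not actually an obstacle: in Proposition~\ref{proposition5.37} the lattice $M$ is \emph{chosen} to have the stated fundamental invariants (the $\varepsilon$-construction appears only in the proof to pick a convenient basis), so $M\cong L_\eta$ holds by hypothesis.
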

	\begin{proof}
		By Proposition \ref{proposition5.36} and Proposition \ref{proposition5.37}, we have 
		\begin{equation*}
			\vert	((L_{\lambda}^{\vee})^{\geq 0} \backslash (L_{\eta}^{\vee})^{\geq0}) / L_{\lambda} \vert=q^{2t_{\ge 1}(\lambda)-1} \vert	((\pi L_{\lambda}^{\vee})^{\geq 1} \backslash (\pi L_{\eta}^{\vee})^{\geq1}) / L_{\lambda} \vert,
		\end{equation*}
		and
		\begin{equation*}
			\vert	((\pi L_{\lambda}^{\vee})^{\geq 0} \backslash (\pi L_{\eta}^{\vee})^{\geq0}) / L_{\lambda} \vert=q \vert	((\pi L_{\lambda}^{\vee})^{\geq 1} \backslash (\pi L_{\eta}^{\vee})^{\geq1}) / L_{\lambda} \vert.
		\end{equation*}
		
		Therefore, we have
		\begin{equation}\label{eq5.29}
			\vert	((L_{\lambda}^{\vee})^{\geq 0} \backslash (L_{\eta}^{\vee})^{\geq0}) / L_{\lambda} \vert=q^{2t_{\ge 1}(\lambda)-2}\vert	((\pi L_{\lambda}^{\vee})^{\geq 0} \backslash (\pi L_{\eta}^{\vee})^{\geq0}) / L_{\lambda} \vert.
		\end{equation}
		
		Note that
		\begin{equation*}\begin{array}{l}
				\vert (L_{\lambda}^{\vee})^{\geq 0}/L_{\lambda}\vert=\mu(L_{\lambda}),\\
				\vert (L_{\eta}^{\vee})^{\geq 0}/L_{\lambda}\vert=q^2 \vert (L_{\eta}^{\vee})^{\geq 0}/L_{\eta}\vert=q^2 \mu(L_{\eta}).
			\end{array}
		\end{equation*}
		
		Also, note that $(\pi L^{\vee}_{\lambda})^{\geq 0}/L_{\lambda}=(\pi (L_{\lambda \geq 2})^{\vee})^{\geq 0}/L_{\lambda \geq 2}$, and it is trivial if $L_{\lambda \geq 2}$ is empty. Since the fundamental invariants of $L_{\lambda \geq 2}$ are at least $2$, any element $x$ of $\pi^{-1}L_{\lambda \geq 2}$ has valuation at least $0$. Therefore, we have
		\begin{equation*}\begin{array}{l}
				\vert (\pi L^{\vee}_{\lambda})^{\geq 0}/L_{\lambda}\vert=\vert(\pi (L_{\lambda \geq 2})^{\vee})^{\geq 0}/L_{\lambda \geq 2}\vert
				=q^{2t_{\ge 2}(\lambda)}\vert((\pi^{-1}L_{\lambda \geq 2})^{\vee})^{\geq 0}/ (\pi^{-1}L_{\lambda \geq 2})\vert\\\\
				=q^{2t_{\ge 2}(\lambda)} \mu(\pi^{-1}L_{\lambda \geq 2})
				=q^{2t_{\ge 2}(\lambda)} \mu(L_{(\lambda \geq 2)-2}).
			\end{array}
		\end{equation*}
		Similarly, we have
		\begin{equation*}
			\vert (\pi L_{\eta}^{\vee})^{\geq 0}/L_{\lambda} \vert=q^2 \vert (\pi L_{\eta}^{\vee})^{\geq 0}/L_{\eta} \vert=q^{2+2t_{\ge 2}(\eta)}\mu(L_{(\eta \geq 2) -2}).
		\end{equation*}
		
		Now, by \eqref{eq5.29} and the fact that $t_{\ge 1}(\lambda)=t_{\ge 1}(\eta)$, we have the proof of the proposition.
	\end{proof}

	\begin{lemma}\label{lemma5.40}
		For $\lambda \in \CR_{n}^{0+}$, $\lambda \geq (1,1,\dots,1)$, we have that
		\begin{equation*}
			\mu(L_{\lambda})=q^{2n-1}\mu(L_{\lambda-1})-(-q)^{\vert \lambda \vert-1}(q-1).
		\end{equation*}
	\end{lemma}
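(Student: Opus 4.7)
My plan is to compute $\mu(L_\lambda)$ directly by Fourier inversion on the finite group $\pi^{-\lambda_1}O_{F_0}/O_{F_0}$, so that the recursion falls out of isolating the contribution of unit-valued characters. Fix a normal basis $\{e_1,\ldots,e_n\}$ of $L_\lambda$ with $(e_i,e_j)=\delta_{ij}\pi^{\lambda_i}$; any class in $L_\lambda^\vee/L_\lambda$ has a unique representative $\sum_i a_i\pi^{-\lambda_i}e_i$ with $a_i \in O_F/\pi^{\lambda_i}$, and $(x,x) = \sum_i N(a_i)\pi^{-\lambda_i}$ gives a well-defined map to $\pi^{-\lambda_1}O_{F_0}/O_{F_0}$. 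Applying Fourier inversion on this group and identifying its dual with $O_{F_0}/\pi^{\lambda_1}$ via $a \mapsto \psi(a\,\cdot\,)$, one rewrites
\begin{equation*}
\mu(L_\lambda) \;=\; \frac{1}{q^{\lambda_1}} \sum_{a \in O_{F_0}/\pi^{\lambda_1}} \prod_{i=1}^{n} T_{\lambda_i}(a), \qquad T_m(a) \coloneqq \sum_{[b] \in O_F/\pi^{m}} \psi(aN(b)/\pi^m).
\end{equation*}

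Next I would split the outer sum by the valuation of $a$. For $v(a) \geq 1$, writing $a = \pi a'$ with $a' \in O_{F_0}/\pi^{\lambda_1-1}$ and fibering $O_F/\pi^{\lambda_i}$ over $O_F/\pi^{\lambda_i-1}$ — noting that the cross-terms in $N(b'+\pi^{\lambda_i-1}c)$ push the character argument into $O_{F_0}$ — one obtains the identity $T_{\lambda_i}(\pi a') = q^2\,T_{\lambda_i - 1}(a')$. Hence the $v(a)\geq 1$ part equals $\frac{q^{2n}}{q^{\lambda_1}} \cdot q^{\lambda_1-1} \mu(L_{\lambda-1}) = q^{2n-1}\mu(L_{\lambda-1})$, which is exactly the first term of the claimed recursion.

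The heart of the proof is the evaluation $T_m(u) = (-q)^m$ for every $u \in O_{F_0}^\times$ and every $m \geq 0$, which I would establish by induction on $m$. The base cases $T_0(u) = 1$ and $T_1(u) = 1 + (q+1)\sum_{c\in\BF_q^\times}\psi(uc/\pi) = 1 - (q+1) = -q$ are direct quadratic Gauss sums over $\BF_{q^2}$ for the unramified norm form. For $m \geq 2$, the analogous fibering $[b] = [b']+\pi^{m-1}[c]$ with $c \in O_F/\pi$, combined with the vanishing $\sum_{c \in O_F/\pi} \psi(u\,\mathrm{Tr}(b'\bar c)/\pi) = 0$ whenever $b'$ is a unit modulo $\pi$, forces $T_m(u) = q^2\,T_{m-2}(u)$ and completes the induction. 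Since $\prod_i T_{\lambda_i}(u) = (-q)^{|\lambda|}$ is then independent of the unit $u$, the $v(a)=0$ part of the sum equals
\begin{equation*}
\frac{q^{\lambda_1-1}(q-1)}{q^{\lambda_1}} \cdot (-q)^{|\lambda|} \;=\; -(-q)^{|\lambda|-1}(q-1),
\end{equation*}
and adding the two contributions yields the lemma.

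The only delicate step will be verifying the Gauss-sum reduction $T_m(u) = q^2\,T_{m-2}(u)$ cleanly; everything else amounts to bookkeeping with Fourier inversion and the diagonal structure of the chosen normal basis.
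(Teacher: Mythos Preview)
Your argument is correct and is genuinely different from the paper's. The paper proves the lemma by induction on $|\lambda|$, repeatedly invoking Proposition~\ref{proposition5.38} (itself resting on Propositions~\ref{proposition5.36} and~\ref{proposition5.37}) and splitting into five or six cases according to the shape of $\lambda$ (whether $\lambda_1=1,2,3,4,$ or $\geq 5$, with further subcases). Your route bypasses all of that: you express $\mu(L_\lambda)$ as a character sum on $\pi^{-\lambda_1}O_{F_0}/O_{F_0}$, factor it as $\prod_i T_{\lambda_i}(a)$, and then the split $v(a)\geq 1$ versus $v(a)=0$ immediately yields the two terms of the recursion. The key Gauss-sum identity $T_m(u)=(-q)^m$ for units $u$ is verified by a clean two-step recurrence $T_m(u)=q^2T_{m-2}(u)$, which follows exactly as you outline (the vanishing of $\sum_c\psi(u\,\mathrm{Tr}(b'\bar c)/\pi)$ when $b'$ is a unit mod~$\pi$ is the crucial point, and it holds because the trace form on $\BF_{q^2}$ is nondegenerate). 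Your argument is shorter, treats all $\lambda$ uniformly, and is entirely self-contained; the paper's approach has the minor advantage of reusing the counting propositions~\ref{proposition5.36}--\ref{proposition5.38}, which are needed elsewhere in the computation of $\widehat{\partial\mathrm{Den}}$ anyway.
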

	\begin{proof}
		First, consider the case $\lambda=(1,1,\dots,1)$. Then, it is easy to see that (see \cite[Example 5.6]{vollaard2011supersingular}, for example)
		\begin{equation*}\begin{aligned}
				\mu(L_{1,\dots,1})&=\vert\lbrace (x_1,\dots,x_n) \in \BF_q^{n}\mid x_1^{q+1}+\dots+x_n^{q+1}=0  \rbrace\vert\\
				&=q^{2n-1}+(-q)^{n}+(-q)^{n-1}=q^{2n-1}\mu(L_{0,\dots,0})-(-q)^{n-1}(q-1).
			\end{aligned}
		\end{equation*}
		Similarly, if $\lambda=(2,\overset{n-1}{\overbrace{1,\dots,1}})$, we have that \begin{equation*}\begin{aligned}
				\mu(L_{\lambda})=\vert((L_{2,1,\dots,1})^{\vee})^{\geq 0}/L_{2,1,\dots,1}\vert=q^2\mu(L_{\underset{n-1}{\underbrace{1,\dots,1}}})&=q^{2n-1}+(-q)^{n+1}+(-q)^n\\
				&=q^{2n-1}\mu(L_{1})-(-q)^{n}(q-1).
			\end{aligned}
		\end{equation*}
		
		Now, we will prove the lemma by induction on $\vert \lambda \vert$.
		
		Assume that $\lambda=(\overset{a}{\overbrace{2,\dots,2}},\overset{b}{\overbrace{1,\dots,1}})$, $a \geq 2$, and let $\eta=(\overset{a-2}{\overbrace{2,\dots,2}},\overset{b+2}{\overbrace{1,\dots,1}})$. Then, by Proposition \ref{proposition5.38} (2), we have
		\begin{equation*}
			\mu(L_{\lambda})=q^2 \mu(L_{\eta})+q^{2n+2a-2}-q^{2n+2a-4}.
		\end{equation*}
		Also, we have
		\begin{equation*}\begin{array}{l}
				\mu(L_{\lambda-1})=\mu(L_{\underset{a}{\underbrace{1,\dots,1}}})=q^{2a-1}+(-q)^{a}+(-q)^{a-1},\\
				q^2\mu(L_{\eta-1})=q^2\mu(L_{\underset{a-2}{\underbrace{1,\dots,1}}})=q^{2a-3}+(-q)^{a}+(-q)^{a-1},\\
				\mu(L_{\lambda-1})=q^2\mu(L_{\eta-1})+q^{2a-1}-q^{2a-3}.
			\end{array}
		\end{equation*}
		Therefore, we have that
		\begin{equation*}
			\mu(L_{\lambda})-q^{2n-1}\mu(L_{\lambda-1})=q^2\lbrace\mu(L_{\eta})-q^{2n-1}\mu(L_{\eta-1})\rbrace.
		\end{equation*}
		Now, by our induction hypothesis, we have that
		\begin{equation*}
			q^2\lbrace\mu(L_{\eta})-q^{2n-1}\mu(L_{\eta-1})\rbrace=-(-q)^{\vert\eta\vert-1+2}(q-1)=-(-q)^{\vert\lambda\vert-1}(q-1).
		\end{equation*}
		This finishes the proof of the lemma when $\lambda=(\overset{a}{\overbrace{2,\dots,2}},\overset{b}{\overbrace{1,\dots,1}})$, $a \geq 2$.
		
		Now, assume that $\lambda=(\overset{n-a-b}{\overbrace{3,\dots,3}},\overset{a}{\overbrace{2,\dots,2}},\overset{b}{\overbrace{1,\dots,1}})$, and let $\eta=(\overset{n-a-b-1}{\overbrace{3,\dots,3}},\overset{a}{\overbrace{2,\dots,2}},\overset{b+1}{\overbrace{1,\dots,1}})$. Then, by Proposition \ref{proposition5.38} (1), we have
		\begin{equation*}
			\mu(L_{\lambda})=q^2\mu(L_{\eta})+q^{4n-2b-2}\mu(L_{(\lambda \geq 2)-2})-q^{4n-2b-2}\mu(L_{(\eta \geq 2)-2}).
		\end{equation*}
		Also, by the previous case, we have
		\begin{equation*}
			\begin{array}{l}
				\mu(L_{\lambda-1})=q^{2n-2b-1}\mu(L_{(\lambda \geq 2)-2})-(-q)^{\vert \lambda \vert-n-1}(q-1),\\
				\mu(L_{\eta-1})=q^{2n-2b-3}\mu(L_{(\eta \geq 2)-2})-(-q)^{\vert \eta \vert -n-1}(q-1),\\
				\mu(L_{\lambda-1})=q^2\mu(L_{\eta-1})+q^{2n-2b-1}\mu(L_{(\lambda \geq 2)-2})-q^{2n-2b-1}\mu(L_{(\eta\geq 2)-2}).
			\end{array}
		\end{equation*}
		
		Therefore, we have
		\begin{equation*}
			\begin{aligned}
				\mu(L_{\lambda})-q^{2n-1}\mu(L_{\lambda-1})
				&=q^2\mu(L_{\eta})+q^{4n-2b-2}\mu(L_{(\lambda \geq 2)-2})-q^{4n-2b-2}\mu(L_{(\eta \geq 2)-2})\\
				&-q^{2n-1}\lbrace q^2 \mu(L_{\eta-1})+q^{2n-2b-1}\mu(L_{(\lambda\geq 2)-2})-q^{2n-2b-1}\mu(L_{(\eta\geq 2)-2})\\
				&=q^2\lbrace \mu(L_{\eta})-q^{2n-1}\mu(L_{\eta-1})\rbrace\\
				&=-(-q)^{\vert \eta \vert+2-1}(q-1) \text{ }(\text{by our inductive hypothesis})\\
				&=-(-q)^{\vert \lambda \vert -1}(q-1).
			\end{aligned}
		\end{equation*}
		This finishes the proof of the lemma when $\lambda=(\overset{n-a-b}{\overbrace{3,\dots,3}},\overset{a}{\overbrace{2,\dots,2}},\overset{b}{\overbrace{1,\dots,1}})$.
		
		Now, assume that $\lambda=(\lambda_1=4,\lambda_2,\dots,\lambda_{n-a-b},\overset{a}{\overbrace{2,\dots,2}},\overset{b}{\overbrace{1,\dots,1}})$, $\lambda_{n-a-b}\geq3$. Let
		\begin{equation*} \eta=(\lambda_2,\dots,\lambda_{n-a-b},\overset{a+1}{\overbrace{2,\dots,2}},\overset{b}{\overbrace{1,\dots,1}}).
		\end{equation*}
		Then, by Proposition \ref{proposition5.38} (1), we have
		\begin{equation*}
			\mu(L_{\lambda})=q^2\mu(L_{\eta})+q^{4n-2b-2}\mu(L_{(\lambda \geq 2)-2})-q^{4n-2b}\mu(L_{(\eta \geq 2)-2}).
		\end{equation*}
		Similarly, we have $t_{\ge 2}(\lambda-1)=n-a-b$,  $t_{\ge 2}(\eta-1)=n-a-b-1$,  $t_
		{\geq 1}(\lambda-1)=t_{\geq 1}(\eta-1)=n-b$, and
		\begin{equation*}
			\mu(L_{\lambda-1})=q^2\mu(L_{\eta-1})+q^{4n-2a-4b-2}\mu(L_{(\lambda \geq 3)-3})-q^{4n-2a-4b-2}\mu(L_{(\eta \geq 3)-3}).
		\end{equation*}
		Therefore,
		\begin{equation*}
			\begin{array}{cl}
				\mu(L_{\lambda})-q^{2n-1}\mu(L_{\lambda-1})&=q^2\lbrace \mu(L_{\eta})-q^{2n-1}\mu(L_{\eta-1})\rbrace\\
				&+q^{4n-2b-2}\lbrace \mu(L_{(\lambda \geq 2)-2})-q^{2n-2a-2b-1}\mu(L_{(\lambda \geq 3)-3})\rbrace\\
				&-q^{4n-2b}\lbrace \mu(L_{(\eta \geq 2)-2})-q^{2n-2a-2b-3}\mu(L_{(\eta \geq 3)-3})\rbrace\\
				(\text{by our inductive hypothesis})&=-(-q)^{\vert \eta \vert+1}(q-1)+q^{4n-2b-2}(-(-q)^{\vert \lambda \vert-b-2(n-b)-1}(q-1))\\
				&-(-q)^{4n-2b}(-(-q)^{\vert \eta \vert-b-2(n-b)-1}(q-1))\\
				&=-(-q)^{\vert \lambda \vert-1}(q-1).
			\end{array}
		\end{equation*}
		This finishes the proof of the lemma when $\lambda=(\lambda_1=4,\lambda_2,\dots,\lambda_{n-a-b},\overset{a}{\overbrace{2,\dots,2}},\overset{b}{\overbrace{1,\dots,1}})$, $\lambda_{n-a-b}\geq3$.
		
		Finally, assume that $\lambda=(\lambda_1,\dots,\lambda_{n-a-b},\overset{a}{\overbrace{2,\dots,2}},\overset{b}{\overbrace{1,\dots,1}})$, $\lambda_1 \geq 5$. In this case, let $\eta=(\lambda_1-2,\lambda_2,\dots,\lambda_n)$ (if necessary, we change the order of $\eta_i$'s so that $\eta \in \CR_{n}^{0+})$. Then, by Proposition \ref{proposition5.38} (1), we have
		\begin{equation*}
			\mu(L_{\lambda})=q^2\mu(L_{\eta})+q^{4n-2b-2}\mu(L_{(\lambda \geq 2)-2})-q^{4n-2b}\mu(L_{(\eta \geq 2)-2}),
		\end{equation*}
		and
		\begin{equation*}
			\mu(L_{\lambda-1})=q^2\mu(L_{\eta-1})+q^{4n-2a-4b-2}\mu(L_{(\lambda \geq 3)-3})-q^{4n-2a-4b}\mu(L_{(\eta \geq 3)-3}).
		\end{equation*}
		Therefore,
		\begin{equation*}
			\begin{array}{rl}
				\mu(L_{\lambda})-q^{2n-1}\mu(L_{\lambda-1})&=q^2\lbrace \mu(L_{\eta})-q^{2n-1}\mu(L_{\eta-1})\rbrace\\
				&+q^{4n-2b-2}\lbrace \mu(L_{(\lambda \geq 2)-2})-q^{2n-2a-2b-1}\mu(L_{(\lambda \geq 3)-3})\rbrace\\
				&-q^{4n-2b}\lbrace \mu(L_{(\eta \geq 2)-2})-q^{2n-2a-2b-1}\mu(L_{(\eta \geq 3)-3})\rbrace\\
				(\text{by our inductive hypothesis})&=-(-q)^{\vert \eta \vert+1}(q-1)+q^{4n-2b-2}(-(-q)^{\vert \lambda \vert-b-2(n-b)-1}(q-1))\\
				&-(-q)^{4n-2b}(-(-q)^{\vert \eta \vert-b-2(n-b)-1}(q-1))\\
				&=-(-q)^{\vert \lambda \vert-1}(q-1).
			\end{array}
		\end{equation*}
		This finishes the proof of the lemma.
	\end{proof}

	\begin{lemma}\label{lemma5.43}
		Assume that $a \geq 1$, and $j \in \BZ$. Let $\kappa_{a,i}$ be the constants such that
		\begin{equation*}
			(1-X)(1-(-q)X)\dots(1-(-q)^{a-2}X)=\sum_{i=0}^{a-1}\kappa_{a,i}X^i.
		\end{equation*}
		Then, we have
		\begin{equation*}
			D_{n,h}(a,b,c)=\sum_{i=0}^{a-1}\kappa_{a,i} (-q)^{i(a+b-h+1)}D_{n-i,h-i}(1,b+a-1-i,c).
		\end{equation*}
	\end{lemma}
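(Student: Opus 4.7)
The plan is to prove this by induction on $a$, using Theorem \ref{theorem5.24} as the single-step recurrence and verifying that the coefficients collapse via the defining recurrence for $\kappa_{a,i}$.

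The base case $a=1$ is immediate since $P_1(X) := \prod_{l=0}^{-1}(1-(-q)^l X) = 1$, so $\kappa_{1,0} = 1$, and both sides equal $D_{n,h}(1,b,c)$.

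For the inductive step, first I would apply Theorem \ref{theorem5.24} to write
\begin{equation*}
D_{n,h}(a,b,c) = D_{n,h}(a-1,b+1,c) - (-q)^{2n-h-1-b-2c}\, D_{n-1,h-1}(a-1,b,c),
\end{equation*}
then apply the inductive hypothesis separately to each of the two $D$'s with parameter $a-1$. The crucial book-keeping is that the exponent $i(a+b-h+1)$ appearing in the statement of the lemma is \emph{invariant} under both substitutions $(a,b,h) \mapsto (a-1,b+1,h)$ and $(a,b,h,n) \mapsto (a-1,b,h-1,n-1)$, since in each case $a+b-h+1$ is unchanged. After reindexing the second sum by $i = j+1$, the coefficient of $D_{n-i,h-i}(1,b+a-1-i,c)$ becomes
\begin{equation*}
\bigl(\kappa_{a-1,i} - (-q)^{2n-h-1-b-2c - (a+b-h+1)}\,\kappa_{a-1,i-1}\bigr)(-q)^{i(a+b-h+1)}.
\end{equation*}
The key identity to check is the exponent equality $2n-h-1-b-2c - (a+b-h+1) = a-2$, which simplifies to $n = a+b+c$, precisely the standing assumption. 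Then the polynomial recurrence $P_a(X) = P_{a-1}(X)(1-(-q)^{a-2}X)$ yields $\kappa_{a,i} = \kappa_{a-1,i} - (-q)^{a-2}\kappa_{a-1,i-1}$, matching the bracket above and completing the induction.

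There is no substantive obstacle here; the only real work is the arithmetic verification that the two exponents agree under the constraint $n = a+b+c$, which is automatic. The main subtlety to be careful about is the applicability of Theorem \ref{theorem5.24} at every step of the induction: one must check that the side conditions $c \leq n-h$, $a \geq 1$, and $(a-1,b+1,c) \neq (0,h,n-h)$ propagate appropriately through the recursive application. Since $c$ and the total dimension are preserved (resp.\ both decrease by $1$) in the two reductions, and since the terminal values $(1, b+a-1-i, c)$ with $i \leq a-1$ all have first coordinate $\geq 1$, the hypotheses of Theorem \ref{theorem5.24} remain valid at each step of the induction, so no extra case analysis is required.
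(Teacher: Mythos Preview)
Your proof is correct and is exactly the induction on $a$ that the paper's one-line proof (``This follows from Theorem~\ref{theorem5.24}'') leaves implicit; the exponent identity $2n-h-1-b-2c-(a+b-h+1)=a-2$ (equivalent to $n=a+b+c$) and the recurrence $\kappa_{a,i}=\kappa_{a-1,i}-(-q)^{a-2}\kappa_{a-1,i-1}$ are precisely the two ingredients that make the induction close. One small remark on your side-condition check: the reason the exclusion $(a'-1,b'+1,c)\neq(0,h',n'-h')$ never bites is simply that in the inductive step you only invoke Theorem~\ref{theorem5.24} when the first parameter is $\geq 2$, so $a'-1\geq 1\neq 0$ automatically; and the case $c>n-h$ (not covered by Theorem~\ref{theorem5.24}) is harmless since then both sides vanish by Lemma~\ref{lemma7.5}.
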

	\begin{proof}
		This follows from Theorem \ref{theorem5.24}.
	\end{proof}
	
	\begin{lemma}\label{lemma5.44}
		Assume that $a \geq 1$. Then, we have
		\begin{equation}\label{eq5.34}\begin{array}{ccl}
				&&D_{n,h}(a+1,b,c)\\
				+((-q)^a-1)&\times&D_{n,h}(a,b+1,c)\\
				+(-q)^a((-q)^a-1)&\times&D_{n,h}(a,b,c+1)\\
				-(-q)^{2a}(1-(-q)^b)(1-(-q)^{b-1})&\times&D_{n,h}(a+1,b-2,c+2)\\
				-(-q)^{2a+b-1}(1-(-q)^b)(1-(-q)^a)&\times&D_{n,h}(a,b-1,c+2)
			\end{array}
			=0.
		\end{equation}
	\end{lemma}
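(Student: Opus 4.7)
The plan is to reduce the identity \eqref{eq5.34} to the simple base case $a=1$ and then verify that case directly from the explicit formula in Theorem \ref{theorem5.31}(2). The crucial input is Lemma \ref{lemma5.43}, which expresses $D_{n,h}(a,b,c)$ as an explicit linear combination of the values $D_{n-i,h-i}(1,b+a-1-i,c)$, together with the fact (Theorem \ref{theorem5.31}(2)) that $D_{m,\ell}(1,b',c')$ depends only on $\ell$ and $b'$ (not on $m$ or $c'$) and equals an explicit product $\prod_{l=\ell+1}^{b'}(1-(-q)^l)$ when $b' \geq \ell+1$ (or $1$ when $b' \in \{\ell-1,\ell\}$).

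First, I would apply Lemma \ref{lemma5.43} to each of the five terms $D_{n,h}(a+1,b,c)$, $D_{n,h}(a,b+1,c)$, $D_{n,h}(a,b,c+1)$, $D_{n,h}(a+1,b-2,c+2)$, $D_{n,h}(a,b-1,c+2)$. After this substitution, \eqref{eq5.34} becomes a sum over $i$ of terms of the shape
\[
\bigl(\text{coefficient in }q\bigr)\cdot D_{n-i,h-i}(1,b+a-i,c) \;+\;\bigl(\text{coefficient in }q\bigr)\cdot D_{n-i,h-i}(1,b+a-2-i,c+2),
\]
and similar. Grouping by the second argument $b'$ of the $D_{n-i,h-i}(1,b',\cdot)$ that appears, one obtains for each fixed $b'$ a sum of a handful of $\kappa_{a,i}$ and $\kappa_{a+1,i}$ contributions. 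The key observation is that by Theorem \ref{theorem5.31}(2) the value $D_{n-i,h-i}(1,b',c')$ is independent of $c'$ and equal for the same $b'-\ell=b'-(h-i)$; hence the whole identity decouples into a collection of purely polynomial identities in $q$ for the coefficients of each $D_{m,\ell}(1,b',\cdot)$ that shows up.

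Next, I would verify these polynomial identities. In view of the explicit product formula from Theorem \ref{theorem5.31}(2), the coefficients in front of the different values $D_{m,\ell}(1,b',\cdot)$ are rational functions of $q$ built out of the $\kappa_{a,i}$'s, the $(1-(-q)^l)$ factors, and the $(-q)^{i(a+b-h+1)}$ weights. The natural way to check vanishing is to recognize the combination
\[
\sum_{i=0}^{a-1}\kappa_{a,i}(-q)^{i(a+b-h+1)}(\cdot)+\sum_{i=0}^{a}\kappa_{a+1,i}(-q)^{i(a+b-h+2)}(\cdot)
\]
as a telescoping identity coming from the generating function $\prod_{j=0}^{a-2}(1-(-q)^jX)$. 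I expect each grouped coefficient to match the expression obtained by one further application of Theorem \ref{theorem5.24}, which is the cleanest way to see the cancellation.

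The main obstacle is the bookkeeping: five terms expanded via Lemma \ref{lemma5.43} yield many summands, and one must carefully track how shifts in $b$, $c$, and the parity constraints in Remark \ref{remark5.10} interact with the case distinctions in Theorem \ref{theorem5.31}(2) (that is, the branches $b'=h-1,h$ versus $b' \geq h+1$). A cleaner backup strategy, which I would fall back on if the direct expansion becomes unwieldy, is to induct on $a$: apply Theorem \ref{theorem5.24} to rewrite both $D_{n,h}(a+1,b,c)$ and $D_{n,h}(a+1,b-2,c+2)$ in terms of $(a,\cdot,\cdot)$-data plus $D_{n-1,h-1}(a,\cdot,\cdot)$-corrections; a direct algebraic simplification (already carried out partially for the $D_{n,h}(a,b-1,c+2)$ coefficient, which collapses to $-(-q)^{2a}(1-(-q)^b)(1-(-q)^{a+b-1})$) shows that the $D_{n,h}$-part of the identity becomes $(-q)^a$ times the statement for $a-1$, while the $D_{n-1,h-1}$-part becomes the identity \eqref{eq5.34} at level $(n-1,h-1)$ for the same $a$. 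Then the base case $a=1$ follows from plugging the product formula of Theorem \ref{theorem5.31}(2) into \eqref{eq5.34} and observing the factor $(1-(-q)^{b+1})$ cancels between the $D_{n,h}(2,b,c)$ and $D_{n,h}(1,b+1,c)$ terms, after which the remaining expression reduces to an elementary identity in $1-(-q)^\ell$ factors.
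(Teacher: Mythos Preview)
Your primary approach is essentially the paper's: expand each term via Lemma~\ref{lemma5.43}, use Theorem~\ref{theorem5.31}(2) to evaluate the resulting $D_{n-i,h-i}(1,b',c')$ (exploiting independence from $c'$), and reduce to polynomial identities among the $\kappa_{a,i}$ coming from the generating function $\prod_{j=0}^{a-2}(1-(-q)^jX)$. The paper organizes this by collecting the ``degree~$t$'' contributions $X_0^t,X_1^t,X_2^t$ (where $X_j^k:=D_{n-k+1,h-k+1}(1,b+a-k+1-j,c+j)$), which makes the key cancellation transparent: it boils down to the single relation $-((-q)^{t+1}-(-q)^2)\kappa_{a,t-1}+((-q)^t-(-q)^{a+1})\kappa_{a,t-2}=0$, obtained by comparing $(1-(-q)^{a-1}X)\sum\kappa_{a,t}X^t=(1-X)\sum\kappa_{a,t}(-q)^tX^t$. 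The paper then handles the boundary cases $a+b\in\{h-1,h,h+1\}$ (where Theorem~\ref{theorem5.31}(2) gives $1$ rather than the product) separately by direct substitution; you allude to this but should be explicit, since these cases do not fall out of the generic computation.

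Your backup inductive strategy via Theorem~\ref{theorem5.24} is not quite right as stated: after substituting for $D_{n,h}(a+1,b,c)$ and $D_{n,h}(a+1,b-2,c+2)$, the remaining $D_{n,h}$-terms involve only first coordinate $a$, not $a-1$, so you do not directly recover ``$(-q)^a$ times the statement for $a-1$'' without further applications of Theorem~\ref{theorem5.24}; the resulting double recursion in $(n,h)$ and $a$ would need a more careful setup than you sketch.
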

	\begin{proof}
		To simplify notation, we assume that for $k \geq 1$,
		\begin{equation*}\begin{array}{l}
				X_0^{k}\coloneqq D_{n-k+1,h-k+1}(1,b+a-k+1,c),\\
				X_1^{k}\coloneqq D_{n-k+1,h-k+1}(1,b+a-k,c+1),\\
				X_2^{k}\coloneqq D_{n-k+1,h-k+1}(1,b+a-k-1,c+2).
			\end{array}
		\end{equation*}
		
		Then, by Lemma \ref{lemma5.43}, we have that
		\begin{equation*}\begin{array}{l}
				D_{n,h}(a+1,b,c)=X_0(1-(-q)^{a+b-h+2}X_0)\dots(1-(-q)^{2a+b-h+1}X_0),\\
				D_{n,h}(a,b+1,c)=X_0(1-(-q)^{a+b-h+2}X_0)\dots(1-(-q)^{2a+b-h}X_0),\\
				D_{n,h}(a,b,c+1)=X_1(1-(-q)^{a+b-h+1}X_1)\dots(1-(-q)^{2a+b-h-1}X_1),\\
				D_{n,h}(a+1,b-2,c+2)=X_2(1-(-q)^{a+b-h}X_2)\dots(1-(-q)^{2a+b-h-1}X_2),\\
				D_{n,h}(a,b-1,c+2)=X_2(1-(-q)^{a+b-h}X_2)\dots(1-(-q)^{2a+b-h-2}X_2).\\
			\end{array}
		\end{equation*}
		
		Therefore, we have that
		\begin{equation*}\begin{array}{l}
				D_{n,h}(a+1,b,c)+((-q)^a-1)D_{n,h}(a,b+1,c)\\
				=(-q)^aX_0(1-(-q)^{a+b-h+1}X_0)\dots(1-(-q)^{2a+b-h}X_0)\\
				=(-q)^aX_0(1-(-q)^{2a+b-h}X_0)(\sum_{t=0}^{a-1}\kappa_{a,t}(-q)^{t(a+b-h+1)}X_0^t).
			\end{array}
		\end{equation*}
		This implies that for $1 \leq t \leq a+1$, $X_0^t$-terms are
		\begin{equation}\label{eq5.35}
			\left\lbrace \begin{array}{ll}
				(-q)^a\lbrace \kappa_{a,t-1}(-q)^{(a+b-h+1)(t-1)}-(-q)^{2a+b-h}\kappa_{a,t-2}(-q)^{(a+b-h+1)(t-2)}\rbrace X_0^t& \text{ if } t \neq a+1,\\
				-(-q)^{3a+b-h}\kappa_{a,a-1}(-q)^{(a+b-h+1)(a-1)}X_0^{a+1} & \text{ if } t=a+1.
			\end{array}\right.
		\end{equation}
		
		Similarly, for $1 \leq t \leq a$, $X_1^t$-terms in $(-q)^a((-q)^a-1)D_{n,h}(a,b,c+1)$ are
		\begin{equation}\label{eq5.36}
			(-q)^a((-q)^a-1)\kappa_{a,t-1}(-q)^{(a+b-h+1)(t-1)}X_1^t.
		\end{equation}
		
		Finally, for $1 \leq t \leq a+1$, $X_2^t$-terms in
		\begin{equation*}
			\begin{array}{l}
				-(-q)^{2a}(1-(-q)^b)(1-(-q)^{b-1})D_{n,h}(a+1,b-2,c+2)\\
				-(-q)^{2a+b-1}(1-(-q)^b)(1-(-q)^a)D_{n,h}(a,b-1,c+2)
			\end{array}
		\end{equation*}
		are
		\begin{equation}\label{eq5.37}
			\left\lbrace \begin{array}{ll}
				\lbrace-\kappa_{a,t-1}(-q)^{(a+b-h)(t-1)+2a}(1-(-q)^b)(1-(-q)^{a+b-1})&\\ +\kappa_{a,t-2}(-q)^{(a+b-h)(t-2)+4a+b-h-1}(1-(-q)^b)(1-(-q)^{b-1})\rbrace X_2^t&\text{ if } t \neq a+1,\\
				\kappa_{a,a-1}(-q)^{(a+b-h)(a-1)+4a+b-h-1}(1-(-q)^b)(1-(-q)^{b-1})X_2^{a+1}  &\text{ if } t=a+1.
			\end{array}\right.
		\end{equation}

		Note that if $a+b < h-1$, then $X_0^t, X_1^t,$ and $X_2^t$ are all zero. Therefore, all of the degree $t$-terms of the left-hand side of \eqref{eq5.34} are $0$, and hence \eqref{eq5.34} holds in this case.
		
		Now, assume that $a+b \geq h+2$. Then, by Theorem \ref{theorem5.31}, we have that
		\begin{equation}\label{eq5.38}
			\begin{array}{l}
				X_0^t=\prod_{l=h-t+2}^{a+b-t+1}(1-(-q)^l),\\
				X_1^t=\prod_{l=h-t+2}^{a+b-t}(1-(-q)^l),\\
				X_2^t=\prod_{l=h-t+2}^{a+b-t-1}(1-(-q)^l).
			\end{array}
		\end{equation}
		
		Combining \eqref{eq5.35}, \eqref{eq5.36}, \eqref{eq5.37}, and \eqref{eq5.38}, we can see that the degree $t$-terms of the left hand side of \eqref{eq5.34} can be written as follows: if $1 \leq t \leq a$, we have
		\begin{equation}\label{eq5.39}\begin{aligned}
				\prod_{l=h-t+2}^{a+b-t-1} (1-(-q)^l)\Big{\lbrace}&	(-q)^a\lbrace \kappa_{a,t-1}(-q)^{(a+b-h+1)(t-1)}-(-q)^{2a+b-h}\kappa_{a,t-2}(-q)^{(a+b-h+1)(t-2)}\rbrace\\
				&\times (1-(-q)^{a+b-t+1})(1-(-q)^{a+b-t})\\
				&+(-q)^a((-q)^a-1)\kappa_{a,t-1}(-q)^{(a+b-h+1)(t-1)}(1-(-q)^{a+b-t})\\
				&+	\lbrace-\kappa_{a,t-1}(-q)^{(a+b-h)(t-1)+2a}(1-(-q)^b)(1-(-q)^{a+b-1})\\ &+\kappa_{a,t-2}(-q)^{(a+b-h)(t-2)+4a+b-h-1}(1-(-q)^b)(1-(-q)^{b-1})\rbrace \Big{\rbrace}\\
				&=((-q)^{a+2b}-(-q)^t)(-q)^{(a+b-h)(t-2)+3a+b-h-t-2}\\
				&\lbrace -((-q)^{t+1}-(-q)^2)\kappa_{a,t-1}+((-q)^t-(-q)^{a+1})\kappa_{a,t-2}\rbrace. 
			\end{aligned}
		\end{equation}
		
		Now, we claim that
		\begin{equation}\label{eq5.40}
			\lbrace -((-q)^{t+1}-(-q)^2)\kappa_{a,t-1}+((-q)^t-(-q)^{a+1})\kappa_{a,t-2}\rbrace=0.
		\end{equation}
		
		Recall from Lemma \ref{lemma5.43} that
		\begin{equation*}\begin{array}{l}
				(1-X)(1-(-q)X)\dots(1-(-q)^{a-2}X)=\sum_{t=0}^{a-1}\kappa_{a,t}X^t,\\
				(1-(-q)X)(1-(-q)^2X)\dots(1-(-q)^{a-1}X)=\sum_{t=0}^{a-1}\kappa_{a,t}(-q)^tX^t.
		\end{array}	\end{equation*}
		Therefore,
		\begin{equation*}
			(1-(-q)^{a-1}X)\sum_{t=0}^{a-1}\kappa_{a,t}X^t=(1-X)\sum_{t=0}^{a-1}\kappa_{a,t}(-q)^tX^t.
		\end{equation*}
		
		By comparing degree $t-1$ terms, we have that
		\begin{equation}\label{eq5.41.2}\begin{array}{ll}
				&\kappa_{a,t-1}-(-q)^{a-1}\kappa_{a,t-2}=\kappa_{a,t-1}(-q)^{t-1}-\kappa_{a,t-2}(-q)^{t-2}\\
				\Longleftrightarrow & -((-q)^{t-1}-1)\kappa_{a,t-1}+((-q)^{t-2}-(-q)^{a-1})\kappa_{a,t-2}=0.
			\end{array}
		\end{equation}
		
		Therefore, we have that \eqref{eq5.40} holds, and hence \eqref{eq5.39} is zero. This shows that the degree $t$-term of \eqref{eq5.34} is zero for $1 \leq t \leq a$.
		
		For $t=a+1$, we have that the degree $a+1$-terms of the left hand side of \eqref{eq5.34} are
		\begin{equation*}\begin{array}{ll}
				\prod_{l=h-a+1}^{b-2} (1-(-q)^l) &\Big{\lbrace}-(-q)^{3a+b-h}\kappa_{a,a-1}(-q)^{(a+b-h+1)(a-1)}(1-(-q)^b)(1-(-q)^{b-1})\\
				&+\kappa_{a,a-1}(-q)^{(a+b-h)(a-1)+4a+b-h-1}(1-(-q)^b)(1-(-q)^{b-1})\Big{\rbrace}\\
				=0.&
			\end{array}
		\end{equation*}
		
		This shows that \eqref{eq5.34} holds when $a+b \geq h+2$.
		
		Now, the remaining cases are $a+b=h-1$, $h$, and $h+1$.
		
		When $a+b=h-1$, we have that $X_0^t=1$, $X_1^t=X_2^t=0$ by Theorem \ref{theorem5.31}. Therefore, we have that
		\begin{equation*}\begin{array}{l}
				D_{n,h}(a+1,b,c)+((-q)^a-1)D_{n,h}(a,b+1,c)\\
				=(-q)^a(1-(-q)^{a+b-h+1})\dots(1-(-q)^{2a+b-h})\\
				=0 \quad (\text{since }a+b-h+1=0).
			\end{array}
		\end{equation*}
		Also, we have $D_{n,h}(a,b,c+1)=0$,
		$D_{n,h}(a+1,b-2,c+2)=0$, and
		$D_{n,h}(a,b-1,c+2)=0$. This shows that \eqref{eq5.34} holds when $a+b=h-1$.
		
		When $a+b=h$, we have that $X_0^t=X_1^t=1$ and $X_2^t=0$ by Theorem \ref{theorem5.31}. Therefore, we have that
		\begin{equation*}
			\begin{array}{l}
				D_{n,h}(a+1,b,c)+((-q)^a-1)D_{n,h}(a,b+1,c)
				=(-q)^a(1-(-q)^{a+b-h+1})\dots(1-(-q)^{2a+b-h});\\\\
				(-q)^a((-q)^a-1)D_{n,h}(a,b,c+1)=(-q)^a((-q)^a-1)(1-(-q)^{a+b-h+1})\dots(1-(-q)^{2a+b-h-1});\\\\
				-(-q)^{2a}(1-(-q)^b)(1-(-q)^{b-1})D_{n,h}(a+1,b-2,c+2)=0;\\\\
				-(-q)^{2a+b-1}(1-(-q)^b)(1-(-q)^a)D_{n,h}(a,b-1,c+2)=0.\\
			\end{array}
		\end{equation*}
		
		Therefore, the left-hand side of \eqref{eq5.34} is
		\begin{equation*}\begin{array}{l}
				(-q)^a(1-(-q)^{a+b-h+1})\dots(1-(-q)^{2a+b-h})+(-q)^a((-q)^a-1)(1-(-q)^{a+b-h+1})\dots(1-(-q)^{2a+b-h-1})\\
				=(-q)^a(1-(-q)^{a+b-h+1})\dots(1-(-q)^{2a+b-h-1})\lbrace (-q)^a-(-q)^{2a+b-h} \rbrace\\
				=0 \quad (\text{since }2a+b-h=a).
			\end{array}
		\end{equation*}
		
		Now, assume that $a+b=h+1$. In this case, by Theorem \ref{theorem5.31}, we have that $X_0^t=(1-(-q)^{h-t+2})$, $X_1^t=1$, and $X_2^t=1$. Therefore, we have
		\begin{equation}\label{eq5.42.2}
			\begin{array}{l}
				D_{n,h}(a+1,b,c)+((-q)^a-1)D_{n,h}(a,b+1,c)\\
				=(-q)^a(1-(-q)^{a+b-h+1})\dots(1-(-q)^{2a+b-h})\\
				-(-q)^a(-q)^{h+1}(1-(-q)^{a+b-h+1}(-q)^{-1})\dots(1-(-q)^{2a+b-h}(-q)^{-1})\\
				=(-q)^a(1-(-q)^{2})\dots(1-(-q)^{a})(1-(-q)^{a+1}-(-q)^{h+1}+(-q)^{h+2}).
			\end{array}
		\end{equation}
		
		Also, we have
		\begin{equation*}\begin{array}{l}
				(-q)^a((-q)^a-1)D_{n,h}(a,b,c+1)=(-q)^a((-q)^a-1)(1-(-q)^{2})\dots(1-(-q)^{a}),
		\end{array}\end{equation*}
		and
		\begin{equation*}
			\begin{array}{l}
				-(-q)^{2a}(1-(-q)^b)(1-(-q)^{b-1})D_{n,h}(a+1,b-2,c+2)\\
				-(-q)^{2a+b-1}(1-(-q)^b)(1-(-q)^a)D_{n,h}(a,b-1,c+2)\\
				=-(-q)^{2a}(1-(-q)^b)(1-(-q)^{b-1})(1-(-q)^{a+b-h})\dots(1-(-q)^{2a+b-h-1})\\
				-(-q)^{2a+b-1}(1-(-q)^b)(1-(-q)^a)(1-(-q)^{a+b-h})\dots(1-(-q)^{2a+b-h-2})\\
				=-(-q)^{2a}(1-(-q)^b)(1-(-q)^1)\dots(1-(-q)^a).
			\end{array}
		\end{equation*}
		
		Therefore, the left-hand side of \eqref{eq5.34} is
		\begin{equation*}
			\begin{array}{l}
				(-q)^a(1-(-q)^{2})\dots(1-(-q)^{a})(1-(-q)^{a+1}-(-q)^{h+1}+(-q)^{h+2})\\
				+(-q)^a((-q)^a-1)(1-(-q)^{2})\dots(1-(-q)^{a})-(-q)^{2a}(1-(-q)^b)(1-(-q)^1)\dots(1-(-q)^a)\\
				=0.
			\end{array}
		\end{equation*}
		Therefore, \eqref{eq5.34} holds when $a+b=h+1$.
		
		This finishes the proof of the lemma.
	\end{proof}

	\begin{lemma}\label{lemma5.45}
		Assume that $a \geq 2$. Then, we have
		\begin{equation}\label{eq5.42}\begin{array}{ccl}
				&&D_{n,h}(a,b+1,c)\\
				+((-q)^{a-1}-1)&\times&D_{n,h}(a-1,b+2,c)\\
				-(-q)^{a-1}&\times&D_{n,h}(a,b,c+1)\\
				+(-q)^{2a+b-1}(1-(-q)^b)&\times&D_{n,h}(a,b-1,c+2)\\
				+(-q)^{2a+2b-1}(1-(-q)^{a-1})&\times&D_{n,h}(a-1,b,c+2)
			\end{array}
			=0
		\end{equation}
	\end{lemma}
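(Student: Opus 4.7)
The plan is to mimic the proof of Lemma \ref{lemma5.44}. By Lemma \ref{lemma5.43}, each of the five $D_{n,h}(\cdot,\cdot,\cdot)$ summands in \eqref{eq5.42} expands into a polynomial combination of the auxiliary quantities
\[
X_0^k = D_{n-k+1,h-k+1}(1, b+a-k+1, c), \qquad X_1^k = D_{n-k+1,h-k+1}(1, b+a-k, c+1),
\]
\[
X_2^k = D_{n-k+1,h-k+1}(1, b+a-k-1, c+2),
\]
with coefficients involving $\kappa_{a, i}$ or $\kappa_{a-1, i}$ and powers of $-q$. A direct check shows that $D_{n,h}(a, b+1, c)$ and $D_{n,h}(a-1, b+2, c)$ both expand in the $X_0^k$ (and, crucially, with the same exponent shift $a+b-h+2$ in the powers of $-q$), that $D_{n,h}(a, b, c+1)$ expands in the $X_1^k$, and that $D_{n,h}(a, b-1, c+2)$ and $D_{n,h}(a-1, b, c+2)$ both expand in the $X_2^k$.

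For the generic range $a + b \geq h + 2$, Theorem \ref{theorem5.31}~(2) supplies the explicit products
\[
X_0^k = \prod_{l=h-k+2}^{b+a-k+1}(1-(-q)^l), \quad X_1^k = \prod_{l=h-k+2}^{b+a-k}(1-(-q)^l), \quad X_2^k = \prod_{l=h-k+2}^{b+a-k-1}(1-(-q)^l),
\]
each divisible by $\prod_{l=h-t+2}^{b+a-t-1}(1-(-q)^l)$ when collecting the coefficient of degree $t$. After pulling out this common factor, the coefficient of degree $t$ on the left-hand side of \eqref{eq5.42} is a $\mathbb{Q}(q)$-linear combination of $\kappa_{a,t-1}, \kappa_{a,t-2}, \kappa_{a-1,t-1}, \kappa_{a-1,t-2}$. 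I would then eliminate the $\kappa_{a-1, *}$ using the identity
\[
\kappa_{a, i} = \kappa_{a-1, i} - (-q)^{a-2} \kappa_{a-1, i-1},
\]
which follows from $\sum_i \kappa_{a,i} X^i = (1-(-q)^{a-2}X)\sum_i \kappa_{a-1,i} X^i$, and finally invoke the linear identity \eqref{eq5.41.2} that was the key cancellation in Lemma \ref{lemma5.44}. This kills the coefficient of $X_j^t$ for each $1 \leq t \leq a-1$, and the top-degree terms (degrees $a-1$ and $a$) are checked separately by direct substitution of $\kappa_{a, a-1}$ and $\kappa_{a-1, a-2}$.

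The three boundary cases $a+b \in \{h-1,\, h,\, h+1\}$ are then treated by hand. In each case, one substitutes the special values from Theorem \ref{theorem5.31} (for instance $D_{n', h'}(1, b', c') = 1$ when $b' \in \{h'-1, h'\}$, and $D_{n', h'}(1, b', c') = 0$ when $b' \leq h'-2$) into each of the five summands of \eqref{eq5.42} and verifies cancellation by a short explicit computation, exactly in the style of the closing paragraphs of the proof of Lemma \ref{lemma5.44}.

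The main obstacle will be bookkeeping. The identity \eqref{eq5.42} mixes a polynomial of degree $a$ (coming from the $D_{n,h}(a, \cdot, \cdot)$ summands) with polynomials of degree $a-1$ (coming from the $D_{n,h}(a-1, \cdot, \cdot)$ summands), so the coefficients must be carefully realigned before combining. The specific numerical factors $((-q)^{a-1}-1)$, $-(-q)^{a-1}$, $(-q)^{2a+b-1}(1-(-q)^b)$, and $(-q)^{2a+2b-1}(1-(-q)^{a-1})$ in the statement of the lemma are engineered precisely so that this realignment succeeds; conceptually the cancellation is no harder than in Lemma \ref{lemma5.44}, but the careful tracking of these offsets is what makes the algebra lengthy.
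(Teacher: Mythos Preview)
Your proposal is correct and follows essentially the same architecture as the paper's proof: expand each summand via Lemma~\ref{lemma5.43}, collect degree-$t$ coefficients, reduce to the $\kappa$-identity \eqref{eq5.41.2} (applied with $a-1$ in place of $a$) in the generic range $a+b\geq h+2$, and check the boundary cases $a+b\in\{h-1,h,h+1\}$ by direct substitution from Theorem~\ref{theorem5.31}. The one streamlining you miss is that the paper first combines the two $X_0$-terms and the two $X_2$-terms \emph{before} expanding: the coefficient $((-q)^{a-1}-1)$ is engineered so that $D_{n,h}(a,b+1,c)+((-q)^{a-1}-1)D_{n,h}(a-1,b+2,c)$ collapses to the single product $(-q)^{a-1}X_0\prod_{l=a+b-h+1}^{2a+b-h-1}(1-(-q)^l X_0)$, which already lives in the $\kappa_{a-1}$-world and makes your recursion step $\kappa_{a,i}=\kappa_{a-1,i}-(-q)^{a-2}\kappa_{a-1,i-1}$ unnecessary; the same happens for the $X_2$-pair. (Also note the trivial range $a+b<h-1$, where all $X_j^t$ vanish, should be recorded for completeness.)
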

	\begin{proof}
		To simplify notation, we follow the notation in the proof of Lemma \ref{lemma5.44}. Then, by Lemma \ref{lemma5.43}, we have
		\begin{equation*}\begin{array}{l}
				D_{n,h}(a,b+1,c)=X_0(1-(-q)^{a+b-h+2}X_0)\dots(1-(-q)^{2a+b-h}X_0),\\
				D_{n,h}(a-1,b+2,c)=X_0(1-(-q)^{a+b-h+2}X_0)\dots(1-(-q)^{2a+b-h-1}X_0),\\
				D_{n,h}(a,b,c+1)=X_1(1-(-q)^{a+b-h+1}X_1)\dots(1-(-q)^{2a+b-h-1}X_1),\\
				
				D_{n,h}(a,b-1,c+2)=X_2(1-(-q)^{a+b-h}X_2)\dots(1-(-q)^{2a+b-h-2}X_2),\\
				D_{n,h}(a-1,b,c+2)=X_2(1-(-q)^{a+b-h}X_2)\dots(1-(-q)^{2a+b-h-3}X_2).
			\end{array}
		\end{equation*}
		
		This implies that for $1 \leq t \leq a$, $X_0^t$-terms of $D_{n,h}(a,b+1,c)+((-q)^{a-1}-1)D_{n,h}(a-1,b+2,c)$ are
		\begin{equation}\label{eq5.44}
			\left\lbrace \begin{array}{ll}
				(-q)^{a-1}\lbrace \kappa_{a-1,t-1}(-q)^{(a+b-h+1)(t-1)}-(-q)^{2a+b-h-1}\kappa_{a-1,t-2}(-q)^{(a+b-h+1)(t-2)}\rbrace X_0^t,& \text{ if } t \neq a,\\
				-(-q)^{3a+b-h-2}\kappa_{a-1,a-2}(-q)^{(a+b-h+1)(a-2)}X_0^{a} & \text{ if } t=a.
			\end{array}\right.
		\end{equation}
		
		Similarly, for $1 \leq t \leq a$, $X_1^t$-terms in $-(-q)^{a-1}D_{n,h}(a,b,c+1)$ are
		\begin{equation}\label{eq5.45}
			\left\lbrace \begin{array}{ll}
				-(-q)^{a-1}\lbrace \kappa_{a-1,t-1}(-q)^{(a+b-h+1)(t-1)}-(-q)^{2a+b-h-1}\kappa_{a-1,t-2}(-q)^{(a+b-h+1)(t-2)}\rbrace X_1^t,& \text{ if } t \neq a,\\
				(-q)^{3a+b-h-2}\kappa_{a-1,a-2}(-q)^{(a+b-h+1)(a-2)}X_1^{a} & \text{ if } t=a.
			\end{array}\right.
		\end{equation}
		
		Finally, for $1 \leq t \leq a$, $X_2^t$-terms in $(-q)^{2a+b-1}(1-(-q)^b)D_{n,h}(a,b-1,c+2)+(-q)^{2a+2b-1}(1-(-q)^{a-1})D_{n,h}(a-1,b,c+2)$ are
		\begin{equation}\label{eq5.46}
			\left\lbrace \begin{array}{ll}
				\lbrace\kappa_{a-1,t-1}(-q)^{(a+b-h)(t-1)+2a+b-1}(1-(-q)^{a+b-1})&\\ -\kappa_{a-1,t-2}(-q)^{(a+b-h)(t-2)+4a+2b-h-3}(1-(-q)^b\rbrace X_2^t&\text{ if } t \neq a,\\
				-\kappa_{a-1,a-2}(-q)^{(a+b-h)(a-2)+4a+2b-h-3}(1-(-q)^b)X_2^{a}  &\text{ if } t=a.
			\end{array}\right.
		\end{equation}
		
		First, assume that $a+b \geq h+2$. Then, by \eqref{eq5.38}, \eqref{eq5.44}, \eqref{eq5.45}, and \eqref{eq5.46}, we have that the degree $t$-terms of the left hand side of \eqref{eq5.42} are as follows: for $1 \leq t \leq a-1$, we have
		
		\begin{equation}\label{eq5.47}\begin{array}{l}
				\prod_{l=h-t+2}^{a+b-t-1} (1-(-q)^l)\\
				\times\Big{\lbrace}	(-q)^{a-1}\lbrace \kappa_{a-1,t-1}(-q)^{(a+b-h+1)(t-1)}-(-q)^{2a+b-h-1}\kappa_{a-1,t-2}(-q)^{(a+b-h+1)(t-2)}\rbrace\\
				\times (1-(-q)^{a+b-t+1})(1-(-q)^{a+b-t})\\
				-(-q)^{a-1}\lbrace \kappa_{a-1,t-1}(-q)^{(a+b-h+1)(t-1)}-(-q)^{2a+b-h-1}\kappa_{a-1,t-2}(-q)^{(a+b-h+1)(t-2)}\rbrace\\
				\times (1-(-q)^{a+b-t})\\
				+	\lbrace\kappa_{a-1,t-1}(-q)^{(a+b-h)(t-1)+2a+b-1}(1-(-q)^{a+b-1})\\ -\kappa_{a-1,t-2}(-q)^{(a+b-h)(t-2)+4a+2b-h-3}(1-(-q)^b)\rbrace \Big{\rbrace}\\
				=(-q)^{(a+b-h)(t-1)+3a+2b-t-3}\\
				\lbrace -((-q)^{t+1}-(-q)^2)\kappa_{a-1,t-1}+((-q)^t-(-q)^{a})\kappa_{a-1,t-2}\rbrace. 
			\end{array}
		\end{equation}
		
		Now, by \eqref{eq5.40}, we have that \eqref{eq5.47} is zero.
		
		For $t=a$, we have that the degree $a$-terms of the left hand side of \eqref{eq5.42} are
		\begin{equation*}\begin{array}{ll}
				\prod_{l=h-a+2}^{b-1} (1-(-q)^l) &\Big{\lbrace}-(-q)^{3a+b-h-2}\kappa_{a-1,a-2}(-q)^{(a+b-h+1)(a-2)}(1-(-q)^{b+1})(1-(-q)^{b})\\
				&+(-q)^{3a+b-h-2}\kappa_{a-1,a-2}(-q)^{(a+b-h+1)(a-2)}(1-(-q)^b)\\
				&-\kappa_{a-1,a-2}(-q)^{(a+b-h)(a-2)+4a+2b-h-3}(1-(-q)^b)\Big{\rbrace}\\
				=0.&
			\end{array}
		\end{equation*}
		
		This shows that \eqref{eq5.42} holds when $a+b \geq h+2$.
		
		Now, the remaining cases are $a+b <h-1$, $a+b=h-1$, $h$, and $h+1$.
		
		If $a+b<h-1$, then $X_0^t, X_1^t,$ and $X_2^t$ are all zero, and hence \eqref{eq5.42} holds.
		
		If $a+b=h-1$, we have that $X_0^t=1$, and $X_1^t=X_2^t=0$ by Theorem \ref{theorem5.31}. Therefore, we have that \eqref{eq5.42} is
		\begin{equation*}\begin{array}{l}
				(1-(-q)^{a+b-h+2})\dots(1-(-q)^{2a+b-h})+((-q)^{a-1}-1)(1-(-q)^{a+b-h+2})\dots(1-(-q)^{2a+b-h-1})\\
				=0 \quad (\text{ since }2a+b-h=a-1).
			\end{array}		
		\end{equation*}
		This shows that \eqref{eq5.42} holds when $a+b=h-1$.
		
		If $a+b=h$, then we have $X_0^t=X_1^t=1$ and $X_2^t=0$ by Theorem \ref{theorem5.31}. Therefore, we have that \eqref{eq5.42} is
		\begin{equation*}
			\begin{array}{l}
				(1-(-q)^{a+b-h+2})\dots(1-(-q)^{2a+b-h})+((-q)^{a-1}-1)(1-(-q)^{a+b-h+2})\dots(1-(-q)^{2a+b-h-1})\\
				-(-q)^{a-1}(1-(-q)^{a+b-h+1})\dots(1-(-q)^{2a+b-h-1})\\
				=0.
			\end{array}
		\end{equation*}
		This shows that \eqref{eq5.42} holds when $a+b=h$.
		
		When $a+b=h+1$, we have $X_0^t=(1-(-q)^{h-t+2})$, and $X_1^t=X_2^t=1$. Therefore, by \eqref{eq5.42.2}, we have that \eqref{eq5.42} is
		\begin{equation*}
			\begin{array}{l}
				(-q)^{a-1}(1-(-q)^2)\dots(1-(-q)^{a-1})(1-(-q)^a-(-q)^{h+1}+(-q)^{h+2})\\
				-(-q)^{a-1}(1-(-q)^2)\dots(1-(-q)^a)\\
				+(-q)^{2a+b-1}(1-(-q)^b)(1-(-q))\dots(1-(-q)^{a-1})\\
				+(-q)^{2a+2b-1}(1-(-q)^{a-1})(1-(-q))\dots(1-(-q)^{a-2})\\
				=0.
			\end{array}
		\end{equation*}
		
		Therefore, \eqref{eq5.42} holds when $a+b=h+1$, and this finishes the proof of the lemma.
	\end{proof}

	\begin{lemma}\label{lemma5.46}
		Assume that $a=0$. Then, we have
		\begin{equation}\label{eq5.48}\begin{array}{ccl}
				&&D_{n,h}(a+1,b,c)\\
				+((-q)^a-1)&\times&D_{n,h}(a,b+1,c)\\
				+(-q)^a((-q)^a-1)&\times&D_{n,h}(a,b,c+1)\\
				-(-q)^{2a}(1-(-q)^b)(1-(-q)^{b-1})&\times&D_{n,h}(a+1,b-2,c+2)\\
				-(-q)^{2a+b-1}(1-(-q)^b)(1-(-q)^a)&\times&D_{n,h}(a,b-1,c+2)
			\end{array}
		\end{equation}
		\begin{equation*}
			\mathlarger{	\mathlarger{=}}	\quad\quad\begin{array}{cl}
				0 &	\text{ if }b \geq h+2 \text{ or } b \leq h-2,\\
				1 & \text{ if } b=h-1, h,\\
				q^{2h+1}+(-q)^h &\text { if } b=h+1.
			\end{array}
		\end{equation*}
	\end{lemma}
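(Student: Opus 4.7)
The plan is to observe that the substitution $a=0$ collapses the left-hand side of \eqref{eq5.48} almost entirely. Indeed, the coefficients $((-q)^a - 1)$, $(-q)^a((-q)^a - 1)$ and $(-q)^{2a+b-1}(1-(-q)^b)(1-(-q)^a)$ all vanish, while $(-q)^{2a} = 1$ and the surviving $D_{n,h}(a+1, b-2, c+2)$ equals $D_{n,h}(1, b-2, c+2)$. Thus \eqref{eq5.48} reduces to proving
\begin{equation*}
D_{n,h}(1,b,c) - (1-(-q)^b)(1-(-q)^{b-1})\, D_{n,h}(1,b-2,c+2)
\end{equation*}
equals $0$, $1$, or $q^{2h+1}+(-q)^h$ according to the stated cases, where $c = n-1-b$.

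Next I would evaluate each summand via the closed-form expressions already in hand. For $b \geq h-1$, Theorem \ref{theorem5.31}(2) gives $D_{n,h}(1,b,c) = 1$ when $b \in \{h-1,h\}$ and $D_{n,h}(1,b,c) = \prod_{l=h+1}^{b}(1-(-q)^l)$ when $b \geq h+1$. For $b \leq h-2$ the index $c = n-1-b$ satisfies $c > n-h$, so Lemma \ref{lemma7.5} forces $D_{n,h}(1,b,c) = 0$ (here the hypothesis $a = 1 \neq 0$ is satisfied).

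I would then run through five subcases. When $b \leq h-2$ both terms vanish, giving $0$. When $b = h-1$ the first term equals $1$ and $D_{n,h}(1,h-3,c+2) = 0$, giving $1$; similarly for $b = h$. When $b \geq h+2$ the two products telescope:
\begin{equation*}
\prod_{l=h+1}^{b}(1-(-q)^l) - (1-(-q)^b)(1-(-q)^{b-1})\prod_{l=h+1}^{b-2}(1-(-q)^l) = 0.
\end{equation*}
The only nontrivial case is $b = h+1$, where $D_{n,h}(1,h+1,c) = 1-(-q)^{h+1}$ and $D_{n,h}(1,h-1,c+2) = 1$, so the expression equals
\begin{equation*}
(1-(-q)^{h+1}) - (1-(-q)^{h+1})(1-(-q)^h) = (1-(-q)^{h+1})(-q)^h = (-q)^h + q^{2h+1},
\end{equation*}
using $-(-q)^{2h+1} = q^{2h+1}$.

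There is no real obstacle here: unlike Lemmas \ref{lemma5.44} and \ref{lemma5.45}, the case $a=0$ avoids the delicate cancellations between $X_0^t$, $X_1^t$, $X_2^t$-terms driven by the identity \eqref{eq5.41.2}, and reduces instead to direct substitution from Theorem \ref{theorem5.31} together with the vanishing statement Lemma \ref{lemma7.5}. The only bookkeeping to be careful about is checking that the parity constraint of Remark \ref{remark5.10} is consistent across each subcase and that the boundary case $b = h+1$ produces the precise constant $q^{2h+1}+(-q)^h$ claimed in the statement.
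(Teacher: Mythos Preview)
Your proposal is correct and follows essentially the same approach as the paper: reduce \eqref{eq5.48} at $a=0$ to $D_{n,h}(1,b,c)-(1-(-q)^b)(1-(-q)^{b-1})D_{n,h}(1,b-2,c+2)$, then evaluate case-by-case via Theorem~\ref{theorem5.31}(2) and the vanishing from Lemma~\ref{lemma7.5}. The only cosmetic difference is that you explicitly cite Lemma~\ref{lemma7.5} for the $b\leq h-2$ vanishing, whereas the paper states it without reference.
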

	\begin{proof}
		Note that when $a=0$, we have that \eqref{eq5.48} is
		\begin{equation*}
			D_{n,h}(1,b,c)-(1-(-q)^b)(1-(-q)^{b-1})D_{n,h}(1,b-2,c+2).
		\end{equation*}
		
		If $b \leq h-2$, then we have that both $D_{n,h}(1,b,c)$ and $D_{n,h}(1,b-2,c+2)$ are zero.
		
		If $b \geq h+2$, then by Theorem \ref{theorem5.31}, we have that
		\begin{equation*}\begin{array}{l}
				D_{n,h}(1,b,c)-(1-(-q)^b)(1-(-q)^{b-1})D_{n,h}(1,b-2,c+2)\\
				=\prod_{l=h+1}^{b}(1-(-q)^l)-(1-(-q)^b)(1-(-q)^{b-1})\prod_{l=h+1}^{b-2}(1-(-q)^l)=0.
			\end{array}
		\end{equation*}
		
		Now, assume that $b=h+1$. Then, by Theorem \ref{theorem5.31}, we have
		\begin{equation*}\begin{array}{l}
				D_{n,h}(1,b,c)-(1-(-q)^b)(1-(-q)^{b-1})D_{n,h}(1,b-2,c+2)\\
				=(1-(-q)^{h+1})-(1-(-q)^{h+1})(1-(-q)^{h})=(-q)^h(1-(-q)^{h+1})=q^{2h+1}+(-q)^h.
			\end{array}
		\end{equation*}
		
		Finally, if $b=h, h-1$, then by Theorem \ref{theorem5.31}, we have
		\begin{equation*}\begin{array}{l}
				D_{n,h}(1,b,c)-(1-(-q)^b)(1-(-q)^{b-1})D_{n,h}(1,b-2,c+2)=1.
			\end{array}
		\end{equation*}
		
		This finishes the proof of the lemma.
	\end{proof}

	\begin{lemma}\label{lemma5.47}
		Assume that $a=1$. Then, we have
		\begin{equation}\label{eq5.49}\begin{array}{ccl}
				&&D_{n,h}(a,b+1,c)\\
				+((-q)^{a-1}-1)&\times&D_{n,h}(a-1,b+2,c)\\
				-(-q)^{a-1}&\times&D_{n,h}(a,b,c+1)\\
				+(-q)^{2a+b-1}(1-(-q)^b)&\times&D_{n,h}(a,b-1,c+2)\\
				+(-q)^{2a+2b-1}(1-(-q)^{a-1})&\times&D_{n,h}(a-1,b,c+2)
			\end{array}
		\end{equation}
		\begin{equation*}
			\mathlarger{	\mathlarger{=}}	\quad\quad\begin{array}{cl}
				0 &	\text{ if }b \geq h+1 \text{ or } b \leq h-3,\\
				1 & \text{ if } b=h-2,\\
				0 & \text{ if } b=h-1,\\
				q^{2h+1} &\text { if } b=h.
			\end{array}
		\end{equation*}
	\end{lemma}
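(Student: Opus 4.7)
Substituting $a=1$ in the stated linear combination kills the second and fifth terms at once, since both coefficients $((-q)^{a-1}-1)$ and $(1-(-q)^{a-1})$ vanish. The expression thereby collapses to
\[
E(b) \;\coloneqq\; D_{n,h}(1,b+1,c)-D_{n,h}(1,b,c+1)+(-q)^{b+1}(1-(-q)^b)\,D_{n,h}(1,b-1,c+2),
\]
and the plan is to evaluate $E(b)$ case by case using Theorem~\ref{theorem5.31}(2) (the explicit product formula for $D_{n,h}(1,b',c')$ when $b'\geq h-1$) together with Lemma~\ref{lemma7.5} (vanishing when $c'>n-h$ and $a\neq 0$). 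Since each of the three surviving $D$-terms has the form $D_{n,h}(1,b',c')$ with $b'+c'=n-1$, the inequality $c'>n-h$ translates directly into an inequality on $b$: the first term vanishes for $b\leq h-3$, the second for $b\leq h-2$, and the third for $b\leq h-1$.

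In the generic range $b\geq h+1$ every $D$-value is given by a product via Theorem~\ref{theorem5.31}(2), and I would factor $\prod_{l=h+1}^{b-1}(1-(-q)^l)\cdot(1-(-q)^b)$ out of $E(b)$ (with the empty-product convention covering $b=h+1$). The remaining scalar is
\[
(1-(-q)^{b+1}) - 1 + (-q)^{b+1} \;=\; 0,
\]
so $E(b)=0$. The only case producing a nontrivial cancellation is $b=h$, where Theorem~\ref{theorem5.31}(2) supplies $D_{n,h}(1,h+1,c)=1-(-q)^{h+1}$ and $D_{n,h}(1,h,c+1)=D_{n,h}(1,h-1,c+2)=1$, whence
\[
E(h) \;=\; -(-q)^{h+1} + (-q)^{h+1}(1-(-q)^h) \;=\; -(-q)^{2h+1} \;=\; q^{2h+1},
\]
matching the value stated in the lemma.

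The three remaining cases are pure bookkeeping. For $b=h-1$ the first two terms both equal $1$ by Theorem~\ref{theorem5.31}(2) and the third vanishes by Lemma~\ref{lemma7.5}, giving $E(h-1)=0$. For $b=h-2$ only the first term survives (with value $1$), while the second and third vanish by Lemma~\ref{lemma7.5}, giving $E(h-2)=1$. For $b\leq h-3$ all three $D$-terms vanish by Lemma~\ref{lemma7.5}. Structurally this argument parallels the proof of Lemma~\ref{lemma5.46} but is strictly simpler: the coefficients $(-q)^{a-1}-1$ and $1-(-q)^{a-1}$ handed us two free cancellations, so there is no genuine obstacle beyond correctly tracking the five boundary regimes in $b$, in particular checking at $b=h+1$ that the empty product convention is consistent with Theorem~\ref{theorem5.31}(2) and at $b\in\{h-1,h,h-2\}$ that the shift of $c$ by $0,1,2$ correctly determines whether Lemma~\ref{lemma7.5} or Theorem~\ref{theorem5.31}(2) applies.
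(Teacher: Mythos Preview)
Your proof is correct and follows essentially the same route as the paper's: both reduce the expression to $E(b)=D_{n,h}(1,b+1,c)-D_{n,h}(1,b,c+1)+(-q)^{b+1}(1-(-q)^b)D_{n,h}(1,b-1,c+2)$ and then evaluate case by case via Theorem~\ref{theorem5.31}(2), with the vanishing in the low-$b$ range coming from Lemma~\ref{lemma7.5}. The only cosmetic difference is that you make the appeal to Lemma~\ref{lemma7.5} explicit and present the $b\geq h+1$ case by first extracting the common factor, whereas the paper writes out the three products and cancels directly.
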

	\begin{proof}
		Note that when $a=1$, we have that \eqref{eq5.49} is
		\begin{equation*}
			D_{n,h}(1,b+1,c)-D_{n,h}(1,b,c+1)+(-q)^{b+1}(1-(-q)^b)D_{n,h}(1,b-1,c+2).
		\end{equation*}
		
		If $b\leq h-3$, we have that $D_{n,h}(1,b+1,c)=D_{n,h}(1,b,c+1)=D_{n,h}(1,b-1,c+2)=0$.
		
		If $b \geq h+1$, by Theorem \ref{theorem5.31}, we have
		\begin{equation*}
			\begin{array}{l}
				D_{n,h}(1,b+1,c)-D_{n,h}(1,b,c+1)+(-q)^{b+1}(1-(-q)^b)D_{n,h}(1,b-1,c+2)\\
				=\prod_{l=h+1}^{b+1}(1-(-q)^l)-\prod_{l=h+1}^{b}(1-(-q)^l)+(-q)^{b+1}(1-(-q)^b)\prod_{l=h+1}^{b-1}(1-(-q)^l)=0.
			\end{array}
		\end{equation*}
		If $b=h-2$, then we have
		\begin{equation*}
			\begin{array}{l}
				D_{n,h}(1,b+1,c)-D_{n,h}(1,b,c+1)+(-q)^{b+1}(1-(-q)^b)D_{n,h}(1,b-1,c+2)
				=1-0+0=1.
			\end{array}
		\end{equation*}
		If $b=h-1$, then we have
		\begin{equation*}
			\begin{array}{l}
				D_{n,h}(1,b+1,c)-D_{n,h}(1,b,c+1)+(-q)^{b+1}(1-(-q)^b)D_{n,h}(1,b-1,c+2)
				=1-1+0=0.
			\end{array}
		\end{equation*}
		If $b=h$, then we have
		\begin{equation*}
			\begin{array}{l}
				D_{n,h}(1,b+1,c)-D_{n,h}(1,b,c+1)+(-q)^{b+1}(1-(-q)^b)D_{n,h}(1,b-1,c+2)\\
				=(1-(-q)^{h+1})-1+(-q)^{h+1}(1-(-q)^h)=q^{2h+1}.
			\end{array}
		\end{equation*}
		This finishes the proof of the lemma.
	\end{proof}

	\begin{theorem}\label{theorem5.46}
		Assume that $x \perp L^{\flat}$, $\val(( x,x)) \leq -2$, and $L^{\flat} \subset L'^{\flat} \subset (L'^{\flat})^{\vee} \subset L^{\flat}_F$. Let $\lambda$ be the fundamental invariants of $L'^{\flat}$ and $(a,b,c)=(t_{\ge 2}(\lambda),t_1(\lambda),t_0(\lambda))$. Assume further that $(a,b,c) \neq (1,h,n-h-2), (1,h-2,n-h), (0,h+1,n-h-2), (0,h,n-h-1),(0,h-1,n-h)$. Then, we have
		\begin{equation*}
			\widehat{\pDen}_{L'^{\flat\circ}}^{n,h}(x)=0.
		\end{equation*} 
	\end{theorem}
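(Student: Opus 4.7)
The plan is to expand $\widehat{\pDen}_{L'^{\flat\circ}}^{n,h}(x)$ using the parametrization established at the beginning of this section and reduce the result to the four identities in Lemmas \ref{lemma5.44}--\ref{lemma5.47}. Since $x \perp L^\flat$ and $\val((x,x)) \leq -2 \leq -1$, we have
\[
\widehat{\pDen}_{L'^{\flat\circ}}^{n,h}(x) = \sum_{(u^\flat, u^\perp)} D_{n,h}(L'^\flat + \langle u^\flat + u^\perp \rangle)\,\vol(L'^\flat + \langle u^\flat + u^\perp\rangle),
\]
where $(u^\flat, u^\perp)$ ranges over $(L'^\flat)^{\vee,\geq 0}/L'^\flat \times (\langle x\rangle^\vee \setminus \{0\})/O_F^\times$. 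First I would stratify this sum by $v \coloneqq \val((u^\perp, u^\perp))$. Since $u^\perp = \alpha x$ with $\val(\alpha\bar\alpha) + \val((x,x)) \geq 0$, the admissible $v$ are exactly those nonnegative integers with $v \equiv \val((x,x)) \pmod 2$, and there is exactly one class of $u^\perp$ modulo $O_F^\times$ per admissible $v$. The hypothesis $\val((x,x)) \leq -2$ guarantees that all admissible strata satisfy $v \in \{0,1\}$ or $v \geq 2$, with $v = 0$ contributing vacuously by a separate check (in that case $u^\perp$ can be absorbed into $L'^\flat$ and the resulting primitive term falls outside the support).

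Within each stratum, I would decompose the $u^\flat$-sum using the eleven cases of Proposition \ref{proposition5.33} (for $v \geq 2$) or Proposition \ref{proposition5.34} (for $v = 1$), reading off $D_{n,h}(L'^\flat + \langle u^\flat + u^\perp\rangle) = D_{n,h}(a', b', c')$ for triples $(a', b', c')$ that depend on the case. The cardinality of each case is given by Proposition \ref{proposition5.36.1} in terms of $\mu$-values of lattices $L_{(\lambda\geq i)-j}$, which telescope via Proposition \ref{proposition5.35}. Multiplying by the volume $\vol(L') = q^{-\val(L')/2}$ (which depends only on $v$ and the case of $u^\flat$) and pulling out a common constant from each stratum, the remaining bracket should coincide with the left-hand side of Lemma \ref{lemma5.44} when $v \geq 2, a \geq 1$; the LHS of Lemma \ref{lemma5.46} when $v \geq 2, a = 0$; the LHS of Lemma \ref{lemma5.45} when $v = 1, a \geq 2$; and the LHS of Lemma \ref{lemma5.47} when $v = 1, a = 1$.

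Once these identifications are made, vanishing follows immediately. For $a \geq 2$, both Lemmas \ref{lemma5.44} and \ref{lemma5.45} give zero unconditionally. For $a = 1$, Lemma \ref{lemma5.44} still vanishes, while Lemma \ref{lemma5.47} is nonzero precisely at $b \in \{h-2, h\}$, yielding the excluded triples $(1, h-2, n-h)$ and $(1, h, n-h-2)$. For $a = 0$, Lemma \ref{lemma5.45} is not invoked and Lemma \ref{lemma5.46} gives zero outside $b \in \{h-1, h, h+1\}$, yielding the remaining three excluded triples $(0, h-1, n-h), (0, h, n-h-1), (0, h+1, n-h-2)$. Thus, under the hypothesis of the theorem, every stratum contributes zero and we conclude $\widehat{\pDen}_{L'^{\flat\circ}}^{n,h}(x) = 0$.

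The main obstacle is the bookkeeping in the second step: showing that the coefficients $((-q)^a - 1)$, $(-q)^a((-q)^a - 1)$, $-(-q)^{2a}(1-(-q)^b)(1-(-q)^{b-1})$, and $-(-q)^{2a+b-1}(1-(-q)^b)(1-(-q)^a)$ in Lemma \ref{lemma5.44} (and analogously for Lemmas \ref{lemma5.45}--\ref{lemma5.47}) arise precisely from combining the $\mu$-cardinalities of Proposition \ref{proposition5.36.1} across the grouped cases (Case 1-3 + Case 3-2, Case 2-1 + Case 4-1, and Case 2-2 + Case 4-2), after multiplication by the appropriate volume factor. The $v = 1$ stratum is particularly delicate because Case 1-2 of Proposition \ref{proposition5.34} further splits into 1-2-1 and 1-2-2 depending on whether $\val((u^\flat + u^\perp, u^\flat + u^\perp))$ jumps; tracking this split against the exponent shift $v \mapsto v - 1$ in the recursion from Proposition \ref{proposition5.35} is the technical heart of the matching.
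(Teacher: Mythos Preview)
Your overall plan to reduce to Lemmas \ref{lemma5.44}--\ref{lemma5.47} is correct, but the way you propose to match the strata to the lemmas is structurally wrong, and this is not merely a bookkeeping issue.

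First, under the hypothesis $\val((x,x)) \leq -2$ there is \emph{no} $v=1$ stratum at all: if $e=\val((x,x))$ then $u^{\perp}=\alpha\pi^{-e}x$ with $\alpha\in O_F$, so $v=2\val(\alpha)-e\geq -e\geq 2$. Thus every $u^{\perp}$ falls under Proposition \ref{proposition5.33}, never Proposition \ref{proposition5.34}; the Case 1-2-1/1-2-2 split you worry about simply does not arise in this theorem (it arises in Theorem \ref{theorem5.47}). Your assignment of Lemmas \ref{lemma5.45} and \ref{lemma5.47} to a $v=1$ stratum cannot be how they enter.

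Second, and more seriously, for fixed $v\geq 2$ the $u^{\flat}$-sum is \emph{not} the left side of \eqref{eq5.34} alone. Proposition \ref{proposition5.33} produces seven distinct triples, including $D_{n,h}(a-1,b+2,c)$ (Case 3-1) and $D_{n,h}(a-1,b,c+2)$ (Case 5), which appear in \eqref{eq5.42} but not in \eqref{eq5.34}. The actual identity (equation \eqref{eq5.50} in the paper) is
\[
\sum_{u^{\flat}} D_{n,h}(L'^{\flat}+\langle u^{\flat}+u^{\perp}\rangle)
=\mu(L_{(\lambda\geq 2)-2})\cdot\eqref{eq5.34}
+\bigl\{q^{2t_{\geq 3}(\lambda)}\mu(L_{(\lambda\geq 3)-3})-(-q)^{t_{\geq 2}(\lambda)}\mu(L_{(\lambda\geq 2)-2})\bigr\}\cdot\eqref{eq5.42},
\]
a \emph{simultaneous} linear combination of both identities. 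Verifying this decomposition coefficient-by-coefficient (seven checks) is the real content of the proof, and several of them require the $\mu$-recursion of Lemma \ref{lemma5.40}, not just the cardinalities in Proposition \ref{proposition5.36.1}. Once \eqref{eq5.50} is established, all four Lemmas \ref{lemma5.44}--\ref{lemma5.47} are invoked for this single $v\geq 2$ sum: Lemmas \ref{lemma5.44}/\ref{lemma5.46} kill the first summand and Lemmas \ref{lemma5.45}/\ref{lemma5.47} kill the second (the second coefficient vanishes when $a=0$, which is why Lemma \ref{lemma5.47} is not needed there). Your excluded-triple analysis at the end is then correct.
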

	\begin{proof}
		Recall that
		\begin{equation*}
			\widehat{\pDen}_{L'^{\flat\circ}}^{n,h}(x)
			=\mathlarger{\sum}_{(u^{\flat},u^{\perp}) \in (L'^{\flat})^{\vee,\geq0}/L'^{\flat} \times (\langle x \rangle^{\vee}\backslash \lbrace 0\rbrace )/O_F^{\times}} D_{n,h}(L'^{\flat}+\langle u^{\flat}+u^{\perp}\rangle)\vol(L'^{\flat}+\langle u^{\flat}+u^{\perp}\rangle).
		\end{equation*}
		Since $\val(( x,x )) \leq -2$, we have that $\val(( u^{\perp},u^{\perp} )) \geq 2$. Therefore, by Proposition \ref{proposition5.33}, $D_{n,h}(L'^{\flat}+\langle u^{\flat}+u^{\perp}\rangle)$ depends only on $u^{\flat}$. Also, we have that
		\begin{equation*}
			\vol(L'^{\flat}+\langle u^{\flat}+u^{\perp}\rangle)=\vol(L'^{\flat})\vol(\langle u^{\perp} \rangle).
		\end{equation*}
		
		This implies that
		\begin{equation*}\begin{aligned}
				\widehat{\pDen}_{L'^{\flat\circ}}^{n,h}(x)
				&=\mathlarger{\sum}_{(u^{\flat},u^{\perp}) \in (L'^{\flat})^{\vee,\geq0}/L'^{\flat} \times (\langle x \rangle^{\vee}\backslash \lbrace 0\rbrace )/O_F^{\times}} D_{n,h}(L'^{\flat}+\langle u^{\flat}+u^{\perp}\rangle)\vol(L'^{\flat}+\langle u^{\flat}+u^{\perp}\rangle)\\
				&=\vol(L'^{\flat})\vol(\langle x \rangle^{\vee})\mathlarger{\sum}_{i \geq 0} q^{-2i}\mathlarger{\sum}_{u^{\flat} \in (L'^{\flat})^{\vee,\geq0}/L'^{\flat}} D_{n,h}(L'^{\flat}+\langle u^{\flat}+u^{\perp}\rangle)\\
				&=\vol(L'^{\flat})\vol(\langle x \rangle^{\vee})(1-q^{-2})^{-1}\mathlarger{\sum}_{u^{\flat} \in (L'^{\flat})^{\vee,\geq0}/L'^{\flat}} D_{n,h}(L'^{\flat}+\langle u^{\flat}+u^{\perp}\rangle).
			\end{aligned}
		\end{equation*}
		
		Now, it suffices to show that $\mathlarger{\sum}_{u^{\flat} \in (L'^{\flat})^{\vee,\geq0}/L'^{\flat}} D_{n,h}(L'^{\flat}+\langle u^{\flat}+u^{\perp}\rangle)=0$.
		
		We claim that $\mathlarger{\sum}_{u^{\flat} \in (L'^{\flat})^{\vee,\geq0}/L'^{\flat}} D_{n,h}(L'^{\flat}+\langle u^{\flat}+u^{\perp}\rangle)$ is a sum of two equations \eqref{eq5.34} and \eqref{eq5.42}. More precisely, we claim that
		\begin{equation}\label{eq5.50}\begin{aligned}
				\mathlarger{\sum}_{u^{\flat} \in (L'^{\flat})^{\vee,\geq0}/L'^{\flat}} D_{n,h}(L'^{\flat}+\langle u^{\flat}+u^{\perp}\rangle)&=\mu(L_{(\lambda\geq 2)-2})\times \eqref{eq5.34}\\&+\lbrace q^{2t_{\ge 3}(\lambda)}\mu(L_{(\lambda \geq3)-3})-(-q)^{t_{\ge 2}(\lambda)}\mu(L_{(\lambda\geq 2)-2}) \rbrace \times\eqref{eq5.42}.
			\end{aligned}
		\end{equation}
		
		Note that both sides of \eqref{eq5.50} are linear sums of the Cho-Yamauchi constants $D_{n,h}(a+1,b,c)$, $D_{n,h}(a,b+1,c)$, $D_{n,h}(a,b,c+1)$, $D_{n,h}(a+1,b-2,c+2)$, $D_{n,h}(a,b-1,c+2)$, $D_{n,h}(a-1,b+2,c)$, $D_{n,h}(a-1,b,c+2)$. Therefore, it suffices to show that both sides have the same coefficients.
		
		First, consider the coefficients of $D_{n,h}(a+1,b,c)$ on both sides. By Proposition \ref{proposition5.36.1} (Case 1-1), we have that the coefficient of $D_{n,h}(a+1,b,c)$ on the left-hand side of \eqref{eq5.50} is $\mu(L_{(\lambda \geq 2) -2}).$ Also, the coefficients of $D_{n,h}(a+1,b,c)$ on the right hand side of \eqref{eq5.50} is $\mu(L_{\lambda \geq 2 -2})$ (note that $D_{n,h}(a+1,b,c)$ appears only in \eqref{eq5.34}). Therefore, the coefficients of $D_{n,h}(a+1,b,c)$ on both sides are the same.
		
		For $D_{n,h}(a,b+1,c)$, by Proposition \ref{proposition5.36.1} (Case 1-2), we have that the coefficient of $D_{n,h}(a,b+1,c)$ on the left hand side of \eqref{eq5.50} is $q^{2t_{\ge 3}(\lambda)}\mu(L_{(\lambda \geq 3)-3})-\mu(L_{(\lambda \geq 2) -2}).$ Now, the coefficient of $D_{n,h}(a,b+1,c)$ on the right hand side of \eqref{eq5.50} is
		\begin{equation*}\begin{array}{l}
				\mu(L_{(\lambda \geq 2)-2}) \lbrace(-q)^{t_{\ge 2}(\lambda)}-1\rbrace+\lbrace q^{2t_{\ge 3}(\lambda)}\mu(L_{(\lambda \geq3)-3})-(-q)^{t_{\ge 2}(\lambda)}\mu(L_{(\lambda\geq 2)-2})\rbrace\\
				=q^{2t_{\ge 3}(\lambda)}\mu(L_{(\lambda \geq 3)-3})-\mu(L_{(\lambda \geq 2) -2}).
			\end{array}
		\end{equation*}
		Therefore, the coefficients of $D_{n,h}(a,b+1,c)$ on both sides are the same.
		
		For $D_{n,h}(a,b,c+1)$, by Proposition \ref{proposition5.36.1} (Case 1-3+Case 3-2), we have that the coefficient of $D_{n,h}(a,b,c+1)$ on the left hand side of \eqref{eq5.50} is $q^{2t_{\ge 2}(\lambda)}\mu(L_{(\lambda \geq 2)-2})-\mu(L_{(\lambda \geq 2) -1}).$ On the other hand, the coefficient of $D_{n,h}(a,b,c+1)$ on the right hand side of \eqref{eq5.50} is
		\begin{equation*}\begin{array}{l}
				\mu(L_{(\lambda \geq 2)-2}) (-q)^{t_{\ge 2}(\lambda)}\lbrace(-q)^{t_{\ge 2}(\lambda)}-1\rbrace-(-q)^{t_{\ge 2}(\lambda)-1}\lbrace q^{2t_{\ge 3}(\lambda)}\mu(L_{(\lambda \geq3)-3})-(-q)^{t_{\ge 2}(\lambda)}\mu(L_{(\lambda\geq 2)-2})\rbrace.
			\end{array}
		\end{equation*}
		
		Therefore, it suffices to show that
		\begin{equation*}
			\mu(L_{(\lambda \geq 2)-1})+(-q)^{2t_{\ge 2}(\lambda)-1}\mu(L_{(\lambda \geq 2)-2})=(-q)^{t_{\ge 2}(\lambda)}\mu(L_{(\lambda \geq 2)-2})+(-q)^{t_{\ge 2}(\lambda)+2t_{\ge 3}(\lambda)-1}\mu(L_{(\lambda\geq3)-3}).
		\end{equation*}
		By Lemma \ref{lemma5.40}, we have that
		\begin{equation*}
			\mu(L_{(\lambda \geq 2)-1})-q^{2t_{\ge 2}(\lambda)-1}\mu(L_{(\lambda \geq 2)-2})=-(q-1)(-q)^{\vert \lambda\vert-t_1(\lambda)-t_{\ge 2}(\lambda)-1},
		\end{equation*}
		and
		\begin{equation*}
			\begin{aligned}
				(-q)^{t_{\ge 2}(\lambda)}\mu(L_{(\lambda \geq 2)-2})+(-q)^{t_{\ge 2}(\lambda)+2t_{\ge 3}(\lambda)-1}\mu(L_{(\lambda\geq3)-3})&=(-q)^{t_{\ge 2}(\lambda)}\lbrace -(q-1)(-q)^{\vert \lambda \vert-t_1(\lambda)-2t_{\ge 2}(\lambda)-1}\rbrace\\
				&=-(q-1)(-q)^{\vert \lambda \vert-t_1(\lambda)-t_{\ge 2}(\lambda)-1}.
			\end{aligned}
		\end{equation*}
		This shows that the coefficients of $D_{n,h}(a,b,c+1)$ on both sides are the same.
		
		For $D_{n,h}(a+1,b-2,c+2)$, by Proposition \ref{proposition5.36.1} (Case 2-1+Case 4-1), we have that the coefficient of $D_{n,h}(a+1,b-2,c+2)$ on the left hand side of \eqref{eq5.50} is $q^{2t_{\ge 2}(\lambda)}\mu(L_{(\lambda \geq 2) -2})\times (\mu(L_{\lambda=1})-1).$ On the other hand, the coefficient of $D_{n,h}(a+1,b-2,c+2)$ on the right hand side of \eqref{eq5.50} is
		\begin{equation*}
			-(-q)^{2t_{\ge 2}(\lambda)}(1-(-q)^{t_1(\lambda)})(1-(-q)^{t_1(\lambda)-1})\mu(L_{(\lambda \geq 2) -2}).
		\end{equation*}
		Now, by Lemma \ref{lemma5.40}, we have that
		\begin{equation}\label{eq5.51}
			\mu(L_{\lambda=1})-1=q^{2t_1(\lambda)-1}-(-q)^{t_1(\lambda)-1}(q-1)-1=-(1-(-q)^{t_1(\lambda)})(1-(-q)^{t_1(\lambda)-1}).
		\end{equation}
		This shows that the coefficients of $D_{n,h}(a+1,b-2,c+2)$ on both sides are the same.
		
		For $D_{n,h}(a,b-1,c+2)$, by Proposition \ref{proposition5.36.1} (Case 2-2+Case 4-2), we have that the coefficient of $D_{n,h}(a,b-1,c+2)$ on the left hand side of \eqref{eq5.50} is
		\begin{equation*}
			q^{2t_{\ge 2}(\lambda)}\lbrace \mu(L_{(\lambda \geq 2)-2} \obot L_{\lambda=1})-\mu(L_{(\lambda \geq 2) -2})\mu(L_{\lambda=1})\rbrace.
		\end{equation*}
		On the other hand, the coefficient of $D_{n,h}(a,b-1,c+2)$ on the right hand side is
		\begin{equation*}\begin{array}{l}
				-(-q)^{2t_{\ge 2}(\lambda)+t_1(\lambda)-1}(1-(-q)^{t_1(\lambda)})(1-(-q)^{t_{\ge 2}(\lambda)}) \mu(L_{(\lambda \geq 2) -2})\\
				+(-q)^{2t_{\ge 2}(\lambda)+t_1(\lambda)-1}(1-(-q)^{t_1(\lambda)})\lbrace q^{2t_{\ge 3}(\lambda)}\mu(L_{(\lambda \geq 3)-3})-(-q)^{t_{\ge 2}(\lambda)}\mu(L_{(\lambda \geq 2)-2}) \rbrace.
			\end{array}
		\end{equation*}
		Note that by Lemma \ref{lemma5.40}, we have that
		\begin{equation*}\begin{array}{l}
				\mu(L_{(\lambda \geq 2)-2}\obot L_{\lambda=1})=\mu(L_{(\lambda \geq 3)-2}\obot L_{\lambda=1})=q^{2t_{\ge 3}(\lambda)+2t_1(\lambda)-1}\mu(L_{(\lambda \geq 3)-3})-(q-1)(-q)^{\vert \lambda \vert-2t_{\ge 2}(\lambda)-1}.\\
			\end{array}
		\end{equation*}
		Combining these with \eqref{eq5.51}, it suffices to show that
		\begin{equation*}
			-(q-1)(-q)^{\vert \lambda \vert-1}=(-q)^{2t_{\ge 2}(\lambda)+t_1(\lambda)}\mu(L_{(\lambda \geq 2)-2})+(-q)^{2t_{\ge 3}(\lambda)+2t_{\ge 2}(\lambda)+t_1(\lambda)-1}\mu(L_{(\lambda \geq 3) -3}).
		\end{equation*}
		Since $\mu(L_{(\lambda \geq 2)-2})=\mu(L_{(\lambda \geq 3)-2})$, this follows from Lemma \ref{lemma5.40}. This shows that the coefficients of $D_{n,h}(a,b-1,c+2)$ on both sides are the same.

		For $D_{n,h}(a-1,b+2,c)$, by Proposition \ref{proposition5.36.1} (Case 3-1), we have that the coefficient of $D_{n,h}(a-1,b+2,c)$ on the left hand side of \eqref{eq5.50} is
		\begin{equation*}
			\mu(L_{(\lambda \geq 2) -1})-q^{2t_{\ge 3}(\lambda)}\mu(L_{(\lambda \geq 3)-3}).
		\end{equation*}
		On the other hand, the coefficient of $D_{n,h}(a-1,b+2,c)$ on the right hand side of \eqref{eq5.50} is
		\begin{equation*}
			((-q)^{t_{\ge 2}(\lambda)-1}-1)\lbrace q^{2t_{\ge 3}(\lambda)}\mu(L_{(\lambda \geq 3)-3})-(-q)^{t_{\ge 2}(\lambda)}\mu(L_{(\lambda \geq 2)-2}) \rbrace.
		\end{equation*}
		Therefore, it suffices to show that
		\begin{equation*}
			\mu(L_{(\lambda \geq 2) -1})+(-q)^{2t_{\ge 2}(\lambda)-1}\mu(L_{(\lambda \geq 2)-2})= (-q)^{t_{\ge 2}(\lambda)}\mu(L_{(\lambda \geq 2)-2})+(-q)^{2t_{\ge 3}(\lambda)+t_{\ge 2}(\lambda)-1}\mu(L_{(\lambda \geq 3)-3}).
		\end{equation*}
		Now, by Lemma \ref{lemma5.40}, we have that
		\begin{equation*}
			\mu(L_{(\lambda \geq 2) -1})+(-q)^{2t_{\ge 2}(\lambda)-1}\mu(L_{(\lambda \geq 2)-2})=-(-q)^{\vert \lambda \vert-t_1(\lambda)-t_{\ge 2}(\lambda)-1}(q-1),
		\end{equation*}
		and
		\begin{equation*}
			(-q)^{t_{\ge 2}(\lambda)}\mu(L_{(\lambda \geq 2)-2})+(-q)^{2t_{\ge 3}(\lambda)+t_{\ge 2}(\lambda)-1}\mu(L_{(\lambda \geq 3)-3})=(-q)^{t_{\ge 2}(\lambda)}(-(-q)^{\vert \lambda \vert-t_1(\lambda)-2t_{\ge 2}(\lambda)-1})(q-1).
		\end{equation*}
		This shows that the coefficients of $D_{n,h}(a-1,b+2,c)$ on both sides are the same.
		
		For $D_{n,h}(a-1,b,c+2)$, by Proposition \ref{proposition5.36.1} (Case 5), we have that the coefficient of $D_{n,h}(a-1,b,c+2)$ on the left hand side of \eqref{eq5.50} is
		\begin{equation*}
			\mu(L_{\lambda \geq 2} \obot L_{\lambda=1})-q^{2t_{\ge 2}(\lambda)}\mu(L_{(\lambda \geq 2) -2}\obot L_{\lambda=1}). 
		\end{equation*}
		On the other hand, the coefficient of $D_{n,h}(a-1,b,c+2)$ on the right hand side of \eqref{eq5.50} is
		\begin{equation*}
			(-q)^{2t_{\ge 2}(\lambda)+2t_1(\lambda)-1}(1-(-q)^{t_{\ge 2}(\lambda)-1})\lbrace q^{2t_{\ge 3}(\lambda)}\mu(L_{(\lambda \geq 3)-3})-(-q)^{t_{\ge 2}(\lambda)}\mu(L_{(\lambda \geq 2)-2}) \rbrace.
		\end{equation*}
		Note that by Lemma \ref{lemma5.40}, we have
		\begin{equation*}
			\mu(L_{\lambda \geq 2} \obot L_{\lambda=1})=q^{2t_{\ge 2}(\lambda)+2t_1(\lambda)-1}\mu(L_{(\lambda \geq 2)-1})-(q-1)(-q)^{\vert \lambda \vert-1},
		\end{equation*}
		and
		\begin{equation*}\begin{array}{l}
				\mu(L_{(\lambda \geq 2)-2}\obot L_{\lambda=1})=\mu(L_{(\lambda \geq 3)-2}\obot L_{\lambda=1})=q^{2t_{\ge 3}(\lambda)+2t_1(\lambda)-1}\mu(L_{(\lambda \geq 3)-3})-(q-1)(-q)^{\vert \lambda \vert-2t_{\ge 2}(\lambda)-1}.\\
			\end{array}
		\end{equation*}
		Therefore, it suffices to show that
		\begin{equation*}
			\begin{array}{l}
				q^{2t_{\ge 2}(\lambda)+2t_1(\lambda)-1}\mu(L_{(\lambda \geq 2)-1})-q^{2t_{\ge 3}(\lambda)+2t_{\ge 2}(\lambda)+2t_1(\lambda)-1}\mu(L_{(\lambda \geq 3)-3})\\
				=(-q)^{2t_{\ge 2}(\lambda)+2t_1(\lambda)-1}(1-(-q)^{t_{\ge 2}(\lambda)-1})\lbrace q^{2t_{\ge 3}(\lambda)}\mu(L_{(\lambda \geq 3)-3})-(-q)^{t_{\ge 2}(\lambda)}\mu(L_{(\lambda \geq 2)-2}) \rbrace.
			\end{array}
		\end{equation*}
		Note that this is equivalent to
		\begin{equation*}\begin{aligned}
				\mu(L_{(\lambda \geq 2)-1})-q^{2t_{\ge 2}(\lambda)-1}\mu(L_{(\lambda \geq 2)-2})&=(-q)^{t_{\ge 2}(\lambda)}\lbrace \mu(L_{(\lambda \geq 2)-2})-q^{2t_{\ge 3}(\lambda)-1}\mu(L_{(\lambda \geq 3) -3})\rbrace\\
				&=(-q)^{t_{\ge 2}(\lambda)}\lbrace \mu(L_{(\lambda \geq 3)-2})-q^{2t_{\ge 3}(\lambda)-1}\mu(L_{(\lambda \geq 3) -3})\rbrace.
			\end{aligned}
		\end{equation*}
		Now, by Lemma \ref{lemma5.40}, we have that both sides are equal to $-(-q)^{\vert \lambda \vert-t_1(\lambda)-t_{\ge 2}(\lambda)-1}(q-1)$. This shows that the coefficients of $D_{n,h}(a-1,b,c+2)$ on both sides are the same.
		
		This finishes the proof of \eqref{eq5.50}. Now, the theorem follows from Lemma \ref{lemma5.44}, Lemma \ref{lemma5.45}, Lemma \ref{lemma5.46}, and Lemma \ref{lemma5.47}.

	\end{proof}
	
	\begin{theorem}\label{theorem5.47}
		Assume that $x \perp L^{\flat}$, $\val(( x,x ))=-1$, and $L^{\flat} \subset L'^{\flat} \subset (L'^{\flat})^{\vee} \subset L^{\flat}_F$. Let $\lambda$ be the fundamental invariants of $L'^{\flat}$ and $(a,b,c)=(t_{\ge 2}(\lambda),t_1(\lambda),t_0(\lambda))$. Assume further that $(a,b,c) \neq (1,h,n-h-2), (1,h-2,n-h), (0,h+1,n-h-2), (0,h,n-h-1),(0,h-1,n-h)$. Then, we have
		\begin{equation*}
			\widehat{\pDen}_{L'^{\flat\circ}}^{n,h}(x)=-\dfrac{1}{q^h}D_{n-1,h-1}(a,b,c).
		\end{equation*} 
	\end{theorem}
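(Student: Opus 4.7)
The plan is to adapt the strategy of Theorem \ref{theorem5.46}, exploiting the parallel structure but tracking carefully the new behaviour that appears when $\val((u^\perp,u^\perp))=1$. Since $\val((x,x))=-1$, the representatives of $(\langle x\rangle^{\vee}\setminus\{0\})/O_F^\times$ are of the form $u^\perp=\pi^k x$ with $k\ge 1$, and $\val((u^\perp,u^\perp))=2k-1$ is odd. So the $u^\perp$-sum in
$$\widehat{\pDen}^{n,h}_{L'^{\flat\circ}}(x)=\vol(L'^\flat)\!\!\sum_{u^\flat\in(L'^\flat)^{\vee,\geq0}/L'^\flat}\ \sum_{u^\perp\in(\langle x\rangle^\vee\setminus\{0\})/O_F^\times}\!\!D_{n,h}(L'^\flat+\langle u^\flat+u^\perp\rangle)\vol(\langle u^\perp\rangle)$$
splits cleanly into the contribution with $k=1$ (so $\val((u^\perp,u^\perp))=1$) and the tail with $k\geq 2$ (so $\val((u^\perp,u^\perp))\geq 3$).

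For the tail $k\geq 2$, Proposition \ref{proposition5.33} applies and $D_{n,h}(L'^\flat+\langle u^\flat+u^\perp\rangle)$ depends only on $u^\flat$. After summing the geometric series $\sum_{k\geq 2}\vol(\langle\pi^k x\rangle)$, the $u^\flat$-sum is exactly the linear combination appearing in equation (5.50) of the proof of Theorem \ref{theorem5.46}. Under the assumption that $(a,b,c)$ avoids the five listed exceptional triples, Lemmas \ref{lemma5.44} and \ref{lemma5.45} guarantee that this combination vanishes. Hence the entire $k\geq 2$ contribution is zero.

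For $k=1$, i.e.\ $u^\perp=\pi x$, one uses Proposition \ref{proposition5.34} instead of Proposition \ref{proposition5.33}. The only discrepancy between the two propositions is in Case 1-2, which in Proposition \ref{proposition5.34} splits according to whether $\val(\langle u^\flat+\pi x,u^\flat+\pi x\rangle)$ equals $1$ (Case 1-2-1, yielding $D_{n,h}(a,b+1,c)$) or is $\geq 2$ (Case 1-2-2, yielding $D_{n,h}(a+1,b,c)$). Writing the $k=1$ sum as ``the Proposition \ref{proposition5.33} answer'' plus the 1-2-2 correction, the first piece again vanishes by (5.50) and Lemmas \ref{lemma5.44}--\ref{lemma5.45}, leaving only the correction term, which is a multiple of
$$D_{n,h}(a+1,b,c)-D_{n,h}(a,b+1,c)=-(-q)^{2n-h-1-b-2c}\,D_{n-1,h-1}(a,b,c),$$
where the identity comes from Theorem \ref{theorem5.24} (applied with $(a+1,b,c)$). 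Thus the final shape $-q^{-h}D_{n-1,h-1}(a,b,c)$ of the answer already emerges.

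The main obstacle is the precise counting in the 1-2-2 correction. Within Case 1-2 the element $u^\flat\in(\pi^2 L_2^\vee\obot\pi L_1^\vee)^{\geq 1}\setminus(\pi^2 L_2^\vee\obot\pi L_1^\vee)^{\geq 2}$ has $\val((u^\flat,u^\flat))=1$, and since $(\pi x,u^\flat)=0$ and $(\pi x,\pi x)=\pi\cdot(\mathrm{unit})$, the condition $\val(\langle u^\flat+\pi x,u^\flat+\pi x\rangle)\geq 2$ singles out those $u^\flat$ whose residue of $(u^\flat,u^\flat)/\pi$ in $O_{F_0}/\pi=\mathbb{F}_q$ takes a prescribed value. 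One therefore needs a trace-computation (analogous to Proposition \ref{proposition5.37}): along a fixed ``$\mu_1$-coordinate'' where the obstruction is non-degenerate, exactly a $1/q$-fraction of the residues are admissible. Combining this fractional count with the total cardinality $q^{2t_{\ge 3}(\lambda)}\mu(L_{(\lambda\geq3)-3})-\mu(L_{(\lambda\geq2)-2})$ from Proposition \ref{proposition5.36.1} (Case 1-2), multiplying by $\vol(L'^\flat)\vol(\langle\pi x\rangle)=\vol(L'^\flat)q^{-1}$ and the factor $-(-q)^{2n-h-1-b-2c}$ from Theorem \ref{theorem5.24}, and simplifying (with $\vol(L'^\flat)=q^{-\val(L'^\flat)}$ and $\val(L'^\flat)=2t_{\geq 2}(\lambda)+\cdots+t_1(\lambda)$), one arrives at exactly $-q^{-h}D_{n-1,h-1}(a,b,c)$. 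This bookkeeping, together with verifying that the five exceptional $(a,b,c)$ triples are precisely those for which the identities (5.34) and (5.42) acquire a nonzero right-hand side (as recorded in Lemmas \ref{lemma5.46} and \ref{lemma5.47}), completes the proof.
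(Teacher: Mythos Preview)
Your overall strategy matches the paper's: split the $u^\perp$-sum into the term with $\val((u^\perp,u^\perp))=1$ and the tail with $\val((u^\perp,u^\perp))\geq 2$ (equivalently $\geq 3$ here), kill the tail by the computation of Theorem~\ref{theorem5.46}, and write the remaining term as ``the Proposition~\ref{proposition5.33} answer plus a correction'', then feed the correction into Theorem~\ref{theorem5.24}. However, your execution of the correction step has two concrete errors that would prevent the computation from closing.

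First, you assert that the \emph{only} discrepancy between Propositions~\ref{proposition5.33} and~\ref{proposition5.34} is in Case~1-2. This is false: Case~1-1 also changes, from $D_{n,h}(a+1,b,c)$ to $D_{n,h}(a,b+1,c)$. This contributes an additional $-\mu(L_{(\lambda\geq 2)-2})\cdot\bigl(D_{n,h}(a+1,b,c)-D_{n,h}(a,b+1,c)\bigr)$ to the correction. Second, within Case~1-2, the fraction of $u^\flat$ falling into Case~1-2-2 is $\tfrac{1}{q-1}$, not $\tfrac{1}{q}$: since $\val((u^\flat,u^\flat))=1$ exactly, the residue $(u^\flat,u^\flat)/\pi\bmod\pi$ lies in $\mathbb{F}_q^\times$, and the condition $\val((u^\flat+u^\perp,u^\flat+u^\perp))\geq 2$ pins this residue to a single value among the $q-1$ units. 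Your analogy with Proposition~\ref{proposition5.37} (trace over $\mathbb{F}_{q^2}/\mathbb{F}_q$) is misleading here. With the correct fraction $\tfrac{1}{q-1}$ \emph{and} the Case~1-1 term included, the combined coefficient becomes
\[
\frac{q^{2t_{\geq 3}(\lambda)}}{q-1}\mu(L_{(\lambda\geq 3)-3})-\frac{q}{q-1}\mu(L_{(\lambda\geq 2)-2}),
\]
which Lemma~\ref{lemma5.40} (a key ingredient you do not invoke) evaluates to $-(-q)^{|\lambda|-b-2a}$. Your version of the coefficient, $\tfrac{1}{q}\bigl(q^{2t_{\geq 3}(\lambda)}\mu(L_{(\lambda\geq 3)-3})-\mu(L_{(\lambda\geq 2)-2})\bigr)$, does not simplify to this, so your final bookkeeping would not produce $-q^{-h}D_{n-1,h-1}(a,b,c)$.
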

	\begin{proof}
		Recall that
		\begin{equation*}
			\widehat{\pDen}_{L'^{\flat\circ}}^{n,h}(x)
			=\mathlarger{\sum}_{(u^{\flat},u^{\perp}) \in (L'^{\flat})^{\vee,\geq0}/L'^{\flat} \times (\langle x \rangle^{\vee}\backslash \lbrace 0\rbrace )/O_F^{\times}} D_{n,h}(L'^{\flat}+\langle u^{\flat}+u^{\perp}\rangle)\vol(L'^{\flat}+\langle u^{\flat}+u^{\perp}\rangle).
		\end{equation*}
		Since $\val(( x,x )) =-1$, we have that $\val(( u^{\perp},u^{\perp})) \geq 1$. In the proof of Theorem \ref{theorem5.46}, we proved that
		\begin{equation*}
			\mathlarger{\sum}_{(u^{\flat},u^{\perp}) \in (L'^{\flat})^{\vee,\geq0}/L'^{\flat} \times (\langle x \rangle^{\vee}\backslash \lbrace 0\rbrace )/O_F^{\times}, \val(( u^{\perp},u^{\perp}))\geq 2} D_{n,h}(L'^{\flat}+\langle u^{\flat}+u^{\perp}\rangle)\vol(L'^{\flat}+\langle u^{\flat}+u^{\perp}\rangle)=0.
		\end{equation*}
		
		Therefore, we have that
		\begin{equation*}
			\widehat{\pDen}_{L'^{\flat\circ}}^{n,h}(x)=\mathlarger{\sum}_{\substack{(u^{\flat},u^{\perp}) \in (L'^{\flat})^{\vee,\geq0}/L'^{\flat} \times (\langle x \rangle^{\vee}\backslash \lbrace 0\rbrace )/O_F^{\times},\\ \val(( u^{\perp},u^{\perp})=1}} D_{n,h}(L'^{\flat}+\langle u^{\flat}+u^{\perp}\rangle)\vol(L'^{\flat}+\langle u^{\flat}+u^{\perp}\rangle).
		\end{equation*}
		Therefore, by Proposition \ref{proposition5.34}, $D_{n,h}(L'^{\flat}+\langle u^{\flat}+u^{\perp}\rangle)$ depends only on $u^{\flat}$.
		Also, we have that
		\begin{equation*}
			\vol(L'^{\flat}+\langle u^{\flat}+u^{\perp}\rangle)=\vol(L'^{\flat})\vol(\langle u^{\perp} \rangle).
		\end{equation*}
		
		This implies that
		\begin{equation*}\begin{aligned}
				\widehat{\pDen}_{L'^{\flat\circ}}^{n,h}(x)
				&=\mathlarger{\sum}_{\substack{(u^{\flat},u^{\perp}) \in (L'^{\flat})^{\vee,\geq0}/L'^{\flat} \times (\langle x \rangle^{\vee}\backslash \lbrace 0\rbrace )/O_F^{\times},\\ \val(( u^{\perp},u^{\perp}))=1}} D_{n,h}(L'^{\flat}+\langle u^{\flat}+u^{\perp}\rangle)\vol(L'^{\flat}+\langle u^{\flat}+u^{\perp}\rangle)\\
				&=\vol(L'^{\flat})(q)^{-1}\mathlarger{\sum}_{u^{\flat} \in (L'^{\flat})^{\vee,\geq0}/L'^{\flat}} D_{n,h}(L'^{\flat}+\langle u^{\flat}+u^{\perp}\rangle).
			\end{aligned}
		\end{equation*}
		
		Note that Proposition \ref{proposition5.33} and Proposition \ref{proposition5.34} are different only when (Case 1-1) and (Case 1-2). Also, it is easy to see that
		\begin{equation*}\begin{array}{l}
				\text{(Case 1-2-1)}\\
				\vert \lbrace u^{\flat} \in (\pi^2(L_{2}'^{\flat})^{\vee}\obot \pi (L_{1}'^{\flat})^{\vee})^{\geq 1}-(\pi^2(L_{2}'^{\flat})^{\vee}\obot \pi (L_{1}'^{\flat})^{\vee})^{\geq 2}\mid \val(( u^{\flat}+u^{\perp},u^{\flat}+u^{\perp} )) =1 \rbrace \vert\\
				=\dfrac{q-2}{q-1}\text{(Case 1-2)}=\dfrac{q-2}{q-1}(q^{2t_{\ge 3}(\lambda)}\mu(L_{(\lambda\geq3)-3})-\mu(L_{(\lambda \geq2)-2})),
			\end{array}
		\end{equation*}
		\begin{equation*}\begin{array}{l}
				\text{(Case 1-2-2)}\\
				\vert \lbrace u^{\flat} \in (\pi^2(L_{2}'^{\flat})^{\vee}\obot \pi (L_{1}'^{\flat})^{\vee})^{\geq 1}-(\pi^2(L_{2}'^{\flat})^{\vee}\obot \pi (L_{1}'^{\flat})^{\vee})^{\geq 2} \mid \val(( u^{\flat}+u^{\perp},u^{\flat}+u^{\perp} )) \geq 2 \rbrace \vert\\
				=\dfrac{1}{q-1}\text{(Case 1-2)}=\dfrac{1}{q-1}(q^{2t_{\ge 3}(\lambda)}\mu(L_{(\lambda\geq3)-3})-\mu(L_{(\lambda \geq2)-2})).
			\end{array}
		\end{equation*}
		
		Now, note that if $\val(( u^{\perp},u^{\perp} )) \geq 2$, then $\mathlarger{\sum}_{u^{\flat} \in (L'^{\flat})^{\vee,\geq0}/L'^{\flat}} D_{n,h}(L'^{\flat}+\langle u^{\flat}+u^{\perp}\rangle)=0$. Therefore, for $\val((u^{\perp},u^{\perp}))=1$, we have
		\begin{equation*}\begin{array}{l}
				\widehat{\pDen}_{L'^{\flat\circ}}^{n,h}(x)=\dfrac{1}{q}\vol(L'^{\flat})\mathlarger{\sum}_{u^{\flat} \in (L'^{\flat})^{\vee,\geq0}/L'^{\flat}} D_{n,h}(L'^{\flat}+\langle u^{\flat}+u^{\perp}\rangle)\\
				=\dfrac{1}{q}\vol(L'^{\flat})
				\lbrace (D_{n,h}(a,b+1,c)-D_{n,h}(a+1,b,c))\mu(L_{(\lambda \geq 2)-2})\\
				\qquad\qquad+(D_{n,h}(a+1,b,c)-D_{n,h}(a,b+1,c))\dfrac{1}{q-1}(q^{2t_{\ge 3}(\lambda)}\mu(L_{(\lambda\geq3)-3})-\mu(L_{(\lambda \geq2)-2}))\rbrace\\
				=\dfrac{1}{q}\vol(L'^{\flat})(D_{n,h}(a+1,b,c)-D_{n,h}(a,b+1,c))\lbrace\dfrac{q^{2t_{\ge 3}(\lambda)}}{q-1}\mu(L_{(\lambda \geq 3)-3})-\dfrac{q}{q-1}\mu(L_{(\lambda \geq2)-2}) \rbrace.
			\end{array}
		\end{equation*}
		
		By Lemma \ref{lemma5.40}, we have
		\begin{equation*}
			\dfrac{q^{2t_{\ge 3}(\lambda)}}{q-1}\mu(L_{(\lambda \geq 3)-3})-\dfrac{q}{q-1}\mu(L_{(\lambda \geq2)-2})=-(-q)^{\vert \lambda \vert-b-2a}.
		\end{equation*}
		
		Also, by Theorem \ref{theorem5.24}, we have
		\begin{equation*}
			D_{n,h}(a+1,b,c)-D_{n,h}(a,b+1,c)=-(-q)^{2n-h-1-b-2c}D_{n-1,h-1}(a,b,c).
		\end{equation*}
		
		Finally, we have that $\vol(L'^{\flat})=q^{-\vert \lambda \vert}$. Therefore, we have
		\begin{equation*}
			\widehat{\pDen}_{L'^{\flat\circ}}^{n,h}(x)=\dfrac{(-q)^{\vert \lambda\vert+2n-2a-2b-2c-h-1}}{q^{\vert\lambda\vert+1}}D_{n-1,h-1}(a,b,c).
		\end{equation*}
		Now, note that $a+b+c=n-1$ and $\val(L'^{\flat} \obot \langle x \rangle)=\vert \lambda \vert-1 \equiv h+1 \pmod 2.$ This implies that
		\begin{equation*}
			\widehat{\pDen}_{L'^{\flat\circ}}^{n,h}(x)=-\dfrac{1}{q^h}D_{n-1,h-1}(a,b,c).			
		\end{equation*}
		This finishes the proof of the theorem.
	\end{proof}

	\begin{theorem}\label{theorem8.18}
		Assume that $x \perp L^{\flat}$, $\val(\langle x,x \rangle) \leq -2$, and $L^{\flat} \subset L'^{\flat} \subset (L'^{\flat})^{\vee} \subset L^{\flat}_F$. Let $\lambda$ be the fundamental invariants of $L'^{\flat}$ and $(a,b,c)=(t_{\ge 2}(\lambda),t_1(\lambda),t_0(\lambda))$. Then, we have

		\begin{equation*}\begin{array}{l}
				\widehat{\pDen}_{L'^{\flat\circ}}^{n,h}(x)
				=\left\lbrace \begin{array}{ll}
					q^{h-1}(q+1)\vol(\langle x \rangle^{\vee})(1-q^{-2})^{-1} & \text{ if }(a,b,c)=(1,h,n-h-2),\\
					q^{-h}(q+1)\vol(\langle x \rangle^{\vee})(1-q^{-2})^{-1} & \text{ if }(a,b,c)=(1,h-2,n-h),\\
					q^{-(h-1)}\vol(\langle x \rangle^{\vee})(1-q^{-2})^{-1} & \text{ if }(a,b,c)=(0,h-1,n-h),\\
					q^{-h}\vol(\langle x \rangle^{\vee})(1-q^{-2})^{-1} & \text{ if }(a,b,c)=(0,h,n-h-1),\\
					q^{-(h+1)}(q^{2h+1}+(-q)^h)\vol(\langle x \rangle^{\vee})(1-q^{-2})^{-1} & \text{ if }(a,b,c)=(0,h+1,n-h-2).
				\end{array}\right.
			\end{array}
		\end{equation*} 
	\end{theorem}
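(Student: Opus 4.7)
The plan is to proceed exactly as in the proof of Theorem~\ref{theorem5.46}, but track the residual contribution at the five excluded boundary configurations. Repeating the initial manipulation from that proof, for $\val(\langle x, x\rangle) \leq -2$ one writes
$$\widehat{\pDen}_{L'^{\flat\circ}}^{n,h}(x) = \vol(L'^{\flat}) \vol(\langle x\rangle^\vee)(1-q^{-2})^{-1} \cdot S,$$
where $S = \sum_{u^{\flat} \in (L'^{\flat})^{\vee,\geq 0}/L'^{\flat}} D_{n,h}(L'^{\flat} + \langle u^{\flat} + u^{\perp}\rangle)$. The identity \eqref{eq5.50} established in the proof of Theorem~\ref{theorem5.46} continues to hold at these boundary tuples and decomposes $S$ as a linear combination of the left-hand side $E_1$ of \eqref{eq5.34}, paired with $\mu(L_{(\lambda \geq 2) - 2})$, and the left-hand side $E_2$ of \eqref{eq5.42}, paired with the coefficient $C(\lambda) := q^{2t_{\geq 3}(\lambda)} \mu(L_{(\lambda \geq 3) - 3}) - (-q)^{t_{\geq 2}(\lambda)} \mu(L_{(\lambda \geq 2) - 2})$. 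Unlike the generic case, however, the excluded tuples force either $E_1$ or $E_2$ to be nonzero, as recorded in Lemmas~\ref{lemma5.46} and~\ref{lemma5.47}.

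For the three cases with $a = 0$, one has $t_{\geq 2}(\lambda) = t_{\geq 3}(\lambda) = 0$, so all relevant $\mu$-factors equal $1$ by convention; this forces $C(\lambda) = 0$ and $\mu(L_{(\lambda \geq 2) -2}) = 1$, hence $S = E_1(0, b, c)$. Lemma~\ref{lemma5.46} then gives $S = 1$ for $b = h - 1, h$ and $S = q^{2h+1} + (-q)^h$ for $b = h+1$. Combining with $\val(L'^{\flat}) = b$, so $\vol(L'^{\flat}) = q^{-b}$, yields the three corresponding formulas in the statement.

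For the two cases with $a = 1$, Lemma~\ref{lemma5.44} still forces $E_1 = 0$, so $S = C(\lambda) \cdot E_2(1, b, c)$. Lemma~\ref{lemma5.47} gives $E_2 = q^{2h+1}$ when $b = h$ and $E_2 = 1$ when $b = h - 2$. Writing the sole entry of $\lambda$ that is $\geq 2$ as $\lambda_1 \geq 2$, one needs $\mu(L_{(\lambda_1 - 2)})$, and when $\lambda_1 \geq 3$ also $\mu(L_{(\lambda_1 - 3)})$. Iterating Lemma~\ref{lemma5.40} on rank-one lattices produces the closed form $\mu(L_{(k)}) = q^{2 \lfloor k/2 \rfloor}$ for all $k \geq 0$, from which a direct calculation shows $C(\lambda) = q^{\lambda_1 - 2}(q + 1)$ uniformly for all $\lambda_1 \geq 2$.

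The main subtlety is this last step: one must verify that $\vol(L'^{\flat}) \cdot C(\lambda)$ is independent of $\lambda_1$, so that the stated answer holds uniformly over all $L'^{\flat}$ with the prescribed $(a,b,c)$. Since $\vol(L'^{\flat}) = q^{-\lambda_1 - b}$ and $C(\lambda) = q^{\lambda_1 - 2}(q+1)$, the product equals $q^{-b-2}(q+1)$; multiplying by $E_2$ and the remaining factor $\vol(\langle x\rangle^\vee)(1-q^{-2})^{-1}$ reproduces $q^{h-1}(q+1) \vol(\langle x\rangle^\vee)(1-q^{-2})^{-1}$ for $b = h$ and $q^{-h}(q+1) \vol(\langle x \rangle^\vee)(1-q^{-2})^{-1}$ for $b = h - 2$. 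This cancellation between the $\lambda_1$-exponents in $\vol(L'^{\flat})$ and $C(\lambda)$ is the only nontrivial check; all other steps are direct applications of the lemmas already established in \S\ref{sec: FT}.
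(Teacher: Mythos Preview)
Your proposal is correct and follows essentially the same approach as the paper's proof: both invoke the decomposition \eqref{eq5.50} from the proof of Theorem~\ref{theorem5.46}, observe that for $a=0$ the coefficient $C(\lambda)$ vanishes so only the contribution from Lemma~\ref{lemma5.46} survives, while for $a=1$ Lemma~\ref{lemma5.44} kills $E_1$ so only $C(\lambda)\cdot E_2$ remains, and then compute $C(\lambda)=q^{\lambda_1-2}(q+1)$ via the rank-one formula $\mu(L_{(k)})=q^{2\lfloor k/2\rfloor}$. Your explicit emphasis on the cancellation of $\lambda_1$ between $\vol(L'^{\flat})$ and $C(\lambda)$ is a nice organizational touch, but the underlying computation is identical to the paper's.
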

	\begin{proof}
		In the proof of Theorem \ref{theorem5.46}, we proved that
		\begin{equation*}\begin{array}{ll}
				\widehat{\pDen}_{L'^{\flat\circ}}^{n,h}(x)
				=\vol(L'^{\flat})\vol(\langle x \rangle^{\vee})(1-q^{-2})^{-1}\mathlarger{\sum}_{u^{\flat} \in (L'^{\flat})^{\vee,\geq0}/L'^{\flat}} D_{n,h}(L'^{\flat}+\langle u^{\flat}+u^{\perp}\rangle).
				
			\end{array}
		\end{equation*}
		
		Also, in \eqref{eq5.50}, we showed that
		\begin{equation*}\begin{aligned}
				\mathlarger{\sum}_{u^{\flat} \in (L'^{\flat})^{\vee,\geq0}/L'^{\flat}} D_{n,h}(L'^{\flat}+\langle u^{\flat}+u^{\perp}\rangle)&=\mu(L_{(\lambda\geq 2)-2})\times \eqref{eq5.34}\\&+\lbrace q^{2t_{\ge 3}(\lambda)}\mu(L_{(\lambda \geq3)-3})-(-q)^{t_{\ge 2}(\lambda)}\mu(L_{(\lambda\geq 2)-2}) \rbrace \times\eqref{eq5.42}.
			\end{aligned}
		\end{equation*}
		
		First, assume that $(a,b,c)=(1,h,n-h-2)$, $\lambda=(\alpha,\overset{h}{\overbrace{1,\dots,1}},\overset{n-h-2}{\overbrace{0,\dots,0}})$. In this case, we know that \eqref{eq5.34}$=0$. Also, it is easy to see that $\mu(L_{\alpha})=q^{2[\frac{\alpha}{2}]}$, and hence
		\begin{equation*}\begin{array}{l}
				q^{2t_{\ge 3}(\lambda)}\mu(L_{(\lambda \geq3)-3})-(-q)^{t_{\ge 2}(\lambda)}\mu(L_{(\lambda\geq 2)-2})=q^{2t_{\ge 3}(\alpha)}\mu(L_{\alpha-3})+q\mu(L_{\alpha-2})\\
				=\left\lbrace \begin{array}{ll}
					1+q & \text{ if }\alpha=2,\\
					q^{2+2[\frac{\alpha-3}{2}]}+q^{2[\frac{\alpha-2}{2}]+1}=q^{\alpha-2}(q+1) & \text{ if } \alpha \geq 3,
				\end{array}\right.\\
				=q^{\alpha-2}(q+1).
			\end{array}
		\end{equation*}
		Now, by Lemma \ref{lemma5.47}, we have that
		\begin{equation*}\begin{array}{l}
				\mathlarger{\sum}_{u^{\flat} \in (L'^{\flat})^{\vee,\geq0}/L'^{\flat}} D_{n,h}(L'^{\flat}+\langle u^{\flat}+u^{\perp}\rangle)=q^{\alpha-2}(q+1)q^{2h+1}.
			\end{array}
		\end{equation*}
		
		Similarly, for $(a,b,c)=(1,h-2,n-h)$, $\lambda=(\alpha,\overset{h-2}{\overbrace{1,\dots,1}},\overset{n-h}{\overbrace{0,\dots,0}})$, we have that
		\begin{equation*}\begin{array}{l}
				\mathlarger{\sum}_{u^{\flat} \in (L'^{\flat})^{\vee,\geq0}/L'^{\flat}} D_{n,h}(L'^{\flat}+\langle u^{\flat}+u^{\perp}\rangle)=q^{\alpha-2}(q+1).
			\end{array}
		\end{equation*}
		
		Therefore, we have that
		\begin{equation*}\begin{aligned}
				\widehat{\pDen}_{L'^{\flat\circ}}^{n,h}(x)
				&=\vol(L'^{\flat})\vol(\langle x \rangle^{\vee})(1-q^{-2})^{-1}\mathlarger{\sum}_{u^{\flat} \in (L'^{\flat})^{\vee,\geq0}/L'^{\flat}} D_{n,h}(L'^{\flat}+\langle u^{\flat}+u^{\perp}\rangle)\\
				&=\left\lbrace \begin{array}{ll}
					q^{h-1}(q+1)\vol(\langle x \rangle^{\vee})(1-q^{-2})^{-1} & \text{ if }(a,b,c)=(1,h,n-h-2),\\
					q^{-h}(q+1)\vol(\langle x \rangle^{\vee})(1-q^{-2})^{-1} & \text{ if }(a,b,c)=(1,h-2,n-h).
				\end{array}\right.
			\end{aligned}
		\end{equation*}
		
		Now, assume that $a=0$. Then, we have that
		\begin{equation*}
			\begin{array}{l}
				\mu(L_{(\lambda \geq2)-2})=1,\\
				q^{2t_{\ge 3}(\lambda)}(L_{(\lambda \geq 3)-3})-(-q)^{t_{\ge 2}(\lambda)}\mu(L_{(\lambda \geq 2)-2})=0.
			\end{array}
		\end{equation*}
		Therefore, by Lemma \ref{lemma5.46}, we have
		\begin{equation*}\begin{array}{l}
				\mathlarger{\sum}_{u^{\flat} \in (L'^{\flat})^{\vee,\geq0}/L'^{\flat}} D_{n,h}(L'^{\flat}+\langle u^{\flat}+u^{\perp}\rangle)=\left\lbrace\begin{array}{ll}
					1 & \text{ if }b=h-1,h,\\
					q^{2h+1}+(-q)^h & \text{ if } b=h+1.
				\end{array}\right.
			\end{array}
		\end{equation*}
		This implies that
		\begin{equation*}\begin{aligned}
				\widehat{\pDen}_{L'^{\flat\circ}}^{n,h}(x)
				&=\vol(L'^{\flat})\vol(\langle x \rangle^{\vee})(1-q^{-2})^{-1}\mathlarger{\sum}_{u^{\flat} \in (L'^{\flat})^{\vee,\geq0}/L'^{\flat}} D_{n,h}(L'^{\flat}+\langle u^{\flat}+u^{\perp}\rangle)\\
				&=\left\lbrace \begin{array}{ll}
					q^{-(h-1)}\vol(\langle x \rangle^{\vee})(1-q^{-2})^{-1} & \text{ if }(a,b,c)=(0,h-1,n-h),\\
					q^{-h}\vol(\langle x \rangle^{\vee})(1-q^{-2})^{-1} & \text{ if }(a,b,c)=(0,h,n-h-1),\\
					q^{-(h+1)}(q^{2h+1}+(-q)^h)\vol(\langle x \rangle^{\vee})(1-q^{-2})^{-1} & \text{ if }(a,b,c)=(0,h+1,n-h-2).
				\end{array}\right.
			\end{aligned}
		\end{equation*}
		This finishes the proof of the theorem.
		
	\end{proof}
	
	\begin{theorem}\label{theorem8.19}
		Assume that $x \perp L^{\flat}$, $\val(( x,x ))=-1$, and $L^{\flat} \subset L'^{\flat} \subset (L'^{\flat})^{\vee} \subset L^{\flat}_F$. Let $\lambda$ be the fundamental invariants of $L'^{\flat}$ and $(a,b,c)=(t_{\ge 2}(\lambda),t_1(\lambda),t_0(\lambda))$. Then, we have
		\begin{equation*}\begin{array}{l}
				\widehat{\pDen}_{L'^{\flat\circ}}^{n,h}(x)\\
				=\left\lbrace \begin{array}{ll}
					q^{h-2}(q+1)(1-q^{-2})^{-1} -q^{-h}D_{n-1,h-1}(a,b,c)& \text{ if }(a,b,c)=(1,h,n-h-2),\\
					q^{-h-1}(q+1)(1-q^{-2})^{-1} -q^{-h}D_{n-1,h-1}(a,b,c)& \text{ if }(a,b,c)=(1,h-2,n-h),\\
					q^{-h-1}(1-q^{-2})^{-1} -q^{-h}D_{n-1,h-1}(a,b,c)& \text{ if }(a,b,c)=(0,h,n-h-1).
				\end{array}\right.
			\end{array}
		\end{equation*}
		Note that two cases $(a,b,c)=(0,h-1,n-h), (0,h+1,n-h-2)$ are not possible  since $\val(( x,x))=-1$ and hence $\val(L^{\flat})\equiv h \pmod 2$.
	\end{theorem}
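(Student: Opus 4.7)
The strategy parallels the proof of Theorem \ref{theorem5.47} while incorporating the non-vanishing input of Theorem \ref{theorem8.18}. Since $\val((x,x)) = -1$, every class in $(\langle x \rangle^\vee \setminus \{0\})/O_F^\times$ is represented by $\pi^j x$ with $j \geq 1$, so $\val((u^\perp, u^\perp)) = 2j - 1$ takes the values $1, 3, 5, \ldots$. Writing
\[
\widehat{\pDen}_{L'^{\flat\circ}}^{n,h}(x) = A + B,
\]
where $A$ collects the terms with $\val((u^\perp, u^\perp)) \geq 3$ and $B$ the single term with $\val((u^\perp, u^\perp)) = 1$, I analyze each piece separately.

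For $A$, Proposition \ref{proposition5.33} gives $\sum_{u^\flat} D_{n,h}(L'^\flat + \langle u^\flat + u^\perp \rangle) = C_{a,b,c}$, the constant worked out in the proof of Theorem \ref{theorem8.18} via Proposition \ref{proposition5.36.1} and Lemmas \ref{lemma5.44}--\ref{lemma5.47}. With $\vol(\langle x \rangle^\vee) = q^{-1}$, the geometric series satisfies
\[
\sum_{j \geq 2} q^{-(2j-1)} = \frac{\vol(\langle x \rangle^\vee)}{1 - q^{-2}} - q^{-1},
\]
so $A$ equals the formal specialization of Theorem \ref{theorem8.18}'s formula to $\vol(\langle x \rangle^\vee) = q^{-1}$, which supplies the claimed first term in each case, minus a correction $q^{-1} \vol(L'^\flat) C_{a,b,c}$.

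For $B$, Proposition \ref{proposition5.34} replaces Proposition \ref{proposition5.33}; they differ only in Case 1-1 and in the split of Case 1-2 into 1-2-1 and 1-2-2. Writing $A_1, A_{12}$ for the cardinalities of Cases 1-1 and 1-2 from Proposition \ref{proposition5.36.1}, the sum over $u^\flat$ becomes $C_{a,b,c} + \Delta_{a,b,c}$, where
\[
\Delta_{a,b,c} = \left(\frac{A_{12}}{q-1} - A_1\right)\bigl(D_{n,h}(a+1,b,c) - D_{n,h}(a,b+1,c)\bigr).
\]
Therefore $B = \vol(L'^\flat) q^{-1}(C_{a,b,c} + \Delta_{a,b,c})$; the $C_{a,b,c}$ contribution cancels against the correction in $A$, leaving the first term of Theorem \ref{theorem8.19} plus $\vol(L'^\flat) q^{-1} \Delta_{a,b,c}$. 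Using Theorem \ref{theorem5.24} to rewrite the $D_{n,h}$-difference as $-(-q)^{2n-h-1-b-2c} D_{n-1,h-1}(a,b,c)$, and Lemma \ref{lemma5.40} to evaluate $A_{12}/(q-1) - A_1$ exactly as in the proof of Theorem \ref{theorem5.47}, the residue collapses to $-q^{-h} D_{n-1,h-1}(a,b,c)$. The main obstacle will be confirming this collapse case by case for the three exceptional triples, where one invokes the parity $|\lambda| \equiv h \pmod 2$ forced by $\val((x,x)) = -1$ and the parity $h+1$ of the moment determinant of $\bV$.

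Lastly, the triples $(0,h-1,n-h)$ and $(0,h+1,n-h-2)$ are excluded by this same parity argument. Indeed, $L'^\flat + \langle x \rangle$ is full-rank in $\bV$, so $|\lambda| + \val((x,x)) \equiv h+1 \pmod 2$, forcing $|\lambda| \equiv h \pmod 2$. Both excluded triples have $|\lambda| \in \{h-1, h+1\}$, violating this congruence, so they do not arise in the setting of Theorem \ref{theorem8.19}.
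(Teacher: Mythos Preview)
Your proposal is correct and follows essentially the same approach as the paper: split the sum over $u^\perp$ into the $\val=1$ term and the $\val\geq 3$ tail, handle the tail via the computation already done for Theorem~\ref{theorem8.18}, and evaluate the difference between Propositions~\ref{proposition5.33} and \ref{proposition5.34} (which differ only in Cases 1-1 and 1-2) exactly as in the proof of Theorem~\ref{theorem5.47}, using Theorem~\ref{theorem5.24} and Lemma~\ref{lemma5.40}. Your bookkeeping via the geometric-series identity $\sum_{j\geq 2}q^{-(2j-1)}=\vol(\langle x\rangle^\vee)(1-q^{-2})^{-1}-q^{-1}$ is a mild repackaging of the paper's $q^{-3}(1-q^{-2})^{-1}+q^{-1}=q^{-1}(1-q^{-2})^{-1}$, but the substance is identical; note in particular that the reduction of $\vol(L'^\flat)q^{-1}\Delta_{a,b,c}$ to $-q^{-h}D_{n-1,h-1}(a,b,c)$ is uniform in $(a,b,c)$ once Theorem~\ref{theorem5.24} applies (its exclusion $(a,b,c)\neq(0,h-1,n-h)$ is exactly one of your parity-excluded cases), so the only genuinely case-by-case step is reading off $C_{a,b,c}$ from Lemmas~\ref{lemma5.46} and \ref{lemma5.47}.
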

	\begin{proof}
		Recall that
		\begin{equation}\label{eq10.23.1}\begin{aligned}
				\widehat{\pDen}_{L'^{\flat\circ}}^{n,h}(x)
				&=\mathlarger{\sum}_{(u^{\flat},u^{\perp}) \in (L'^{\flat})^{\vee,\geq0}/L'^{\flat} \times (\langle x \rangle^{\vee}\backslash \lbrace 0\rbrace )/O_F^{\times}} D_{n,h}(L'^{\flat}+\langle u^{\flat}+u^{\perp}\rangle)\vol(L'^{\flat}+\langle u^{\flat}+u^{\perp}\rangle)\\
				&=\vol(L'^{\flat})\mathlarger{\sum}_{\substack{(u^{\flat},u^{\perp}) \in (L'^{\flat})^{\vee,\geq0}/L'^{\flat} \times (\langle x \rangle^{\vee}\backslash \lbrace 0\rbrace )/O_F^{\times}\\ \val(( u^{\perp},u^{\perp})=1}} D_{n,h}(L'^{\flat}+\langle u^{\flat}+u^{\perp}\rangle)\vol(\langle u^{\perp}\rangle)\\
				&+\vol(L'^{\flat})\mathlarger{\sum}_{\substack{(u^{\flat},u^{\perp}) \in (L'^{\flat})^{\vee,\geq0}/L'^{\flat} \times (\langle x \rangle^{\vee}\backslash \lbrace 0\rbrace )/O_F^{\times}\\ \val(( u^{\perp},u^{\perp}) \geq 3}} D_{n,h}(L'^{\flat}+\langle u^{\flat}+u^{\perp}\rangle)\vol(\langle u^{\perp}\rangle).
			\end{aligned}
		\end{equation}
		
		In the proof of the Theorem \ref{theorem8.18}, we showed that
		\begin{equation*}\begin{array}{l}
				\vol(L'^{\flat})\mathlarger{\sum}_{\substack{(u^{\flat},u^{\perp}) \in (L'^{\flat})^{\vee,\geq0}/L'^{\flat} \times (\langle x \rangle^{\vee}\backslash \lbrace 0\rbrace )/O_F^{\times}\\ \val(( u^{\perp},u^{\perp}) \geq 3}} D_{n,h}(L'^{\flat}+\langle u^{\flat}+u^{\perp}\rangle)\vol(\langle u^{\perp}\rangle)\\
				=\vol(L'^{\flat})\dfrac{1}{q^3}(1-q^{-2})^{-1}\mathlarger{\sum}_{\substack{u^{\flat} \in (L'^{\flat})^{\vee,\geq0}/L'^{\flat} \\ \val(( u^{\perp},u^{\perp})\geq 3}} D_{n,h}(L'^{\flat}+\langle u^{\flat}+u^{\perp}\rangle).
			\end{array}
		\end{equation*}
		Also, we already computed the sum $\mathlarger{\sum}_{\substack{u^{\flat} \in (L'^{\flat})^{\vee,\geq0}/L'^{\flat} \\ \val(( u^{\perp},u^{\perp}) \geq 3}} D_{n,h}(L'^{\flat}+\langle u^{\flat}+u^{\perp}\rangle)$ in the proof of the Theorem \ref{theorem8.18} (more precisely, we computed this for $\val(( u^{\perp},u^{\perp} )) \geq 2$). Therefore, we only need to compute 
		\begin{equation*}
			\vol(L'^{\flat})\mathlarger{\sum}_{\substack{(u^{\flat},u^{\perp}) \in (L'^{\flat})^{\vee,\geq0}/L'^{\flat} \times (\langle x \rangle^{\vee}\backslash \lbrace 0\rbrace )/O_F^{\times}\\ \val(( u^{\perp},u^{\perp}) =1}} D_{n,h}(L'^{\flat}+\langle u^{\flat}+u^{\perp}\rangle)\vol(\langle u^{\perp}\rangle).
		\end{equation*}
		
		Note that Proposition \ref{proposition5.33} and Proposition \ref{proposition5.34} are different only when (Case 1-1) and (Case 1-2). Therefore, as in the proof of Theorem \ref{theorem5.47}, we have that
		\begin{equation*}\begin{array}{l}
				\mathlarger{\sum}_{\substack{u^{\flat} \in (L'^{\flat})^{\vee,\geq0}/L'^{\flat} \\ \val(( u^{\perp},u^{\perp}) =1}} D_{n,h}(L'^{\flat}+\langle u^{\flat}+u^{\perp}\rangle)-\mathlarger{\sum}_{\substack{u^{\flat} \in (L'^{\flat})^{\vee,\geq0}/L'^{\flat} \\ \val(( u^{\perp},u^{\perp}) \geq 2}} D_{n,h}(L'^{\flat}+\langle u^{\flat}+u^{\perp}\rangle)\\
				=(D_{n,h}(a+1,b,c)-D_{n,h}(a,b+1,c))\lbrace\dfrac{q^{2t_{\ge 3}(\lambda)}}{q-1}\mu(L_{(\lambda \geq 3)-3})-\dfrac{q}{q-1}\mu(L_{(\lambda \geq2)-2}) \rbrace\\
				=(-(-q)^{2n-h-1-b-2c}D_{n-1,h-1}(a,b,c))(-(-q)^{\vert \lambda \vert-b-2a})\\
				=(-q)^{\vert \lambda \vert-h+1}D_{n-1,h-1}(a,b,c).
			\end{array}
		\end{equation*}
		
		Since $\vol(L'^{\flat})=q^{-\vert \lambda \vert}$ and $\val(L'^{\flat} \obot \langle x \rangle)=\vert \lambda \vert-1 \equiv h+1 \pmod 2$, we have that
		\begin{equation*}
			\begin{array}{l}
				\vol(L'^{\flat})\mathlarger{\sum}_{\substack{(u^{\flat},u^{\perp}) \in (L'^{\flat})^{\vee,\geq0}/L'^{\flat} \times (\langle x \rangle^{\vee}\backslash \lbrace 0\rbrace )/O_F^{\times}\\ \val(\langle u^{\perp},u^{\perp}\rangle =1}} D_{n,h}(L'^{\flat}+\langle u^{\flat}+u^{\perp}\rangle)\vol(\langle u^{\perp}\rangle)\\
				=\dfrac{(-q)^{\vert\lambda \vert-h+1}}{q^{\vert\lambda \vert+1}}D_{n-1,h-1}(a,b,c)+\dfrac{\vol(L'^{\flat})}{q}\mathlarger{\sum}_{\substack{u^{\flat} \in (L'^{\flat})^{\vee,\geq0}/L'^{\flat} \\ \val(( u^{\perp},u^{\perp}) \geq 2}} D_{n,h}(L'^{\flat}+\langle u^{\flat}+u^{\perp}\rangle)\\
				=-\dfrac{1}{q^h}D_{n-1,h-1}(a,b,c)+\dfrac{\vol(L'^{\flat})}{q}\mathlarger{\sum}_{\substack{u^{\flat} \in (L'^{\flat})^{\vee,\geq0}/L'^{\flat} \\ \val(( u^{\perp},u^{\perp}) \geq 2}} D_{n,h}(L'^{\flat}+\langle u^{\flat}+u^{\perp}\rangle).
				\					\end{array}
		\end{equation*}
		Now, the theorem follows from \eqref{eq10.23.1} and the fact that (see the proof of Theorem \ref{theorem8.18})
		\begin{equation*}
			\begin{array}{ll}
				\mathlarger{\sum}_{\substack{u^{\flat} \in (L'^{\flat})^{\vee,\geq0}/L'^{\flat} \\ \val(( u^{\perp},u^{\perp}) \geq 2}} D_{n,h}(L'^{\flat}+\langle u^{\flat}+u^{\perp}\rangle)=\left\lbrace \begin{array}{ll}
					q^{h-1}(q+1)q^{\vert \lambda \vert} & \text{ if }(a,b,c)=(1,h,n-h-2),\\
					q^{-h}(q+1)q^{\vert \lambda \vert} & \text{ if }(a,b,c)=(1,h-2,n-h-1),\\
					1 & \text{ if }(a,b,c)=(0,h,n-h).
				\end{array}\right.
			\end{array}
		\end{equation*}
	\end{proof}
	
	We will use the following lemma in the next section when we count the number of horizontal components.
	
	\begin{lemma}\label{lemma5.48}
		Assume that $\lambda \geq 2$. Consider $(\lambda,0^{n-1}), (\lambda,2^{n-1}) \in \CR_n^{0+}$ and $(0^{n-1}),(2^{n-1}) \in \CR_{n-1}^{0+}$. Then, we have
		\begin{equation*}
			\begin{array}{l}
				\dfrac{\alphad(A_{(\lambda,0^{n-1})},A_{(\lambda,2^{n-1})})/\alphad(A_{(\lambda,0^{n-1})},A_{(\lambda,0^{n-1})})}{\alphad(A_{(0^{n-1})},A_{(2^{n-1})})/\alphad(A_{(0^{n-1})},A_{(0^{n-1})})}=\left\lbrace \begin{array}{ll}
					q^{2n-2} & \text{ if } \lambda \geq 3,\\
					q^{2n-2}\dfrac{1-(-q)^{-n}}{1-(-q)^{-1}} &\text{ if } \lambda=2.
				\end{array}\right.
			\end{array}
		\end{equation*}
	\end{lemma}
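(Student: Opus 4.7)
The plan is to convert both densities into counts of overlattices via the identity $\den(M,L) = n(M,L)\cdot \den(M,M)$ from Lemma~\ref{lem: 0 vanish of error term}, which applies because in each of the two quotients the pair $(M,L)$ consists of hermitian lattices of the same rank. Setting $N_n(\lambda) \coloneqq \#\bigl\{L' \subset L_F : L_{(\lambda,2^{n-1})}\subset L',\ L'\cong L_{(\lambda,0^{n-1})}\bigr\}$ and letting $N_{n-1}$ denote the analogous count of unimodular overlattices of $L_{(2^{n-1})}$, the claim reduces to $N_n(\lambda)/N_{n-1} = q^{2(n-1)}$ when $\lambda\geq 3$ and $N_n(\lambda)/N_{n-1} = q^{2(n-1)}\cdot\frac{1-(-q)^{-n}}{1-(-q)^{-1}}$ when $\lambda = 2$.

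I would then decompose $L_{(\lambda,2^{n-1})} = \langle v_0\rangle\obot L^\flat$, where $v_0$ has norm $\pi^\lambda$ and $L^\flat \cong \pi L_{(0^{n-1})}$. Any overlattice $L'$ with the correct invariants is determined by (i) a unimodular overlattice $M$ of $L^\flat$ in $L^\flat_F$, and (ii) the placement of the ``long'' direction in $L_F$. When $\lambda\geq 3$, the valuation gap between $\pi^\lambda$ and $\pi^2$ forces $L'$ to possess a unique rank-one orthogonal complement $\langle v_0'\rangle$ to $M$ with $v_0' \equiv v_0 \pmod{L^\flat_F}$; the freedom in choosing $v_0'$ modulo $M$ is parametrized by $M/\pi M$, of size $q^{2(n-1)}$, and the norm condition $(v_0',v_0')=\pi^\lambda$ is automatically satisfied since any shift lands in strictly higher valuation. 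This gives $N_n(\lambda) = q^{2(n-1)} N_{n-1}$ directly.

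The case $\lambda=2$ is more delicate because $(v_0,v_0)$ lies in the same $\pi$-scale as $L^\flat$, so the orthogonal splitting of $L'$ into rank-one and rank-$(n-1)$ pieces is no longer unique: any vector of norm $\pi^2$ in $L'$ that is primitive modulo its unimodular complement can serve as $v_0'$. I would handle this by first forming the ambient unimodular overlattice $\widetilde L$ of rank $n$ obtained by adjoining the appropriate rescaling of $v_0$, counting the overlattices $L'$ with fixed $\widetilde L$ via the $q^{2(n-1)}$ factor from the $\lambda\geq 3$ analysis, and then counting the distinct $\widetilde L$ that arise by enumerating rank-one unimodular direct summands of a unimodular rank-$n$ hermitian lattice. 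The latter orbit count is classical and equals $\frac{1-(-q)^{-n}}{1-(-q)^{-1}}$, whose form matches the number of suitable lines in the reduction modulo $\pi$ (computable inductively, compatible with Lemma~\ref{lemma5.40}).

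The main obstacle will be the $\lambda = 2$ orbit analysis, in particular ensuring that each overlattice is counted exactly once after introducing the auxiliary unimodular ambient lattice, and pinning down the closed-form count $\frac{1-(-q)^{-n}}{1-(-q)^{-1}}$ via either a direct enumeration or an induction on $n$. An alternative, more mechanical approach would be to evaluate each of the four densities in the statement directly by the Jordan-block formula built into Proposition~\ref{proposition5.27} and Proposition~\ref{proposition5.7}, then simplify the ratio algebraically; this bypasses the lattice-counting interpretation at the cost of substantially longer bookkeeping.
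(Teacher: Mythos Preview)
Your reduction to overlattice counting via Lemma~\ref{lem: 0 vanish of error term} is correct and is a genuinely different route from the paper's proof, which instead manipulates the densities directly using algebraic identities from \cite{Cho4} (relating $\alphad(A_{(\lambda,0^{n-1})},-)$ to $\alphad(A_{(\lambda,2^{n-1})},-)$ through intermediate polynomials $f_{1,k},\dots,f_{4,k}$) and then plugs in closed formulas for $\alphad(A_\eta,A_\eta)$ from \cite{Cho3}. Your alternative ``mechanical'' suggestion at the end is in fact closer in spirit to what the paper does.

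However, your proposed bijection for $\lambda\ge 3$ is broken. If $M\subset L^\flat_F$ is a unimodular overlattice of $L^\flat$ and $v_0'=v_0+m$ with $m\in M$, then $M+\langle v_0'\rangle=M+\langle v_0\rangle$ regardless of $m$, so varying $v_0'$ over $M/\pi M$ does \emph{not} produce $q^{2(n-1)}$ distinct lattices---it produces exactly one, namely $\langle v_0\rangle\obot M$. Concretely, take $n=2$, $\lambda=3$, $L=\langle v_0\rangle\obot\langle v_1\rangle$ with $(v_0,v_0)=\pi^3$, $(v_1,v_1)=\pi^2$. There are $q^2$ overlattices $L'\cong L_{(3,0)}$, parametrized by lines $[a:b]$ in $\pi^{-1}L/L$ with $b$ a unit; but for $a$ a unit one checks $\pi^{-1}v_1\notin L'$, so $L'\cap L^\flat_F=\langle v_1\rangle$ is \emph{not} unimodular, and $L'$ does \emph{not} arise from your construction. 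Your bijection only captures the single $L'$ with $a\equiv 0$, missing the other $q^2-1$.

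For $\lambda=2$ there is an additional problem: the factor $\tfrac{1-(-q)^{-n}}{1-(-q)^{-1}}$ is not an integer (for $n=2$ it equals $1-q^{-1}$), so it cannot by itself be the orbit count you describe. The full product $q^{2n-2}\cdot\tfrac{1-(-q)^{-n}}{1-(-q)^{-1}}$ \emph{is} an integer (e.g.\ $q^2-q$ for $n=2$, which one can verify directly by counting lines $[a:b]$ with $a\bar a+b\bar b\not\equiv 0$), but your decomposition into a $q^{2(n-1)}$ factor times a summand count does not reflect the actual structure. A working lattice-counting argument would need a different parametrization---for instance, classifying $L'/L$ as a submodule of $L^\vee/L$ with the correct induced form---rather than the orthogonal-splitting picture you sketch.
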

	\begin{proof}
		We use \cite[Theorem 2.5]{Cho4} to prove this. To simplify notation, we use the following convention: For $1 \leq k \leq n$, $B_1 \in X_n(O_F)$, and $B_2 \in X_{n-1}(O_F)$, we define
		\begin{equation*}
			\begin{array}{lll}
				R_0^k=\alphad(A_{(\lambda,1^{k-1},0^{n-k})},B_1),& 	R_1^k=\alphad(A_{(\lambda,2^{k-1},1^{n-k})},B_1), &	R_2^k=\alphad(A_{(\lambda,3^{k-1},2^{n-k})},B_1), \\
				R_3^k=\alphad(A_{(1^{k-1},0^{n-k})},B_2),& 	R_4^k=\alphad(A_{(2^{k-1},1^{n-k})},B_2), &	R_5^k=\alphad(A_{(3^{k-1},2^{n-k})},B_2),
			\end{array}
		\end{equation*}
		and for a polynomial $f(X)=\sum_{i=1}^{n}a_i X^i$, we denote by $f(R_i)$ the sum $f(R_i)\coloneqq \sum_{i=1}^{n}a_i R_i^k$. For example, if $f(X)=X+X^2$, then $f(R_1)=\alphad(A_{(\lambda,0^{n-1})},B_1)+\alphad(A_{(\lambda,1^{1},0^{n-2})},B_1)$ (not $\alphad(A_{(\lambda,0^{n-1})},B_1)+\lbrace \alphad(A_{(\lambda,0^{n-1})},B_1)\rbrace^2$).
		
		Also, we define the following polynomials
		\begin{equation*}\begin{array}{ll}
				f_{1,k}(X)=X^k\prod_{l=k+1}^{n}(1-(-q)^{-l}X),&	f_{2,k}(X)=X\prod_{l=k+1}^{n}(1-(-q)^{-l}X),\\
				f_{3,k}(X)=X^k\prod_{l=k}^{n-1}(1-(-q)^{-l}X),&	f_{4,k}(X)=X\prod_{l=k}^{n-1}(1-(-q)^{-l}X).
			\end{array}
		\end{equation*}
		Finally, for $0 \leq j \leq i$, we define the following constants $k_{i,j}$:
		\begin{equation*}
			\prod_{l=1}^{i}(1+(-q)^{-l}X)=\sum_{j=0}^{i}k_{i,j}X^j.
		\end{equation*}
		
		First, we claim that
		\begin{equation}\label{eq5.52}
			f_{2,i+1}(X)=\sum_{j=0}^{i}k_{i,j}(-q)^{-\frac{j(j+1)}{2}}f_{1,j+1}(X), 0 \leq i \leq n-1,
		\end{equation}
		and
		\begin{equation}\label{eq5.53}
			f_{4,i+1}(X)=\sum_{j=0}^{i}k_{i,j}(-q)^{-\frac{j(j-1)}{2}}f_{3,j+1}(X), 0 \leq i \leq n-1.
		\end{equation}
		
		Let us prove \eqref{eq5.52} by induction on $i$. For $i=0$, the claim holds since $f_{2,1}(X)=f_{1,1}(X)$. Now, assume that the claim holds for $i$, i.e.,
		\begin{equation}\label{eq5.54}
			X\prod_{l=i+2}^{n}(1-(-q)^{-l}X)=\sum_{j=0}^{i}k_{i,j}(-q)^{-\frac{j(j+1)}{2}}X^{j+1}\prod_{l=j+2}^{n}(1-(-q)^{-l}X).
		\end{equation}
		Note that $\sum_{j=0}^{i+1}k_{i+1,j}X^j=\prod_{l=1}^{i+1}(1+(-q)^{-l}X)=(1+(-q)^{-i-1}X)(\sum_{j=0}^{i}k_{i,j}X^j)$, and hence
		\begin{equation}\label{eq5.55}
			\begin{aligned}
				\sum_{j=0}^{i+1}k_{i+1,j}(-q)^{-\frac{j(j+1)}{2}}f_{1,j+1}(X)&=\sum_{j=0}^{i}k_{i,j}(-q)^{-\frac{j(j+1)}{2}}f_{1,j+1}(X)\\
				&+\sum_{j=0}^{i}(-q)^{-i-1}k_{i,j}(-q)^{-\frac{(j+1)(j+2)}{2}}f_{1,j+2}(X).
			\end{aligned}
		\end{equation}
		Now, consider the equation \eqref{eq5.54} with $X\Rightarrow (-q)^{-1}X$, $n \Rightarrow n-1$. Then, we have
		\begin{equation*}
			\begin{array}{ll}
				&(-q)^{-1}X\prod_{l=i+3}^{n}(1-(-q)^{-l}X)=\sum_{j=0}^{i}k_{i,j}(-q)^{-\frac{(j+1)(j+2)}{2}}X^{j+1}\prod_{l=j+3}^{n}(1-(-q)^{-l}X)\\
				\Longleftrightarrow&\sum_{j=0}^{i}(-q)^{-i-1}k_{i,j}(-q)^{-\frac{(j+1)(j+2)}{2}}f_{1,j+2}(X)=(-q)^{-i-2}X^2 \prod_{l=i+3}^{n}(1-(-q)^{-l}X).
			\end{array}
		\end{equation*}
		Combining this with \eqref{eq5.55}, we have
		\begin{equation*}
			\begin{aligned}
				\sum_{j=0}^{i+1}k_{i+1,j}(-q)^{-\frac{j(j+1)}{2}}f_{1,j+1}(X)&=X \prod_{l=i+2}^{n}(1-(-q)^{-l}X)+(-q)^{-i-2}X^2 \prod_{l=i+3}^{n}(1-(-q)^{-l}X)\\
				&=X \prod_{l=i+3}^{n}(1-(-q)^{-l}X).
			\end{aligned}
		\end{equation*}
		This finishes the proof of \eqref{eq5.52}. The equation \eqref{eq5.53} can be proved in a similar way.

		Now, note that \cite[Theorem 2.5]{Cho4} implies that if $B_1=A_{\eta_1}$, $B_2=A_{\eta_2}$ such that $\eta_1 \geq (\overset{n}{\overbrace{2,\dots,2}})$, $\eta_2 \geq(\overset{n-1}{\overbrace{2,\dots,2}})$, then we have
		\begin{equation}\label{eq5.56}
			\begin{array}{ll}
				f_{1,j}(R_0)=\dfrac{(-1)^{n-j}}{(-q)^{n(n-j)}}f_{2,j}(R_1), & f_{1,j}(R_1)=\dfrac{(-1)^{n-j}}{(-q)^{n(n-j)}}f_{2,j}(R_2),\\
				f_{3,j}(R_3)=\dfrac{(-1)^{n-j}}{(-q)^{(n-1)(n-j)}}f_{4,j}(R_4), & f_{3,j}(R_4)=\dfrac{(-1)^{n-j}}{(-q)^{(n-1)(n-j)}}f_{4,j}(R_5).\\
			\end{array}
		\end{equation}
		Furthermore, if $B_1=A_{(\lambda,2^{n-1})}$ and $B_2=A_{(2^{n-1})}$, then $R_2^{k}=R_5^k=0$ for all $k > 1$. Therefore,
		\begin{equation}\label{eq5.57}
			\begin{array}{ll}
				f_{2,j}(R_2)=R_2=\alphad(A_{(\lambda,2^{n-1})},A_{(\lambda,2^{n-1})}), & f_{4,j}(R_5)=R_5=\alphad(A_{(2^{n-1})},A_{(2^{n-1})}).
			\end{array}
		\end{equation}
		Since $\alphad(A_{(\lambda,1,0^{n-2})},A_{(\lambda,2^{n-1})})=0$, we have
		\begin{equation}\label{eq5.58}
			\begin{array}{ll}
				\alphad(A_{(\lambda,0^{n-1})},A_{(\lambda,2^{n-1})})=R_0(1-(-q)^{-n}R_0)=f_{2,n-1}(R_0)&\\
				=\sum_{i=0}^{n-2}k_{n-2,i}(-q)^{-\frac{i(i+1)}{2}}f_{1,i+1}(R_0)&(\text{by }\eqref{eq5.52})\\
				=\sum_{i=0}^{n-2}k_{n-2,i}(-q)^{-\frac{i(i+1)}{2}}\frac{(-1)^{n-i-1}}{(-q)^{n(n-i-1)}}f_{2,i+1}(R_1) & (\text{by }\eqref{eq5.56})\\
				=\sum_{i=0}^{n-2}k_{n-2,i}(-q)^{-\frac{i(i+1)}{2}}\frac{(-1)^{n-i-1}}{(-q)^{n(n-i-1)}}\sum_{j=0}^{i}k_{i,j}(-q)^{-\frac{j(j+1)}{2}}f_{1,j+1}(R_1) & (\text{by } \eqref{eq5.52})\\
				=\lbrace\sum_{i=0}^{n-2}k_{n-2,i}(-q)^{-\frac{i(i+1)}{2}}\frac{(-1)^{n-i-1}}{(-q)^{n(n-i-1)}}\sum_{j=0}^{i}k_{i,j}(-q)^{-\frac{j(j+1)}{2}}\frac{(-1)^{n-j-1}}{(-q)^{n(n-j-1)}}\rbrace R_2 & (\text{by } \eqref{eq5.56}, \eqref{eq5.57}).
			\end{array}
		\end{equation}
		Similarly, note that $\alphad(A_{(1,0^{n-2})},A_{(2^{n-1})})=0$, and hence
		\begin{equation}\label{eq5.59}
			\begin{array}{ll}
				\alphad(A_{(0^{n-1})},A_{(2^{n-1})})=R_3(1-(-q)^{-(n-1)}R_3)=f_{4,n-1}(R_3)&\\
				=\sum_{i=0}^{n-2}k_{n-2,i}(-q)^{-\frac{i(i-1)}{2}}f_{3,i+1}(R_3)&(\text{by }\eqref{eq5.53})\\
				=\sum_{i=0}^{n-2}k_{n-2,i}(-q)^{-\frac{i(i-1)}{2}}\frac{(-1)^{n-i-1}}{(-q)^{(n-1)(n-i-1)}}f_{4,i+1}(R_4) & (\text{by }\eqref{eq5.56})\\
				=\sum_{i=0}^{n-2}k_{n-2,i}(-q)^{-\frac{i(i-1)}{2}}\frac{(-1)^{n-i-1}}{(-q)^{(n-1)(n-i-1)}}\sum_{j=0}^{i}k_{i,j}(-q)^{-\frac{j(j-1)}{2}}f_{3,j+1}(R_4) & (\text{by } \eqref{eq5.53})\\
				=\lbrace\sum_{i=0}^{n-2}k_{n-2,i}(-q)^{-\frac{i(i-1)}{2}}\frac{(-1)^{n-i-1}}{(-q)^{(n-1)(n-i-1)}}\sum_{j=0}^{i}k_{i,j}(-q)^{-\frac{j(j-1)}{2}}\frac{(-1)^{n-j-1}}{(-q)^{(n-1)(n-j-1)}}\rbrace R_5 & (\text{by } \eqref{eq5.56}, \eqref{eq5.57}).
			\end{array}
		\end{equation}
		
		Comparing \eqref{eq5.58} and \eqref{eq5.59}, we have
		\begin{equation}\label{eq5.60}	(-q)^{2n-2}\dfrac{\alphad(A_{(\lambda,0^{n-1})},A_{(\lambda,2^{n-1})})}{\alphad(A_{(\lambda,2^{n-1})},A_{(\lambda,2^{n-1})})}=\dfrac{\alphad(A_{(0^{n-1})},A_{(2^{n-1})})}{\alphad(A_{(2^{n-1})},A_{(2^{n-1})})}.
		\end{equation}
		Now, by \cite[Lemma 4.6]{Cho3}, we have
		\begin{equation*}
			\begin{array}{l}
				\alphad(A_{(2^{n-1})},A_{(2^{n-1})})=q^{2(n-1)^2}\alphad(A_{(0^{n-1})},A_{(0^{n-1})})=q^{2(n-1)^2}\prod_{l=1}^{n-1}(1-(-q)^{-l}),\\
				\alphad(A_{\lambda,(0^{n-1})},A_{(\lambda,0^{n-1})})=q^{\lambda}(1-(-q)^{-1})\prod_{l=1}^{n-1}(1-(-q)^{-l}), \text{ if }\lambda \geq 1,
			\end{array}
		\end{equation*}
		\begin{equation*}
			\alphad(A_{(\lambda,2^{n-1})},A_{(\lambda,2^{n-1})})=\left\lbrace\begin{array}{ll}
				q^{2(n^2-1)+\lambda}(1-(-q)^{-1})\prod_{l=1}^{n-1}(1-(-q)^{-l}) & \text{ if }\lambda \geq 3,\\
				q^{2n^2}\prod_{l=1}^{n}(1-(-q)^{-l}) & \text{ if }\lambda=2.
			\end{array}\right.
		\end{equation*}
		Combining these with \eqref{eq5.60}, we have
		\begin{equation*}
			\begin{array}{l}
				\dfrac{\alphad(A_{(\lambda,0^{n-1})},A_{(\lambda,2^{n-1})})/\alphad(A_{(\lambda,0^{n-1})},A_{(\lambda,0^{n-1})})}{\alphad(A_{(0^{n-1})},A_{(2^{n-1})})/\alphad(A_{(0^{n-1})},A_{(0^{n-1})})}\\
				=(-q)^{-2n+2}\dfrac{\alphad(A_{(\lambda,2^{n-1})},A_{(\lambda,2^{n-1})})}{\alphad(A_{(\lambda,0^{n-1})},A_{(\lambda,0^{n-1})})}\dfrac{\alphad(A_{(0^{n-1})},A_{(0^{n-1})})}{\alphad(A_{(2^{n-1})},A_{(2^{n-1})})}

				=\left\lbrace \begin{array}{ll}
					q^{2n-2} & \text{ if } \lambda \geq 3,\\
					q^{2n-2}\dfrac{1-(-q)^{-n}}{1-(-q)^{-1}} &\text{ if } \lambda=2.
				\end{array}\right.
			\end{array}
		\end{equation*}
		This finishes the proof of the lemma.
	\end{proof}

	\section{Tate conjectures and the proof of the main theorem}

	\subsection{The proof of the main theorem}
	Consider the Rapoport--Zink space $\CN^{[h]}_n$ and the space of special homomorphisms $\BV$. Assume that $L^{\flat} \subset \BV$ is an $O_F$-lattice of rank $n-1$. Recall that for $L'^{\flat}$ such that $L^{\flat} \subset L'^{\flat} \subset (L'^{\flat})^{\vee} \subset L^{\flat}_F$, we define the primitive part $\CZ(L'^{\flat})^{\circ}$ of $\CZ(L'^{\flat})$ inductively by setting
	\begin{equation*}
		\CZ(L'^{\flat})^{\circ}\coloneqq \CZ(L'^{\flat})-\mathlarger{\sum}_{\substack{L'^{\flat} \subset L''^{\flat}\\L''^{\flat} \subset (L''^{\flat})^{\vee} \subset L'^{\flat}_F}} \CZ(L''^{\flat})^{\circ}.
	\end{equation*}
	For example, for a lattice $L'^{\flat}$ with fundamental invariants $(0,0,\dots,0,1)$, there is no integral lattice $L''^{\flat}$ such that $L'^{\flat} \subset L''^{\flat}$ and hence $\CZ(L'^{\flat})^{\circ}=\CZ(L'^{\flat})$. For a lattice $L'^{\flat}$ with fundamental invariants $(0,0,\dots,0,3)$, there is one integral lattice $L''^{\flat}$ satisfying the above conditions, and its fundamental invariants are $(0,0,\dots,0,1)$. Therefore, we have $\CZ(L'^{\flat})^{\circ}=\CZ(L'^{\flat})-\CZ(L''^{\flat})^{\circ}=\CZ(L'^{\flat})-\CZ(L''^{\flat})$.
	
	Now, let us define the derived special cycles $\text{}^{\BL}\CZ(L)$.
	\begin{definition}\label{definition9.1}
		For a lattice $L \subset \BV$, choose its basis $x_1, \dots,x_r$. We define the derived special cycle $\text{}^{\BL}\CZ(L)$ as the image of $O_{\CZ(x_1)}\otimes^{\BL}\dots \otimes^{\BL}O_{\CZ(x_r)}$ in the $r$-th graded piece of the Grothendieck group $\text{Gr}^r K_0^{\CZ(L)}(\CN)$. This does not depend on the choice of the basis by Proposition \ref{prop: linear invariance}.
	\end{definition}

	Then, Conjecture \ref{conj: specialized}  is equivalent to the following statement: for $x \in \BV \backslash L_F^{\flat}$,
	\begin{equation}\label{eq9.1}
		\chi(\CN^{[h]}_n, \text{}^{\BL}\CZ(L'^{\flat})^{\circ} \otimes^{\BL}  O_{\CZ(x)})=\partial \mathrm{Den}_{L'^{\flat\circ}}^{n,h}(x)\coloneqq \mathlarger{\sum}_{L'^{\flat} \subset L' \subset L'^{\vee}, L' \cap L^{\flat}_F=L'^{\flat}} D_{n,h}(L')1_{L'}(x),
	\end{equation}
	where $L' \subset \BV$ are $O_F$-lattices of rank $n$. For example, for a lattice $L'^{\flat}$ with fundamental invariants $(0,0,\dots,1)$, it is obvious that Conjecture \ref{conj: specialized} is equivalent to \eqref{eq9.1}. For a lattice $L'^{\flat}$ with fundamental invariants $(0,0,\dots,3)$, there is one integral lattice $L''^{\flat}$ such that $L'^{\flat} \subsetneq L''^{\flat}$, and the fundamental invariants of $L''^{\flat}$ is $(0,0,\dots, 1)$. Therefore, Conjecture \ref{conj: specialized} is equivalent to
	\begin{equation*}\begin{array}{ll}
			\chi(\CN^{[h]}_n, \text{}^{\BL}\CZ(L'^{\flat})^{\circ} \otimes^{\BL}  O_{\CZ(x)})&=\mathlarger{\sum}_{L'^{\flat} \subset L' \subset L'^{\vee}} D_{n,h}(L')1_{L'}(x)-\mathlarger{\sum}_{L'^{\flat} \subset L' \subset L'^{\vee}, L' \cap L^{\flat}_F=L''^{\flat}} D_{n,h}(L')1_{L'}(x)\\
			&=\mathlarger{\sum}_{L'^{\flat} \subset L' \subset L'^{\vee}, L' \cap L^{\flat}_F=L'^{\flat}} D_{n,h}(L')1_{L'}(x),
		\end{array}
	\end{equation*}
	which is equivalent to \eqref{eq9.1}.
	
	Now, note that there is a decomposition of the derived special cycle $^{\BL}\CZ(L^{\flat})$ into a sum of horizontal and vertical parts (see Section \ref{subsection2.3}):
	\begin{equation*}
		\text{}^{\BL}\CZ(L'^{\flat})=\text{}^{\BL}\CZ(L'^{\flat})_{\ScH}+    \text{}^{\BL}\CZ(L'^{\flat})_{\ScV}.
	\end{equation*}
	We denote by $\text{}^{\BL}\CZ(L'^{\flat})_{\ScH}^{\circ}$ (resp. $\text{}^{\BL}\CZ(L'^{\flat})_{\ScV}^{\circ})$ the primitive part of $\text{}^{\BL}\CZ(L'^{\flat})_{\ScH}$ (resp. $\text{}^{\BL}\CZ(L'^{\flat})_{\ScV}$).
	
	We define
	\begin{equation*}
		\begin{array}{ll}
			\mathrm{Int}_{L'^{\flat},\ScH}(x)\coloneqq \chi(\CN^{[h]}_n, \text{}^{\BL}\CZ(L'^{\flat})_{\ScH} \otimes^{\BL}  O_{\CZ(x)}), &
			
			\mathrm{Int}_{L'^{\flat},\ScV}(x)\coloneqq \chi(\CN^{[h]}_n, \text{}^{\BL}\CZ(L'^{\flat})_{\ScV} \otimes^{\BL}  O_{\CZ(x)}),\\
			
			\mathrm{Int}_{L'^{\flat \circ},\ScH}(x)\coloneqq \chi(\CN^{[h]}_n, \text{}^{\BL}\CZ(L'^{\flat})_{\ScH}^{\circ} \otimes^{\BL}  O_{\CZ(x)}),&
			
			\mathrm{Int}_{L'^{\flat \circ},\ScV}(x)\coloneqq \chi(\CN^{[h]}_n, \text{}^{\BL}\CZ(L'^{\flat})_{\ScV}^{\circ} \otimes^{\BL}  O_{\CZ(x)}).
		\end{array}
	\end{equation*}
	Then, Conjecture \ref{conjecture5.5} is equivalent to  
	\begin{equation}\label{eq9.2}
		\mathrm{Int}_{L'^{\flat},\ScH}(x)+	\mathrm{Int}_{L'^{\flat },\ScV}(x)=\partial \mathrm{Den}_{L'^{\flat}}^{n,h}(x).
	\end{equation}
	or
	\begin{equation}\label{eq9.3}
		\mathrm{Int}_{L'^{\flat \circ},\ScH}(x)+	\mathrm{Int}_{L'^{\flat \circ},\ScV}(x)=\partial \mathrm{Den}_{L'^{\flat\circ}}^{n,h}(x).
	\end{equation}

	Now, we define $\partial \mathrm{Den}_{L'^{\flat\circ},\ScV}^{n,h}(x)$ by
	\begin{equation*}
		\partial \mathrm{Den}_{L'^{\flat\circ},\ScV}^{n,h}(x)\coloneqq \partial \mathrm{Den}_{L'^{\flat\circ}}^{n,h}(x)-\mathrm{Int}_{L'^{\flat \circ},\ScH}(x).
	\end{equation*}
	Let $\lambda \in \CR_{n-1}^{0+}$ be the fundamental invariants of $L'^{\flat}$. Then, by Lemma \ref{lemma4.3}, we know that if $L'^{\flat} \notin H(\BV)$ (i.e., $(t_{\ge 2}(\lambda),t_{1}(\lambda),t_0(\lambda)) \neq (1,h,n-h-2), (1,h-2,n-h), (0,h-1,n-h), (0,h,n-h-1), (0,h+1,n-h-2)$), then we have
	\begin{equation*}
		\mathrm{Int}_{L'^{\flat \circ},\ScH}(x)=0,
	\end{equation*}
	and hence we have that
	\begin{equation*}
		\partial \mathrm{Den}_{L'^{\flat\circ},\ScV}^{n,h}(x)= \partial \mathrm{Den}_{L'^{\flat\circ}}^{n,h}(x),
	\end{equation*} in this case.
	
	When $L'^{\flat} \in H(\BV)$, then the horizontal part of $\CZ(L'^{\flat})$ is not empty, and hence we need to be careful. Note that by Theorem \ref{thm: hori part}, the horizontal part of $\CZ(L'^{\flat})^{\circ}$ comes from $\CN^{[0]}_2$ or $\CN^{[2]}_2$. By \cite[Theorem 1.1]{KR1} and reductions in Proposition \ref{proposition2.6}, we have a precise formula for $\mathrm{Int}_{L'^{\flat \circ},\ScH}(x)$.
	
	Now, we will prove the following theorem.
	\begin{theorem}\label{theorem9.1.1}(cf. \cite[Theorem 7.4.1]{LZ})
		Assume that $x \perp L'^{\flat}$ and $(t_{\ge 2}(\lambda),t_1(\lambda),t_0(\lambda))=(a,b,c)$ where $\lambda \in \CR_{n-1}^{0+}$ is the fundamental invariants of $L'^{\flat}$. Then, we have the followings.
		\begin{enumerate}
			\item If $\val(( x,x )) \leq -2$, we have $\widehat{\pDen}_{L'^{\flat\circ},\ScV}^{n,h}(x)=0$.
			
			\item If $\val(( x,x))=-1$, we have
			\begin{equation*}
				\widehat{\pDen}_{L'^{\flat\circ},\ScV}^{n,h}(x)=\left\lbrace \begin{array}{ll}
					-\dfrac{1}{q^h}D_{n-1,h-1}(a,b,c) & \text{if }(a,b,c) \neq (1,h-2,n-h),\\\\
					-\dfrac{1}{q^h}D_{n-1,h-1}(a,b,c)+\dfrac{1}{q^h} & \text{if }(a,b,c) =(1,h-2,n-h).
				\end{array}\right.
			\end{equation*}
		\end{enumerate}
	\end{theorem}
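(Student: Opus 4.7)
The plan is to start from the definition
\[
\widehat{\pDen}_{L'^{\flat\circ},\ScV}^{n,h}(x) \;=\; \widehat{\pDen}_{L'^{\flat\circ}}^{n,h}(x) \;-\; \widehat{\mathrm{Int}}_{L'^{\flat\circ},\ScH}(x),
\]
which reduces the statement to combining the analytic Fourier-transform computations of \S\ref{sec: FT} (Theorems \ref{theorem5.46}, \ref{theorem5.47}, \ref{theorem8.18}, \ref{theorem8.19}) with a geometric computation of the Fourier transform of the horizontal intersection. I would then split into the two regimes dictated by whether $L'^{\flat}$ lies in $H(\BV)$, i.e.\ whether the triple $(a,b,c)=(t_{\ge 2}(\lambda),t_1(\lambda),t_0(\lambda))$ equals one of the five exceptional values $(1,h,n-h-2)$, $(1,h-2,n-h)$, $(0,h+1,n-h-2)$, $(0,h,n-h-1)$, $(0,h-1,n-h)$.

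In the non-exceptional regime, Lemma \ref{lemma4.3} and Theorem \ref{thm: hori part} force $\CZ(L'^{\flat})^{\circ}_{\ScH}$ (and hence $\Int_{L'^{\flat\circ},\ScH}$) to vanish identically, so $\widehat{\pDen}_{L'^{\flat\circ},\ScV}^{n,h}=\widehat{\pDen}_{L'^{\flat\circ}}^{n,h}$. Then (1) follows from Theorem \ref{theorem5.46} and (2) follows from Theorem \ref{theorem5.47} verbatim. The only nontrivial content here is therefore the exceptional regime.

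For the exceptional regime I would use Theorem \ref{thm: hori part} to write $\CZ(L'^{\flat})^{\circ}_{\ScH}$ as a (weighted) union of quasi-canonical liftings, sitting inside an embedded $\cN_{2}^{[0]}$ or an intersection $\cZ(M_1)^{\circ}\!\cdot\cY(N_2)^{\circ}$ coming from an $\cN_{2}^{[1]}$ factor. Pairing with $\Oo_{\CZ(x)}$ and summing over the finitely many horizontal components attached to $L'^{\flat}$ yields, by \cite[\S 8]{KR1} (Gross--Keating/quasi-canonical intersection formulas, together with the cancellation law of Proposition \ref{proposition2.6}), an explicit expression for $\Int_{L'^{\flat\circ},\ScH}(x)$ as a locally constant function of $x\perp L'^{\flat}$ supported in terms of $\val(\langle x,x\rangle)$. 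Fourier-transforming this expression is a lattice-counting exercise of the same flavor as \S\ref{sec: FT} but considerably easier, since the support consists of only a few orbits; in particular, for $\val(\langle x,x\rangle)\le -2$ the horizontal Fourier transform equals exactly the ``unwanted'' values produced by Theorem \ref{theorem8.18}, and for $\val(\langle x,x\rangle)=-1$ it produces the difference between Theorem \ref{theorem8.19} and the target formula. This is the matching that must be verified case by case against the five (resp.\ three) exceptional triples.

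The main obstacle is the residual term $+\tfrac{1}{q^h}$ in case $(a,b,c)=(1,h-2,n-h)$ with $\val(\langle x,x\rangle)=-1$. Unlike the other exceptional triples, here the horizontal contribution from the single quasi-canonical lifting of degree $a-1$ (recall this triple corresponds to the type (i) lattices with $\alpha\ge 2$) does not completely cancel the analytic correction; instead the difference is precisely the intersection multiplicity $1/q^h$ coming from the unique degree-$0$ quasi-canonical lifting fiber intersecting $\CZ(x)$ when $\val(\langle x,x\rangle)=-1$. Carefully tracking this through the quasi-canonical lifting intersection formula, especially the normalization by $\vol(L'^{\flat})\vol(\langle x\rangle^{\vee})(1-q^{-2})^{-1}$ already appearing in Theorem \ref{theorem8.19}, is the delicate point. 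Once this identity is verified—and it is essentially forced by the structure of Theorem \ref{thm: hori part}(1)—the stated formulas follow by direct subtraction, completing the proof.
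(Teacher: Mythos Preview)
Your overall strategy matches the paper's exactly: split into the non-exceptional regime (where Theorems~\ref{theorem5.46} and~\ref{theorem5.47} apply directly because $\CZ(L'^{\flat})^{\circ}_{\ScH}=\emptyset$) and the five exceptional triples, and in the latter compute $\widehat{\Int}_{L'^{\flat\circ},\ScH}(x)$ and subtract it from Theorems~\ref{theorem8.18} and~\ref{theorem8.19}.

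Where you diverge from the paper is in how you propose to compute $\widehat{\Int}_{L'^{\flat\circ},\ScH}(x)$. You suggest writing down $\Int_{L'^{\flat\circ},\ScH}(x)$ explicitly via Gross--Keating/quasi-canonical lifting formulas and then Fourier-transforming by hand. The paper instead never writes this function down on all of $\BV$: it uses Proposition~\ref{proposition2.6} to reduce each exceptional case to an embedded low-rank situation ($\CN^{[0]}_2$, $\CN^{[2]}_2$, or $\CN^{[0]}_1$), then invokes \cite[Proposition~8.2]{ZhiyuZhang} to say that the Fourier transform through such an embedding picks up exactly a factor $q^{-k}$ (where $k$ counts the $\CY$-cycles used). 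Since the Kudla--Rapoport conjecture is already known in these low-rank targets, the Fourier transforms there are given by Theorems~\ref{theorem8.18} and~\ref{theorem8.19} \emph{applied to those small spaces}. For the two cases with many horizontal components, $(1,h,n-h-2)$ and $(0,h+1,n-h-2)$, the count is supplied by Lemma~\ref{lemma5.48}. This recursive trick is cleaner than a direct Fourier computation and makes the origin of the anomalous $+q^{-h}$ transparent: in the case $(1,h-2,n-h)$ the reduction lands in $\CN^{[2]}_2$ (via $\CN^{[h]}_h$, using $\CZ(w)=\CY(\pi^{-1}w)$ there), and Theorem~\ref{theorem8.19} for $\CN^{[2]}_2$ at $\val=-1$ already contains an extra $-q^{-2}$; scaling by the embedding factor $q^{-(h-2)}$ gives $-q^{-h}$, which becomes $+q^{-h}$ after subtraction. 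Your geometric description of this term as coming from ``the unique degree-$0$ quasi-canonical lifting fiber'' is not how it arises in the paper's bookkeeping, and would need more justification to stand on its own.
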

	\begin{proof}
		First, if $(a,b,c)\coloneqq (t_{\ge 2}(\lambda),t_{1}(\lambda),t_0(\lambda)) \neq (1,h,n-h-2), (1,h-2,n-h), (0,h-1,n-h), (0,h,n-h-1), (0,h+1,n-h-2)$, then the assertion (1) follows from Theorem \ref{theorem5.46}, and the assertion (2) follows from Theorem \ref{theorem5.47}.
		
		Now, assume that $(a,b,c)= (1,h,n-h-2), (1,h-2,n-h), (0,h-1,n-h), (0,h,n-h-1), (0,h+1,n-h-2)$. Since we already know the Fourier transform of $\partial \mathrm{Den}_{L'^{\flat\circ}}^{n,h}(x)$ by Theorem \ref{theorem8.18} and Theorem \ref{theorem8.19}, we only need to know the Fourier transform of $\mathrm{Int}_{L'^{\flat \circ},\ScH}(x)$.
		
		When $(a,b,c)=(1,h,n-h-2)$, let us write $L'^{\flat}=L_2 \obot L_1 \obot L_0$, where the hermitian matrix of $L_2$, $L_1$, $L_0$ are $\pi^{\lambda}$ ($\lambda \geq 2$), $\pi I_h$, $I_{n-h-2}$, respectively. Then, by Proposition \ref{proposition2.6}, $\CZ(L'^{\flat})^{\circ}_{\ScH}$ in $\CN^{[h]}_{n}$ can be reduced to $\CZ(L_2 \obot L_1)^{\circ}_{\ScH}$ in $N^{[h]}_{h+2}$. Therefore, by Theorem \ref{thm: hori part}, we have
		\begin{equation}\label{eq9.4}
			\CZ(L_2 \obot L_1)^{\circ}_{\ScH}=\mathlarger{\sum}_{\substack{L_2 \obot L_1 \subset L_2 \oplus N \subset \pi^{-1}(L_2 \obot L_1)\\ N \simeq (\pi^{-1})^h}} \CZ(L_2)^{\circ}\cdot \CY(N)^{\circ}.
		\end{equation}
		
		By Proposition \ref{proposition2.6} again, $\CZ(L_2 \obot L_1)^{\circ}_{\ScH}$ can be reduced to $\CZ(L_2)^{\circ}_{\ScH}=\CZ(L_2)^{\circ}$ in $\CN^{[0]}_{2}$. Now, note that by \cite[Theorem 1.1]{KR1}, we know that the Kudla-Rapoport conjecture holds in the case of $\CN^{[0]}_2$, and hence
		\begin{equation*}
			\mathrm{Int}_{L_2^{\circ},\ScH}(x)=\mathrm{Int}_{L_2^{\circ}}(x)=\partial \mathrm{Den}_{L_2^{\circ}}^{2,0}(x).
		\end{equation*}
		By Theorem \ref{theorem8.18} and Theorem \ref{theorem8.19}, we have that
		\begin{equation*}
			\widehat{\Int}_{L_2^{\circ}}(x)=\widehat{\pDen}_{L_2^{\circ}}^{2,0}(x)=q^{-1}(q+1)\vol(\langle x \rangle^{\vee})(1-q^{-2})^{-1}.
		\end{equation*}
		Now, by \cite[Proposition 8.2]{ZhiyuZhang}, we know that via the embedding $\CN^{[0]}_{2} \hookrightarrow \CN^{[h]}_n$, the Fourier transform of $\chi(\CN^{[h]}_n, O_{\CZ(L_2)^{\circ}} \otimes^{\BL} O_{\CY(N_2)} \otimes^{\BL} O_{\CZ(L_0)}\otimes^{\BL}O_{\CZ(x)})$ is
		\begin{equation*}
			\dfrac{1}{q^h}\widehat{\Int}_{L_2^{\circ}}(x)=q^{-h-1}(q+1)\vol(\langle x \rangle^{\vee})(1-q^{-2})^{-1}.
		\end{equation*}
		
		Now, by Lemma \ref{lemma5.48}, the number of lattices $L_2 \oplus N$ in the sum in \eqref{eq9.4} is
		\begin{equation*}
			\begin{array}{l}
				\dfrac{\alphad(A_{(\lambda,(-1)^{h})},A_{(\lambda,1^{h})})/\alphad(A_{(\lambda,(-1)^{h})},A_{(\lambda,(-1)^{h})})}{\alphad(A_{((-1)^{h})},A_{(1^{h})})/\alphad(A_{((-1)^{h})},A_{((-1)^{h})})}\\
				=\dfrac{\alphad(A_{((\lambda+1),0^{h})},A_{((\lambda+1),2^{h})})/\alphad(A_{((\lambda+1),0^{h})},A_{((\lambda+1),0^{h})})}{\alphad(A_{(0^{h})},A_{(2^{h})})/\alphad(A_{(0^{h})},A_{(0^{h})})}=q^{2h}.
			\end{array}
		\end{equation*}
		
		This implies that
		\begin{equation*}\begin{aligned}
				\widehat{\Int}_{L'^{\flat \circ},\ScH}(x)&=q^{2h}q^{-h-1}(q+1)\vol(\langle x \rangle^{\vee})(1-q^{-2})^{-1}\\
				&=q^{h-1}(q+1)\vol(\langle x \rangle^{\vee})(1-q^{-2})^{-1}.
			\end{aligned}
		\end{equation*}
		Now, by Theorem \ref{theorem8.18}, Theorem \ref{theorem8.19}, we have that
		\begin{equation*}
			\widehat{\pDen}_{L'^{\flat\circ},\ScV}^{n,h}(x)=\widehat{\pDen}_{L'^{\flat\circ}}^{n,h}(x)-\widehat{\Int}_{L'^{\flat \circ},\ScH}(x)=\left\lbrace \begin{array}{ll}
				0 & \text{if }\val(( x,x )) \leq -2,\\
				-\dfrac{1}{q^h}D_{n-1,h-1}(a,b,c) & \text{if }\val(( x,x ))=-1.
			\end{array}\right.
		\end{equation*}
		This proves (1) and (2) when $(a,b,c)=(1,h,n-h-2)$.
		
		Assume that $(a,b,c)=(1,h-2,n-h)$ and let us write $L'^{\flat}=L_2 \obot L_1 \obot L_0$, where the hermitian matrix of $L_2$, $L_1$, $L_0$ are $\pi^{\lambda}$ ($\lambda \geq 2$), $\pi I_{h-2}$, $I_{n-h}$, respectively. Then, by Proposition \ref{proposition2.6}, $\CZ(L'^{\flat})^{\circ}_{\ScH}$ in $\CN^{[h]}_n$ can be reduced to $\CZ(L_2 \obot L_1)^{\circ}_{\ScH}$ in $\CN^{[h]}_{h}$. Also, in $\CN^{[h]}_{h}$, we have that $\CZ(w)=\CY(\pi^{-1}w)$ for any $w \in \BV$, and hence $\CZ(L_2 \obot L_1)^{\circ}_{\ScH}$ can be reduced to $\CZ(L_2)^{\circ}_{\ScH}=\CZ(L_2)^{\circ}$ in $\CN^{[2]}_2$. Now, note that by \cite[Theorem 1.1]{KR1}, we know that the Kudla-Rapoport conjecture holds in the case of $\CN^{[2]}_2$, and hence
		\begin{equation*}
			\mathrm{Int}_{L_2^{\circ},\ScH}(x)=\mathrm{Int}_{L_2^{\circ}}(x)=\partial \mathrm{Den}_{L_2^{\circ}}^{2,2}(x).
		\end{equation*}
		By Theorem \ref{theorem8.18} and Theorem \ref{theorem8.19}, we have that
		\begin{equation*}\begin{aligned}
				\widehat{\Int}_{L_2^{\circ}}(x)&=\widehat{\pDen}_{L_2^{\circ}}^{2,2}(x) 
				=\left\lbrace \begin{array}{ll}
					q^{-2}(q+1)\vol(\langle x \rangle^{\vee})(1-q^{-2})^{-1} & \text{if } \val(( x, x ))\leq -2,\\
					q^{-3}(q+1)(1-q^{-2})^{-1}-q^{-2} & \text{if } \val(( x, x))= -1.
				\end{array}\right.
			\end{aligned}
		\end{equation*}
		Now, by \cite[Proposition 8.2]{ZhiyuZhang}, we know that via the embedding $\CN^{[2]}_{2} \hookrightarrow \CN^{[h]}_n$, the Fourier transform of $\chi(\CN^{[h]}_n, O_{\CZ(L_2)^{\circ}} \otimes^{\BL} O_{\CY(\pi^{-1}L_1)} \otimes^{\BL} O_{\CZ(L_0)}\otimes^{\BL}O_{\CZ(x)})$ is
		\begin{equation*}\begin{aligned}
				\dfrac{1}{q^{h-2}}\widehat{\Int}_{L_2^{\circ}}(x)
				&=\left\lbrace \begin{array}{ll}
					q^{-h}(q+1)\vol(\langle x \rangle^{\vee})(1-q^{-2})^{-1} & \text{if } \val(( x, x ))\leq -2,\\
					q^{-h-1}(q+1)(1-q^{-2})^{-1}-q^{-h} & \text{if } \val(( x, x ))= -1.
				\end{array}\right.
			\end{aligned}
		\end{equation*}
		Now, by Theorem \ref{theorem8.18}, Theorem \ref{theorem8.19}, we have that
		\begin{equation*}
			\widehat{\pDen}_{L'^{\flat\circ},\ScV}^{n,h}(x)=\widehat{\pDen}_{L'^{\flat\circ}}^{n,h}(x)-\widehat{\Int}_{L'^{\flat \circ},\ScH}(x)=\left\lbrace \begin{array}{ll}
				0 & \text{if }\val(( x,x)) \leq -2,\\
				-\dfrac{1}{q^h}D_{n-1,h-1}(a,b,c)+\dfrac{1}{q^h} & \text{if }\val(( x,x ))=-1.
			\end{array}\right.
		\end{equation*}
		This proves (1) and (2) when $(a,b,c)=(1,h-2,n-h)$.
		
		Assume that $(a,b,c)=(0,h-1,n-h)$ and let us write $L'^{\flat}=L_2 \obot L_1 \obot L_0$, where the hermitian matrix of $L_2$, $L_1$, $L_0$ are $\pi I_{1}$. $\pi I_{h-2}$, $I_{n-h}$, respectively. Note that in this case, $\val((x,x)) \neq -1$ since $\val(L^{\flat} \obot \langle x \rangle)\equiv h+1 \pmod 2$. By Proposition \ref{proposition2.6}, $\CZ(L'^{\flat})^{\circ}_{\ScH}$ in $\CN^{[h]}_n$ can be reduced to $\CZ(L_2 \obot L_1)^{\circ}_{\ScH}$ in $\CN^{[h]}_{h}$. Also, in $\CN^{[h]}_{h}$, we have that $\CZ(w)=\CY(\pi^{-1}w)$ for any $w \in \BV$, and hence $\CZ(L_2 \obot L_1)^{\circ}_{\ScH}$ can be reduced to $\CZ(L_2)^{\circ}_{\ScH}=\CZ(L_2)^{\circ}$ in $\CN^{[2]}_2$. Now, note that by \cite[Theorem 1.1]{KR1}, we know that the Kudla-Rapoport conjecture holds in the case of $\CN^{[2]}_2$, and hence
		\begin{equation*}
			\mathrm{Int}_{L_2^{\circ},\ScH}(x)=\mathrm{Int}_{L_2^{\circ}}(x)=\partial \mathrm{Den}_{L_2^{\circ}}^{2,2}(x).
		\end{equation*}
		By Theorem \ref{theorem8.18}, we have that
		\begin{equation*}\begin{aligned}
				\widehat{\Int}_{L_2^{\circ}}(x)&=\widehat{\pDen}_{L_2^{\circ}}^{2,2}(x) 
				=q^{-1}\vol(\langle x \rangle^{\vee})(1-q^{-2})^{-1}.
			\end{aligned}
		\end{equation*}
		Now, by \cite[Proposition 8.2]{ZhiyuZhang}, we know that via the embedding $\CN^{[2]}_{2} \hookrightarrow \CN^{[h]}_n$, the Fourier transform of $\chi(\CN^{[h]}_n, O_{\CZ(L_2)^{\circ}} \otimes^{\BL} O_{\CY(\pi^{-1}L_1)} \otimes^{\BL} O_{\CZ(L_0)}\otimes^{\BL}O_{\CZ(x)})$is
		\begin{equation*}\begin{array}{l}
				\dfrac{1}{q^{h-2}}\widehat{\Int}_{L_2^{\circ}}(x)
				=q^{-h+1}\vol(\langle x \rangle^{\vee})(1-q^{-2})^{-1}.
			\end{array}
		\end{equation*}
		By Theorem \ref{theorem8.18} we have that
		\begin{equation*}
			\widehat{\pDen}_{L'^{\flat\circ},\ScV}^{n,h}(x)=\widehat{\pDen}_{L'^{\flat\circ}}^{n,h}(x)-\widehat{\Int}_{L'^{\flat \circ},\ScH}(x)=0,
		\end{equation*}
		This proves (1) when $(a,b,c)=(0,h-1,n-h)$.
		
		Assume that $(a,b,c)=(0,h,n-h-1)$ and let us write $L'^{\flat}=L_1 \obot L_0$, where the hermitian matrix of $L_1$, $L_0$ are $\pi I_{h}$, $I_{n-h-1}$, respectively. By Proposition \ref{proposition2.6}, $\CZ(L'^{\flat})^{\circ}_{\ScH}$ in $\CN^{[h]}_n$ can be reduced to $\CZ(L_1)^{\circ}_{\ScH}$ in $\CN^{[h]}_{h+1}$. Therefore, by Theorem \ref{thm: hori part}, we have that $\CZ(L_1)^{\circ}_{\ScH}=\CY(\pi^{-1}L_1)$ and hence $\langle \CY(\pi^{-1}L_1), \CZ(x)\rangle$ is $\langle \CZ(x) \rangle$ in $\CN^{[0]}_1$. Now, By \cite[Theorem 1.1]{KR1} and Proposition \ref{proposition2.6}, we know that $\langle \CZ(x)\rangle=\partial \mathrm{Den}_{\emptyset}^{1,0}(x)$.
		By Theorem \ref{theorem8.18} and Theorem \ref{theorem8.19}, we have that
		\begin{equation*}
			\widehat{\pDen}_{\emptyset}^{1,0}(x)=\vol(\langle x \rangle^{\vee})(1-q^{-2})^{-1}.
		\end{equation*}
		
		Now, by \cite[Proposition 8.2]{ZhiyuZhang}, we know that via the embedding $\CN^{[0]}_{1} \hookrightarrow \CN^{[h]}_n$, the Fourier transform of $\chi(\CN^{[h]}_n, O_{\CY(\pi^{-1}L_1)} \otimes^{\BL} O_{\CZ(L_0)}\otimes^{\BL}O_{\CZ(x)})$is
		\begin{equation*}
			\dfrac{1}{q^h}\widehat{\pDen}_{\emptyset}^{1,0}(x)=q^{-h}\vol(\langle x \rangle^{\vee})(1-q^{-2})^{-1}.
		\end{equation*}
		
		This implies that
		\begin{equation*}\begin{aligned}
				\widehat{\Int}_{L'^{\flat \circ},\ScH}(x)&=q^{-h}\vol(\langle x \rangle^{\vee})(1-q^{-2})^{-1}
			\end{aligned}
		\end{equation*}
		Now, by Theorem \ref{theorem8.18}, Theorem \ref{theorem8.19}, we have that
		\begin{equation*}
			\widehat{\pDen}_{L'^{\flat\circ},\ScV}^{n,h}(x)=\widehat{\pDen}_{L'^{\flat\circ}}^{n,h}(x)-\widehat{\Int}_{L'^{\flat \circ},\ScH}(x)=\left\lbrace \begin{array}{ll}
				0 & \text{if }\val(( x,x )) \leq -2,\\
				-\dfrac{1}{q^h}D_{n-1,h-1}(a,b,c) & \text{if }\val(( x,x ))=-1.
			\end{array}\right.
		\end{equation*}
		This proves (1) and (2) when $(a,b,c)=(0,h,n-h-1)$.

		When $(a,b,c)=(0,h+1,n-h-2)$, let us write $L'^{\flat}=L_2 \obot L_1 \obot L_0$, where the hermitian matrix of $L_2$, $L_1$, $L_0$ are $\pi^{1}I_1$, $\pi I_h$, $I_{n-h-2}$, respectively.  Note that in this case, $\val((x,x)) \neq -1$ since $\val(L^{\flat} \obot \langle x \rangle)\equiv h+1 \pmod 2$. By Proposition \ref{proposition2.6}, $\CZ(L'^{\flat})^{\circ}_{\ScH}$ in $\CN^{[h]}_{n}$ can be reduced to $\CZ(L_2 \obot L_1)^{\circ}_{\ScH}$ in $N^{[h]}_{h+2}$. Therefore, by Theorem \ref{thm: hori part}, we have
		\begin{equation}\label{eq9.5}
			\CZ(L_2 \obot L_1)^{\circ}_{\ScH}=\mathlarger{\sum}_{\substack{L_2 \obot L_1 \subset L_2 \oplus N \subset \pi^{-1}(L_2 \obot L_1)\\ N \simeq (\pi^{-1})^h}} \CZ(L_2)^{\circ}\cdot \CY(N)^{\circ}.
		\end{equation}
		
		By Proposition \ref{proposition2.6} again, $\CZ(L_2 \obot L_1)^{\circ}_{\ScH}$ can be reduced to $\CZ(L_2)^{\circ}_{\ScH}=\CZ(L_2)^{\circ}$ in $\CN^{[0]}_{2}$. Now, note that by \cite[Theorem 1.1]{KR1}, we know that the Kudla-Rapoport conjecture holds in the case of $\CN^{[0]}_2$, and hence
		\begin{equation*}
			\mathrm{Int}_{L_2^{\circ},\ScH}(x)=\mathrm{Int}_{L_2^{\circ}}(x)=\partial \mathrm{Den}_{L_2^{\circ}}^{2,0}(x).
		\end{equation*}
		By Theorem \ref{theorem8.18} and Theorem \ref{theorem8.19}, we have that
		\begin{equation*}
			\widehat{\Int}_{L_2^{\circ}}(x)=\widehat{\pDen}_{L_2^{\circ}}^{2,0}(x)=q^{-1}(q+1)\vol(\langle x \rangle^{\vee})(1-q^{-2})^{-1}.
		\end{equation*}
		Now, by \cite[Proposition 8.2]{ZhiyuZhang}, we know that via the embedding $\CN^{[0]}_{2} \hookrightarrow \CN^{[h]}_n$, the Fourier transform of $\chi(\CN^{[h]}_n, O_{\CZ(L_2)^{\circ}} \otimes^{\BL} O_{\CY(N_2)} \otimes^{\BL} O_{\CZ(L_0)}\otimes^{\BL}O_{\CZ(x)})$is
		\begin{equation*}
			\dfrac{1}{q^h}\widehat{\Int}_{L_2^{\circ}}(x)=q^{-h-1}(q+1)\vol(\langle x \rangle^{\vee})(1-q^{-2})^{-1}.
		\end{equation*}
		
		Now, by Lemma \ref{lemma5.48}, the number of lattices $L_2 \oplus N$ in the sum in \eqref{eq9.5} is
		\begin{equation*}
			\begin{array}{l}
				\dfrac{\alphad(A_{(1,(-1)^{h})},A_{(1,1^{h})})/\alphad(A_{(1,(-1)^{h})},A_{(1,(-1)^{h})})}{\alphad(A_{((-1)^{h})},A_{(1^{h})})/\alphad(A_{((-1)^{h})},A_{((-1)^{h})})}\\
				=\dfrac{\alphad(A_{(2,0^{h})},A_{(2,2^{h})})/\alphad(A_{(2,0^{h})},A_{(2,0^{h})})}{\alphad(A_{(0^{h})},A_{(2^{h})})/\alphad(A_{(0^{h})},A_{(0^{h})})}=q^{2h}\dfrac{1-(-q)^{-h-1}}{1-(-q)^{-1}}.
			\end{array}
		\end{equation*}
		
		This implies that
		\begin{align*}
			\widehat{\Int}_{L'^{\flat \circ},\ScH}(x)&=q^{2h}\dfrac{1-(-q)^{-h-1}}{1-(-q)^{-1}}q^{-h-1}(q+1)\vol(\langle x \rangle^{\vee})(1-q^{-2})^{-1}\\
			&=q^{h}(1-(-q)^{-h-1})\vol(\langle x \rangle^{\vee})(1-q^{-2})^{-1}.
		\end{align*}
		Now, by Theorem \ref{theorem8.18}, Theorem \ref{theorem8.19}, we have that
		\begin{equation*}
			\widehat{\pDen}_{L'^{\flat\circ},\ScV}^{n,h}(x)=\widehat{\pDen}_{L'^{\flat\circ}}^{n,h}(x)-\widehat{\Int}_{L'^{\flat \circ},\ScH}(x)=0.
		\end{equation*}
		This proves (1) when $(a,b,c)=(0,h+1,n-h-2)$.
		
		This finishes the proof of the theorem.
	\end{proof}
	
	\begin{proposition}\label{proposition9.1.3} (cf. \cite[Proposition 7.3.4]{LZ}, \cite[Proposition 2.22]{LL2})
		Assume that $L^{\flat}$ be an $O_F$-lattice of rank $n-1$ in $\BV$. Then, $\partial \mathrm{Den}_{L'^{\flat},\ScV}^{n,h}(x)$ extends uniquely to a compactly supported locally constant function on $\BV$ (we still denote it by $\partial \mathrm{Den}_{L'^{\flat},\ScV}^{n,h}(x)$).
	\end{proposition}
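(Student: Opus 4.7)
The plan is to follow \cite[Proposition 7.3.4]{LZ}, with the refined Fourier transform formulas of Theorems \ref{theorem5.46}--\ref{theorem9.1.1} playing the role of the cleaner vanishing available in the hyperspecial case. First, I would use the finite decomposition
\begin{equation*}
\partial \mathrm{Den}_{L^{\flat},\ScV}^{n,h}(x) \;=\; \sum_{L^{\flat}\subset L'^{\flat}\subset (L'^{\flat})^{\vee}\subset L^{\flat}_F} \partial \mathrm{Den}_{L'^{\flat\circ},\ScV}^{n,h}(x)
\end{equation*}
to reduce the claim to the extension statement for each primitive piece $\partial \mathrm{Den}_{L'^{\flat\circ},\ScV}^{n,h}$, noting that only finitely many integral $L'^{\flat}$ lie between $L^{\flat}$ and $(L^{\flat})^{\vee}$.

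For a fixed $L'^{\flat}$, on $\BV\setminus L^{\flat}_F$ the raw density $\partial\mathrm{Den}_{L'^{\flat\circ}}^{n,h}(x)=\sum D_{n,h}(L')\mathbf{1}_{L'}(x)$ is locally finite (each $x$ lies in only finitely many integral $L'\supset L'^{\flat}+\langle x\rangle$, since such $L'$ sits in a finite sandwich between that sum and its dual), hence locally constant and compactly supported there. The horizontal intersection $\mathrm{Int}_{L'^{\flat\circ},\ScH}$ is described by Theorem \ref{thm: hori part} as an explicit combination of quasi-canonical lifting divisors inherited from $\CN^{[0]}_{2}$ or $\CN^{[2]}_{2}$; by \cite[Theorem 1.1]{KR1} and the reductions of Proposition \ref{proposition2.6}, this contribution is already a compactly supported locally constant function on $\BV$. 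Subtracting gives the same two properties for $\partial\mathrm{Den}_{L'^{\flat\circ},\ScV}^{n,h}$ on $\BV\setminus L^{\flat}_F$.

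The main step is to extend each primitive vertical piece locally constantly across $L^{\flat}_F$, which I would argue via Fourier inversion. Being compactly supported on $\BV\setminus L^{\flat}_F$ (as a bounded finite sum of characteristic functions minus a compactly supported horizontal term), its Fourier transform is a well-defined locally constant function; the computations of Theorems \ref{theorem5.46}, \ref{theorem5.47}, \ref{theorem8.18}, \ref{theorem8.19}, and \ref{theorem9.1.1} show that this Fourier transform vanishes on $\{\val((x,x))\le -2\}$ and equals $-q^{-h}D_{n-1,h-1}(a,b,c)$, with explicit horizontal corrections in the degenerate cases, on $\{\val((x,x))=-1\}$. Together with a direct verification that the Fourier transform has compact support in the remaining range $\{\val((x,x))\ge 0\}$, this establishes compact support of $\widehat{\partial\mathrm{Den}_{L'^{\flat\circ},\ScV}^{n,h}}$ globally; by $p$-adic Fourier duality the original function is then forced to be locally constant on all of $\BV$, and uniqueness of the extension is immediate from density of $\BV\setminus L^{\flat}_F$.

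The hard part will be pinning down the Fourier transform in the range $\{\val((x,x))\ge 0\}$ and ensuring it is compactly supported: the non-vanishing at $\val((x,x))=-1$, a genuinely new feature of the bad reduction setting absent in \cite{LZ}, must be matched inductively via the identities derived in Section \ref{sec: ind formula} against the corresponding density on $\CN^{[h-1]}_{n-1}$. Once the extension is established for each primitive piece, summing over the finite family of $L'^{\flat}$ yields the required compactly supported locally constant extension of $\partial\mathrm{Den}_{L^{\flat},\ScV}^{n,h}$.
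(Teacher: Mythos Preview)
Your Fourier-inversion strategy has a genuine gap. The Fourier transform computations you cite (Theorems \ref{theorem5.46}--\ref{theorem9.1.1}) are all stated and proved only for $x\perp L'^{\flat}$, so they do not give vanishing of $\widehat{\partial\mathrm{Den}}_{L'^{\flat\circ},\ScV}^{n,h}$ on the full region $\{\val((x,x))\le -2\}\subset\BV$. More seriously, the step you flag as ``the hard part'' --- verifying compact support of the Fourier transform on $\{\val\ge 0\}$ --- is equivalent to the proposition itself: since the original function is already supported in a fixed lattice, its Fourier transform is automatically locally constant, and hence lies in $C_c^{\infty}(\BV)$ if and only if the original function does. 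Your plan thus defers the entire content to an unaddressed verification. (The description ``bounded finite sum of characteristic functions'' is also inaccurate: the sum over integral $L'\supset L'^{\flat}$ is infinite, and boundedness as $x^{\perp}\to 0$ is precisely what is at issue.)

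The paper explicitly notes that the \cite{LZ} route is blocked here because the functional equation \cite[(3.2.0.2)]{LZ} for the density polynomial is not available, and instead follows \cite[Proposition 2.22]{LL2}. The argument is a direct stabilization: for each $y\in L^{\flat}_F/L^{\flat}$ one shows $\partial\mathrm{Den}_{L'^{\flat},\ScV}^{n,h}(y+\pi^{\delta}f_1)=\partial\mathrm{Den}_{L'^{\flat},\ScV}^{n,h}(y+\pi^{\delta-1}f_2)$ for $\delta>a_{n-1}+2$, by parametrizing lattices $L\supset L^{\flat}$ via triples $(L'^{\flat},\delta,\epsilon)$ and telescoping the difference of sums down to the single identity
\[
\sum_{L\subset L^{\vee},\ L\cap L^{\flat}_F=L'^{\flat},\ \delta_L=2}D_{n,h}(L)=0.
\]
That identity is extracted from Theorem \ref{theorem9.1.1}(1) by evaluating the term-by-term Fourier transform at generators of $\pi^{-2}((L^{\flat}_F)^{\perp})^{\ge 0}$ and $\pi^{-3}((L^{\flat}_F)^{\perp})^{\ge 0}$ and differencing --- so the Fourier input is used at exactly two perpendicular points, not globally.
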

	\begin{proof}
		Even though our situation is more similar to \cite[Proposition 7.3.4]{LZ}, we do not have a functional equation like \cite[(3.2.0.2)]{LZ} yet. Therefore, let us follow the proof of \cite[Proposition 2.22]{LL2}. Basically, the proof is very similar to the proof of Theorem \ref{theorem9.1.1} (1).
		
		Note that if $L^{\flat}$ is not integral, then $\partial \mathrm{Den}_{L'^{\flat},\ScV}^{n,h}(x)=0$. Therefore, we can assume that $L^{\flat}$ is integral. Now, it suffices to show that for every $y \in L^{\flat}_F/ L^{\flat}$, there exists an integer $\delta(y)>0$ such that $\partial \mathrm{Den}_{L'^{\flat},\ScV}^{n,h}(y+x)$ is constant for $x \in \pi^{\delta(y)}((L_F^{\flat})^{\perp})^{\geq 0}\backslash \lbrace 0 \rbrace$, where $\BV=L^{\flat}_F \obot (L_F^{\flat})^{\perp}$ and $((L_F^{\flat})^{\perp})^{\geq 0}=\lbrace (L_F^{\flat})^{\perp} \mid (x,x) \in O_F \rbrace$. If $L^{\flat}+\langle y \rangle$ is not integral, then there exists $\delta(y)$ such that $L^{\flat}+\langle y+x\rangle$ is not integral for  $x \in \pi^{\delta(y)}((L_F^{\flat})^{\perp})^{\geq 0}\backslash \lbrace 0 \rbrace$, and hence $\partial \mathrm{Den}_{L'^{\flat},\ScV}^{n,h}(y+x)=0$. Therefore, we can assume that $L^{\flat}+\langle y\rangle$ is integral. Let $(a_1,a_2,\dots,a_{n-1})$ ($a_1 \leq a_2 \dots \leq a_{n-1}$) be the fundamental invariants of $L^{\flat}$ and let $\delta(y)=a_{n-1}+2$. Then, it suffices to show that for a fixed pair $(f_1,f_2)$ of generators of $((L_F^{\flat})^{\perp})^{\geq 0}$, we have
		\begin{equation*}
			\partial \mathrm{Den}_{L'^{\flat},\ScV}^{n,h}(y+\pi^{\delta}f_1)-\partial \mathrm{Den}_{L'^{\flat},\ScV}^{n,h}(y+\pi^{\delta-1}f_2)=0,
		\end{equation*}
		for $\delta > \delta(y)=a_{n-1}+2$.
		
		Now, let us introduce some notations from \cite[Lemma 2.24]{LL2}. Let $\FL$ be the set of $O_F$-lattices in $\BV$ containing $L^{\flat}$, and let $\FC$ be the set of triples $(L'^{\flat},\delta,\epsilon)$ such that $L'^{\flat}$ is an $O_F$-lattice of $L^{\flat}_F$ containing $L^{\flat}$, $\delta \in \BZ$, and $\epsilon:\pi^{\delta}((L_F^{\flat})^{\perp})^{\geq 0} \rightarrow L'^{\flat} \otimes_{O_F} F/O_F$ is an $O_F$-linear map. Then, consider the map $\theta:\FL \rightarrow \FC$ sending $L$ to $(L\cap L^{\flat}_F,\delta_L,\epsilon_L)$ where $\delta_L$ is the maximal integer such that the image of $L$ under the projection $\text{Pr}_{\perp}:\BV \rightarrow (L_F^{\flat})^{\perp}$ is contained in $\pi^{\delta_L}((L_F^{\flat})^{\perp})^{\geq 0}$, and $\epsilon_L$ is the extension map $\pi^{\delta_L}((L_F^{\flat})^{\perp})^{\geq 0} \rightarrow L'^{\flat} \otimes_{O_F} F/O_F$ induced by the short exact sequence
		\begin{equation*}
			0 \rightarrow L \cap L_F^{\flat} \rightarrow L \rightarrow \pi^{\delta_L}((L_F^{\flat})^{\perp})^{\geq 0} \rightarrow 0.
		\end{equation*}
		Then, as in the proof of \cite[Lemma 2.24 (1)]{LL2}, $\theta$ is a bijection and its inverse is given by sending $(L'^{\flat},\delta, \epsilon)$ to the $O_F$-lattice $L$ generated by $L'^{\flat}$ and $\epsilon(x)+x$ for every $x \in \pi^{\delta}((L_F^{\flat})^{\perp})^{\geq 0}$.

		Now, for every $\delta'\in\BZ$, we define the following sets
		\begin{equation*}
			\begin{array}{l}
				\FL_1^{\delta'}\coloneqq \lbrace L \in \FL \mid L \subset L^{\vee}, \delta_L=\delta', y+\pi^{\delta}f_1 \in L\rbrace,\\
				\FL_2^{\delta'}\coloneqq \lbrace L \in \FL \mid L \subset L^{\vee}, \delta_L=\delta', y+\pi^{\delta-1}f_2 \in L\rbrace,\\
			\end{array}
		\end{equation*}
		and for $L^{\flat} \subset L'^{\flat}$, we define
		\begin{equation*}
			\begin{array}{l}
				\FL_{1,L'^{\flat}}^{\delta'}\coloneqq \lbrace L \in \FL \mid L \subset L^{\vee}, \delta_L=\delta', y+\pi^{\delta}f_1 \in L , L \cap L^{\flat}_F=L'^{\flat}\rbrace,\\
				\FL_{2,L'^{\flat}}^{\delta'}\coloneqq \lbrace L \in \FL \mid L \subset L^{\vee}, \delta_L=\delta', y+\pi^{\delta-1}f_2 \in L, L \cap L^{\flat}_F=L'^{\flat}\rbrace.\\
			\end{array}
		\end{equation*}
		Since $\partial \mathrm{Den}_{L^{\flat},\ScV}^{n,h}(x)$ is a certain sum of $\partial \mathrm{Den}_{L'^{\flat\circ},\ScV}^{n,h}(x)$, it suffices to show that
		\begin{equation*}
			\partial \mathrm{Den}_{L'^{\flat\circ},\ScV}^{n,h}(y+\pi^{\delta}f_1)-\partial \mathrm{Den}_{L'^{\flat\circ},\ScV}^{n,h}(y+\pi^{\delta-1}f_2)=0,
		\end{equation*}
		for $\delta > \delta(y)=a_{n-1}+2$ and $L^{\flat} \subset L'^{\flat}$.
		
		By definition, we have that
		\begin{equation*}
			\begin{array}{l}
				\partial \mathrm{Den}_{L'^{\flat\circ},\ScV}^{n,h}(y+\pi^{\delta}f_1)=\mathlarger{\sum}_{\delta' \leq \delta} \quad \mathlarger{\sum}_{L \in \FL_{1,L'^{\flat}}^{\delta'}} D_{n,h}(L),\\
				
				\partial \mathrm{Den}_{L'^{\flat\circ},\ScV}^{n,h}(y+\pi^{\delta-1}f_2)=\mathlarger{\sum}_{\delta' \leq \delta-1} \quad \mathlarger{\sum}_{L \in \FL_{2,L'^{\flat}}^{\delta'}} D_{n,h}(L).
				
			\end{array}
		\end{equation*}
		Therefore, it suffices to show that
		\begin{equation}\label{eq9.1.6.1}
			\mathlarger{\sum}_{\delta' \leq \delta} \quad \mathlarger{\sum}_{L \in \FL_{1,L'^{\flat}}^{\delta'}} D_{n,h}(L)-\mathlarger{\sum}_{\delta' \leq \delta-1} \quad \mathlarger{\sum}_{L \in \FL_{2,L'^{\flat}}^{\delta'}} D_{n,h}(L)=0,
		\end{equation}
		for all $\delta>\delta(y)=a_{n-1}+2$ and $L^{\flat} \subset L'^{\flat}$.
		
		Since $\delta > a_{n-1}+2$, we have that for $\delta' \leq 2$, we have
		\begin{equation*}
			\FL_{1,L'^{\flat}}^{\delta'}=\FL_{2,L'^{\flat}}^{\delta'}=\lbrace L \in \FL \mid L \subset L^{\vee}, \delta_L=\delta', y \in L , L \cap L^{\flat}_F=L'^{\flat}\rbrace.
		\end{equation*}
		
		Therefore, \eqref{eq9.1.6.1} equals to
		\begin{equation*}
			\mathlarger{\sum}_{ \delta'=2}^{\delta} \quad \mathlarger{\sum}_{L \in \FL_{1,L'^{\flat}}^{\delta'}} D_{n,h}(L)-\mathlarger{\sum}_{\delta'=2}^{\delta-1} \quad \mathlarger{\sum}_{L \in \FL_{2,L'^{\flat}}^{\delta'}} D_{n,h}(L)=0.
		\end{equation*}
		
		Also, the automorphism of $\FC$ sending $(L'^{\flat},\delta',\epsilon)$ to $(L'^{\flat},\delta'-1,\epsilon(\pi \alpha \cdot))$, where $\alpha \in O_F^{\times}$, $f_1=\alpha f_2$, induces a bijection from $\FL_{1,L'^{\flat}}^{\delta'}$ to $\FL_{2,L'^{\flat}}^{\delta'-1}$. Therefore, it suffices to show that
		\begin{equation}\label{eq9.1.7.1}
			\mathlarger{\sum}_{L \in \FL_{1,L'^{\flat}}^{2}} D_{n,h}(L)=1_{L'^{\flat}}(y) \mathlarger{\sum}_{L \subset L^{\vee}, L \cap L_{F}^{\flat}=L'^{\flat}, \delta_L=2} D_{n,h}(L)=0.
		\end{equation}
		
		Note that by Theorem \ref{theorem9.1.1} (1), we have that for $\val((x,x)) \leq -2$, 
		\begin{align*}
			\widehat{\pDen}_{L'^{\flat\circ}}^{n,h}(x)	&=\mathlarger{\sum}_{L'^{\flat} \subset L \subset L^{\vee}, L \cap L^{\flat}_F=L'^{\flat}, x \in L^{\vee}} D_{n,h}(L)\vol(L)\\
			&=\vol(L'^{\flat})\mathlarger{\sum}_{L'^{\flat} \subset L \subset L^{\vee}, L \cap L^{\flat}_F=L'^{\flat}, x \in L^{\vee}} D_{n,h}(L)\vol(\text{Pr}_{\perp}(L))\\
			&=0.
		\end{align*}
		Now, choose $x$ to be generators of $\pi^{-2}((L_F^{\flat})^{\perp})^{\geq 0}$ and $\pi^{-3}((L_F^{\flat})^{\perp})^{\geq 0}$ and then take the difference. Then, we have that
		\begin{equation*}
			\mathlarger{\sum}_{L \subset L^{\vee}, L \cap L_{F}^{\flat}=L'^{\flat}, \delta_L=2} D_{n,h}(L)=0.
		\end{equation*}
		This shows that \eqref{eq9.1.7.1} holds which finishes the proof of the proposition.
	\end{proof}

	\begin{theorem}\label{theorem9.1.3}
		Assume that Conjecture \ref{conjecture9.2.1} above holds for $\CN^{[h]}_n$ and Conjecture \ref{conjecture5.5} holds for $\CZ$-cycles in $\CN^{[h-1]}_{n-1}$. Then, Conjecture \ref{conjecture5.5} holds for $\CZ$-cycles in $\CN^{[h]}_{n}$.
	\end{theorem}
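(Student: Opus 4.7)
The plan is to adapt the strategy of \cite{LZ} (comparison of horizontal and vertical parts plus Fourier analysis) to the bad reduction setting $\cN_n^{[h]}$, using the new local modularity provided by Conjecture \ref{conjecture9.2.1}. Fix a rank $n-1$ lattice $L^\flat \subset \bV$ and the two functions on $\bV \setminus L^\flat_F$, namely $\mathrm{Int}_{L^\flat}(x) = \mathrm{Int}_{L^\flat,\sH}(x)+\mathrm{Int}_{L^\flat,\sV}(x)$ and $\pDen_{L^\flat}^{n,h}(x)=\pDen_{L^\flat,\sH}^{n,h}(x)+\pDen_{L^\flat,\sV}^{n,h}(x)$. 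We must show they agree.

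First I would handle the horizontal parts. By Theorem \ref{thm: hori part}, $\cZ(L^\flat)_{\sH}$ is a sum of primitive pieces indexed by lattices in $H(\bV)$, and via the embeddings of Propositions \ref{proposition2.5}--\ref{proposition2.6} each such piece reduces to a horizontal cycle on $\cN_2^{[0]}$ or $\cN_2^{[2]}$, where Kudla--Rapoport is known by \cite{KR1}. The computation in the proof of Theorem \ref{theorem9.1.1} already records the resulting Fourier transforms on the analytic side and shows they match, giving $\mathrm{Int}_{L^\flat,\sH}(x)=\pDen_{L^\flat,\sH}^{n,h}(x)$ identically.

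For the vertical parts, the goal is to show $\widehat{\mathrm{Int}}_{L^\flat,\sV}(x)=\widehat{\pDen}_{L^\flat,\sV}^{n,h}(x)$ on the locus $\val(x,x)<0$, after which the uncertainty principle argument of \cite[\S 7]{LZ} upgrades the Fourier-side equality to pointwise equality on all of $\bV$ (together with the horizontal identity). Conjecture \ref{conjecture9.2.1} lets me write $\cZ(L^\flat)_\sV$, as a $1$-cycle class tested against $\cZ(x)$, as a $\bQ$-linear combination of Deligne--Lusztig curves $C_i\subset \cN_3^{[0]}$ and projective lines $D_i\subset\cN_2^{[1]}$ embedded in $\cN_n^{[h]}$. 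For each such special curve, the cited Tate-type local modularity (\cite[Lem.~6.3.1]{LZ} and \cite[Thm.~8.1]{ZhiyuZhang}) yields
\begin{equation*}
\widehat{\mathrm{Int}}_{C}(\cZ(x)) = -q^{-h}\,\mathrm{Int}_{C}(\cY(x)),\qquad \widehat{\mathrm{Int}}_{D}(\cZ(x)) = -q^{-h}\,\mathrm{Int}_{D}(\cY(x)),
\end{equation*}
so by linearity
\begin{equation*}
\widehat{\mathrm{Int}}_{L^\flat,\sV}(\cZ(x)) = -q^{-h}\,\mathrm{Int}_{L^\flat,\sV}(\cY(x)).
\end{equation*}
Next, by Proposition \ref{proposition2.5}, any $\cY(x)$ with $\val(x,x)=-1$ identifies with $\cN_{n-1}^{[h-1]}$, so $\mathrm{Int}_{L^\flat,\sV}(\cY(x))$ becomes a $\cZ$-cycle intersection in $\cN_{n-1}^{[h-1]}$ (after the cancellation of Proposition \ref{proposition2.6}), whence the inductive hypothesis (Conjecture \ref{conjecture5.5} for $\cZ$-cycles in $\cN_{n-1}^{[h-1]}$) evaluates it as $D_{n-1,h-1}(a,b,c)$ with $(a,b,c)=(t_{\geq 2}(\lambda),t_1(\lambda),t_0(\lambda))$ for the relevant $L'^\flat$. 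A parallel verification treats $\val(x,x)\le -2$, where the $\cY$-side vanishes after splitting off $x$. Comparing with Theorem \ref{theorem9.1.1} (which computes $\widehat{\pDen}_{L'^{\flat\circ},\sV}^{n,h}(x)=-q^{-h}D_{n-1,h-1}(a,b,c)$ when $\val(x,x)=-1$ and zero when $\val(x,x)\le -2$) shows the two Fourier transforms coincide on the negative valuation locus.

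The main obstacle I expect is bookkeeping at the boundary between horizontal and vertical parts, namely the five exceptional types in $H(\bV)$ appearing in Theorem \ref{theorem9.1.1}. For those $L'^\flat$ the horizontal contribution is nontrivial, and a clean separation of $\cZ(L^\flat)_\sV$ into special curves (to apply Conjecture \ref{conjecture9.2.1}) must be reconciled with the $\cN_2^{[0]}$/$\cN_2^{[2]}$ quasi-canonical lifting computations so that the analytic correction terms obtained in Theorem \ref{theorem8.18}--\ref{theorem8.19} line up with the geometric correction on the right-hand side. Once this reconciliation is in place and the Fourier identity is established for all $\val(x,x)<0$, Proposition \ref{proposition9.1.3} (which extends $\pDen_{L^\flat,\sV}^{n,h}$ to a compactly supported locally constant function) combined with the uncertainty principle forces $\mathrm{Int}_{L^\flat,\sV}(x)-\pDen_{L^\flat,\sV}^{n,h}(x)$ to vanish, completing the induction.
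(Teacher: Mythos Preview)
Your overall strategy matches the paper's proof closely, but there is one genuine gap in the final step. You write that once $\widehat{\phi}(x)=0$ for all $x$ with $\val((x,x))<0$, ``Proposition~\ref{proposition9.1.3} \dots\ combined with the uncertainty principle forces $\phi$ to vanish.'' This is not enough. Proposition~\ref{proposition9.1.3} only tells you $\phi\in C_c^\infty(\bV)$; it does not pin down the support to any particular lattice. Knowing merely that $\widehat{\phi}$ vanishes on $\{\val<0\}$ does not force $\phi=0$ for an arbitrary compactly supported $\phi$.

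The paper fills this gap by running an \emph{induction on $\val(L^\flat)$} (following \cite[\S 8.2]{LZ}). For a fixed $L^\flat$ one sets $M=L^\flat\obot\langle u\rangle$ with $\val((u,u))=a_{n-1}$ or $a_{n-1}+1$ so that $\val(M)\equiv h+1\pmod 2$. If $x\notin M$, then by \cite[Lemma~8.2.2]{LZ} there is a basis change producing $L''^{\flat}$ with $\val(L''^{\flat})<\val(L^\flat)$ and $\mathrm{Int}_{L^\flat}(x)=\mathrm{Int}_{L''^{\flat}}(x')$, $\pDen_{L^\flat}^{n,h}(x)=\pDen_{L''^{\flat}}^{n,h}(x')$; the inductive hypothesis then gives $\phi(x)=0$. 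Thus $\mathrm{supp}(\phi)\subset M$, and since $\phi$ is $L^\flat$-invariant one may write $\phi=1_{L^\flat}\otimes\phi_\perp$ with $\phi_\perp$ supported on $\langle u\rangle$. Now $\widehat{\phi_\perp}$ is invariant under translation by $u^\vee=\pi^{-a_n}u$, which has $\val((u^\vee,u^\vee))<0$, so the vanishing of $\widehat{\phi}$ on $\{x\perp L^\flat,\ \val((x,x))<0\}$ forces $\widehat{\phi_\perp}\equiv 0$ and hence $\phi=0$. Your phrase ``completing the induction'' at the end suggests you may have had this in mind, but you never set it up; without it, the uncertainty step does not go through.

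One smaller point: in the $\val((x,x))=-1$ step you should apply the inductive hypothesis to $\chi(\cN_{n-1}^{[h-1]},{}^{\BL}\cZ(L'^{\flat})^\circ)$ and then subtract the horizontal piece $\chi(\cN_{n-1}^{[h-1]},{}^{\BL}\cZ(L'^{\flat})^\circ_{\sH})$ separately, exactly as the paper does; this is where the extra $+q^{-h}$ in the case $(a,b,c)=(1,h-2,n-h)$ appears and matches Theorem~\ref{theorem9.1.1}(2). You allude to this in your ``main obstacle'' paragraph, but it is worth making explicit that the correction arises on the geometric side from $\cZ(L'^{\flat})^\circ_{\sH}$ surviving in $\cN_{n-1}^{[h-1]}$ only for that one type.
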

	\begin{proof}
		As in \cite[section 8.2]{LZ}, we will prove this inductively. Let $L^{\flat} \subset \BV$ be a rank $n-1$ lattice such that $L^{\flat}_F$ is non-degenerate, and let $x \in \BV \backslash L^{\flat}_F$. By definition of $\partial \mathrm{Den}_{L^{\flat},\ScV}^{n,h}(x)$ and $\partial \mathrm{Den}_{L^{\flat\circ},\ScV}^{n,h}(x)$, it suffices to show that
		\begin{equation}\label{eq9.1.6}
			\partial \mathrm{Den}_{L^{\flat},\ScV}^{n,h}(x)=\mathrm{Int}_{L^{\flat},\ScV}(x),
		\end{equation}
		or equivalently
		\begin{equation}\label{eq9.1.7}
			\partial \mathrm{Den}_{L'^{\flat\circ},\ScV}^{n,h}(x)=\mathrm{Int}_{L'^{\flat\circ},\ScV}(x),
		\end{equation}
		for all $L^{\flat} \subset L'^{\flat} \subset L'^{\flat\vee}$.
		
		Now, assume that \eqref{eq9.1.6} holds for $L''^{\flat}$ such that $\val(L''^{\flat}) < \val(L^{\flat})$.
		
		Let $(a_1,a_2,\dots,a_{n-1})$ ($0 \leq a_1 \leq \dots \leq a_{n-1}$) be the fundamental invariants of $L^{\flat}$. Let $M=M(L^{\flat})=L^{\flat} \obot \langle u \rangle$ for some $u \in \BV$ such that $\val((u,u))=a_n\coloneqq a_{n-1}$ or $a_{n-1}+1$, so that
		\begin{equation*}
			a_1+a_2+\dots+a_n \equiv h+1 \pmod 2.
		\end{equation*}
		Now, assume that $(a_1',a_2',\dots, a_n')$ be the fundamental invariants of the lattice $L^{\flat}+\langle x \rangle$ with a basis $(e_1',\dots,e_n')$ such that $\val(e_i',e_i')=a_i'$. Let $L''^{\flat}=\langle e_1',\dots,e_{n-1}'\rangle$ ane let $x'=e_n'$. Then, we have that
		\begin{equation*}
			\begin{array}{ll}
				\mathrm{Int}_{L^{\flat}}(x)=\mathrm{Int}_{L''^{\flat}}(x), & \partial \mathrm{Den}_{L^{\flat}}^{n,h}(x)=\partial \mathrm{Den}_{L''^{\flat}}^{n,h}(x).
			\end{array}
		\end{equation*}
		By \cite[Lemma 8.2.2]{LZ}, if $x \notin M$, then $\val(L''^{\flat})<\val(L^{\flat})$, and hence by the inductive hypothesis, we have that
		\begin{equation*}
			\mathrm{Int}_{L''^{\flat}}(x)=\partial \mathrm{Den}_{L''^{\flat}}^{n,h}(x).
		\end{equation*}
		This implies that the support of
		\begin{equation*}
			\phi=	\mathrm{Int}_{L^{\flat},\ScV}(x)-\partial \mathrm{Den}_{L^{\flat},\ScV}^{n,h}(x) \in C_{c}^{\infty}(\BV)
		\end{equation*}
		is contained in the lattice $M$. Here, $\mathrm{Int}_{L^{\flat},\ScV}(x)-\partial \mathrm{Den}_{L^{\flat},\ScV}^{n,h}(x)$ is in $C_{c}^{\infty}(\BV)$ by Proposition \ref{proposition9.1.3}, Theorem \ref{theorem9.2.1}, \cite[Lemma 6.2.1]{LZ}, and \cite[Lemma 2.11]{San3}.
		
		Now, let us consider $x$ such that $\val((x,x)) <0$ and $x \perp L^{\flat}$. Since we have assumed that Conjecture \ref{conjecture9.2.1} below holds, by \cite[Lemma 6.3.1]{LZ} and \cite[Theorem 8.1]{ZhiyuZhang}, we have that
		\begin{equation*}
			\widehat{\chi(\CN^{[h]}_n, \text{}^{\BL}\CZ(L^{\flat})_{\ScV} \otimes^{\BL} O_{\CZ(x)})}=\widehat{\Int}_{L^{\flat},\ScV}(x)=-\frac{1}{q^h}\mathrm{Int}_{L^\flat,\sV}(\cY(x))\coloneqq-\dfrac{1}{q^h}\chi(\CN^{[h]}_n, \text{}^{\BL}\CZ(L^{\flat})_{\ScV} \otimes^{\BL} O_{\CY(x)}).
		\end{equation*}
		From now on, we use $\mathrm{Int}_{L^\flat,\sV}(\cY(x))$ (resp. $\mathrm{Int}_{L^{\flat\circ},\ScV}(\CY(x))$) to denote $\chi(\CN^{[h]}_n, \text{}^{\BL}\CZ(L^{\flat})_{\ScV} \otimes^{\BL} O_{\CY(x)})$ (resp. $\chi(\CN^{[h]}_n, \text{}^{\BL}\CZ(L^{\flat})^{\circ}_{\ScV} \otimes^{\BL} O_{\CY(x)})$).

		Furthermore, note that we can decompose this into primitive parts
		\begin{align*}
			-\dfrac{1}{q^h}\mathrm{Int}_{L^{\flat},\ScV}(\CY(x)))&=\mathlarger{\sum}_{L^{\flat} \subset L'^{\flat} \subset L'^{\flat\vee}}-\dfrac{1}{q^h}\mathrm{Int}_{L'^{\flat\circ},\ScV}(\CY(x))\\
			&=\mathlarger{\sum}_{L^{\flat} \subset L'^{\flat} \subset L'^{\flat\vee}} \widehat{\Int}_{L'^{\flat\circ},\ScV}(x).
		\end{align*}
		Now, let us compare this with the analytic side.
		
		When $\val((x,x)) \leq -2$, we have that $\CY(x)$ is empty. Therefore, combining this with Theorem \ref{theorem9.1.1}, we have that
		\begin{equation*}
			\widehat{\phi}(x)=\widehat{\Int}_{L^{\flat},\ScV}(x)-\widehat{\pDen}_{L^{\flat},\ScV}^{n,h}(x) =0-0=0.
		\end{equation*}
		
		When $\val((x,x))=-1$, by Proposition \ref{proposition2.5}, we have that
		\begin{align*}
			\widehat{\Int}_{L'^{\flat\circ},\ScV}(x)=-\dfrac{1}{q^h}\mathrm{Int}_{L^{\flat\circ},\ScV}(\CY(x))&=-\dfrac{1}{q^h}\chi(\CN^{[h]}_n, \text{}^{\BL}\CZ(L'^{\flat})^{
				\circ}_{\ScV} \otimes^{\BL} O_{\CY(x)})\\
			&=-\dfrac{1}{q^h}\chi(\CN^{[h-1]}_{n-1}, \text{}^{\BL}\CZ(L'^{\flat})^{
				\circ}_{\ScV})\\
			&=-\dfrac{1}{q^h}\chi(\CN^{[h-1]}_{n-1}, \text{}^{\BL}\CZ(L'^{\flat})^{
				\circ}-\text{}^{\BL}\CZ(L'^{\flat})^{
				\circ}_{\ScH}).
		\end{align*}
		Since we have assumed that Conjecture \ref{conjecture5.5} holds for $\CN^{[h-1]}_{n-1}$, we have that
		\begin{equation*}
			-\dfrac{1}{q^h}\chi(\CN^{[h-1]}_{n-1}, \text{}^{\BL}\CZ(L'^{\flat})^{
				\circ})=-\dfrac{1}{q^h}D_{n-1,h-1}(a,b,c),
		\end{equation*}
		where $(a,b,c)=(t_{\ge 2}(\lambda),t_1(\lambda),t_0(\lambda))$ and $\lambda \in \CR_{n-1}^{0+}$ is the fundamental invariants of $L'^{\flat}$.
		
		For $\chi(\CN^{[h-1]}_{n-1},\text{}^{\BL}\CZ(L'^{\flat})^{
			\circ}_{\ScH})$, we have the following two cases.
		
		First, assume that $(a,b,c)\neq (1,h-2,n-h)$. Then, by Theorem \ref{thm: hori part} (or see the proof of Theorem \ref{theorem9.1.1}), we have that $\CZ(L'^{\flat})_{\ScH}^{\circ}$ is empty or the sum of $\CZ(L)^{\circ} \cdot \CY(N)$ for some lattice $N \simeq (\pi^{-1})^h$. Since $\CY(N)$ is empty in $\CN^{[h-1]}_{n-1}$, we have that $\chi(\CN^{[h-1]}_{n-1},\text{}^{\BL}\CZ(L'^{\flat})^{
			\circ}_{\ScH})=0$.
		
		Now, assume that $(a,b,c)=(1,h-2,n-h)$. Then by Theorem \ref{thm: hori part} (or see the proof of Theorem \ref{theorem9.1.1}), we have that
		\begin{equation*}
			\CZ(L'^{\flat})^{\circ}_{\ScH}=\CZ(L_2)^{\circ}\cdot\CY(\pi^{-1}L_1)\cdot\CZ(L_0),
		\end{equation*}
		where $L_2 \simeq \pi^{\lambda}$ ($\lambda \geq 2$), $\pi^{-1}L_1 \simeq \pi^{-1}I_{h-2}$, and $L_0 \simeq I_{n-h}$. Therefore,
		\begin{equation*}
			\chi(\CN^{[h-1]}_{n-1},\text{}^{\BL}\CZ(L'^{\flat})^{
				\circ}_{\ScH})=\chi(\CN^{[1]}_{1},\text{}^{\BL}\CZ(L_2)^{
				\circ})=D_{1,1}(1,0,0)=1.
		\end{equation*}
		
		Combining these, we have that
		\begin{equation*}
			\widehat{\Int}_{L'^{\flat\circ},\ScV}(x)=\left\lbrace \begin{array}{ll}
				-\dfrac{1}{q^h}D_{n-1,h-1}(a,b,c) & \text{if }(a,b,c)\neq (1,h-2,n-h),\\
				-\dfrac{1}{q^h}D_{n-1,h-1}(a,b,c)+\dfrac{1}{q^h} & \text{if }(a,b,c)= (1,h-2,n-h).
			\end{array}\right.
		\end{equation*}
		Therefore, by Theorem \ref{theorem9.1.1}, we have that
		\begin{equation*}
			\widehat{\Int}_{L'{\flat},\ScV}(x)-\widehat{\pDen}_{L'^{\flat},\ScV}^{n,h}(x)=0 \text{ for }\val((x,x))< 0.
		\end{equation*}
		This implies that $\widehat{\phi}=0$ for $\val((x,x))< 0$.
		
		Now, we only need to follow the proof of \cite[Theorem 8.2.1]{LZ}.
		
		Since $\phi$ is invariant under $L^{\flat}$ and $\text{Supp}(\phi) \subset M$, we have that
		\begin{equation*}
			\phi=1_{L^{\flat}} \otimes \phi_{\perp},
		\end{equation*}
		where $\phi_{\perp} \in C_{c}^{\infty}((L^{\flat}_F)^{\perp})$ and $\phi_{\perp}$ is supported on $M_{\perp}=\langle u \rangle$. Then, we have
		\begin{equation*}
			\widehat{\phi}=\vol(L^{\flat})1_{L^{\flat\vee}} \otimes \widehat{\phi_{\perp}},
		\end{equation*}
		and $\widehat{\phi_{\perp}}$ is invariant under the translation by $\langle u^{\vee} \rangle=\langle \pi^{-a_n}u \rangle$. Since $\val((u^{\vee},u^{\vee}))=-a_n <0$ and $\widehat{\phi_{\perp}}=0$ for every $x$ such that $x \perp L^{\flat}$, $\val((x,x))<0$, we have that $ 
		\widehat{\phi_{\perp}}$ vanishes identically and hence $\phi=0$. This finishes the proof of the Theorem.
	\end{proof}
	
	\begin{theorem}(cf. \cite[Theorem 8.2.1, Theorem 10.5.1]{LZ}) \label{theorem9.1.4}
		Conjecture \ref{conjecture5.5} holds for $\CZ$-cycles in $\CN^{[0]}_{n}, \CN^{[1]}_{n},\CN^{[n-1]}_{n}, \CN^{[n]}_{n}$, and $\CN^{[2]}_{4}$.
	\end{theorem}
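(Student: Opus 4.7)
The plan is to combine Theorem \ref{theorem9.1.3} (the reduction step) with Theorem \ref{theorem9.2.1} (which verifies Conjecture \ref{conjecture9.2.1} unconditionally for the listed Rapoport--Zink spaces), anchored by the Li--Zhang proof of the classical Kudla--Rapoport conjecture for $\CN^{[0]}_n$ (see \cite{LZ}). The argument proceeds by a bootstrap that treats the five families in a carefully chosen order, with an induction on $n$ required for the two ``diagonal'' families $\CN^{[n]}_n$ and $\CN^{[n-1]}_n$.

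The easy cases are disposed of first. The result for $\CN^{[0]}_n$ is the original Kudla--Rapoport conjecture \cite{LZ}. For $\CN^{[1]}_n$ with $n \geq 2$, I would apply Theorem \ref{theorem9.1.3} with $h = 1$: its two hypotheses are Conjecture \ref{conjecture9.2.1} for $\CN^{[1]}_n$ (supplied by Theorem \ref{theorem9.2.1}) and Conjecture \ref{conjecture5.5} for $\CZ$-cycles in $\CN^{[0]}_{n-1}$ (supplied by \cite{LZ}). Having established $\CN^{[1]}_n$ for all $n$, the case $\CN^{[2]}_4$ follows by a single further application of Theorem \ref{theorem9.1.3} with $(n, h) = (4, 2)$, using $\CN^{[1]}_3$ as the $(h-1, n-1)$ input.

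The remaining two cases $\CN^{[n]}_n$ and $\CN^{[n-1]}_n$ require induction on $n$, because in each case the hypothesis of Theorem \ref{theorem9.1.3} feeds back into the same family: for $\CN^{[n]}_n$ we need $\CN^{[n-1]}_{n-1}$, and for $\CN^{[n-1]}_n$ we need $\CN^{[n-2]}_{n-1} = \CN^{[(n-1)-1]}_{n-1}$. Crucially, Theorem \ref{theorem9.2.1} supplies Conjecture \ref{conjecture9.2.1} for every value of $n$ in both families, so each inductive step is a clean application of Theorem \ref{theorem9.1.3}. The inductions close at small $n$: for $\CN^{[n-1]}_n$ the base case $n = 2$ reduces via Proposition \ref{proposition2.5} to the $\CN^{[0]}_1$ computation of \cite{LZ}, and for $\CN^{[n]}_n$ the base case $\CN^{[2]}_2$ (or lower) follows from the duality $\CN^{[n]}_n \cong \CN^{[0]}_n$ together with \cite{LZ}, taking care that $\theta$ exchanges $\CZ$- and $\CY$-cycles and that the identification happens on the analytic side rather than as a naive pullback of cycles.

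The principal obstacle is therefore ensuring the base cases of the two inductions close up. On the analytic side this is automatic from the polynomial identities established in Section \ref{sec: ind formula} (in particular Theorem \ref{theorem5.24} and Theorem \ref{theorem5.31}). On the geometric side the subtle point is that, for $\CN^{[n]}_n$, Lemma \ref{lem: vanishing of Z(Lambda)} forces many $\CZ$-cycles to be empty, and one must verify that this vanishing matches the corresponding correction terms in $\pDen_{n,n}$; but this matching is precisely what Definition \ref{definition3.2} was designed to arrange, so the consistency check is routine. Once the base cases are pinned down, every subsequent step is a direct citation of Theorems \ref{theorem9.1.3} and \ref{theorem9.2.1}, and the theorem follows.
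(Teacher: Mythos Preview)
Your proposal is correct and follows the same route as the paper, which proves the theorem in a single line by citing Theorems \ref{theorem9.1.3} and \ref{theorem9.2.1}; you have simply unpacked the implicit induction. One minor simplification worth noting: the remark immediately after the theorem indicates that $\CN^{[n]}_n$, like $\CN^{[0]}_n$, is already treated in \cite{LZ} as a good-reduction case (via the rescaling $\lambda\mapsto\pi^{-1}\lambda$ rather than the duality $\theta$), so no separate induction or base-case duality argument is needed there, and your invocation of Proposition \ref{proposition2.5} for the $\CN^{[n-1]}_n$ base case should really be Theorem \ref{theorem9.1.3} feeding into $\CN^{[0]}_1$ (or simply the observation that $\CN^{[1]}_2$ already falls under the $\CN^{[1]}_n$ family).
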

	\begin{proof}
		This follows from Theorem \ref{theorem9.1.3} and Theorem \ref{theorem9.2.1}.
	\end{proof}
	\begin{remark}
		We remark that the Kudla-Rapoport conjecture for $\CN^{[0]}_n$, $\CN^{[n]}_n$ (good reductions case), $\CN^{[1]}_n$ (almost self-dual case) is already proved in \cite{LZ}. Therefore, our new cases are $\CN^{[n-1]}_n$ and $\CN^{[2]}_4$. We also remark that our work gives a different proof of the conjecture for $\CN^{[1]}_n$.
	\end{remark}

	\bibliographystyle{plain}
\begin{bibdiv}
	\begin{biblist}
		
		\bib{cho2018basic}{misc}{
			author={Cho, Sungyoon},
			title={The basic locus of the unitary {S}himura variety with parahoric
				level structure, and special cycles},
			date={2018},
		}

		\bib{Cho}{article}{
			author={Cho, Sungyoon},
			title={Special cycles on unitary {S}himura varieties with minuscule
				parahoric level structure},
			date={2022},
			journal={Mathematische Annalen},
			pages={1\ndash 67},
		}
			\bib{Cho4}{misc}{
			author={Cho, Sungyoon},
			title={On local representation densities of hermitian forms and special
				cycles {II}},
			date={2022},
		}
		
		\bib{Cho3}{article}{
			author={Cho, Sungyoon},
			title={{On Local Representation Densities of Hermitian Forms and Special
					Cycles}},
			date={2023},
			ISSN={1073-7928},
			journal={International Mathematics Research Notices},
		}
		
		\bib{CY}{article}{
			author={Cho, Sungmun},
			author={Yamauchi, Takuya},
			title={A reformulation of the {S}iegel series and intersection numbers},
			date={2020},
			journal={Mathematische Annalen},
			volume={377},
			number={3},
			pages={1757\ndash 1826},
		}
		
		\bib{DL22}{article}{
			author={{Disegni}, Daniel},
			author={{Liu}, Yifeng},
			title={{A $p$-adic arithmetic inner product formula}},
			date={2022-04},
			journal={arXiv e-prints},
			pages={arXiv:2204.09239},
			eprint={2204.09239},
		}
		
		\bib{Hi}{article}{
			author={Hironaka, Yumiko},
			title={Classification of hermitian forms by the iwahori subgroup and
				their local densities},
			date={2000},
			journal={Commentarii Mathematici Universitatis Sancti Pauli},
		}
		
		\bib{HLSY}{article}{
			author={He, Qiao},
			author={Li, Chao},
			author={Shi, Yousheng},
			author={Yang, Tonghai},
			title={A proof of the {K}udla--{R}apoport conjecture for {Kr\"a}mer
				models},
			date={2023},
			journal={Inventiones mathematicae},
			volume={234},
			pages={721\ndash 817},
			url={https://doi.org/10.1007/s00222-023-01209-1},
		}
		
		\bib{Ho2}{inproceedings}{
			author={Howard, Benjamin},
			title={Linear invariance of intersections on unitary rapoport--zink
				spaces},
			date={2019},
			booktitle={Forum mathematicum},
			volume={31},
			pages={1265\ndash 1281},
		}
		
		\bib{HSY}{article}{
			author={He, Qiao},
			author={Shi, Yousheng},
			author={Yang, Tonghai},
			title={{The Kudla-Rapoport conjecture at a ramified prime for
					$\mathrm{U}(1,1)$}},
			date={2020},
			journal={Transactions of the AMS, to appear.},
		}
		
		\bib{HSY3}{article}{
			author={He, Qiao},
			author={Shi, Yousheng},
			author={Yang, Tonghai},
			title={Kudla–{R}apoport conjecture for {K}rämer models},
			date={2023},
			journal={Compositio Mathematica},
			volume={159},
			number={8},
			pages={1673–1740},
		}
		
		\bib{KRshimuracurve}{article}{
			author={Kudla, Stephen},
			author={Rapoport, Michael},
			title={Height pairings on {S}himura curves and {$p$}-adic
				uniformization},
			date={2000},
			ISSN={0020-9910},
			journal={Invent. Math.},
			volume={142},
			number={1},
			pages={153\ndash 223},
			url={https://doi.org/10.1007/s002220000087},
			review={\MR{1784798}},
		}
		
		\bib{KR1}{article}{
			author={Kudla, Stephen},
			author={Rapoport, Michael},
			title={Special cycles on unitary {S}himura varieties {I}. {U}nramified
				local theory},
			date={2011},
			ISSN={0020-9910},
			journal={Invent. Math.},
			volume={184},
			number={3},
			pages={629\ndash 682},
			url={https://doi.org/10.1007/s00222-010-0298-z},
			review={\MR{2800697}},
		}
		
		\bib{KR2}{article}{
			author={Kudla, Stephen},
			author={Rapoport, Michael},
			title={Special cycles on unitary {S}himura varieties {II}: {G}lobal
				theory},
			date={2014},
			ISSN={0075-4102},
			journal={J. Reine Angew. Math.},
			volume={697},
			pages={91\ndash 157},
			url={https://doi.org/10.1515/crelle-2012-0121},
			review={\MR{3281653}},
		}
		
		\bib{Kudla2004}{incollection}{
			author={Kudla, Stephen},
			title={Special cycles and derivatives of {E}isenstein series},
			date={2004},
			booktitle={Heegner points and {R}ankin {$L$}-series},
			series={Math. Sci. Res. Inst. Publ.},
			volume={49},
			publisher={Cambridge Univ. Press, Cambridge},
			pages={243\ndash 270},
			url={https://doi.org/10.1017/CBO9780511756375.009},
			review={\MR{2083214}},
		}
		
		\bib{Kudla97}{article}{
			author={Kudla, Stephen},
			title={Central derivatives of eisenstein series and height pairings},
			date={1997},
			journal={Annals of mathematics},
			volume={146},
			number={3},
			pages={545\ndash 646},
		}
		
		\bib{LL2020}{article}{
			author={Li, Chao},
			author={Liu, Yifeng},
			title={Chow groups and {$L$}-derivatives of automorphic motives for
				unitary groups},
			date={2021},
			ISSN={0003-486X},
			journal={Ann. of Math. (2)},
			volume={194},
			number={3},
			pages={817\ndash 901},
			url={https://doi.org/10.4007/annals.2021.194.3.6},
			review={\MR{4334978}},
		}
		
		\bib{LL2}{inproceedings}{
			author={Li, Chao},
			author={Liu, Yifeng},
			title={Chow groups and {$L$}-derivatives of automorphic motives for
				unitary groups, {II}.},
			organization={Cambridge University Press},
			date={2022},
			booktitle={Forum of mathematics, pi},
			volume={10},
		}
		
		\bib{lusztig1976coxeter}{article}{
			author={Lusztig, George},
			title={Coxeter orbits and eigenspaces of frobenius},
			date={1976},
			journal={Inventiones mathematicae},
			volume={38},
			number={2},
			pages={101\ndash 159},
		}
		
		\bib{LZ}{article}{
			author={Li, Chao},
			author={Zhang, Wei},
			title={Kudla--{R}apoport cycles and derivatives of local densities},
			date={2022},
			journal={Journal of the American Mathematical Society},
			volume={35},
			number={3},
			pages={705\ndash 797},
		}
		
		\bib{LZ2}{article}{
			author={Li, Chao},
			author={Zhang, Wei},
			title={On the arithmetic {S}iegel--{W}eil formula for {GS}pin {S}himura
				varieties},
			date={2022},
			journal={Inventiones mathematicae},
			volume={228},
			number={3},
			pages={1353\ndash 1460},
		}
		
		\bib{Ohm}{article}{
			author={Ohmori, J.},
			title={The tate conjecture over finite fields for projective schemes
				related to coxeter orbits},
			date={2010},
			journal={Hokkaido Mathematical Journal},
			volume={38},
			pages={1\ndash 38},
		}
		
		\bib{RZ}{book}{
			author={Rapoport, M.},
			author={Zink, Thomas},
			title={Period spaces for p-divisible groups},
			publisher={Princeton University Press},
			date={1996},
			number={141},
		}
		
		\bib{San3}{article}{
			author={Sankaran, Siddarth},
			title={Improper intersections of kudla--rapoport divisors and eisenstein
				series},
			date={2017},
			journal={Journal of the Institute of Mathematics of Jussieu},
			volume={16},
			number={5},
			pages={899\ndash 945},
		}
		
		\bib{Sie35}{article}{
			author={Siegel, Carl~Ludwig},
			title={\"{U}ber die analytische {T}heorie der quadratischen {F}ormen},
			date={1935},
			ISSN={0003-486X},
			journal={Ann. of Math. (2)},
			volume={36},
			number={3},
			pages={527\ndash 606},
			url={https://doi.org/10.2307/1968644},
			review={\MR{1503238}},
		}
		
		\bib{Siegel1951}{article}{
			author={Siegel, Carl~Ludwig},
			title={Indefinite quadratische {F}ormen und {F}unktionentheorie. {I}},
			date={1951},
			ISSN={0025-5831},
			journal={Math. Ann.},
			volume={124},
			pages={17\ndash 54},
			url={https://doi.org/10.1007/BF01343549},
			review={\MR{0067930}},
		}
		
		\bib{Vo}{article}{
			author={Vollaard, I.},
			title={The supersingular locus of the shimura variety for $gu(1,s)$},
			date={2010},
			journal={Can. J. Math.},
			volume={62},
			pages={668\ndash 720},
		}
		
		\bib{vollaard2011supersingular}{article}{
			author={Vollaard, Inken},
			author={Wedhorn, Torsten},
			title={The supersingular locus of the {S}himura variety of {GU}($1,
				n-1$) {II}},
			date={2011},
			journal={Inventiones mathematicae},
			volume={184},
			number={3},
			pages={591\ndash 627},
		}
		
		\bib{Weil1965}{article}{
			author={Weil, Andr\'{e}},
			title={Sur la formule de {S}iegel dans la th\'{e}orie des groupes
				classiques},
			date={1965},
			ISSN={0001-5962},
			journal={Acta Math.},
			volume={113},
			pages={1\ndash 87},
			url={https://doi.org/10.1007/BF02391774},
			review={\MR{0223373}},
		}
		
		\bib{ZhiyuZhang}{misc}{
			author={Zhang, Zhiyu},
			title={Maximal parahoric arithmetic transfers, resolutions and
				modularity},
			date={2022},
		}
		
	\end{biblist}
\end{bibdiv}

\end{document}